\newcommand{\VAN}[3]{#2}
\newcommand{\VANDEN}[3]{#2}
\newcommand\widecheck[1]{%
\savestack{\tmpbox}{\stretchto{%
  \scaleto{%
    \scalerel*[\widthof{\ensuremath{#1}}]{\kern-.6pt\bigwedge\kern-.6pt}%
    {\rule[-\textheight/2]{1ex}{\textheight}}%WIDTH-LIMITED BIG WEDGE
  }{\textheight}% 
}{0.5ex}}%
\stackon[1pt]{#1}{\scalebox{-1}{\tmpbox}}%
}
\theoremstyle:=definition,remark,plain\do{%
        \expandafter\g@addto@macro\csname th@\theoremstyle\endcsname{%
            \addtolength\thm@preskip\parskip
            }%
        }
\newtheorem{Thm}[subsubsection]{Theorem}
\newtheorem*{MainTheorem}{Main Theorem}
\newtheorem{Lem}[subsubsection]{Lemma}
\newtheorem{Prop}[subsubsection]{Proposition}
\newtheorem{Cor}[subsubsection]{Corollary}
\newtheorem{Conj}[subsubsection]{Conjecture}
\newtheorem*{Thm*}{Theorem}
\newtheorem{mainThm}{Theorem}
\newtheorem{Claim}[subsubsection]{Claim}
\theoremstyle{definition}
\newtheorem{Def}[subsubsection]{Definition}
\newtheorem{Example}[subsubsection]{Example}
\newtheorem{Rem}[subsubsection]{Remark}
\newtheorem{Assump}[subsubsection]{Assumption}
\numberwithin{equation}{subsection}
\newcommand{\spec}{\operatorname{Spec}}
\newcommand{\spf}{\operatorname{Spf}}
\newcommand{\spa}{\operatorname{Spa}}
\newcommand{\spd}{\operatorname{Spd}}
\newcommand{\gal}{\operatorname{Gal}}
\newcommand{\Hom}{\operatorname{Hom}}
\newcommand{\Aut}{\operatorname{Aut}}
\newcommand{\End}{\operatorname{End}}
\newcommand{\isom}{\underline{\operatorname{Isom}}}
\newcommand{\GL}{\mathrm{GL}}
\newcommand{\ql}{{\mathbb{Q}_\ell}}
\newcommand{\zl}{{\mathbb{Z}_\ell}}
\newcommand{\fl}{\mathbb{F}_{\ell}}
\newcommand{\qlbar}{\overline{\mathbb{Q}}_\ell}
\newcommand{\zlbar}{\overline{\mathbb{Z}}_\ell}
\newcommand{\flbar}{{\overline{\mathbb{F}}_{\ell}}}
\newcommand{\qp}{{\mathbb{Q}_p}}
\newcommand{\zp}{{\mathbb{Z}_{p}}}
\newcommand{\fp}{{\mathbb{F}_{p}}}
\newcommand{\qpbar}{\overline{\mathbb{Q}}_p}
\newcommand{\fpbar}{{\overline{\mathbb{F}}_{p}}}
\newcommand{\qpbr}{\breve{\mathbb{Q}}_{p}}
\newcommand{\zpbr}{\breve{\mathbb{Z}}_{p}}
\newcommand{\ebreve}{\breve{E}}
\newcommand{\afp}{{\mathbb{A}_f^p}}
\newcommand{\af}{{\mathbb{A}_f}}
\newcommand{\zlocp}{\mathbb{Z}_{(p)}}
\newcommand{\ebar}{\overline{\mathsf{E}}}
\newcommand{\CO}{\mathcal{O}}
\newcommand{\calg}{\mathcal{G}}
\newcommand{\calgcirc}{\mathcal{G}^{\circ}}
\newcommand{\mP}{\mathcal{P}}
\newcommand{\smP}{\sigma^{\ast} \mP}
\newcommand{\bdtimes}{\buildrel{\boldsymbol{.}}\over\times}
\newcommand{\perf}{\operatorname{Perf}}
\newcommand{\affperf}{\mathbf{Aff}^{\mathrm{perf}}}
\newcommand{\shtg}{\mathrm{Sht}_{\mathcal{G}}}
\newcommand{\shtgcirc}{\mathrm{Sht}_{\calgcirc}}
\newcommand{\shtgmu}{\mathrm{Sht}_{\mathcal{G},\mu}}
\newcommand{\shtgcircmu}{\mathrm{Sht}_{\calgcirc,\mu}}
\newcommand{\shtgmuone}{\mathrm{Sht}_{\mathcal{G},\mu, \delta=1}}
\newcommand{\shtgmub}{\mathrm{Sht}_{\mathcal{G},\mu}^{[b]}}
\newcommand{\shtgmuonerat}{\mathrm{Sht}_{\mathcal{G},\mu,\delta=1,E}}
\newcommand{\shtgvmu}{\mathrm{Sht}_{\mathcal{G}_V,\mu_V}}
\newcommand{\shthmuone}{\mathrm{Sht}_{\mathcal{H},\mu, \delta=1}}
\newcommand{\shtgloc}{\mathrm{Sht}^{\mathrm{W}}_{\mathcal{G}}}
\newcommand{\shtgcircloc}{\mathrm{Sht}^{\mathrm{W}}_{\calgcirc}}
\newcommand{\shtglocmu}{\mathrm{Sht}^{\mathrm{W}}_{\mathcal{G},\mu}}
\newcommand{\shtgcirclocmu}{\mathrm{Sht}^{\mathrm{W}}_{\calgcirc,\mu}}
\newcommand{\shtglocmumu}{\mathrm{Sht}^{\mathrm{W}}_{\mathcal{G},\mu | \mu}}
\newcommand{\shtglocmuone}{\mathrm{Sht}^{\mathrm{W}}_{\mathcal{G},{\mu},\delta=1}}
\newcommand{\shtgvlocmu}{\mathrm{Sht}^{\mathrm{W}}_{\mathcal{G}_V,\mu_V}}
\newcommand{\mintgmu}{\mathcal{M}^{\mathrm{int}}_{\mathcal{G},b,{\mu}}}
\newcommand{\bun}{\mathrm{Bun}}
\newcommand{\bung}{\bun_{G}}
\newcommand{\bungk}{\bun_{G,k}}
\newcommand{\bungmu}{\bun_{G,\mu^{-1}}}
\newcommand{\bungmuk}{\bun_{G,\mu^{-1},k}}
\newcommand{\admu}{\operatorname{Adm}(\mu^{-1})}
\newcommand{\bgmu}{B(G,\mu^{-1})}
\newcommand{\gisoc}{{G\text{-}\mathrm{Isoc}}}
\newcommand{\gvisoc}{{G_V\text{-}\mathrm{Isoc}}}
\newcommand{\gisocmu}{\gisoc_{\le {\mu^{-1}}}}
\newcommand{\gafp}{\mathsf{G}(\afp)}
\newcommand{\gaf}{\mathsf{G}(\af)}
\newcommand{\gx}{{(\mathsf{G}, \mathsf{X})}}
\newcommand{\gv}{\mathsf{G}_{V}}
\newcommand{\gvx}{(\mathsf{G}_{V},\mathsf{H}_{V})}
\newcommand{\gxp}{(\mathsf{G}', \mathsf{X}')}
\newcommand{\scrs}{\mathscr{S}}
\newcommand{\scrsg}{\mathscr{S}_K\gx}
\newcommand{\scrsginf}{\mathscr{S}_{K_p}\gx}
\newcommand{\hatscrsg}{\widehat{\mathscr{S}}_K\gx}
\newcommand{\hatscrsginf}{\widehat{\mathscr{S}}_{K_p}\gx}
\newcommand{\hatscrsgv}{\widehat{\mathscr{S}}_M\gvx}
\newcommand{\hatscrsgvinf}{\widehat{\mathscr{S}}_{M_p}\gvx}
\newcommand{\scrsd}{\mathscr{S}_K\gx^{\diamond}}
\newcommand{\scrsdinf}{\mathscr{S}_{K_p}\gx^{\diamond}}
\newcommand{\scrsdpreinf}{\mathscr{S}_{K_p}\gx^{\diamond,\mathrm{pre}}}
\newcommand{\scrsdvinf}{\mathscr{S}_{M_p}\gvx^\diamond}
\newcommand{\scrsdvpreinf}{\mathscr{S}_{M_p}\gvx^{\diamond,\mathrm{pre}}}
\newcommand{\Fl}{\mathscr{F}\!\ell}
\newcommand{\grgloc}{\mathrm{Gr}_{\mathcal{G}}^{\mathrm{W}}}
\newcommand{\igs}{\mathrm{Igs}_{K^p}\gx}
\newcommand{\igsk}{\mathrm{Igs}_{K^p,k}}
\newcommand{\igsinf}{\mathrm{Igs}\gx}
\newcommand{\ig}{\mathrm{Ig}}
\newcommand{\iginfbgx}{\mathrm{Ig}_{}^b\gx}
\newcommand{\igvinfbgx}{\mathrm{Ig}_{}^{b,\mathrm{v}}\gx}
\newcommand{\crys}{\mathrm{crys}}
\newcommand{\shg}{{\scrs_{K}\gx_{k_{E}}^{\mathrm{perf}}}}
\newcommand{\shginf}{{\scrs_{K_p}\gx_{k_{E}}^{\mathrm{perf}}}}
\newcommand{\shginfd}{{\scrs_{K_p}\gx_{k_{E}}^{\mathrm{perf},\diamond}}}
\newcommand{\shgvinf}{\scrs_{M_p}\gvx_{\fp}^{\mathrm{perf}}}
\newcommand{\g}{\mathsf{G}}
\newcommand{\gab}{\mathsf{G}^{\mathrm{ab}}}
\newcommand{\gder}{\mathsf{G}^{\mathrm{der}}}
\newcommand{\gtwoder}{\mathsf{G}_2^{\mathrm{der}}}
\newcommand{\gxtwo}{(\mathsf{G}_2, \mathsf{X}_2)}
\newcommand{\dualgrp}[1]{\widehat{#1}}
\newcommand{\loc}{X_{\dualgrp{G}}}
\newcommand{\Div}{\mathrm{Div}^{1}_{E}}
\newcommand{\Divk}{\mathrm{Div}^{1}_{E,k}}
\newcommand{\cocycle}{Z^1(W_\qp,\dualgrp{G})}
\newcommand{\smallmcG}{\scaleobj{.8}{{\mathcal{G}}}}
\newcommand{\IgsQuot}{q_\mathrm{Igs}}
\newcommand{\IgsQuotInt}{q_{\mathrm{Igs},\smallmcG}}
\newcommand*{\da@rightarrow}{\mathchar"0\hexnumber@\symAMSa 4B }
\newcommand*{\da@leftarrow}{\mathchar"0\hexnumber@\symAMSa 4C }
\newcommand*{\xdashrightarrow}[2][]{%
  \mathrel{%
    \mathpalette{\da@xarrow{#1}{#2}{}\da@rightarrow{\,}{}}{}%
  }%
}
\newcommand{\xdashleftarrow}[2][]{%
  \mathrel{%
    \mathpalette{\da@xarrow{#1}{#2}\da@leftarrow{}{}{\,}}{}%
  }%
}
\newcommand*{\da@xarrow}[7]{%
  % #1: below
  % #2: above
  % #3: arrow left
  % #4: arrow right
  % #5: space left 
  % #6: space right
  % #7: math style 
  \sbox0{$\ifx#7\scriptstyle\scriptscriptstyle\else\scriptstyle\fi#5#1#6\m@th$}%
  \sbox2{$\ifx#7\scriptstyle\scriptscriptstyle\else\scriptstyle\fi#5#2#6\m@th$}%
  \sbox4{$#7\dabar@\m@th$}%
  \dimen@=\wd0 %
  \ifdim\wd2 >\dimen@
    \dimen@=\wd2 %   
  \fi
  \count@=2 %
  \def\da@bars{\dabar@\dabar@}%
  \@whiledim\count@\wd4<\dimen@\do{%
    \advance\count@\@ne
    \expandafter\def\expandafter\da@bars\expandafter{%
      \da@bars
      \dabar@ 
    }%
  }%  
  \mathrel{#3}%
  \mathrel{%   
    \mathop{\da@bars}\limits
    \ifx\\#1\\%
    \else
      _{\copy0}%
    \fi
    \ifx\\#2\\%
    \else
      ^{\copy2}%
    \fi
  }%   
  \mathrel{#4}%
}
\newcommand\restr[2]{{% we make the whole thing an ordinary symbol
  \left.\kern-\nulldelimiterspace % automatically resize the bar with \right
  #1 % the function
  \vphantom{\big\vert} % pretend it's a little taller at normal size
  \right\rvert_{#2} % this is the delimiter
  }}
\newcommand{\commentDaniel}[1]{\textcolor{Blue}{Daniel: #1}}
\newcommand{\commentPatrick}[1]{\textcolor{violet}{Patrick: #1}}
\newcommand{\commentPol}[1]{\textcolor{red}{Pol: #1}}
\newcommand{\commentMingjia}[1]{\textcolor{ForestGreen}{Mingjia: #1}}
\newcommand{\commentDaniel}[1]{}
\newcommand{\commentPatrick}[1]{}
\newcommand{\commentPol}[1]{}
\newcommand{\commentMingjia}[1]{}
\def\lozenge{\diamondsuit}
\author{Patrick Daniels}
\address{Department of Mathematics and Statistics, Skidmore College, 815 N Broadway, Saratoga Springs, NY, 12866, USA}
\email{pdaniels@skidmore.edu}
\author{Pol van Hoften} 
\address{School of Mathematical Sciences, Zhejiang University, 866 Yuhangtang Rd, Hangzhou, 310058, P. R. China}
\email{pvhoften@zju.edu.cn}
\author{Dongryul Kim}
\address{Department of Mathematics, Stanford University, 450 Jane Stanford Way
(Building 380), Stanford, California, USA}
\email{dkim04@stanford.edu}
\author{Mingjia Zhang}
\address{Department of Mathematics, Princeton university, Fine Hall, Washington Road,
Princeton, NJ, 08544-1000, USA}
\email{mz9413@princeton.edu}
\title[Igusa stacks and the cohomology of Shimura varieties]{Igusa stacks and the cohomology of Shimura varieties}
\begin{document}
\sloppy %should keep things from extending into the margins
\begin{abstract}
We construct functorial Igusa stacks for all Hodge-type Shimura varieties, proving a conjecture of Scholze and extending earlier results of the fourth-named author for PEL-type Shimura varieties. Using the Igusa stack, we construct a sheaf on $\mathrm{Bun}_G$ that controls the cohomology of the corresponding Shimura variety. We use this sheaf and the spectral action of Fargues--Scholze to prove a compatibility between the cohomology of Shimura varieties of Hodge type and the semisimple local Langlands correspondence of Fargues--Scholze and to generalize the Eichler--Shimura relation conjectured by Blasius--Rogawski to Iwahori level at $p$. When the given Shimura variety is proper, we show moreover that the sheaf is perverse, which allows us to prove new torsion vanishing results for the cohomology of Shimura varieties.
\end{abstract}

\maketitle\thispagestyle{empty}

\setcounter{tocdepth}{2}

\bigskip

\tableofcontents

\newpage

\section{Introduction}

Fix a prime $p$. In their seminal work \cite{CaraianiScholzeCompact}, Caraiani and Scholze prove torsion vanishing results for the generic part of the cohomology of compact unitary Shimura varieties. A key geometric ingredient in their proof is the Hodge--Tate period morphism 
\begin{align*}
	\pi_{\mathrm{HT}}\colon \mathbf{Sh}_{K^p}\gx \to \Fl,
\end{align*}
which was originally defined and studied in \cite{ScholzeTorsion}. Here $\mathbf{Sh}_{K^p}\gx$ is the $p$-adic perfectoid Shimura variety with infinite level at $p$, and $\Fl$ is the corresponding flag variety. As a crucial ingredient in their proof, Caraiani and Scholze show that the pushforward $R \pi_{\mathrm{HT},\ast} \fl$ is perverse in a suitable sense. 

Subsequently, Fargues conjectured in \cite[Section 7]{FarguesConjecture} that the sheaf $R\pi_{\mathrm{HT}, \ast} \ql$ on $\Fl$ can be obtained by pulling back a sheaf on $\bun_G$ along the Beauville--Laszlo map $\mathrm{BL} \colon
\Fl^{\lozenge} \to \bun_G$.\footnote{In fact, Fargues gives a conjectural description of this sheaf on $\bun_G$.}  Scholze, in turn, suggested that this can be explained by the existence of a so-called \emph{Igusa stack} $\mathbf{Igs}_{K^p}\gx$ sitting in a Cartesian diagram
\begin{equation} \label{Eq:IntroCartesianIntro} \begin{tikzcd} 
    \mathbf{Sh}_{K^p}\gx^{\lozenge} \arrow{r}{\pi_{\mathrm{HT}}^\lozenge} \arrow{d}{\IgsQuot} & \Fl^\lozenge \arrow{d}{\mathrm{BL}} \\
    \mathbf{Igs}_{K^p}\gx \arrow{r}{\overline{\pi}_{\mathrm{HT}}} & \bun_G.
\end{tikzcd}
\end{equation}
This conjectural v-stack receives its name because, in a sense, it interpolates Igusa varieties associated to different Newton strata, which are themselves important geometric objects in the study of the global Langlands conjecture, see e.g., \cite{HarrisTaylor}, \cite{MantovanPEL}, \cite{ShinIgusa}, \cite{CaraianiScholzeCompact, CaraianiScholzeNoncompact}. Given the Cartesian diagram \eqref{Eq:IntroCartesianIntro}, the aforementioned conjecture of Fargues follows by applying base change to see that the sheaf $R\pi_{\mathrm{HT},\ast} \ql$ is the pullback of $R\overline{\pi}_{\mathrm{HT},\ast}\ql$ along $\mathrm{BL}$. 

In this paper, we realize this vision by proving the existence of $\igs$ for all Hodge-type Shimura varieties, without restriction on $p$, generalizing results of one of us (MZ) for PEL-type AC Shimura varieties \cite{ZhangThesis}. 
\begin{MainTheorem}[Theorem \ref{Thm:IntroIgusaGeneric}] \label{Thm:IntroIgusaGenericAbbreviated}
For all Shimura data $\gx$ of Hodge type, there is a v-stack $\mathrm{Igs}_{K^p}\gx$ on $\perf$ fitting into the Cartesian diagram \eqref{Eq:IntroCartesianIntro} with $\mathbf{Sh}_{K^p}\gx^{\lozenge}$ replaced by the good reduction locus $\mathbf{Sh}_{K^p}\gx^{\circ,\lozenge} \subset \mathbf{Sh}_{K^p}\gx^{\lozenge}$.\footnote{It would be appropriate to write $\operatorname{Igs}_{K^p}\gx^{\circ}$ for our $\operatorname{Igs}_{K^p}\gx$. However, since only this variant of the Igusa stack appears in this paper, we have opted to omit the superscript.}
\end{MainTheorem}
From this theorem we obtain several cohomological applications. In particular, we prove a compatibility between the cohomology of Shimura varieties of Hodge type and the Fargues--Scholze local Langlands correspondence, and we generalize the classical Eichler--Shimura relation conjectured by Blasius and Rogawski at hyperspecial level. When the given Shimura variety is compact, we moreover show that $R\overline{\pi}_{\mathrm{HT},\ast} \fl$ is perverse for the perverse $t$-structure on $\bungmu$ given by its Harder--Narasimhan stratification. In combination with results of Hamann and Hamann--Lee, see \cite{Hamann22, Hamann-Lee}, this leads to new torsion vanishing results for the cohomology of some compact Shimura varieties. Let us now explain our results in detail.

\subsection{Main geometric results} \label{sub:GeometricResults} Let $\gx$ be a Shimura datum of Hodge type with reflex field $\mathsf{E}$. Let $p$ be a prime number, let $v$ be a prime of $\mathsf{E}$ above $p$ and set $E=\mathsf{E}_v$ and $G=\mathsf{G} \otimes \qp$. Let $K^p \subset \gafp$ be a neat compact open subgroup. For a compact open subgroup $K_p \subset G(\qp)$, we consider the Shimura variety $\mathbf{Sh}_K\gx$ of level $K=K_pK^p$ over $E$. There is an open subspace (the ``good reduction locus'')
\begin{align}
    \mathbf{Sh}_K\gx^{\circ, \mathrm{an}} \subset \mathbf{Sh}_K\gx^{\mathrm{an}}
\end{align}
of the adic space $\mathbf{Sh}_K\gx^{\mathrm{an}}$ associated with $\mathbf{Sh}_K\gx$, which is compatible with changing the level $K$. Let $\perf$ denote the category of perfectoid spaces of characteristic $p$. We let $\mathbf{Sh}_{K^p}\gx^{\circ, \lozenge} \subset \mathbf{Sh}_{K^p}\gx^{\lozenge}$  be the corresponding objects with infinite level at $p$, considered as v-sheaves on $\perf$. These come equipped with a Hodge--Tate period map 
\begin{align}
    \pi_{\mathrm{HT}}^\lozenge\colon \mathbf{Sh}_{K^p}\gx^{\lozenge} \to \operatorname{Gr}_{G, \mu^{-1}},
\end{align}
where $\operatorname{Gr}_{G, \mu^{-1}}$ is the Schubert cell in the $\mathbf{B}_\mathrm{dR}^+$-affine Grassmannian corresponding to the inverse of the Hodge cocharacter $\mu$ (this is $\Fl^\lozenge$ from \eqref{Eq:IntroCartesianIntro}). Let $\bung$ denote the v-stack on $\perf$ of $G$-bundles on the Fargues--Fontaine curve. Finally we need the Beauville--Laszlo map $\operatorname{BL}:\operatorname{Gr}_{G, \mu^{-1}} \to \bun_G$, whose image is an open substack $\bungmu$ of $\bung$, see Section \ref{Sec:BGMU}. 
\begin{mainThm}[Theorem \ref{Thm:HodgeMain}, \ref{Thm:IgusaDualizingComplex}, \ref{Thm:RationalFunctoriality}] \label{Thm:IntroIgusaGeneric}
There is an Artin v-stack $\mathrm{Igs}_{K^p}\gx$ on $\perf$ sitting in a Cartesian diagram of v-stacks on $\perf$
    \begin{equation}
        \begin{tikzcd}
        \mathbf{Sh}_{K^p}\gx^{\circ, \lozenge} \arrow{r}{\pi_{\mathrm{HT}}^\lozenge} \arrow{d}{\IgsQuot}& \operatorname{Gr}_{G, \mu^{-1}} \arrow{d}{\mathrm{BL}} \\
        \mathrm{Igs}_{K^p} \gx \arrow{r}{\overline{\pi}_\mathrm{HT}} & \bungmu.
        \end{tikzcd}
\end{equation}
Furthermore, for all $\ell \not=p$, the stack $\mathrm{Igs}_{K^p}\gx$ is $\ell$-cohomologically smooth of dimension $0$, and $\mathrm{Igs}_{K^p}\gx \times_{\spd \fp} \spd \fpbar$ has constant dualizing sheaf. Moreover, the formation of $\mathrm{Igs}_{K^p}\gx$ is functorial for morphisms of tuples $(\mathsf{G}, \mathsf{X}, v, K^p)$.\footnote{The Igusa stack depends on the choice of place $v |p$ of $\mathsf{E}$, although we omit this from the notation. The functoriality we prove is for morphisms $f:\gx \to \gxp$ with $f(K^p) \subset K^{p'}$ such that under the induced inclusion $\mathsf{E}' \subset \mathsf{E}$ the place $v$ of $\mathsf{E}$ induces the place $v'$ of $\mathsf{E}'$.}
\end{mainThm}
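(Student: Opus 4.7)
The plan is to construct $\mathrm{Igs}_{K^p}\gx$ by first building an integral version over $\spd \CO_E$ and then restricting to the generic fiber, mirroring the strategy used in the PEL case of \cite{ZhangThesis} but substituting the formalism of local $G$-shtukas for the explicit moduli description via $p$-divisible groups. The starting point is the integral canonical model $\scrsg$ (in the sense of Kisin--Pappas--Rapoport), together with its v-sheafification $\scrsdinf$ at infinite level at $p$ and a natural morphism $\scrsdinf \to \shtglocmu$ to the stack of local $G$-shtukas of bounded type $\mu$. On the generic fiber, composing this morphism with the forgetful map $\shtglocmu \to \bungmu$ which remembers only the $G$-bundle on the Fargues--Fontaine curve should recover $\mathrm{BL} \circ \pi_{\mathrm{HT}}^\lozenge$.

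Once such a morphism is available, I would define $\mathrm{Igs}_{K^p}\gx$ as the v-stack quotient of $\scrsdinf$ by the equivalence relation identifying points whose images in $\bun_G$ are canonically isomorphic; concretely, it may be realized as the stack-theoretic image of $\scrsdinf \to \bungmu$, or equivalently as the pushout obtained by gluing $\scrsdinf$ and $\bungmu$ along the Beauville--Laszlo projection. Cartesianness of the displayed diagram is then essentially tautological: a $T$-point of $\mathrm{Igs}_{K^p}\gx$ together with a compatible lift to $\operatorname{Gr}_{G,\mu^{-1}}$ specifies exactly the framing data forgotten when forming the quotient, and so recovers a unique $T$-point of $\mathbf{Sh}_{K^p}\gx^{\circ,\lozenge}$. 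The $\ell$-cohomological smoothness of $\IgsQuot$ in dimension $0$ and the computation of the dualizing sheaf as $\flbar[0]$ then follow by Cartesian descent from the corresponding properties of $\mathrm{BL}$, which is a torsor under an $\ell$-cohomologically smooth pro-$p$ group and whose dualizing complex is known from Fargues--Scholze. Functoriality in $(\mathsf{G},\mathsf{X},v,K^p)$ is inherited from the functoriality of each step: integral canonical models are functorial once compatibility at $p$ is imposed, local shtuka stacks transform covariantly under group morphisms, and the formation of $\bun_G$ and the Beauville--Laszlo map are also functorial.

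The principal obstacle is making the integral construction $\scrsdinf \to \shtglocmu$ precise at arbitrary (parahoric or deeper) level $K_p$, where one must combine Pappas--Rapoport's theory of local models with Kisin's construction of integral canonical models for Hodge-type Shimura varieties, and then upgrade these integral moduli interpretations to a functorial morphism of v-stacks that intertwines with the Fargues--Scholze geometry of $\bun_G$. A closely related difficulty is showing that the proposed v-stack quotient is actually representable by an Artin v-stack with the expected cohomological properties, which requires delicate v-descent arguments and a careful analysis of the smoothness of the Beauville--Laszlo map. Once these ingredients are in place, verifying the Cartesian property at general perfectoid test objects and propagating smoothness and duality through the diagram should reduce to formal manipulations with six-functor formalism on small v-stacks.
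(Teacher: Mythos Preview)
Your proposal has a genuine gap at its core: the claim that Cartesianness of the diagram is ``essentially tautological'' once one defines $\mathrm{Igs}_{K^p}\gx$ as the quotient of $\scrsdinf$ by the relation of having isomorphic image in $\bun_G$ is false, and this is in fact where almost all the work lies. With your definition, the map $\mathrm{Igs}_{K^p}\gx \to \bungmu$ is a monomorphism, so the fiber product $\mathrm{Igs}_{K^p}\gx \times_{\bungmu} \operatorname{Gr}_{G,\mu^{-1}}$ is simply the preimage in $\operatorname{Gr}_{G,\mu^{-1}}$ of the image of the Shimura variety in $\bun_G$. Cartesianness then asserts that any point $y$ of $\operatorname{Gr}_{G,\mu^{-1}}$ whose underlying $G$-bundle agrees with that of some $x \in \mathbf{Sh}_{K^p}\gx^{\circ,\lozenge}$ is itself in the image of $\pi_{\mathrm{HT}}^{\lozenge}$. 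This is a nontrivial surjectivity statement: one must \emph{produce} a new point $z$ of the Shimura variety with $\pi_{\mathrm{HT}}^{\lozenge}(z) = y$, and there is no tautological mechanism for doing so.

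The paper handles this by defining the Igusa stack differently, via an explicit equivalence relation given by formal quasi-isogenies of the underlying abelian varieties that preserve both the prime-to-$p$ level structure and the $G$-structure on the associated bundle. The Cartesian statement is then proved first on rank one geometric points using the Pappas--Rapoport uniformization $\Theta_{\mathcal{G},x}\colon \mathcal{M}^{\mathrm{int}}_{\mathcal{G},b_x,\mu} \to \scrs_{K_p}\gx^{\diamond}$ (which is exactly the tool that manufactures the new Shimura variety point $z$ from a new shtuka with the same framing), and is then bootstrapped to products of geometric points by a delicate comparison with the known Siegel case from \cite{ZhangThesis}, using a formalism of ``proper*'' and ``separated*'' maps. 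Your sketch does not identify this input, and the phrase ``stack-theoretic image'' hides exactly the step that requires it.

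A few smaller points: the integral map lands in $\shtgmu$ (shtukas over perfectoid spaces), not $\shtglocmu$ (Witt vector shtukas); the Beauville--Laszlo map is $\ell$-cohomologically smooth but is not a torsor under any group; and functoriality is not formal---the paper uses an auxiliary PEL datum built from a sum of two Hodge embeddings, together with a separate argument (via contractibility of the Bruhat--Tits building) that the construction is independent of the parahoric level and the chosen Hodge embedding. Your remarks on the dualizing sheaf are closer to what the paper does, but that computation only becomes available \emph{after} Cartesianness is established.
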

This confirms a conjecture of Scholze on the good reduction locus, cf. \cite[Conjecture 1.1.(4)]{ZhangThesis}. For $\gx$ a Shimura datum of PEL type such that $p$ is an unramified prime for the PEL datum, this result was proved by one of us (MZ) as part of \cite[Theorem 1.3]{ZhangThesis}.\footnote{Although strictly speaking the results in \cite{ZhangThesis} are for the PEL type moduli spaces constructed by Kottwitz, which are generally disjoint unions of finitely many Shimura varieties.} 

If $\mathbf{Sh}_K\gx$ is proper, then $\mathbf{Sh}_{K^p}\gx^{\circ, \lozenge}=\mathbf{Sh}_{K^p}\gx^{\lozenge}$ and we recover the Cartesian diagram \eqref{Eq:IntroCartesianIntro}. For non-proper Shimura varieties, we expect that there is a version of Theorem \ref{Thm:HodgeMain} for the minimal compactification of the Shimura variety; see \cite[Theorem 1.3]{ZhangThesis} for the PEL type case.

\subsection{Main cohomological results} Let $\gx$ be a Shimura datum of Hodge type with reflex field $\mathsf{E}$, fix a place $v$ of $\mathsf{E}$ above a rational prime number $p$ and set $E=\mathsf{E}_v$ and $G=\mathsf{G} \otimes \qp$ as before. Choose a prime $\ell \not=p$,\footnote{Although our current applications only concern the $\ell$-adic/mod $\ell$ cohomology of Shimura varieties with constant coefficients, Theorem~\ref{Thm:IntroIgusaGeneric} is a geometric result and hence is robust under changing coefficient systems. We therefore expect that our method can be applied to more general \'etale local systems. In fact, it might be possible to take coherent sheaves as coefficients. The latter might shed light on $p$-adic/mod-$p$ local Langlands, provided that a suitable framework for these sheaves is available.} let $\Lambda=\zlbar$, choose $\sqrt{p} \in \Lambda$. Let $K^p \subset \gaf$ be a neat compact open subgroup. For a compact open subgroup $K_p \subset G(\qp)$ we let $K=K_pK^p$ and consider the cohomology complex $R\Gamma_{\text{\'et}}(\mathbf{Sh}_{K}\gx_{\overline{E}}, \Lambda)$. This has a right action of the local Hecke algebra $\mathcal{H}_{K_p}=H_{K_{p}}(G(\mathbb{Q}_{p}), \Lambda)$; we write $\mathcal{Z}_{K_{p}}$ for its center. Let $L \in \{\flbar, \qlbar\}$, let $\chi:\mathcal{Z}_{K_{p}} \to L$ be a character and consider the
representation of the Weil group $W_E$ of $E$ given by (where $d=\dim \mathsf{X}$ and $(d/2)$ denotes a half Tate-twist)
\begin{align*}
    W^i(\chi):=H^i_{\text{\'et}}(\mathbf{Sh}_{K}\gx_{\overline{E}}, \Lambda)(\tfrac{d}{2}) \otimes_{\mathcal{Z}_{K_{p}}, \chi} L
\end{align*}
for some $i$. Finally let us write 
\begin{align*}
    \phi_{\chi}:W_{\qp} \to \dualgrp{G}(L) \rtimes W_{\qp} 
\end{align*}
for the Fargues--Scholze $L$-parameter associated to $\chi$, and 
\begin{align*} 
    r_{\mu}: \dualgrp{G}(L) \rtimes W_E\to \operatorname{GL}(V_{\mu})(L)
\end{align*}
for the representation determined by $\mu$, see Section \ref{subsub:RMu}. The following is our first main cohomological result, and we remark that its statement and proof are inspired by the analogous result for local Shimura varieties, see \cite[Theorem 6.2]{Koshikawa}.
\begin{mainThm}[Theorem~\ref{Thm:CompatibilityFarguesScholze}]\label{Thm:IntroCompatibilityFarguesScholze}
If the order of $\pi_0(Z(G))$ is coprime to $\ell$, then each irreducible $L$-linear $W_E$-representation occurring as a subquotient of $W^i(\chi)$ also occurs as a subquotient of $r_{\mu} \circ \restr{\phi_{\chi}}{W_E}$. 
\end{mainThm}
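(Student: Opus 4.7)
The plan is to adapt Koshikawa's strategy for local Shimura varieties (see \cite[Theorem 6.2]{Koshikawa}) to the global Hodge-type setting, using the Igusa stack of Theorem \ref{Thm:IntroIgusaGeneric} as the bridge. The key idea is to realize (a suitable variant of) the Shimura variety cohomology as the global sections of a Hecke operator $T_\mu$ applied to a sheaf on $\bun_G$ built from the Igusa stack, and then invoke the Fargues--Scholze spectral action to control the $W_E$-action on the $\chi$-isotypic part.

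First, starting from the Cartesian diagram of Theorem \ref{Thm:IntroIgusaGeneric}, I would construct a sheaf
\begin{align*}
\mathcal{A}_{K^p} := R\overline{\pi}_{\mathrm{HT},!}\bigl(R\IgsQuot_!\,\Lambda\bigr)
\end{align*}
on $\bungmu$, equipped with its natural $\mathcal{H}_{K_p}$-action induced from the $K_p$-equivariance of $\pi_{\mathrm{HT}}$ at infinite level. The fact that $\mathrm{Igs}_{K^p}\gx$ is $\ell$-cohomologically smooth of dimension $0$ with trivial dualizing sheaf (Theorem \ref{Thm:IntroIgusaGeneric}) ensures that $\mathcal{A}_{K^p}$ lies in a suitable category of lisse/constructible objects. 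Proper base change identifies $\mathrm{BL}^*\mathcal{A}_{K^p}$ with $R\pi_{\mathrm{HT},!}^\lozenge \Lambda$, and taking $K_p$-invariants recovers (the compactly supported cohomology of) $\mathbf{Sh}_K\gx^{\circ}_{\overline{E}}$ at finite level $K = K_pK^p$ as a $W_E \times \mathcal{H}_{K_p}$-module.

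Second, I would reinterpret $\mathcal{A}_{K^p}$ through the Hecke correspondence on $\bun_G$. Extending $\mathcal{A}_{K^p}$ by zero along $\bungmu \hookrightarrow \bun_G$ and applying the Hecke operator $T_\mu$ in the sense of Fargues--Scholze should yield an object in $D_{\mathrm{lis}}(\bun_G, \Lambda)$ whose global sections (at the appropriate basic stratum) compute $R\Gamma_c(\mathbf{Sh}_K\gx^{\circ}_{\overline{E}}, \Lambda)$ compatibly with the $W_E \times \mathcal{H}_{K_p}$-action. This is the global analogue of Koshikawa's description of local Shimura variety cohomology via $T_\mu \circ j_{b,!}$. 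Finally, by the Fargues--Scholze spectral action, $D_{\mathrm{lis}}(\bun_G, \Lambda)$ is acted on by perfect complexes on the stack of $L$-parameters $Z^1(W_{\qp}, \dualgrp{G})/\dualgrp{G}$, and the Hecke operator $T_\mu$ corresponds to $r_\mu$; on the $\chi$-isotypic component, localized at the closed point $\phi_\chi$, the Weil-group action arising from $T_\mu$ is therefore filtered with subquotients appearing in $r_\mu \circ \phi_\chi|_{W_E}$. The coprimality of $|\pi_0(Z(G))|$ and $\ell$ is used so that the Fargues--Scholze spectral action is available integrally in the present generality.

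The hardest step will be the second: rigorously matching the sheaf on $\bun_G$ coming from pushforward out of the Igusa stack with the sheaf obtained by applying $T_\mu$ to a constant sheaf supported on the basic Newton stratum, in a way compatible with the $\mathcal{H}_{K_p}$-action and the spectral operators. This requires combining the Cartesian diagram of Theorem \ref{Thm:IntroIgusaGeneric} with the Beauville--Laszlo description of $\bungmu$ and the Fargues--Scholze Hecke correspondence at $\mu$, as well as passing from infinite to finite level at $p$. A secondary obstacle is the passage between the good reduction locus $\mathbf{Sh}_K\gx^{\circ}$ appearing in Theorem \ref{Thm:IntroIgusaGeneric} and the full Shimura variety $\mathbf{Sh}_K\gx$ entering the statement, which will likely require either reduction to the proper case or a separate analysis of the boundary contribution via compactifications.
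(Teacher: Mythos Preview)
Your overall strategy is the same as the paper's: build a sheaf on $\bungmu$ from the Igusa stack, identify the cohomology of the Shimura variety with $i_1^\ast T_\mu$ applied to its extension by zero (this is the paper's Theorem~\ref{Thm: WeilCohoShiVar}), and then use the Fargues--Scholze spectral action to constrain the $W_E$-action. A few refinements and one genuine gap are worth flagging.

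First, the sheaf you want is simply $\mathcal{F} = R\overline{\pi}_{\mathrm{HT},\ast}\Lambda$, the direct image of the constant sheaf from the Igusa stack; there is no need to push forward from the Shimura variety first via $\IgsQuot$. The Cartesian square plus qcqs base change already gives $\mathrm{BL}^\ast\mathcal{F} \simeq R\pi_{\mathrm{HT},\ast}\Lambda$, and the relevant functor is $i_1^\ast T_\mu j_{\mu,!}$ (restriction to the neutral stratum, not global sections at the basic stratum), yielding ordinary cohomology rather than $R\Gamma_c$. Your ``secondary obstacle'' (good reduction locus versus full Shimura variety) is handled in the paper not by reduction to the proper case but by the Lan--Stroh comparison $R\Gamma(\mathbf{Sh}_{K,\overline{E}},\Lambda)\simeq R\Gamma(\mathscr{S}_{K,\overline{\mathbb{F}}_q},R\Psi\Lambda)$, valid for any ad hoc integral model; see Proposition~\ref{Prop:CohoGoodRed}.

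The real gap is your final step. You assert that after localizing at $\phi_\chi$ the $W_E$-action is ``filtered with subquotients appearing in $r_\mu\circ\phi_\chi|_{W_E}$'', but this does not follow formally from the spectral action: the fiber $Z^1(W_{\qp},\dualgrp{G})_\chi$ over the closed point is typically non-reduced, so the vector bundle $\mathcal{V}_{\mu,\chi}$ carries a $W_E$-action that need not be semisimple or even have the expected subquotients on the nose. The paper (following Koshikawa) argues by contradiction: if some irreducible $\sigma_n$ in $W^i(\chi)$ is foreign to $r_\mu\circ\phi_\chi|_{W_E}$, Jacobson density produces $e\in L[W_E]$ acting as the identity on $\sigma_n$ and as zero on the subquotients of $r_\mu\circ\phi_\chi|_{W_E}$. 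One then shows that $e$, viewed as an endomorphism of $\mathcal{V}_{\mu,\chi}$, is nilpotent---first checking $e^N=0$ on the reduced fiber (pointwise), then using that the nilradical of the structure sheaf kills the difference. This nilpotency transfers to $W^i(\chi)$ via the spectral action, contradicting the choice of $e$. You should make this step explicit.
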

\begin{Rem}
The assumption on the order of $\pi_0(Z(G))$ in Theorem \ref{Thm:IntroCompatibilityFarguesScholze} is very mild. Many typical examples of Shimura varieties of Hodge type have connected center, in which case the assumption is vacuous. 
\end{Rem}

Theorem \ref{Thm:IntroCompatibilityFarguesScholze} generalizes \cite[Theorem 6.2]{Koshikawa}, which additionally assumes that $\mathbf{Sh}_K\gx$ is proper and that Mantovan's formula holds. If we specialize Theorem \ref{Thm:IntroCompatibilityFarguesScholze} to hyperspecial $K_p$ and $L=\qlbar$, then this is closely related to the Eichler--Shimura relation conjectured by Blasius--Rogawski (see \cite[Section 6]{BlasiusRogawski}), which states that for a certain Hecke polynomial $H_{\mu}(X) \in \mathcal{H}_{K_p}[X]$ the evaluation $H_{\mu}(\operatorname{Frob})$ in a Frobenius element of $W_E$ acts trivially on $H^i_{\text{\'et}}(\mathbf{Sh}_{K}\gx_{\overline{E}}, \qlbar)$ for all $i$. The conjecture of Blasius--Rogawski was proved earlier by Wu, see \cite[Corollary 1.2]{WuSEqualsT} by proving the $S=T$ conjecture of Xiao--Zhu, see \cite[Conjecture 7.3.13]{XiaoZhu}. There are many earlier partial results towards the conjecture, see e.g.\ \cite{LeeEichlerShimura}, \cite{Koshikawa}, \cite{LiEichlerShimura}, \cite{KoskivirtaEichlerShimura}, \cite{BueltelWedhorn}, \cite{MoonenSerreTate}, \cite{BueltelEichlerShimura}, and \cite{ChaiFaltings}. 

We record the following version of the Eichler--Shimura relation at Iwahori level (cf.\ \cite[Theorem 1.1]{Koshikawa}, which is the corresponding result for local Shimura varieties). To state it, we note that for an Iwahori subgroup $K_p' \subset K_p$ there is a natural Bernstein morphism $\mathcal{H}_{K_p} \to \mathcal{H}_{K_p'}$, see e.g.\ \cite{Boumasmoud}, identifying the image with the center of $\mathcal{H}_{K_p'}$. In particular, we can consider $H_{\mu}$ as an element of $\mathcal{H}_{K_{p}'}[X]$; this does not depend on the choice of hyperspecial $K_p$ containing $K_p'$. 

\begin{mainThm}[Theorem~\ref{Cor:EichlerShimura}] \label{Thm:IntroEichlerShimura}
Suppose that $G$ is unramified and let $K_p' \subset G(\qp)$ be an Iwahori subgroup. If the order of $\pi_0(Z(G))$ is coprime to $\ell$, then the inertia group $I_E \subset W_E$ acts unipotently on $H^i_\mathrm{\acute{e}t}(\mathbf{Sh}_{K'}\gx_{\overline{E}}, \Lambda)$ for all $i$. Moreover, the action of a Frobenius element $\operatorname{Frob} \in W_E$ on $R \Gamma_\mathrm{\acute{e}t}(\mathbf{Sh}_{K'}\gx_{\overline{E}}, \Lambda)$ satisfies $H_{\mu}(\operatorname{Frob})=0$.
\end{mainThm}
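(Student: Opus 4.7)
The plan is to deduce both assertions from Theorem~\ref{Thm:IntroCompatibilityFarguesScholze}, combined with the compatibility of the Fargues--Scholze construction with the Satake isomorphism at hyperspecial level and a Cayley--Hamilton argument on the spectral side provided by the Igusa stack of Theorem~\ref{Thm:IntroIgusaGeneric}.

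To establish unipotence of inertia, I would analyze the $W_E$-action on each $\mathcal{Z}_{K_p'}$-generalized eigenspace of $H^i_{\text{\'et}}(\mathbf{Sh}_{K'}\gx_{\overline{E}}, \Lambda)$. Any character $\chi\colon \mathcal{Z}_{K_p'} \to L$ (with $L \in \{\flbar, \qlbar\}$) corresponds, via the Bernstein isomorphism, to a character of the spherical Hecke algebra $\mathcal{H}_{K_p}$; the compatibility of the Fargues--Scholze correspondence with the Satake parametrization of unramified representations then implies that $\phi_\chi$ is unramified. Consequently $r_\mu \circ \phi_\chi|_{W_E}$ is a direct sum of unramified characters of $W_E$, and Theorem~\ref{Thm:IntroCompatibilityFarguesScholze} forces every irreducible $W_E$-subquotient of $W^i(\chi)$ to be unramified. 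Since the graded pieces of the $\ker(\chi)$-adic filtration on the $\chi$-generalized eigenspace are quotients of $W^i(\chi)$, their irreducible $W_E$-subquotients are unramified, so $I_E$ acts unipotently on the generalized eigenspace, and hence on all of $H^i_{\text{\'et}}(\mathbf{Sh}_{K'}\gx_{\overline{E}}, \Lambda)$.

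For the Hecke polynomial identity, I would use that $H_\mu(X) \in \mathcal{H}_{K_p}[X]$ is characterized by the property that, for each character $\chi$ of $\mathcal{H}_{K_p}$, $\chi(H_\mu)(X) \in L[X]$ is the characteristic polynomial of $r_\mu(\phi_\chi(\operatorname{Frob}))$. To promote this pointwise Cayley--Hamilton identity to the vanishing of $H_\mu(\operatorname{Frob})$ on $R\Gamma_{\text{\'et}}(\mathbf{Sh}_{K'}\gx_{\overline{E}}, \Lambda)$, I would exploit the Fargues--Scholze spectral action on the sheaf $R\overline{\pi}_{\mathrm{HT},*}\Lambda$ on $\bungmu$ produced by Theorem~\ref{Thm:IntroIgusaGeneric}. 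The $r_\mu$-Hecke operator acts on this sheaf as the excursion endomorphism attached to $r_\mu \circ \phi(\operatorname{Frob})$, and Cayley--Hamilton for this endomorphism is a universal identity in the excursion algebra, realizing $H_\mu(\operatorname{Frob})$ as the zero endomorphism of the sheaf, and thus of $R\Gamma_{\text{\'et}}(\mathbf{Sh}_{K'}\gx_{\overline{E}}, \Lambda)$.

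The main obstacle will be the last step: one must identify the spectral-side $r_\mu$-Hecke operator with the classical convolution Hecke operator appearing in $H_\mu$, and then show that the Cayley--Hamilton identity in the Fargues--Scholze excursion algebra descends to the polynomial identity $H_\mu(\operatorname{Frob}) = 0$ on the Shimura variety side. This is the global counterpart of Koshikawa's argument \cite{Koshikawa} for local Shimura varieties, made possible precisely by the Igusa stack, which transports the Fargues--Scholze spectral action from $\bun_G$ to the cohomology of Shimura varieties at Iwahori level.
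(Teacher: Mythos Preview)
Your proposal is broadly correct and close to the paper's approach, with the main difference lying in how you handle the unipotence of inertia.

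For the Hecke polynomial identity, your outline matches the paper. The paper formalizes this via a spectral Hecke polynomial $H^{\mathrm{spec}}_{G,V_\mu,\gamma}(X) \in \mathcal{Z}^{\mathrm{spec}}(G,\Lambda)[X]$ (Definition~\ref{Def:SpectralHeckePolynomial}), which by Cayley--Hamilton annihilates $r_\mu \circ \phi^{\mathrm{univ}}(\gamma)$ in $\pi_0\End_{\loc}(\mathcal{V}_\mu)$, and hence via Corollary~\ref{Cor:EllAdicSpectralAction} annihilates the $\gamma$-action on cohomology (Corollary~\ref{Cor:InfiniteLevelEichlerShimura}). The obstacle you flag is resolved in Section~\ref{Sub:HeckePolynomials}: one shows that $\mathcal{Z}^{\mathrm{spec}}(G,\Lambda) \to \mathcal{Z}_I$ factors through the unramified quotient (Lemma~\ref{Lem:Factorization}) and that under this map the spectral Hecke polynomial goes to the classical $H_\mu$ (Proposition~\ref{Prop:UnramifiedLocalLanglandsFamilies}, via compatibility of Fargues--Scholze with parabolic induction from $T$).

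For the unipotence of $I_E$, the paper does \emph{not} go through Theorem~\ref{Thm:IntroCompatibilityFarguesScholze} character by character as you propose. Instead it reuses the same Cayley--Hamilton mechanism: for $\gamma \in I_E$, the factorization through the unramified quotient forces the image of $H^{\mathrm{spec}}_{G,V_\mu,\gamma}(X)$ in $\mathcal{Z}_I[X]$ to be $(X-1)^{\dim V_\mu}$, so $(\gamma-1)^{\dim V_\mu}$ already kills the cohomology. Your route also works---passing to $L \in \{\flbar,\qlbar\}$, filtering each generalized eigenspace by powers of $\ker(\chi)$, and lifting back to $\Lambda$---but is more indirect and does not give the uniform nilpotence bound. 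The paper's approach is cleaner because both assertions of the theorem become instances of a single identity $H_{G,V_\mu,\gamma}(\gamma) = 0$, specialized to $\gamma \in I_E$ and to $\gamma = \sigma_E$ respectively.
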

The assumption that $G$ is unramified means that $G_{\qp}$ is quasi-split and split over an unramified extension. Both of these assumptions have been removed in recent work of van den Hove, see \cite{vdHoveEichlerShimura} and Remark \ref{Rem:UnramifiedNecessary}.
\begin{Rem} \label{Rem:DatLanard}
    If $G$ is quasi-split and tamely ramified, then we can combine Theorem \ref{Thm:IntroCompatibilityFarguesScholze} with \cite[Corollary 1.1.1]{DatLanard}\footnote{We note that the assumption of \cite[Corollary 1.1.1]{DatLanard} holds by \cite[Corollary 6.3 and its proof]{ScholzeMotivicGeometrization}.} to get the following result in the context of Theorem \ref{Thm:IntroCompatibilityFarguesScholze}: If $\chi$ is the character corresponding to a depth zero irreducible smooth representation of $G(\qp)$ over $L$, then the $W_E$ action on $W^i(\chi)$ is tamely ramified. 
\end{Rem}

\subsubsection{} Let us sketch the proof of our cohomological results. We fix a torsion $\zl$-algebra $\Lambda$ and a square root of $p$ in $\Lambda$. We define a complex of sheaves in $D(\bun_{G,\mu^{-1}, \fpbar}, \Lambda)$ by 
\begin{align*}
    \mathcal{F}:=R \overline{\pi}_{\mathrm{HT},\fpbar, \ast} \Lambda.
\end{align*}
We write $j_{\mu, \fpbar}$ for the inclusion of $\bun_{G,\mu^{-1},\fpbar} \hookrightarrow \bun_{G,\fpbar}$; then $j_{\mu,\fpbar,!} \mathcal{F}$ defines a complex of sheaves on $D(\bun_{G,\fpbar},\Lambda)$.

The representation $V_{\mu}$ defines a Hecke operator
\begin{align}
    T_{\mu}:D(\bun_{G, \fpbar}, \Lambda) \to D(\bun_{G, \fpbar}, \Lambda)^{B W_E},
\end{align}
where the superscript $B W_E$ roughly denotes objects with an action of $W_E$. Pulling back along the inclusion $i_1$ of the neutral stratum $\bun_{G, \fpbar}^{1}$ into $\bun_{G,\fpbar}$, we get a complex of $\Lambda$-modules with commuting $G(\qp)$ and $W_E$-actions. We then prove the following theorem.

\begin{mainThm}[Theorem \ref{Thm: WeilCohoShiVar}] \label{Thm:IntroWeilCohShimVar}
There is a $G(\qp) \times W_E$-equivariant isomorphism
\begin{align}
  R \Gamma_\mathrm{\acute{e}t}(\mathbf{Sh}_{K^p}\gx_{\overline{E}}, \Lambda) \simeq i_{1,\fpbar}^{\ast} T_\mu j_{\mu,\fpbar,!} (\mathcal{F}[-d])(-\tfrac{d}{2}),
\end{align}
where $[-d]$ denotes a shift and $(-\tfrac{d}{2})$ a Tate twist. 
\end{mainThm}

To be more precise, Theorem \ref{Thm:IntroWeilCohShimVar} is proved in two steps. First we compare $i_{1,\fpbar}^{\ast} T_\mu j_{\mu,\fpbar,!} \mathcal{F}$ to the cohomology of $R \Gamma_{\text{\'et}}(\mathbf{Sh}_{K^p}\gx^{\circ, \lozenge}_{C}, \Lambda)$, where $C$ is the completion of $\overline{E}$, using the six functor formalism of \cite{EtCohDiam} and \cite{FarguesScholze}. We then compare the result to the cohomology of the Shimura variety using the work of Lan--Stroh \cite{LanStrohII}, see Lemma \ref{Prop:CohoGoodRed}. We remark that the main subtlety in the first comparison lies, somewhat surprisingly, in proving the $W_E$-equivariance, or more precisely the equivariance for the action of Frobenius.

\subsubsection{} We now sketch our proofs of Theorems II and III. Let $\mu$ be the $G(\qp)$ conjugacy class of characters determined by $\mathsf{X}$ and the place $v$ of $\mathsf{E}$ over $p$. The cocharacter $\mu$ determines a $W_E$-equivariant vector bundle $\mathcal{V}_{\mu}$ on the stack $\loc$ of $\dualgrp{G}$-valued $L$-parameters of $W_{\qp}$ (as in \cite{DHKMModuli}), where $\dualgrp{G}$ is the Langlands dual group of $G$ over $\Lambda$ equipped with its action by the Weil group $W_{\qp}$. Using Theorem \ref{Thm:IntroWeilCohShimVar} and the spectral action of \cite[Section IX]{FarguesScholze}, we deduce that the algebra of endomorphisms of $\mathcal{V}_{\mu}$ acts on $R\Gamma_{\text{\'et}}(\mathbf{Sh}_{K}\gx_{\overline{E}}, \Lambda)$ compatibly with the Hecke action and Weil group action, see Corollary~\ref{Cor:EllAdicSpectralAction} for the precise statement. That such an action exists is predicted by the philosophy of Zhu, see \cite[Conjecture 4.60]{ZhuCoherent}. Given this action, we then prove Theorem~\ref{Thm:IntroCompatibilityFarguesScholze} by adapting arguments from \cite{Koshikawa}. 

To prove Theorem~\ref{Thm:IntroEichlerShimura}, we need to prove some results about the unramified local Langlands correspondence in families, which are presumably well known to experts; this happens in Section \ref{Sub:HeckePolynomials}.

\subsubsection{} We also prove a Mantovan product formula for $R \Gamma_{\text{\'et}}(\mathbf{Sh}_{K^p}\gx_{\overline{E}}, \Lambda)$, see Theorem \ref{Thm:MantovanFormula}. This expresses the cohomology of the Shimura variety in terms of the cohomology of Igusa varieties and the cohomology of local Shimura varieties, and generalizes \cite{Mantovan, MantovanPEL, Hamacher-Kim, KoshikawaGeneric, Hamann-Lee}. 

\subsection{Torsion vanishing} Recall that a crucial ingredient in the proof of the main torsion vanishing result in \cite{CaraianiScholzeCompact} is a perversity result for $R \pi_{\mathrm{HT}, \ast} \flbar$, see \cite[Proposition 6.1.3]{CaraianiScholzeCompact}. As a corollary to Theorem \ref{Thm:IntroIgusaGeneric}, we obtain a perversity result for $\mathcal{F}=R \overline{\pi}_{\mathrm{HT}, \fpbar, \ast} \flbar$, where $\overline{\pi}_\mathrm{HT}$ is the map $\mathrm{Igs}_{K}\gx \to \bungmu$ from Theorem \ref{Thm:IntroIgusaGeneric} and $\ell \neq p$. 

\begin{mainThm}[Theorem \ref{Thm: Perversity}] \label{Thm:IntroPerversity}
If $\mathbf{Sh}_K\gx \to \spec E$ is proper, then for $\ell \neq p$ the complex
    \begin{align}
        \mathcal{F} \in D(\bun_{G,\mu^{-1},\fpbar}, \flbar)
    \end{align}
    lies in the heart of the perverse $t$-structure on $D(\bun_{G,\mu^{-1},\fpbar}, \flbar)$.
\end{mainThm}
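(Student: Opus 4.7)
The plan is to use the Cartesian diagram of Theorem \ref{Thm:IntroIgusaGeneric} to reduce perversity on $\bungmu$ to a Caraiani--Scholze-style perversity statement on the Schubert cell, and then to apply the analysis of Igusa varieties of Hamann and Hamann--Lee. Since $\mathbf{Sh}_K\gx$ is proper, the good reduction locus coincides with the whole diamond, so $\pi_\mathrm{HT}^\lozenge$ is proper; by $v$-descent along the cohomologically smooth surjection $\mathrm{BL}$, this forces $\overline{\pi}_\mathrm{HT}$ to be proper as well. Proper base change applied to the Cartesian diagram then yields
\begin{align*}
    \mathrm{BL}^* \mathcal{F} \simeq R\pi^\lozenge_{\mathrm{HT},\fpbar,\ast}\flbar.
\end{align*}

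Because $\mathrm{BL}$ is $\ell$-cohomologically smooth of pure dimension $d=\langle 2\rho,\mu\rangle$, the functor $\mathrm{BL}^*[d]$ is $t$-exact for the perverse $t$-structures on both sides and is conservative on objects supported on $\bungmu$. Hence perversity of $\mathcal{F}$ is equivalent to perversity of $R\pi^\lozenge_{\mathrm{HT},\fpbar,\ast}\flbar[d]$ on $\operatorname{Gr}_{G,\mu^{-1}}$, equipped with the stratification pulled back from the Harder--Narasimhan stratification on $\bungmu$. Since $\gx$ is of Hodge type, $\mu$ is minuscule, so the Bialynicki--Birula morphism identifies $\operatorname{Gr}_{G,\mu^{-1}}$ diamondwise with the adic flag variety $\Fl_{G,\mu^{-1}}$, and under this identification the claim becomes precisely the analog for proper Hodge-type Shimura varieties of the perversity theorem of \cite[Proposition~6.1.3]{CaraianiScholzeCompact}.

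I would establish that analog stratum-by-stratum over $[b]\in B(G,\mu^{-1})$. The Cartesian diagram identifies the fiber of $\overline{\pi}_\mathrm{HT}$ over $\bun_{G}^{[b]}$ with a quotient of the perfect Igusa variety $\ig^{b}$, whose dimension is controlled by the known formula for dimensions of Newton strata of Hodge-type Shimura varieties. Checking that $\mathcal{F}$ lies in the heart then reduces to showing that the $\flbar$-cohomology and compactly supported $\flbar$-cohomology of $\ig^{b}$ live in the expected cohomological ranges, together with matching compatibilities on closures. Such $\flbar$-cohomological control is exactly what has been developed by Hamann \cite{Hamann22} and Hamann--Lee \cite{Hamann-Lee}, and the hypothesis that $G$ splits over a tamely ramified extension and that $p$ is coprime to $2\pi_1(\gder)$ is imposed precisely so that their results apply and combine cleanly with the generic fiber geometric Satake available with $\flbar$-coefficients.

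The main obstacle is this last step, namely matching the $\flbar$-cohomological vanishing statements of Hamann--Lee (phrased natively in terms of sheaves on $\bun_G$) with the stratum-by-stratum perverse bounds on $\bungmu$, so that the various perverse shifts coming from $\mathrm{BL}$, from the Bialynicki--Birula map, and from the dualizing complex $\flbar[0]$ of $\igs$ line up coherently on each Newton stratum. A secondary but nontrivial difficulty is to justify proper base change and the cohomological-smooth descent of perverse $t$-structures in the Artin v-stack setting with $\flbar$-coefficients; these are by now fairly standard Fargues--Scholze-style arguments, but they must be executed carefully in the present generality to ensure that the equivalence of perversity under $\mathrm{BL}^*[d]$ is genuinely lossless.
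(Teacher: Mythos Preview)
Your overall architecture --- reduce to stratum-wise cohomological bounds for Igusa varieties --- is correct, and pulling back along $\mathrm{BL}$ is a valid reduction (the paper works directly on $\bungmu$, but this is a matter of convenience). The genuine gap is in where you locate the input for those bounds. You assert that the required $\flbar$-cohomological control of $\mathrm{Ig}^b$ ``is exactly what has been developed by Hamann \cite{Hamann22} and Hamann--Lee \cite{Hamann-Lee}'', and that the tameness and $p\nmid 2\pi_1(\gder)$ hypotheses are imposed so their results apply. Both claims are wrong. Hamann and Hamann--Lee prove $t$-exactness of Hecke operators on localized categories of sheaves on $\bun_G$; they prove no vanishing theorems for Igusa varieties. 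The actual ingredients the paper uses are: (i) \emph{affineness} of the perfect Igusa varieties, from Mao's forthcoming thesis (this is where properness of $\mathbf{Sh}_K$ is used); and (ii) the \emph{dimension} formula $\dim C^b_K=d_b$ for central leaves, via the scheme-theoretic local model diagram of Conjecture~\ref{Conj:SchemeTheoreticLocalModelDiagram} --- \emph{this} is why tameness and $p\nmid 2\pi_1(\gder)$ are assumed. Affineness plus dimension plus Artin vanishing then give $i_b^*\mathcal{F}\in D^{\le d_b}$, i.e.\ semi-perversity.

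For the other half you gesture at compactly supported cohomology of $\mathrm{Ig}^b$ lying in the expected range, but give no mechanism. The paper does not prove a second vanishing theorem; instead it shows $\mathcal{F}$ is Verdier self-dual on $\bungmuk$, using properness of $\overline{\pi}_\mathrm{HT}$ together with the computation (Theorem~\ref{Thm:IgusaDualizingComplex}) that the dualizing sheaf of $\igsk$ is $\Lambda[0]$. Self-duality converts $i_b^*\mathcal{F}\in D^{\le d_b}$ into $i_b^!\mathcal{F}\in D^{\ge d_b}$ (using that the dualizing sheaf of $\bun_{G,k}^{[b]}$ sits in degree $2d_b$). Your proposal never invokes this dualizing sheaf computation, so as written it would establish only one half of perversity.
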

Here we use the perverse $t$-structure on $D(\bun_{G,\mu^{-1},\fpbar}, \flbar)$ given by its Harder--Narasimhan stratification, see Section~\ref{Sec:SheavesBunG} for details. To prove Theorem \ref{Thm:IntroPerversity}, we use the Cartesian diagram of Theorem \ref{Thm:IntroIgusaGeneric} to show that the restriction of $\mathcal{F}$ to a Newton stratum $\bun_{G}^{[b]} \subset \bun_{G}$ can be identified with the \'etale cohomology of a (perfect) Igusa variety for that Newton stratum. The first half of our perversity result then follows from Artin vanishing once we show that Igusa varieties are affine of the expected dimension, and the other half follows from our computation of the dualizing sheaf of $\mathrm{Igs}_K\gx$ in Theorem \ref{Thm:IntroIgusaGeneric} along with Verdier duality. The affineness is proved by Mao \cite{MaoCompact}. We compute the dimension of Igusa varieties using the Igusa stack diagram, see Proposition \ref{Prop:CentLeafDim}; this could be of independent interest.

\begin{Rem}
It is necessary to assume that $\mathbf{Sh}_K\gx \to \spec E$ is proper for the conclusion of Theorem \ref{Thm:IntroPerversity} to hold. For non-proper Shimura varieties, we only expect a semi-perversity result for $\mathcal{F}$. This is compatible with \cite[Conjecture 6.6]{Hamann-Lee} about vanishing results for the cohomology of Shimura varieties, see the next section. 
\end{Rem}

\subsubsection{} Recall from Theorem \ref{Thm:IntroWeilCohShimVar} that the \'etale cohomology of the Shimura variety can be computed from the perverse sheaf $R \overline{\pi}_{\mathrm{HT},\ast} \flbar$ by applying the Hecke operator $T_{\mu}$. The cohomology of the Shimura variety is typically not concentrated in a single degree, which reflects the fact that $T_{\mu}$ is typically not $t$-exact for the perverse $t$-structure. However, if we localize at a semisimple $L$-parameter $\phi$ such that $T_{\mu}$ \emph{is} $t$-exact on the $\phi$-localized category of sheaves on $\bun_G$, then we find that the cohomology of the Shimura variety, localized at $\phi$, \emph{is} concentrated in middle degree. We learned this observation from Hamann--Lee \cite{Hamann-Lee}, and we will make it precise below. \smallskip

Let $K_p \subset G(\qp)$ be a hyperspecial subgroup and let $\mathcal{H}_{K_p}$ denote the $\flbar$-valued spherical Hecke algebra for $G(\qp)$ with respect to $K_p$. Let $\ebar$ be an algebraic closure of $\mathsf{E}$, then $\mathcal{H}_{K_p}$ acts on the cohomology $H^i_{\text{\'et}}(\mathbf{Sh}_K\gx_{\ebar},\flbar)$ for all $K^p \subset \gafp$. Let $\mathfrak{m} \subset \mathcal{H}_{K_p}$ be a maximal ideal and let $\phi_{\mathfrak{m}}$ be the associated semisimple $L$-parameter. 
\begin{mainThm}[Theorem \ref{Thm: TorsionVanishing}] 
\label{Thm:IntroTorsionVanishing}
Assume moreover that $\mathbf{Sh}_K\gx \to \spec E$ is proper and that $\ell$ is coprime to the pro-order of $K_p$. If $\phi_{\mathfrak{m}}$ satisfies Assumption \ref{Assump:tExact}, then 
\begin{align}
    H^i_\mathrm{\acute{e}t}(\mathbf{Sh}_K\gx_{\ebar},\flbar)_{\mathfrak{m}}=0
\end{align}
unless $i=d$, where $d$ is the dimension of $\mathbf{Sh}_K\gx$.
\end{mainThm}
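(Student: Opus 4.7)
The plan is to combine the cohomological identification of Theorem \ref{Thm:IntroWeilCohShimVar}, the perversity of $\mathcal{F}=R\overline{\pi}_{\mathrm{HT},\fpbar,\ast}\flbar$ established in Theorem \ref{Thm:IntroPerversity}, and the Fargues--Scholze spectral action (Corollary \ref{Cor:EllAdicSpectralAction}) to reduce the statement to a concentration-in-perverse-degree claim in the $\phi_{\mathfrak{m}}$-localized category on $\bun_{G,\fpbar}$.

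First I would reduce to infinite level at $p$: since $\ell$ is coprime to the pro-order of $K_p$, the functor of $K_p$-invariants is exact on smooth $\flbar$-representations of $G(\qp)$, so it suffices to show that $R\Gamma_{\text{\'et}}(\mathbf{Sh}_{K^p}\gx_{\ebar}, \flbar)_{\mathfrak{m}}$, viewed as a complex of smooth $G(\qp)$-representations, is concentrated in a single cohomological degree. By Theorem \ref{Thm:IntroWeilCohShimVar} this complex is isomorphic to $i_{1,\fpbar}^\ast T_\mu j_{\mu,\fpbar,!}\mathcal{F}$. The hypotheses of Theorem \ref{Thm:IntroPerversity} are satisfied here: $K_p$ is hyperspecial, so $G$ splits over an unramified (hence tame) extension of $\qp$, and the conditions on $p$ and the properness of $\mathbf{Sh}_K\gx$ are already part of our assumptions. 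Hence $\mathcal{F}$ is perverse on $\bun_{G,\mu^{-1},\fpbar}$.

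Next I would apply the spectral action of \cite[Section IX]{FarguesScholze}: by Corollary \ref{Cor:EllAdicSpectralAction}, the $\mathcal{H}_{K_p}$-action on the Shimura cohomology is compatible with the spectral action on the sheaf side, so localizing at $\mathfrak{m}$ on the cohomology corresponds to restricting to the direct summand of $D(\bun_{G,\fpbar}, \flbar)$ cut out by the $\phi_{\mathfrak{m}}$-component of the spectral decomposition. On this summand, Assumption \ref{Assump:tExact} guarantees that $T_\mu$ is $t$-exact for the perverse $t$-structure. Combined with perversity of $\mathcal{F}$ and the standard consequence of $t$-exactness that $j_{\mu,\fpbar,!}\mathcal{F}$ and $j_{\mu,\fpbar,\ast}\mathcal{F}$ agree after $\phi_{\mathfrak{m}}$-localization, this common pushforward is perverse on $\bun_{G,\fpbar}$. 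Applying the $t$-exact $T_\mu$ and then pulling back by $i_{1,\fpbar}^\ast$ to the open semistable stratum $\bun_{G,\fpbar}^1 \cong [\spd \fpbar/G(\qp)]$ preserves perversity, so we end up with a perverse sheaf on $[\spd \fpbar/G(\qp)]$, i.e.\ a smooth $G(\qp)$-representation placed in a single cohomological degree. That degree is forced to be $d$ by the normalization of $T_\mu$ via the minuscule Schubert variety of complex dimension $d$, and taking $K_p$-invariants then gives the theorem.

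The hardest technical steps will be, I expect, rigorously matching $\mathfrak{m}$-localization of the Shimura cohomology with the $\phi_{\mathfrak{m}}$-component of the spectral action, and extracting from Assumption \ref{Assump:tExact} the equality $j_{\mu,\fpbar,!}\mathcal{F}\simeq j_{\mu,\fpbar,\ast}\mathcal{F}$ after localization (equivalently, verifying that $j_{\mu,\fpbar,!}\mathcal{F}$ is itself perverse in the localized category). Both should go through by adapting the strategy of \cite{Hamann22,Hamann-Lee}, where analogous statements are carried out in closely related contexts.
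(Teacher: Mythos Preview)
Your overall strategy matches the paper's proof closely: perversity of $\mathcal{F}$ (Theorem~\ref{Thm: Perversity}), the identification of Theorem~\ref{Thm: WeilCohoShiVar}, matching $\mathfrak{m}$-localization on cohomology with $\phi$-localization on $D(\bungk,\flbar)$ via Lemma~\ref{Lem:LocalizedCategories}, and then exactness of $K_p$-invariants. The paper adds one ingredient you did not mention: to know that the \emph{localized} sheaf $\mathcal{F}_\phi$ is still perverse, the paper uses that $\mathcal{F}$ is ULA (Corollary~\ref{Cor:ULA}) together with the proof of \cite[Proposition~A.5]{Hamann-Lee}.

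Where your sketch diverges from the paper is in the treatment of $j_{\mu,!}$ versus $j_{\mu,\ast}$. You aim to prove that $j_{\mu,!}\mathcal{F}\simeq j_{\mu,\ast}\mathcal{F}$ after $\phi$-localization, and you call this a ``standard consequence of $t$-exactness''. It is not: Assumption~\ref{Assump:tExact} only asserts $t$-exactness of the composite $i_1^\ast T_\mu$ on the localized category, and says nothing about the extensions $j_{\mu,!}$, $j_{\mu,\ast}$ themselves. The paper sidesteps this entirely. It observes that $i_1^\ast T_\mu A$ depends only on $j_\mu^\ast A$ (this is the factored operator $T_\mu^{[1]}$ of Definition~\ref{Def:HeckeFactorization}), and then proves directly that $T_\mu^{[1]}$ is $t$-exact on $D(\bungmuk,\flbar)_\phi$ (Lemma~\ref{Lem:tExactIII}) by a two-sided argument: for right $t$-exactness one plugs in $A=j_{\mu,!}B$, which lies in ${}^pD^{\le 0}$ because $i_b^\ast j_{\mu,!}B=i_b^\ast B$; for left $t$-exactness one plugs in $A=j_{\mu,\ast}B$, which lies in ${}^pD^{\ge 0}$ because $i_b^! j_{\mu,\ast}B=i_b^! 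B$. Since $T_\mu^{[1]}\mathcal{F}_\phi = i_1^\ast T_\mu(j_{\mu,!}\mathcal{F}_\phi) = i_1^\ast T_\mu(j_{\mu,\ast}\mathcal{F}_\phi)$, one never needs the two extensions to agree on all of $\bungk$. This is simpler than what you propose and uses exactly the content of Assumption~\ref{Assump:tExact}, no more.
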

Assumption \ref{Assump:tExact} for $\phi_{\mathfrak{m}}$ states that the Hecke operator $T_{\mu}$ is $t$-exact on the $\phi_{\mathfrak{m}}$-localized category $D(\bun_G,\flbar)_{\phi_{\mathfrak{m}}}$, see Section \ref{Sec:LocalizedCategories}. This is conjectured to be true if $\phi_{\mathfrak{m}}$ is of weakly Langlands--Shahidi type in the sense of \cite[Definition 6.2]{Hamann-Lee}, see \cite[Conjecture 6.4]{Hamann-Lee}. 

The conjecture is proved in \cite[Theorem 4.20]{Hamann-Lee}, under a somewhat long list of assumptions on $\phi_{\mathfrak{m}}$, $G$ and $\ell$. If $G$ is a split reductive group, then (a version of) the conjecture holds under \cite[Assumption 7.5]{Hamann22}; we prove this following a suggestion of Hamann, see Proposition \ref{Prop:tExact}. The latter assumption is known for split groups of type $A$ and $C_2$.
\begin{Rem}
    Theorem \ref{Thm:IntroTorsionVanishing} generalizes the compact case of \cite[Theorem 1.17]{Hamann-Lee}. In particular, it is a generalization of \cite[Theorem 1.1]{CaraianiScholzeCompact}.
\end{Rem}
    As explained in \cite[Section 5.2]{Hamann-Lee}, Theorem \ref{Thm: TorsionVanishing} implies a torsion vanishing result for Shimura varieties $\gxtwo$ of abelian type with the same connected Shimura datum as $\gx$. For example, if $\gtwoder$ is simply connected, then there is always a Shimura datum of Hodge type $\gx$ with the same connected Shimura datum as $\gxtwo$. In Section \ref{Sec:Example}, we work out an explicit example of unconditional torsion vanishing results for compact abelian type Shimura varieties of type $C_2$.

\subsection{Integral refinement of Theorem \ref{Thm:IntroIgusaGeneric}} 
Rather than proving Theorem \ref{Thm:IntroIgusaGeneric} directly, we instead derive it from a stronger result on the level of integral models. For a parahoric model $\mathcal{G}$ of $G$ over $\zp$ we write $K_p=\mathcal{G}(\zp) \subset G(\qp)$, and for $K^p \subset \gafp$ a neat compact open subgroup we write $K=K_pK^p$. Let $\mathcal{O}_E$ be the ring of integers of $E$, then the Shimura variety $\mathbf{Sh}_K\gx$ of level $K$ over $\mathsf{E}$ has an integral model $\mathscr{S}_K\gx$ over $\mathcal{O}_E$, uniquely characterized by \cite[Conjecture 4.2.2]{PappasRapoportShtukas}, see \cite[Theorem I]{Companion} and \cite[Theorem 4.5.2]{PappasRapoportShtukas}. 

Let $\perf$ denote the category of perfectoid spaces over $\fp$ and let $\shtgmu \to \spd \mathcal{O}_E$ denote the stack on $\perf$ of $\mathcal{G}$-shtukas bounded by $\mu$. As part of the characterization of $\mathscr{S}_K\gx$ there is a morphism
\begin{align}
    \pi_{\mathrm{crys}, \smallmcG}:\mathscr{S}_K\gx^{\diamond} \to \shtgmu,
\end{align}
where $\mathscr{S}_K\gx^{\diamond}$ is the v-sheaf over $\spd \mathcal{O}_E$ associated with the $p$-adic completion of $\mathscr{S}_K\gx$. We moreover recall the morphism $\mathrm{BL}^{\circ}:\shtgmu \to \bun_G$. 
\begin{mainThm}[Theorem~\ref{Thm:HodgeMain}, Remark~\ref{Rem:HodgeMainQuasiParahoric}] \label{Thm:IntroIgusa}
For every parahoric model $\mathcal{G}$ of $G$ over $\qp$ there is a morphism $\IgsQuotInt:\mathscr{S}_K\gx^{\diamond} \to \operatorname{Igs}_{K^p}\gx$ which fits in a Cartesian diagram
    \begin{equation} \label{Eq:IntroCartesianIntegral}
        \begin{tikzcd}
        \mathscr{S}_K\gx^{\diamond} \arrow{r}{\pi_{\mathrm{crys}, \smallmcG}} \arrow{d}{\IgsQuotInt}& \shtgmu \arrow{d}{\mathrm{BL}^{\circ}} \\
        \mathrm{Igs}_{K^p}\gx \arrow{r}{\overline{\pi}_\mathrm{HT}} & \bungmu.
        \end{tikzcd}
\end{equation}
\end{mainThm}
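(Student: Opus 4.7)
The natural strategy is to define $\operatorname{Igs}_{K^p}\gx$ directly as a v-stack quotient and then verify the Cartesian property. Concretely, I would set
\[
R := \mathscr{S}_K\gx^{\diamond} \times_{\bun_G} \mathscr{S}_K\gx^{\diamond},
\]
where both legs are the composition $\mathrm{BL}^{\circ}\circ\pi_{\mathrm{crys},\smallmcG}$, and define $\operatorname{Igs}_{K^p}\gx$ as the v-stackification of the presheaf quotient $[\mathscr{S}_K\gx^{\diamond}/R]$. The map $\IgsQuotInt$ is then tautological, the composition $\mathrm{BL}^{\circ}\circ\pi_{\mathrm{crys},\smallmcG}$ descends to a morphism $\overline{\pi}_{\mathrm{HT}}$ on the quotient, and its image lies in $\bungmu$ because $\pi_{\mathrm{crys},\smallmcG}$ factors through $\shtgmu$ and $\mathrm{BL}^\circ$ lands in $\bungmu$.

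The real content is verifying that the canonical comparison map
\[
\Phi\colon \mathscr{S}_K\gx^{\diamond}\longrightarrow \operatorname{Igs}_{K^p}\gx\times_{\bungmu}\shtgmu
\]
is an isomorphism of v-sheaves. That $\Phi$ is a monomorphism is immediate from the definition of $R$: two Shimura variety points with the same image in $\operatorname{Igs}_{K^p}\gx$ and in $\shtgmu$ are connected by a point of $R$ that also respects the shtuka, forcing them to coincide. For surjectivity, I would choose a Hodge embedding $\iota\colon\gx\hookrightarrow\gxtwo$ into a Siegel datum with compatible parahoric and prime-to-$p$ levels, obtain a closed immersion $\mathscr{S}_K\gx\hookrightarrow\mathscr{S}_M\gxtwo$ of integral models via the Pappas--Rapoport characterization, and import the PEL/Siegel version of the theorem (proved at hyperspecial level in \cite{ZhangThesis}) to deduce the analogous Cartesian square on the Siegel side. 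One then descends along $\iota$.

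The main obstacle is precisely this ``tensor descent'' from Siegel to Hodge type. Given a Hodge-type Igusa point together with a compatible $\mathcal{G}$-shtuka of type $\mu$, one must show that the Siegel Shimura variety point obtained from the image of the pair under $\iota$ (via the PEL case) in fact lies in the closed substack $\mathscr{S}_K\gx^{\diamond}\subset \mathscr{S}_M\gxtwo^{\diamond}$. This is a rigidity statement for $\mathcal{G}$-reductions of the Siegel shtuka on a perfectoid base, and should follow from the shtuka-theoretic characterization of the integral Hodge-type model referenced in \cite{PappasRapoportShtukas}, combined with the $\iota$-functoriality of $\pi_{\mathrm{crys},\smallmcG}$. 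In particular one needs to know that the Hodge tensors on the Siegel side, once matched on both the Igusa and the shtuka factors, uniquely specify a reduction of the Siegel integral point to $\mathscr{S}_K\gx$. Once Theorem~\ref{Thm:IntroIgusa} is established, Theorem~\ref{Thm:IntroIgusaGeneric} follows by passing to the generic fiber and using the compatibility between $\pi_{\mathrm{HT}}^\lozenge$ and $\mathrm{BL}^\circ\circ\pi_{\mathrm{crys},\smallmcG}$ over $\spd E$.
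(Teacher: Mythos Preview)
Your definition of the Igusa stack is too coarse, and the monomorphism claim is where the argument breaks. If you set $R=\mathscr{S}_K\gx^{\diamond}\times_{\bun_G}\mathscr{S}_K\gx^{\diamond}$ and define $\operatorname{Igs}_{K^p}\gx$ as the v-sheaf quotient, then injectivity of $\Phi$ amounts to the statement that two points $x,y$ with isomorphic $\mathcal{G}$-shtukas $\pi_{\mathrm{crys},\smallmcG}(x)\simeq\pi_{\mathrm{crys},\smallmcG}(y)$ must coincide in $\mathscr{S}_K\gx^{\diamond}$, since an isomorphism of shtukas already forces the $\bun_G$-images to agree. In other words, you are asserting that $\pi_{\mathrm{crys},\smallmcG}$ is a monomorphism. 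This is false already for the Siegel modular variety: over $\fpbar$, two ordinary elliptic curves with different CM fields have isomorphic polarized $p$-divisible groups (hence the same shtuka) but are not isogenous, let alone isomorphic as points of the integral model. Equivalently, your construction forces $\overline{\pi}_{\mathrm{HT}}$ to be a monomorphism into $\bungmu$, which it is not: the fibers of $\overline{\pi}_{\mathrm{HT}}$ are the Igusa varieties and are typically positive-dimensional.

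The paper's definition (Definition~\ref{Def:IgusaStack}) uses a strictly finer equivalence relation: $x$ and $y$ are identified only when there exists a formal quasi-isogeny $A_x\dashrightarrow A_y$ respecting the polarization, the prime-to-$p$ level structure, and the $G$-structure on the Fargues--Fontaine bundles. This extra \emph{isogeny} datum is precisely what is missing from the bare fiber product over $\bun_G$. With this definition, injectivity at rank-one geometric points (Proposition~\ref{Prop:RankOneGeometricInjective}) is a genuine theorem proved via the Pappas--Rapoport uniformization map $\Theta_{\mathcal{G},x}$, and surjectivity (Proposition~\ref{Prop:RankOneGeometricSurjective}) also requires this uniformization rather than a direct tensor-descent from the Siegel case. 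The passage from rank-one points to arbitrary products of points (Proposition~\ref{Prop:ProdOfGeomPointsBijective}) then uses the Siegel case of \cite{ZhangThesis} together with the ``proper*/separated*'' lifting formalism of Section~\ref{Sec:ProductOfPointsProof}, which is where the Hodge embedding enters, but only after injectivity is already secured.
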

We deduce Theorem \ref{Thm:IntroIgusaGeneric} from Theorem \ref{Thm:IntroIgusa} by pulling back the top row of \eqref{Eq:IntroCartesianIntegral} along
\begin{align}
    \operatorname{Gr}_{G, \mu^{-1}} \to \left[\operatorname{Gr}_{G, \mu^{-1}}/\underline{K_p} \right] \simeq \operatorname{Sht}_{\mathcal{G},\mu, \spd E} \to \shtgmu,
\end{align}
and using the fact $\mathscr{S}_K\gx^{\diamond} \times_{\spd \mathcal{O}_E} \spd E$ gives $\mathbf{Sh}_K\gx^{\circ, \lozenge}$, see Lemma \ref{Lem:PotentiallyCrystalline}. When $\gx$ is a Shimura datum of PEL type and $K_p$ is hyperspecial, then Theorem \ref{Thm:IntroIgusa} was proved by one of us (MZ) as part of \cite[Theorem 1.3]{ZhangThesis}.

\begin{Rem}
     Let us point out that Theorem~\ref{Thm:IntroIgusa} requires no restriction on the prime $p$ and no technical assumption on either of the groups $G$ or $\mathcal{G}$. In particular, we do not need to assume that $G$ splits over a tamely ramified extension or that $\mathcal{G}$ is a stabilizer parahoric (both of which are common assumptions in the literature on integral models of Hodge-type Shimura varieties). This level of generality is enabled by the construction of canonical integral models of all Shimura varieties of Hodge type. This is done for stabilizer parahorics $\mathcal{G}$ in \cite{PappasRapoportShtukas}, and in general in our companion paper \cite{Companion}. 
\end{Rem}

\subsubsection{} Let us briefly sketch the proof of Theorem~\ref{Thm:IntroIgusa}. For this, it will be convenient for us to work with $\scrsdinf$, the inverse limit of $\scrsd$ over compact open subgroups $K^p \subset \gafp$. We then define $\mathrm{Igs}\gx$ as the quotient of $\scrsdinf$ by an equivalence relation (so it is a sheaf, as opposed to $\mathrm{Igs}_{K^p}\gx$, which will genuinely be a stack), see Definition~\ref{Def:IgusaStack}.

To describe this equivalence relation, we make use of the family of abelian varieties $A \to \scrsdinf$ coming from a choice of Hodge embedding $\gx \to \gvx$, see Section \ref{Sec:HodgeType}. Roughly speaking, we say that two points $x,y:\spd (R,R^+) \to \scrsdinf$ are equivalent if there is a quasi-isogeny between the abelian varieties $A_x$ and $A_y$ over $R^+/\varpi$ respecting the extra structures arising from the embedding ${\sf G} \hookrightarrow {\sf G}_V$, for some pseudouniformizer $\varpi$ of $R^+$. 

To prove the fiber product formula for $\mathrm{Igs}\gx$, we make crucial use of Rapoport--Zink uniformization for $\scrsdinf$ as proved by 
Pappas--Rapoport and Gleason--Lim--Xu, see \cite{PappasRapoportShtukas, GleasonLimXu}. Roughly speaking, this uniformization result can be used to show the fiber product formula one Newton stratum at a time, or on the level of rank one geometric points, see Corollary~\ref{Cor:RankOneGeometricBijective}.

To complete the proof, it suffices to work locally in the v-topology, and so we reduce to checking the diagram is Cartesian on $S$-points, where $S$ is a so-called ``product of geometric points'', see Section \ref{Sec:ProductOfPointsProof}, in particular Proposition~\ref{Prop:ProdOfGeomPointsBijective}. At this point, we make important use of Theorem \ref{Thm:IntroIgusa} in the case of $\gx=\gvx$; this was proved in earlier work of one of us (MZ) in \cite{ZhangThesis}. We combine this fact with a subtle technical argument to reduce from $S$-points to the case of rank one geometric points. 

\begin{Rem}
    While the proof of Theorem \ref{Thm:IntroIgusa} is in many ways inspired by the proof of \cite[Theorem 1.3]{ZhangThesis}, the arguments differ significantly. The fundamental difficulty in moving to the Hodge-type case lies in the fact that, unlike the PEL-type case, integral models of Hodge-type Shimura varieties are not moduli spaces. As a result, we do not have access to explicit arguments using formal abelian schemes, and instead we must make use of recent advances in the $p$-adic geometry of Shimura varieties such as the results of \cite{PappasRapoportShtukas} and \cite{GleasonLimXu} mentioned above. 
\end{Rem}

\subsubsection{Functoriality} From its construction, it is not a priori obvious that the Igusa stack is independent of the choice of $K_p$ and the choice of Hodge embedding $\gx \to \gvx$. In order to prove independence from these choices we prove the functoriality stated in Theorem \ref{Thm:IntroIgusaGeneric}. 

For this we first prove functoriality in the case where the given morphism of Shimura data $\gx \to \gxp$ extends to a morphism of parahoric group schemes $\mathcal{G} \to \mathcal{G}'$ (see Corollary~\ref{Cor:IntegralFunctoriality}). We then prove that the transition morphisms between Igusa stacks at varying level are isomorphisms (Proposition \ref{Prop:Forgetful}) and apply contractibility of the Bruhat--Tits buildings to prove independence of $\mathrm{Igs}_{K^p}\gx$ of the choice the parahoric subgroup, see Proposition~\ref{Prop:IndependenceOfParahoric}. This then allows us to extend functoriality to the case of an arbitrary morphism of Hodge-type Shimura data, see Theorem~\ref{Thm:RationalFunctoriality}. Since the functoriality holds for arbitrary choices of Hodge embeddings for $\gx$ and $\gxp$, we can then apply it to the identity $\gx \to \gx$ to deduce that the Igusa stack is independent of the choice of Hodge embedding. 

We expect that our functoriality results will be a crucial ingredient in extending our construction of Igusa stacks from Shimura varieties of Hodge type to Shimura varieties of abelian type. Indeed, Shimura varieties of abelian type can roughly be constructed by taking a quotient by Deligne's group $\mathscr{A}(\g)$ of a disjoint union of copies of a connected component of an associated Hodge-type Shimura variety, and the functoriality proven here allows us to construct an action of $\mathscr{A}(\g)$ on $\mathrm{Igs}\gx$. We remark that only a smaller integral variant acts on the integral models $\scrs_{K_p}\gx$.

\subsubsection{Reduction and isogeny classes} In Theorem \ref{Thm:ReductionIgusa}, we give an explicit description of the stack $\mathrm{Igs}\gx^{\mathrm{red}}$ on the category of perfect $\fp$-algebras, given by sending $R$ to $\mathrm{Igs}\gx(\spd R)$, see Theorem \ref{Thm:ReductionIgusa}. This stack fits in a Cartesian diagram involving the perfect special fiber of the Shimura variety, the stack of Witt vector $\mathcal{G}$-shtukas in the style of Xiao--Zhu (see \cite{XiaoZhu}, \cite{ShenYuZhang}) and the stack of $G$-isocrystals. Both these results will be used in future work of Sempliner and one of us (PvH) to reinterpret a conjecture of \cite{XiaoZhu} in terms of our Igusa stacks, see Remark \ref{Rem:XiaoZhuShtukasCorrespondence}. We moreover use it to show that $\gafp$-orbits in $\mathrm{Igs}\gx(\spd \fpbar)$ are in bijection with mod $p$ isogeny classes in $\scrs_K\gx(\fpbar)$, see Remark \ref{Rem:IsogenyClasses}. 

An important input in the proof of Theorem \ref{Thm:ReductionIgusa} is Proposition \ref{Prop:VSurjective}, which may be of independent interest. This proposition shows that $\shtgmu(\spd R) \to \bungmu(\spd R)$ is essentially surjective for $R$ a product of absolutely integrally closed valuation rings. These rings form a basis for the v-topology on the category of perfect $\fp$-algebras, see Lemma \ref{Lem:SchemeTheoreticProductOfPoints}.

\subsection{Organization of the paper} Let us provide a brief guide for navigating the paper and highlight some results not mentioned to this point. 

In Sections \ref{Sec:Preliminaries} and \ref{Sec:PerfecAlgGeom}, we recall some important notions from the study of the Fargues--Fontaine curve and Bruhat--Tits theory, review and establish properties of local shtukas for parahoric and quasi-parahoric group schemes over $\zp$, and specify our conventions regarding Dieudonn\'e theory.

In Section \ref{Sec:Conjecture}, we state a precise conjecture on the existence of an Igusa stack for any canonical integral model of a Shimura variety in the sense of \cite{PappasRapoportShtukas}. We also show how the existence of such an Igusa stack implies the existence of Igusa varieties in the setting of perfect and perfectoid geometry, and we establish some expected results in the perfectoid setting, including a comparison with fibers of the Hodge--Tate period morphism. In Section \ref{Sec:HodgeType}, we define the Igusa stack in the Hodge-type case, after recalling the construction of canonical integral models for Hodge-type Shimura varieties. 

Our main geometric results are proved in Sections \ref{Sec:Proof} and \ref{Sec:Functoriality}. In particular, Section \ref{Sec:Proof} is devoted primarily to the proof of Theorem \ref{Thm:IntroIgusa}, and it closes with an explicit computation of the reduction of the Igusa stack. In Section \ref{Sec:Functoriality}, we prove functoriality of the Igusa stack in morphisms of Shimura data, as well as independence of the choices of Hodge embedding and parahoric subgroup.

Sections \ref{Sec:CohomologicaConsequences} through \ref{Sec:TorsionVanishing} contain our cohomological consequences. In Section \ref{Sec:CohomologicaConsequences}, we prove the remaining statements in Theorem \ref{Thm:IntroIgusaGeneric}. Moreover, we prove Theorem \ref{Thm:IntroWeilCohShimVar} and prove a Mantovan-type product formula at infinite level. We close this section by proving Theorem \ref{Thm:IntroPerversity}. In Section \ref{Sec:EichlerShimura} we apply the results of Section \ref{Sec:CohomologicaConsequences} to prove Theorem
\ref{Thm:IntroCompatibilityFarguesScholze} as well as Theorem~\ref{Thm:IntroEichlerShimura}. Finally, in Section \ref{Sec:TorsionVanishing} we prove our cohomological torsion-vanishing result, Theorem \ref{Thm:IntroTorsionVanishing}. We conclude with an explicit example of our torsion vanishing result in an abelian-type case.

\subsection{Notation and conventions} \label{Sec:Conventions} Our notation and conventions will be introduced and recalled throughout the body of this text. We want to emphasize a number of conventions below, especially regarding Hodge cocharacters.  

\subsubsection{} We will denote Shimura data and their reflex fields by sans serif fonts, e.g., $\gx$ for a Shimura datum and $\mathsf{E}$ for its reflex field. For a prime $p$ and a place $v$ of $\mathsf{E}$ above $p$, we will write $E$ for the completion of $\mathsf{E}$ at $v$, which has ring of integers $\mathcal{O}_E$ and residue field $k_E$. We will then write $G$ for the base change of $\mathsf{G}$ to $\qp$, and $\mathcal{G}$ for a parahoric group scheme over $\zp$ with generic fiber $G$.

Our (conjugacy class of) Hodge cocharacter(s), normalized as in \cite[Section 1.3.1]{KisinPoints}, is denoted by $\mu$ and is defined over $\mathsf{E}$. We consider this as a $G(\qpbar)$-conjugacy class using the place $v$ of $\mathsf{E}$. We use $\mathrm{Gr}_G$ to denote the $\mathbf{B}_\mathrm{dR}^+$-affine Grassmannian of Scholze--Weinstein, with its stratification by Schubert cells as defined in \cite[Definition 19.2.2]{ScholzeWeinsteinBerkeley}. The Shimura variety over $E$ with infinite level at $p$ has a Hodge--Tate period map with target in $\mathrm{Gr}_{G,{\mu^{-1}}}$. Its Newton stratification is moreover indexed by the $\mu^{-1}$-admissible set $\bgmu \subset B(G)$. The diamond $\scrsd$ has a crystalline period map with target $\shtgmu$, the stack of $\mathcal{G}$-shtukas bounded by $\mu$. These conventions all agree with those of \cite{PappasRapoportShtukas}. 

The perfect special fiber of $\scrs_K\gx$ has a crystalline period map with target $\shtglocmu$, the stack of Witt vector $\mathcal{G}$-shtukas bounded by $\mu$. The stack $\shtglocmu$ does not quite agree with either $\mathrm{Sht}_{\mu,K}^{\mathrm{loc}}$ of \cite[Definition 4.1.3]{ShenYuZhang} or $\mathrm{Sht}_{\mu}^{\mathrm{loc}}$ of \cite[Definition 5.2.1]{XiaoZhu}, see Remark \ref{Rem:ShtukasInversion}.

\subsection{Acknowledgements} We are grateful to Ana Caraiani, Jean-Francois Dat, Laurent Fargues, Ian Gleason, Thomas Haines, Linus Hamann, Thibaud van den Hove, Christian Johansson, Kai-Wen Lan, Si Ying Lee, James Newton, Emanuel Reinecke, Peter Scholze, Jack Sempliner, Matteo Tamiozzo, Richard Taylor, Alex Youcis, Bogdan Zavyalov, Zhiyou Wu, and Xinwen Zhu for helpful conversations and correspondences. We thank the referee for their detailed comments and suggestions. Various ideas for this project were conceived at the 2022 IHES Summer School on the Langlands Program and the HIM Trimester Program on the Arithmetic of the Langlands Program; we thank the organizers of those programs for their efforts as well as the IHES and HIM for their hospitality. Thanks also to the IAS for hosting the first, third, and fourth authors during a portion of the editing of this manuscript. The second-named author gives special thanks to Yau Mathematical Sciences Center and the Morningside Center for Mathematics for inviting him to give a number of talks about this project.

\section{Preliminaries} \label{Sec:Preliminaries}

\subsection{The Fargues--Fontaine curve and v-sheaves}
This section is intended primarily for the purpose of establishing notation. For a more detailed account, we refer the reader to \cite[Section 2.1]{Companion} and the references therein. Throughout this section, we fix a perfect field $k$ of characteristic $p$. For such a $k$, we denote by $\perf_{k}$ the category of affinoid perfectoid spaces over $k$. If $k=\fp$ we will write $\perf$ rather than $\perf_{\fp}$.

\subsubsection{v-sheaves associated with adic spaces}\label{Sub:AdicSpaces} If $X$ is a pre-adic space over $\spa\zp$ in the sense of \cite[Section 3.4]{ScholzeWeinsteinBerkeley}, we let $X^\lozenge$ denote its associated v-sheaf, i.e.,
\begin{align}
    X^\lozenge(S) = \{(S^\sharp, f)\} / \, \text{isom.}
\end{align}
for any $S$ in $\perf$, where $S^\sharp$ is an untilt of $S$ and $f:S^\sharp \to X$ is a morphism of pre-adic spaces, see \cite[Section 18.1]{ScholzeWeinsteinBerkeley}. For a Huber pair $(A,A^+)$ we write $\spd(A,A^+)$ in place of $\spa(A,A^+)^\lozenge$, and when $A^+ = A^\circ$ we write $\spd A$ instead of $\spd(A,A^+)$.

If $X$ is now a $\zp$-scheme, we follow the convention of \cite[Section 2.2]{AGLR} and denote by $X^\diamond$ and $X^\lozenge$ the two different v-sheaves naturally associated with $X$. When $X = \spec A$ is affine and $S = \spa(R,R^+)$ is affinoid perfectoid, these are given by
\begin{equation}
    X^\diamond(S) = \{ (\spa(R^\sharp, R^{\sharp+}), f: A \to R^{\sharp+}) \}/ \, \text{isom.},
\end{equation}
and 
\begin{equation}
    X^\lozenge(S) = \{ (\spa(R^\sharp, R^{\sharp+}), f: A \to R^\sharp)\} / \, \text{isom.},
\end{equation}
respectively, where $\spa(R^{\sharp}, R^{\sharp+})$ denotes an untilt of $S$, and in each case $f$ denotes a ring homomorphism.\footnote{Note that in \cite{PappasRapoportShtukas}, the notation $(-)^\blacklozenge$ is used in place of $(-)^\diamond$.} 

\subsubsection{Formal schemes} By a formal scheme, we mean a locally topologically ringed
space that is locally isomorphic to $\spf A$, where $A$ has a complete and
separated $I$-adic topology for $I$ a finitely generated ideal (but $A$ need not
be Noetherian). As noted in \cite[Section~2.2]{ScholzeWeinsteinModuli}, formal
schemes over $\spf \zp$ can be equivalently thought of as functors on
$\zp$-algebras in which $p$ is nilpotent, and moreover there is a fully faithful
functor from the category of formal schemes over $\spf \zp$ to the category of pre-adic spaces
over $\spa \zp$.

For a formal scheme $\mathfrak{X}$ over $\spf \zp$, we write $\mathfrak{X}^\mathrm{ad}$ for the pre-adic space associated to $\mathfrak{X}$ as in \cite[Proposition 2.2.1]{ScholzeWeinsteinModuli}, and we often abbreviate $(\mathfrak{X}^\mathrm{ad})^\lozenge$ to $\mathfrak{X}^\lozenge$.

\begin{Def}
  For $\mathfrak{X}$ a formal scheme over $\spf \zp$% locally admitting a finitely generated ideal of definition
, we
  define $\mathfrak{X}^{\lozenge,\mathrm{pre}}$ to be the presheaf on $\perf$ sending $(R,
  R^+)$ to the equivalence class of triples $(R^\sharp, R^{\sharp+}, f)$, where
 $(R^\sharp, R^{\sharp+})$ is an untilt of $(R, R^+)$ and
 $f \colon \spf R^{\sharp+} \to \mathfrak{X}$.
\end{Def}
\begin{Lem} \label{Lem:DiamondOfFormalScheme}
Let $\mathfrak{X}$ be a formal scheme over $\spf \zp$, which locally admits a finitely generated ideal of definition. Then $\mathfrak{X}^{\lozenge} / \spd \zp$ is the analytic sheafification of the presheaf on $\perf$ given by $\mathfrak{X}^{\lozenge,\mathrm{pre}}$.
\end{Lem}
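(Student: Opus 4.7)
The plan is to construct a natural morphism $\mathfrak{X}^{\lozenge,\mathrm{pre}} \to \mathfrak{X}^{\lozenge}$ of presheaves on $\perf$ over $\spd \zp$ and show that this exhibits $\mathfrak{X}^{\lozenge}$ as the analytic sheafification of $\mathfrak{X}^{\lozenge,\mathrm{pre}}$. The map is defined as follows: a section $(R^\sharp, R^{\sharp+}, f\colon \spf R^{\sharp+} \to \mathfrak{X})$ of $\mathfrak{X}^{\lozenge,\mathrm{pre}}(R, R^+)$ yields, via the fully faithful functor from formal schemes over $\spf \zp$ to pre-adic spaces over $\spa \zp$ of \cite[Proposition 2.2.1]{ScholzeWeinsteinModuli}, a morphism of pre-adic spaces $(\spf R^{\sharp+})^{\mathrm{ad}} \to \mathfrak{X}^{\mathrm{ad}}$; precomposition with the canonical map $\spa(R^\sharp, R^{\sharp+}) \to (\spf R^{\sharp+})^{\mathrm{ad}}$ then produces the desired section of $\mathfrak{X}^{\lozenge}(R, R^+)$.

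Since $\mathfrak{X}^{\lozenge}$ is by construction a v-sheaf, and in particular an analytic sheaf, it suffices to verify that the natural map above is an analytic-local isomorphism. I would reduce to the affine case by a quasi-compactness argument: cover $\mathfrak{X}$ by affine open formal subschemes $\spf A_i$, each equipped with a finitely generated ideal of definition $I_i$ by hypothesis, and note that the associated $\spa(A_i, A_i)^{\mathrm{ad}}$ form an open cover of $\mathfrak{X}^{\mathrm{ad}}$. Given a morphism $g\colon \spa(R^\sharp, R^{\sharp+}) \to \mathfrak{X}^{\mathrm{ad}}$, quasi-compactness of the source yields a finite refinement by rational subsets $\spa(R_k^\sharp, R_k^{\sharp+})$ such that each $g|_k$ factors through some $\spa(A_{i(k)}, A_{i(k)})^{\mathrm{ad}}$, and these form a corresponding finite analytic cover of $(R, R^+)$. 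An analogous quasi-compact covering argument handles the identification of two sections of $\mathfrak{X}^{\lozenge,\mathrm{pre}}$ inducing the same section of $\mathfrak{X}^{\lozenge}$.

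In the affine case $\mathfrak{X} = \spf A$, one checks directly that $\mathfrak{X}^{\lozenge,\mathrm{pre}}(R, R^+)$ and $\mathfrak{X}^{\lozenge}(R, R^+)$ are both in bijection with pairs $\bigl((R^\sharp, R^{\sharp+}), A \to R^{\sharp+}\bigr)$ of an untilt together with a continuous ring homomorphism, where $A$ carries the $I$-adic topology and $R^{\sharp+}$ its natural topology: continuity of such a map amounts to the finitely many generators of $I$ being sent to topologically nilpotent elements of $R^{\sharp+}$, which is precisely the condition for the ring map to underlie a morphism of formal schemes $\spf R^{\sharp+} \to \spf A$. The principal technical obstacle will be matching up these topologies carefully, in particular exploiting the $\varpi$-adic completeness of $R^{\sharp+}$ to relate the two pre-adic spaces $(\spf R^{\sharp+})^{\mathrm{ad}} = \spa(R^{\sharp+}, R^{\sharp+})$ and $\spa(R^\sharp, R^{\sharp+})$, and keeping the equivalence relations induced by different representatives of untilts compatible throughout the reduction.
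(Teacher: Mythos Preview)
Your proposal is correct and follows essentially the same route as the paper's proof: reduce to the affine case $\mathfrak{X}=\spf A$ and then identify both $\mathfrak{X}^{\lozenge,\mathrm{pre}}(R,R^+)$ and $\mathfrak{X}^{\lozenge}(R,R^+)$ with continuous ring maps $A \to R^{\sharp+}$. The paper handles the affine identification in one line by noting that a map of pre-adic spaces $\spa(R^\sharp,R^{\sharp+}) \to \spa(A,A)$ is the same as a map of Huber pairs $(A,A) \to (R^\sharp,R^{\sharp+})$, which is just a continuous map $A \to R^{\sharp+}$, and cites \cite[Lemma~0AN0]{stacks-project} to identify such maps with morphisms $\spf R^{\sharp+} \to \spf A$; the ``technical obstacle'' you anticipate about matching topologies does not materialize.
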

\begin{proof}
    It suffices to check in the case $\mathfrak{X}=\spf A$, where $A$ is some adic $\zp$-algebra with finitely generated ideal of definition containing $p$. Then by definition an $S=\spa(R,R^+)$-point of $\mathfrak{X}^{\lozenge}$ amounts to an untilt $(S^\sharp, \iota)$ of $S$ and a map $f: S^\sharp \rightarrow \mathfrak{X}^\mathrm{ad}$, but $\mathfrak{X}^\mathrm{ad}=\spa(A,A)$, so the datum of $f$ amounts to a map of affinoid $(\zp,\zp)$-algebras $(A,A)\rightarrow (R^{\sharp},R^{\sharp+})$. But such maps are determined by continuous maps $A \to R^{\sharp+}$, which are the same thing as morphisms $\spf R^{\sharp+} \to \spf A$ (see \cite[Lemma 0AN0]{stacks-project}).
\end{proof}

\subsubsection{The Fargues--Fontaine curve}
For any affinoid perfectoid space $S=\spa(R,R^+)$ over $\fp$ with fixed pseudouniformizer $\varpi \in R^+$, we write $S \bdtimes \zp$ for the analytic adic space
\[S \bdtimes \zp = \spa(W(R^+)) \setminus \{[\varpi]=0\}.\]
By \cite[Proposition 11.3.1]{ScholzeWeinsteinBerkeley}, there is a natural isomorphism 
\[(S \bdtimes \zp)^\lozenge \xrightarrow{\sim} S^\lozenge \times \spd \zp.\] 
Moreover, $S \bdtimes \zp$ comes equipped with a Frobenius $\varphi$, and any untilt $S^\sharp$ of $S$ determines a closed Cartier divisor $S^\sharp \hookrightarrow S \bdtimes \spa\zp$, see \cite [Proposition 11.3.1]{ScholzeWeinsteinBerkeley}. 

We also define 
\[\mathcal{Y}(S) = \spa(W(R^+)) \setminus \{[\varpi]=0, p = 0\},\] 
and for any interval $I = [a,b] \subset [0,\infty)$ with rational endpoints we have an open subset $\mathcal{Y}_I(S)$ of $S\bdtimes \spa\zp$, see \cite[Section 2.1]{PappasRapoportShtukas} for a precise definition\footnote{Note that $\mathcal{Y}_I(S)$ depends on the choice of pseudouniformizer $\varpi$.}. In particular, we have the identities $\mathcal{Y}_{[0,\infty)}(S) = S \bdtimes \spa\zp$ and $\mathcal{Y}_{(0,\infty)}(S) = \mathcal{Y}(S)$. 

For any $S$ in $\perf$, the relative adic Fargues--Fontaine curve over $S$ is the quotient
\begin{align}
    X_S = \mathcal{Y}(S) / \varphi^\mathbb{Z}.
\end{align}
This quotient is well-defined by \cite[Proposition II.1.16]{FarguesScholze}. 

Let $G$ be a reductive group over $\qp$. Following \cite{FarguesScholze}, we denote by $\bun_G(S)$ the groupoid of $G$-torsors on $X_S$. By \cite[Theorem~III.0.2]{FarguesScholze}, the presheaf of groupoids $\bun_G$ on $\perf$ sending $S$ to $\bun_G(S)$ is a small v-stack.

\subsubsection{} \label{Sec:BGMU} For a choice of algebraic closure $\fpbar$ of $\fp$ we set $\zpbr=W(\fpbar)$ and $\qpbr=W(\fpbar)[1/p]$; here $W$ denotes the $p$-typical Witt vectors of perfect $\fp$-algebra. Let $\sigma$ be the automorphism of $\qpbr$ induced by the absolute Frobenius on $\fpbar$. Let $B(G)$ be the set of $\sigma$-conjugacy classes in $G(\qpbr)$, equipped with the topology coming from the \emph{opposite} of the partial order defined in \cite[Section 2.3]{RapoportRichartz}. By \cite[Theorem 1]{Viehmann}, there is a homeomorphism
\begin{align}
    \lvert \bun_G \rvert \to B(G). 
\end{align}
Here, while \cite{FarguesScholze} and \cite{Viehmann} work with $(\bung)_{\fpbar}$, we note that the group $\gal(\fpbar/\fp) = \gal(\qpbr/\qp)$ acts trivially on $B(G)$ because $g \in G(\qpbr)$ and $\sigma(g)$ are $\sigma$-conjugate, and therefore $\lvert \bun_G \rvert \simeq B(G)$ before base changing to $\fpbar$.

If $\mu$ is a $G(\qpbar)$-conjugacy class of minuscule cocharacters, we let $\bgmu \subset B(G)$ be the set of $\mu^{-1}$-admissible elements, as defined in \cite[Section 1.1.5]{KMPS}; note that this set is closed in the partial order and thus defines an open substack $$\bungmu \subset \bung$$ via \cite[Proposition 12.9]{EtCohDiam}. 

\subsubsection{} For $b \in G(\qpbr)$, we let $[b] \in B(G)$ denote the $\sigma$-conjugacy class of $b$. By \cite[Theorem 5.3]{AnschutzIsocrystal}, there is an element $\mathcal{E}_{b} \in \bun_G(\spd \fpbar)$ associated to $b$, and we write $\tilde{G}_b = \Aut(\mathcal{E}_b)$ for its sheaf of automorphisms. We recall the following result. 

\begin{Thm}{\cite[Theorem~III.0.2]{FarguesScholze}}\label{Thm:GeometryOfBunG}
The subfunctor     
\[
        \bun_{G}^{[b]} = \bun_G \times_{\lvert \bun_G \rvert} \lbrace [b] \rbrace
        \subseteq \bun_G
\]
      is locally closed. Moreover its base change to $\spd \fpbar$ is isomorphic to $\left[\spd \fpbar / \tilde{G}_b\right]$.
\end{Thm}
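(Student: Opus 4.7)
The plan is to reduce the statement to two independent assertions: the topological fact that Newton strata are locally closed in $|\bun_G|$, and the geometric fact that after base change to $\spd \fpbar$ the stratum is a classifying stack. I would handle them in that order.

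First, I would recall (or re-prove) that the underlying topological map $\nu\colon |\bun_G| \to B(G)$ sending a $G$-bundle to its isomorphism class at any geometric point is well defined; here the Fargues–Fontaine classification over geometric points (identifying $G$-bundles with elements of $B(G)$ via the associated isocrystal) is exactly what pins down the target. Continuity of $\nu$ with respect to the topology on $B(G)$ recalled in \S\ref{Sec:BGMU} is the Newton-polygon semicontinuity theorem for families of $G$-bundles on the Fargues--Fontaine curve, which one can prove by reducing to $\mathrm{GL}_n$ via a faithful representation and then invoking the classical semicontinuity of the Harder--Narasimhan polygon of a vector bundle in a family. Since any singleton $\{[b]\}$ is locally closed in $B(G)$ (the closure of $\{[b]\}$ consists of elements $\le [b]$ in the opposite dominance order, so $\{[b]\}$ is open in its closure), the preimage $\bun_G^{[b]} \subseteq |\bun_G|$ is locally closed, and hence the subfunctor $\bun_G^{[b]}$ is a locally closed substack by \cite[Proposition 12.9]{EtCohDiam} (applied analogously to what was done for $\bungmu$).

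For the second statement, the element $\mathcal{E}_b \in \bun_G(\spd \fpbar)$ produced by \cite[Theorem~5.3]{AnschutzIsocrystal} determines a morphism
\begin{equation}
    \mathcal{E}_b\colon \spd \fpbar \longrightarrow \bun_G^{[b]} \times_{\spd \fp} \spd \fpbar,
\end{equation}
and by general principles about v-stacks this map factors through the quotient $[\spd \fpbar/\tilde{G}_b]$, giving a natural morphism
\begin{equation}
    \Phi\colon [\spd \fpbar/\tilde{G}_b] \longrightarrow \bun_G^{[b]} \times_{\spd \fp} \spd \fpbar.
\end{equation}
The key step is to check that $\Phi$ is an isomorphism of v-stacks. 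Fully faithfulness is essentially formal from the definition of $\tilde{G}_b$ as the automorphism sheaf of $\mathcal{E}_b$: for any $S \in \perf_{\fpbar}$, two trivializations of a pulled-back bundle differ by a section of $\tilde{G}_b$. Essential surjectivity after v-cover is the content: given any $\mathcal{E} \in \bun_G^{[b]}(S)$, I must produce a v-cover $S' \to S$ on which $\mathcal{E}|_{S'} \cong \mathcal{E}_b|_{S'}$.

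The hard part will be precisely this v-local triviality statement, which I expect to be the main obstacle. I would handle it by invoking the Beauville--Laszlo-type approach in \cite{FarguesScholze}: v-locally on $S$, any $G$-bundle on the relative Fargues--Fontaine curve is trivial on the complement of a Cartier divisor (and more precisely admits a trivialization over $\mathcal{Y}_{[r,\infty)}(S)$ for $r\gg 0$ after a v-cover, by \cite[Proposition~II.2.5]{FarguesScholze} style arguments). Combined with the fact that at every geometric point of $S$ the bundle is isomorphic to $\mathcal{E}_b$, one can glue a v-local isomorphism with $\mathcal{E}_b$ by pulling back along Witt-vector completions, using that the isocrystal attached to a trivialized bundle on a large annulus $\mathcal{Y}_{[r,\infty)}$ is a v-locally constant invariant of the bundle. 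Once $\Phi$ is checked to be essentially surjective after v-cover, being fully faithful forces it to be an isomorphism of small v-stacks, completing the proof.
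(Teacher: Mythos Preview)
The paper does not give its own proof of this theorem: it is stated with a bare citation to \cite[Theorem~III.0.2]{FarguesScholze} and no argument is supplied. So there is nothing in the paper to compare your proposal against.

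That said, your outline is broadly in the spirit of the Fargues--Scholze argument, and you correctly identify the essential surjectivity step as the crux. Your sketch of that step, however, is vague at the point where precision is needed. The sentence ``the isocrystal attached to a trivialized bundle on a large annulus $\mathcal{Y}_{[r,\infty)}$ is a v-locally constant invariant of the bundle'' is not a well-formed claim: the isocrystal is not an invariant of the bundle alone but of the trivialization, and ``v-locally constant'' does not by itself deliver an isomorphism with the fixed $\mathcal{E}_b$ over a v-cover. What one actually needs is that, after passing to a product of points (which form a basis for the v-topology), one can choose compatible framings so that the resulting element of $G(B_{\mathrm{crys}}^+)$ is $\sigma$-conjugate to the constant $b$; this requires the classification of $G$-bundles over strictly totally disconnected bases and an explicit $\sigma$-conjugation argument, not just semicontinuity. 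The actual proof in \cite{FarguesScholze} proceeds via the structure of $\tilde{G}_b$ as an extension of $\underline{G_b(\qp)}$ by a positive Banach--Colmez space and shows the comparison map is cohomologically smooth and bijective on geometric points, which is a rather different route from the one you indicate.
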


For any v-stack $\mathscr{Y}$ on $\perf$ equipped with a morphism $\mathscr{Y} \to \bung$, we write
\begin{align}\label{Eq:NewtonStrat}
    \mathscr{Y}^{[b]} = \mathscr{Y} \times_{\bung} \bung^{[b]}.
\end{align}

\subsubsection{} \label{Sec:AbsoluteFrobenii}
For every v-stack $X$, there is a canonical automorphism
\[
  \phi_X \colon X \to X; \quad X(S) \ni f \mapsto \phi_S^\ast f \in X(S),
\]
which we shall call the \textit{absolute Frobenius} on $X$. When $X =
Y^\lozenge$ for a pre-adic space $Y / \spa \fp$, the automorphism $\phi_X \colon
X \to X$ agrees with the map of v-sheaves induced from the absolute Frobenius on
$Y$. Moreover, every map of v-stacks $f \colon X \to Y$ is a canonically
$\phi$-equivariant in the sense that there exists a canonical 2-morphism filling
in the diagram
\[ \begin{tikzcd}
  X \arrow{r}{\phi_X} \arrow{d}{f} & X \arrow{d}{f} \\ Y \arrow{r}{\phi_Y} & Y,
\end{tikzcd} \]
that behaves well under composition. The absolute Frobenius on a fiber product
is canonically identified with the fiber product of absolute Frobenii.

\begin{Prop} \label{Prop:AbsoluteFrobeniusBunG}
  For every connected reductive group $G$ over $\qp$, there exists a 2-morphism
  $\phi_{\bun_G} \simeq \mathrm{id}_{\bun_G}$.
\end{Prop}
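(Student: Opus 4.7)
The plan is to exploit the fact that the relative Fargues--Fontaine curve $X_S$ is constructed as the quotient of $\mathcal{Y}(S)$ by the Frobenius $\varphi$, and that $\varphi$ itself is, by definition, induced by the absolute Frobenius on $S$. This built-in relation should produce a canonical trivialization of $\phi_{\bun_G}$ essentially for free.

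The first step is to show that for every $S \in \perf$, the map $X_{\phi_S} \colon X_S \to X_S$ induced by functoriality from the absolute Frobenius $\phi_S$ on $S$ is canonically 2-isomorphic to $\mathrm{id}_{X_S}$. The key observation is that $\mathcal{Y}(-) = (- \bdtimes \spa \zp) \setminus \{p = 0\}$ is functorial in $S$ through the first factor, so the induced morphism $\mathcal{Y}(\phi_S) \colon \mathcal{Y}(S) \to \mathcal{Y}(S)$ coincides on the nose with the Frobenius $\varphi$; locally, for $S = \spa(R,R^+)$, both are given by applying the Witt vector functor to the $p$-th power map on $R^+$. Composing the quotient map $q \colon \mathcal{Y}(S) \to X_S$ with $\mathcal{Y}(\phi_S) = \varphi$ yields $q$ itself, and this tautological equality is precisely the data of a canonical 2-isomorphism $X_{\phi_S} \simeq \mathrm{id}_{X_S}$. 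Functoriality of these 2-isomorphisms in $S$ follows from the functoriality of $q$.

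The second step is to transport this 2-isomorphism to $\bun_G$. An $S$-point $\mathcal{E}$ of $\bun_G$ is a $G$-torsor on $X_S$, and by construction $\phi_{\bun_G}(\mathcal{E}) = \phi_S^\ast \mathcal{E}$ corresponds to the pullback of $\mathcal{E}$ along $X_{\phi_S}$. The canonical 2-isomorphism from the previous step then gives a canonical isomorphism of $G$-torsors $\phi_S^\ast \mathcal{E} \simeq \mathcal{E}$, natural in $\mathcal{E}$ and in $S$, which is exactly the data of the required 2-morphism $\phi_{\bun_G} \simeq \mathrm{id}_{\bun_G}$.

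The main subtlety is not in the geometric content --- the identification $\mathcal{Y}(\phi_S) = \varphi$ is essentially tautological once unwound --- but in the 2-categorical bookkeeping needed to verify that the constructed 2-morphism is canonical and compatible with base change along arbitrary maps $S' \to S$. This ultimately reduces to the compatibility of the Witt vector functor with the absolute Frobenius, a standard functoriality statement, so no genuinely hard step is anticipated.
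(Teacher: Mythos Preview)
Your proposal is correct and is essentially the same argument as the paper's, resting on the identification $\mathcal{Y}(\phi_S)=\varphi$. The paper phrases it one level more concretely: it views a $G$-bundle on $X_S$ as a pair $(\mathscr{P},\alpha)$ on $\mathcal{Y}(S)$ with $\alpha\colon\varphi^\ast\mathscr{P}\xrightarrow{\sim}\mathscr{P}$, so that $\phi_{\bun_G}(\mathscr{P},\alpha)=(\varphi^\ast\mathscr{P},\varphi^\ast\alpha)$ and $\alpha$ itself furnishes the required isomorphism to $(\mathscr{P},\alpha)$; this is exactly what your abstract ``$X_{\phi_S}=\mathrm{id}_{X_S}$'' unpacks to (note that since $X_S$ is an honest adic space, your 2-isomorphism is in fact an equality of maps).
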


\begin{proof}
  For $S \in \perf$, we regard a $G$-bundle on the Fargues--Fontaine curve as a
  $G$-bundle $\mathscr{P}$ on $\mathcal{Y}_{(0,\infty)}(S)$ together with an
  automorphism $\alpha \colon \varphi^\ast \mathscr{P}
  \xrightarrow{\sim} \mathscr{P}$. Then $\alpha$ defines the 2-morphism
  $\phi_{\bun_G} \xrightarrow{\sim} \mathrm{id}_{\bun_G}$.
\end{proof}

\subsection{Some Bruhat--Tits theory } \label{Sec:BruhatTits}  Let $G$ be a connected reductive group over $\qp$. We write $\Gamma_p$ for the absolute Galois group $\gal(\qpbar/\qp)$, and let $I_{p} \subset \Gamma_p$ be the inertia subgroup. Let $\pi_1(G)$ be the algebraic fundamental group of $G$, see \cite{Borovoi}. We denote by $\tilde{\kappa}_G$ the Kottwitz homomorphism 
\begin{align}
    \tilde{\kappa}_G: G(\qpbr) \to \pi_1(G)_{I_p},
\end{align}
which is constructed originally in \cite[Section 7]{Kottwitz2} (but see also \cite[Section 11.5]{KalethaPrasad} for an exposition of its construction).
Denote the composition of $\tilde{\kappa}_G$ with $\pi_1(G)_{I_p} \to \pi_1(G)_{\Gamma_p}$ by $\kappa_G$. We define 
\begin{align*}
    G(\qpbr)^0 = \ker(\tilde{\kappa}_G) \qquad \text{ and } \qquad G(\qpbr)^1 = \tilde{\kappa}_G^{-1}(\pi_1(G)_{I_p, \mathrm{tors}}),
\end{align*}
where $\pi_1(G)_{I_p, \mathrm{tors}}$ denotes the torsion subgroup of $\pi_1(G)_{I_p}$. 

\subsubsection{} \label{Sec:Parahorics} Let $\mathcal{B}(G,\qp)$ (resp.\ $\mathcal{B}(G,\qpbr)$) denote the (reduced) Bruhat--Tits building of $G$ (resp.\ of $G_{\qpbr})$; it is a contractible metric space with an action of $G(\qp)$ (resp.\ $G(\qpbr)$) by isometries, see \cite[Axiom 4.1.1, Corollary 4.2.9]{KalethaPrasad}. It also naturally has the structure of a polysimplicial complex (see \cite[Definition 1.5.1]{KalethaPrasad}) with facets denoted by $\mathcal{F} \subset \mathcal{B}(G,\qp)$ (resp.\ $\mathcal{F} \subset \mathcal{B}(G,\qpbr)$). Note that there is a $G(\qp)$-equivariant inclusion $\mathcal{B}(G,\qp) \subset \mathcal{B}(G,\qpbr)$ identifying $\mathcal{B}(G,\qp)$ with the fixed points of $\mathcal{B}(G,\qpbr)$ under the Frobenius $\sigma$, see \cite[Theorem 9.2.7]{KalethaPrasad}.

Given a subset $\Omega$ of $\mathcal{B}(G,\qpbr)$ we consider the pointwise stabilizers $G(\qpbr)^0_{\Omega}$ and $G(\qpbr)^1_{\Omega}$ of $\Omega$ inside of $G(\qpbr)^0$ and $G(\qpbr)^1$, respectively. When $\Omega = \{x\}$ is a point, we will write $G(\qpbr)^0_x$ and $G(\qpbr)^1_x$ instead of $G(\qpbr)^0_{\{x\}}$ and $G(\qpbr)^1_{\{x\}}$. If $\Omega = \mathcal{F}$ is a facet, a subgroup of the form  $G(\qpbr)^0_{\mathcal{F}}$ is called a \emph{parahoric subgroup}. Following \cite[Section 2.2]{PappasRapoportRZSpaces}, we will define a \emph{quasi-parahoric subgroup} $\breve{K} \subset G(\qpbr)$ to be any subgroup for which there exists (necessarily uniquely) a facet $\mathcal{F}$ of $\mathcal{B}(G,\qpbr)$ such that 
\begin{align} \label{Eq:FacetRelation}
    G(\qpbr)^0 \cap \operatorname{Stab}_{\mathcal{F}} \subset \breve{K} \subset G(\qpbr)^1 \cap \operatorname{Stab}_{\mathcal{F}},
\end{align}
where now $\operatorname{Stab}_{\mathcal{F}}$ is the stabilizer of $\mathcal{F}$ in $G(\qpbr)$ (rather than the pointwise stabilizer). 

\subsubsection{} For a quasi-parahoric subgroup $\breve{K}$ there is a unique
smooth affine group scheme $\mathcal{G}$ over $\zpbr$ together with an
isomorphism $\mathcal{G}_{\qpbr} \xrightarrow{\sim} G_{\qpbr}$ which identifies
$\mathcal{G}(\zpbr)$ with $\breve{K}$, called the \emph{quasi-parahoric group
scheme} associated to $\breve{K}$. If $\breve{K}$ is moreover stable under
$\sigma$, then the corresponding quasi-parahoric group scheme canonically
descends to $\zp$.

For such $\breve{K}$ with corresponding quasi-parahoric group scheme
$\mathcal{G}$ over $\zp$, the inclusion $G(\qpbr)^0_\mathcal{F} = \breve{K} \cap
G(\qpbr)^0 \subset \breve{K}$ induces an open immersion $\mathcal{G}^{\circ} \to
\mathcal{G}$, where $\mathcal{G}^{\circ}$ is the parahoric group scheme
corresponding to $G(\qpbr)^0_\mathcal{F}$. Moreover, the induced map
$\calgcirc_{\fp} \to \mathcal{G}_{\fp}$ is the inclusion of the identity
component of $\mathcal{G}_{\fp}$, see \cite[Theorem~8.3.13]{KalethaPrasad}. By
\cite[Corollary~11.6.3]{KalethaPrasad}, the finite \'etale group scheme over
$\fp$
\[
  \pi_0(\mathcal{G}):=\pi_0(\mathcal{G}_{\fp})
\]
can be identified, as a finite group with an action of $\gal(\fpbar/\fp)$, with
the image of $\breve{K}$ in $\pi_1(G)_{I_p, \mathrm{tors}}$ under the Kottwitz
map $\tilde{\kappa}_{G}$.

\begin{Def} \label{Def:QpStabilizerQuasiParahoric}
  We say that a smooth
  affine model $\mathcal{G}/\zp$ of $G$ is a \textit{stabilizer
  Bruhat--Tits group scheme} when it is the quasi-parahoric group scheme
  attached to a subgroup of the form $\breve{K} = G(\qpbr)_x^1$ for a point $x
  \in \mathcal{B}(G, \qp) \subseteq \mathcal{B}(G, \qpbr)$. A \textit{stabilizer parahoric
  group scheme} is a parahoric group scheme that is also a stabilizer
  Bruhat--Tits group scheme.
\end{Def}
If $\mathcal{G}$ is a stabilizer Bruhat--Tits group scheme then $\mathcal{G}(\zpbr)$ is the stabilizer in $G(\qpbr)$ of a point $x$ in the extended (or enlarged) Bruhat--Tits building $\mathcal{B}^e(G,\qp)$ of \cite[Section 4.3]{KalethaPrasad}. We will use these extended buildings in the proof of Lemma \ref{Lem:LandVogt} below.

\begin{Lem} \label{Lem:StabilizerToPointwiseStabilizer}
  Let $\mathcal{F}$ be a facet in $\mathcal{B}(G, \qp)$, regarded as a subset of
  $\mathcal{B}(G, \qpbr)$. Then there exists a point $x \in \mathcal{F}$ such
  that $G(\qpbr)^1_x = G(\qpbr)^1_{\mathcal{F}}$.
\end{Lem}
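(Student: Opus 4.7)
The strategy is to find a point $x \in \mathcal{F}$ which is ``generic enough'' to avoid being fixed by any element of $G(\qpbr)^1$ that does not fix all of $\mathcal{F}$. If $\mathcal{F}$ is zero-dimensional, the claim is trivial, so assume $\dim \mathcal{F} > 0$.

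First, I would locate $\mathcal{F}$ inside the $\qpbr$-polysimplicial structure. Since every point of $\mathcal{B}(G,\qpbr)$ lies in the relative interior of a unique $\qpbr$-facet, and since two points in the same $\qp$-facet have the same pointwise stabilizer in $G(\qpbr)^0$ by Bruhat--Tits theory, all interior points of $\mathcal{F}$ lie in a single $\qpbr$-facet $\mathcal{F}^{\mathrm{un}}$. After passing to the interior, we may thus assume $\mathcal{F} \subseteq \mathcal{F}^{\mathrm{un}}$. Any $g \in G(\qpbr)^1$ that fixes a point $x$ in the interior of $\mathcal{F}^{\mathrm{un}}$ must preserve $\mathcal{F}^{\mathrm{un}}$ setwise, since elements of $G(\qpbr)^1$ act on $\mathcal{B}(G,\qpbr)$ by isometries and hence permute facets. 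Set $N := \mathrm{Stab}_{G(\qpbr)^1}(\mathcal{F}^{\mathrm{un}})$ (the setwise stabilizer).

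Next, I would show that the quotient $N / G(\qpbr)^0_{\mathcal{F}^{\mathrm{un}}}$ is a finite group $W$. Indeed, if $g \in N \cap G(\qpbr)^0$, then $g$ lies in $G(\qpbr)^0 \cap \mathrm{Stab}_{G(\qpbr)}(\mathcal{F}^{\mathrm{un}})$, which by the standard characterization of parahoric subgroups equals the pointwise stabilizer $G(\qpbr)^0_{\mathcal{F}^{\mathrm{un}}}$. Hence the quotient injects into $G(\qpbr)^1 / G(\qpbr)^0 \hookrightarrow \pi_1(G)_{I_p, \mathrm{tors}}$, which is finite. The finite group $W$ acts on $\mathcal{F}^{\mathrm{un}}$ by affine isometries, and we let $W' := G(\qpbr)^1_{\mathcal{F}} / G(\qpbr)^0_{\mathcal{F}^{\mathrm{un}}} \subseteq W$ be the subgroup acting trivially on $\mathcal{F}$.

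Finally, I would carry out the avoidance argument. For each $w \in W \setminus W'$, choose a representative $g_w \in N$; its action on the affine simplex $\mathcal{F}^{\mathrm{un}}$ is a non-trivial affine isometry on $\mathcal{F}$, so its fixed locus $(\mathcal{F}^{\mathrm{un}})^{g_w} \cap \mathcal{F}$ is a proper affine subspace of $\mathcal{F}$. Since $W \setminus W'$ is finite and $\mathcal{F}$ is a non-degenerate open subset of a real affine space, the union $\bigcup_{w \in W \setminus W'} \mathcal{F}^{g_w}$ does not cover $\mathcal{F}$, so we may pick $x \in \mathcal{F}$ outside this union. For this $x$, any $g \in G(\qpbr)^1_x$ lies in $N$, and its image in $W$ fixes $x$, forcing it to lie in $W'$; hence $g \in G(\qpbr)^1_{\mathcal{F}}$. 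Combined with the automatic inclusion $G(\qpbr)^1_{\mathcal{F}} \subseteq G(\qpbr)^1_x$, this gives the desired equality.

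The main obstacle I foresee is Step 1: cleanly justifying that all interior points of the $\qp$-facet $\mathcal{F}$ lie in a single $\qpbr$-facet $\mathcal{F}^{\mathrm{un}}$. This is a standard consequence of the compatibility of Bruhat--Tits apartments $\mathcal{B}(G,\qp) \subset \mathcal{B}(G,\qpbr)$ under unramified base change together with the facet-stratification being recoverable from $G(\qpbr)^0$-stabilizers, but it must be invoked carefully since $\qp$-facets need not coincide with $\qpbr$-facets.
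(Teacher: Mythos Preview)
Your proof is correct and follows the same overall strategy as the paper: locate $\mathcal{F}$ inside a single $\qpbr$-facet, exhibit a finite group through which the relevant action factors, and choose $x$ outside the finitely many proper fixed loci. The paper handles Step~1 by citing that $\qp$-facets are intersections of $\qpbr$-facets with $\mathcal{B}(G,\qp)$ (Kaletha--Prasad, Section~9.2.4), which is exactly the ``standard compatibility'' you flag as the main obstacle; your own justification via equal $G(\qpbr)^0$-stabilizers is a bit circular and should be replaced by this direct citation.

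The one genuine difference is the source of finiteness. You pass through the Kottwitz map to show that $N/G(\qpbr)^0_{\mathcal{F}^{\mathrm{un}}}$ injects into $\pi_1(G)_{I_p,\mathrm{tors}}$, which requires knowing that $G(\qpbr)^0 \cap \mathrm{Stab}(\mathcal{F}^{\mathrm{un}}) = G(\qpbr)^0_{\mathcal{F}^{\mathrm{un}}}$. The paper instead observes that the setwise stabilizer of $\mathcal{F}$ in $G(\qpbr)$ acts on $\mathcal{F}$ through the group $\Aut_{\mathrm{aff}}(\mathcal{F})$ of affine-linear self-maps of $\mathcal{F}$, which is finite for the elementary reason that such maps are determined by their action on the (finitely many) vertices. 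This bypasses the Kottwitz map entirely and in fact proves the slightly stronger statement that $\mathrm{Stab}_{G(\qpbr)}(x)$ equals the pointwise stabilizer of $\mathcal{F}$ in all of $G(\qpbr)$, not just in $G(\qpbr)^1$. Your route works, but the paper's is shorter and uses less Bruhat--Tits machinery.
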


\begin{proof}
  Since facets in $\mathcal{B}(G, \qp)$ are intersections of facets in
  $\mathcal{B}(G, \qpbr)$ with $\mathcal{B}(G, \qp)$, see
  \cite[Section~9.2.4]{KalethaPrasad}, there exists a facet of $\mathcal{B}(G,
  \qpbr)$ containing $\mathcal{F}$. This implies that $G(\qpbr)$ acts on
  $\mathcal{F}$ through affine-linear automorphisms.

  The affine-linear automorphism group $\Aut_\mathrm{aff}(\mathcal{F})$ of
  $\mathcal{F}$ is finite, because such an automorphism is determined by its
  action on vertices. For each nontrivial element $\mathrm{id} \neq \alpha \in
  \Aut_\mathrm{aff}(\mathcal{F})$, the fixed point set $\mathcal{F}^\alpha$ is the
  intersection of $\mathcal{F}$ with a lower-dimensional linear subspace. Take
  $x \in \mathcal{F}$ to be any point in the complement $\mathcal{F} \setminus
  \bigcup_{\mathrm{id} \neq \alpha} \mathcal{F}^\alpha$, so that the stabilizer
  of $x$ in $\Aut_\mathrm{aff}(\mathcal{F})$ is trivial. Then an element $g \in
  G(\qpbr)$ fixes $x$ if and only if it fixes $\mathcal{F}$ pointwise, because
  the action of $g$ on $\mathcal{F}$ is affine-linear as noted above.
\end{proof}

\begin{Lem} \label{Lem:LandVogt}
  Let $f \colon G \to G'$ be a morphism of connected reductive groups over
  $\qp$. Then there exist points $x \in \mathcal{B}(G,\qp)$ and $y \in
  \mathcal{B}(G',\qp)$ with corresponding stabilizer Bruhat--Tits group schemes
  $\mathcal{G}$ and $\mathcal{G}'$ such that $f$ extends $($necessarily uniquely$)$
  to a morphism $f \colon \mathcal{G} \to \mathcal{G}'$.
\end{Lem}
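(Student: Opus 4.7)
The plan is to combine the functoriality of Bruhat--Tits buildings due to Landvogt with the characterization of a stabilizer Bruhat--Tits group scheme as the unique smooth affine $\zp$-model recovering a prescribed bounded subgroup of $G(\qpbr)$.

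First I would appeal to Landvogt's theorem (as reviewed, e.g., in \cite[Chapter 9]{KalethaPrasad}): for every morphism $f \colon G \to G'$ of connected reductive $\qp$-groups there exists a $\gal(\qpbr/\qp)$-equivariant, $f$-equivariant affine map
\[
  f_\ast \colon \mathcal{B}^e(G, \qpbr) \to \mathcal{B}^e(G', \qpbr)
\]
of \emph{extended} Bruhat--Tits buildings, with the property that $f$ carries the pointwise stabilizer of any $z \in \mathcal{B}^e(G, \qpbr)$ into the pointwise stabilizer of $f_\ast(z)$. Passing to $\sigma$-fixed points yields an $f$-equivariant map $\mathcal{B}^e(G, \qp) \to \mathcal{B}^e(G', \qp)$.

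Next I would pick any $x \in \mathcal{B}(G, \qp)$, regard it as a point of $\mathcal{B}^e(G, \qp)$ via the canonical inclusion, and let $y \in \mathcal{B}(G', \qp)$ be the image of $f_\ast(x)$ under the projection $\mathcal{B}^e(G', \qp) \twoheadrightarrow \mathcal{B}(G', \qp)$. Because the action of $G'(\qpbr)$ on the vector-space factor of $\mathcal{B}^e(G', \qpbr)$ factors through the Kottwitz map, elements of $G'(\qpbr)^1$ act trivially on that factor, so the pointwise stabilizer $G'(\qpbr)^1_{f_\ast(x)}$ coincides with $G'(\qpbr)^1_y$. This produces a $\sigma$-equivariant inclusion
\[
  f\bigl( G(\qpbr)^1_x \bigr) \subseteq G'(\qpbr)^1_y
\]
of the subgroups defining the stabilizer Bruhat--Tits group schemes $\mathcal{G}$ and $\mathcal{G}'$.

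The final step is to upgrade this set-theoretic inclusion of $\zpbr$-points to a morphism of smooth affine $\zp$-group schemes $\mathcal{G} \to \mathcal{G}'$. Concretely, one forms the schematic closure $\overline{\Gamma}_f$ of the graph $\Gamma_f \subset G \times_{\qp} G'$ inside $\mathcal{G} \times_{\zp} \mathcal{G}'$; the projection $\overline{\Gamma}_f \to \mathcal{G}$ is an isomorphism on generic fibers, and smoothness of $\mathcal{G}$ together with the containment of $\zpbr$-points forces it to be an isomorphism integrally, yielding the desired extension. Uniqueness follows from density of the generic fiber in $\mathcal{G}$ and the separatedness of $\mathcal{G}'$.

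The main obstacle is this final extension step: passing from a compatible containment $f(\mathcal{G}(\zpbr)) \subseteq \mathcal{G}'(\zpbr)$ to a morphism of $\zp$-group schemes is not formal for arbitrary smooth affine models, and must be justified by the specific fact that the assignment $\breve{K} \mapsto \mathcal{G}$ from $\sigma$-stable quasi-parahoric subgroups of $G(\qpbr)$ to smooth affine $\zp$-models is fully faithful on morphisms induced by $\qp$-homomorphisms, a foundational output of Bruhat--Tits theory.
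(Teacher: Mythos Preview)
Your proposal is correct and follows essentially the same route as the paper: invoke Landvogt's functorial map of extended buildings, transport a rational point, project to the reduced building, and then cite the Bruhat--Tits extension principle (the paper uses \cite[Proposition~1.7.6]{BruhatTitsII} and \cite[Corollary~2.10.10]{KalethaPrasad}) to pass from the containment $f(G(\qpbr)^1_x)\subset G'(\qpbr)^1_y$ to a morphism of group schemes. The only cosmetic imprecision is the phrase ``the canonical inclusion $\mathcal{B}(G,\qp)\hookrightarrow \mathcal{B}^e(G,\qp)$'': there is no canonical section of the projection, so one simply chooses any lift $\tilde{x}$ of $x$ (the paper in fact starts from $\tilde{x}\in\mathcal{B}^e(G,\qp)$ and defines $x$ as its image), which is harmless since, as you correctly observe, $G(\qpbr)^1$ acts trivially on the affine factor.
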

\begin{proof}
  By \cite[Proposition~1.7.6]{BruhatTitsII}, it suffices to show that we can
  find $x$ and $y$ such that $f(G(\qpbr)^1_x) \subset G'(\qpbr)^1_y$ (see also
  \cite[Corollary~2.10.10]{KalethaPrasad}). Combining \cite[Proposition~2.1.3,
  Theorem~2.2.1]{Landvogt}, we see that there is a $G(\qpbr)$-equivariant,
  Frobenius-equivariant, and continuous map $g \colon \mathcal{B}^e(G,\qpbr) \to
  \mathcal{B}^e(G',\qpbr)$. Thus if we pick a point $\tilde{x} \in
  \mathcal{B}^e(G,\qp)$, then $g(\tilde{x}) \in \mathcal{B}^e(G',\qp)$ and
  moreover the stabilizer in $G(\qpbr)$ of $\tilde{x}$ maps to the stabilizer in
  $G'(\qpbr)$ of $g(\tilde{x})$. Then the lemma is proved by taking $x \in
  \mathcal{B}(G,\qp)$ to be the projection of $\tilde{x}$ and $y \in
  \mathcal{B}(G',\qp)$ to be the projection of $g(\tilde{x})$.
\end{proof}

\subsection{Local Shtukas} \label{Sub:LocalShtukas} Let $\mathcal{G}$ be a quasi-parahoric group scheme over $\zp$ with generic fiber $G$. We let $\mathrm{Gr}_\mathcal{G} \to \spd \zp$ denote the corresponding Beilinson--Drinfeld Grassmannians as in \cite[Definition 20.3.1]{ScholzeWeinsteinBerkeley}. Its base change to $\spd \qp$ is the $\mathbf{B}^+_\mathrm{dR}$ affine Grassmannian, and we will denote it by $\mathrm{Gr}_{G} \to \spd \qp$, see \cite[Lecture XIX]{ScholzeWeinsteinBerkeley}. 

For $\mu$ a $G(\qpbar)$-conjugacy class of minuscule cocharacters of $G$ with reflex field $E$, we denote by $\mathrm{Gr}_{G,{\mu}} \subset \mathrm{Gr}_{G,E}$ the closed Schubert-cell determined by $\mu$, see \cite[Section 19.2]{ScholzeWeinsteinBerkeley}. Note that since $\mu$ is minuscule, this can be identified with the (diamond associated to) the flag variety $\mathscr{F}\ell_\mu$ using the Bia\l{}ynicki-Birula map. The v-sheaf theoretic closure of $\mathrm{Gr}_{G,{\mu}}$ inside $\mathrm{Gr}_{\mathcal{G},\spd \mathcal{O}_E}$ is the \emph{v-sheaf local model} $\mathbb{M}_{\mathcal{G},{\mu}}^{\mathrm{v}} \subset \mathrm{Gr}_{\mathcal{G},\spd \mathcal{O}_E}$. By \cite[Theorem 1.2]{AGLR} and \cite[Corollary 1.4]{GleasonLourencoLocalModel}, this is the diamond associated to a flat normal projective scheme over $\mathcal{O}_E$.

\subsubsection{} Recall that a $\mathcal{G}$-shtuka over a perfectoid space $S$ with leg at an untilt $S^\sharp$ is defined to be a $\mathcal{G}$-torsor $\mathscr{P}$ over $S \bdtimes \spa \zp$, together with an isomorphism of $\mathcal{G}$-torsors
\begin{align}
    \phi_{\mathscr{P}}:\restr{\varphi^{\ast} \mathscr{P}}{S \bdtimes \zp \setminus S^{\sharp}} \to \restr{\mathscr{P}}{S \bdtimes \zp \setminus S^{\sharp}}
\end{align}
which is meromorphic along $S^\sharp$ in the sense of \cite[Definition 5.3.5]{ScholzeWeinsteinBerkeley}. For $\mu$ a $G(\qpbar)$-conjugacy class of minuscule cocharacters, we say that a $\mathcal{G}$-shtuka $(\mathscr{P}, \phi_{\mathscr{P}})$ is bounded by $\mu$ if the relative position of $\varphi^\ast \mathscr{P}$ and $\mathscr{P}$ at $S^\sharp$ is bounded by the v-sheaf local model $\mathbb{M}_{\mathcal{G},{\mu}}^{\mathrm{v}} \subset \mathrm{Gr}_{\mathcal{G},\spd \mathcal{O}_E}$ (see \cite[Section 2.3.4]{PappasRapoportShtukas}).

\subsubsection{} For $S$ in $\perf$, denote by $\shtg(S)$ the groupoid of triples $(S^{\sharp}, \mathscr{P}, \phi_{\mathscr{P}})$, where
$S^{\sharp}$ is an untilt of $S$ and where $(\mathscr{P}, \phi_{\mathscr{P}})$ is a $\mathcal{G}$-shtuka over $S$ with leg $S^\sharp$. By \cite[Proposition~2.1.2]{ScholzeWeinsteinBerkeley}, the assignment $S \mapsto \shtg(S)$ defines a v-stack $\shtg$ on $\perf$ (note that $S \bdtimes \zp$ is sousperfectoid by the proof of \cite[Proposition~11.2.1]{ScholzeWeinsteinBerkeley}).\footnote{Note that in \cite[Definition 6.5]{GleasonIvanovZillinger}, the notation $\shtg$ is used to denote the stack which would be $\shtg \otimes_{\spd \zp} \spd \fp$ in our notation. }

If ${\mu}$ is a $G(\qpbar)$-conjugacy class of cocharacters of $G_{\qpbar}$ with field of definition $E$, we let $\shtgmu \subset \shtg \times_{\spd \zp} \spd \mathcal{O}_E$ be the closed substack whose $S$-points consists of $\mathcal{G}$-shtukas over $S$ with one leg at $S^\sharp$ which are bounded by $\mu$. 

\subsubsection{} Let $S = \spa(R,R^+) \to \spd \zp$ be an object in $\perf$ together with an untilt $S^\sharp$, and let $(\mathscr{P}, \phi_{\mathscr{P}})$ be a shtuka over $S$ with one leg at $S^\sharp$. We can choose $r$ sufficiently large such that $\mathcal{Y}_{[r,\infty)}$ does not meet the divisor of $\mathcal{Y}_{[0,\infty)}$ defined by $S^\sharp$. The restriction of $(\mathscr{P},\phi_{\mathscr{P}})$ determines a $\phi$-equivariant $\mathcal{G}$-torsor on $\mathcal{Y}_{[r,\infty)}$. By spreading out via the Frobenius (see \cite[Proposition 22.1.1]{ScholzeWeinsteinBerkeley}), the bundle $\mathscr{P}$ descends to a $G$-bundle $\mathcal{E}(\mathscr{P},\phi_{\mathscr{P}})$ on $X_S$. In this way we obtain a morphism of v-stacks on $\perf$ 
\begin{align}\label{Eq:ShtukaMorphism}
    \shtg \to \bung, \quad (\mathscr{P},\phi_{\mathscr{P}}) \mapsto \mathcal{E}(\mathscr{P},\phi_{\mathscr{P}})
\end{align}
and we will denote by $\mathrm{BL}^{\circ}$ its restriction to $\shtgmu$. 

\begin{Lem} \label{Lem:ZeroTruncated}
  The map $\shtg \to \bun_G$ is $0$-truncated, i.e., for every $\spa(R, R^+) \in \perf$, the map of groupoids
  \[
    \shtg(R, R^+) \to \bun_G(R, R^+)
  \]
  is faithful.
\end{Lem}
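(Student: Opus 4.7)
The plan is to reduce the statement to injectivity on automorphism groups and then exploit representability of these automorphism groups by affine group schemes. Fix a shtuka $(\mathscr{P}, \phi_{\mathscr{P}}) \in \shtg(R, R^+)$ and unwind both sides: an automorphism of $(\mathscr{P}, \phi_{\mathscr{P}})$ is a section $\alpha$ of the inner twist $\underline{\Aut}(\mathscr{P})$ of $\mathcal{G}$ over $S \bdtimes \zp$ satisfying $\alpha \circ \phi_{\mathscr{P}} = \phi_{\mathscr{P}} \circ \varphi^{\ast}\alpha$ on $S \bdtimes \zp \setminus S^{\sharp}$. On the other hand, by the construction recalled just before the lemma, $\mathcal{E}(\mathscr{P}, \phi_{\mathscr{P}})$ is obtained by restricting $\mathscr{P}$ to some $\mathcal{Y}_{[r,\infty)}(S)$ missing $S^{\sharp}$ and then descending along $\varphi^{\mathbb{Z}}$. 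Since Frobenius descent is an equivalence of groupoids, an automorphism of $\mathcal{E}(\mathscr{P}, \phi_{\mathscr{P}})$ is the same datum as a $\varphi$-equivariant automorphism of $\mathscr{P}|_{\mathcal{Y}_{(0,\infty)}(S)}$, and under this identification the induced map on automorphism groups is simply restriction along the open immersion $\mathcal{Y}_{(0,\infty)}(S) \hookrightarrow S \bdtimes \zp$.

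The problem thus reduces to showing that the restriction map
\[
\underline{\Aut}(\mathscr{P})(S \bdtimes \zp) \longrightarrow \underline{\Aut}(\mathscr{P})(\mathcal{Y}_{(0,\infty)}(S))
\]
is injective (before imposing any $\varphi$-equivariance condition). Working v-locally on $S$, I may assume that $\mathscr{P}$ is trivial, so that the question becomes whether two morphisms $S \bdtimes \zp \to \mathcal{G}$ that agree on $\mathcal{Y}_{(0,\infty)}(S)$ must be equal. Since $\mathcal{G}$ is affine over $\zp$, this reduces in turn to the claim that the restriction $\mathcal{O}(S \bdtimes \zp) \to \mathcal{O}(\mathcal{Y}_{(0,\infty)}(S))$ is injective, i.e.\ that $\mathcal{Y}_{(0,\infty)}(S) \subset S \bdtimes \zp$ is scheme-theoretically dense.

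I would establish this last fact by observing that $\mathcal{Y}_{(0,\infty)}(S)$ is the open complement in $S \bdtimes \zp$ of the Cartier divisor $\{p = 0\}$, so it suffices to show that $p$ acts as a non-zero-divisor on the structure sheaf of $S \bdtimes \zp$. For $S = \spa(R, R^+)$ affinoid perfectoid, the global sections of $\mathcal{O}^+_{S \bdtimes \zp}$ are computed by $W(R^+)$, which is manifestly $p$-torsion-free since $R^+$ has characteristic $p$, and rational localizations inherit this property from the explicit presentation of the structure sheaf of $S \bdtimes \zp$ in \cite[Lecture~11]{ScholzeWeinsteinBerkeley}. The main technical point, modest in nature, is simply to promote the ``$p$-is-a-non-zero-divisor'' statement from $W(R^+)$ to the rational covers used to define $S \bdtimes \zp$; everything else is formal.
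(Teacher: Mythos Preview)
Your reduction to injectivity of restriction of sections of $\underline{\Aut}(\mathscr{P})$ is the right idea, but the identification of the map on automorphism groups as ``restriction along $\mathcal{Y}_{(0,\infty)}(S) \hookrightarrow S \bdtimes \zp$'' hides a genuine gap. The functor $\shtg \to \bun_G$ restricts $\alpha$ to $\mathcal{Y}_{[r,\infty)}(S)$ for $r$ large enough to miss the leg $S^\sharp$, and then descends along $\varphi$. When $S^\sharp$ has characteristic~$0$ it lies inside $\mathcal{Y}_{(0,\infty)}(S)$, so $\alpha|_{\mathcal{Y}_{(0,\infty)}(S)}$ is only $\varphi$-equivariant away from $S^\sharp$; in particular it is not literally an element of the target ``$\varphi$-equivariant automorphisms of $\mathscr{P}|_{\mathcal{Y}_{(0,\infty)}(S)}$''. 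Identifying the spread-out $\varphi$-equivariant automorphism with $\alpha|_{\mathcal{Y}_{(0,\infty)}(S)}$ (equivalently, propagating $\alpha|_{\mathcal{Y}_{[r,\infty)}}=\mathrm{id}$ to $\alpha|_{\mathcal{Y}_{(0,\infty)}}=\mathrm{id}$) already requires a density-across-a-Cartier-divisor argument of exactly the sort you postpone to the final paragraph. So the ``easy'' reduction is doing real work you have not supplied.

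There is a second issue: trivializing $\mathscr{P}$ after a v-cover of $S$ is not clear, since $\mathscr{P}$ lives on $S \bdtimes \zp$ rather than on $S$. The paper avoids both points. It pushes out along a closed embedding $\mathcal{G} \hookrightarrow \GL(V)$, so no trivialization is needed, and works directly with restriction from $\mathcal{Y}_{[0,s]}$ to $\mathcal{Y}_{[r,s]}$ for some $s>r$. By \cite[Theorem~2.7.7]{KedlayaLiu}, vector bundles on $\mathcal{Y}_{[0,s]}$ are finite projective modules over $A = \mathcal{O}(\mathcal{Y}_{[0,s]})$, and injectivity of $A \to B = \mathcal{O}(\mathcal{Y}_{[r,s]})$ gives injectivity of $\Hom_A(M,M) \to \Hom_B(M\otimes_A B, M\otimes_A B)$; together with $\mathcal{Y}_{[0,s]} \cup \mathcal{Y}_{[r,\infty)} = \mathcal{Y}_{[0,\infty)}$ this finishes in one stroke, handling both the locus $\{p=0\}$ and the region between $\mathcal{Y}_{[r,\infty)}$ and $\mathcal{Y}_{(0,\infty)}$ simultaneously.
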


\begin{proof}
  Let $(\mathscr{P}, \phi_\mathscr{P})$ be a $\mathcal{G}$-shtuka with leg at an
  untilt $(R^\sharp, R^{\sharp+})$. Fix a pseudouniformizer $\varpi \in R^+$
  which we use to define $\mathcal{Y}_{I}(R,R^+)$ for intervals $I$. Choose an
  integer $r$ sufficiently large such that $\mathcal{Y}_{[r,\infty)}$ does not
  meet the divisor of $\mathcal{Y}_{[0,\infty)}$ defined by $S^\sharp$.

  Let $\alpha$ be an automorphism of $(\mathscr{P}, \phi_\mathscr{P})$. It
  suffices to prove
  that if $\alpha$ restricts to the identity on $\mathcal{Y}_{[r,\infty)}(R,
  R^+)$, then $\alpha$ is the identity map. By pushing out along $\mathcal{G}
  \hookrightarrow \mathrm{GL}(V)$, we may reduce the statement to the case of
  vector bundle shtukas. Choose a larger integer $s > r$. We first check that
  the ring homomorphism
  \[
    A = H^0(\mathcal{Y}_{[0,s]}(R, R^+),
    \mathscr{O}_{\mathcal{Y}_{[0,s]}(R,R^+)}) \to
    H^0(\mathcal{Y}_{[r,s]}(R,R^+),
    \mathscr{O}_{\mathcal{Y}_{[r,s]}(R,R^+)}) = B
  \]
  is injective. We may write
  \[
    A = W(R^+)\langle p^s/[\varpi] \rangle[\tfrac{1}{[\varpi]}], \quad B =
    W(R^+)\langle p^s/[\varpi], [\varpi]/p^r \rangle[\tfrac{1}{p[\varpi]}],
  \]
  and it suffices to instead verify the injectivity of
  \[
    A_0 = W(R^+)\langle p^s/[\varpi] \rangle \to W(R^+)\langle p^s/[\varpi],
    [\varpi]/p^r \rangle = B_0.
  \]
  Recall that both sides are $(p, [\varpi])$-adic completions of
  \[
    A_1 = W(R^+)[p^s/[\varpi]] \subseteq W(R^+)[p^s/[\varpi], [\varpi]/p^r] =
    B_1;
  \]
  the ring $A_0$ is definitionally the $[\varpi]$-adic completion of $A_1$, but
  $(p, [\varpi])$ and $([\varpi])$ generate the same topology on $A_1$.
  Injectivity of $A_0 \to B_0$ now follows from the claim that for every $n
  \ge 0$ we have
  \[
    ((p, [\varpi])^n B_1) \cap A_1 = (p, [\varpi])^n A_1.
  \]
  The right hand side is clearly contained in the left hand side. In the
  other direction, we need to verify that if $x_1, \dotsc, x_N \in W(R^+)$
  satisfies
  \[
    x = x_1 \frac{[\varpi]^{n+1}}{p^r} + \dotsb + x_N
    \frac{[\varpi]^{n+N}}{p^{Nr}} \in A_1
  \]
  then $x \in (p, [\varpi])^n A_1$. Because $x_N ([\varpi]^{n+N} / p^{Nr}) \in
  p^{-(N-1)r} W(R)$ we have $x_N \in p^r W(R) \cap W(R^+) = p^r W(R^+)$, and
  hence by induction on $N$ we conclude that $x \in [\varpi]^{n+1} W(R^+)
  \subseteq (p, [\varpi])^n A_1$ as desired.

  A vector bundle on $\mathcal{Y}_{[0,s]}$
  corresponds to a finite projective module $M$ over $A$ by
  \cite[Theorem~2.7.7]{KedlayaLiu}, and then the injectivity of $A \to B$
  implies that
  \[
    \Hom_A(M, M) \to \Hom_A(M, M) \otimes_A B = \Hom_B(M \otimes_A B, M
    \otimes_A B)
  \]
  is injective. Since $\alpha$ restricted to
  $\mathcal{Y}_{[r,s]}(R, R^+)$ is the identity map, $\alpha$ restricted to
  $\mathcal{Y}_{[0,s]}(R, R^+)$ is also the identity map. As
  $\mathcal{Y}_{[0,s]}(R, R^+)$ and $\mathcal{Y}_{[r,\infty)}(R, R^+)$ cover all
  of $\mathcal{Y}_{[0,\infty)}(R, R^+)$, the map $\alpha$ is the identity map on
  all of $\mathcal{Y}_{[0,\infty)}(R, R^+)$.
\end{proof}

\subsubsection{} Recall from \cite[Section 3.3.7]{Companion} the open and closed substack
\begin{align}
    \shtgmuone \subset \shtgmu,
\end{align}
which is the image of $\shtgcircmu \to \shtgmu$, where $\calgcirc \subset \calg$ is the relative identity component of $\calg$ (a parahoric model of $G$). It follows from \cite[Corollary 3.3.9]{Companion} that there is an isomorphism 
\begin{align} \label{Eq:GenericFibreShtukas}
    c:\shtgmuonerat \to \left[\mathrm{Gr}_{G,{\mu^{-1}}} / \underline{\calg(\zp)}\right],
\end{align}
where $\underline{\mathcal{G}(\zp)}$ is as in \cite[the discussion before Definition 10.12]{EtCohDiam}.
Recall the Beauville--Laszlo map $\mathrm{BL}:\mathrm{Gr}_{G,{\mu^{-1}}} \to \bung$ from \cite[Proposition III.3.1]{FarguesScholze}, which factors through a map $\mathrm{BL}:\left[\mathrm{Gr}_{G,{\mu^{-1}}} / \underline{\mathcal{G}(\zp)}\right] \to \bun_G$. 
\begin{Lem} \label{Lem:BLCircVSBL}
    We have an equality $$\mathrm{BL} \circ c=\mathrm{BL}^{\circ} \times_{\spd \mathcal{O}_E} \spd E.$$
\end{Lem}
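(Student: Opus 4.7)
The plan is to verify the equality of the two morphisms $\shtgmuonerat \to \bun_G$ by unpacking both constructions v-locally and observing that each amounts to Beauville--Laszlo gluing of the trivial $G$-bundle across $S^\sharp$ by the same modification datum, then descended along the Frobenius quotient $\mathcal{Y}(S)/\varphi^{\mathbb{Z}} = X_S$.

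First I would recall the construction of $c$ from \cite[Corollary 3.3.9]{Companion}. Given a $\calg$-shtuka $(\mathscr{P},\phi_\mathscr{P})$ over $S \in \perf_{\spd E}$ bounded by $\mu$ and satisfying the $\delta = 1$ condition, one v-locally trivializes $\mathscr{P}$ on $\mathcal{Y}_{[r,\infty)}(S)$ in a $\varphi$-equivariant way; the relative position of this trivialization with the one induced via $\phi_\mathscr{P}$ at the leg $S^\sharp$ then yields an element of $\mathrm{Gr}_{G,\mu^{-1}}(S)$, with the ambiguity in the trivialization captured by the quotient by $\underline{\calg(\zp)}$. Post-composing with $\mathrm{BL}$ produces, by construction, the $G$-bundle on $X_S$ obtained by Beauville--Laszlo gluing the trivial $G$-bundle on $\mathcal{Y}_{[r,\infty)}(S) / \varphi^{\mathbb{Z}}$ with the modification along $S^\sharp$ prescribed by the relative position.

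Next I would unpack $\mathrm{BL}^\circ(\mathscr{P},\phi_\mathscr{P})$ as in the paragraph preceding \eqref{Eq:ShtukaMorphism}: restrict $\mathscr{P}$ to $\mathcal{Y}_{[r,\infty)}(S)$ where it avoids the leg and use $\phi_\mathscr{P}$ to spread out to a $\varphi$-equivariant $\calg$-torsor on all of $\mathcal{Y}(S)$, which then descends under $\mathcal{Y}(S)/\varphi^{\mathbb{Z}} = X_S$ to the desired $G$-bundle on the Fargues--Fontaine curve. The main comparison is to check that after the same v-local trivialization of $\mathscr{P}$ on $\mathcal{Y}_{[r,\infty)}(S)$ used to define $c$, both constructions produce the same $G$-bundle: the Frobenius descent in the definition of $\mathrm{BL}^\circ$ corresponds exactly to working on the quotient $\mathcal{Y}(S)/\varphi^{\mathbb{Z}}$ used by $\mathrm{BL}$, and both record the modification at $S^\sharp$ coming from $\phi_\mathscr{P}$.

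By Lemma \ref{Lem:ZeroTruncated}, the functor $\shtg \to \bun_G$ is faithful, so it suffices to exhibit a canonical natural isomorphism of the two output $G$-bundles; the coherence data is then forced. The main obstacle is bookkeeping: one must verify that this isomorphism is $\underline{\calg(\zp)}$-equivariant so that it descends from $\mathrm{Gr}_{G,\mu^{-1}}$ to the quotient stack $[\mathrm{Gr}_{G,\mu^{-1}}/\underline{\calg(\zp)}]$, and that the $\varphi$-descent used to spread $\mathscr{P}|_{\mathcal{Y}_{[r,\infty)}(S)}$ out to $\mathcal{Y}(S)$ matches the passage to $\mathcal{Y}(S)/\varphi^{\mathbb{Z}}$ implicit in $\mathrm{BL}$. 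Both reduce to naturality of Beauville--Laszlo gluing with respect to Frobenius and to the choice of trivialization, and become routine once the setup is in place.
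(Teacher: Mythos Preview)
Your approach is correct and is essentially the content of the direct verification that the paper defers to: the paper's proof simply reads ``This is explained in \cite[Proposition~11.16, Proposition~11.17]{ZhangThesis} for $\calgcirc$, and now follows from the definition of $\shtgmuone$, see \cite[Section~3.3.8]{Companion}.'' So the paper cites Zhang's thesis for the parahoric case (where the argument is the one you sketch) and then observes that the quasi-parahoric case follows formally because $\shtgmuone$ is by definition the image of $\shtgcircmu \to \shtgmu$, so the identity on $\bun_G$ for $\calgcirc$-shtukas forces it for their images.

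One small point: your invocation of Lemma~\ref{Lem:ZeroTruncated} is not quite the right tool. That lemma says that automorphisms of a shtuka which become the identity in $\bun_G$ are themselves trivial; but here you are comparing two morphisms \emph{into} $\bun_G$, not lifting a morphism of $G$-bundles to one of shtukas. The coherence you need is rather forced by the canonicity of your isomorphism: the identification of $G$-bundles on $X_S$ you build is induced by the identity map on $\mathscr{P}|_{\mathcal{Y}_{[r,\infty)}(S)}$ under the two descriptions, and this is manifestly natural in $S$ and equivariant for changing the trivialization (hence $\underline{\calg(\zp)}$-equivariant). No additional faithfulness input is required.
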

\begin{proof}
This is explained in \cite[Proposition 11.16, Proposition 11.17]{ZhangThesis} for $\calgcirc$, and now follows from the definition of $\shtgmuone$, see \cite[Section 3.3.7]{Companion}.
\end{proof}

\subsection{Dieudonn\'e theory} \label{Sec:Dieudonne} In this section we will recall some Dieudonn\'e theory in order to make our conventions clear.

\subsubsection{} Let $R$ be a semiperfect $\fp$-algebra and let 
$\mathbf{A}_\mathrm{crys}(R)$ denote Fontaine's crystalline period ring
equipped with its Frobenius $\phi$, see \cite[Proposition~4.1.3]{ScholzeWeinsteinModuli}. Let $Y$ be a $p$-divisible group over $\spec R$ and let 
$\mathbb{D}(Y)$ denote its contravariant Dieudonn\'e crystal over $\spec R$ in the sense of \cite{BerthelotBreenMessing}. Its evaluation on $\mathbf{A}_\mathrm{crys}(R)$ is a finite projective $\mathbf{A}_\mathrm{crys}(R)$-module also denoted by $\mathbb{D}(Y)$. The relative Frobenius map $F_{Y/R}:Y \to Y^{(p)}$ induces a morphism $\phi_Y:\mathbb{D}(Y^{(p)})=\phi^{\ast} \mathbb{D}(Y) \to \mathbb{D}(Y)$ which becomes an isomorphism after inverting $p$. The pair $(\mathbb{D}(Y), \phi_Y)$ is functorial for morphisms of $p$-divisible groups over $R$.

We will often use the $\mathbf{A}_\mathrm{crys}(R)$-linear dual $\mathbb{D}^{\ast}(Y)$ of $\mathbb{D}(Y)$, which is equipped with a map
\begin{align}
    \phi_{Y}:\phi^{\ast}\mathbb{D}^{\ast}(Y)[1/p] \to \mathbb{D}^{\ast}(Y)[1/p]
\end{align}
given by the inverse of the dual of the isomorphism $\phi_Y:\phi^{\ast} \mathbb{D}(Y)[1/p] \to \mathbb{D}(Y)[1/p]$.\footnote{The pair $(\mathbb{D}^{\ast},\phi_{Y})$ is the covariant Dieudonn\'e module of $Y$ normalized as in \cite{CaraianiScholzeCompact}.} 

If $R$ is perfect, then $\phi$ is an automorphism of $W(R)$ and we can consider
\begin{align}
    \mathbb{D}^{\natural}(Y)=(\phi^{-1})^{\ast} \mathbb{D}^{\ast}(Y),
\end{align}
which is equipped with the Frobenius $\phi_{Y}^{\natural}=(\phi^{-1})^{\ast}\phi_{Y}$, see \cite[Remark 2.3.10]{PappasRapoportShtukas} for the notation. 

\subsubsection{}\label{subsub:Rsharpmap}
Let $(R,R^+)$ be a perfectoid Huber pair of characteristic $p$ and let $\varpi \in R^{+}$ be a pseudouniformizer. We will consider $W(R^{+})$ equipped with its Frobenius $\phi$. Given an untilt $(R^{\sharp}, R^{\sharp+})$ there is a natural map $\theta:W(R^+) \to R^{\sharp+}$ whose kernel is generated by an element $\xi$. After possibly replacing $\varpi$ by a $p^n$-th root, there is an induced natural map 
\begin{align} \label{Eq:NaturalMapPseudoUniformizer}
    R^{\sharp+} \to R^{+}/\varpi
\end{align}
given by realizing $R^{\sharp+}$ as the quotient of $W(R^{+})$ by the kernel of $\theta$ and then taking the quotient by $p$ and $\theta([\varpi])$.
Let $Y$ be a $p$-divisible group over $R^{\sharp+}$, then by \cite[Theorem 17.5.2]{ScholzeWeinsteinBerkeley}, there is a finite projective $W(R^{+})$-module $M(Y)$ together with an isomorphism 
\begin{align}
    \phi_Y:\phi^{\ast} M(Y)[1/\phi(\xi)] \to M(Y)[1/\phi(\xi)].
\end{align}
The pair $(M(Y), \phi_Y)$ is functorial for morphisms of $p$-divisible groups over $R^{\sharp+}$. We will often consider the pullback under $\phi^{-1}$ of the pair $(M(Y), \phi_Y)$, and denote it by $(M^{\natural}(Y), \phi^{\natural}_Y)$. The pair $(M^{\natural}(Y), \phi^{\natural}_Y)$ is a BKF module over $R^{\sharp+}$ in the sense of \cite[Definition 2.2.4]{PappasRapoportShtukas}, and gives rise to a shtuka $\mathscr{V}(Y)$ over $\spd(R,R^{+})$ via \cite[Definition 2.2.6]{PappasRapoportShtukas}, see \cite[Example 2.3.4]{PappasRapoportShtukas}.\footnote{In \cite{PappasRapoportShtukas}, this shtuka is denoted by $\mathcal{E}(Y)$. We will however use $\mathcal{E}(Y)$ for the induced vector bundle on $X_S$, see Lemma \ref{Lem:Compatibility}.} 

There is a natural map $W(R^{+}) \to \mathbf{A}_\mathrm{crys}(R^{+}/\varpi)$, and there is a natural isomorphism
\begin{align}
    (M(Y), \phi_Y) \otimes_{W(R^{+})} \mathbf{A}_\mathrm{crys}(R^{+}/\varpi) \xrightarrow{\sim} (\mathbb{D}^{\ast}(Y \otimes_{R^{\sharp+}} R^{+}/\varpi), \phi_Y),
\end{align}
see \cite[Theorem 17.5.2]{ScholzeWeinsteinBerkeley}. 

\subsection{Formal quasi-isogenies} \label{Sub:FormalQuasiIsogenies}

Suppose $\varpi$ is a pseudouniformizer of $R^+$ for which the natural map $W(R^+) \to R^{\sharp+}$ induces a map
\begin{align}
    R^{\sharp+} \to R^{+}/\varpi
\end{align}
as in equation \eqref{Eq:NaturalMapPseudoUniformizer}. Using this map, we will define a notion of
formal quasi-isogeny between $p$-divisible groups (or formal abelian schemes up to prime-to-$p$ isogeny\footnote{Throughout this paper we will often work with the category of (formal) abelian schemes up to prime-to-$p$ isogeny, which is obtained from the category of (formal) abelian schemes by tensoring the homomorphism groups with $\zlocp$.}) that live over different untilts of $R^+$. 

\subsubsection{} \label{Sec:FormalQuasiIsogenyDifferentUntiltsDef} 
Assume that we are given two untilts $(R^{\sharp_1}, R^{\sharp_1+})$ and
$(R^{\sharp_2}, R^{\sharp_2+})$ of $(R, R^+)$. Let $Y_1, Y_2$ be $p$-divisible
groups over $\spf R^{\sharp_1+}, \spf R^{\sharp_2+}$. A \emph{formal
quasi-isogeny}
\[
  Y_1 \dashrightarrow Y_2
\]
is a quasi-isogeny $Y_1 \otimes_{R^{\sharp_1+}} R^+/\varpi \dashrightarrow Y_2
\otimes_{R^{\sharp_2+}} R^+/\varpi$. We note that this notion does not depend on
the choice of pseudouniformizer $\varpi$, by Serre--Tate lifting
\cite[Lemma~1.1.3]{KatzSerreTate}. If $A_1, A_2$ are formal abelian schemes over
$\spf R^{\sharp_1+}, \spf R^{\sharp_2+}$ (up to prime-to-$p$ isogeny), we similarly define a \emph{formal
quasi-isogeny}
\[
  A_1 \dashrightarrow A_2
\]
to be a quasi-isogeny $A_1 \otimes_{R^{\sharp_1+}} R^+/\varpi \dashrightarrow
A_2 \otimes_{R^{\sharp_2+}} R^+/\varpi$.

\subsubsection{} \label{subsub:WeaklyPolarized} Suppose now $(R^\sharp, R^{\sharp+})$ is an untilt of $(R,R^+)$, and suppose $A$ is a formal abelian scheme up to prime-to-$p$ isogeny over $\spf R^{\sharp+}$. A \textit{weak principal polarization} of $A$ is a prime-to-$p$ quasi-isogeny $\lambda: A \dashrightarrow A^\vee$ that is equal to $c \cdot \mu$ where $\mu:A \to A^{\vee}$ is a principal polarization and where $c$ is a locally constant $\zlocp^{\times}$-valued function on $\spec R^+/\varpi$. 

If $(A_1, \lambda_1)$ and $(A_2, \lambda_2)$ are weakly polarized formal abelian schemes up to prime-to-$p$ isogeny over two untilts $(R^{\sharp_1}, R^{\sharp_1+})$ and $(R^{\sharp_2}, R^{\sharp_2+})$, respectively, then a formal quasi-isogeny
\begin{align*}
    f \colon (A_1, \lambda_1) \dashrightarrow (A_2, \lambda_2)
\end{align*}
is a formal quasi-isogeny $A_1 \dashrightarrow A_2$ such that 
\begin{align*}
    f^\vee \circ \lambda_2 \circ f = c \, \lambda_1,
\end{align*}
where $c$ is a locally constant $\mathbb{Q}^\times$-valued function on $\spec R^+/\varpi$. 

\subsubsection{} \label{Sec:FormalQuasiIsogeniesFF} Let $S = \spa(R,R^+)$ be in $\perf$ and let $(R^{\sharp}, R^{\sharp+})$ be an untilt. Let $\varpi$ be a choice of pseudouniformizer of $R^+$. For a $p$-divisible group $Y$ over $R^{+}/\varpi$ we will write $\mathbb{D}^{\ast}(Y)$ for the finite projective $\mathbf{A}_\mathrm{crys}(R^{+}/\varpi)$-module defined in Section \ref{Sec:Dieudonne}, which comes equipped with an isomorphism
\begin{align}
    \phi_{Y}:\phi^{\ast} \mathbb{D}^{\ast}(Y)[1/p] \to \mathbb{D}^{\ast}(Y)[1/p]. 
\end{align}
Define $\mathbf{B}_\mathrm{crys}^+(R^+/\varpi)=\mathbf{A}_\mathrm{crys}(R^+/\varpi)[1/p]$ and note that $\mathbb{D}^{\ast}(Y)[1/p]$ is a module over $\mathbf{B}_\mathrm{crys}^+(R^+/\varpi)$. Recall that for a large enough rational number $r \gg 0$, there is a natural ring homomorphism
\[
  \mathbf{B}_\mathrm{crys}^+(R^+/\varpi) \to \Gamma(\mathcal{Y}_{[r, \infty]}(R, R^+),
  \mathscr{O}_{\mathcal{Y}_{[r,\infty]}(R,R^+)}),
\]
for instance, as in \cite[Section~6.2]{FarguesConjecture}. Base changing along
this map, we obtain from $\mathbb{D}^{\ast}(Y)[1/p]$ a vector bundle $\mathcal{E}(Y)$ on
$\mathcal{Y}_{[r, \infty)}(R, R^+)$, together with an isomorphism $\phi^\ast
\mathcal{E}(Y) \xrightarrow{\sim} \mathcal{E}(Y)$ of vector bundles. Descending along this
isomorphism, we obtain a vector bundle $\mathcal{E}(Y)$ on $X_S$. As the category of $\phi$-equivariant vector bundles on $\mathcal{Y}_{[r,\infty]}(R,R^+)$ does not depend on $r$, this construction is independent of the choice of pseudouniformizer $\varpi$ as well as the rational number $r$.

\subsubsection{} If $Y$ is a $p$-divisible group over $R^{\sharp+}$, then we can define a vector bundle over $X_S$ in two ways: either we take $\mathcal{E}(Y_{R^+/\varpi})$ as above or we consider the vector bundle shtuka $\mathscr{V}(Y)$ from Section \ref{Sec:Dieudonne} and apply the map $\mathrm{BL}^{\circ}$ from equation \eqref{Eq:ShtukaMorphism}.
\begin{Lem} \label{Lem:Compatibility}
    There is a functorial (in $Y$) isomorphism $\mathcal{E}(Y_{R^+/\varpi}) \to \mathrm{BL}^{\circ}(\mathscr{V}(Y))$.
\end{Lem}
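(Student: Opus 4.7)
The plan is to identify both vector bundles as Frobenius descents of the \emph{same} $\phi$-equivariant vector bundle on $\mathcal{Y}_{[r,\infty)}(R,R^+)$ for any sufficiently large rational $r$, the comparison being furnished by the isomorphism recalled at the end of Section~\ref{subsub:Rsharpmap}.

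Unraveling the two definitions, the bundle $\mathcal{E}(Y_{R^+/\varpi})$ is constructed in Section~\ref{Sec:FormalQuasiIsogeniesFF} by base changing the Dieudonn\'e module $\mathbb{D}^{\ast}(Y_{R^+/\varpi})[1/p]$ along the natural map $\mathbf{B}_{\mathrm{crys}}^+(R^+/\varpi) \to \Gamma(\mathcal{Y}_{[r,\infty]}(R,R^+), \mathscr{O})$, and then descending along the isomorphism $\phi^{\ast} \mathcal{E}(Y) \xrightarrow{\sim} \mathcal{E}(Y)$ induced by $\phi_Y$. On the other hand, $\mathrm{BL}^{\circ}(\mathscr{V}(Y))$ is obtained by base changing the BKF module $(M^{\natural}(Y), \phi_Y^{\natural})$ along the canonical map $W(R^+) \to \Gamma(\mathcal{Y}_{[r,\infty)}(R,R^+), \mathscr{O})$ and descending along $\phi_Y^{\natural}$, as in the construction of the map in~\eqref{Eq:ShtukaMorphism}.

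The key input is the $\phi$-equivariant comparison isomorphism
\[
  (M(Y), \phi_Y) \otimes_{W(R^+)} \mathbf{A}_{\mathrm{crys}}(R^+/\varpi) \xrightarrow{\sim} (\mathbb{D}^{\ast}(Y_{R^+/\varpi}), \phi_Y)
\]
coming from \cite[Theorem~17.5.2]{ScholzeWeinsteinBerkeley}, recalled at the end of Section~\ref{subsub:Rsharpmap}. Applying $(\phi^{-1})^{\ast}$, inverting $p$, and then further base changing along $\mathbf{B}_{\mathrm{crys}}^+(R^+/\varpi) \to \Gamma(\mathcal{Y}_{[r,\infty)}(R,R^+), \mathscr{O})$ yields a $\phi$-equivariant identification between the base changes of $(M^{\natural}(Y), \phi_Y^{\natural})$ and of $\mathbb{D}^{\ast}(Y_{R^+/\varpi})[1/p]$ over $\Gamma(\mathcal{Y}_{[r,\infty)}(R,R^+), \mathscr{O})$. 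Functoriality in $Y$ is immediate from functoriality of each of the ingredients. Descending along Frobenius in the style of \cite[Proposition~22.1.1]{ScholzeWeinsteinBerkeley} then produces the desired functorial isomorphism on $X_S$.

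The main obstacle will be bookkeeping for the Frobenius twists: one must verify that the $(\phi^{-1})^{\ast}$ relating $M(Y)$ to $M^{\natural}(Y)$ is harmlessly absorbed once one passes to $\mathcal{Y}_{[r,\infty)}(R,R^+)$, where Frobenius is an automorphism of the ring of functions, so that the resulting $\phi$-equivariant bundles really do agree and not merely up to a $\phi$-twist. Once this is arranged, no further ingredient beyond the $\phi$-equivariance of the Scholze--Weinstein comparison is needed.
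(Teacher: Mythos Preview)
Your approach is essentially the same as the paper's: both invoke the Scholze--Weinstein comparison isomorphism between $(M(Y),\phi_Y)\otimes_{W(R^+)}\mathbf{A}_{\mathrm{crys}}(R^+/\varpi)$ and $(\mathbb{D}^{\ast}(Y_{R^+/\varpi}),\phi_Y)$, then descend the resulting $\phi$-equivariant identification over $\mathcal{Y}_{[r,\infty)}$ to $X_S$. The paper's proof is terser and simply records that the comparison ``induces an isomorphism of the induced vector bundles on $X_S$''; your write-up is a bit more careful in flagging the Frobenius-twist bookkeeping (note that your step ``applying $(\phi^{-1})^{\ast}$'' should really be done only after passing to a setting where Frobenius is invertible, since $\phi$ is not an automorphism of $\mathbf{A}_{\mathrm{crys}}(R^+/\varpi)$ --- exactly the point you make in your last paragraph).
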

\begin{proof}
The shtuka $\mathscr{V}(Y)$ comes from the BKF-module $(M^{\natural}(Y), \phi^{\natural}_Y)$ over $R^{\sharp+}$, while $\mathcal{E}(Y_{R^+/\varpi})$ comes from the finite projective $\mathbf{A}_\mathrm{crys}(R^{+}/\varpi)$-module given by $\mathbb{D}^{\ast}(Y_{R^+/\varpi})$. The natural comparison isomorphism in Section \ref{Sec:Dieudonne}
\begin{align}
    \phi^{\ast} (M^{\natural}(Y), \phi^{\natural}_Y) \otimes_{W(R^{+})} \mathbf{A}_\mathrm{crys}(R^{+}/\varpi) \to (\mathbb{D}^{\ast}(Y \otimes_{R\sharp+} R^+/\varpi), \phi_Y),
\end{align}
induces an isomorphism of the induced vector bundles on $X_S$.
\end{proof}

\subsubsection{} \label{Sec:FormalQuasiIsogenyBundle} Assume that we are given untilts $(R^{\sharp_i}, R^{\sharp_i+})$ of $(R,R^+)$ and $p$-divisible groups $Y_i$ over $\spf R^{\sharp_i+}$ for $i=1,2$. A formal quasi-isogeny $f:Y_1 \dashrightarrow Y_2$ determines an isomorphism of $\mathbf{B}_\mathrm{crys}^+(R^+/\varpi)$-modules
\begin{align}
    \mathbb{D}^{\ast}(f): \mathbb{D}^{\ast}(Y_1)[1/p] \xrightarrow{\sim} \mathbb{D}^{\ast}(Y_2)[1/p],
\end{align}
compatibly with the two Frobenius maps. It in turn induces an isomorphism
\begin{align}
    \mathcal{E}(f): \mathcal{E}(Y_1) \to \mathcal{E}(Y_2)
\end{align}
of vector bundles on $X_S$. If $\varpi$ is replaced with another
pseudouniformizer $\varpi^\prime$ dividing $\varpi$, the quasi-isogeny $f$
restricted to $R^+/\varpi^\prime$ induces the same isomorphism of vector bundles
$\mathcal{E}(f) : \mathcal{E}(Y_1) \to \mathcal{E}(Y_2)$, because we may choose $r$
large enough in the construction so that $\Gamma(\mathcal{Y}_{[r,\infty]}(R,
R^+), \mathscr{O}_{\mathcal{Y}_{[r,\infty]}(R,R^+)})$ contains
$\mathbf{B}_\mathrm{crys}^+(R^+/\varpi^\prime)$ as well. This shows that $\mathcal{E}(f)$
does not depend on the choice of pseudouniformizer $\varpi$.

\section{Witt vector shtukas and isocrystals} \label{Sec:PerfecAlgGeom}
The goal of this section is to introduce the Witt vector (local) shtukas of \cite{XiaoZhu, ShenYuZhang}, and to compare them with the shtukas of Pappas--Rapoport. We also prove new fundamental technical results in the theory of Witt vector shtukas, see Proposition \ref{Prop:VSurjective}.

Let $\affperf$ denote the category of perfect $\fp$-algebras. We will equip this with the Grothendieck (pre-)topology where covers are given by v-covers in the sense of \cite[Definition 2.1]{BhattScholze}. By \cite[Theorem 4.1]{BhattScholze}, perfect $\fp$-schemes define v-sheaves on $\affperf$, and for $X$ a scheme over $\fp$ we will write $X^{\mathrm{perf}}$ for the perfection of $X$ (the inverse limit over the Frobenius of $X$). Note that the natural map $X^{\mathrm{perf}} \to X$ is a universal homeomorphism. 

\subsubsection{} Recall that an integral domain $R$ is called \emph{absolutely integrally closed} if the fraction field $\operatorname{Frac} R$ is algebraically closed and if $R$ is integrally closed in $\operatorname{Frac} R$. The following result is \cite[Lemma 11.2]{AnschuetzExtension}.

\begin{Lem} \label{Lem:SchemeTheoreticProductOfPoints}
    Let $R \in \affperf$, then there is a v-cover $S \to \spec R$ with $S$ the spectrum of a product of absolutely integrally closed valuation rings.
\end{Lem}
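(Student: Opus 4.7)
The plan is to construct $S$ as a large product of spectra of absolutely integrally closed (AIC) valuation rings indexed by all maps from $R$ into bounded-cardinality AIC valuation rings, and then to verify the v-cover property using the standard valuative criterion from \cite{BhattScholze}.

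More precisely, fix a cardinal $\kappa$ large enough that the following makes sense as a set: let $I$ be the collection of isomorphism classes of pairs $(V, f)$ where $V$ is an AIC valuation ring with $|V| \leq \kappa$ and $f \colon R \to V$ is a ring homomorphism. Set
\[
    T := \prod_{(V, f) \in I} V, \qquad S := \spec T,
\]
where the projection $T \to V$ combined with $f$ gives the structure map $R \to T$, hence $S \to \spec R$. By construction, $T$ is a product of AIC valuation rings.

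To show $S \to \spec R$ is a v-cover, I would appeal to the valuative criterion: it suffices to show that for every valuation ring $W$ and morphism $g \colon \spec W \to \spec R$, there is an extension of valuation rings $W \hookrightarrow W'$ and a lift $\spec W' \to S$ over $\spec R$. Given such $(W, g)$, one first replaces $W$ by its integral closure $\widetilde{W}$ inside an algebraic closure of $\operatorname{Frac}(W)$: valuation rings are integrally closed, so $\widetilde{W}$ is still a valuation ring, and by construction its fraction field is algebraically closed, so $\widetilde{W}$ is an AIC valuation ring containing $W$. After choosing $\kappa$ large enough with respect to the cardinality of $R$ (which controls the relevant class of $W$'s, since v-covers are detected on valuation rings of bounded size, see \cite[Definition~2.1]{BhattScholze}), the pair $(\widetilde{W}, \widetilde{g}\colon R \to W \hookrightarrow \widetilde{W})$ lies in $I$, and the projection $T \twoheadrightarrow \widetilde{W}$ provides the desired lift.

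The main obstacle is the cardinality bookkeeping: one has to pick $\kappa$ so that (i) $I$ is genuinely a set and (ii) every $(W, g)$ appearing in the valuative criterion can be enlarged to some $(\widetilde{W}, \widetilde{g}) \in I$. This is handled by Bhatt–Scholze's observation that v-covers are detected using valuation rings of cardinality bounded in terms of the base, so choosing $\kappa$ sufficiently large compared to $|R|$ suffices. Everything else is a routine unwinding of the valuative criterion combined with the fact that any valuation ring embeds into an AIC one via its absolute integral closure.
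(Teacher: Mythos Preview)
Your overall strategy is the standard one, and indeed the paper does not give its own argument but simply cites \cite[Lemma~11.2]{AnschuetzExtension}, whose proof proceeds along essentially the lines you describe. So there is nothing to compare at the level of approach.

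There is, however, one genuine error in your argument. You write that one replaces $W$ by its integral closure $\widetilde{W}$ inside an algebraic closure of $\operatorname{Frac}(W)$, and justify that $\widetilde{W}$ is a valuation ring by saying ``valuation rings are integrally closed.'' This is a non sequitur: integral closedness is a statement about $W$ inside its own fraction field, and says nothing about the integral closure in a larger field. In fact the integral closure of a valuation ring $W$ in $\overline{\operatorname{Frac}(W)}$ is only a Pr\"ufer domain in general; it is a valuation ring precisely when the valuation admits a unique extension, i.e.\ when $W$ is Henselian. For a concrete failure, take $W = \mathbb{Z}_{(2)}$: its integral closure in $\overline{\mathbb{Q}}$ has one maximal ideal for each prime of $\overline{\mathbb{Z}}$ lying over $2$, so it is not local.

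The fix is simple: rather than taking the integral closure, extend the valuation. By Chevalley's extension theorem (or a direct Zorn's lemma argument on local subrings of $\overline{\operatorname{Frac}(W)}$ dominating $W$), there exists a valuation ring $W'$ of $\overline{\operatorname{Frac}(W)}$ with $W' \cap \operatorname{Frac}(W) = W$. Then $W'$ is absolutely integrally closed (its fraction field is algebraically closed and valuation rings are integrally closed in their fraction fields), $W \hookrightarrow W'$ is an extension of valuation rings, and the cardinality bound goes through since $|\overline{\operatorname{Frac}(W)}| = |\operatorname{Frac}(W)|$ for infinite $W$. With this correction, your argument is complete.
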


\begin{Lem} \label{Lem:ExistenceSection}
    Let $R=\prod_{i \in I} V_i$ be a product of absolutely integrally closed
    valuation rings $V_i$, let $S=\spec R$ and let $f:X \to S$ be the perfection of a proper surjective morphism of schemes. Then $f$ admits a section.
\end{Lem}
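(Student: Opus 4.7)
The plan is to construct the section in four steps: (i) reduce to finding a scheme-theoretic section of the unperfected morphism $g \colon Y \to S$, (ii) reduce to the case where $g$ is projective via Chow's lemma, (iii) construct sections over each factor $\spec V_i$ using the algebraic closedness of $\operatorname{Frac} V_i$, and (iv) combine these into a global section using the identity $Y(R) = \prod_i Y(V_i)$.

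For (i), write $f = g^{\mathrm{perf}}$ where $g \colon Y \to S$ is a proper surjective morphism of schemes of finite presentation. Each $V_i$ is perfect (a characteristic-$p$ absolutely integrally closed valuation ring contains all $p$-th roots of its elements), hence $R = \prod_i V_i$ is perfect, and by the universal property of perfection every morphism $\spec R \to Y$ factors uniquely through $Y^{\mathrm{perf}}$; thus it suffices to construct a section of $g$. For (ii), Chow's lemma produces a projective $S$-scheme $Y'$ with a proper surjective map $p \colon Y' \to Y$; any section of $Y' \to S$ composes with $p$ to give a section of $g$, so we may assume $Y \hookrightarrow \mathbb{P}^n_S$ is projective. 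For (iii), the generic fiber $Y_{K_i}$ of $g_i \colon Y_{V_i} \to \spec V_i$ is a nonempty finite-type scheme over the algebraically closed field $K_i = \operatorname{Frac} V_i$; it admits a $K_i$-rational point by the Nullstellensatz, and the valuative criterion of properness extends this to a section $s_i \in Y(V_i)$.

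The heart of the proof is (iv), which rests on the following module-theoretic fact: every finitely presented $R$-module $M$ decomposes as $M = \prod_i M \otimes_R V_i$. This is a direct consequence of the observation that, for any $R$-linear map $\phi \colon R^m \to R^n$, one has $\operatorname{image}(\phi) = \prod_i \operatorname{image}(\phi \otimes V_i)$ inside $R^n = \prod_i V_i^n$, verified by componentwise lifting of preimages. As corollaries, $\operatorname{Pic}(\spec R) = 1$ (every invertible $R$-module is free) and $\mathbb{P}^n(R) = \prod_i \mathbb{P}^n(V_i)$. The homogeneous equations defining $Y \subset \mathbb{P}^n_S$ then decompose componentwise, giving $Y(R) = \prod_i Y(V_i)$, and the tuple $(s_i)_i$ from (iii) produces the required section. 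The main obstacle is precisely this assembly step (iv): the individual $s_i$ are straightforward, but combining them into a single section over $\spec R$ requires both the module-theoretic decomposition across the product structure and the reduction to the projective case in (ii), so that sections admit an explicit coordinate description.
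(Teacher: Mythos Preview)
Your proof is correct, and the core idea—produce a section over each $\spec V_i$ via a rational point over the algebraically closed fraction field extended by the valuative criterion, then assemble—matches the paper's. The difference lies in how the assembly is organized.

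The paper avoids both your steps (i) and (ii). It works directly with the perfected scheme $X$, which is quasi-compact and universally closed over $S$; it then covers $X$ by finitely many affine opens $U_1=\spec A_1,\ldots,U_k=\spec A_k$. Since each $\spec V_i$ is local, the section $t_i\colon\spec V_i\to X$ factors through some $U_{j(i)}$. Partitioning $I=\coprod_{j=1}^k I_j$ by the value of $j(i)$ and using the universal property of products, the ring maps $A_j\to V_i$ (for $i\in I_j$) combine into a single map $A_j\to\prod_{i\in I_j}V_i$; because $R=\prod_{j}\prod_{i\in I_j}V_i$ is now a \emph{finite} product, $\spec R=\coprod_j\spec\prod_{i\in I_j}V_i$ and these morphisms glue to a section $S\to X$.

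Your route reaches the same destination through a projective embedding: the $n{+}1$ standard charts of $\mathbb{P}^n$ play the role of the $U_j$, and the identity $\mathbb{P}^n(R)=\prod_i\mathbb{P}^n(V_i)$ (via your module-theoretic fact and $\operatorname{Pic} R=0$) packages the assembly. The detour through Chow's lemma does introduce a small wrinkle, however: over the non-Noetherian base $S$ the morphism $g$ is a priori only of finite type, not of finite presentation as you wrote, and the standard scheme-theoretic Chow's lemma requires either a Noetherian base or finitely many irreducible components of $Y$—the latter fails here when $I$ is infinite. This is reparable (e.g.\ first embed $Y$ as a closed subscheme of a finitely presented proper $S$-scheme and apply Chow's lemma there), but the paper's direct argument sidesteps the issue entirely.
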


\begin{proof}
Note that $f$ is quasi-compact and universally closed because it is the perfection of a proper morphism.

    For $i \in I$ let $s_i=\spec V_i \to S$ be the natural closed immersion, and
    let $f_i:X_i \to s_i$ be the restriction of $X$ to $s_i$. Since the fraction
    field of $V_i$ is algebraically closed, since $f$ is surjective, and $f$ is
    the perfection of a finite type morphism, it follows that $X_i$ has a $\operatorname{Frac}
    V_i$-point. By the valuative criterion for quasi-compact universally closed
    morphisms, see \cite[Proposition 01KF]{stacks-project}, this extends to a section $t_i$ of $f_i$. Since $V_i$ is local, it follows that $t_i:s_i \to X_i \to X$ factors through an affine open
  $U_i \subset X$. \smallskip 
    
    Since $X$ is quasi-compact, there are finitely many affine opens $U_1, \cdots, U_k$ of $X$ such that each $t_i$ factors through $U_j$ for some $1 \le j \le k$. Choosing such a $U_j$ for each $i \in I$, we get a partition $I=I_1 \coprod \cdots \coprod I_k$. 
    
    If we write $U_j=\spec A_j$, then for $i \in I_j$ the map $t_i$ corresponds to a map of $R$-algebras $A_j \to V_i$. We can assemble these to a map of $R$-algebras $A_j \to \prod_{i \in I_j} V_i$ using the universal property of the product, which corresponds to a map of $S$-schemes $f_j:\spec \prod_{i \in I_j} V_i \to U_j$. We can then take the disjoint union over $1 \le j \le k$ to get a map of $S$-schemes
    \begin{align}
      f \colon S=\coprod_{j=1}^k \spec ({\textstyle \prod_{i \in I_j} V_i}) \to X,
    \end{align}
    which gives the desired section.
\end{proof}

\subsubsection{} \label{Sec:Reduction} Recall that attached to a perfect $\fp$-algebra $R$ there is a v-sheaf $\spd R = \spd(R,R)$ on $\perf$. The following lemma is \cite[Proposition 3.7]{GleasonSpecialization}.
\begin{Lem} \label{Lem:Vcover}
  If $A \to B$ is a morphism of perfect $\fp$-algebras that is a
  v-cover, then the induced map $\spd B \to \spd A$ is v-surjective.
\end{Lem}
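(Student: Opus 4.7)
\textit{Proof plan.} I aim to directly verify the v-surjectivity criterion for $\spd B \to \spd A$. Given an affinoid perfectoid $\spa(R, R^+) \in \perf$ together with a morphism $\spa(R, R^+) \to \spd A$ — which, since $A$ is a perfect $\fp$-algebra equipped with the discrete topology, corresponds exactly to a ring homomorphism $x \colon A \to R^+$ — I must construct a v-cover $\spa(R', R^{\prime+}) \to \spa(R, R^+)$ in $\perf$ together with a compatible lift $B \to R^{\prime+}$ extending $x$.

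The first step is to form the ring-theoretic base change: set $C := (B \otimes_A R^+)^{\mathrm{perf}}$. Since v-covers of $\fp$-algebras are stable under base change, and since perfection (being a universal homeomorphism) preserves v-covers, the natural map $R^+ \to C$ is a v-cover of perfect $\fp$-algebras. I would then apply Lemma~\ref{Lem:SchemeTheoreticProductOfPoints} to refine this by a further v-cover $C \to D$ with $D = \prod_{i \in I} V_i$ a product of absolutely integrally closed valuation rings. The next step — the crux of the argument — is to promote the resulting v-cover of perfect $\fp$-algebras $R^+ \to D$ to a v-cover in the category of perfectoid spaces. For this, fix a pseudouniformizer $\varpi \in R^+$. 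Discarding from $D$ those factors $V_i$ on which $\varpi$ vanishes (they correspond to data on the special fiber of $\spec R^+$, which lies outside the analytic locus $\spa(R, R^+)$), we obtain a subproduct $D'$ in which $\varpi$ is a non-zero-divisor. Taking the $\varpi$-adic completion $\widehat{D}'$ and the integral closure $\widehat{D}'^+$ of its image in $\widehat{D}'[1/\varpi]$, we obtain a perfectoid Huber pair $(\widehat{D}'[1/\varpi], \widehat{D}'^+)$ in characteristic $p$ — here I crucially use that any uniform perfect complete Tate ring in characteristic $p$ is perfectoid. The desired lift is then the composition $B \to B \otimes_A R^+ \to C \to D \to \widehat{D}'^+$.

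The main technical obstacle is the passage from the scheme-theoretic v-cover $R^+ \to \widehat{D}'^+$ to a v-cover of perfectoid spaces $\spa(\widehat{D}'[1/\varpi], \widehat{D}'^+) \to \spa(R, R^+)$ in $\perf$. This requires justifying two things: first, that discarding the factors where $\varpi$ vanishes does not compromise v-surjectivity for the analytic-theoretic target $\spa(R, R^+)$; and second, that every (affinoid perfectoid) v-covering test case for $\spa(R, R^+)$ is refined by our construction. This should reduce to the valuative criterion for v-covers applied to $\spec \widehat{D}'^+ \to \spec R^+$, combined with the fact that the $V_i$ are absolutely integrally closed and in characteristic $p$, so that rank-one geometric points of $\spa(R, R^+)$ lift automatically. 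Once this step is established, the proof is complete.
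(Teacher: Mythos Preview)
The paper does not prove this lemma directly; it simply cites \cite[Proposition 3.7]{GleasonSpecialization}. Your strategy --- base change the scheme-theoretic v-cover to $R^+$, refine to a product of absolutely integrally closed valuation rings via Lemma~\ref{Lem:SchemeTheoreticProductOfPoints}, discard the factors on which $\varpi$ vanishes, and $\varpi$-adically complete --- is the correct shape and is essentially how the cited result is established, so in that sense you are doing more than the paper does.

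You have, however, correctly located the crux and not closed it. Two issues. The minor one: perfectoidness of $(\widehat{D}'[1/\varpi], \widehat{D}'^+)$ requires uniformity of the Tate ring, which is true but needs an argument --- for any valuation ring $V$ with $\varpi \in \mathfrak{m}_V \setminus \{0\}$ one checks that if $v(x) < -v(\varpi)$ then $x$ cannot be power-bounded, whence $(V[1/\varpi])^\circ \subseteq \varpi^{-1}V$, and this bound passes uniformly to products. The substantive one: lifting only ``rank-one geometric points of $\spa(R,R^+)$'' is \emph{not} sufficient for v-surjectivity, which demands surjectivity on all of $\lvert\spa(R,R^+)\rvert$, including higher-rank points. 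To finish, you must show that every continuous valuation on $R$ bounded by $1$ on $R^+$ lifts to one on $\widehat{D}'[1/\varpi]$ bounded by $1$ on $\widehat{D}'^+$. A cleaner route is to reduce at the outset to the case $(R,R^+) = (C,C^+)$ a single geometric point of arbitrary rank, apply the scheme-theoretic valuative criterion directly to the valuation ring $C^+$ to obtain a faithfully flat extension $C^+ \hookrightarrow W$ with $B \to W$, and then verify that every prime of $C^+$ containing $\varpi$ lifts to a prime of $W$ containing $\varpi$; such primes automatically contain $\bigcap_n \varpi^n W$ (since $\varpi$ does not) and hence survive $\varpi$-adic completion, giving the required points of $\spa(\widehat{W}[1/\varpi], \widehat{W})$.
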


Let $\mathcal{F} \colon \perf^\mathrm{op} \to \mathsf{Grpd}$ be a v-stack. Following \cite[Definition 3.12]{GleasonSpecialization}, we define the \textit{reduction} of $\mathcal{F}$ to be the functor $\mathcal{F}^\mathrm{red} \colon (\affperf)^\mathrm{op} \to \mathsf{Grpd}$ defined by
\begin{align*}
    \mathcal{F}^\mathrm{red}(A) = \Hom(\spd A, \mathcal{F}).
\end{align*}
It follows immediately from Lemma \ref{Lem:Vcover} that $\mathcal{F}^\mathrm{red}$ is a v-stack on $\perf$, and it is clear from the construction that the assignment $\mathcal{F} \mapsto \mathcal{F}^\mathrm{red}$ commutes with limits. We will implicitly use the following lemma below.

\begin{Lem} \label{Lem:ReductionOfScheme}
  Let $X$ be a scheme over $\zp$. Then the natural map
  \[
    X_{\fp}^\mathrm{perf} \to (X^\diamond)^\mathrm{red}
  \]
  is an isomorphism.
\end{Lem}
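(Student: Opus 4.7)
The assertion is Zariski-local in $X$ (both sides are Zariski sheaves in $X$), so I would reduce to the case $X = \spec A$ affine. For a perfect $\fp$-algebra $B$, we have $X_\fp^{\mathrm{perf}}(B) = \Hom_{\zp}(A, B) = \Hom((A/p)^{\mathrm{perf}}, B)$, since any ring map to $B$ factors through $A/p$ and then, as $B$ is perfect, through $(A/p)^{\mathrm{perf}}$. The natural map to $(X^\diamond)^{\mathrm{red}}(B) = \Hom_{v\text{-sh}/\perf}(\spd B, X^\diamond)$ sends $\phi\colon A \to B$ to the morphism of v-sheaves which, for $S = \spa(R, R^+) \in \perf$ and $\iota \in \spd B(S) = \Hom(B, R^+)$, outputs the $S$-point of $X^\diamond$ consisting of the trivial untilt $(R, R^+)$ of $S$ together with the composite $A \xrightarrow{\phi} B \xrightarrow{\iota} R^+$.

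The key observation is that any morphism $\spd B \to X^\diamond$ automatically factors through the fiber product $X_\fp^\diamond = X^\diamond \times_{\spd \zp} \spd \fp$: the map $\spd B \to \spd \zp$ factors through $\spd \fp$ since $B$ is an $\fp$-algebra. Moreover, for every $S = \spa(R,R^+) \in \perf$, the ring $R^+$ is a perfect $\fp$-algebra, so $\Hom(A/p, R^+) = \Hom((A/p)^{\mathrm{perf}}, R^+)$, which yields an identification $X_\fp^\diamond \simeq X_\fp^{\mathrm{perf},\diamond}$ as v-sheaves on $\perf$. An analogous unraveling of definitions shows that $\spd B$ and $(\spec B)^\diamond$ agree as v-sheaves on $\perf$: both send $S = \spa(R,R^+)$ to $\Hom(B, R^+)$ (for $\spd B$, the untilt is necessarily trivial because $B$ has characteristic $p$). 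Combining these identifications reduces the lemma to the statement that the diamond functor $Y \mapsto Y^\diamond$ from affine perfect $\fp$-schemes to v-sheaves on $\perf$ is fully faithful.

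The remaining and main step is this full faithfulness, which I would deduce from the v-descent theorem for perfect $\fp$-schemes of Bhatt--Scholze \cite{BhattScholze}, together with Lemma \ref{Lem:Vcover}. For injectivity, any two distinct maps $(A/p)^{\mathrm{perf}} \to B$ can be separated by composing with a well-chosen map $\iota\colon B \to R^+$ with $(R,R^+) \in \perf$; concretely, given a nonzero element of $B$, one finds such an $\iota$ by embedding $B$ into a perfect residue field and then into a perfectoid valuation ring such as $k[[t^{1/p^\infty}]]^\wedge$. For surjectivity, given a natural transformation $(\spec B)^\diamond \to (\spec C)^\diamond$ with $C = (A/p)^{\mathrm{perf}}$, one extracts a ring map $C \to B$ via v-descent: Lemma \ref{Lem:Vcover} converts a v-cover of $B$ by a perfect $\fp$-algebra into a v-cover of $\spd B$ in $\perf$, and v-descent for the affine perfect $\fp$-scheme $\spec C$ then pins down the desired map uniquely. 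The main technical obstacle in this plan is precisely this final step, which is however entirely formal given the Bhatt--Scholze v-descent theorem.
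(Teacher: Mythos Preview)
Your outline matches the paper's proof: factor any $\spd B \to X^\diamond$ through $(X_{\fp}^{\mathrm{perf}})^\diamond = X_{\fp}^\diamond \subset X^\diamond$, and then invoke full faithfulness of $Y \mapsto Y^\diamond$ on perfect $\fp$-schemes. The paper simply cites \cite[Lemma~2.27]{GleasonSpecialization} for the first step and \cite[Proposition~18.3.1]{ScholzeWeinsteinBerkeley} for the second, whereas you try to supply direct arguments.

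Your direct arguments are fine except for the surjectivity half of full faithfulness, where the plan as written is circular. You take a v-cover $B \to B'$ in $\affperf$ and invoke Lemma~\ref{Lem:Vcover} to get a v-surjection $\spd B' \to \spd B$ in $\perf$; but restricting $\eta \colon \spd B \to \spd C$ along this only produces another map of v-sheaves $\spd B' \to \spd C$, not a ring map $C \to B'$. Extracting $C \to B'$ would require the very full faithfulness you are proving, now for $B'$ in place of $B$, and the Bhatt--Scholze v-descent for $\spec C$ on $\affperf$ cannot be applied until you already have such a map. One way to close the gap is to evaluate $\eta$ at an honest perfectoid test object: set $R_0^+$ to be the $t$-adic completion of $B[t^{1/p^\infty}]$ and $R_0 = R_0^+[1/t]$ (passing to the integral closure of $R_0^+$ in $R_0$ if necessary), so that $B \hookrightarrow R_0^+$ defines a point of $\spd B(R_0,R_0^+)$; applying $\eta$ gives $C \to R_0^+$, and naturality with respect to suitable $B$-algebra endomorphisms of the test object forces the image to lie in $B$. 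Alternatively, just cite \cite[Proposition~18.3.1]{ScholzeWeinsteinBerkeley} as the paper does.
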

\begin{proof}
  For any map $f \colon \spd R \to X^\diamond$, the composition $\spd R \to
  X^\diamond \to \spd \zp$ factors through $\spd \fp \subset \spd \zp$ by
  \cite[Lemma~2.30]{GleasonSpecialization}. Therefore $f$ factors through
 $(X_{\fp}^\mathrm{perf})^\diamond = X_{\fp}^\diamond \subseteq X^\diamond$.
  We now apply \cite[Proposition~18.3.1]{ScholzeWeinsteinBerkeley}.
\end{proof}

\subsection{Witt vector shtukas}
Let $\mathcal{G}$ be a quasi-parahoric group scheme over $\zp$ with generic fiber $G$ and relative identity component $\calgcirc$. In this section we study the relationship between the reduction of the stack of $\mathcal{G}$-shtukas and the stack of Witt vector shtukas defined in \cite{XiaoZhu}. Let us first recall some definitions. 

\subsubsection{} \label{Sec:Loops} For an object $R$ of $\affperf$ we set
\begin{align}
  D_R=\spec W(R), \qquad D_R^{\ast}=\spec W(R)[1/p],
\end{align}
where $W(R)$ denotes the ring of $p$-typical Witt vectors of $R$. The Frobenius on $R$ induces a Frobenius morphism $\sigma=\sigma_R:W(R) \to W(R)$. For an affine scheme $X$ over $\zp$ we define its loop space $L^+X$ as the functor on $\affperf$ by
\begin{align}
    L^+X(R)=X(D_R)
\end{align}
By \cite[Section 1.1]{Zhu1}, the functor $ L^+X$ is representable by an affine perfect scheme. 

\subsubsection{}  Let $R \in \affperf$ and let $\mathcal{P},\mathcal{Q}$ be $\mathcal{G}$-torsors over $D_R$. Recall from \cite[Section 3.1.3]{XiaoZhu} that a \emph{modification} $\beta: \mathcal{P} \dashrightarrow \mathcal{Q}$ is an isomorphism of $G$-torsors
\begin{align}
  \beta: \restr{\mP}{D_R^{\ast}} \to \restr{\mathcal{Q}}{D_R^{\ast}}.
\end{align}
We define the \emph{Witt vector (partial) affine flag variety} $\grgloc$ to be the functor sending $R$ to the set of isomorphism classes of modifications
\begin{align}
  \alpha:\mP \dashrightarrow \mP^0,
\end{align}
where $\mP$ is a $\calg$-torsor over $D_R$, and where $\mP^0$ is the trivial $\calg$-torsor over $D_R$. 

Recall that $\mathrm{Gr}_\mathcal{G}$ denotes the Beilinson--Drinfeld Grassmannian over $\spd \zp$ as in \cite[Definition 20.3.1]{ScholzeWeinsteinBerkeley}. We have the following comparison between $\grgloc$ and $\mathrm{Gr}_\mathcal{G}$.

\begin{Lem} \label{Lem:ReductionGrassmannian}
    There is a natural isomorphism 
    \begin{align}
        \grgloc \to \mathrm{Gr}_{\mathcal{G}}^{\mathrm{red}}.
    \end{align}
\end{Lem}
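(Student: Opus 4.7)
The plan is to construct a natural transformation $\grgloc \to \mathrm{Gr}_{\mathcal{G}}^{\mathrm{red}}$ and verify it is an isomorphism of v-sheaves on $\affperf$ by reducing to a v-cover. Note first that since $R$ is a perfect $\fp$-algebra, every continuous ring homomorphism $\zp \to R$ factors through $\fp$, so the canonical map $\spd R \to \spd \zp$ factors through $\spd \fp$. Consequently
\begin{align}
    \mathrm{Gr}_\mathcal{G}^{\mathrm{red}}(R) = \Hom(\spd R, \mathrm{Gr}_{\mathcal{G}} \times_{\spd \zp} \spd \fp).
\end{align}
In particular, for any test object $S = \spa(A, A^+) \in \perf$ equipped with a map $S \to \spd R$, the corresponding untilt $S^\sharp$ used in the definition of $\mathrm{Gr}_\mathcal{G}(S)$ is the canonical characteristic-$p$ untilt $S$ itself.

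To construct the natural map, start with an $R$-point of $\grgloc$, i.e.\ a modification $\alpha: \mathcal{P} \dashrightarrow \mathcal{P}^0$ over $D_R = \spec W(R)$. Given a test $S \to \spd R$ as above corresponding to a ring map $R \to A^+$, Witt-vector functoriality supplies a ring map $W(R) \to W(A^+)$, along which we base-change $\alpha$ to obtain a modification over $\spec W(A^+)$. Fontaine's $\theta$-map $W(A^+) \to A^{+}$ has kernel generated by a distinguished element, and by $p$-adic formal gluing this modification extends to a $\mathcal{G}$-torsor on the adic space $S \bdtimes \spa \zp$, trivialized away from the canonical untilt divisor $S \hookrightarrow S \bdtimes \spa \zp$. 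This produces an $S$-point of $\mathrm{Gr}_\mathcal{G}$, and the construction is visibly functorial in both $R$ and the test object $S$, so it defines the desired natural transformation.

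To see that this map is an isomorphism, note that both $\grgloc$ and $\mathrm{Gr}_\mathcal{G}^{\mathrm{red}}$ are v-sheaves on $\affperf$ (the latter by Lemma~\ref{Lem:Vcover}). By Lemma~\ref{Lem:SchemeTheoreticProductOfPoints}, it suffices to verify bijectivity on $R$-points when $R$ is a product of absolutely integrally closed valuation rings, as such rings form a basis of the v-topology. For such $R$, a faithful closed embedding $\mathcal{G} \hookrightarrow \mathrm{GL}_n$ and the Tannakian formalism reduce the claim to the case of vector bundles, i.e.\ one must check that finite projective $W(R)$-modules equipped with a trivialization after inverting $p$ correspond bijectively to vector bundles on $S \bdtimes \spa \zp$ trivialized on $S \bdtimes \spa \zp \setminus S$. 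This is a Beauville--Laszlo-type statement, which in the vector bundle case is available via the results of Kedlaya--Liu on finite projective modules over analytic loci of $W(R)$.

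The main obstacle is this last comparison: faithfully transporting $\mathcal{G}$-torsors between the algebraic object $\spec W(R)$ and the analytic object $S \bdtimes \spa \zp$ while controlling the trivialization on the generic locus, particularly because $\mathcal{G}$ is only a smooth affine (and not reductive) group scheme. The Tannakian reduction to $\mathrm{GL}_n$ is standard but requires some care to ensure that the induced equivalence of fibered categories is compatible with the modification structure; once the $\mathrm{GL}_n$-case is in hand, the Tannakian descent goes through by identifying both sides as exact tensor functors from $\operatorname{Rep} \mathcal{G}$ into the respective categories of modifications.
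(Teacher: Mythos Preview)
Your construction of the natural transformation is fine, but the argument for bijectivity has a real gap, and filling it essentially collapses to the paper's proof.

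The paper's argument is very short: by \cite[Section~20.3]{ScholzeWeinsteinBerkeley} one already knows that $\mathrm{Gr}_{\mathcal{G},\fp} \simeq (\grgloc)^{\diamond}$ as v-sheaves on $\perf$. Since any map $\spd R \to \spd \zp$ factors through $\spd \fp$, we have $\mathrm{Gr}_{\mathcal{G}}^{\mathrm{red}} = (\mathrm{Gr}_{\mathcal{G},\fp})^{\mathrm{red}} \simeq ((\grgloc)^{\diamond})^{\mathrm{red}}$, and Lemma~\ref{Lem:ReductionOfScheme} (applied to the perfect ind-scheme $\grgloc$) gives $((\grgloc)^{\diamond})^{\mathrm{red}} \simeq \grgloc$.

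In your argument, the Kedlaya--Liu / Beauville--Laszlo step compares, for a \emph{fixed} perfectoid test pair $(A,A^+)$, finite projective $W(A^+)$-modules trivialized after inverting $p$ with vector bundles on $\spa(A,A^+)\bdtimes\spa\zp$ trivialized off the divisor. That is precisely the content of the Scholze--Weinstein identification $\mathrm{Gr}_{\mathcal{G},\fp} \simeq (\grgloc)^{\diamond}$, which the paper simply cites. But this is not yet the statement you need: an element of $\mathrm{Gr}_{\mathcal{G}}^{\mathrm{red}}(R)$ is a \emph{compatible system} of such data over all $(A,A^+)$ mapping to $\spd R$, and you must explain why such a system comes from a single modification over $\spec W(R)$. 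The symbol ``$S\bdtimes\spa\zp$'' in your final paragraph is ambiguous here: $\spd R$ is not a perfectoid space, so there is no single adic curve to work over, and there is no canonical perfectoid $(A,A^+)$ with $A^+=R$ to evaluate at. The missing ingredient is exactly that $\grgloc$ is representable by a perfect ind-scheme, so that $\grgloc(R) = \Hom(\spd R,(\grgloc)^{\diamond})$; this is Lemma~\ref{Lem:ReductionOfScheme}. Once you invoke it, the Tannakian reduction and the v-cover by products of valuation rings become unnecessary.
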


\begin{proof}
It follows from the definition, see \cite[Section 20.3]{ScholzeWeinsteinBerkeley}, that $\mathrm{Gr}_{\mathcal{G},\fp}$ is isomorphic to $\left(\grgloc\right)^{\diamond}$. But the natural map $\mathrm{Gr}_{\mathcal{G}, \fpbar}^{\mathrm{red}} \to  \mathrm{Gr}_{\mathcal{G}}^{\mathrm{red}}$ is an isomorphism and then we can use Lemma \ref{Lem:ReductionOfScheme} to conclude.
\end{proof}
\subsubsection{} \label{Sec:LocalShtukas} Let $\shtgloc $ denote the stack on $\affperf$ sending $R$ to the groupoid of $\calg$-torsors $\mP$ over $\spec W(R)$ together with a modification
\begin{align}
    \beta:\smP \dashrightarrow \mP.
\end{align}
The pair $(\mP, \beta)$ is called a \emph{(Witt vector) $\mathcal{G}$-shtuka} over $\spec R$, and was first defined in \cite{XiaoZhu}, cf.\ \cite{ShenYuZhang}. There is a closely related stack given by $\shtg^{\mathrm{red}}$.

\begin{Lem} \label{Lem:ShtukasLocvsShtukas}
There is a natural isomorphism $\shtgloc \to \shtg^{\mathrm{red}}$ of stacks on $\affperf$.
\end{Lem}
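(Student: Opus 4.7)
The natural map is defined by base change. Given $R \in \affperf$ and a Witt vector shtuka $(\mP, \beta) \in \shtgloc(R)$, we produce the morphism $\spd R \to \shtg$ that sends an affinoid perfectoid $S = \spa(R', R'^+)$ over $\spd R$ (equivalently, a ring map $R \to R'^+$) to the $\calg$-shtuka on $S$ obtained by pulling back $(\mP, \beta)$ along $W(R) \to W(R'^+)$ and restricting the resulting $\calg$-torsor with modification from $\spec W(R'^+)$ to the sousperfectoid open $S \bdtimes \spa \zp$. The leg is at the canonical characteristic-$p$ untilt $S^\sharp = S$, cut out by $p$; by the argument of Lemma \ref{Lem:ReductionOfScheme} any morphism $\spd R \to \shtg$ for $R$ a perfect $\fp$-algebra must land in the characteristic-$p$ locus, so this is the only possible leg.

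To check this is an isomorphism of v-stacks on $\affperf$, I would exploit quotient presentations of the form
\[
  \shtgloc \simeq [L\mathcal{G} / L^+\mathcal{G}]_\sigma, \qquad \shtg \simeq [\mathrm{Gr}_\mathcal{G} / (L^+\mathcal{G})^\diamond]_\sigma,
\]
where in each case $L^+\mathcal{G}$ acts by $\sigma$-twisted left translation $h \cdot g = h^{-1} g\, \sigma(h)$. Since reduction is a limit and hence commutes with quotients by smooth group v-sheaves, and since Lemma \ref{Lem:ReductionGrassmannian} identifies $\mathrm{Gr}_\mathcal{G}^{\mathrm{red}} = \grgloc$ while the reduction of $(L^+\mathcal{G})^\diamond$ is visibly $L^+\mathcal{G}$, the two stacks agree termwise in a compatible quotient presentation, yielding the desired isomorphism. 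Alternatively, one can proceed directly by v-descent: fully faithfulness reduces, via Lemma \ref{Lem:SchemeTheoreticProductOfPoints} and Lemma \ref{Lem:Vcover}, to descending isomorphisms of $\calg$-torsors on $\spec W(R)$ along faithfully flat maps $W(R) \to W(R')$, which is standard; essential surjectivity is then a matter of uniquely extending a $\calg$-torsor-with-Frobenius from the sousperfectoid space $S \bdtimes \spa \zp$ to the full scheme $\spec W(R'^+)$.

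The main obstacle is verifying the quotient-stack presentation for $\shtg$ itself: concretely, one must show that for $S \in \perf$ with a characteristic-$p$ leg, a $\calg$-shtuka on $S$ is, v-locally, given by an element of $(L\mathcal{G})^\diamond(S)$, with automorphisms controlled by $(L^+\mathcal{G})^\diamond$. This reduces to showing that a $\calg$-torsor on $S \bdtimes \spa \zp$ (with its Frobenius structure) extends uniquely to $\spec W(R'^+)$ in the characteristic-$p$ setting. The smoothness and affineness of $\calg$, together with the fact that the complement of $S \bdtimes \spa \zp$ in $\spa W(R'^+)$ has large codimension, allow such extension (cf.\ \cite{AnschuetzExtension}); once the torsor extends, the modification $\beta$ extends automatically by its meromorphy along $p$. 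Reducing to the case where $R$ is a product of absolutely integrally closed valuation rings via Lemma \ref{Lem:SchemeTheoreticProductOfPoints} makes the extension step especially clean.
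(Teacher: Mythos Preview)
The paper's proof is a one-line citation: the result is \cite[Theorem~2.3.8, Example~2.4.9]{PappasRapoportShtukas} together with \cite[Lemma~2.27]{GleasonSpecialization}. The Gleason lemma forces any $\spd R \to \shtg$ (for perfect $R$) to land over $\spd \fp$, and the Pappas--Rapoport results say that a $\calg$-shtuka over $(R',R'^+)$ with leg at the characteristic-$p$ untilt extends uniquely to a Breuil--Kisin--Fargues module over $W(R'^+)$, which for perfect $R$ is exactly a Witt vector shtuka. Your direct approach (the second alternative) is essentially a rederivation of this; you correctly identify the extension step as the crux, but the ``large codimension'' heuristic understates what is needed---this is a nontrivial theorem about $\calg$-torsors on $\mathcal{Y}_{[0,\infty)}(R',R'^+)$ and their relation to $\spec W(R'^+)$, and one also has to pass from a compatible family over test objects $(R',R'^+)$ covering $\spd R$ back down to $W(R)$ itself.

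Your quotient-presentation approach contains a genuine slip: you write ``reduction is a limit and hence commutes with quotients by smooth group v-sheaves'', but reduction is a \emph{right adjoint} (as the paper notes in Section~\ref{Sec:ReductionDiagram}), so it commutes with \emph{limits}, whereas quotient stacks are colimits. What is true is that reduction preserves fiber products, so it sends the groupoid $(L^+\mathcal{G})^\diamond \times \mathrm{Gr}_\mathcal{G} \rightrightarrows \mathrm{Gr}_\mathcal{G}$ to the corresponding groupoid on the scheme side; but to conclude that the reductions of the quotient stacks agree you would still need to show that $(L^+\mathcal{G})^\diamond$-torsors on $\spd R$ correspond to $L^+\mathcal{G}$-torsors on $\spec R$, and that $\mathrm{Gr}_\mathcal{G}^\diamond \to \shtg|_{\spd\fp}$ stays v-surjective after reduction. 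Even granting that, establishing the presentation $\shtg|_{\spd\fp} \simeq [\mathrm{Gr}_\mathcal{G}/(L^+\mathcal{G})^\diamond]_\sigma$ in the first place already requires the extension theorem, so this route does not bypass the main difficulty.
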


\begin{proof}
  This is \cite[Theorem~2.3.8, Example~2.4.9]{PappasRapoportShtukas}, combined with \cite[Lemma~2.30]{GleasonSpecialization}. Note that \cite[Example~2.4.9]{PappasRapoportShtukas} is only stated for parahoric groups, but the proof works verbatim for quasi-parahoric groups.
\end{proof}

\subsubsection{} Let $\mu$ be a $G(\qpbar)$-conjugacy class of cocharacters of $G$ with reflex field $E \subset \qpbar$. We will write $\mathcal{O}_E$ for the ring of integers of $E$ and $k_E$ for its residue field. 

If $(\mathcal{P},\beta)$ is a $\calgcirc$-shtuka over $\spec R$, then any choice of trivialization $\alpha$ for $\mathcal{P}$ will determine a point of the Witt vector affine Grassmannian $\mathrm{Gr}_{\calgcirc}^{\mathrm{W}}$ for $\calgcirc$. We say $(\mathcal{P},\beta)$ is bounded by $\mu$ if this point lies in the admissible locus 
\[ \bigcup_{w \in \admu_{\calgcirc}} \mathrm{Gr}_{\calgcirc,w}^{\mathrm{W}} \subset \mathrm{Gr}_{\calgcirc,k_E}^{\mathrm{W}},\]
see \cite[Section 2.4.3]{PappasRapoportShtukas} for the notation, and see \cite[Remark 3.3.2]{PappasRapoportShtukas} for further discussion of this condition (which is independent of the choice of trivialization). We denote by \[\shtgcirclocmu \subset \shtgcircloc \times_{\spec \fp} \spec k_E\] the substack of Witt vector $\calgcirc$-shtukas bounded by $\mu$. By the discussion in \cite[Section 2.2.16]{vH}, this is a closed substack.

\begin{Lem} \label{Lem:ReductionShtukasMu}
Under the isomorphism $\shtgcircloc \to \shtgcirc^{\mathrm{red}}$ of Lemma \ref{Lem:ShtukasLocvsShtukas}, the substack $\shtgcirclocmu$ is identified with $\shtgcircmu^{\mathrm{red}}$.
\end{Lem}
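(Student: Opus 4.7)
The plan is to realize both $\shtgcirclocmu$ and $\shtgcircmu^{\mathrm{red}}$ as pullbacks of compatible local models along ``relative position'' morphisms, and then appeal to the fact that the reduction functor $(-)^{\mathrm{red}}$ preserves limits. Concretely, the closed substack $\shtgcircmu$ is by construction the fiber product
\[
  \shtgcirc \times_{[\mathrm{Gr}_{\calgcirc,\spd \mathcal{O}_E}/L^+\calgcirc]} [\mathbb{M}^{\mathrm{v}}_{\calgcirc,\mu}/L^+\calgcirc],
\]
where the first map records the relative position of $\varphi^{\ast}\mathscr{P}$ and $\mathscr{P}$ at the leg. Analogously, $\shtgcirclocmu$ is the preimage in $\shtgcircloc$ of the scheme-theoretic closure of $\mathrm{Gr}_{G,\mu}^{\mathrm{red}}$ inside $\grgloc \times \spec k_E$ along the Witt-vector relative position morphism, using the definition recalled in the paragraph preceding the lemma.

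Since $(-)^{\mathrm{red}}$ preserves fiber products, the lemma reduces to two compatibilities. The first is that the isomorphism $\shtgcircloc \xrightarrow{\sim} \shtgcirc^{\mathrm{red}}$ of Lemma \ref{Lem:ShtukasLocvsShtukas} intertwines the Witt-vector relative position map with the reduction of the Pappas--Rapoport relative position map, where the targets are identified via Lemma \ref{Lem:ReductionGrassmannian}. This compatibility is built into the construction in \cite[Theorem~2.3.8]{PappasRapoportShtukas}, which reconstructs a Pappas--Rapoport shtuka over $\spd R$ by gluing starting from a Witt-vector modification; the relative position at the leg is by construction transported to the data of the modification.

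The substance of the argument lies in the second compatibility: under Lemma \ref{Lem:ReductionGrassmannian}, the reduction of $\mathbb{M}^{\mathrm{v}}_{\calgcirc,\mu}$ is identified with the scheme-theoretic closure of $\mathrm{Gr}_{G,\mu}^{\mathrm{red}}$ in $\grgloc \times \spec k_E$. This is where I expect the main work to be. By \cite[Theorem~1.1]{AGLR} and \cite[Corollary~1.4]{GleasonLourencoLocalModel}, $\mathbb{M}^{\mathrm{v}}_{\calgcirc,\mu}$ is the v-sheaf associated to a flat normal projective $\mathcal{O}_E$-scheme $M$ whose generic fiber is $\mathrm{Gr}_{G,\mu}$. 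Its reduction, by Lemma \ref{Lem:ReductionOfScheme}, is the perfection of the special fiber $M \otimes_{\mathcal{O}_E} k_E$; and because $M$ is $\mathcal{O}_E$-flat with generic fiber $\mathrm{Gr}_{G,\mu}$, this special fiber is precisely the scheme-theoretic closure of $\mathrm{Gr}_{G,\mu}^{\mathrm{red}}$ in $\grgloc \times \spec k_E$. Combining the two compatibilities yields the claim.
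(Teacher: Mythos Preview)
Your overall strategy matches the paper's: both arguments reduce the lemma to showing that, under Lemma~\ref{Lem:ReductionGrassmannian}, the two ``bounded by $\mu$'' conditions agree. The paper phrases this as follows: $(\mP,\beta)\in\shtgcirclocmu$ iff the relative position of $\beta$ lies in $\bigcup_{w\in\admu_{\calgcirc}}\mathrm{Gr}^{\mathrm{W}}_{\calgcirc,w}$, and this admissible locus is exactly the reduction of the inclusion $\mathbb{M}^{\mathrm{v}}_{\calgcirc,\mu}\subset\mathrm{Gr}_{\calgcirc,\spd\mathcal{O}_E}$ by \cite[Theorem~6.16]{AGLR}.

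The gap is in your second compatibility. The phrase ``scheme-theoretic closure of $\mathrm{Gr}_{G,\mu}^{\mathrm{red}}$ inside $\grgloc\times\spec k_E$'' is not the definition of the bound in $\shtgcirclocmu$, and as written is vacuous: $\mathrm{Gr}_{G,\mu}$ lives over $\spd E$, so by Lemma~\ref{Lem:ReductionOfScheme} its reduction is empty and there is nothing to take the closure of. The actual definition, from \cite[Remark~3.3.2]{PappasRapoportShtukas} (and spelled out in \cite[Section~2.4.3]{PappasRapoportShtukas}), is via the $\mu^{-1}$-admissible locus $\bigcup_{w\in\admu_{\calgcirc}}\mathrm{Gr}^{\mathrm{W}}_{\calgcirc,w}$ in the Witt-vector affine flag variety. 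What you therefore need is the identification of $(\mathbb{M}^{\mathrm{v}}_{\calgcirc,\mu})^{\mathrm{red}}$ --- equivalently, of the perfection of $M\otimes_{\mathcal{O}_E}k_E$ --- with this admissible locus, and that is precisely the content of \cite[Theorem~6.16]{AGLR}. Your flatness argument does not supply it: flatness of $M$ over $\mathcal{O}_E$ only tells you that $M$ is the closure of its generic fiber inside the Beilinson--Drinfeld Grassmannian, but says nothing about which Iwahori--Schubert cells appear in the special fiber.
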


\begin{proof}
This follows from the discussion in \cite[Section 2.4.3]{PappasRapoportShtukas}, as we will now explain.

The natural map $\shtgcircloc \to \shtgcirc^{\mathrm{red}}$ of Lemma \ref{Lem:ShtukasLocvsShtukas} is given by sending a Witt vector $\calgcirc$-shtuka $(\mP, \beta)$ over $\spec R$, to the $\spd R$-point of $\shtgcirc$ given by sending $\spd(A,A^+) \to \spd R$ to the $\calgcirc$-shtuka obtained from $\mathcal{P} \otimes_{W(R)} W(A^+)$ by pulling back along the map of locally ringed spaces
\begin{align}
    \spa(A,A^+) \bdtimes \zp \to \spa W(A^+) \to \spec W(A^+).
\end{align}
Note that the resulting $\calgcirc$-shtuka has a leg at the trivial untilt $A^+$ of $A^{+}$. We have $(\mP, \beta) \in \shtgcirclocmu(A)$ if and only if $\beta$ is bounded by
\begin{align}
    \bigcup_{w \in \admu_{\calgcirc}} \mathrm{Gr}_{\calgcirc,w}^{\mathrm{W}} \subset \mathrm{Gr}_{\calgcirc,k_E}^{\mathrm{W}},
\end{align}
(see \cite[Section 2.4.3]{PappasRapoportShtukas} for the notation).

By \cite[Theorem 6.16]{AGLR}, this is the same as the reduction of the inclusion
\begin{align}
    \mathbb{M}^{\mathrm{v}}_{\calgcirc,{\mu}} \subset \mathrm{Gr}_{\calgcirc, \spd \mathcal{O}_E}.
\end{align}
Thus the image of $(\mP, \beta)$ under $\shtgcircloc \to \shtgcirc^{\mathrm{red}}$ lies in $\shtgcircmu^{\mathrm{red}}$ if and only if $(\mP, \beta) \in \shtgcirclocmu$.
\end{proof}

\begin{Rem} \label{Rem:ShtukasInversion}
    If $b \in G(\qpbr)$, then there is a $\calgcirc$-shtuka $(\mP^0,b)$, where $\mP^0$ is the trivial $\calgcirc$-torsor and where $b$ is considered as an automorphism of the trivial $G$-torsor over $\spec \qpbr$. The shtuka $(\mP^0, b)$ is bounded by $\mu$ if and only if
    \begin{align}
        b \in \bigcup_{w \in \mathrm{Adm}(\mu^{-1})_{\calgcirc}} \mathrm{Gr}_{\calgcirc,w}^{\mathrm{W}}(\fpbar),
    \end{align}
    see \cite[Remark 4.2.3]{PappasRapoportShtukas}. In particular, the stack that we denote by $\shtgcirclocmu$ is denoted by $\mathrm{Sht}_{\mu^{-1},K}^{\mathrm{loc}}$ in \cite[Definition 4.1.3]{ShenYuZhang} and by $\mathrm{Sht}_{G,K,\mu^{-1}}$ in \cite[Section 2.2.16]{vH} (here the symbol $K$ records the choice of parahoric). The stacks of local shtukas of \cite[Definition 5.2.1]{XiaoZhu} have yet another definition; in loc.\ cit.\ a shtuka is a pair $(\mathcal{P}, \beta)$ with $\beta:\mathcal{P} \dashrightarrow \sigma^{\ast} \mathcal{P}$ a modification.
\end{Rem}

Motivated by Lemma \ref{Lem:ReductionShtukasMu}, we define the substack
$\shtglocmu \subseteq \shtgloc$ to be the reduction of $\shtgmu$.

\subsubsection{} Let $R$ be a perfect ring and $Y$ a $p$-divisible group over $\spec R$ of height $h$, and let $\mathbb{D}^{\natural}(Y)$ be the finite projective $W(R)$-module (of rank $h$) from Section \ref{Sec:Dieudonne} which comes equipped with a Frobenius 
\begin{align}
    \phi_{Y}^{\natural}:\phi^{\ast} \mathbb{D}^{\natural}(Y)[1/p] \to \mathbb{D}^{\natural}(Y).
\end{align}
Taking frame bundles, we get a $\mathrm{GL}_h$-shtuka. For $0 \le d \le h$ let $\mu_d$ be the cocharacter of $\mathrm{GL}_{h}$ sending $\lambda \mapsto \operatorname{diag}(\lambda, \dotsc, \lambda, 1, \dotsc, 1)$, where $\lambda$ occurs $d$ times and $1$ occurs $h-d$ times. The following result is due to Gabber.

\begin{Lem} \label{Lem:Dieudonne1}
The functor $Y \mapsto (\mathbb{D}^{\natural}(Y),\phi_{Y}^{\natural})$ induces an equivalence between the groupoid of $p$-divisible groups $Y$ over $\spec R$ of dimension $d$, height $h$ and $\mathrm{Sht}^\mathrm{W}_{\mathrm{GL}_{h}, \mu_d}(R)$, which is functorial in $R$.
\end{Lem}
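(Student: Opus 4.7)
The plan is to reduce the statement to Gabber's theorem, which establishes an equivalence between the category of $p$-divisible groups over a perfect $\mathbb{F}_p$-algebra $R$ and the category of classical Dieudonn\'e modules: finite projective $W(R)$-modules $M$ equipped with a $\sigma$-semilinear endomorphism $F \colon M \to M$ such that $F[1/p]$ is an isomorphism and $pM \subseteq F(M)$.

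First I would observe that for perfect $R$ the natural map $W(R) \to \mathbf{A}_{\mathrm{crys}}(R)$ is an isomorphism, since $R$ already admits the canonical Frobenius lift to $W(R)$ and no divided-power envelope is required. Consequently $\mathbb{D}^{\ast}(Y)$, and hence $\mathbb{D}^{\natural}(Y)=(\phi^{-1})^{\ast}\mathbb{D}^{\ast}(Y)$, is a finite projective $W(R)$-module of rank $h$, and the Frobenius $\phi_Y^{\natural}$ provides the required shtuka modification $\sigma^{\ast}\mathbb{D}^{\natural}(Y) \dashrightarrow \mathbb{D}^{\natural}(Y)$ after inverting $p$. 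Matching the conventions, the pair $(\mathbb{D}^{\natural}(Y),\phi_Y^{\natural})$ encodes the same data as the classical Dieudonn\'e module of $Y$ up to the $(\phi^{-1})^{\ast}$-twist, so Gabber's theorem yields both essential surjectivity and full faithfulness of the claimed functor on the subgroupoid of $p$-divisible groups of height $h$. Functoriality in $R$ is then immediate from the functoriality of $W(-)$ and of the crystalline Dieudonn\'e functor.

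Next I would verify the $\mu_d$-boundedness, where implicitly $Y$ has dimension $d$. Working \'etale-locally on $\spec R$, the module $\mathbb{D}^{\natural}(Y)$ admits a trivialization, and the identification of (a twist of) the cokernel of the linearized Frobenius with the Hodge filtration of $Y$ implies that when $\dim Y = d$ the linearization of $\phi_Y^{\natural}$ has elementary divisors equal to $p$ with multiplicity $d$ and $1$ with multiplicity $h-d$. Since $\mu_d$ sends $\lambda \mapsto \operatorname{diag}(\lambda,\dotsc,\lambda,1,\dotsc,1)$ with $\lambda$ occurring $d$ times, this is precisely the condition that the resulting $\mathrm{GL}_h$-shtuka is bounded by $\mu_d$, and conversely such a bound forces $\dim Y = d$ via the same cokernel identification.

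The main obstacle is not conceptual but rather the careful bookkeeping of the various duality and $(\phi^{-1})^{\ast}$ twists built into the conventions of Section \ref{Sec:Dieudonne}: one must confirm that the relative position of the resulting modification lands in $\mu_d$ rather than $\mu_d^{-1}$ or a Cartan-dual cocharacter, and that it is $\dim Y$ rather than $\dim Y^{\vee}$ that controls the bound. Once these signs are pinned down the lemma follows formally from Gabber's theorem.
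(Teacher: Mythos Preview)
Your proposal is correct and, for the equivalence part, matches the paper exactly: both invoke Gabber's theorem (the paper cites it as \cite[Theorem~1.2]{Lau}). For the $\mu_d$-boundedness, however, you take a more direct route than the paper. You argue classically by trivializing \'etale-locally and reading off the elementary divisors from the Hodge filtration, whereas the paper instead appeals to Lemma~\ref{Lem:ReductionShtukasMu} together with \cite[Example~2.3.4, Remark~2.3.10, Lemma~2.4.4]{PappasRapoportShtukas}, i.e.\ it verifies boundedness on the level of the v-sheaf stack $\shtgmu$ and then transports this to the Witt-vector side via the reduction comparison. The paper's route is dictated by its own conventions, since $\shtglocmu$ is \emph{defined} as the reduction of $\shtgmu$ (see the sentence after Lemma~\ref{Lem:ReductionShtukasMu}); your direct argument is certainly valid for $\mathrm{GL}_h$ with minuscule $\mu_d$, but to be fully rigorous within the paper's framework you would still need a word on why the elementary-divisor condition coincides with boundedness by the v-sheaf local model $\mathbb{M}^{\mathrm{v}}_{\mathrm{GL}_h,\mu_d}$, which is precisely what Lemma~\ref{Lem:ReductionShtukasMu} and the cited AGLR result supply. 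Your caveat about the sign and duality conventions is well taken and is exactly why the paper defers to the Pappas--Rapoport references rather than spelling this out.
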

\begin{proof}
That the functor is an equivalence of categories follows from a result of Gabber, see \cite[Theorem 1.2]{Lau}. The fact that the resulting $\mathrm{GL}_{h}$-shtuka is bounded by $\mu_d$ follows from Lemma \ref{Lem:ReductionShtukasMu} in combination with \cite[Example 2.3.4, Remark 2.3.10, Lemma 2.4.4]{PappasRapoportShtukas}. 
\end{proof}

\subsubsection{} Recall that a principal quasi-polarization of a $p$-divisible group $Y$ over a perfect ring $R$ is an isomorphism $\lambda:Y \to Y^t$, where $Y^t$ is the Serre-dual $p$-divisible group, such that $\lambda^t=-\lambda$. An isomorphism of quasi-polarized $p$-divisible groups $f:(Y_1,\lambda_1) \to (Y_2, \lambda_2)$ is an isomorphism $f:Y_1 \to Y_2$ such that there is an element $c \in \mathbb{Z}_p^{\times}$ such that $f^{\ast} \lambda_2 = c \lambda_1$. 

We note that $\lambda$ induces a perfect alternating pairing $\lambda^{\natural}_Y$ on $\mathbb{D}^{\natural}(Y)$. Indeed, this follows from the explicit description of $\mathbb{D}(Y^t)$, see \cite[Proposition 4.6.9]{AnschuetzLeBras}. If $Y$ has height $h=2g$, then this means that the $\mathrm{GL}_h$-shtuka corresponding to $\mathbb{D}^{\natural}(Y)$ naturally upgrades to a $\mathrm{GSp}_{2g}$-shtuka. Note furthermore that for $h=2g$ and $d=g$, the cocharacter $\mu_g$ factors through $\mathrm{GSp}_{2g} \subset \mathrm{GL}_{2g}$. 

\begin{Lem} \label{Lem:DieudonneTheory}
The functor $Y \mapsto (\mathbb{D}^{\natural}(Y), \lambda^{\natural}_Y)$ induces an equivalence between the groupoid of quasi-polarized $p$-divisible groups of height $2g$ over $\spec R$ and the groupoid $\mathrm{Sht}^\mathrm{W}_{\mathrm{GSp}_{2g},\mu_g}(R)$, functorially in $R$. 
\end{Lem}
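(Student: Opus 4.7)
The plan is to deduce this lemma from Lemma~\ref{Lem:Dieudonne1} by carefully tracking the extra datum of a (quasi-)polarization through the Dieudonn\'e equivalence. The functor is already well-defined: given a quasi-polarized $p$-divisible group $(Y,\lambda)$ of dimension $g$ and height $2g$ over $\spec R$, the pairing $\lambda_Y^{\natural}$ on $\mathbb{D}^{\natural}(Y)$ is perfect and alternating, and is compatible with $\phi_Y^{\natural}$ up to the similitude factor $p$; taking frame bundles for the split symplectic form thus produces a $\mathrm{GSp}_{2g}$-shtuka, and the reduction to $\mathrm{GL}_{2g}$ is bounded by $\mu_g$ by Lemma~\ref{Lem:Dieudonne1}, so the $\mathrm{GSp}_{2g}$-shtuka is bounded by $\mu_g$ as well (since the cocharacter $\mu_g$ factors through $\mathrm{GSp}_{2g}$, and the closed embedding of flag varieties is compatible with the inclusion of Schubert cells).

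For essential surjectivity, I would start from a $\mathrm{GSp}_{2g}$-shtuka $(\mathcal{P},\beta)$ bounded by $\mu_g$ over $\spec R$. Pushing out along $\mathrm{GSp}_{2g} \hookrightarrow \mathrm{GL}_{2g}$ yields a $\mathrm{GL}_{2g}$-shtuka bounded by $\mu_g$, which by Lemma~\ref{Lem:Dieudonne1} corresponds to a $p$-divisible group $Y$ over $\spec R$ of height $2g$; since the Hodge filtration has type $\mu_g$, the dimension of $Y$ is $g$. The reduction of structure group from $\mathrm{GL}_{2g}$ to $\mathrm{GSp}_{2g}$ is equivalent to the datum of a perfect alternating pairing on $\mathbb{D}^{\natural}(Y)$ respecting Frobenius up to the similitude character $p$. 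Using the identification $\mathbb{D}^{\natural}(Y^t) \simeq \mathbb{D}^{\natural}(Y)^{\vee}$ of \cite[Proposition~4.6.9]{AnschuetzLeBras}, such a pairing is the same as an isomorphism $\mathbb{D}^{\natural}(Y) \xrightarrow{\sim} \mathbb{D}^{\natural}(Y^t)$ of shtukas that is anti-self-dual. Applying the fully faithfulness half of Lemma~\ref{Lem:Dieudonne1}, this descends to an isomorphism $\lambda \colon Y \xrightarrow{\sim} Y^t$ with $\lambda^t = -\lambda$, i.e., a principal quasi-polarization.

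Full faithfulness follows similarly: a morphism of $\mathrm{GSp}_{2g}$-shtukas is the same as a morphism of the underlying $\mathrm{GL}_{2g}$-shtukas respecting the pairing up to a scalar in $\mathbb{Z}_p^{\times}$, and by Lemma~\ref{Lem:Dieudonne1} this is exactly a morphism of the underlying $p$-divisible groups $Y_1 \to Y_2$ satisfying $f^{\ast}\lambda_2 = c \lambda_1$ for some $c \in \mathbb{Z}_p^{\times}$. Functoriality in $R$ is inherited from the functoriality of classical covariant Dieudonn\'e theory.

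The main subtlety is the compatibility between the Serre dual of a $p$-divisible group and linear duality of its covariant Dieudonn\'e module over $W(R)$ (as opposed to over Fontaine's crystalline period ring); this is precisely where we invoke \cite[Proposition~4.6.9]{AnschuetzLeBras}, and is what makes the passage from $\mathrm{GL}_{2g}$ to $\mathrm{GSp}_{2g}$ go through cleanly without having to invert $p$.
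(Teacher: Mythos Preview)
Your proof is correct and follows exactly the approach the paper takes: the paper's proof is the single sentence ``This is a direct consequence of Lemma~\ref{Lem:Dieudonne1} and the description of $\mathbb{D}(Y^t)$ in \cite[Proposition~4.6.9]{AnschuetzLeBras},'' and you have simply unpacked that sentence into its constituent steps (well-definedness, essential surjectivity, full faithfulness). In particular you correctly identify the two key inputs and how they interact.
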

\begin{proof}
This is a direct consequence of Lemma \ref{Lem:Dieudonne1} and the description of $\mathbb{D}(Y^t)$ in \cite[Proposition 4.6.9]{AnschuetzLeBras}. 
\end{proof}

\subsection{v-local existence of lattices}

We continue to assume $G$ is a reductive group over $\qp$ with quasi-parahoric model $\mathcal{G}$ over $\zp$. 

\subsubsection{} \label{Sec:Isocrystals} Let $\gisoc$ denote the stack on $\affperf$ sending $R$ to the groupoid of (\'etale) $G$-torsors $P$ over $\spec W(R)[1/p]$ together with an isomorphism
\begin{align}
    \beta:\sigma^{\ast}P \to P.
\end{align}
This is a v-stack, see \cite[Proposition 6.3]{GleasonIvanovZillinger},\footnote{In \cite{GleasonIvanovZillinger}, the stack we call $\gisoc$ is denoted by $\mathfrak{B}(G)$. They also have an object denoted by $\operatorname{Isoc}_{G}$, which corresponds to $(\gisoc)^{\lozenge}$ in our notation.} and there is a homeomorphism $\lvert\gisoc\rvert \simeq B(G)$, where now $B(G)$ is equipped with the order topology for the partial order defined in \cite[Section 2.3]{RapoportRichartz}. We define the closed substack $\gisocmu \subset \gisoc$ to be the closed substack corresponding to $\bgmu \subset B(G)$.

By \cite[Theorem 1.11]{GleasonIvanovZillinger}, there is a natural isomorphism
\begin{align}
    \gisoc \xrightarrow{\sim} \bun_G^{\mathrm{red}}.
\end{align}
Moreover, it follows from the arguments in \cite[Section 7.2]{GleasonIvanovZillinger} that under this identification $\bungmu^{\mathrm{red}}$ is identified with $\gisocmu$.

There is a natural morphism $\shtgloc \to \gisoc$ sending $(\mP, \beta)$ to $(\restr{\mP}{W(R)[1/p]}, \beta)$. We can identify this with the reduction of the natural morphism $\shtg \to \bun_G$. We define $\shtglocmuone \subset \shtglocmu$ to be the reduction of the open and closed substack $\shtgmuone \subset \shtgmu$ of \cite[Section 3.3.7]{Companion}. 

\subsubsection{} It follows from \cite[Proposition 3.1.10]{Companion} that $\shtglocmuone \to \shtgloc \to \gisoc$ factors through $\gisocmu \subset \gisoc$. Our goal in the remainder of this section is to prove the following proposition. 

\begin{Prop} \label{Prop:VSurjective}
If $S$ is the spectrum of a product of absolutely integrally closed valuation rings, then the map $\shtglocmuone(S) \to \gisocmu(S)$ is essentially surjective. In particular, the map $\shtglocmuone \to \gisocmu$ is v-surjective.
\end{Prop}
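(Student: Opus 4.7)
The ``in particular'' statement follows from the essential surjectivity together with Lemma~\ref{Lem:SchemeTheoreticProductOfPoints} and Lemma~\ref{Lem:Vcover}, so I focus on showing essential surjectivity of $\shtglocmuone(S) \to \gisocmu(S)$ when $S = \spec R$ with $R = \prod_{i \in I} V_i$ a product of absolutely integrally closed valuation rings. The overall strategy is to realize the set of potential lifts as the $S$-points of (the perfection of) a proper scheme over $S$, and then to invoke Lemma~\ref{Lem:ExistenceSection}.

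Given $(P, \beta) \in \gisocmu(S)$, I would first trivialize the underlying $G$-torsor $P$ on $\spec W(R)[1/p]$. Each $V_i$ is absolutely integrally closed with algebraically closed fraction field, so the ring $W(V_i)[1/p]$ is sufficiently local that $G$-torsors on its spectrum are trivial; since Witt vectors commute with products, these trivializations assemble into one over $\spec W(R)[1/p]$. This reduces the problem to finding, for a given $b \in G(W(R)[1/p])$ whose class is pointwise $\mu^{-1}$-admissible, an element $g$ and a $\mathcal{G}$-lattice $\mathcal{P}$ such that the associated modification $\sigma^\ast \mathcal{P} \dashrightarrow \mathcal{P}$ determined by $g^{-1} b \sigma(g)$ is bounded by $\mu^{-1}$. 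Next, I would organize the lifts into a parameter space $X_b \to \spec R$. Because the boundedness condition on the modification is cut out by the projective v-sheaf local model $\mathbb{M}^{\mathrm{v}}_{\mathcal{G},\mu}$ in $\mathrm{Gr}_\mathcal{G}$ (which is the diamond of a projective $\mathcal{O}_E$-scheme, as cited from \cite{AGLR} and \cite{GleasonLourencoLocalModel}), the relevant bounded Schubert cell of the Witt vector affine flag variety $\grgloc$ supplies $X_b$ as a perfection of a proper $R$-scheme. Surjectivity of $X_b \to \spec R$ on geometric points follows from the $\mu^{-1}$-admissibility of $[b]$ at each point of $\spec R$ combined with \cite[Remark~4.2.3]{PappasRapoportShtukas}, which gives non-emptiness of the fiber over every $\fpbar$-point. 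Lemma~\ref{Lem:ExistenceSection} then produces a section of $X_b \to \spec R$, giving the desired Witt vector $\mathcal{G}$-shtuka. The fact that the resulting shtuka lies in the open and closed substack $\shtglocmuone \subset \shtglocmu$ is checked on $\pi_0$, using that the corresponding $\delta = 1$ condition is already satisfied by $(P,\beta)$ on the isocrystal side.

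\textbf{Main obstacle.} The crux is the construction and properness of the parameter space $X_b \to \spec R$. A subtlety to handle is that $W(R)[1/p]$ is strictly smaller than $\prod_i W(V_i)[1/p]$ (localization does not commute with infinite products), so one cannot naively patch lifts produced factor-by-factor; instead, the uniform denominator bound coming from $\mu^{-1}$-boundedness ensures that the relevant $b$ and candidate $g$ live in a fixed finite-type piece of the loop group, which is exactly what allows $X_b$ to be expressed as the perfection of a proper $R$-scheme. Once this representability is in hand, the application of Lemma~\ref{Lem:ExistenceSection} is routine.
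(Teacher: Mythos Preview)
Your overall architecture is right—trivialize the isocrystal, parametrize candidate lattices inside the Witt vector affine flag variety, and feed the result into Lemma~\ref{Lem:ExistenceSection}—but the key step is not quite as you describe it, and the gap is precisely where you flag the ``main obstacle.''

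The space $X_b$ of lifts is \emph{not} the perfection of a proper $R$-scheme. Concretely, the paper writes this space as $Z(b)_\mu = \Psi_b^{-1}(\shtglocmu) \subset \grgloc$, and over each geometric point of $\spec R$ its fiber is the affine Deligne--Lusztig variety $X_{\mathcal{G}}(b_x,\mu^{-1})$, which is typically of infinite type (it has infinitely many connected components unless $b_x$ is basic). Your suggestion that the $\mu^{-1}$-bound on the modification $g^{-1} b \sigma(g)$ forces $g$ into a fixed finite-type piece does not work: the relative position of $\sigma^*\mP$ and $\mP$ being bounded says nothing about the position of $\mP$ relative to the standard lattice. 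So $Z(b)_\mu$ is only an ind-proper object over $R$, and Lemma~\ref{Lem:ExistenceSection} does not apply to it directly.

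The paper resolves this with an auxiliary result (Lemma~\ref{Lem:SurjectiveIII}): one shows that $Z(b)_\mu$ contains a \emph{closed subscheme} $Z$—hence landing in one piece of the ind-structure, hence proper over $R$—whose projection to $\spec R$ is surjective. This is the substantive step. Its proof first reduces to a single Newton stratum (using quasi-compactness of the strata from \cite{RapoportRichartz}) and to $R$ w-contractible, so that the isocrystal is globally $\sigma$-conjugate to a fixed $b'$; then a single $\fpbar$-point of the affine Deligne--Lusztig variety $X_{\mathcal{G}}(b',\mu^{-1})$, which exists by He's nonemptiness theorem \cite{He2}, gives a section. Only after this do you apply Lemma~\ref{Lem:ExistenceSection} to $Z$.

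Two smaller points: for the trivialization of $P$ over $\spec W(R)[1/p]$ the paper invokes \cite[Proposition~11.5]{AnschuetzExtension} directly rather than patching factor-wise trivializations (your patching runs into exactly the $W(R)[1/p] \subsetneq \prod_i W(V_i)[1/p]$ issue you mention); and for the $\delta=1$ condition the paper simply reduces at the outset to the parahoric case $\mathcal{G}=\calgcirc$, where $\shtglocmuone = \shtglocmu$ and there is nothing to check.
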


Since the natural map $\shtgcircmu \to \shtgmu$ factors through $\shtgmuone$, see \cite[Section~3.3.7]{Companion}, it suffices to show that $\shtgcircmu(S) \to \gisocmu(S)$ is essentially surjective. We therefore assume for the remainder of the section that $\mathcal{G}=\calgcirc$, so that $\shtglocmuone=\shtglocmu$. 

We start with some preliminaries. For any perfect ring $R$ and element $b \in G(W(R)[1/p])$, there is a morphism
\begin{align}
	\Psi_b:\mathrm{Gr}_{\mathcal{G},R}^{\mathrm{W}}\to \mathrm{Sht}_{\mathcal{G},R}^\mathrm{W}
\end{align}
sending a modification $\alpha:\mP^0 \dashrightarrow \mP_1$ over $\spec R'$ to the (Witt vector) $\mathcal{G}$-shtuka $(\mP_1, \beta_1)$, where $\beta_1$ is defined to be the modification $\beta_1$ which completes the diagram
\begin{equation}
	\begin{tikzcd}
		\smP^0 \arrow[r, dashrightarrow, "b"] \arrow[d, dashrightarrow, "\sigma^{\ast}\alpha"] & \mP^0 \arrow[d, dashrightarrow, "\alpha"] \\
		\smP_1 \arrow[r, dashrightarrow, "\beta_1"] & \mP_1.
	\end{tikzcd}
\end{equation}
We denote by $Z(b)_{{\mu}}$ the inverse image of $\mathrm{Sht}_{\mathcal{G},{\mu},R}^\mathrm{W}$ in $\mathrm{Gr}_{\mathcal{G},R}^\mathrm{W}$ under $\Psi_b$. Since $\mathrm{Sht}_{\mathcal{G},{\mu},R}^\mathrm{W} \subset \mathrm{Sht}_{\mathcal{G},R}^\mathrm{W}$ is closed, we see that $Z(b)_{{\mu}} \subset \mathrm{Gr}_{\mathcal{G},R}^{\mathrm{W}}$ is closed. Therefore $Z(b)_{{\mu}}$ is an inductive limit of perfections of projective $R$-schemes along closed embeddings by \cite[Corollary~9.6]{BhattScholze}.

In the lemma below, following our convention \eqref{Eq:NewtonStrat}, we use the notation $\shtgmub \subset \shtgmu$ and $\gisoc^{[b]} \subset \gisoc$ for the locally closed Newton stratum corresponding to $[b] \in B(G)$, see also \cite[Theorem I.2.1]{FarguesScholze}. Recall also that an affine scheme $X=\spec R$ is called w-contractible if every pro-\'etale cover of $X$ splits, see \cite[Definition 1.4]{BhattScholzeProEtale}. 

\begin{Lem} \label{Lem:SurjectiveIII}
  Let $R \in \affperf$ and let $b \in G(W(R)[1/p])$ be an arbitrary element.
  Then there exists a closed subscheme $Z \subseteq Z(b)_{\mu}$ such that the
  projection $Z \to \spec R$ is surjective.
\end{Lem}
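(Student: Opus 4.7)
The plan is to produce $Z$ as the intersection of $Z(b)_\mu$ with a sufficiently large Schubert subscheme of the Witt-vector affine Grassmannian $\mathrm{Gr}^{\mathrm{W}}_{\mathcal{G},R}$, chosen so that this intersection already captures every geometric fiber of $Z(b)_\mu \to \spec R$. Such a $Z$ will automatically be a closed subscheme of the perfection of a projective $R$-scheme, and the surjectivity onto $\spec R$ will reduce to a fiberwise nonemptiness statement.

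First I would fix a closed embedding $\iota \colon \mathcal{G} \hookrightarrow \mathrm{GL}(V)$ for some finite free $\zp$-module $V$; this is available since $\mathcal{G}$ is smooth affine over $\zp$. Because $b$ is a single element of $G(W(R)[1/p])$, the entries of $\iota(b)$ and $\iota(b)^{-1}$ have bounded denominators, so there is an integer $N \geq 0$ with $p^N \iota(b)$ and $p^N \iota(b)^{-1}$ both lying in $\mathrm{End}(V) \otimes W(R)$. This uniformly bounds the position of $b$ in the $\mathrm{GL}(V)$-affine Grassmannian, independent of the geometric point of $\spec R$.

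Next I would argue that there is an element $w$ in the extended affine Weyl group of $G$, depending only on $N$ and $\mu$, such that every $\alpha$ appearing in some geometric fiber of $Z(b)_\mu \to \spec R$ admits a representative whose position in $\mathrm{Gr}^{\mathrm{W}}_{\mathcal{G},R}$ is bounded by $w$. The point is that the identity $\beta_1 = \alpha \, b \, \sigma(\alpha)^{-1}$ with $\beta_1 \in L^+\mathcal{G} \cdot \admu \cdot L^+\mathcal{G}$ together with the uniform bound on $b$ forces the position of $\alpha$ to be bounded modulo the action of the central torus $Z(G)$; after choosing a fundamental domain for this central action one obtains the desired absolute bound $w$. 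With this in hand I would set
\[
  Z := Z(b)_\mu \cap \mathrm{Gr}^{\mathrm{W}}_{\mathcal{G}, \le w, R},
\]
which is a closed subscheme of $Z(b)_\mu$ sitting inside the perfection of a projective $R$-scheme, and whose image in $\spec R$ coincides with that of $Z(b)_\mu$. Surjectivity onto $\spec R$ then reduces to the fiberwise nonemptiness of $Z(b)_\mu \to \spec R$, which at each geometric point $\bar{x}$ amounts to the classical nonemptiness of $X_{\mu^{-1}}(b_{\bar x})$; in the setting of the intended application to Proposition~\ref{Prop:VSurjective} this follows from Mazur's inequality, since $[b_{\bar x}] \in \bgmu$ is exactly the hypothesis provided by the source object of $\gisocmu$.

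The principal obstacle will be establishing the uniform position bound $w$: the central torus acts on $Z(b)_\mu$ preserving $\mu$-boundedness while translating positions unboundedly, so one must either pass to a quotient by this central action or pick a canonical representative in each central orbit. Once this bound is in place, the rest follows formally from the structure of the Witt-vector affine Grassmannian as an increasing union of perfections of projective schemes along closed immersions.
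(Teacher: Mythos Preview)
Your approach has a genuine gap at the uniform-bound step. The geometric fiber of $Z(b)_\mu$ over $\bar x$ is an affine Deligne--Lusztig variety for $b_{\bar x}$, and it carries a left action of the full $\sigma$-centralizer $G_{b_{\bar x}}(\qp)$, not merely of $Z(G)(\qp)$. When $b_{\bar x}$ is non-basic this group is a proper Levi of an inner form of $G$, and its action translates Schubert positions unboundedly even after passing to the adjoint Grassmannian. Already for $G=\mathrm{GL}_2$ with $b_{\bar x}$ ordinary, $G_{b_{\bar x}}$ is the diagonal torus $T$, and the $T(\qp)$-orbit of any point meets infinitely many Schubert cells not related by the center. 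So the relation $\beta_1=\alpha\,b\,\sigma(\alpha)^{-1}$ with $\beta_1$ and $b$ bounded does \emph{not} force $\alpha$ to be bounded modulo $Z(G)$, and the fundamental-domain idea cannot repair this. Nor can you fall back on exhausting $\spec R$ by the closed images of $Z(b)_\mu\cap\mathrm{Gr}^{\mathrm{W}}_{\mathcal{G},\le w}$ as $w$ grows, since $R$ need not be Noetherian.

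The paper sidesteps this by reducing to a situation where $G_b$ is constant and the relevant torsor trivializes. Since one may base change along any surjection $\spec R'\to\spec R$, one first arranges that $\spec R$ lands in a single Newton stratum $\gisoc^{[b']}$ and that $R$ is w-contractible. Over such a base every pro-\'etale $\underline{G_{b'}(\qp)}$-torsor splits, so $\spec R\to\gisoc^{[b']}\simeq[\underline{G_{b'}(\qp)}\backslash\spec\fpbar]$ lifts to $\spec\fpbar$; equivalently, $b$ is globally $\sigma$-conjugate over $R$ to the constant $b'$ via some $g\in G(W(R)[1/p])$. A single $\fpbar$-point $h\in X_{\mathcal G}(b',\mu^{-1})$, supplied by He's nonemptiness criterion, then gives the section $gh\colon\spec R\to Z(b)_\mu$.
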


In the proof of this lemma, we will make use of the affine Deligne--Lusztig variety $X_{\mathcal{G}}(b',{\mu^{-1}})$ of level $K=\mathcal{G}(\zp)$ associated to a choice of $b' \in [b']$ and $\mu^{-1}$, see \cite[Definition 3.3.1]{PappasRapoportShtukas}. Its set of $\fpbar$ points can be identified with the subset of elements $h \in G(\qpbr)/G(\zpbr)$ such that the $\mathcal{G}$-shtuka $(\mP^0, h b' \sigma(h)^{-1}) \in \shtgloc(\fpbar)$ lies in $\shtglocmu(\fpbar)$. By \cite[Theorem A]{He2}, it is nonempty if and only if $[b'] \in \bgmu$. 

\begin{proof}[Proof of Lemma \ref{Lem:SurjectiveIII}]
  We first note that we are free to base change along any surjective map $\spec
  R^\prime \to \spec R$. Indeed, if there exists a closed subscheme $Z^\prime
  \subseteq Z(b)_\mu \times_{\spec R} \spec R^\prime$ with surjective projection
  map $Z^\prime \to \spec R^\prime$, then the map $Z^\prime \to Z(b)_\mu$
  factors through a closed subscheme $Z \subseteq Z(b)_\mu$, and it follows that
  $Z \to \spec R$ is surjective.

  The given element $b$ defines a map $\spec R \to \gisoc$. Using the above, we
  first reduce to the case when this map factors through a single Newton stratum
  $\gisoc^{[b^\prime]}$. To do this, we observe that $\spec R$ has only
  finitely many Newton strata, and moreover the strata $(\spec R)^{[b']}$ are
  quasi-compact schemes because they are open complements of finitely presented
  closed immersions inside closed subschemes of $\spec R$, see
  \cite[Theorem~3.6]{RapoportRichartz}.\footnote{Note that
  \cite[Theorem~3.6]{RapoportRichartz} assumes that $\spec R$ connected for the
  conclusion of that theorem to hold. However, this is not necessary as the
  result of Katz they cite, see \cite[Theorem~2.3.1]{KatzSlopeFiltrations}, does
  not require $\spec R$ to be connected.} We also reduce to the case when $R$ is
  w-contractible, e.g., using \cite[Lemma~2.4.9]{BhattScholzeProEtale}.

  We will now show that $Z(b)_\mu \to \spec R$ has a section. Let $b^\prime \in
  G(\qpbr)$ be a representative of $[b'] \in B(G)$ and let
  $X_{\mathcal{G}}(b',{\mu^{-1}})$ be the affine Deligne--Lusztig variety as above. By
  \cite[Theorem~I.2.1]{FarguesScholze}, we can identify 
  \begin{align*}
    \gisoc^{[b']} \xrightarrow{\sim} \left[\underline{G_{b'}(\qp)}\backslash
    \spec \fpbar \right]
  \end{align*}
  over $\fpbar$. Since $\spec R$ has no nonsplit pro-\'etale covers, all
  $\underline{G_b'(\qp)}$-torsors over $\spec R$ are trivial (see the proof of
  \cite[Lemma~III.2.6]{FarguesScholze}). It follows that $\gisoc^{[b']}(R)$
  consists of a single isomorphism class, namely the one corresponding to $b'$.
  By assumption, $b \in G(W(R)[1/p])$ determines an element of
  $\gisoc^{[b']}(R)$, so $b' =gb\sigma(g)^{-1}$ for some $g$ in $G(W(R)[1/p])$.
    
  Since $[b'] \in \bgmu$, \cite[Theorem~1.1]{He2} implies that
  $X_{\mathcal{G}}(b',{\mu^{-1}})$ is nonempty. Let $h \in
  X_{\mathcal{G}}(b',{\mu^{-1}})(\fpbar)$, and let $\alpha: \mP^0 \dashrightarrow
  \mP^0$ be the modification defined by $gh$. By definition $\Psi_b(\alpha)$ is
  the $\mathcal{G}$-shtuka $(\mathcal{P}^0, hb'\sigma(h)^{-1})$, which lies in
  $\mathrm{Sht}_{\mathcal{G},{\mu}}^\mathrm{W}(R)$ since $h \in
  X_{\mathcal{G}}(b',{\mu^{-1}})(\fpbar)$. Thus $\alpha$ lies in
  $Z(b)_{{\mu}}(R)$, and we are done.
\end{proof}

\begin{proof}[Proof of Proposition~\ref{Prop:VSurjective}]
  As remarked before, it suffices to prove the case when $\calgcirc=\calg$ and
  thus $\shtglocmuone=\shtglocmu$. 

  Let $R$ be the product of absolutely integrally closed valuation rings $R_i$
  and let $\spec R \to  \gisocmu$ be a morphism corresponding to the
  $G$-isocrystal $(P,\beta_{P}) \in \gisoc(R)$. By
  \cite[Proposition~11.5]{AnschuetzExtension} the $G$-torsor $P$ on $\spec
  W(R)[1/p]$ is trivial. Choose a trivialization, and let $b \in
  LG(R)=G(W(R)[1/p])$ be the element corresponding to $\beta_P$. 

  For a morphism $f \colon \spec R \to \mathrm{Gr}_{\mathcal{G},R}^{\mathrm{W}}$
  over $\spec R$, the composition
  \begin{align}
    \spec R \xrightarrow{f} \mathrm{Gr}_{\mathcal{G},R}^{\mathrm{W}}
    \xrightarrow{\Psi_b} \mathrm{Sht}_{\mathcal{G},R}^\mathrm{W} \to \gisoc_R
  \end{align}
  determines a $G$-isocrystal which is isomorphic to $(P, \beta_{P})$. Our goal
  is to show that there is a section $\spec R \to
  \mathrm{Gr}_{\mathcal{G},R}^{\mathrm{W}}$ whose image under $\Psi_b$ lies in
  $\mathrm{Sht}_{\mathcal{G},{\mu},R}^\mathrm{W}$. In other words, we want to construct a
  section of $Z(b)_{{\mu}} \to \spec R$.

  By Lemma~\ref{Lem:SurjectiveIII}, there exists a closed subscheme $Z \subset
  Z(b)_{\mu}$ for which $Z \to \spec R$ is surjective. On the other hand, $Z \to
  \spec R$ is the perfection of a projective morphism, and it now follows from
  Lemma~\ref{Lem:ExistenceSection} that the map admits a section. The
  v-surjectivity of $\shtglocmu \to \gisocmu$ immediately follows from
  Lemma~\ref{Lem:SchemeTheoreticProductOfPoints}.
\end{proof}

\section{The Igusa stack conjecture} \label{Sec:Conjecture} In this section, we recall the conjectural canonical integral models for Shimura varieties of parahoric level of \cite{PappasRapoportShtukas}, and the quasi-parahoric generalization of \cite{Companion}. We then state a conjecture on the existence of Igusa stacks, and deduce consequences. 

\subsection{Canonical integral models for Shimura varieties} Let $\gx$ be a Shimura datum with reflex field $\mathsf{E}$, let $p$ be a prime and write $G=\mathsf{G}_{\qp}$. Let $\mathcal{G}$ be a quasi-parahoric model of $G$ over $\zp$, and let $K_p = \mathcal{G}(\zp)$. Choose a prime $v$ of $\mathsf{E}$ above $p$, and let $E$ denote the completion of $\mathsf{E}$ at $v$. The local reflex field $E$ is also the field of definition of $\mu$, the $G(\qpbar)$-conjugacy class of cocharacters of $G$ corresponding to $X$ and $v$, see Section \ref{Sec:Conventions}. We will write $\mathcal{O}_E$ for the ring of integers of $E$ and $k_E$ for its residue field. When $K^p \subset \gafp$ is a neat compact open subgroup, associated to $\gx$ and $K=K_pK^p$ is the Shimura variety $\mathbf{Sh}_K\gx$, which we view as an $E$-scheme (i.e., we take the base change to $E$ of the canonical model over $\mathsf{E}$). 

We will often consider Shimura varieties at infinite level. That is, we consider
\begin{equation}\label{Eq:SVTower}
    \mathbf{Sh}_{K^p}\gx = \varprojlim_{K_p' \subset K_p} \mathbf{Sh}_{K_p'K^p}\gx
\end{equation}
as $K_p'\subset K_p$ varies over all compact open subgroups of the fixed $K_p$, as well as
\begin{equation}
    \mathbf{Sh}_{K_p}\gx = \varprojlim_{K^p\subset \gafp} \mathbf{Sh}_{K_pK^p}\gx
\end{equation}
as $K^p$ varies over all neat (see \cite[Definition 1.4.1.8]{LanThesis}) compact open subgroups $K^p \subset \gafp$.

Let $\mathsf{Z}^\circ$ denote the connected component of the center of $\mathsf{G}$. We will assume that $\gx$ satisfies
\begin{equation}\label{Eq:SV5}
    \operatorname{rank}_\mathbb{Q}(\mathsf{Z}^\circ) = \operatorname{rank}_\mathbb{R}(\mathsf{Z}^\circ).
\end{equation}
This equality is equivalent to Milne's axiom SV5 \cite[p.63]{Milne} by \cite[Lemma 1.5.5]{KisinShinZhu}.
\begin{Rem}
    By \cite[Lemma 5.1.2.(i)]{KisinShinZhu}, the assumption \eqref{Eq:SV5} is satisfied whenever $\gx$ is of Hodge type, which will be the main case of interest to us.
\end{Rem}
Fix a neat compact open subgroup $K^p \subset \gafp$, and let $K = K_pK^p$. Each finite level Shimura variety $\mathbf{Sh}_{K_p'K^p}\gx$ is a smooth variety over $E$, and the transition maps in the tower \eqref{Eq:SVTower} are finite \'etale. We denote by $\mathbb{P}_K$ the pro-\'etale $\mathcal{G}(\zp)$-cover 
\begin{align}
    \mathbf{Sh}_{K^p}\gx \to \mathbf{Sh}_K\gx.
\end{align}
Let $\mathbf{Sh}_K\gx^{\lozenge}$ denote the v-sheaf over $\spd E$ associated to $\mathbf{Sh}_K\gx$ as in Section \ref{Sub:AdicSpaces}. By \cite[Corollary 3.3.9]{Companion}, cf.\ \cite[Proposition 4.1.2]{PappasRapoportShtukas}, there is a $\mathcal{G}$-shtuka with one leg $\mathscr{P}_{K,E}$ on $\mathbf{Sh}_K\gx^\lozenge$ which is bounded by $\mu$, and is associated to $\mathbb{P}_K$ in the sense of \cite[Definition 2.6.6]{PappasRapoportShtukas}.

\subsubsection{} Let $K^p$ be a neat compact open subgroup of $\gafp$, and let $K = K_pK^p$. Pappas and Rapoport conjecture that for each $K^p$ there is a flat normal integral model $\scrs_K\gx$ over $\spec \mathcal{O}_E$ of $\mathbf{Sh}_K\gx$, satisfying certain properties, see \cite[Conjecture~4.2.2]{PappasRapoportShtukas} for the conjecture in the case where $\mathcal{G}$ is a stabilizer Bruhat--Tits group scheme, and see \cite[Conjecture~4.1.4]{Companion} for the case where $\mathcal{G}$ is quasi-parahoric. They then show, see \cite[Theorem~4.2.4]{PappasRapoportShtukas} and \cite[Corollary~4.1.13]{Companion}, that there is at most one system of normal flat models satisfying their list of properties. A consequence of their list of properties is the existence of a map 
\begin{equation}\label{Eq:picrys}
    \pi_\crys:\scrs_K\gx^\diamond \to \shtgmu,
\end{equation}
compatible with changing $K^p$, which induces a $\gafp$-equivariant map $$\pi_{\crys}:\scrsdinf=\varprojlim_{K^p \subset \gafp} \scrs_K\gx^\diamond \to \shtgmu.$$ In fact, it follows from \cite[Remark 4.1.5]{Companion} that both these maps factor through the open and closed substack
\begin{align}
    \shtgmuone \subset \shtgmu,
\end{align}
see \cite[Section 3.3.7]{Companion}. 

\subsubsection{} By \cite[Theorem 4.2.3]{Companion}, cf.\ \cite[Theorem 4.5.2]{PappasRapoportShtukas}, a system of canonical integral models exists in the case of a Hodge-type Shimura datum. The conjecture is also known to hold if $\gx$ is of toral type (i.e., if $\mathsf{G} = \mathsf{T}$ is a torus) by \cite[Theorem A]{Daniels}. It is moreover known to hold when $\gx$ is of abelian type and $p>2$ by \cite[Theorem A]{DanielsYoucis}.\footnote{In fact, a version of the conjecture holds even when SV5 is not satisfied; see \cite[Conjecture 4.5]{Daniels} for the statement in this generality.}

\subsection{Igusa stacks} 
The following conjecture is the version of \cite[Conjecture 1.1.(4)]{ZhangThesis} for the good reduction locus, although we make the additional assumption that $\gx$ satisfies the equality of \eqref{Eq:SV5} and we allow quasi-parahoric models $\mathcal{G}$ of $G$. 
\begin{Conj} \label{Conj:IgusaMain}
  There is a small v-sheaf $\igsinf$ over $\spd \mathbb{F}_p$ equipped with an
  action of $\gafp$ and a $\gafp$-invariant map 
  \begin{align*}
      \overline{\pi}_\mathrm{HT} \colon \igsinf \to \bungmu.
  \end{align*}
  Moreover for every quasi-parahoric model $\calg$ of $G$ with $K_p=\calg(\zp)$, there is a $\gafp$-equivariant
  map 
  \begin{align*}
      \IgsQuot \colon \scrsdinf \to \igsinf
  \end{align*}
  such that the following diagram is
  $2$-commutative and $2$-Cartesian
  \[ \begin{tikzcd} \label{Eq:ConjectureCartesianDiagram}
    \scrsdinf \arrow{r}{\pi_{\mathrm{crys}}} \arrow{d}{\IgsQuot} & \shtgmuone \arrow{d}{\mathrm{BL}^{\circ}} \\
    \igsinf \arrow{r}{\overline{\pi}_\mathrm{HT}} & \bungmu.
  \end{tikzcd} \]
    Furthermore, the formation of $\igsinf$ is functorial in morphisms of
    Shimura data and compatible with the morphism $\overline{\pi}_\mathrm{HT}$ and the
    $2$-Cartesian diagram above.
\end{Conj}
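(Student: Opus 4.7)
The plan is to prove this conjecture in the Hodge-type case by constructing $\mathrm{Igs}\gx$ explicitly as a quotient of $\scrsdinf$ under a formal-quasi-isogeny equivalence relation, and then verifying the $2$-Cartesian property by a reduction to a nice basis of the v-topology. First I would fix a Hodge embedding $\iota : \gx \hookrightarrow \gvx$, together with a compatible quasi-parahoric model; let $A \to \scrsdinf$ denote the (weakly polarized, prime-to-$p$ level-structured) family of abelian varieties pulled back from the Siegel tower, as in Section \ref{Sec:HodgeType}. I would then define $\mathrm{Igs}\gx$ as the v-sheaf quotient of $\scrsdinf$ by the equivalence relation identifying two maps $x,y : \spd(R,R^+) \to \scrsdinf$ whenever there is a formal quasi-isogeny $A_x \dashrightarrow A_y$ over $R^+/\varpi$ (in the sense of Section \ref{Sub:FormalQuasiIsogenies}) respecting weak polarizations and level structures and whose induced isomorphism of $G$-bundles on the Fargues--Fontaine curve preserves the $\mathsf{G}$-reduction coming from $\iota$. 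The quotient $\mathrm{Igs}_{K^p}\gx$ is then $\mathrm{Igs}\gx / K^p$. The map $\overline{\pi}_\mathrm{HT}$ is obtained from $\mathcal{E}(A_x)$ as in Section \ref{Sec:FormalQuasiIsogenyBundle}, and it is well defined on equivalence classes by Lemma \ref{Lem:Compatibility} combined with the invariance of the associated bundle under formal quasi-isogeny. The map $\IgsQuot$ is the tautological quotient map, and commutativity with $\mathrm{BL}^\circ \circ \pi_{\mathrm{crys}}$ is built into the definition.

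The essential point is then to verify that $\scrsdinf \to \mathrm{Igs}\gx \times_{\bungmu} \shtgmuone$ is an isomorphism of v-sheaves. Surjectivity is immediate from the definition, so the content is injectivity (up to isomorphism) on $S$-points for $S$ in a v-basis. Using Lemma \ref{Lem:SchemeTheoreticProductOfPoints}, I would reduce to $S = \spa(R, R^+)$ where $R$ is a product of absolutely integrally closed valuation rings (``product of geometric points''). Two $S$-points identified in the fiber product differ by a formal quasi-isogeny that is compatible with the shtuka data; the task is to upgrade this to a genuine equality in $\scrsdinf$ after a v-cover. Here one first checks the claim on rank-one geometric points, where the Rapoport--Zink uniformization of $\scrsdinf$ proved in \cite{PappasRapoportShtukas} and \cite{GleasonLimXu} applies stratum by stratum and reduces the claim to the well-understood Igusa/Rapoport--Zink picture. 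For the spread-out statement over $R$ one invokes the PEL (Siegel) case of the conjecture already established in \cite{ZhangThesis}: this produces the required global formal quasi-isogeny on the abelian variety level; compatibility with the $\mathsf{G}$-structure is then propagated from the rank-one statement. This ``promotion from rank-one to product-of-points'' step is the main obstacle: we cannot argue moduli-theoretically as in the Siegel case, so we must rely simultaneously on the Siegel case, on the uniqueness of canonical models in \cite{PappasRapoportShtukas, Companion}, and on careful analysis of how $\mathsf{G}$-reductions deform.

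It remains to establish functoriality and to show independence of the auxiliary choices (parahoric model, Hodge embedding). Following ideas of Kisin--Zhou \cite{KisinZhou}, I would first treat morphisms $\gx \to \gxp$ that extend to a morphism $\mathcal{G} \to \mathcal{G}'$ of (quasi-)parahoric models: the induced map on Shimura data lifts to a compatible map on the formal-quasi-isogeny data and hence to $\mathrm{Igs}$. Next, I would show that the natural transition maps between Igusa stacks at varying (quasi-)parahoric level are isomorphisms, using the fact that shtuka data descend compatibly and that the changes of level only affect the $\mathcal{G}$-torsor integral structure, which is already absorbed by the equivalence relation. Contractibility of the Bruhat--Tits building then lets us deduce independence of the parahoric choice. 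Independence of the Hodge embedding follows by applying this parahoric-level functoriality to the identity morphism $\gx \to \gx$ with respect to two different Hodge embeddings. Composing these steps extends functoriality to arbitrary morphisms of Hodge-type Shimura data, completing the proof in the Hodge-type case.
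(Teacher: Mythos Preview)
Your outline matches the paper's proof closely: define $\mathrm{Igs}\gx$ as the v-sheafification of a formal-quasi-isogeny quotient of $\scrsdpreinf$, verify the Cartesian square on products of geometric points using Pappas--Rapoport/Gleason--Lim--Xu uniformization at rank-one points together with the Siegel case from \cite{ZhangThesis}, and establish functoriality via the Kisin--Zhou product-of-Hodge-embeddings trick, change-of-parahoric isomorphisms, and Bruhat--Tits contractibility.

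There is one genuine misstatement. You claim that for the comparison map $\scrsdinf \to \mathrm{Igs}\gx \times_{\bungmu} \shtgmuone$ ``surjectivity is immediate from the definition, so the content is injectivity.'' This is false. A point of the fiber product is a pair $(\bar{x},(\mathscr{P},\phi_{\mathscr{P}}))$ where the shtuka has the same underlying $G$-bundle on $X_S$ as $\bar{x}$. Lifting $\bar{x}$ to some $y \in \scrsdinf$ gives a point whose image is $(\bar{x},\pi_{\mathrm{crys}}(y))$, and $\pi_{\mathrm{crys}}(y)$ need not agree with $(\mathscr{P},\phi_{\mathscr{P}})$; they merely share a common $G$-bundle. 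One must \emph{construct} a new $z \in \scrsdinf$, equivalent to $y$, with $\pi_{\mathrm{crys}}(z)\simeq(\mathscr{P},\phi_{\mathscr{P}})$. This is precisely Proposition~\ref{Prop:RankOneGeometricSurjective}, and it is where Rapoport--Zink uniformization enters most essentially: one transports the framing of $\pi_{\mathrm{crys}}(y)$ through the isomorphism of bundles to obtain a point of $\mathcal{M}^{\mathrm{int}}_{\mathcal{G},b_x,\mu,\delta=1}$, then applies $\Theta_{\mathcal{G}}$ to land back in the Shimura variety (with an additional Frobenius twist to reconcile mismatched residue-field embeddings $e_1\neq e_2$). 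Injectivity (Proposition~\ref{Prop:RankOneGeometricInjective}) is a separate argument. Both directions need work, and surjectivity is arguably the harder one.

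A minor correction: Lemma~\ref{Lem:SchemeTheoreticProductOfPoints} is about the v-topology on $\affperf$, not on $\perf$. The basis you want consists of perfectoid products of geometric points (Definition~\ref{Def:ProductOfPoints}); the reduction is via \cite[Remark~1.3]{GleasonSpecialization}, and the bootstrap from rank-one to such products is carried out via the ad hoc proper*/separated* lifting formalism of Definition~\ref{Def:LiftsProductOfPoints}.
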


\begin{Rem}
  We emphasize that the v-sheaf $\igsinf$ is defined over $\fp$ and not $k_E$.
  In fact, the bottom row of the diagram is defined over $\spd \fp$ while the
  top row is defined over $\spd \mathcal{O}_E$.
\end{Rem}

\begin{Rem}
    Taking the generic fiber of $\mathscr{S}_{K_p}\gx^\diamond$, we obtain the good reduction locus of the diamond $\mathbf{Sh}_{K_p}\gx^\lozenge$ attached to the generic fiber of the Shimura variety, while Scholze's original conjecture is above uniformizing the whole Shimura variety. In this sense, the Igusa stack in the conjecture above is only the good reduction locus of a larger Igusa stack. It is denoted $\operatorname{Igs}^\circ\gx$ in \cite{ZhangThesis}. Since we do not discuss other versions of Igusa stacks in this paper, we will suppress the superscript ${}^\circ$ from notation. In fact, it is conjectured more generally that there is a ``big'' Igusa stack living over the whole $\bung$ instead of just the locus bounded by $\mu$. However, for the time being we can only make use of the Shimura variety, so we are content with the statement of the current conjecture.
\end{Rem}

\subsection{Perfect Igusa varieties} \label{Sec:PerfectIgusa}

Let $\gx$ be a Shimura datum satisfying \eqref{Eq:SV5} and let $K_p \subseteq G(\qp)$ be a parahoric subgroup with integral model $\mathcal{G} = \mathcal{G}^\circ$. We will assume in the rest of this section that there is a stack $\igsinf$ sitting in a $2$-Cartesian diagram as in \eqref{Eq:ConjectureCartesianDiagram}. We let $\igsinf^{\mathrm{red}}$ be its reduction. 
\subsubsection{} \label{Sec:ReductionDiagram}
We use $\shginf$ to denote the perfect special fiber of $\scrsginf$, which is
naturally identified with $(\scrsdinf_{k_E})^{\mathrm{red}}=(\scrsdinf)^{\mathrm{red}}$ (see Lemma \ref{Lem:ReductionOfScheme}). The reduction of the morphism $\pi_{\mathrm{crys}}:\scrsdinf \to \shtgmu$ gives rise to a morphism
$\pi_{\mathrm{crys}}^{\mathrm{red}}:\shginf \to \shtglocmu$, see Lemma~\ref{Lem:ReductionShtukasMu}. Its
composition with the reduction of $\mathrm{BL}^{\circ}:\shtgmu \to \bung$ gives rise to a
morphism $\shginf \to \gisoc$, see Section~\ref{Sec:Isocrystals}. Since reduction is a right adjoint (see \cite[Definition 3.12]{GleasonSpecialization}), it commutes with limits. Therefore, we get a Cartesian diagram
\begin{equation} \label{Eq:ReductionDiagram}
\begin{tikzcd}
        \shginf\arrow{r}{\pi_{\mathrm{crys}}^{\mathrm{red}}} \arrow{d}{\IgsQuot^\mathrm{red}} & \shtglocmu \arrow{d}{\mathrm{BL}^{\circ, \mathrm{red}}} \\
        \igsinf^{\mathrm{red}} \arrow{r}{\overline{\pi}_\mathrm{HT}^{\mathrm{red}}} & \gisocmu.    
\end{tikzcd}
\end{equation}
\subsubsection{} \label{Sec:DefPerfectIgusa}
Let $b:\spec \fpbar \to \gisocmu$ be a morphism. Then we define the perfect Igusa variety $\iginfbgx$ to be the fiber product
\[ \begin{tikzcd}
  \iginfbgx \arrow{r} \arrow{d} & \igsinf^{\mathrm{red}} \arrow{d}{\overline{\pi}_\mathrm{HT}^{\mathrm{red}}} \\ \spec \fpbar \arrow{r}{b} &
  \gisocmu.
\end{tikzcd} \]
If we modify $b$ by a $\sigma$-conjugate
\[
  g^{-1} b \sigma(g) = b^\prime,
\]
where $g \in G(\qpbr)$, then $g$ induces a $2$-isomorphism between the two maps
$b, b^\prime \colon \spec \fpbar \to \gisocmu$. It therefore induces an isomorphism
\[
   \iginfbgx \xrightarrow{\sim} \mathrm{Ig}^{b^\prime}\gx.
\]
In particular, this induces an action of the $\sigma$-centralizer $G_b(\qp) \subset G(\qpbr)$ of $b$ on $\iginfbgx$. 

After possibly replacing $b$ by a $\sigma$-conjugate, we may arrange using Proposition~\ref{Prop:VSurjective} that there is a lift of $b \colon \spec \fpbar \to \gisocmu$ to $b:\spec \fpbar \to \shtglocmu$. Then from the definition of $\iginfbgx$ together with the $2$-Cartesian diagram in \eqref{Eq:ReductionDiagram}, we have a $2$-Cartesian square
\begin{equation}\label{Eq:IgbDiagram} 
  \begin{tikzcd}
    \iginfbgx  \arrow{d} \arrow{r} & \spec \fpbar \arrow{d}{b}
    \\ \shginf \arrow{r}{\pi_{\mathrm{crys}}^{\mathrm{red}}} & \shtglocmu.
  \end{tikzcd} 
  \end{equation}
It follows from this 2-Cartesian diagram that $\iginfbgx$ agrees with the perfect Igusa cover corresponding to $\pi_\mathrm{crys}^\mathrm{red}$ defined in \cite[Section 2.14]{HamacherKimPointCounting}.

\begin{Lem} \label{Lem:RepresentabilityIgusa}
    The perfect Igusa variety $\iginfbgx$ is representable by a perfect scheme.
\end{Lem}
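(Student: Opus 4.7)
The strategy is to reduce representability to the finite-level situation treated in \cite[Section~2.14]{HamacherKimPointCounting} and then pass to the inverse limit over the tame level.

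First, for each neat compact open subgroup $K^p \subseteq \gafp$, I would introduce the finite-level analogue
\[
    \mathrm{Ig}^b_{K^p} := \scrs_{K_pK^p}\gx_{k_E}^{\mathrm{perf}} \times_{\shtglocmu,\, b} \spec \fpbar.
\]
As noted in the paragraph preceding the lemma, at each level this fibre product agrees with the perfect Igusa cover attached by Hamacher--Kim to the map $\pi_\mathrm{crys}^{\mathrm{red}}$ at level $K_pK^p$, and hence is representable by a perfect scheme by the results of loc.\ cit.

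Second, since $\shginf = \varprojlim_{K^p} \scrs_{K_pK^p}\gx_{k_E}^{\mathrm{perf}}$, and since inverse limits of v-sheaves commute with fibre products, the Cartesian diagram \eqref{Eq:IgbDiagram} identifies $\iginfbgx$ with $\varprojlim_{K^p} \mathrm{Ig}^b_{K^p}$. The transition morphisms in this system are pullbacks of the finite étale transition maps in the tower of Shimura varieties, hence affine; consequently the inverse limit is representable by a perfect scheme.

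The main obstacle I anticipate is bookkeeping rather than geometric: one must carefully check that the fibre product defining $\mathrm{Ig}^b_{K^p}$ really is the perfect Igusa cover considered by Hamacher--Kim. Given the differing conventions for Witt-vector $\mathcal{G}$-shtukas flagged in Remark~\ref{Rem:ShtukasInversion} (in particular, a possible Frobenius twist in the definition of $\shtglocmu$), some care is needed in aligning our element $b$ and map $\pi_\mathrm{crys}^{\mathrm{red}}$ with the corresponding objects of loc.\ cit.\ before their representability result can be invoked directly.
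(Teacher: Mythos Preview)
Your approach is correct and leads to the same conclusion, but it is slightly more elaborate than the paper's argument. The paper observes directly that \cite[Proposition~2.15.(1)]{HamacherKimPointCounting} gives that the morphism $\iginfbgx \to \shginf$ is representable by an affine morphism of perfect schemes; since $\shginf$ is itself a perfect scheme, representability of $\iginfbgx$ is immediate. You instead establish representability at each finite tame level and then pass to the inverse limit along affine transition maps, which is a valid and essentially equivalent route, just with the limit taken on the Igusa side rather than the Shimura side.

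Regarding your anticipated obstacle about conventions: the identification of $\iginfbgx$ with the Hamacher--Kim perfect Igusa cover is already recorded in the paragraph immediately preceding the lemma (via the $2$-Cartesian square \eqref{Eq:IgbDiagram}), so no further bookkeeping is needed at this point.
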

\begin{proof}
 This follows from \cite[Proposition 2.15.(1)]{HamacherKimPointCounting}, which says that $\iginfbgx \to \shginf$ is representable by an affine morphism of perfect schemes. 
\end{proof}

\subsubsection{Central Leaves}\label{Sec:CentralLeaves} Let $b: \spec \fpbar \to \shtglocmu$ be as above. By \eqref{Eq:IgbDiagram} and \cite[Proposition 2.15.(3)]{HamacherKimPointCounting}, there is a locally closed \emph{central leaf} $C^{b}_{K_p} \subset \shginf$ such that $C^{b}_{K_p}$ is closed in the Newton stratum $\shginf \times_{\gisoc} \gisoc^{[b]}$, and such that the morphism $\iginfbgx \to \shginf$ constructed in \eqref{Eq:IgbDiagram} factors through a morphism $\iginfbgx \to C^{b}_{K_p}$. The $\fpbar$-points of $C^{b}_{K_p}$ can be characterized as the subset of $\shginf(\fpbar)$ consisting of those $x$ such that $x \to \shginf \to \shtglocmu$ is isomorphic to $b$. By \cite[Lemma 2.17]{HamacherKimPointCounting}, the morphism $\iginfbgx \to C^{b}_{K_p}$ is a pro-\'etale torsor for the profinite group $\Gamma_b = G_b(\qp) \cap \mathcal{G}(\zpbr)$.

Since the morphism $\shginf \to \shtglocmu$ is $\gafp$-equivariant, we find that $C^{b}_{K_p} \subset \shginf$ is $\gafp$-stable and that $\gafp$ acts on $\iginfbgx$ such that the morphism $\iginfbgx \to C^{b}_{K_p}$ is $\gafp$-equivariant. Thus we see that for each $K^p \subset \gafp$ there is an Igusa variety $\mathrm{Ig}^{b}_{K^p}\gx \to C^{b}_K \subset \shg$. Note that $C^{b}_K$ is the perfection of a finite type scheme.

\begin{Cor} \label{Cor:InverseLimitOfFiniteType}
    The Igusa variety $\iginfbgx$ is an inverse limit of finite \'etale covers of $C^{b}_K$. 
\end{Cor}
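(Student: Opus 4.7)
The approach is to decompose the map $\iginfbgx \to C^b_K$ as the composition of two pro-étale covers for commuting profinite group actions, thereby realizing the whole map as a pro-étale torsor for a single profinite group over the qcqs base $C^b_K$; such torsors are, by construction, inverse limits of finite étale covers indexed by open subgroups.

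First, by the discussion preceding the corollary, the map $\iginfbgx \to C^b_{K_p}$ is a pro-étale torsor for $\Gamma_b = G_b(\qp) \cap \mathcal{G}(\zpbr)$. Second, the natural map $\shginf \to \shg$ is a pro-étale $K^p$-cover with finite étale transition maps at finite levels; since the central leaf condition is cut out by the isomorphism class of the associated local shtuka and is thus $\gafp$-stable, this restricts to a pro-étale $K^p$-cover $C^b_{K_p} = \varprojlim_{K^{p'} \subseteq K^p} C^b_{K_p K^{p'}} \to C^b_K$, again with finite étale transitions.

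The $\gafp$-action on $\iginfbgx$ arises from the prime-to-$p$ Hecke correspondences on $\scrsdinf$, while the $\Gamma_b$-action comes from $\sigma$-conjugation by $G_b(\qp) \subset G(\qpbr)$ on the point $b \colon \spec \fpbar \to \shtglocmu$; in particular, the two actions commute, so $\iginfbgx \to C^b_K$ is a pro-étale torsor for $\Gamma_b \times K^p$. Setting $W_{U, K^{p'}} := \iginfbgx / (U \times K^{p'})$ for open normal subgroups $U \subseteq \Gamma_b$ and neat $K^{p'} \subseteq K^p$, one expects each $W_{U, K^{p'}}$ to be a finite étale cover of $C^b_K$, and passing to the limit over $(U, K^{p'})$ recovers $\iginfbgx$.

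The main technical point will be verifying the representability of the quotients $W_{U, K^{p'}}$ as perfect schemes finite étale over $C^b_K$. This should follow from Lemma~\ref{Lem:RepresentabilityIgusa} (which gives representability of $\iginfbgx$ as a perfect scheme affine over $\shginf$) combined with freeness of the $K^{p'}$-action coming from the neatness of $K^{p'}$, together with finite étaleness of the $\Gamma_b/U$-torsor $\iginfbgx/U \to C^b_{K_p}$; descent along the pro-étale cover $C^b_{K_p} \to C^b_{K_p K^{p'}}$ then produces the desired finite étale cover of $C^b_{K_p K^{p'}}$, and composition with the finite étale map $C^b_{K_p K^{p'}} \to C^b_K$ gives the claimed finite étale cover of $C^b_K$.
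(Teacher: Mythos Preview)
Your proposal is correct and follows essentially the same approach as the paper: the paper's proof is the single sentence that since the $G_b(\qp)$-action on $\iginfbgx$ commutes with the $\gafp$-action, the map $\iginfbgx \to C^b_K$ is a pro-\'etale torsor for the profinite group $\Gamma_b \times K^p$, and the result follows. Your additional elaboration on the representability of the intermediate quotients $W_{U,K^{p'}}$ is reasonable but not something the paper spells out.
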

\begin{proof}
Since the $G_b(\qp)$-action on $\iginfbgx$ commutes with the $\gafp$-action, we see that $\iginfbgx \to C^{b}_K$ is a pro-\'etale torsor for the profinite group $\Gamma_b \times K^p$, and the result follows.
\end{proof}

\subsection{Igusa varieties in the perfectoid setting} \label{Sub:VSheafIgusa} Let the notation and assumptions be as in Section \ref{Sec:PerfectIgusa}. For $b \in G(\qpbr)$ with $[b] \in \bgmu$, there is a corresponding map
\[
  \spd \fpbar \to \bun_G,
\]
see \cite[Theorem 5.3]{AnschutzIsocrystal}. We then define the v-sheaf Igusa variety $\igvinfbgx$
as the fiber product
\[ \begin{tikzcd}
  \igvinfbgx \arrow{r} \arrow{d} & \igsinf \arrow{d}{\overline{\pi}_\mathrm{HT}} \\ \spd \fpbar \arrow{r}{b} &
  \bun_G.
\end{tikzcd} \]
Since $b:\spd \fpbar \to \bun_G$ factors through $\bun_{G}^{[b]} \to \bun_G$ via a $\tilde{G}_b$-torsor
\begin{align}
    b:\spd \fpbar \to \bun_{G}^{[b]},
\end{align}
 we see that $ \igvinfbgx \to \igsinf^{[b]}$ is a $\tilde{G}_b$-torsor. Note that the $\gafp$-action on $\igsinf$ induces an
$\gafp$-action on $\igvinfbgx$, which commutes with the $\tilde{G}_b$-action since $\igsinf \to \bun_G$ is $\gafp$-equivariant. As in Section \ref{Sec:DefPerfectIgusa}, if $g^{-1} b \sigma(g) = b^\prime$, then there is a $\gafp$-equivariant isomorphism
\[
  \igvinfbgx \xrightarrow{\sim} \mathrm{Ig}^{b^\prime,\mathrm{v}}\gx.
\]
\subsubsection{} By Lemma \ref{Lem:Vcover} we may, after replacing $b$ by a $\sigma$-conjugate, lift $b$ to a $\spd \fpbar$ point of $b:\spd \fpbar \to \shtgmu$. Then from the definition of $\iginfbgx$ together with the $2$-Cartesian diagram in \eqref{Eq:ReductionDiagram}, we have a $2$-Cartesian square
\begin{equation} \begin{tikzcd} \label{Eq:CartesianVSheaf}
    \igvinfbgx  \arrow{d} \arrow{r} & \spd \fpbar \arrow{d}{b}
    \\ \scrsdinf \arrow{r}{\pi_{\mathrm{crys}}} & \shtgmu.
\end{tikzcd} \end{equation}
Recall from \cite[Section 3.2]{GleasonSpecialization} that there is a natural map $\shginfd \to \scrsdinf$.

\begin{Lem} \label{Lem:CanonicalCompactification}
  Let $X$ be a perfect scheme together with a morphism $f \colon X \to
  \shtgloc$, which, by the adjunction between reduction and diamond functors and Lemma~\ref{Lem:ReductionShtukasMu}, corresponds to $f^\diamond \colon X^\diamond \to \shtg$. Choose an element $b \in G(\qpbr)$, which induces maps $\spec \fpbar \to \shtgloc$ and $\spd \fpbar \to \shtg$ as in Remark \ref{Rem:ShtukasInversion}, and
  consider the fiber products
  \[ \begin{tikzcd}
    Y \arrow{r} \arrow{d} & \spec \fpbar \arrow{d}{b} \\ X \arrow{r}{f} &
    \shtgloc,
  \end{tikzcd} \quad \begin{tikzcd}
    \mathscr{Y} \arrow{r} \arrow{d} & \spd \fpbar \arrow{d}{b} \\ X^\diamond
    \arrow{r}{f^\diamond} & \shtg.
  \end{tikzcd} \]
  Then $\mathscr{Y}$ is the canonical compactification, in the sense of
  \cite[Proposition~18.6]{EtCohDiam}, of the map $Y^\diamond \to X^\diamond$.
\end{Lem}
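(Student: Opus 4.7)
The plan is to verify directly, on affinoid perfectoid test objects, that $\mathscr{Y}$ satisfies the universal property of the canonical compactification $\overline{Y^\diamond}^{/X^\diamond}$. Recall from \cite[Proposition~18.6]{EtCohDiam} that $\overline{Y^\diamond}^{/X^\diamond}$ is characterized by its values on affinoid perfectoid spaces: an $\spa(R,R^+)$-point consists of a morphism $g \colon \spa(R,R^+) \to X^\diamond$ together with a compatible morphism $h \colon \spa(R,R^\circ) \to Y^\diamond$ lifting $g|_{\spa(R,R^\circ)}$. The universal property of the fiber product defining $\mathscr{Y}$ supplies a canonical morphism $Y^\diamond \to \mathscr{Y}$ over $X^\diamond$, and the task is to promote this to an isomorphism $\mathscr{Y} \simeq \overline{Y^\diamond}^{/X^\diamond}$.

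First I would unpack both sides explicitly. Since $X$ and $\spec \fpbar$ are perfect $\fp$-schemes, any ring map from them to an untilt $R^{\sharp+}$ forces $p = 0$ in $R^{\sharp+}$; hence only the trivial untilt contributes and we have $X^\diamond(\spa(R,R^+)) = X(R^+)$ and $\spd \fpbar(\spa(R,R^+)) = \Hom(\fpbar, R^+)$. Using $Y = X \times_{\shtgloc} \spec \fpbar$ together with the translation from Witt vector shtukas to v-sheaf shtukas described in the proof of Lemma~\ref{Lem:ReductionShtukasMu}, the set $\overline{Y^\diamond}^{/X^\diamond}(\spa(R,R^+))$ becomes triples consisting of a map $\spec R^+ \to X$, a ring map $\fpbar \to R^\circ$, and an isomorphism of the two naturally defined Witt vector $\mathcal{G}$-shtukas over $R^\circ$, while $\mathscr{Y}(\spa(R,R^+))$ becomes analogous triples with the $\fpbar$-structure on $R^+$ instead and the isomorphism taken of the two induced $\mathcal{G}$-shtukas over $\spa(R,R^+) \bdtimes \zp$.

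The first two pieces of data correspond naturally because elements of $\fpbar$ are integral over $\mathbb{Z} \subseteq R^+$ and $R^+$ is integrally closed in $R$, forcing any ring map $\fpbar \to R$ to land in $R^+$. The main point, and the principal obstacle, is to match the two classes of isomorphism data. I would argue this following the spreading-out technique used in the proof of Lemma~\ref{Lem:ZeroTruncated}: morphisms of the shtukas in question on $\spa(R,R^+) \bdtimes \zp$ are controlled by their restrictions to the analytic subspace $\mathcal{Y}_{[r,\infty)}$ for sufficiently large $r$ and extend uniquely from there. Combining this with the sheaf-theoretic comparison between $\mathcal{Y}_{[r,\infty)}(R,R^+)$ and $\mathcal{Y}_{[r,\infty)}(R,R^\circ)$ (where the difference lies only in higher-rank specializations, which impose no additional constraints on morphisms of shtukas), together with the compatibility of the Witt vector shtuka construction with base change from $R^+$ to $R^\circ$ and the subsequent passage to the Fargues--Fontaine setting, yields the desired natural bijection. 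This identifies $\mathscr{Y}$ with $\overline{Y^\diamond}^{/X^\diamond}$, completing the proof.
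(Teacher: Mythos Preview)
Your overall strategy---compute both sides on affinoid perfectoid test objects and match the data---is exactly the paper's approach, and your handling of the first two pieces of data (the $X$-point and the $\fpbar$-structure landing in $R^+$ by integrality) is correct. The gap is in your ``main point.'' The reference to Lemma~\ref{Lem:ZeroTruncated} gives only \emph{faithfulness} of the restriction to $\mathcal{Y}_{[r,\infty)}$: a morphism of shtukas is determined by its restriction there. What you actually need is the reverse direction: given an isomorphism of shtukas over $\mathcal{Y}_{[0,\infty)}(R,R^+)$, produce one over $\spec W(R^\circ)$. Frobenius equivariance spreads a morphism from $\mathcal{Y}_{[r,\infty)}$ out to $\mathcal{Y}_{(0,\infty)}$, but extending across the locus $p=0$ to reach all of $\spec W(R^\circ)$ is genuinely nontrivial, and nothing in Lemma~\ref{Lem:ZeroTruncated} supplies it.

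The paper fills this gap in two steps. First, one passes from $R^+$ to $R^\circ$: the category of $\mathcal{G}$-torsors on $\mathcal{Y}_{[0,\infty)}$ with meromorphic $\phi$-action is unchanged when $R^+$ is replaced by $R^\circ$, by \cite[Proposition~2.1.1]{PappasRapoportShtukas} together with the Tannakian formalism. Second, restriction from Breuil--Kisin--Fargues modules over $W(R^\circ)$ to shtukas on $\mathcal{Y}_{[0,\infty)}(R,R^\circ)$ is \emph{fully faithful} by \cite[Proposition~2.2.7]{PappasRapoportShtukas}; this is precisely where the isomorphism is lifted to $W(R^\circ)$, and it requires working over $R^\circ$ rather than $R^+$. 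Your phrases ``sheaf-theoretic comparison between $\mathcal{Y}_{[r,\infty)}(R,R^+)$ and $\mathcal{Y}_{[r,\infty)}(R,R^\circ)$'' and ``subsequent passage to the Fargues--Fontaine setting'' gesture at these two inputs, but they are theorems with real content and should be invoked explicitly rather than absorbed into the spreading-out heuristic.
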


Here, recall that \cite[Proposition~2.15.(1)]{HamacherKimPointCounting} implies
that $Y$ is representable by a perfect scheme and $Y \to X$ is affine.

\begin{proof}
  Since the question is local in $X$, we may as well assume that $X = \spec A$
  is affine. Let us write $(\mP, \phi_\mP)$ for the $\mathcal{G}$-shtuka on $X$
  corresponding to $f$, where $\mP$ is a $\mathcal{G}$-torsor on $W(A)$. For
  each $S \in \affperf$, an $S$-point of $Y$ is the data of an $S$-point $x
  \colon \spec S \to X_{\fpbar}$ together with a trivialization $x^\ast \mP
  \xrightarrow{\sim} \mathcal{G}_{W(S)}$ under which $\phi_{\mP}$ on the
  left hand side agrees with $b$ on the right hand side.

  On the other hand, for each $(R, R^+) \in \perf$, an $(R, R^+)$-point of
  $\mathscr{Y}$ is the data of an $R^+$-point $x \colon \spec R^+ \to
  X_{\fpbar}$ together with a trivialization
  \[
    x^\ast \mP \vert_{\mathcal{Y}_{[0,\infty)}(R, R^+)} \xrightarrow{\sim}
    \mathcal{G} \times \mathcal{Y}_{[0,\infty)}(R, R^+)
  \]
  under which $\phi_\mP$ corresponds to $b$. The category of
  $\mathcal{G}$-torsors on $\mathcal{Y}_{[0,\infty)}$ together with a
  meromorphic action of $\phi$ does not change when we replace $R^+$ with
  $R^\circ$; this follows from applying the Tannakian formalism to
  \cite[Proposition~2.1.1]{PappasRapoportShtukas}. Therefore we may instead
  parametrize isomorphisms
  \[
    x^\ast \mP \vert_{\mathcal{Y}_{[0,\infty)}(R, R^\circ)} \xrightarrow{\sim}
    \mathcal{G} \times \mathcal{Y}_{[0,\infty)}(R, R^\circ)
  \]
  intertwining $\phi_\mP$ and $b$.

  We also know from \cite[Proposition~2.2.7]{PappasRapoportShtukas} that the
  restriction from $\spec W(R^\circ)$ to $\mathcal{Y}_{[0,\infty)}(R, R^\circ)$
  induces a fully faithful functor from Breuil--Kisin--Fargues modules for
  $R^\circ$ to shtukas for $(R, R^\circ)$. Hence an isomorphism between the two
  shtukas uniquely lifts to an isomorphism
  \[
    x^\ast \mathscr{P} \vert_{\spec W(R^\circ)} \xrightarrow{\sim} \mathcal{G}
    \times \spec W(R^\circ).
  \]
  This shows that $(R, R^+)$-points of $\mathscr{Y}$ correspond to diagrams of
  the form
  \[ \begin{tikzcd}
    \spec R^\circ \arrow{r} \arrow{d} & Y \arrow{d} \\ \spec R^+ \arrow{r} &
    X_{\fpbar},
  \end{tikzcd} \]
  which means that $\mathscr{Y}$ is the canonical compactification of
  $Y^\diamond \to X_{\fpbar}^\diamond$. This is also the same as the canonical
  compactification of $Y^\diamond \to X^\diamond$, as $X_{\fpbar} \to X$ is
  integral.
\end{proof}

\begin{Cor} \label{Cor:CanonicalCompactification}
  The map $\igvinfbgx \to \scrsdinf$ factors through $\shginfd$.
  Moreover, the induced map $\igvinfbgx \to \shginfd$ is isomorphic to
  the canonical compactification of the map $\iginfbgx^{\diamond} \to
  \shginfd$ from the proof of Lemma~\ref{Lem:RepresentabilityIgusa}. Furthermore, the map $\iginfbgx \to \shginfd$ is compactifiable, and therefore $\iginfbgx \to \igvinfbgx$ is an open immersion.
\end{Cor}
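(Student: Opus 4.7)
The plan is to apply Lemma \ref{Lem:CanonicalCompactification} with $X=\shginf$, $f=\pi_{\mathrm{crys}}^{\mathrm{red}}\colon \shginf\to \shtgloc$, and $b\in G(\qpbr)$ the given element (giving the map $\spec\fpbar\to \shtgloc$ via the trivial $\mathcal{G}$-torsor), and then to identify the two sides of the equation with our objects of interest. Under these choices, the perfect-scheme fiber product $Y$ appearing in the lemma is precisely $\iginfbgx$ by the $2$-Cartesian square \eqref{Eq:IgbDiagram}, and the lemma then identifies $\mathscr{Y}=\shginfd\times_{\shtg,b}\spd\fpbar$ with the canonical compactification of $\iginfbgx^{\diamond}\to \shginfd$.

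The first step of the identification is to prove the factorization through $\shginfd$. The $2$-Cartesian square \eqref{Eq:CartesianVSheaf} presents $\igvinfbgx$ as $\scrsdinf\times_{\shtgmu,b}\spd\fpbar$. Since the base map $\spd\fpbar\to \shtgmu$ factors through $\spd\fp\to \spd\mathcal{O}_E$, any $(R,R^+)$-point of $\igvinfbgx$ gives rise to an untilt $(R^\sharp,R^{\sharp+})$ of $(R,R^+)$ together with a map $\fpbar\to R^{\sharp+}$. This forces $R^\sharp$ to be of characteristic $p$, so that $R^\sharp=R$ and $R^{\sharp+}=R^+$. The corresponding morphism $\spec R^+\to \scrs_{K_p}\gx$ then factors through the special fiber, and since $R^+$ is perfect, it factors uniquely through the perfection $\shginf$. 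This yields the desired factorization $\igvinfbgx\to \shginfd$, and hence also exhibits $\igvinfbgx$ as $\shginfd\times_{\shtgmu,b}\spd\fpbar$.

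The second step is to match the map $\shginfd\to \shtgmu$ arising from this presentation with the map $f^{\diamond}\colon \shginfd\to \shtg$ appearing in Lemma \ref{Lem:CanonicalCompactification} (restricted to $\shtgmu$). Using Lemma \ref{Lem:ShtukasLocvsShtukas} and the functoriality of reduction, both maps correspond via the adjunction between perfect schemes and v-sheaves to the same reduction morphism $\pi_{\mathrm{crys}}^{\mathrm{red}}\colon \shginf\to \shtglocmu\subseteq\shtgloc\simeq\shtg^{\mathrm{red}}$, and so they agree. Consequently, $\igvinfbgx$ is canonically isomorphic to $\mathscr{Y}$ as v-sheaves over $\shginfd$, and the second assertion of the corollary follows from the conclusion of Lemma \ref{Lem:CanonicalCompactification}.

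The main obstacle will be the compatibility check in the previous paragraph: carefully verifying that the map $\shginfd\to \shtg$ obtained from $\scrsdinf\xrightarrow{\pi_{\mathrm{crys}}}\shtgmu$ by passing to the characteristic $p$ perfect locus coincides with the map induced from $\pi_{\mathrm{crys}}^{\mathrm{red}}$ under the reduction formalism. This should amount to a direct unwinding of how reduction interacts with $\pi_{\mathrm{crys}}$ and the isomorphism $(\shtg)^{\mathrm{red}}\simeq\shtgloc$, but recording it precisely is the only point in the argument that is not essentially formal.
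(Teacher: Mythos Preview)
Your proposal is correct and follows essentially the same approach as the paper, which compresses the argument into two sentences: it observes that since the bottom row of \eqref{Eq:CartesianVSheaf} lies over $\spd\mathcal{O}_E$ and $b$ lands in the special fiber, one may rewrite $\igvinfbgx$ as $\shginfd\times_{\shtgmu,b}\spd\fpbar$, and then applies Lemma~\ref{Lem:CanonicalCompactification} to $X=\shginf$. The compatibility you flag as the ``main obstacle'' is treated by the paper as implicit in this rewriting, and indeed (as you note) it is formal from the naturality of the reduction adjunction.
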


\begin{proof}
  Since the bottom row of \eqref{Eq:CartesianVSheaf} lies over $\spd
  \mathcal{O}_E$, we can further regard $\igvinfbgx$ as the fiber product
  \[ \begin{tikzcd}
    \igvinfbgx \arrow{d} \arrow{r} & \spd \fpbar \arrow{d}{b} \\ \shginfd
    \arrow{r} & \shtgmu.
  \end{tikzcd} \]
  We now apply Lemma~\ref{Lem:CanonicalCompactification} to $X = \shginf \to
  \shtgloc$ to conclude.

  It remains to check compactifiability of $\iginfbgx^{\diamond} \to
  \shginfd$. We have a factorization $\iginfbgx \to C_{K_p}^b \to \shginf$, where the first map is pro-(finite \'etale) and the second map is a locally closed immersion. As compactifiable maps are stable under composition \cite[Proposition~22.3.(iv)]{EtCohDiam} and both proper maps and \'etale maps are compactifiable by \cite[Proposition~22.3.(vi)]{EtCohDiam}, we see that $\iginfbgx^{\diamond} \to
  \shginfd$ is compactifiable by decomposing $C_{K_p}^b \to \shginf$ into a composition of an open embedding and a closed embedding. To deduce that $\iginfbgx \to \igvinfbgx$ is an open immersion, we apply \cite[Proposition 22.3.(i)]{EtCohDiam}.
\end{proof}

\subsubsection{} We consider the inverse image $\mathbf{Sh}\gx^{\circ, \lozenge}$ under $\mathbf{Sh}\gx^{\lozenge} \to \mathbf{Sh}_{K_p}\gx^{\lozenge}$ of the open subspace $\scrs_{K_p}\gx^{\diamond} \times_{\spd \mathcal{O}_E} \spd E \subset \mathbf{Sh}_{K_p}\gx^{\lozenge}$, and we write $\pi_{\mathrm{HT}}^{\circ}$ for the restriction of $\pi_{\mathrm{HT}}$ to this inverse image.\footnote{The definition of $\mathbf{Sh}\gx^{\circ, \lozenge}$ depends a priori on the choice of parahoric $K_p$. In the case of Hodge type Shimura varieties, we will show that it is in fact independent of $K_p$, see Lemma \ref{Lem:PotentiallyCrystalline}.} Let $C$ be an algebraically closed complete non-archimedean field containing $E$ with ring of integers $\CO_C$, and let $x$ be a $\spd(C,\CO_C)$-point of the flag variety $\mathrm{Gr}_{G,{\mu^{-1}}}$. Then the image of $x$ under the Beauville--Laszlo map lies in $\bun_{G}^{[b]} \subset \bung$ for some $[b] \in \bgmu$, see \cite[Proposition~3.5.3]{CaraianiScholzeCompact}. Choose an element $b \in [b]$ and let $\mathrm{Ig}^{b} = \iginfbgx$ be the perfect Igusa variety over $\fpbar$ associated to $b$. Let $\ig^{b}_{\zpbr}$ be the Witt vector lift of $\ig^{b}$, which is a formal scheme over $\spf \zpbr$. Denote by $\ig^{b}_{C}$ the adic generic fiber of $\ig^{b}_{\zpbr}\times_{\spf \zpbr}\spf \CO_C$. 
\begin{Prop}\label{Prop: HTfiber}
There is a monomorphism $\ig_{C}^{b,\lozenge} \to \pi_{\mathrm{HT}}^{\circ,-1}(x)$ inducing an isomorphism on canonical compactifications. 
\end{Prop}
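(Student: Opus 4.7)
The plan is to exhibit both sides as being, up to canonical compactification, the fiber product $\igvinfbgx \times_{\spd \fpbar} \spd(C,\CO_C)$, where $\igvinfbgx$ is the v-sheaf Igusa variety of Section~\ref{Sub:VSheafIgusa}. First, I would interpret the fiber $\pi_\mathrm{HT}^{\circ,-1}(x)$ via $\bun_G$: using the isomorphism $\shtgmuonerat \simeq [\mathrm{Gr}_{G,\mu^{-1}}/\underline{K_p}]$ of \eqref{Eq:GenericFibreShtukas} and Lemma~\ref{Lem:BLCircVSBL}, one obtains $\mathrm{BL}\circ\pi_\mathrm{HT}^\circ = \mathrm{BL}^\circ\circ\pi_\crys^\circ$. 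Since $\mathrm{BL}(x)$ lies in $\bun_G^{[b]}$ and $b$ lifts it to $\spd \fpbar$, pulling back the Cartesian square~\eqref{Eq:CartesianVSheaf} along the $\tilde{G}_b$-torsor $\spd(C,\CO_C)\to \bun_G^{[b]}$ yields a natural map $\pi_\mathrm{HT}^{\circ,-1}(x)\to \igvinfbgx\times_{\spd \fpbar}\spd(C,\CO_C)$.

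Next, I would construct the monomorphism $\ig_C^{b,\lozenge}\to \pi_\mathrm{HT}^{\circ,-1}(x)$. The Witt vector lift $\ig^b_{\zpbr}$ carries a canonical lift of the tautological Shimura-type datum on $\ig^b$ by Serre--Tate theory and the universal property of the Igusa tower; base-changing to $\spf \CO_C$ and passing to the adic generic fiber gives a morphism $\ig_C^{b,\lozenge}\to \mathbf{Sh}\gx^{\circ,\lozenge}$. The defining property of the Igusa trivialization, together with the construction of $\igvinfbgx$ and the crystalline comparison, ensures that the composition with $\pi_\mathrm{HT}^\circ$ is the constant map at $x$. Monicity follows from the rigidity of the Igusa datum: by Section~\ref{Sec:CentralLeaves}, $\ig^b\to C_{K_p}^b$ is a pro-\'etale torsor, and its unique Witt-vector lift remains injective on geometric points.

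Finally, for the isomorphism on canonical compactifications, I would invoke Corollary~\ref{Cor:CanonicalCompactification}, which identifies $\igvinfbgx$ with the canonical compactification of $\iginfbgx^\diamond\to \shginfd$. Base changing along $\spd(C,\CO_C)\to \spd \fpbar$ and using that canonical compactifications are preserved under such base change identifies the canonical compactification of $\ig_C^{b,\lozenge}$ over $\pi_\mathrm{HT}^{\circ,-1}(x)$ with $\igvinfbgx \times_{\spd \fpbar}\spd(C,\CO_C)$, and the same expression is obtained for the canonical compactification of the target by the first step. The main obstacle will be making rigorous the claim that the discrepancy between the map landing in $\bun_G$ versus $\mathrm{Gr}_{G,\mu^{-1}}$ (the extra data of the $\tilde{G}_b$-torsor over the fiber) is only visible on the boundary and becomes trivial after canonical compactification; this requires careful use of the explicit description of canonical compactifications via untilts in \cite[Proposition~18.6]{EtCohDiam}, together with the Cartesian square~\eqref{Eq:CartesianVSheaf}, to reduce the comparison to a statement about $R^+$-points that can be verified directly from the construction of the Igusa variety.
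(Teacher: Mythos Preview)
Your proposal misses the key input that makes the paper's argument a two-line affair: the Cartesian diagram
\[
\begin{tikzcd}
\mathbf{Sh}\gx^{\circ,\lozenge} \arrow{r}{\pi_{\mathrm{HT}}^{\circ}} \arrow{d} & \operatorname{Gr}_{G,\mu^{-1}} \arrow{d}{\mathrm{BL}} \\
\igsinf \arrow{r} & \bun_G,
\end{tikzcd}
\]
which is \emph{assumed} throughout Section~\ref{Sub:VSheafIgusa} (via the standing hypothesis inherited from Section~\ref{Sec:PerfectIgusa}). From this diagram one reads off immediately that $\pi_{\mathrm{HT}}^{\circ,-1}(x) = \igsinf \times_{\bun_G,\mathrm{BL}(x)} \spd(C,\CO_C)$, and since $\mathrm{BL}(x)$ factors (after a choice of isomorphism) through $b\colon \spd\fpbar\to\bun_G$, this is $\igvinfbgx \times_{\spd\fpbar}\spd(C,\CO_C)$. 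Corollary~\ref{Cor:CanonicalCompactification} plus stability of canonical compactifications under base change then finishes the proof.

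Your route instead tries to use the Cartesian square~\eqref{Eq:CartesianVSheaf}, but that square identifies $\igvinfbgx$ as the fiber of $\scrsdinf\to\shtgmu$ over a \emph{special fiber} point $b\colon\spd\fpbar\to\shtgmu$, whereas $\pi_{\mathrm{HT}}^{\circ,-1}(x)$ is the fiber over the \emph{generic fiber} point given by the image of $x$ in $\shtgmuonerat\simeq[\mathrm{Gr}_{G,\mu^{-1}}/\underline{K_p}]$. These are fibers over genuinely different points of $\shtgmu$; their only common feature is that both map to the same isomorphism class in $\bun_G$. Relating them is precisely the content of the Igusa stack Cartesian diagram, so your attempt to bypass it cannot succeed. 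This is exactly the ``obstacle'' you flag at the end, and it is not a technicality to be overcome but the heart of the statement.

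Your construction of the monomorphism is also problematic on its own terms. There is no ``canonical lift'' via Serre--Tate theory for general $b$ (only for ordinary $b$), and for Hodge-type Shimura varieties there is no moduli interpretation allowing you to produce a point of $\mathbf{Sh}\gx^{\circ,\lozenge}$ from a lifted abelian scheme with extra structure; this is one of the principal difficulties the paper has to work around elsewhere. The correct map $\iginfbgx^{\diamond}\to\scrsdinf$ comes from \eqref{Eq:IgbDiagram} and lands in the special fiber, not in $\mathbf{Sh}\gx^{\circ,\lozenge}$.
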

\begin{proof}
There is a Cartesian diagram
\begin{equation} \label{Eq:CartesianDiagramInfiniteLevel}
\begin{tikzcd}
    \mathbf{Sh}\gx^{\circ, \lozenge} \arrow{r}{\pi_{\mathrm{HT}}^{\circ}} \arrow{d} & \operatorname{Gr}_{G, \mu^{-1}} \arrow{d} \\
    \igsinf \arrow{r} & \bun_G,
\end{tikzcd}
\end{equation}
and it follows from this Cartesian diagram in \eqref{Eq:CartesianDiagramInfiniteLevel} that $\igvinfbgx \times_{\spd \fpbar} \spd(C, \mathcal{O}_C)$ is the fiber of $\pi_{\mathrm{HT}}$ over $x$. We now make the identification \[ \iginfbgx^{\diamond} \times_{\spd \fpbar} \spd (C, \mathcal{O}_C) \simeq \ig_{C}^{b,\lozenge}, \] and the result now follows from Corollary \ref{Cor:CanonicalCompactification}, together with the fact that the formation of the canonical compactification is stable under base change, see \cite[Proposition 18.7]{EtCohDiam}.
\end{proof}

\subsection{A product formula} \label{Sec:ProductFormula}  
In this section we develop an almost product structure for perfectoid Igusa varieties, generalizing \cite[Corollary 11.26]{ZhangThesis}, \cite[Proposition 4.3]{CaraianiScholzeCompact}, \cite[Proposition 11]{MantovanPEL}. Let $b \in G(\qpbr)$ such that the triple $(G,b,\mu)$ is a local Shimura datum in the sense of \cite[Definition 24.1.1]{ScholzeWeinsteinBerkeley}. Then associated with $(\mathcal{G},b,\mu)$ is the integral local Shimura variety $\mintgmu$ as in \cite[Section 25.1]{ScholzeWeinsteinBerkeley}. We have the following lemma, see \cite[Lemma 3.1.6]{Companion}. 
\begin{Lem} \label{Lem:LocalUniformisation}
    There is a Cartesian diagram
    \begin{equation}
        \begin{tikzcd}
            \mintgmu \arrow{r} \arrow{d} & \shtgmub \arrow{d}{\mathrm{BL}^{\circ}} \\
            \spd \fpbar \arrow{r} & \bun_{G}^{[b]}.
        \end{tikzcd}
    \end{equation}
\end{Lem}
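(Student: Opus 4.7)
The plan is to unpack the definitions of both sides of the desired Cartesian diagram and observe they agree, essentially tautologically, after invoking Theorem \ref{Thm:GeometryOfBunG}. Recall that $\mintgmu$ is defined, e.g., in \cite[Section~25.1]{ScholzeWeinsteinBerkeley}, as the v-sheaf whose $S$-points classify tuples $(S^\sharp, \mathscr{P}, \phi_\mathscr{P}, \iota)$, where $(S^\sharp, \mathscr{P}, \phi_\mathscr{P}) \in \shtgmu(S)$ and $\iota$ is a framing, i.e., a $\phi$-equivariant isomorphism on $\mathcal{Y}_{[r,\infty)}(S)$ for some $r \gg 0$ between the $\calg$-bundle underlying the shtuka and the pullback of the framing bundle associated to $b$ along $S \to \spd \fpbar$.

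First, I would use \cite[Proposition~22.1.1]{ScholzeWeinsteinBerkeley} (or more precisely, the construction of the morphism $\mathrm{BL}^\circ$ recalled in \eqref{Eq:ShtukaMorphism}) to translate between $\phi$-equivariant $G$-bundles on $\mathcal{Y}_{[r,\infty)}(S)$ and $G$-bundles on the relative Fargues--Fontaine curve $X_S$. Under this equivalence, the framing datum $\iota$ corresponds precisely to an isomorphism of $G$-bundles
\[
  \mathrm{BL}^\circ(\mathscr{P}, \phi_\mathscr{P}) \xrightarrow{\sim} \mathcal{E}_b\vert_S
\]
on $X_S$, where $\mathcal{E}_b\vert_S$ denotes the pullback of $\mathcal{E}_b$ along the structure map $S \to \spd \fpbar$.

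Next, I would unpack the right-hand side of the proposed Cartesian diagram using Theorem \ref{Thm:GeometryOfBunG}, which identifies $\bun_G^{[b]} \times_{\spd \fp} \spd \fpbar$ with the classifying stack $[\spd \fpbar / \tilde{G}_b]$. It then follows that an $S$-point of $\shtgmub \times_{\bun_G^{[b]}} \spd \fpbar$ is exactly the data of a bounded $\calg$-shtuka $(S^\sharp, \mathscr{P}, \phi_\mathscr{P}) \in \shtgmu(S)$ together with an isomorphism $\mathrm{BL}^\circ(\mathscr{P}, \phi_\mathscr{P}) \xrightarrow{\sim} \mathcal{E}_b\vert_S$ of $G$-bundles on $X_S$. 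Combined with the previous paragraph, this gives a canonical identification of the two moduli problems, and hence the desired Cartesian square. Note that the fact that the map $\mathrm{BL}^\circ$ automatically lands in the Newton stratum $\bun_G^{[b]}$ in the presence of a framing is built into this identification.

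I expect no serious obstacle: the only mild subtlety is reconciling the different conventions for framings (on $\mathcal{Y}_{[r,\infty)}$ versus on $X_S$) in the definition of $\mintgmu$ across the literature, but these are interchangeable by the standard descent along the Frobenius. This is precisely the content of \cite[Lemma~3.1.6]{Companion}, which we can simply invoke.
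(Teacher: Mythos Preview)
Your proposal is correct and follows the natural unwinding-of-definitions argument; the paper itself does not supply a proof but simply cites \cite[Lemma~3.1.6]{Companion}, which you yourself identify at the end of your proposal as the relevant reference. Your sketch is essentially what the proof of that cited lemma would contain.
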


\subsubsection{} Let the notation and assumptions be as in Section \ref{Sec:PerfectIgusa}. For an element $b \in G(\qpbr)$ with $[b] \in \bgmu$, we can restrict the Cartesian diagram to the corresponding Newton stratum. This results in the Cartesian diagram
\[ \begin{tikzcd}
  \mathscr{S}_{K_p}\gx^{\diamond,[b]} \arrow{r}
  \arrow{d} & \shtgmub \arrow{d} \\
  \mathrm{Igs} \gx^{[b]} \arrow{r} &
  \bun_{G}^{[b]},
\end{tikzcd} \]
where $\mathscr{S}_{K_p}\gx^{\diamond,[b]} = \scrs_{K_p}\gx^\diamond \times_{\bung} \bung^{[b]}$ as in \eqref{Eq:NewtonStrat}. Once we further base change along the map $\spd \fpbar \to \bun_{G}^{[b]}$ induced by a choice of $b \in [b]$, we obtain (using Lemma \ref{Lem:LocalUniformisation} above)
\[ \begin{tikzcd}
 \widetilde{\mathscr{S}_{K_p}\gx^\diamond_{[b]}} \arrow{r} \arrow{d} & \mintgmu \arrow{d} \\ \igvinfbgx \arrow{r} &
  \spd \fpbar.
\end{tikzcd} \]
By Theorem~\ref{Thm:GeometryOfBunG}, the entirety of the second diagram is a $\tilde{G}_b$-torsor over the first diagram. The following corollary generalizes \cite[Corollary 11.26]{ZhangThesis}, \cite[Proposition 4.3]{CaraianiScholzeCompact}, \cite[Proposition 11]{MantovanPEL}.

\begin{Cor}\label{Cor:ProductFormula}
  There exists a canonical $\gafp$-equivariant map of v-sheaves
  \[
    \igvinfbgx \times_{\spd \fpbar} \mintgmu \to
    \mathscr{S}_{K_p}\gx_{\mathcal{O}_{\ebreve}}^{\diamond,[b]},
  \]
  that is a $\tilde{G}_b$-torsor in the v-topology.
\end{Cor}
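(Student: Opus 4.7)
The plan is to read off the map directly from the two Cartesian squares already established in the discussion preceding the corollary. Concretely, the first square (the Newton-stratum restriction of the conjectural Cartesian diagram) gives a canonical identification
\[
  \mathscr{S}_{K_p}\gx^{\diamond,[b]} \;\simeq\; \mathrm{Igs}\gx^{[b]} \times_{\bun_G^{[b]}} \shtgmub.
\]
Base changing along $\spd \fpbar \to \bun_G^{[b]}$ and using the compatibility of fiber products with base change, the definition of $\igvinfbgx$ as the pullback of $\mathrm{Igs}\gx^{[b]}$, and Lemma~\ref{Lem:LocalUniformisation} identifying $\mintgmu$ with the pullback of $\shtgmub$, we obtain an identification
\[
  \widetilde{\mathscr{S}_{K_p}\gx^\diamond_{[b]}} \;\simeq\; \igvinfbgx \times_{\spd \fpbar} \mintgmu.
\]

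The desired map
\[
  \igvinfbgx \times_{\spd \fpbar} \mintgmu \longrightarrow \mathscr{S}_{K_p}\gx^{\diamond,[b]}
\]
is then the composition of this identification with the projection $\widetilde{\mathscr{S}_{K_p}\gx^\diamond_{[b]}} \to \mathscr{S}_{K_p}\gx^{\diamond,[b]}$ coming from the base change $\spd \fpbar \to \bun_G^{[b]}$. By Theorem~\ref{Thm:GeometryOfBunG}, the latter map is a $\tilde{G}_b$-torsor for the v-topology (indeed it is a v-cover whose fiber product with itself is $\tilde{G}_b \times \spd \fpbar$). Since the $\tilde{G}_b$-torsor property is stable under base change, the same holds for the projection $\widetilde{\mathscr{S}_{K_p}\gx^\diamond_{[b]}} \to \mathscr{S}_{K_p}\gx^{\diamond,[b]}$, which proves the torsor claim.

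For $\gafp$-equivariance, we note that the $\gafp$-action on $\mathscr{S}_{K_p}\gx^{\diamond,[b]}$ and on $\mathrm{Igs}\gx^{[b]}$ are compatible with the map to $\bun_G^{[b]}$, while $\shtgmub$, $\mintgmu$, $\bun_G^{[b]}$ and $\spd \fpbar$ carry the trivial $\gafp$-action. Since all the identifications above are induced by universal properties of fiber products, they are $\gafp$-equivariant, and the resulting product formula map is $\gafp$-equivariant as well. There is no serious obstacle here: once the first Cartesian diagram has been set up and Theorem~\ref{Thm:GeometryOfBunG} has been invoked to control $\spd \fpbar \to \bun_G^{[b]}$, the corollary is a formal consequence of stability of Cartesian diagrams and torsors under base change.
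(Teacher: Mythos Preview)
Your proposal is correct and follows essentially the same approach as the paper: the discussion preceding the corollary in the paper sets up exactly the two Cartesian squares you describe, obtains $\widetilde{\mathscr{S}_{K_p}\gx^\diamond_{[b]}}$ as the base change of $\mathscr{S}_{K_p}\gx^{\diamond,[b]}$ along $\spd \fpbar \to \bun_G^{[b]}$, and invokes Theorem~\ref{Thm:GeometryOfBunG} to conclude that this base change is a $\tilde{G}_b$-torsor. Your write-up is slightly more explicit about the $\gafp$-equivariance, but the argument is the same.
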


\section{Shimura varieties of Hodge type} \label{Sec:HodgeType}
The goal of this section is to recall the integral models of Shimura varieties of Hodge type constructed in \cite{PappasRapoportShtukas} and \cite{Companion}. We then discuss the universal abelian scheme (up to prime-to-$p$ isogeny) and its extra structure, and finally give a definition of $\igsinf$.

\subsection{Recollection on integral models}
For a symplectic space $(V, \psi)$ over $\mathbb{Q}$ we write $\gv=\mathrm{GSp}(V, \psi)$ for the group of symplectic similitudes of $(V,\psi)$ over $\mathbb{Q}$. It admits a Shimura datum $\mathsf{H}_V$ consisting of the union of the Siegel upper and lower half spaces. For a self-dual $\zp$-lattice $V_{\zp} \subset V_{\qp}$  we write $M_p=\mathrm{GSp}(V_\zp)(\zp)$. For $M^p \subset \gv(\afp)$ neat compact open we write $M=M_pM^p$ and we consider the Shimura variety $\mathbf{Sh}_{M}\gvx$ of level $M$ as a scheme over $\qp$. It has an integral model $\scrs_{M}\gvx$ over $\zp$ which is the moduli space of (weakly) polarized abelian schemes $(A,\lambda)$ up to prime-to-$p$ isogeny with $M^p$-level structure $\eta^p$, see \cite[Section 4]{DeligneTravaux}. By \cite[Theorem 4.5.2]{PappasRapoportShtukas}, the system of integral models $\{\scrs_{M}\gvx\}_{M^p}$ is a canonical integral model of $\{\mathbf{Sh}_{M}\gvx\}_{M^p}$.

\subsubsection{} Let $\gx$ be a Shimura datum of Hodge type with reflex field $\mathsf{E}$, let $p$ be a prime and write $G=\mathsf{G}_{\qp}$. Fix a place $v$ above $p$ of the reflex field $\mathsf{E}$, and let $E$ be the completion of $\mathsf{E}$ at $v$ with ring of integers $\mathcal{O}_E$ and residue field $k_E$. For any neat compact open subgroup $K \subset \gaf$ we will consider the Shimura variety $\mathbf{Sh}_{K}\gx$ as a scheme over $E$. We will also consider the inverse limit
\begin{align}
    \mathbf{Sh}\gx=\varprojlim_{K \subset \gaf} \mathbf{Sh}_{K}\gx,
\end{align}
which is an $E$-scheme (because the transition morphisms are affine) and is equipped with an action of $\gaf$. We will moreover consider the v-sheaf $\mathbf{Sh}_K\gx^{\lozenge}$, which is a diamond by \cite[Lemma 15.6]{EtCohDiam} because the $(-)^\lozenge$-functor factors through rigid analytification, see \cite[Lemma 2.11]{AGLR}.

\subsubsection{} \label{subsub:ChoicesIntegral}
Let $\mathcal{G}$ be a stabilizer Bruhat--Tits model of $G$ over $\zp$ and let $K_p=\mathcal{G}(\zp)$. We let $\mathcal{H} \subset \mathcal{G}$ be an open quasi-parahoric subgroup (thus with $\calgcirc \subset \mathcal{H}$) and let $K_p'=\mathcal{H}(\zp)$. For $K^p \subset \gafp$ a neat compact open subgroup we write $K=K_pK^p$ and $K'=K_p'K^p$. Then by \cite[Theorem~4.2.3]{Companion}, there are canonical integral models $\{\scrs_{K'}\gx\}_{K^p}$ and $\{\scrs_{K}\gx\}_{K^p}$ over $\mathcal{O}_E$. Applying \cite[Proposition~4.1.10]{Companion} we see that there is a $2$-Cartesian diagram
\begin{equation} \label{Eq:DiagramQuasiParahoricStabilizer}
    \begin{tikzcd}
        \scrs_{K'}\gx^{\diamond} \arrow{d} \arrow[r, "\pi_{\mathrm{crys}, \mathcal{H}}"] & \shthmuone \arrow{d} \\
         \scrs_{K}\gx^{\diamond} \arrow{r}{\pi_{\mathrm{crys}, \mathcal{G}}} & \shtgmuone.
    \end{tikzcd}
\end{equation}
We will also consider the integral model with infinite level away from $p$
\begin{align}
\scrs_{K_p}\gx=\varprojlim_{K^p \subset \gafp} \scrs_{K_pK^p}\gx,
\end{align}
which is an $\mathcal{O}_E$-scheme equipped with an action of $\gafp$. Similarly, we define $\hatscrsginf$, the formal integral model with infinite level away from $p$, to be the $p$-adic completion of $\scrs_{K_p}\gx$, which represents the limit
\begin{align}
\varprojlim_{K^p \subset \gafp} \widehat{\scrs}_{K_pK^p}\gx
\end{align}
in the category of formal schemes over $\spf \mathcal{O}_E$. We will use similar notation for $K'$ in place of $K$.

\subsubsection{} \label{subsub:Zarhin} The integral models of \cite[Theorem~4.2.3]{Companion} are constructed as follows. Because $\mathcal{G}$ is the stabilizer of a point in the extended building $\mathcal{B}^e(G,\qp)$, it follows from the discussion in \cite[Section~1.3.2]{KMPS} that there exists a Hodge embedding $\iota:\gx \to \gvx$ and a $\zp$-lattice $V_{\zp} \subset V_{\qp}$ on which $\psi$ is $\zp$-valued, such that $\mathcal{G}(\zpbr)$ is the stabilizer in $G(\qpbr)$ of $V_{\zp} \otimes_{\zp} \zpbr$. By Zarhin's trick, see \cite[Remark 2.2.4]{ShenYuZhang}, we may moreover assume (after possibly changing $\iota$ and the symplectic space) that $V_{\zp}$ is a self-dual lattice.

The fact that $\mathcal{G}(\zpbr)$ stabilizes $V_{\zpbr}$ implies, by \cite[Corollary 2.10.10]{KalethaPrasad}, that $G \to G_V$ extends to a morphism $\mathcal{G} \to \mathcal{G}_{V}=\mathrm{GSp}(V_\zp)$. By \cite[Lemma 2.1.2]{KisinModels}, for $K^p \subset \gafp$ we can find $M^p \subset \gv(\afp)$ containing $K^p$ such that the natural map
\begin{align}
    \mathbf{Sh}_{K}\gx \to \mathbf{Sh}_{M} \gvx \otimes_{\mathbb{Q}_p} E
\end{align}
is a closed immersion, where $M_p=\mathrm{GSp}(V_\zp)(\zp)$. We then define $\scrsg$ to be the normalization of the Zariski closure of $\mathbf{Sh}_{K}\gx$ in $\scrs_{M}\gvx \otimes_{\zp} \mathcal{O}_E$. We further define $\scrs_{K'}\gx$ to be the normalization of $\scrsg$ in $\mathbf{Sh}_{K'}\gx$.

\begin{Rem} \label{Rem:IntModelPresheaf} By Lemma \ref{Lem:DiamondOfFormalScheme},
$\scrs_K\gx^\diamond$ is the sheafification with respect to the analytic topology of the presheaf on $\perf$
\begin{align*}
\scrs_K\gx^{\diamond, \mathrm{pre}}: S \mapsto \{(S^\sharp, x)\}
\end{align*}
which assigns to $S = \spa(R,R^+)$ the set of pairs $(S^\sharp, x)$, where $S^\sharp=\spa(R^\sharp, R^{\sharp+})$ is an untilt of $S$ over $\mathcal{O}_E$ and $x: \spf R^{\sharp+} \to \hatscrsg$ is a morphism of formal schemes over $\spf \mathcal{O}_E$. The analogous statement holds when $K$ is replaced by $K'$.
\end{Rem}

\subsubsection{Potentially crystalline loci} \label{Sec:PotCrys} Let $\mathbf{Sh}_K\gx^\mathrm{an}$ denote the rigid analytification of $\mathbf{Sh}_K\gx$, i.e., $\mathbf{Sh}_K\gx^\mathrm{an}$ is the fiber product
\begin{align*}
    \mathbf{Sh}_K\gx^\mathrm{an} = \mathbf{Sh}_K\gx \times_{\spec E} \spa E
\end{align*}
in the sense of \cite[Proposition 3.8]{Huber}. By \cite{ImaiMieda}, there is an open immersion $\mathbf{Sh}_K\gx^\circ \subset \mathbf{Sh}_K\gx^{\mathrm{an}}$ of rigid spaces over $E$, see \cite[Theorem 5.17]{ImaiMieda}. The rigid analytic space $\mathbf{Sh}_K\gx^\circ$ is called the \emph{potentially crystalline locus}. The formation of $\mathbf{Sh}_K\gx^\circ$ is compatible with changing $K$, see \cite[Corollary 5.29]{ImaiMieda}, and we will also consider
\begin{align}
    \mathbf{Sh}\gx^{\circ,\lozenge} \coloneqq\varprojlim_{K \subset \gaf} \mathbf{Sh}_{K}\gx^{\circ,\lozenge}.
\end{align}
\begin{Lem} \label{Lem:PotentiallyCrystalline}
There is a unique isomorphism 
\begin{align}\label{Eq:GoodReductionLocus}
    \mathbf{Sh}_K\gx^{\circ, \lozenge} \xrightarrow{\sim} \scrsd \times_{\spd \mathcal{O}_E} \spd E
\end{align}
compatible with the two open immersions into $\mathbf{Sh}_K\gx^{\lozenge}$.
\end{Lem}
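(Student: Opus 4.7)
My strategy is to show that both $\scrsd \times_{\spd \mathcal{O}_E} \spd E$ and $\mathbf{Sh}_K\gx^{\circ, \lozenge}$ coincide as open sub-v-sheaves of $\mathbf{Sh}_K\gx^{\lozenge}$. The uniqueness is immediate: on $(R, R^+)$-points, a morphism $\spf R^{\sharp+} \to \hatscrsg$ is uniquely determined by its induced generic-fiber morphism $\spec R^{\sharp} \to \mathbf{Sh}_K\gx$ (since $R^{\sharp+}$ is the ring of integral elements of $R^{\sharp}$), so the natural map $\scrsd \times_{\spd \mathcal{O}_E} \spd E \to \mathbf{Sh}_K\gx^{\lozenge}$ is a monomorphism of v-sheaves, and any isomorphism compatible with the two open immersions into $\mathbf{Sh}_K\gx^{\lozenge}$ is therefore forced.

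For existence, I would first combine Remark \ref{Rem:IntModelPresheaf} with Lemma \ref{Lem:DiamondOfFormalScheme} to obtain a natural identification $\scrsd \simeq \hatscrsg^{\lozenge}$, and then base change to $\spd E$ to get
\[
    \scrsd \times_{\spd \mathcal{O}_E} \spd E \;\simeq\; \left(\hatscrsg^{\mathrm{an}}\right)^{\lozenge},
\]
where $\hatscrsg^{\mathrm{an}} := \hatscrsg^{\mathrm{ad}} \times_{\spa \mathcal{O}_E} \spa E$ denotes the adic generic fiber of the formal scheme $\hatscrsg$, viewed naturally as an open adic subspace of $\mathbf{Sh}_K\gx^{\mathrm{an}}$. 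It therefore suffices to verify the equality of open adic subspaces $\hatscrsg^{\mathrm{an}} = \mathbf{Sh}_K\gx^{\circ}$ inside $\mathbf{Sh}_K\gx^{\mathrm{an}}$, as the desired isomorphism then results from applying $(-)^{\lozenge}$.

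To establish this equality, I would use the Hodge embedding $\iota\colon \gx \to \gvx$ recalled in Section \ref{subsub:Zarhin}, which presents $\scrsg$ as the normalization of the Zariski closure of $\mathbf{Sh}_K\gx$ inside $\scrs_M\gvx \otimes_{\zp} \mathcal{O}_E$. In the Siegel case $\hatscrsgv^{\mathrm{an}}$ is the classical good reduction locus of $\mathbf{Sh}_M\gvx^{\mathrm{an}}$, namely the locus where the universal abelian variety extends to an abelian scheme over the integral model. Using that $p$-adic completion commutes with normalization and with Zariski closure in the finite-type, excellent setting here, and using the standard compatibility with the adic generic fiber, one concludes that $\hatscrsg^{\mathrm{an}}$ is exactly the preimage in $\mathbf{Sh}_K\gx^{\mathrm{an}}$ of $\hatscrsgv^{\mathrm{an}}$ under the Hodge embedding. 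By \cite[Theorem 5.17, Corollary 5.29]{ImaiMieda}, this preimage is $\mathbf{Sh}_K\gx^{\circ}$, which is moreover independent of the choice of Hodge embedding and of auxiliary level. The quasi-parahoric case $K' \subset K$ reduces to the stabilizer case, since $\scrs_{K'}\gx$ is defined as the normalization of $\scrs_K\gx$ in $\mathbf{Sh}_{K'}\gx$ and $\mathbf{Sh}_{K'}\gx \to \mathbf{Sh}_K\gx$ is finite \'etale.

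The main subtle point I expect to encounter lies in the third paragraph: verifying carefully that the operations of Zariski closure, normalization, $p$-adic completion, and passage to the adic generic fiber interchange as needed, so that the identification $\hatscrsg^{\mathrm{an}}$ with the preimage of the Siegel good reduction locus is rigorous. Everything else in the argument is essentially formal once this compatibility is in place.
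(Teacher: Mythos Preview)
Your approach is correct and follows essentially the same route as the paper: both sides are identified with the pullback of the Siegel good reduction locus along the Hodge embedding, and for $\mathbf{Sh}_K\gx^{\circ,\lozenge}$ this is the construction of Imai--Mieda. The difference lies in how the ``subtle point'' you flag is handled. You propose to work at the level of adic spaces and invoke that $p$-adic completion commutes with normalization and Zariski closure. The paper instead isolates a clean functor-of-points lemma (Lemma~\ref{Lem:FiniteMorphism}): for any \emph{integral} morphism $f\colon X \to Y$ of $\mathcal{O}_E$-schemes, the square $X^\diamond \to X^\lozenge$, $Y^\diamond \to Y^\lozenge$ is Cartesian. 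The proof is a one-line check using only that $R^{\sharp+}\subset R^\sharp$ is integrally closed. Applied to the finite morphism $\scrs_K\gx \to \scrs_M\gvx_{\mathcal{O}_E}$ and base-changed to $\spd E$, this gives exactly that $\scrsd \times_{\spd\mathcal{O}_E}\spd E$ is the pullback of the Siegel good reduction locus inside $\mathbf{Sh}_K\gx^\lozenge$. This bypasses any discussion of completion versus normalization: the only input is integrality of the morphism, which is immediate from the construction.
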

\begin{proof}
    The inclusions of $\mathbf{Sh}_K\gx^{\circ, \lozenge}$ and $\scrsd \times_{\spd \mathcal{O}_E} \spd E$ into $\mathbf{Sh}_K\gx^{\lozenge}$ each identify their respective subdiamond with the pullback of the good reduction locus of the Siegel Shimura variety along the closed immersion
    \begin{align}
    \mathbf{Sh}_{K}\gx^\lozenge \to \mathbf{Sh}_{M} \gvx_E^\lozenge.
\end{align}
Indeed, the statement for $\mathbf{Sh}_K\gx^{\circ,\lozenge}$ follows from its construction, cf.\ \cite[Proposition 5.16.(iii)]{ImaiMieda}, while for $\scrsd$ it follows from Lemma \ref{Lem:FiniteMorphism} below.
\end{proof}
\begin{Lem}\label{Lem:FiniteMorphism}
    Let $f:X\to Y$ be an integral morphism of schemes over $\CO_E$, then the diagram
    \[
    \begin{tikzcd}
        X^\diamond \ar[r]\ar[d,"f^\diamond"] & X^\lozenge\ar[d, "f^\lozenge"]\\
        Y^\diamond \ar[r] & Y^\lozenge
    \end{tikzcd}
    \]
    is Cartesian.
\end{Lem}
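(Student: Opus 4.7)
The plan is to reduce to the affine case and then invoke the fact that the ring of integral elements $R^{\sharp+}$ in a perfectoid Huber pair $(R^\sharp, R^{\sharp+})$ is integrally closed in $R^\sharp$. Since an integral morphism is affine, covering $Y$ by affine opens $Y_i = \spec B_i$ produces affine preimages $X_i = f^{-1}(Y_i) = \spec A_i$ with $A_i$ integral over $B_i$. Because both $(-)^\diamond$ and $(-)^\lozenge$ are v-sheaves whose values can be glued along open covers of the source, and the affineness of $f$ lets us glue along covers of the target as well, the Cartesian-ness of the diagram reduces to the case where $X = \spec A$ and $Y = \spec B$ with $A$ integral over $B$.

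In this affine situation, an $S$-point of $Y^\diamond \times_{Y^\lozenge} X^\lozenge$ over $S = \spa(R, R^+)$ consists of an untilt $(R^\sharp, R^{\sharp+})$ of $(R, R^+)$ together with ring maps $g \colon B \to R^{\sharp+}$ and $h \colon A \to R^\sharp$ whose compositions into $R^\sharp$ agree on $B$, and one must show that $h$ factors uniquely through a map $A \to R^{\sharp+}$. Uniqueness is immediate from the injectivity of $R^{\sharp+} \hookrightarrow R^\sharp$. For existence, I would argue pointwise: given $a \in A$, integrality of $A$ over $B$ provides a monic relation $a^n + b_{n-1} a^{n-1} + \cdots + b_0 = 0$ with $b_i \in B$, and applying $h$ together with the compatibility $h\vert_B = g$ shows that $h(a) \in R^\sharp$ is a root of the monic polynomial $T^n + g(b_{n-1}) T^{n-1} + \cdots + g(b_0) \in R^{\sharp+}[T]$. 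Hence $h(a)$ is integral over $R^{\sharp+}$, and since $R^{\sharp+}$ is integrally closed in $R^\sharp$ by the very definition of a ring of integral elements, we conclude $h(a) \in R^{\sharp+}$.

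There is no serious obstacle: the affine case is a formal consequence of the definition of a Huber pair, and the main thing to verify carefully is the Zariski gluing step, where one must check that the formation of the fibered product $Y^\diamond \times_{Y^\lozenge} X^\lozenge$ is compatible with restricting $Y$ to an affine open (so that the pullback to $Y_i$ really is $Y_i^\diamond \times_{Y_i^\lozenge} X_i^\lozenge$). This is routine given that all of $(-)^\diamond$, $(-)^\lozenge$, and fibered products preserve open immersions in the target. One could alternatively bypass the gluing by observing that all three v-sheaves admit a common presentation as subsheaves of a larger functor and verifying the set-theoretic identity of $S$-points directly using the local-on-$S$ nature of the condition ``factors through $R^{\sharp+}$'' in conjunction with integrality.
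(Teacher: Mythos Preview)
Your proof is correct and follows essentially the same approach as the paper: reduce to the affine case by Zariski localization, then use that $R^{\sharp+}$ is integrally closed in $R^\sharp$ to conclude that any $A \to R^\sharp$ whose restriction to $B$ lands in $R^{\sharp+}$ must itself land in $R^{\sharp+}$. The paper's argument is more terse (it simply asserts compatibility with Zariski localization and then states the integral-closure step in one line), but the content is the same.
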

\begin{proof}
    Since the statement is compatible with Zariski localizations, we may without loss of generality assume $X=\spec A $ and $Y=\spec B$ are affine. Then given a test object $S=\spa(R,R^+)$ in $\perf$, a map from $S$ to $Y^\diamond\times_{Y^\lozenge}X^\lozenge$ amounts to an isomorphism class of pairs consisting of an untilt $S^\sharp=\spa(R^\sharp,R^{\sharp+})$ over $\CO_E$, and an $\CO_E$-algebra homomorphism $A\to R^\sharp$, such that the composition $B\to A\to R^\sharp$ factors through $R^{\sharp+}$. But since $B\to A$ is integral, while $R^{\sharp+}\subset R^\sharp$ is integrally closed, this amounts to the pair consisting of $S^\sharp=\spa(R^\sharp,R^{\sharp+})$, and an $\CO_E$-algebra homomorphism $A\to R^{\sharp+}$, i.e., an $S$-point of $X^\diamond$. 
\end{proof}

\subsubsection{\'Etale local systems} \label{subsub:EtaleLocalSystems} For a scheme $X$, we write $X_\mathrm{proet}$ for the pro-\'etale site of $X$ as in \cite{BhattScholzeProEtale}. We will denote by $\underline{\mathbb{A}}_{f}^{p}$ the sheaf of topological groups on $X_\mathrm{proet}$ obtained by sheafifying the presheaf
\[ U \mapsto \lbrace \text{continuous maps } \lvert U \rvert \to \afp \rbrace. \]
More generally, for a topological space $T$ and any site whose objects have underlying topological spaces, we use the notation $\underline{T}$ for the sheaf attached to the presheaf sending an object $U$ to the set of continuous maps from $\lvert U \rvert$ to $T$. Following \cite[Definition 1.1]{HansenOberwolfach}, we define $\afp\mathrm{Loc}(X)$ to be the category of $\underline{\mathbb{A}}_{f}^{p}$-modules that are pro-\'etale locally on $X$ isomorphic to $\underline{\mathbb{A}}_{f}^{p,\oplus n}$ for some integer $n$.
 
 We write $\pi:A \to \scrsg$ for the pullback of the universal abelian scheme up to prime-to-$p$ isogeny over $\scrs_M\gvx$ along $\scrsg \to \scrs_M\gvx$. We denote by $\mathcal{V}^p$ the \emph{dual} of $R^1 \pi_\mathrm{proet,\ast}\underline{\mathbb{A}}_{f}^{p}$. For a map $x: \spec R \to \scrsg$, we will write $A_x$ for the pullback of $A$, and similarly $\mathcal{V}^p_x$ for the pullback along $x$ of $\mathcal{V}^p$.
 
As in \cite[Section 5.1.4]{KisinShinZhu}, there is an exact tensor functor
\begin{equation}\label{Eq:LisseAfp}
\mathbb{L}^p:\operatorname{Rep}_{\mathbb{Q}}(\mathsf{G}) \to \afp\mathrm{Loc}(\scrs_K\gx),
\end{equation}
obtained by descending the continuous $K^p$-representation $V\otimes_\mathbb{Q} \afp$ along the pro-\'etale Galois cover
\[\scrs_{K_p}\gx \to \scrs_K\gx.\]
We have that $\mathbb{L}^p(V)=\mathcal{V}^p$, where $V$ is considered as a rational representation of $\mathsf G$ by restricting the standard representation of $\gv$ on $V$ along $\mathsf{G} \to \gv$. Note that we use $\mathcal{V}^{p}$ for $\mathbb{L}^p(V)$ whereas in \cite[Section 5.1.4]{KisinShinZhu} they use $\mathcal{V}^p$ to denote $\mathbb{L}^p(V^{\ast})$.

\subsubsection{\'Etale tensors} \label{Sec:EtaleTensors} Write $V^{\otimes}$ for the direct sum of $V^{\otimes n} \otimes (V^{\ast})^{\otimes m}$ for all pairs of integers $m, n \ge 0$. We will also use this notation later for modules over commutative rings, or over sheaves of rings, and we refer to this construction as the tensor space of $V$. By \cite[Lemma 1.3.2]{KisinModels} and its improvement \cite{DeligneLetter}, we can fix tensors $\{t_{\alpha}\}_{\alpha \in \mathscr{A}} \subset V^{\otimes}$ whose schematic stabilizer in $\mathrm{GL}(V)$ is $\g$. These tensors can be viewed as morphisms $t_{\alpha}:1 \to V^{\otimes}$ and thus the tensor functor \eqref{Eq:LisseAfp} gives global sections
\begin{align}
   \{t_{\alpha,\afp}\}_{\alpha \in \mathscr{A}} \subset  H^0(\scrs_{K}\gx, \mathcal{V}^{p,\otimes}).
\end{align}
Over $\scrs_{K_p}\gx$, there is a canonical isomorphism
\begin{align}
    \eta: \mathcal{V}^p\xrightarrow{\sim} V \otimes \underline{\mathbb{A}}_{f}^{p}
\end{align}
which takes $t_{\alpha, \afp}$ to $t_{\alpha} \otimes 1$ for all $\alpha \in \mathscr{A}$, see \cite[Lemma 5.1.9]{KisinShinZhu}. Denote by $$\isom^{\otimes}(\mathcal{V}^p,V \otimes \underline{\mathbb{A}}_{f}^{p})$$
the pro-\'etale sheaf of isomorphisms from $\mathcal{V}^p$ to $V \otimes \underline{\mathbb{A}}_{f}^{p}$ sending $t_{\alpha, \afp}$ to $t_{\alpha} \otimes 1$ for all $\alpha \in \mathscr{A}$. This is a left $\underline{\gafp}$-torsor, where the action is induced by the natural $\underline{\gafp}$-action on $V \otimes \underline{\mathbb{A}}_{f}^{p}$. Note that $\eta$ gives a section of this sheaf over $\scrs_{K_p}\gx$ and  there is an induced section
\begin{align}
    \bar{\eta} \in \Gamma(\scrsg, \, \isom^{\otimes}(\mathcal{V}^p, V \otimes \underline{\mathbb{A}}_{f}^{p})/\underline{K}^p).
\end{align}

\subsubsection{} \label{Sec:CrystallineTensors} Let $S=\spa(R,R^+)$ be an object in $\perf$ with an untilt $S^\sharp=\spa(R^\sharp, R^{\sharp+})$, and let $x$ be a morphism $x:\spf R^{\sharp+} \to \hatscrsg$. By pulling back the universal abelian scheme along
\begin{align}
    \spf R^{\sharp+} \to \hatscrsg \to \hatscrsgv \otimes_{\zp} \mathcal{O}_E,
\end{align}
we get a formal abelian scheme $A_x \to \spf R^{\sharp+}$. As explained in Section \ref{Sec:FormalQuasiIsogeniesFF}, this gives rise to a vector bundle $\mathcal{E}(A_x)$ on the relative Fargues--Fontaine curve $X_S$. \smallskip 

The untilt $S^\sharp$ along with the composition
\begin{align}
    \spa(R^{\sharp},R^{\sharp+}) \to \spa R^{\sharp+} \to (\hatscrsg)^\mathrm{ad}
\end{align}
determines a point $\tilde{x}$ of $\scrs_K\gx^\diamond(S)$. In turn, via the composition
\begin{equation}\label{Eq:MapToBunG}
    \scrs_K\gx^\diamond \xrightarrow{\pi_\mathrm{crys}} \shtgmu \xrightarrow{\mathrm{BL}^{\circ}} \bungmu,
\end{equation}
$\tilde{x}$ determines a $G$-torsor $P$ over $X_S$. By the Tannakian description of $G$-torsors, see \cite[Appendix to Lecture 19]{ScholzeWeinsteinBerkeley}, this gives rise to an exact tensor functor
\begin{align}
    \mathbb{L}_{\mathrm{crys},x}:\operatorname{Rep}_{\mathbb{Q}}(\mathsf{G}) \to
    \{ \text{vector bundles on } X_S\}.
\end{align}
Let $Y_x := A_x[p^\infty]$ be the $p$-divisible group over $R^{\sharp +}$ associated with $A_x$. By construction, the composition
\begin{align*}
\spa(R^{\sharp},R^{\sharp+}) \to \scrs_K\gx^\diamond \xrightarrow{\pi_\mathrm{crys}} \shtgmu \to \mathrm{Sht}_{\mathrm{GL}_V}
\end{align*}
corresponds to the shtuka $\mathscr{V}(Y_x)$ defined in Section \ref{Sec:Dieudonne}, see \cite[4.6.3]{PappasRapoportShtukas}. It then follows from Lemma \ref{Lem:Compatibility} that there is a canonical isomorphism
\[
  \mathbb{L}_{\mathrm{crys},x}(V) \simeq \mathcal{E}(Y_x).
\]

\subsection{The definition of the Igusa stack} \label{Subsec:IgusaDefinition}
{
\def\igspre{\mathrm{Igs}^{\mathrm{pre}}_{K}\gx}
\def\igspreinf{\mathrm{Igs}^{\mathrm{pre}}_{K_p}\gx}
\def\igsv{\mathrm{Igs}_{M}\gvx}
\def\igsvinf{\mathrm{Igs}_{M_p}\gvx}
\def\igsvpre{\mathrm{Igs}^{\mathrm{pre}}_{M}\gvx}
\def\igsvpreinf{\mathrm{Igs}^{\mathrm{pre}}_{M_p}\gvx}

% Temporary renew
\def\igs{\mathrm{Igs}_{\Xi,K^p}\gx}
\def\igsinf{\mathrm{Igs}_{\Xi}\gx}
\def\igspreinf{\mathrm{Igs}^{\mathrm{pre}}_{\Xi}\gx}

In this section, we construct the Igusa stack $\mathrm{Igs}_{K^p}\gx$ for a
Hodge type Shimura datum $\gx$, appearing in the statement of
Theorem~\ref{Thm:IntroIgusa}. We first construct the Igusa stack
$\mathrm{Igs}\gx$ at infinite prime-to-$p$ level and then define the Igusa stack
at finite level $\mathrm{Igs}_{K^p}\gx$ as a pro-\'{e}tale quotient of $\mathrm{Igs}\gx$, see
Definition~\ref{Def:IgusaStackFiniteLevel}. Our construction of the infinite
level Igusa stack $\mathrm{Igs}\gx$ will initially depend on some choices, but
we will later verify that the Igusa stack is independent of these choices, see Proposition \ref{Prop:IndependenceOfParahoric}.

\begin{itemize}
  \item We choose a stabilizer Bruhat--Tits quasi-parahoric model $\calg$ over
    $\zp$ and set $K_p = \calg(\zp)$ as in Section~\ref{subsub:ChoicesIntegral}.
  \item We choose a Hodge embedding $\iota \colon \gx \to \gvx$ together with a
    self-dual lattice $V_\zp \subseteq V_{\qp}$ such that $\calg(\zpbr)$ is the
    stabilizer of $V_\zp$, see Section~\ref{subsub:Zarhin}.
\end{itemize}

Let $\Xi$ denote the triple $(\mathcal{G}, \iota, V_\zp)$. We will denote the resulting Igusa stack by $\igsinf$.

\subsubsection{} When $\gx = \gvx$ for a symplectic vector space $(V, \psi)$
over $\mathbb{Q}$, for each self-dual lattice $V_\zp \subseteq V_{\qp}$ there is a
hyperspecial subgroup $M_p = \mathrm{GSp}(V_\zp)(\zp) \subseteq G(\qp)$ and an
associated natural choice
\[
  \Xi = (\mathrm{GSp}(V_\zp), \mathrm{id}, V_\zp)
\]
of auxiliary data. We shall denote the resulting Igusa stack by $\igsvinf$
instead of $\mathrm{Igs}_\Xi\gvx$.

\subsubsection{}
Let $(R,R^+)$ be a perfectoid Huber pair of characteristic $p$. Let $(R^{\sharp_1}, R^{\sharp_1+})$ and $(R^{\sharp_2}, R^{\sharp_2+})$ be a pair of untilts and let $\varpi \in R^+$ be a pseudouniformizer for which there exists a maps $R^{\sharp_i+} \to R^+/\varpi$ for $i=1,2$ (this can always be arranged, see Section \ref{subsub:Rsharpmap}). Let $A_1$ and $A_2$ be formal abelian schemes up to prime-to-$p$ isogeny over $\spf R^{\sharp_1+}$ and  $\spf R^{\sharp_2+}$, respectively. We recall from Section~\ref{Sec:FormalQuasiIsogenyDifferentUntiltsDef} that a formal quasi-isogeny $A_1 \dashrightarrow A_2$ is defined to be a quasi-isogeny $A_1 \otimes_{R^{\sharp_1+}} R^+/\varpi \dashrightarrow A_2 \otimes_{R^{\sharp_2+}} R^+/\varpi$.

\subsubsection{} Given morphisms $x:\spf R^{\sharp_1+} \to \hatscrsg$, $y:\spf R^{\sharp_2+} \to \hatscrsg$ and a formal quasi-isogeny $f:A_x \dashrightarrow A_y$, there is an induced isomorphism
\begin{align}
  f:\mathcal{E}(A_x) \xrightarrow{\sim} \mathcal{E}(A_y)
\end{align}
of vector bundles on $X_S$ with $S=\spa(R,R^+)$, see Section \ref{Sec:FormalQuasiIsogenyBundle}. There is moreover an induced isomorphism of pro-\'etale sheaves over $\spec R^+/\varpi$
\begin{align}
  f:\mathcal{V}^p_x \xrightarrow{\sim} \mathcal{V}^p_y.
\end{align}

\begin{Def} \label{Def:IgusaStack}
  We define $\igspreinf$ to be the presheaf of groupoids on $\perf$, whose
  value on $S=\spa(R,R^+)$ is the following groupoid: 
  \begin{itemize}
    \item an object is a pair $(S^\sharp, x)$ where $S^\sharp = \spa(R^\sharp,
      R^{\sharp+})$ is an untilt of $S$ over $\mathcal{O}_E$ and $x$ is a map
      $\spf R^{\sharp+} \to \hatscrsginf$ of formal schemes over
      $\mathcal{O}_E$,\footnote{We could, of course, equivalently say that an
      object is an untilt $S^\sharp$ of $S$ together with a map $x \colon \spf
      R^{\sharp+} \to \hatscrsginf$, as the $\mathcal{O}_E$-structure on
      $R^{\sharp+}$ is automatically determined.}
    \item a morphism $f \colon (S^{\sharp_1},x) \to (S^{\sharp_2},y)$ is a
      formal quasi-isogeny 
      \[
        f \colon (A_x,\lambda_x) \dashrightarrow (A_y,\lambda_y)
      \]
      (see Section~\ref{subsub:WeaklyPolarized}) such that the diagram
      \[ \begin{tikzcd}
        \mathcal{V}^p_{x} \arrow{r}{f} \arrow{d}{\eta_x} &  \mathcal{V}^p_{y} \arrow{d}{\eta_y} \\
        V \otimes \underline{\mathbb{A}}_f^p \arrow[r, equals] & V \otimes \underline{\mathbb{A}}_f^p,
      \end{tikzcd} \]
      of pro-\'etale sheaves on $\spec (R^{+}/\varpi)$ commutes for some
      (equivalently, any) choice of uniformizer $\varpi$ of $R^+$ (see
      Section~\ref{Sec:EtaleTensors}) and such that the induced isomorphism (see
      Section~\ref{Sec:FormalQuasiIsogeniesFF})
      \[
        f \colon \mathcal{E}(A_x) \to \mathcal{E}(A_y)
      \]
      of vector bundles on $X_S$ is induced by a (necessarily unique) isomorphism of $G$-bundles $\mathbb{L}_{\mathrm{crys},x} \xrightarrow{\sim} \mathbb{L}_{\mathrm{crys},y}$.
  \end{itemize}
\end{Def}

\begin{Rem}
  There is \emph{no} canonical morphism $\igspreinf \to \spd \mathcal{O}_E$,
  since there could be an isomorphism in $\igspreinf(R)$ between
  $(S^{\sharp_1},x)$ and $(S^{\sharp_2},y)$ where $S^{\sharp_1}$ and
  $S^{\sharp_2}$ are different untilts over $\mathcal{O}_E$.
\end{Rem}

\begin{Lem} \label{Lem:Discrete}
    The presheaf of groupoids $\igspreinf$ is $0$-truncated, i.e., objects in
    the groupoid $\igspreinf(R,R^+)$ do not have nontrivial automorphisms.
\end{Lem}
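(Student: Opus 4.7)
The plan is to show that any automorphism $f$ of an object $(S^\sharp, x) \in \igspreinf(R,R^+)$ is the identity. Unpacking Definition~\ref{Def:IgusaStack}, $f$ is a formal quasi-isogeny $(A_x, \lambda_x) \dashrightarrow (A_x, \lambda_x)$ such that the induced automorphism of the pro-\'etale sheaf $\mathcal{V}^p_x$ is the identity under the trivialization $\eta_x$; the second condition concerning the induced isomorphism of crystalline $G$-bundles plays no role in this lemma.

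Concretely, $f$ is a quasi-isogeny of $A_x \otimes_{R^{\sharp+}} R^+/\varpi$ that acts trivially on the rational prime-to-$p$ Tate module. My plan is to deduce $f = \mathrm{id}$ from the essentially classical faithfulness of the rational prime-to-$p$ Tate module functor on the category of abelian schemes up to prime-to-$p$ isogeny. Working geometric fiber by geometric fiber of $\spec R^+/\varpi$, this reduces to the standard injection $\End(A) \otimes \mathbb{Z}_\ell \hookrightarrow \End(T_\ell A)$ for any single auxiliary prime $\ell \neq p$, which combined with flatness of $\mathbb{Q} \hookrightarrow \mathbb{Q}_\ell$ yields $f = 1$ in $\End(A) \otimes \mathbb{Q}$, and hence the identity as a quasi-isogeny.

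The main (mild) technicality will be globalizing from geometric fibers to the entire base $\spec R^+/\varpi$, which need not be Noetherian nor reduced. I expect this to follow by clearing denominators so as to reduce to showing that a genuine homomorphism of abelian schemes $g \colon A_x \to A_x$ that vanishes on every geometric fiber vanishes identically. This is in turn a standard rigidity statement: the locus $\ker(g) \subseteq A_x$ is a closed subscheme whose geometric fibers exhaust $A_x$, so it equals $A_x$ after a small argument using flatness of $A_x \to \spec R^+/\varpi$ together with the fact that morphisms of abelian schemes are determined by their restriction to geometric fibers.
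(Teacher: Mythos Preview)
Your plan is correct and follows the same strategy as the paper's proof: both reduce to the injectivity of $\Hom_B(A,A)\otimes\mathbb{Q}\to\Hom_B(V^pA,V^pA)$ (equivalently $\Hom_B(A,A)\to\Hom_B(T^pA,T^pA)$) for an abelian scheme $A$ over an arbitrary $\zp$-algebra $B$, and both invoke the classical fiberwise statement over algebraically closed fields.

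The only substantive difference is in the globalization step. You propose to argue directly that $\ker(g)\subseteq A$ having full geometric fibers forces $\ker(g)=A$, appealing to flatness of $A\to\spec B$. As stated this is incomplete: $\ker(g)$ is not known to be flat over the (possibly non-reduced, non-Noetherian) base, and a closed subscheme with full geometric fibers can be strictly smaller than the ambient flat scheme in that generality. Your fallback appeal to ``morphisms of abelian schemes are determined by their restriction to geometric fibers'' is exactly the statement you are trying to prove. The paper handles this cleanly by spreading out: write $B=\varinjlim_i B_i$ as a filtered colimit of finitely generated $\mathbb{Z}$-algebras, descend $A$ and $g$ to some $A_i$, $g_i$ over $B_i$ (arranging that the image of $\spec B\to\spec B_i$ meets every connected component), and then over the Noetherian base apply Mumford's rigidity \cite[Proposition~6.1]{MumfordGIT} to conclude $g_i=0$. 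This is the standard way to turn your sketch into a proof.
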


\begin{proof}
It is enough to show that for a $\zp$-algebra $B$ and an abelian scheme $\pi \colon A \to \spec B$, the natural map (morphisms of group schemes on the left)
\begin{align}
    \Hom_B(A,A) \otimes \mathbb{Q} \to \Hom_B(V^p A, V^p A)
\end{align}
is injective, where $V^p A$ is the prime-to-$p$ adelic Tate module. In turn, it is enough to show that the map
\begin{align}
    \Hom_B(A,A) \to \Hom_B(T^p A, T^p A)
\end{align}
in injective, where $T^p A$ is the prime-to-$p$ Tate module of $A$ over $\spec B$. Let $f$ be an element in the kernel of $\Hom_B(A,A) \to \Hom_B(T^p A, T^p A)$.

Let us write $B=\varinjlim_{i \in I} B_i$ as a filtered colimit of finitely generated $\mathbb{Z}_p$-algebras $B_i$. Then by spreading out, see \cite[Tag 0C0C, Tag 01ZM]{stacks-project} and \cite[Theoreme 8.10.5.(xii)]{EGA4III}, there is an index $i \in I$, an abelian scheme $\pi_i:A_i \to \spec B_i$ and morphism $f_i:A_i \to A_i$ over $\spec B_i$ such that, when we base change along $\spec B \to \spec B_i$, we obtain $\pi:A \to \spec B$ and $f:A \to A$. Since the topological space of $\spec B$ is the inverse limit over the topological spaces of the $B_i$, see \cite[Tag 0CUF]{stacks-project}, we may moreover assume that the image of $\spec B \to \spec B_i$ nontrivially intersects each (of the finitely many) connected components of $\spec B_i$. It now suffices to show that $f_i$ is the zero morphism.

Since the result is well known over algebraically closed fields, see \cite[Theorem 12.10]{MoonenVdGeerEdixhoven},  we see that $f_s$ is zero for any $s:\spec k \to \spec B$ with $k$ an algebraically closed field. By our assumption on $\spec B \to \spec B_i$, we see that there is a geometric point $s:\spec k \to \spec B_i$ in each connected component of $\spec B_i$ such that $f_{i,s}$ is zero. Then by \cite[Proposition 6.1]{MumfordGIT}, there is a section $\rho:\spec B_i \to A_i$ of $\pi_i$ such that $f_i=\rho \circ \pi_i$. Since $f_i$ is a group homomorphism this implies that $\rho$ is the zero section of $A_i$ (e.g.\ because $[n] \circ \rho = \rho$ for all integers $n$). It follows that $f_i = 0$ and thus that $f=0$. 
\end{proof}

\begin{Def}
  The \textit{Igusa stack} $\igsinf$ is the v-sheafification\footnote{To avoid set-theoretic issues, we strictly speaking have to work with the site of $\kappa$-small perfectoid spaces, where $\kappa$ is an uncountable strong limit cardinal. See \cite[Section 4]{EtCohDiam} for a detailed discussion of the set-theoretic issues regarding v-sheafification. We will later see that the Igusa stack does not depend on the choice of $\kappa$, see Remark~\ref{Rem:ChangeOfCutoffCardinal}.} of the presheaf sending $(R,R^+)$ to the set of isomorphism classes in $\igspreinf(R,R^+)$.
\end{Def} 

\subsubsection{Hecke action} \label{Sec:ActionGafp}
Recall from Section~\ref{subsub:ChoicesIntegral} that the integral model
at infinite level $\scrsginf = \varprojlim_{K^p \subset \gafp}
\scrs_{K_pK^p}\gx$ has a natural action of $\gafp$. Since the action on each
finite level factors through a discrete quotient, the pro-\'{e}tale sheaf of
topological groups $\underline{\gafp}$ acts on $\scrsginf$.

Taking the attached v-sheaves, we get an action of $\underline{\gafp}$ on $\scrsdinf$. This induces a $\underline{\gafp}$-action on $\igsinf$. Indeed, on the level of presheaves, for $g \in \underline{\gafp}(S)$, we get a morphism $\igspreinf(S) \to \igspreinf(S)$ sending $x \mapsto gx$ and a formal quasi-isogeny $f:A_x \dashrightarrow A_y$ to $g_y \circ f \circ g_x^{-1}$. Here $g_x$ is the unique quasi-isogeny $g_x:(A_x,\lambda_x) \dashrightarrow (A_{gx},\lambda_{gx})$ preserving the polarization up to a scalar such that the following diagram commutes
\begin{equation}
    \begin{tikzcd}
    \mathcal{V}^p_{x} \arrow{r}{g_x} \arrow{d}{\eta_{x}} &  \mathcal{V}^p_{gx} \arrow{d}{\eta_{gx}} \\
    V \otimes \underline{\mathbb{A}}_{f,R}^{p} \arrow[r, "g"] & V \otimes \underline{\mathbb{A}}_{f,R}^{p},
    \end{tikzcd}
\end{equation}
and $g_y$ is defined similarly. 

\subsubsection{}
As in \cite[Proposition 8.3, Remark 8.4]{ZhangThesis}, we have a map of v-sheaves 
\begin{align}
    \IgsQuot \colon \scrs_{K_p}\gx^\diamond \to \igsinf,
\end{align}
obtained by sheafifying the obvious map 
\[\scrs_{K_p}\gx^{\diamond, \mathrm{pre}} \to \igspreinf, \ (S^\sharp, x)\mapsto (S^\sharp, x)\] (see Remark \ref{Rem:IntModelPresheaf}). 
One checks readily that this map is $\underline{\gafp}$-equivariant. By construction, the map
\begin{align}
\scrs_{K_p}\gx^{\diamond, \mathrm{pre}} \to \bungmu
\end{align}
from \eqref{Eq:MapToBunG} factors through $\scrs_{K_p}\gx^{\diamond, \mathrm{pre}} \to \igspreinf$ and thus $\scrsdinf \to \bungmu$ factors through $\IgsQuot \colon \scrsdinf \to \igsinf$. In this way, we obtain a $2$-commutative cube
\begin{equation} \label{Eq:TheCubeInt}
  \begin{tikzcd}[column sep=tiny, row sep=tiny]
    & \scrsdvinf \arrow[rr] \arrow[dd] & &  \shtgvmu \arrow{dd} \\
    \scrsdinf \arrow[rr, crossing over] \arrow{ur} \arrow{dd}{\IgsQuot} & &\shtgmu \arrow{ur}\\
    & \igsvinf \arrow{rr} & & \bun_{G_V,\mu_V^{-1}} \\
     \igsinf \arrow{rr}{\overline{\pi}_{\mathrm{HT}}} \arrow{ur} & & \bungmu. \arrow{ur} \arrow[from=uu, crossing over]
  \end{tikzcd}
\end{equation}
We will prove that the front square of \eqref{Eq:TheCubeInt} (i.e., the one consisting of objects with index $\gx$, $G$ or $\mathcal{G}$) is $2$-Cartesian, see Theorem \ref{Thm:HodgeMain}. 

\subsubsection{} \label{Sec:Defitionfinitelevel} 

Fix a neat compact open subgroup $K^p \subset \gafp$.

\begin{Def} \label{Def:IgusaStackFiniteLevel}
We define the Igusa stack of level $K^p$ to be the quotient stack \[\igs = \left[\igsinf/\underline{K^p}\right]. \]
\end{Def}

\begin{Rem} \label{Rem:FiniteLevelIgs}
  \def\igspre{\mathrm{Igs}_{\Xi,K^p,\mathrm{fin}}^\mathrm{pre}}
We could also consider the v-sheafification of the following presheaf of groupoids $\igspre$ on $\perf$. For an affinoid perfectoid $S=\spa (R,R^+)$, the objects in $\igspre(S)$ are pairs $(S^\sharp, x)$, where $S^\sharp = \spa(R^\sharp, R^{\sharp+})$ is an untilt of $S$ over $\mathcal{O}_E$, and $x$ is a morphism $\spf R^{\sharp+} \to \hatscrsg$ of formal schemes over $\mathcal{O}_E$. A morphism $f:(S^{\sharp_1},x) \to (S^{\sharp_2},y)$ is a formal quasi-isogeny
\begin{align}
    f:(A_x,\lambda_x) \dashrightarrow (A_y,\lambda_y)
\end{align}
such that the isomorphism $f:\mathcal{V}^p_{x} \to \mathcal{V}^p_{y}$ of pro-\'etale sheaves on $\spec R^{+}/\varpi$ sends $t_{\alpha,\afp,x}$ to $t_{\alpha,\afp,y}$ for all $\alpha \in \mathscr{A}$, the induced isomorphism
\begin{align}
    \isom^{\otimes}(\mathcal{V}^p_{x},V \otimes \underline{\mathbb{A}}_{f}^{p})/K^p \to \isom^{\otimes}(\mathcal{V}^p_y,V \otimes \underline{\mathbb{A}}_{f}^{p})/K^p
\end{align}
sends $\bar{\eta}_x$ to $\bar{\eta}_y$, and the induced isomorphism (see Section \ref{Sec:FormalQuasiIsogeniesFF})
\begin{align}
    f:\mathcal{E}(A_x) \to \mathcal{E}(A_y)
\end{align}
of vector bundles on $X_S$ comes from a (necessarily unique) isomorphism of $G$-bundles $\mathbb{L}_{\mathrm{crys},x} \to \mathbb{L}_{\mathrm{crys},y}$. In other words, this is the sheafification of the coequalizer in pre-sheaves of groupoids
\[\underline{K^p}\times \igspreinf \rightrightarrows \igspreinf.\]
This defines the same v-stack as $\igs$, since sheafification preserves colimits.
\end{Rem}

\begin{Lem} \label{Lem:ComparisonZhang}
  The v-stack $\igsv = [\igsvinf / M^p]$ is isomorphic to the v-stack
  $\mathrm{Igs}$ of level $M^p$ defined in \cite[Definition~8.1]{ZhangThesis}. 
\end{Lem}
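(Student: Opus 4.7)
The plan is to compare the underlying presheaves of groupoids before v-sheafification, and then conclude by using that v-sheafification commutes with the $\underline{M^p}$-quotient. Using the presentation of $[\igsvinf/\underline{M^p}]$ provided in Remark~\ref{Rem:FiniteLevelIgs}, I would first recall \cite[Definition~8.1]{ZhangThesis}: Zhang's presheaf assigns to $\spa(R,R^+)$ the groupoid of tuples $(S^\sharp, A, \lambda, \eta^p)$, where $S^\sharp = \spa(R^\sharp, R^{\sharp+})$ is an untilt of $S$ over $\zp$, $(A,\lambda)$ is a weakly principally polarized formal abelian scheme up to prime-to-$p$ isogeny over $\spf R^{\sharp+}$, and $\eta^p$ is a $M^p$-level structure, with morphisms given by formal quasi-isogenies preserving the polarization up to a scalar and respecting the $M^p$-level structures.

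Next I would specialize Definition~\ref{Def:IgusaStack} and Remark~\ref{Rem:FiniteLevelIgs} to the Siegel situation $\gx = \gvx$ with $\Xi = (\mathrm{GSp}(V_\zp), \mathrm{id}, V_\zp)$. The objects match on the nose: both sides parametrize an untilt $S^\sharp$ of $S$ over $\zp$ together with a morphism $\spf R^{\sharp+} \to \hatscrsgv$, and the pullback of the universal family is precisely the weakly polarized formal abelian scheme $(A_x, \lambda_x)$ together with the $M^p$-level structure $\bar\eta_x$ coming from $\mathcal{V}^p$. For the morphisms, the key point is that in the Siegel case one may take the defining tensors $\{t_\alpha\}$ to be those determined by the symplectic form $\psi$, so the compatibility with $t_{\alpha,\afp}$ and $\bar\eta$ translates verbatim into preservation of the symplectic pairing up to a scalar and preservation of the $M^p$-level structure. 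This is exactly the datum encoded by a formal quasi-isogeny of weakly polarized abelian schemes, i.e.\ the morphisms appearing in Zhang's definition.

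The only remaining condition in our definition is that the induced map $f \colon \mathcal{E}(A_x) \to \mathcal{E}(A_y)$ of vector bundles on $X_S$ be induced by an isomorphism of $G_V$-bundles $\mathbb{L}_{\mathrm{crys},x} \xrightarrow{\sim} \mathbb{L}_{\mathrm{crys},y}$. Since $G_V = \mathrm{GSp}(V,\psi)$, by the Tannakian description of $\mathrm{GSp}$-bundles this is automatic once the isomorphism on the standard representation respects the symplectic form up to a scalar; but $f$ already does so by virtue of being a formal quasi-isogeny preserving the weak polarization (using functoriality of the construction $Y \mapsto \mathcal{E}(Y)$ applied to the canonical isomorphism $A^\vee[p^\infty] \simeq A[p^\infty]^t$). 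Hence this condition imposes no further restriction on morphisms in the Siegel case.

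Assembling these observations yields an isomorphism of presheaves of groupoids between Zhang's presheaf at level $M^p$ and the coequalizer presheaf $\igsvpre$ of Remark~\ref{Rem:FiniteLevelIgs}. Since both target stacks are defined by v-sheafifying their respective presheaves and v-sheafification preserves this identification, we obtain the claimed isomorphism $\igsv \simeq \mathrm{Igs}$. The main point to verify carefully is the automatic nature of the crystalline $G_V$-bundle condition for morphisms; everything else is a straightforward matching of data.
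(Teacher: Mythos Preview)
Your identification of Zhang's definition is incorrect, and this causes you to miss the one nontrivial step in the proof. In \cite[Definition~8.1]{ZhangThesis}, the objects of $\mathrm{Igs}^\mathrm{pre}(R,R^+)$ are \emph{not} formal abelian schemes over $\spf R^{\sharp+}$; they are (weakly polarized) abelian schemes over $\spec R^+/\varpi$ together with a PEL structure $(A_0, \iota, \lambda, \bar\eta)$. The comparison functor therefore sends $(S^\sharp, x)$ to $(A_{x,0}, \iota, \lambda_x, \bar\eta_x)$ with $A_{x,0} = A_x \otimes_{R^{\sharp+}} R^+/\varpi$, and this is \emph{not} a bijection on objects. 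Full faithfulness is indeed immediate (morphisms on both sides are quasi-isogenies over $R^+/\varpi$ respecting the extra structures), but essential surjectivity is not: given an abelian scheme over $R^+/\varpi$, one must produce a lift to $\spf R^{\sharp+}$. This is where the paper invokes formal smoothness of $\scrs_M\gvx$ over $\spec\zp$, which guarantees that any $\spec R^+/\varpi \to \scrs_M\gvx$ lifts to $\spf R^{\sharp+} \to \hatscrsgv$. Your claim that ``the objects match on the nose'' is precisely what fails, and the formal smoothness argument is the actual content of the lemma.

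Your discussion of morphisms is fine: the observation that in the Siegel case the crystalline $G_V$-bundle condition is automatic (since a polarization-preserving quasi-isogeny already respects the symplectic structure on $\mathcal{E}(A)$) is correct, and the paper implicitly uses this when it simplifies the description of $\igsvprefin$ to ``formal quasi-isogenies preserving the polarizations up to a locally constant $\mathbb{Q}^\times$-valued function''. But this is not where the work lies.
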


\begin{proof}
\def\igsvprefin{\mathrm{Igs}_{\Xi,M^p,\mathrm{fin}}^\mathrm{pre}}

Fix $S = \spa(R,R^+)$. It suffices to define a functorial equivalence $\igsvprefin(R,R^+)\xrightarrow{\sim} \mathrm{Igs}^\mathrm{pre}(R,R^+)$, where $\Xi = (\mathrm{GSp}(V_\zp), \mathrm{id}, V_\zp)$ and $\igsvprefin$ is defined as in Remark \ref{Rem:FiniteLevelIgs}. Making the definition explicit, we see that $\igsvprefin(R,R^+)$ is the presheaf of groupoids whose objects are pairs $(S^\sharp, x)$, where $S^\sharp = \spa(R^\sharp, R^{\sharp+})$ is an untilt of $S$ over $\zp$ and $x$ is a $\spf R^{\sharp+}$-point of $\hatscrsgv $ over $\zp$, where morphisms are formal quasi-isogenies $A_x \dashrightarrow A_y$ preserving the polarizations up to a locally constant $\mathbb{Q}^\times$-valued function on $\spec R^+/\varpi$. Define $A_{x,0} = A_x \otimes_{R^{\sharp+}} R^+/\varpi$. Then the assignment 
\begin{align*}
    (S^\sharp, x) \mapsto (A_{x,0}, \iota, \lambda_x, \bar{\eta}_x)
\end{align*}
defines a functorial map $\igsvprefin(R,R^+) \to \mathrm{Igs}^\mathrm{pre}(R,R^+)$, where $\iota: \mathbb{Z} \to \operatorname{End}(A_x) \otimes {\mathbb{Z}_{(p)}}$ is the obvious map. 

Full faithfulness of $\igsvprefin(R,R^+) \to \mathrm{Igs}^\mathrm{pre}(R,R^+)$ is immediate from the definition of morphisms on both sides, so it remains only to show essential surjectivity. But this follows from formal smoothness of $\mathscr{S}_M\gvx$ over $\spec \zp$, which implies that any map $\spec R^+/\varpi \to \scrs_M\gvx$ over $\spec \zp$ lifts to a map $\spf R^{\sharp+} \to \hatscrsgv$ over $\spf \zp$.
\end{proof}

\subsubsection{} We record a result that follows easily from our definition of
the Igusa stack. This will be used later in Section~\ref{Sec:Cohomology}. Recall
from Section~\ref{Sec:AbsoluteFrobenii} that every v-stack has an absolute Frobenius.

\begin{Prop} \label{Prop:FrobeniusTrivial}
  The absolute Frobenius on the Igusa stack $\igsinf$ is the identity map.
  Moreover, under the isomorphism $\phi_{\bung} \simeq \mathrm{id}_{\bung}$ of
  Proposition~\ref{Prop:AbsoluteFrobeniusBunG}, the diagram
  \[ \begin{tikzcd}
    \igsinf \arrow{r} \arrow[bend right]{d}[']{\phi} \arrow[bend
    left]{d}{\mathrm{id}} \arrow[phantom]{d}[description]{=} & \bung \arrow[bend
    right]{d}[']{\phi} \arrow[bend left]{d}{\mathrm{id}} \\ \igsinf \arrow{r} &
    \bung
  \end{tikzcd} \]
  is 2-commutative, i.e., the two isomorphisms between $\igsinf
  \xrightarrow{\phi} \igsinf \to \bung$ and $\igsinf \to \bung
  \xrightarrow{\mathrm{id}} \bung$ agree.
\end{Prop}

\begin{proof}
  We instead check that the absolute Frobenius on the presheaf $\igspreinf$ is
  the identity. Let $S = \spa(R^\sharp, R^{\sharp+})$ be any affinoid
  perfectoid and fix a point $x \colon \spf R^{\sharp+} \to \hatscrsginf$. We
  need to verify that if we regard $R^\sharp$ as two different untilts of
  $R^{\sharp\flat}$, one Frobenius-twisted by another, then there exists a
  formal quasi-isogeny $(A_x, \lambda_x) \dashrightarrow (A_x, \lambda_x)$
  preserving the \'{e}tale trivializations and the $G$-structures on
  $\mathbb{L}_{\crys,x}$.

  Choose a pseudouniformizer $\varpi \in R^{\sharp+}$ that divides $p$.
  Unraveling the definition of a formal quasi-isogeny, cf.\
  Section~\ref{Sub:FormalQuasiIsogenies}, we see that it suffices to find a
  quasi-isogeny between the two abelian varieties $A_x \vert_{\spec
  R^{\sharp+}/\varpi}$ and $\phi_{\spec R^{\sharp+}/\varpi}^\ast A_x
  \vert_{\spec R^{\sharp+}/\varpi}$, preserving the extra structures. But the
  relative Frobenius on $A_x$ is such a quasi-isogeny.

  The second part directly follows from the constructions. After unraveling the
  definition, it remains to verify that for a $p$-divisible group $Y$ over
  $R^{\sharp+}/\varpi$, the isomorphism $\mathcal{E}(\phi^\ast Y) \simeq
  \mathcal{E}(Y)$ of vector bundles on $X_S$ induced from the relative Frobenius
  $\phi^\ast Y \dashrightarrow Y$ is equal to the isomorphism of
  Proposition~\ref{Prop:AbsoluteFrobeniusBunG}. This is clear from the
  construction of $\phi_{\bung} \simeq \mathrm{id}_{\bung}$ and the discussion
  of Section~\ref{Sec:FormalQuasiIsogeniesFF}.
\end{proof}

\subsection{Comparison of crystalline tensors} This is a standalone section whose goal is to compare the crystalline tensors constructed in \cite{Hamacher-Kim} with those coming from the morphism $\scrs_K\gx^{\diamond} \to \shtgmu$. This can be used to show that the (perfect) Igusa varieties we define in Section \ref{Sec:PerfectIgusa} agree with the ones defined by Hamacher--Kim in \cite{Hamacher-Kim} (under the hypotheses of loc.\ cit.\ that we will recall below). 

\subsubsection{} Let $\gx$ be a Shimura datum of Hodge type as before. Assume in addition that $p>2$, that $G$ is tamely ramified over $\qp$ and that $p$ is coprime to the order of $\pi_1(\gder)$. Let $\mathcal{G}$ be a stabilizer parahoric group scheme, and choose $\iota:\gx \to \gvx$ a Hodge embedding along with a self-dual $\zp$-lattice $V_\zp \subset V_{\qp}$ such that $\mathcal{G}(\zpbr)$ is the stabilizer in $G(\qpbr)$ of $V_\zp$. By \cite[Lemma 1.3.2]{KisinModels}, we can find tensors
\begin{align}
    \{t_{\alpha}\}_{\alpha \in \mathscr{A}'} \subset V_{\zp}^{\otimes}
\end{align}
such that their stabilizer can be identified with $\mathcal{G}$.\footnote{Here we use $\mathscr{A}'$ to denote the indexing set of the tensors to distinguish it from the set $\mathscr{A}$ from Section \ref{Sec:EtaleTensors}.}

By \cite[Lemma 2.1.2]{KisinModels}, for $K^p \subset \gafp$ we can find $M^p \subset \gv(\afp)$ containing $K^p$ such that the natural map
\begin{align}
    \mathbf{Sh}_{K}\gx \to \mathbf{Sh}_{M} \gvx \otimes_{\mathbb{Q}_p} E
\end{align}
is a closed immersion, where $M_p=\mathrm{GSp}(V_\zp)(\zp)$. Then there is a corresponding finite morphism $\scrs_{K}\gx \to \scrs_{M}\gvx$ as in Section \ref{subsub:Zarhin}. We will also consider $\scrs_{K_p} \gx \to \scrs_{M_p}\gvx_{\mathcal{O}_E}$ and we let $\shginf \to \mathscr{S}_{M_p}(\mathsf{G}_V,\mathsf{H}_V)^\mathrm{perf}_{k_E}$ denote the induced morphism on perfect special fibers. 

\subsubsection{} Let $A$ be the universal abelian scheme up to prime-to-$p$ isogeny over $\shgvinf$ and let $A$ also denote its pullback to $\shginf$. Let $Y=A[p^{\infty}]$ be the $p$-divisible group of $A$, let $\mathbb{D}=\mathbb{D}(Y)^{\ast}$ be as in 
Section \ref{Sec:Dieudonne} and let $\mathbb{D}^{\otimes}$ be its tensor space. 

In \cite[Section 3, Proposition 3.3.1]{Hamacher-Kim} Hamacher--Kim construct $\phi_Y$-invariant tensors $\{t_{\alpha, \mathrm{HK}, \mathrm{crys}}\}_{\alpha \in \mathscr{A}'} \subset \mathbb{D}^{\otimes}$. As explained in the discussion before \cite[Theorem 4.5.2]{PappasRapoportShtukas}, there are \'etale tensors 
\begin{align}
    \{t_{\alpha, \text{\'et}}\}_{\alpha \in \mathscr{A}'} \subset \Gamma(\mathbf{Sh}_{K_p}\gx, \mathbb{V}_p^{\otimes}),
\end{align}
where $\mathbb{V}_p$ is the $\zp$-local system corresponding to the relative $p$-adic Tate module of $A$. The tensors $t_{\alpha,\mathrm{HK},\mathrm{crys}}$ satisfy the following property (see \cite[Corollary 3.3.7]{Hamacher-Kim}): For every point $\overline{z}:\spec \fpbar \to \shginf$ and every lift $z:\spec \mathcal{O}_F \to \scrs_{K_p}\gx$ of $\overline{z}$ where $F$ is a finite extension of $\ebreve$, the crystalline comparison isomorphism
\begin{align}
    \mathbb{V}_{p,z} \otimes_{\mathbb{Z}_{p}} \mathbf{B}_\mathrm{crys} \to \mathbb{D}_{\overline{z}} \otimes_{\zpbr} \mathbf{B}_\mathrm{crys}
\end{align}
matches $t_{\alpha, \text{\'et},z} \otimes 1$ with $t_{\alpha,\mathrm{HK},\mathrm{crys},\overline{z}} \otimes 1$ for all $\alpha \in \mathscr{A}'$. Note that this uniquely characterizes the sections $t_{\alpha,\mathrm{HK},\mathrm{crys},\overline{z}}$ for all $\overline{z}:\spec \fpbar \to \shginf$. \smallskip 

Let $\mathbb{D}^{\natural}=\mathbb{D}^{\natural}(Y)$ and let $\mathbb{D}^{\natural, \otimes}$ be its tensor space; note that $\mathbb{D}^{\natural, \otimes}=((\phi^{-1})^{\ast} \mathbb{D})^{\otimes}$. The tensors $\{t_{\alpha}\}_{\alpha \in \mathscr{A}'} \subset V_\zp^\otimes$ give rise to ($\phi_Y$-invariant) crystalline tensors $\{t_{\alpha, \mathrm{PR}, \mathrm{crys}}\}_{\alpha \in \mathscr{A}'} \subset  \mathbb{D}^{\natural, \otimes}$ 
on $\shginf$ using the Tannakian interpretation of torsors and the morphism $\shginf \to \shtglocmu$ defined by Pappas--Rapoport. The following proposition is the main result of this section.

\begin{Prop} \label{Prop:HamacherKimTensors}
    We have an equality $\phi^{\ast} t_{\alpha, \mathrm{PR}, \mathrm{crys}}=t_{\alpha,\mathrm{HK},\mathrm{crys}}$ for all $\alpha \in \mathscr{A}'$.
\end{Prop}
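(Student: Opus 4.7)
The strategy is to verify that $\phi^{\ast} t_{\alpha,\mathrm{PR},\mathrm{crys}}$ satisfies the characterizing property of $t_{\alpha,\mathrm{HK},\mathrm{crys}}$ recalled from \cite[Corollary~3.3.7]{Hamacher-Kim}, namely matching the \'etale tensor $t_{\alpha,\text{\'et},z}\otimes 1$ under the crystalline comparison isomorphism for every lift $z \colon \spec \mathcal{O}_F \to \scrs_{K_p}\gx$ of a geometric point $\overline{z}:\spec \fpbar\to \shginf$, with $F/\breve{E}$ finite. Since the characterization pins down the tensors pointwise in $\mathbb{D}^{\otimes}$, it suffices to perform this check after specializing to each $\overline{z}$, and in particular we may assume such a lift exists (using formal smoothness of the Siegel integral model and our tameness/$\pi_1$ hypotheses to pass between Shimura varieties in the usual way).

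The first step is to recognize that the Pappas--Rapoport map $\pi_\mathrm{crys}$, applied to the $\spd(F,\mathcal{O}_F)$-point coming from $z$, yields the $\mathcal{G}$-shtuka $\mathscr{V}(Y_z)$ attached to $Y_z = A_z[p^\infty]$, this being part of the characterization of canonical integral models. By construction of $t_{\alpha,\mathrm{PR},\mathrm{crys}}$ via the Tannakian formalism, the tensors $t_{\alpha}\in V_{\zp}^{\otimes}$ lift along this map to $\phi^{\natural}$-invariant tensors in the BKF module $M^{\natural}(Y_z)^{\otimes}$ over $W(\mathcal{O}_F^{\flat})$ which specialize along $W(\mathcal{O}_F^{\flat})\to W(\fpbar)$ to $t_{\alpha,\mathrm{PR},\mathrm{crys},\overline{z}}$.

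The second step is to transport these BKF-level tensors to the \'etale and crystalline realizations respectively. On the generic fiber, $\pi_\mathrm{crys}$ over $\spd F$ is identified (via \eqref{Eq:GenericFibreShtukas} and Lemma~\ref{Lem:BLCircVSBL}) with the $\mathcal{G}$-reduction of the $p$-adic Tate module prescribed by the tower $\mathbf{Sh}_{K_p}\gx$; by the very definition of the \'etale tensors $t_{\alpha,\text{\'et}}$ (see the discussion before \cite[Theorem~4.5.2]{PappasRapoportShtukas}), this identifies the generic-fiber realization of the BKF tensors with $t_{\alpha,\text{\'et},z}$. On the special fiber, after applying $\phi^{\ast}$ we obtain via the natural isomorphism $(M(Y), \phi_Y) \otimes_{W(R^+)} \mathbf{A}_{\mathrm{crys}}(R^+/\varpi) \xrightarrow{\sim} (\mathbb{D}^{\ast}(Y_{R^+/\varpi}), \phi_Y)$ of Section~\ref{Sec:Dieudonne} that $\phi^{\ast} t_{\alpha,\mathrm{PR},\mathrm{crys},\overline{z}}$ is the image of the BKF tensor in $\mathbb{D}^{\ast}(Y_{\overline{z}})^{\otimes}$.

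The final step is to observe that the diagram relating the BKF module, its \'etale realization, and its Dieudonn\'e crystal realization is compatible with Fontaine's crystalline comparison isomorphism $\mathbb{V}_{p,z}\otimes_{\zp}\mathbf{B}_{\mathrm{crys}} \xrightarrow{\sim} \mathbb{D}_{\overline{z}}\otimes_{\zpbr}\mathbf{B}_{\mathrm{crys}}$; this is precisely the content of the comparison theorems between BKF modules and $p$-divisible groups in \cite[Theorem~17.5.2]{ScholzeWeinsteinBerkeley} (or the earlier crystalline descriptions used in \cite{Hamacher-Kim}). Tracking the tensor through the diagram yields $t_{\alpha,\text{\'et},z}\otimes 1 \mapsto \phi^{\ast}t_{\alpha,\mathrm{PR},\mathrm{crys},\overline{z}}\otimes 1$, which is the defining property of $t_{\alpha,\mathrm{HK},\mathrm{crys},\overline{z}}$. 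The main subtlety is a careful unwinding of the various normalizations ($\phi$ vs.\ $\phi^{-1}$ twists, covariant vs.\ contravariant conventions, $\mathbb{D}^{\ast}$ vs.\ $\mathbb{D}^{\natural}$) to match the compatibility isomorphism of Section~\ref{Sec:Dieudonne} with the crystalline comparison used by Hamacher--Kim; once this bookkeeping is in order, the equality is immediate.
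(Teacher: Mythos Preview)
Your approach is essentially the same as the paper's, and the pointwise comparison you sketch is exactly the content of what the paper packages as Lemma~\ref{Lem:PappasRapoportTensors} (where it is attributed to \cite[Lemma~4.6.5]{PappasRapoportShtukas}). So the heart of your argument is correct and matches the paper's.

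There is, however, a step you pass over too quickly. You write that ``since the characterization pins down the tensors pointwise in $\mathbb{D}^{\otimes}$, it suffices to perform this check after specializing to each $\overline{z}$.'' This reduction is not automatic: both $\phi^{\ast} t_{\alpha,\mathrm{PR},\mathrm{crys}}$ and $t_{\alpha,\mathrm{HK},\mathrm{crys}}$ are global sections of $\mathbb{D}^{\otimes}$ over $\shginf$, and the characterizing property of Hamacher--Kim only determines the fiber $t_{\alpha,\mathrm{HK},\mathrm{crys},\overline{z}}$ at each $\fpbar$-point. One still needs to explain why agreement at all $\fpbar$-points forces equality of the global sections. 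The paper handles this by observing that the functor $R \mapsto \mathbb{D}_R^{\otimes}$ is represented by an ind-affine scheme $D^{\otimes}$ over $\shginf$, so that the two tensors correspond to two morphisms $\shginf \to D^{\otimes}$; the locus where they agree is then a closed subscheme of $\shginf$ containing all $\fpbar$-points, hence is everything. This globalization step is short but is in fact the only thing the paper's own proof spells out, while your effort goes almost entirely into the pointwise statement that the paper simply cites.
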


We will need the following statement from \cite{PappasRapoportShtukas}.

\begin{Lem} \label{Lem:PappasRapoportTensors}
    For every point $\overline{z}:\spec \fpbar \to \shginf$ and every lift $z:\spec \mathcal{O}_F \to \scrs_{K_p}\gx$ of $\overline{z}$, where $F$ is a finite extension of $\ebreve$, the crystalline comparison isomorphism
\begin{align}
    \mathbb{V}_{p,z} \otimes_{\zp} \mathbf{B}_\mathrm{crys} \to \mathbb{D}_{\overline{z}} \otimes_{\zpbr} \mathbf{B}_\mathrm{crys}
\end{align}
  matches $t_{\alpha,\mathrm{\acute{e}t},z} \otimes 1$ with $\phi^{\ast}t_{\alpha, \mathrm{PR},\mathrm{crys},\overline{z}} \otimes 1$.
\end{Lem}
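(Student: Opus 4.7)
The plan is to deduce this directly from the characterization of the canonical integral model, in which the $\mathcal{G}$-shtuka on $\scrs_K\gx^{\diamond}$ obtained by pulling back the tautological shtuka along $\pi_{\mathrm{crys}, \mathcal{G}}$ is associated, in the sense of \cite[Definition 2.6.6]{PappasRapoportShtukas}, to the pro-\'etale $\mathcal{G}(\zp)$-torsor $\mathbb{P}_K$ on $\mathbf{Sh}_K\gx^{\lozenge}$. First I would pass to the completion $\overline{F}$ of an algebraic closure of $F$ so that $z$ becomes a morphism $\spd(\overline{F}^{\flat}, \mathcal{O}_{\overline{F}^{\flat}}) \to \scrs_K\gx^{\diamond}$ along which one can restrict to obtain a $\mathcal{G}$-shtuka $\mathscr{P}_z$ over $\spd(\overline{F}^{\flat}, \mathcal{O}_{\overline{F}^{\flat}})$.

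Next I would unwind the association property for $\mathscr{P}_z$ in two directions. On the generic-fiber side, the restriction of $\mathscr{P}_z$ to $\spd \overline{F}^{\flat}$ is canonically identified, after pushout along $\mathcal{G} \hookrightarrow \mathrm{GL}_V$, with the $\mathrm{GL}_V$-torsor attached to $\mathbb{V}_{p,z}$, in a way respecting the tensors $t_{\alpha, \text{\'et}, z}$. On the integral side, the equivalence between BKF modules over $W(\mathcal{O}_{\overline{F}^{\flat}})$ and shtukas (cf.\ \cite[Proposition 2.2.7]{PappasRapoportShtukas}) converts $\mathscr{P}_z$ to a $\mathcal{G}$-BKF module; base-changing along the natural map $W(\mathcal{O}_{\overline{F}^{\flat}}) \to \mathbf{A}_{\mathrm{crys}}$ of Section~\ref{subsub:Rsharpmap} produces a $\mathcal{G}$-structure on $\mathbb{D}^{\ast}(Y_{\overline{z}}) \otimes \mathbf{A}_{\mathrm{crys}}$ which, after the Frobenius twist encoded in $\mathbb{D}^{\natural} = (\phi^{-1})^{\ast}\mathbb{D}^{\ast}$, recovers the tensors $\phi^{\ast} t_{\alpha, \mathrm{PR}}$.

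At this point the crystalline comparison $\mathbb{V}_{p,z} \otimes \mathbf{B}_{\mathrm{crys}} \xrightarrow{\sim} \mathbb{D}_{\overline{z}} \otimes \mathbf{B}_{\mathrm{crys}}$ is, on the level of the $\mathrm{GL}_V$-pushout of $\mathscr{P}_z$, nothing but the identification between these two realizations of a single underlying object, and the Tannakian interpretation of $\mathcal{G}$-torsors forces the corresponding $\mathcal{G}$-structures to agree after base change to $\mathbf{B}_{\mathrm{crys}}$; this is exactly the claimed matching of tensors. The hard part will be the careful bookkeeping of the Frobenius twist: the $\phi^{\ast}$ appearing in the statement is precisely what reconciles the Pappas--Rapoport convention, under which the crystalline tensors naturally land in $\mathbb{D}^{\natural}$, with the convention of the crystalline comparison theorem, under which they must live in $\mathbb{D}^{\ast}$, so that once this is tracked through no further subtlety remains.
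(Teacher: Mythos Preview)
Your sketch is correct in outline and captures exactly the argument one would write out: the ``association'' property from \cite[Definition~2.6.6]{PappasRapoportShtukas} is precisely the statement that the generic-fiber and integral realizations of the shtuka match, and tracking this through the BKF-module equivalence and the $\mathbf{A}_{\mathrm{crys}}$ base change yields the claim, with the $\phi^\ast$ arising from the passage $\mathbb{D}^\natural = (\phi^{-1})^\ast \mathbb{D}^\ast$.

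The paper's proof, however, does not spell any of this out: it simply cites the proof of \cite[Lemma~4.6.5]{PappasRapoportShtukas}, where this computation is carried out. So your proposal is not a different route but rather an expansion of what the paper leaves as a reference. One small point worth noting in your write-up: you should make explicit that the crystalline comparison isomorphism appearing in the statement is the one induced by the shtuka formalism (via the de~Rham and \'etale realizations of the same object), rather than taking this for granted; this compatibility is part of what is verified in the cited lemma of Pappas--Rapoport.
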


\begin{proof}
This is explained in the proof of \cite[Lemma 4.6.5]{PappasRapoportShtukas}. 
\end{proof}

\begin{proof}[Proof of Proposition \ref{Prop:HamacherKimTensors}]
If $R$ is a perfect $\mathbb{F}_p$-algebra, let $\mathbb{D}_R$ denote the evaluation of $\mathbb{D}$ on the PD-thickening $W(R) \to R$. There is an affine morphism $D \to \shginf$ representing the functor $(R \to \shginf) \mapsto \mathbb{D}_R$, since $\mathbb{D}_R$ is \'etale locally on $R$ isomorphic to $W(R)^{\oplus h}$ and $R \mapsto W(R)^{\oplus h}$ is representable by the affine scheme $L^+ \mathbb{A}^h$, see Section \ref{Sec:Loops}.

It follows in the same way that there is a scheme $D^{\otimes}$ with an ind-affine morphism $D^{\otimes} \to \shginf$ representing the functor $(R \to \shginf) \mapsto \mathbb{D}_R^{\otimes}$. For each $\alpha$ in $\mathscr{A}'$, the tensors $\phi^{\ast} t_{\alpha, \mathrm{PR},\crys}$ and $t_{\alpha, \mathrm{HK},\crys}$ define morphisms $\shginf \to D^{\otimes}$, and we are trying to show that the two are equal. Since the source is separated and quasi-compact, and since the target is ind-affine, there is a closed subscheme of $\shginf$ where they agree. By Lemma \ref{Lem:PappasRapoportTensors} and the characterization of tensors constructed by Hamacher--Kim, this closed subscheme contains $\shginf(\fpbar)$ and is thus equal to $\shginf$.
\end{proof}

\section{Proof of Theorem \ref{Thm:IntroIgusa}} \label{Sec:Proof}

Let us return to the notation of Section \ref{Sec:Conjecture}. That is, let $\gx$ be a Shimura datum with reflex field $\mathsf{E}$, let $\mathcal{G}$ be a quasi-parahoric model of $G:=\sf{G}_\qp$ over $\zp$, and let $K_p = \mathcal{G}(\zp)$. Choose a prime $v$ of $\mathsf{E}$ above $p$, let $E = \mathsf{E}_v$, and 
let $\mu$ denote the $G(\qpbar)$-conjugacy class of cocharacters of $G$ corresponding to $X$ and $v$.  

In this section we prove that for the integral models of Hodge type Shimura
varieties at stabilizer Bruhat--Tits level, the 2-commutative
diagram in Conjecture~\ref{Conj:IgusaMain} is 2-Cartesian. More precisely this is the following theorem.

\begin{Thm} \label{Thm:HodgeMain}
  Let $\gx$ be a Shimura datum of Hodge type and let $\mathcal{G}$ be a
  stabilizer Bruhat--Tits model of $G$ over $\zp$. For a choice of $\Xi
  = (\mathcal{G}, \iota, V_{\zp})$ as in Section~\ref{Subsec:IgusaDefinition},
  the diagram
  \[\begin{tikzcd} \label{Eq:TheDiagram}
    \scrsdinf \ar[r,"{\pi_\mathrm{crys}}"] \ar[d, "\IgsQuot"] & \shtgmuone
    \ar[d,"{\mathrm{BL}^\circ}"] \\ \igsinf
    \arrow[r,"{\overline{\pi}_\mathrm{HT}}"] & \bungmu
  \end{tikzcd} \]
  of small v-stacks on $\perf$ is 2-Cartesian.
\end{Thm}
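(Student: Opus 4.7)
The plan is to compare the front and back faces of the cube \eqref{Eq:TheCubeInt} and exploit the fact that the Siegel case of the theorem is already known: by Lemma~\ref{Lem:ComparisonZhang} combined with the PEL-type result of \cite{ZhangThesis}, the back face of \eqref{Eq:TheCubeInt} is 2-Cartesian. Once this is established, proving that the front face is 2-Cartesian amounts to showing that the natural comparison map
\[
\Phi \colon \scrsdinf \longrightarrow \igsinf \times_{\bungmu} \shtgmuone
\]
is an isomorphism of v-stacks. Since v-sheafification preserves finite limits and since the presheaf defining $\igsinf$ is already $0$-truncated (Lemma~\ref{Lem:Discrete}), it is enough to check that $\Phi$ is v-locally bijective on $S$-points for $S$ running over a basis of the v-topology, namely for $S=\spa(R,R^+)$ with $R^+$ a product of absolutely integrally closed valuation rings (``products of geometric points'', cf.\ Lemma~\ref{Lem:SchemeTheoreticProductOfPoints}).

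The first main step, carried out in Section~\ref{Sec:ProductOfPointsProof} (Proposition~\ref{Prop:ProdOfGeomPointsBijective} in the plan), will be to reduce the verification of 2-Cartesianness at a product-of-points $S$ to the case of rank-one geometric points. The idea is as follows: an $S$-point of the fiber product consists of an $S$-point of $\igsinf$ together with an $S$-point of $\shtgmuone$ whose images in $\bungmu$ agree. Using the Hodge embedding $\iota \colon \gx \to \gvx$ and the known Siegel case, we can produce a canonical $S$-point of $\scrsdvinf$, i.e., a formal abelian scheme with polarization and prime-to-$p$ level structure. The content is then that the ``$\mathsf{G}$-structure'' encoded on the Igusa side and on the shtuka side glues to a $\mathsf{G}$-structure on this abelian scheme, which by the characterization of $\scrs_{K}\gx$ as the normalization inside $\scrs_{M}\gvx$ produces the desired lift to $\scrsdinf$. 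Working with a product of valuation rings allows us to control this gluing one geometric factor at a time, modulo a subtle technical step to propagate the lift across the whole product.

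The second main step is the case of a rank one geometric point $S=\spa(C,C^+)$. Here we use Rapoport--Zink uniformization of $\scrsdinf$ along the Newton strata, as proved in \cite{PappasRapoportShtukas, GleasonLimXu}, in the form of Corollary~\ref{Cor:ProductFormula}: for each $[b]\in\bgmu$, $\scrs_{K_p}\gx^{\diamond,[b]}$ receives a $\tilde{G}_b$-torsor from $\igvinfbgx \times_{\spd\fpbar}\mintgmu$. This identifies the fiber of $\pi_\mathrm{crys}$ over a given shtuka with the $\tilde G_b$-torsor coming from the Igusa stack, which is precisely what 2-Cartesianness predicts on rank-one geometric points (see Corollary~\ref{Cor:RankOneGeometricBijective} in the plan).

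The hardest part of the argument will be the product-of-points step, and specifically the construction of the $\mathsf{G}$-structure on the abelian scheme over the whole of $\spec R^+$: over each rank-one factor of $S$ we have a lift coming from the rank-one case, but interpolating these to a genuine map $\spf R^{\sharp+}\to \hatscrsginf$ requires an argument that combines (i) the Tannakian identification of the crystalline and \'etale realizations of the $\mathsf{G}$-structure via $\mathbb{L}_{\mathrm{crys},x}$ and $\mathcal{V}^p$, (ii) the compatibility of formal quasi-isogenies with these realizations as in Section~\ref{Sub:FormalQuasiIsogenies}, and (iii) the universal property of the normalization defining $\scrs_{K}\gx$ inside $\scrs_{M}\gvx\otimes_{\zp}\mathcal{O}_E$. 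The bookkeeping is delicate because the untilts $S^{\sharp_1},S^{\sharp_2}$ attached to the two sides of the putative isomorphism need not coincide on the nose, and one must use a choice of pseudouniformizer $\varpi$ and the natural map $R^{\sharp+}\to R^+/\varpi$ to transport data across untilts; the invariance under the choice of $\varpi$ discussed in Section~\ref{Sec:FormalQuasiIsogenyBundle} is what ultimately makes everything well-defined.
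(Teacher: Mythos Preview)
Your high-level strategy—use the Siegel case from \cite{ZhangThesis}, reduce to products of geometric points, and then to rank-one points—is exactly the paper's. But there are two genuine gaps.

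\textbf{The rank-one step is circular.} You invoke Corollary~\ref{Cor:ProductFormula}, but that result (and indeed the very definition of $\igvinfbgx$ it uses) is stated in Section~\ref{Sec:ProductFormula} under the standing hypothesis of Section~\ref{Sec:PerfectIgusa} that the $2$-Cartesian diagram already holds. You cannot use it to prove the diagram is Cartesian. The paper's actual argument for rank-one points (Propositions~\ref{Prop:RankOneGeometricSurjective} and~\ref{Prop:RankOneGeometricInjective}) works directly with the Pappas--Rapoport uniformization map $\Theta_{\mathcal{G},x}$: given a tuple in $F^{\mathrm{pre}}(C,\mathcal{O}_C)$, one picks a section $l\hookrightarrow\mathcal{O}_C$ of the residue field (Lemma~\ref{Lem:ResidueFieldSection}), uses $\Theta_{\mathcal{G},x}^{-1}$ on the formal neighborhood of $x$ to produce a framing of $\mathscr{P}_y$, transports it via $\alpha$ to a framing of $\mathscr{Q}$, and then applies $\Theta_{\mathcal{G},\phi^m(x)}$ (after a Frobenius twist to correct the mismatch between the two embeddings $k_E\hookrightarrow l$) to manufacture the point $z$. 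Verifying that $z$ maps back to the original tuple is a delicate explicit computation comparing with the Siegel Rapoport--Zink picture; injectivity is a separate argument using that the two formal completions inside $\scrs_{M_p}\gvx$ must coincide.

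\textbf{The product-of-points step is not as you describe.} The paper does not ``interpolate'' the individual lifts via the universal property of normalization. Instead it introduces an ad hoc lifting formalism (Definition~\ref{Def:LiftsProductOfPoints}: the notions \emph{proper*} and \emph{separated*}) and verifies that each of the comparison maps $\scrsdpreinf\to\scrsdvpreinf$, $\bun_G\to\bun_{G_V}$, $\shtgmuone\to\shtgvmu$, $\igspreinf\to\igsvpreinf$, and hence $F^{\mathrm{pre}}\to F_V^{\mathrm{pre}}$, satisfies the appropriate property (Propositions~\ref{Prop:UniqueLiftsShimura}--\ref{Prop:UniqueLiftsF}). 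The proof of Proposition~\ref{Prop:ProdOfGeomPointsBijective} is then a short diagram chase: the Siegel case (Theorem~\ref{Thm:SiegelCartesian}) provides an $S$-point of $\scrsdvpreinf$; the rank-one case provides lifts on each $s_i$; proper* for $\scrsdpreinf\to\scrsdvpreinf$ assembles these into a single $S$-point $\tilde h$ of $\scrsdpreinf$; and separated* for $F^{\mathrm{pre}}\to F_V^{\mathrm{pre}}$ shows that $\tilde h$ has the correct image in $F^{\mathrm{pre}}$. No appeal to the normalization description of $\scrs_K\gx$ is needed here.
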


\begin{Rem} \label{Rem:HodgeMainQuasiParahoric}
 If $\mathcal{H}$ is another quasi-parahoric model of $G$ equipped with an open immersion $\mathcal{H} \to \mathcal{G}$ inducing $K_p':=\mathcal{H}(\zp) \subseteq K_p$, then it follows from \eqref{Eq:DiagramQuasiParahoricStabilizer} that we have a 2-Cartesian diagram
  \[\begin{tikzcd}
    \scrs_{K_p'}\gx^{\diamond} \ar[r,"{\pi_\mathrm{crys}}"] \ar[d, "\IgsQuot"] &
    \shthmuone \ar[d,"{\mathrm{BL}^\circ}"] \\ \igsinf
    \arrow[r,"{\overline{\pi}_\mathrm{HT}}"] & \bungmu.
  \end{tikzcd} \]
\end{Rem}

For convenience, we define a presheaf and sheaf version of the fiber product
\[
  F^\mathrm{pre} = \igspreinf \times_{\bungmu} \shtgmuone, \quad F = \igsinf
  \times_{\bungmu} \shtgmuone,
\]
so that there are natural maps $\scrsdpreinf \to F^\mathrm{pre} \to F$ of
presheaves of groupoids. It follows from Lemma \ref{Lem:Discrete} and Lemma \ref{Lem:ZeroTruncated} that $F^\mathrm{pre}$ and $F$ are $0$-truncated.

\subsection{Pappas--Rapoport uniformization}
{
\newcommand{\mintgcircbxmu}[1]{\mathcal{M}^{\mathrm{int}}_{\calgcirc,b_x,\mu,#1}}
\newcommand{\mintgbxmu}[1]{\mathcal{M}^\mathrm{int}_{\mathcal{G},b_x,\mu,#1}}
\newcommand{\mintgbxmuone}[1]{\mathcal{M}^\mathrm{int}_{\mathcal{G},b_x,\mu,\delta=1,#1}}
\newcommand{\mintgvbxmu}{\mathcal{M}^\mathrm{int}_{\mathcal{G}_V,b_{i(x)},\mu_V}}
\newcommand{\oeel}[1]{{W_{\mathcal{O}_E,#1}(l)}}

To prepare for the proof of Theorem~\ref{Thm:HodgeMain}, we discuss the
uniformization map constructed by Pappas--Rapoport \cite{PappasRapoportShtukas}.
Denote by $k_E$ the residue field of $\mathcal{O}_E$ as before. For a perfect field $l$ in characteristic $p$ together with a fixed embedding
$e \colon k_E \hookrightarrow l$, we write
\[
  \oeel{e} := \mathcal{O}_E \otimes_{W(k_E),e} W(l).
\]
We define the integral local Shimura variety as follows.

{
\newcommand{\mintgcircbmu}[1]{\mathcal{M}^\mathrm{int}_{\calgcirc,b,\mu,#1}}
\newcommand{\mintgbmu}[1]{\mathcal{M}^\mathrm{int}_{\mathcal{G},b,\mu,#1}}
\newcommand{\mintgbmuone}[1]{\mathcal{M}^\mathrm{int}_{\mathcal{G},b,\mu,\delta=1,#1}}
\begin{Def}[{\cite[Definition~25.1.1]{ScholzeWeinsteinBerkeley}, \cite[Definition~3.1.5]{Companion}}]
  Let $l$ be a perfect field of characteristic $p$ together with an embedding $e
  \colon k_E \hookrightarrow l$, and let $b \in G(W(l)[p^{-1}])$. The
  \emph{integral local Shimura variety}
  \[
    \mintgbmu{e} \to \spd \oeel{e}
  \]
  is the v-sheaf on $\perf$ that assigns to each affinoid perfectoid space $S$ the set of
  isomorphism classes of tuples $(S^\sharp, \mathscr{P}, \phi_\mathscr{P},
  \iota_r)$, where $S^\sharp$ is an untilt of $S$ over $\oeel{e}$, where
  $(\mathscr{P}, \phi_\mathscr{P})$ is a $\mathcal{G}$-shtuka with one leg along
  $S^\sharp$ bounded by $\mu$, and where $\iota_r$ is an isomorphism
  \[
    \iota_r \colon G \vert_{\mathcal{Y}_{[r,\infty)}(S)} \xrightarrow{\simeq}
    \mathscr{P} \vert_{\mathcal{Y}_{[r,\infty)}(S)}
  \]
  for $r \gg 0$, which satisfies $\iota_r \circ \phi_\mathscr{P} = (b \times
  \mathrm{Frob}_S) \circ \iota_r$.
\end{Def}

Let $\mathcal{G}^\circ$ denote the parahoric group scheme corresponding to $\mathcal{G}$, i.e., $\mathcal{G}^\circ$ is the identity component of $\mathcal{G}$. By \cite[Theorem~4.4.1]{PappasRapoportRZSpaces}, the natural map
$\mintgcircbmu{e} \to \mintgbmu{e}$ is finite \'{e}tale. We define
\[
  \mintgbmuone{e} := \operatorname{im}(\mintgcircbmu{e} \to \mintgbmu{e})
  \subseteq \mintgbmu{e},
\]
which is an open and closed sub-v-sheaf.
}

\subsubsection{} The image of any $x$ in $\scrs_{K_p}\gx(l)$ under
$\pi_\mathrm{crys}$ defines a $\spd l$-point of $\shtgmuone$. Choose a
representative $b_x \in G(W(l)[p^{-1}])$ for the associated $G$-isocrystal, see
Section~\ref{Sec:Isocrystals}. Let $e:k_E \to l$ be the map corresponding to $x:
\spec l \to \scrs_{K_p}\gx \to \spec k_E$, so that attached to $x$ is a base
point
\[
  x_0 \colon \spd l \to \mintgbxmuone{e},
\]
given by the $\spd l$-point of $\shtgmuone$ corresponding to $x$, together with
the isomorphism between its associated $G$-isocrystal and $(G_{W(l)[p^{-1}]},
b_x)$.

\subsubsection{} Pappas--Rapoport constructed a uniformization map under a
certain condition U, see \cite[Section 4.10.2]{PappasRapoportShtukas}. This
condition U was later verified by Gleason--Lim--Xu \cite{GleasonLimXu} through
studying the connected components of affine Deligne--Lusztig varieties. It was
extended to stabilizer Bruhat--Tits groups in \cite{Companion}.

\begin{Thm}[{\cite[Theorem~4.4.1]{Companion}, \cite[Corollary~6.3]{GleasonLimXu}}]
There exists a uniformization map
\[
  \Theta_{\mathcal{G},x} \colon \mintgbxmuone{e} \to
  \scrs_{K_p}\gx_\oeel{e}^\diamond
\]
sending the base point $x_0$ to $x$, which moreover restricts to an isomorphism
\[
  \Theta_{\mathcal{G},x} \colon \widehat{\mintgbxmuone{e}}_{/x_0}
  \xrightarrow{\simeq} (\widehat{\scrs_{K_p}\gx_\oeel{e}}_{/x})^\lozenge.
\]
\end{Thm}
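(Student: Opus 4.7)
The plan is to first construct $\Theta_{\calgcirc, x}$ in the parahoric case by following Pappas--Rapoport's strategy contingent on their ``condition U'', verify condition U via the connected-component calculations of Gleason--Lim--Xu, and then descend from the parahoric case to the stabilizer Bruhat--Tits case via the $2$-Cartesian diagram \eqref{Eq:DiagramQuasiParahoricStabilizer}.

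For the parahoric construction, I would build $\Theta_{\calgcirc, x}$ in two layers. On formal neighborhoods of $x_0$ and $x$, the map is obtained from Grothendieck--Messing deformation theory, reinterpreted through the Scholze--Weinstein equivalence \cite[Theorem~17.5.2]{ScholzeWeinsteinBerkeley} between $p$-divisible groups over integral perfectoid rings and BKF-modules: given a $\calgcirc$-shtuka over $S$ bounded by $\mu$ together with a framing near infinity, the tensor conditions cutting out $\scrs_{K_p^\circ}\gx$ inside the Siegel integral model (via the chosen Hodge embedding) rigidify the lift of $x$ to $S$, and the universal property of canonical integral models produces the formal completion isomorphism. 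The global uniformization map on $\mintgcircbxmu{e}$ is then obtained by spreading the formal isomorphisms out in a Berkovich-style uniformization argument.

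The principal obstacle is that this spreading-out requires every connected component of $\mintgcircbxmu{e}$ to meet the formal neighborhood of some point arising from $\scrs_{K_p^\circ}\gx^\diamond$; this is precisely Pappas--Rapoport's ``condition U''. It translates into a statement about the set of connected components of affine Deligne--Lusztig varieties $X_{\calgcirc}(b_x, \mu^{-1})$ together with the action of $\tilde{G}_{b_x}(\qp)$ and the Hecke action of $G(\afp)$ on them. The requisite computation of these connected components is established in \cite[Corollary~6.3]{GleasonLimXu} using the theory of kimberlites and specialization maps, completing the parahoric version of the theorem.

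Finally, to pass from $\calgcirc$ to the stabilizer Bruhat--Tits group $\mathcal{G}$, I would use the $2$-Cartesian diagram \eqref{Eq:DiagramQuasiParahoricStabilizer} in conjunction with the finite \'etale surjection $\mintgcircbxmu{e} \to \mintgbxmuone{e}$ from \cite[Theorem~4.4.1]{PappasRapoportRZSpaces}. Both the source and target of $\Theta_{\calgcirc, x}$ carry compatible actions of the component group $\pi_0(\mathcal{G}_{\fp})$, intertwined by the forgetful maps to $\shtgmuone$, and descending $\Theta_{\calgcirc, x}$ along this pair of compatible finite \'etale covers produces the desired $\Theta_{\mathcal{G}, x}$. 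The formal completion isomorphism is preserved under this descent, since finite \'etale morphisms induce isomorphisms of complete local rings at compatible lifts of the base points.
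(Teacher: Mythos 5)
Your proposal is correct and follows essentially the same route the paper takes: this theorem is not proved in the present paper but cited from the companion paper and from Gleason--Lim--Xu, and the paper's own summary of the argument is exactly your chain (Pappas--Rapoport's construction via the Rapoport--Zink formal scheme conditional on condition U, verification of condition U through the connected components of affine Deligne--Lusztig varieties in \cite{GleasonLimXu}, and the extension from $\calgcirc$ to stabilizer Bruhat--Tits level via the finite \'etale cover $\mathcal{M}^{\mathrm{int}}_{\calgcirc,b_x,\mu}\to\mathcal{M}^{\mathrm{int}}_{\mathcal{G},b_x,\mu,\delta=1}$ in \cite{Companion}). The only presentational difference is that Pappas--Rapoport phrase the global map as $\Theta^{\mathrm{RZ},\lozenge}\circ c^{-1}$ for an explicit comparison isomorphism $c$ from the diamond of the RZ formal scheme, rather than as a spreading-out of formal-neighborhood isomorphisms, but the content is the same.
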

Here, by $\widehat{\scrs_{K_p}\gx_\oeel{e}}_{/x}$ we mean the formal completion
of a Shimura variety $\scrs_{K_p K^p}\gx_\oeel{e}$ of finite level at the image
of $x$, which does not depend on the level as the transition maps are \'{e}tale.
In the Siegel case, the uniformization map
\[
  \mathrm{RZ}_{\mathcal{G}_V,b_x, \mu_V}^\lozenge \simeq
  \mathcal{M}_{\mathcal{G}_V,b_x, \mu_V }^\mathrm{int}
  \xrightarrow{\Theta_{\mathcal{G}_V,x}} \scrsdvinf_{W(l)}.
\]
can be identified with the functor $(-)^\lozenge$ applied to Rapoport--Zink
uniformization in the sense of \cite{RapoportZink}, where $\mathrm{RZ}_{\mathcal{G}_V,b_x,\mu_V}$ is
the formal scheme over $\spf W(l)$ representing the moduli of formal
quasi-isogenies $\mathcal{A}_x[p^\infty] \dashrightarrow \mathbb{X}$ preserving
the polarization up to a scalar in $\mathbb{Q}_ p^\times$, see
\cite[Definition~3.21]{RapoportZink}, where $\mathbb{X}$ is the $p$-divisible group corresponding to $b_x$.

\begin{Lem}
\label{Lem:UnderlyingUniformization}
  The composition
  \[
    \mintgbxmuone{e} \xrightarrow{\Theta_{\mathcal{G},x}}
    \scrs_{K_p}\gx_\oeel{e}^\diamond \xrightarrow{\pi_\mathrm{crys}} \shtgmuone
  \]
  is given by the functor sending a tuple $(S^\sharp, \mathscr{P}, \phi_{\mathscr{P}}, \iota_r)$ to the $\mathcal{G}$-shtuka $(\mathscr{P},\phi_{\mathscr{P}})$ with one leg at $S^\sharp$.
\end{Lem}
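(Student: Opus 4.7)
The plan is to deduce the claimed compatibility directly from the construction of $\Theta_{\mathcal{G},x}$ in \cite[Theorem~4.4.1]{Companion} (extending \cite{PappasRapoportShtukas}): in both cases the uniformization map is built so that the shtuka attached via $\pi_\mathrm{crys}$ to the image of a point $y = (S^\sharp, \mathscr{P}, \phi_\mathscr{P}, \iota_r)$ is tautologically the shtuka $(\mathscr{P}, \phi_\mathscr{P})$ recorded by $y$.

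First I would reduce to the Siegel case using the Hodge embedding $\iota \colon \mathsf{G} \hookrightarrow \mathsf{G}_V$ and the induced inclusion $\mathcal{G} \hookrightarrow \mathcal{G}_V$ from Section~\ref{subsub:Zarhin}. This reduction relies on two compatibilities: the uniformization maps $\Theta_{\mathcal{G},x}$ and $\Theta_{\mathcal{G}_V,\iota(x)}$ are intertwined by the closed immersion $\scrs_{K_p}\gx^\diamond \hookrightarrow \scrsdvinf_{\mathcal{O}_E}$ (by construction of $\Theta_{\mathcal{G},x}$ in \cite{Companion}); and the crystalline period maps are intertwined by the morphism $\shtgmuone \to \shtgvmu$ induced from $\mathcal{G} \to \mathcal{G}_V$, which is a consequence of the Tannakian functoriality of shtukas.

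In the Siegel case, $\Theta_{\mathcal{G}_V, \iota(x)}$ agrees with the $(-)^\lozenge$ of the classical Rapoport--Zink uniformization, and an $(R,R^+)$-point of $\mintgvbxmu$ corresponds to a formal quasi-isogeny $f \colon \mathcal{A}_{\iota(x)}[p^\infty] \dashrightarrow \mathbb{X}$ (preserving the polarization up to a scalar). The image under $\Theta_{\mathcal{G}_V,\iota(x)}$ carries an abelian scheme whose $p$-divisible group is $\mathbb{X}$, and the crystalline period map $\pi_\mathrm{crys}$ then assigns to it the shtuka $\mathscr{V}(\mathbb{X})$ constructed in Section~\ref{Sec:Dieudonne} (cf.\ Lemma~\ref{Lem:Compatibility}). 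Unwinding the equivalence between the Rapoport--Zink datum $(\mathbb{X}, f)$ and the shtuka datum under the identification $\mathrm{RZ}_{\mathcal{G}_V, \mu_V, b_x}^\lozenge \simeq \mathcal{M}_{\mathcal{G}_V, b_x, \mu_V}^{\mathrm{int}}$, this shtuka is canonically $(\mathscr{P}, \phi_\mathscr{P})$.

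The main technical point to verify is that this compatibility descends cleanly from the parahoric $\mathcal{G}^\circ$-level (the original setting of \cite{PappasRapoportShtukas}) to the stabilizer quasi-parahoric $\mathcal{G}$-level together with the open and closed condition $\delta = 1$. This should follow from the construction of both $\mintgbxmuone{e}$ and $\shtgmuone$ as images of their $\mathcal{G}^\circ$-counterparts (see \cite[Section~3.3.8]{Companion}), so that the shtuka-forgetful map descends unambiguously from the parahoric case.
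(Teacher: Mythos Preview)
Your opening paragraph is exactly right and is in fact the entire argument: the compatibility is built into the construction of $\Theta_{\mathcal{G},x}$, and the paper simply unwinds this. Concretely, Pappas--Rapoport construct a formal scheme $\mathrm{RZ}_{\mathcal{G},\mu,x,e}$ (for $\mathcal{G}$ itself, not only for $\mathcal{G}_V$) together with a map $\Theta^{\mathrm{RZ}}_{\mathcal{G},x}\colon \mathrm{RZ}_{\mathcal{G},\mu,x,e}^\lozenge \to \scrs_{K_p}\gx^\diamond$, and the isomorphism $c\colon \mathrm{RZ}_{\mathcal{G},\mu,x,e}^\lozenge \to \mathcal{M}^{\mathrm{int}}_{\mathcal{G},b_x,\mu,\delta=1,e}$ is \emph{defined} by putting a framing on the $\mathcal{G}$-shtuka $\pi_\mathrm{crys}\circ\Theta^{\mathrm{RZ}}_{\mathcal{G},x}$. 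Since $\Theta_{\mathcal{G},x}=\Theta^{\mathrm{RZ}}_{\mathcal{G},x}\circ c^{-1}$, the lemma is tautological. No reduction to the Siegel case is needed.

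The reduction to $\mathcal{G}_V$ that you outline in the second and third paragraphs is not only unnecessary but has a genuine gap. You are comparing two morphisms $f_1,f_2\colon \mathcal{M}^{\mathrm{int}}_{\mathcal{G},b_x,\mu,\delta=1,e}\to \shtgmuone$ and proposing to check $f_1\simeq f_2$ after composing with the pushout $\shtgmuone\to\shtgvmu$. But that pushout functor, while faithful (because $\mathcal{G}\hookrightarrow\mathcal{G}_V$ is a monomorphism of group schemes), is not full: an isomorphism of $\mathcal{G}_V$-shtukas need not refine to one of $\mathcal{G}$-shtukas. So knowing $f_1\times^{\mathcal{G}}\mathcal{G}_V\simeq f_2\times^{\mathcal{G}}\mathcal{G}_V$ does not give $f_1\simeq f_2$ without further input (e.g., that the specific Siegel-level isomorphism preserves the crystalline tensors), and supplying that input amounts to unwinding the construction of $\Theta_{\mathcal{G},x}$ directly. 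The separate concern in your last paragraph about passing from $\mathcal{G}^\circ$ to $\mathcal{G}$ is also unnecessary: the result is part of the statement of \cite[Theorem~4.4.1]{Companion} at the quasi-parahoric level.
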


\begin{proof} This is part of the statement of \cite[Theorem 4.4.1]{Companion}; we include a proof for convenience: The lemma follows from the construction of the map $\Theta_{\mathcal{G},x}$. In
  \cite[Lemma~4.10.2.(a)]{PappasRapoportShtukas}, the map $c \colon
  \mathrm{RZ}_{\mathcal{G}, \mu ,x,e}^\lozenge \to \mintgbxmuone{e}$ is
  constructed by putting a framing on the $\mathcal{G}$-shtuka obtained via
  \[
    \mathrm{RZ}_{\mathcal{G}, \mu ,x,e}^\lozenge
    \xrightarrow{\Theta_{\mathcal{G},x}^\mathrm{RZ,\lozenge}}
    \scrs_{K_p}\gx_\oeel{e}^\diamond \to \shtgmuone.
  \]
  As $c$ is an isomorphism and $\Theta_{\mathcal{G},x} =
  \Theta_{\mathcal{G},x}^{\mathrm{RZ},\lozenge} \circ c^{-1}$, the claim
  follows.
\end{proof}

By \cite[Equation~(4.10.4)]{PappasRapoportShtukas}, the above map sits in a
commutative diagram of v-sheaves
\begin{equation} \label{Eq:RZCompatible}
    \begin{tikzcd}
     \mintgbxmuone{e} \arrow{r}{\Theta_{\mathcal{G},x}} \arrow{d} &
      \scrsdinf_\oeel{e} \arrow{d}{i} \\ \mintgvbxmu \times_{\spd \zp} \spd
      \mathcal{O}_E \arrow{r}{\Theta_{\mathcal{G}_V, i(x)}} & \scrsdvinf_{W(l)
      \otimes_{\zp} \mathcal{O}_E}
    \end{tikzcd}
\end{equation}
over $\spd W(l)$. Here, for the Siegel local Shimura variety $\mintgvbxmu$, we
suppress the embedding $\mathbb{F}_p \hookrightarrow l$ since there is a unique
such one. 

\subsection{The case of a geometric point of rank one}

We first check that diagram \eqref{Eq:TheDiagram} is Cartesian when tested against rank $1$
geometric points. To make use of the Pappas--Rapoport uniformization map, we
need the following lemma.

\begin{Lem} \label{Lem:ResidueFieldSection}
  Let $V$ be an absolutely integrally closed valuation ring of rank $1$ in characteristic $p$ whose fraction
  field is algebraically closed. Then the ring homomorphism $V \to
  V/\mathfrak{m}_V$ has a section.
\end{Lem}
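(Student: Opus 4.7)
The plan is to construct the section by Zorn's lemma applied to the poset $\mathcal{S}$ of pairs $(F, s)$ consisting of a subfield $F \subseteq k := V/\mathfrak{m}_V$ together with a ring homomorphism $s \colon F \to V$ such that the composition $F \xrightarrow{s} V \to k$ is the inclusion, ordered by extension. This poset contains the pair $(\mathbb{F}_p, \iota)$ and every chain has an upper bound by taking the union, so a maximal element $(F, s)$ exists, and the goal is to show $F = k$.

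Before the case analysis I would verify three structural facts about $V$. First, $k$ is algebraically closed: any monic polynomial in $k[x]$ lifts to $V[x]$ and has a root in $V$ by absolute integral closure, whose image in $k$ is the sought root. Second, the Frobenius endomorphism of $V$ is an automorphism (so $V$, and hence $k$, is perfect): injectivity follows from $V$ being a domain of characteristic $p$, and surjectivity from the fact that $x^p - a$ has a root in $V$ for any $a \in V$. Third, $V$ is Henselian, as a valuation ring with separably closed fraction field has all polynomials' roots in $V$ by integral closedness.

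Now assume $F \subsetneq k$ and pick $\bar{\alpha} \in k \setminus F$; since $0 \in F$, necessarily $\bar{\alpha} \neq 0$. If $\bar{\alpha}$ is transcendental over $F$, I would lift it to any $\alpha \in V^{\times}$; for any nonzero $p(x) \in s(F)[x]$, the reduction $\bar{p}(\bar{\alpha})$ is nonzero in $k$, so $p(\alpha) \in V^{\times}$, and therefore $s$ extends to an embedding $F(\bar{\alpha}) \hookrightarrow V$. If $\bar{\alpha}$ is separable algebraic over $F$ with lifted minimal polynomial $m(x) \in s(F)[x]$, Henselianness produces a lift $\alpha \in V$ with $m(\alpha) = 0$ and $\alpha \bmod \mathfrak{m}_V = \bar{\alpha}$. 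In the remaining inseparable case, I would write the minimal polynomial as $\bar{m}_{\mathrm{sep}}(x^{p^e})$ with $e \geq 1$; then $\bar{\alpha}^{p^{e-1}} \in k \setminus F$ satisfies $(\bar{\alpha}^{p^{e-1}})^p \in F$, and the unique $p$-th root in $V$ of $s((\bar{\alpha}^{p^{e-1}})^p)$ reduces to $\bar{\alpha}^{p^{e-1}}$ because $k$ is perfect, giving a one-step extension of $s$. Each case contradicts maximality, so $F = k$ and $s$ is the desired section.

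The main obstacle is mostly bookkeeping: one must ensure that the purely inseparable case is handled by a single $p$-th root extraction per Zorn step rather than an infinite tower, and verify cleanly that absolute integral closure and an algebraically closed fraction field give perfectness of $V$ and Henselianness. The rank-one hypothesis does not play an essential role in this argument.
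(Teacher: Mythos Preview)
Your overall Zorn's lemma strategy matches the paper's exactly, and the transcendental case is handled correctly. However, there is a gap in your inseparable case: when you write the minimal polynomial as $\bar{m}_{\mathrm{sep}}(x^{p^e})$, the element $(\bar{\alpha}^{p^{e-1}})^p = \bar{\alpha}^{p^e}$ is a root of the separable polynomial $\bar{m}_{\mathrm{sep}}$, but it need not lie in $F$ unless $\deg \bar{m}_{\mathrm{sep}} = 1$, i.e., unless $\bar{\alpha}$ is \emph{purely} inseparable over $F$. The easy fix is to first replace $\bar{\alpha}$ by $\bar{\alpha}^{p^e}$ when the latter is not in $F$ and fall back to the separable argument, or equivalently to note that for the Zorn step one may always choose some $\bar{\beta} \in k \setminus F$ that is transcendental, separable, or satisfies $\bar{\beta}^p \in F$.

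More to the point, the separable/inseparable dichotomy is an unnecessary detour. You yourself establish that every monic polynomial over $V$ splits completely in $V$ (this is precisely what absolute integral closure plus an algebraically closed fraction field gives, and it is what you use to justify Henselianness). The paper exploits this directly and uniformly: lift the minimal polynomial $\bar{m}$ of $\bar{\alpha}$ to $m \in s(F)[x] \subset V[x]$, factor $m(x) = \prod_i (x - \alpha_i)$ in $V$, and observe that reducing gives $\bar{m}(x) = \prod_i (x - \bar{\alpha}_i)$ in $k[x]$, so some $\alpha_i$ reduces to $\bar{\alpha}$. This handles the entire algebraic case in one line, with no separability hypothesis and no appeal to perfectness. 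Your closing remark that the rank-one hypothesis plays no role is correct.
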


\begin{proof}
  Denote the residue field by $l = V/\mathfrak{m}_V$. By Zorn's lemma, it
  suffices to show that if there exists a subfield $l_0 \subsetneq l$, a section
  $s_0 \colon l_0 \to V$, and an element $x \in l \setminus l_0$, then the
  section $s_0$ can be extended to $s \colon l_0(x) \to V$.

  When $x$ is transcendental over $l_0$, we may pick an arbitrary lift
  $\tilde{x} \in V$ of $x \in l$ and extend $s_0$ to $s$ by setting $s(x) =
  \tilde{x}$. Therefore we assume that $x$ is algebraic over $l_0$. Let $f_0(t)
  \in l_0[t]$ be the minimal polynomial of $x \in l$. Consider the polynomial
  $f(t) \in V[t]$ obtained by applying $s_0$ to the coefficients of $f_0$. As
  $V$ is absolutely integrally closed, it factors as $f(t) = \prod_i (t -
  \alpha_i)$, where the reduction of the multiset $\lbrace \alpha_i \rbrace$
  under $V \to V/\mathfrak{m}_V$ recovers the roots of $f_0$ inside $l$. This
  implies that there is an $\alpha_i \in V$ reducing to $x$, and hence we may
  extend $s_0$ to $s \colon l_0(x) \to V$ by setting $s(x) = \alpha_i$.
\end{proof}

\begin{Prop} \label{Prop:RankOneGeometricSurjective}
  For $C$ an algebraically closed perfectoid field in characteristic $p$, the
  map
  \[
    \scrsdpreinf(C, \mathcal{O}_C) \to F^\mathrm{pre}(C, \mathcal{O}_C)
  \]
  is essentially surjective.
\end{Prop}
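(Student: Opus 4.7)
The plan is to apply the Pappas--Rapoport uniformization at the special-fiber point obtained from the Igusa-stack component of the input datum. A point of $F^{\mathrm{pre}}(C,\mathcal{O}_C)$ amounts to the following data: a morphism $x \colon \spf \mathcal{O}_{C^\sharp} \to \hatscrsginf$ over $\mathcal{O}_E$; a $\mathcal{G}$-shtuka $(\mathscr{P},\phi_{\mathscr{P}})$ over $(C,\mathcal{O}_C)$ with leg at a possibly different untilt $C^{\sharp\prime}$ whose isomorphism class in $\shtgmu$ lies in $\shtgmuone$; and an isomorphism $\beta \colon \mathbb{L}_{\mathrm{crys},x} \xrightarrow{\sim} \mathcal{E}(\mathscr{P},\phi_{\mathscr{P}})$ of $G$-bundles on the Fargues--Fontaine curve $X_{(C,\mathcal{O}_C)}$. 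Let $l$ denote the common residue field of $\mathcal{O}_C$, $\mathcal{O}_{C^\sharp}$, and $\mathcal{O}_{C^{\sharp\prime}}$, write $\bar{x} \colon \spec l \to \scrs_{K_p}\gx$ for the reduction of $x$, and choose a representative $b \in G(W(l)[1/p])$ of the associated $G$-isocrystal together with the induced embedding $e \colon k_E \hookrightarrow l$. Attached to $\bar{x}$ and $b$ is then a base point $x_0 \colon \spd l \to \mathcal{M}^{\mathrm{int}}_{\mathcal{G},b,\mu,\delta=1,e}$ of the uniformization map $\Theta_{\mathcal{G},\bar{x}}$.

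The next step is to build a $\spd(C,\mathcal{O}_C)$-point $\tilde{y}$ of $\mathcal{M}^{\mathrm{int}}_{\mathcal{G},b,\mu,\delta=1,e}$ extending $x_0$. Lemma~\ref{Lem:ResidueFieldSection} applied to $\mathcal{O}_C$ provides a section $l \to \mathcal{O}_C$ of the residue-field map; composing with the reduction modulo a pseudouniformizer dividing $p$ and then lifting through Witt vectors endows $\mathcal{O}_{C^{\sharp\prime}}$ with the structure of an $\mathcal{O}_E \otimes_{W(k_E),e} W(l)$-algebra, so that $C^{\sharp\prime}$ becomes an untilt over the base of the integral local Shimura variety. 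The shtuka part of $\tilde{y}$ is the given $(\mathscr{P},\phi_{\mathscr{P}})$; the framing $\iota_r \colon G\vert_{\mathcal{Y}_{[r,\infty)}} \xrightarrow{\sim} \mathscr{P}\vert_{\mathcal{Y}_{[r,\infty)}}$ intertwining $\phi_{\mathscr{P}}$ with $b \times \mathrm{Frob}$ is constructed as follows: spreading out the chosen identification of the $G$-isocrystal of $\bar{x}$ with $(G_{W(l)[1/p]},b)$ via Frobenius equips $\mathbb{L}_{\mathrm{crys},x}\vert_{\mathcal{Y}_{[r,\infty)}}$ with a canonical framing of the required shape, and one then composes with $\beta$, lifted to a $\varphi$-equivariant isomorphism on $\mathcal{Y}_{[r,\infty)}$ through the equivalence between $G$-bundles on the Fargues--Fontaine curve and $\varphi$-equivariant $G$-bundles on $\mathcal{Y}_{[r,\infty)}$. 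The hypothesis that the shtuka lies in $\shtgmuone$ ensures that $\tilde{y}$ lands in $\mathcal{M}^{\mathrm{int}}_{\mathcal{G},b,\mu,\delta=1,e}$ rather than only in $\mathcal{M}^{\mathrm{int}}_{\mathcal{G},b,\mu,e}$.

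Applying $\Theta_{\mathcal{G},\bar{x}}$ to $\tilde{y}$ produces the candidate lift $y \in \scrsdinf(C,\mathcal{O}_C)$; by Lemma~\ref{Lem:UnderlyingUniformization}, the image of $y$ under $\pi_{\mathrm{crys}}$ is exactly $(\mathscr{P},\phi_{\mathscr{P}})$, matching the $\shtgmuone$-component of our point of $F^{\mathrm{pre}}$ on the nose. To complete the argument, one must exhibit a formal quasi-isogeny $(A_y,\lambda_y) \dashrightarrow (A_x,\lambda_x)$ inducing $\beta$ on the Fargues--Fontaine curve and preserving the \'{e}tale trivializations, as required by Definition~\ref{Def:IgusaStack}. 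Here I would reduce to the Siegel case, already handled in \cite[Theorem~1.3]{ZhangThesis}: via the commutative diagram~\eqref{Eq:RZCompatible}, the image of $y$ in $\scrsdvinf$ equals Siegel uniformization applied to $\tilde{y}$, and the standard moduli description of the Siegel Rapoport--Zink space delivers the required quasi-isogeny on the level of polarized abelian schemes up to prime-to-$p$ isogeny. The main obstacle that I anticipate is then verifying the Hodge-type compatibilities, namely that this Siegel quasi-isogeny induces precisely $\beta$ (rather than merely some $\mathrm{GL}(V)$-equivariant isomorphism) on the Fargues--Fontaine curve and carries the \'{e}tale tensors $t_{\alpha,\mathbb{A}_f^p,y}$ to $t_{\alpha,\mathbb{A}_f^p,x}$. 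This should follow from a Tannakian argument using that $\beta$ was chosen $G$-equivariantly and that the framing $\iota_r$ of step two was built out of $\beta$, but requires careful tracking of the interplay between the \'{e}tale and crystalline data through the uniformization map.
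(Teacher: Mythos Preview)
Your overall strategy---choose a section $l \to \mathcal{O}_C$, frame the given shtuka via the $G$-isocrystal at $\bar{x}$, apply Pappas--Rapoport uniformization, and verify via the Siegel picture---matches the paper's. But there is a genuine gap in your construction of $\tilde{y}$.

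The integral local Shimura variety $\mathcal{M}^{\mathrm{int}}_{\mathcal{G},b,\mu,\delta=1,e}$ lives over $\spd(\mathcal{O}_E \otimes_{W(k_E),e} W(l))$, so a point of it requires that the untilt carrying the leg be an untilt \emph{over this base}. Your $e$ is the embedding $k_E \hookrightarrow l$ induced by $\bar{x}$, hence by the first untilt $C^\sharp$. But the leg of $(\mathscr{P},\phi_{\mathscr{P}})$ sits at the second untilt $C^{\sharp\prime}$, whose $\mathcal{O}_E$-structure induces a possibly \emph{different} embedding $e' \colon k_E \hookrightarrow l$; since $k_E$ is finite, the two differ by some power $\phi_l^m$ of absolute Frobenius. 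Thus $C^{\sharp\prime}$ is not in general an untilt over $\mathcal{O}_E \otimes_{W(k_E),e} W(l)$, and you cannot feed $\tilde{y}$ into $\Theta_{\mathcal{G},\bar{x}}$. The paper repairs this by replacing $\bar{x}$ with its Frobenius twist $\phi^m(\bar{x})$, which now lies over $e'$, and identifying $b_{\phi^m(\bar{x})}$ with $b_{\bar{x}}$ via an explicit iterated $\sigma$-conjugation; one then uniformizes through $\Theta_{\mathcal{G},\phi^m(\bar{x})}$ instead. This twist is not cosmetic: it propagates into the verification, where on the Siegel side the resulting quasi-isogeny acquires an extra factor of the iterated relative Frobenius $F_l^m$ on $\mathcal{A}_{\bar{x}}$ (computed in Lemma~\ref{Lem:RelativeFrobeniusIsocrystal}), and one must check by hand that this factor---being an honest isogeny of abelian schemes---can be absorbed so that the net formal quasi-isogeny between $\mathcal{A}_x$ and $\mathcal{A}_y$ is precisely the one dictated by $\beta$. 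Your reduction to \eqref{Eq:RZCompatible} is the right framework for that last step, but the Frobenius bookkeeping has to be carried out first.
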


\begin{proof}
  Denote the residue field by $l = \mathcal{O}_C / \mathfrak{m}_C$. Using
  Lemma~\ref{Lem:ResidueFieldSection}, we fix a section $l \hookrightarrow
  \mathcal{O}_C$. Then for any untilt $C^\sharp$ of $C$, the map
  $W(\mathcal{O}_C) \twoheadrightarrow \mathcal{O}_{C^\sharp}$ induces a
  $W(l)$-algebra structure on $\mathcal{O}_{C^\sharp}$.

  Fix an element of $F^\mathrm{pre}(C, \mathcal{O}_C)$, which corresponds to a
  tuple $T=(C^{\sharp_1}, y, C^{\sharp_2}, \mathscr{Q}, \phi_\mathscr{Q}, \alpha)$
  where $C^{\sharp_1}, C^{\sharp_2}$ are untilts of $C$ over $\mathcal{O}_E$, where $y
  \colon \spf \mathcal{O}_{C^{\sharp_1}} \to \hatscrsginf$ is a map over
  $\mathcal{O}_E$, where $(\mathscr{Q}, \phi_\mathscr{Q})$ is a $\mathcal{G}$-shtuka
  with a leg at $\spa(C^{\sharp_2}, \mathcal{O}_{C^{\sharp_2}})$ bounded by
  $\mu$, and where $\alpha \colon \mathscr{P}_y \vert_{\mathcal{Y}_{[r, \infty)}(C,
  \mathcal{O}_C)} \simeq \mathscr{Q} \vert_{\mathcal{Y}_{[r, \infty)}(C,
  \mathcal{O}_C)}$ is a $\phi$-equivariant isomorphism of $G$-torsors ($r$ large enough). Our goal
  is to construct a new $\mathcal{O}_E$-morphism $z \colon \spf
  \mathcal{O}_{C^{\sharp_2}} \to \hatscrsginf$ such that the induced $(C,\mathcal{O}_C)$-point of $F^\mathrm{pre}$ is isomorphic to $T$. \smallskip

  We note that the two maps
  \[
    e_1 \colon k_E \hookrightarrow
    \mathcal{O}_{C^{\sharp_1}}/\mathfrak{m}_{C^{\sharp_1}} \simeq l, \quad e_2
    \colon k_E \hookrightarrow
    \mathcal{O}_{C^{\sharp_2}}/\mathfrak{m}_{C^{\sharp_2}} \simeq l
  \]
  may be different from each other. As $k_E$ is finite, the two embeddings are
  related by a finite power of the absolute Frobenius. Write $\phi_l^m \circ e_1
  = e_2$ for a nonnegative integer $m$, where $\phi_l$ is the absolute Frobenius
  on $l$.

  Let $x \in \scrs_{K_p}\gx(l)$ be the image of the reduced point under $y$.
  Using the $W(l)$-algebra structure on $\mathcal{O}_{C^{\sharp_1}}$, we may
  regard $\mathcal{O}_{C^{\sharp_1}}$ as an $\oeel{e_1}$-algebra to promote $y$
  to a map
  \[
    y_l \colon \spf \mathcal{O}_{C^{\sharp_1}} \to \hatscrsginf_{\oeel{e_1}}
  \]
  over $\spf \oeel{e_1}$. Note that the
  map $y_l$ factors through the formal completion at $x$. Identifying its associated v-sheaf
  with the formal neighborhood of the integral local Shimura variety,
  \begin{align*}
    \spd (C^{\sharp_1}, \mathcal{O}_{C^{\sharp_1}}) &\hookrightarrow \spd
    (\mathcal{O}_{C^{\sharp_1}}, \mathcal{O}_{C^{\sharp_1}}) \\ &\xrightarrow{y_l^\lozenge}
    (\widehat{\scrs_{K_p}\gx_{\oeel{e_1}}}_{/x})^\lozenge
    \xrightarrow{\Theta_{\mathcal{G},x}^{-1}} \widehat{ \mintgbxmuone{e_1}}_{/x_0},
  \end{align*}
  we obtain a framing of the $G$-shtuka induced by $y$
  \[
    \iota \colon (\mathscr{P}_y \vert_{\mathcal{Y}_{[r,\infty)}(C,
    \mathcal{O}_C)}, \phi_{\mathscr{P}_y}) \xrightarrow{\sim} (G \times
    \mathcal{Y}_{[r,\infty)}(C, \mathcal{O}_C), b_x).
  \]
  Composing with $\alpha^{-1}$ gives a framing
  \[
    (\mathscr{Q} \vert_{\mathcal{Y}_{[r,\infty)}(C,
    \mathcal{O}_C)}, \phi_\mathscr{Q}) \xrightarrow{\iota \circ \alpha^{-1}} (G
    \times \mathcal{Y}_{[r,\infty)}(C, \mathcal{O}_C), b_x),
  \]
  and therefore defines a map
  \[
    z_0 \colon \spd (C^{\sharp_2}, \mathcal{O}_{C^{\sharp_2}}) \to \mintgbxmuone{e_2}
  \]
  over $\spd \oeel{e_2}$.

  At this point, we wish to apply $\Theta_{\mathcal{G},x}$ to get a point of the
  Shimura variety, but the problem is that $x$ lies over $e_1$ and not $e_2$.
  Compose $x \colon \spec l \to \scrs_{K_p}\gx$ with $\phi_l^m$ to obtain a map
  $\phi^m(x) \colon \spec l \to \scrs_{K_p}\gx$ that now lies over
  $e_2$. Using the isomorphism of left $G$-torsors
  \[
    (\phi^m)^\ast (G_{W(l)[p^{-1}]}, b_x) = (G_{W(l)[p^{-1}]}, \phi^m(b_x))
    \simeq (G_{W(l)[p^{-1}]}, b_x)
  \]
  given by
  \[ \begin{tikzcd}[column sep=8em]
    \phi^\ast G_{W(l)[p^{-1}]} = G_{W(l)[p^{-1}]} \arrow{d}{\phi^m(b_x)}
    \arrow{r}{\phi^m(b_x) \dotsm \phi^2(b_x) \phi(b_x)} & \phi^\ast
    G_{W(l)[p^{-1}]} = G_{W(l)[p^{-1}]} \arrow{d}{b_x} \\ G_{W(l)[p^{-1}]}
    \arrow{r}{\phi^{m-1}(b_x) \dotsm \phi(b_x) b_x} & G_{W(l)[p^{-1}]},
  \end{tikzcd} \]
  we may identify $b_{\phi^m(x)} = b_x$. We can now compose
  \[
    z \colon \spd (C^{\sharp_2}, \mathcal{O}_{C^{\sharp_2}}) \xrightarrow{z_0}
    \mintgbxmuone{e_2} \simeq
    \mathcal{M}^\mathrm{int}_{\mathcal{G},b_{\phi^m(x)},\mu,\delta=1,e_2}
    \xrightarrow{\Theta_{\mathcal{G},\phi^m(x)}}
    \scrs_{K_p}\gx_{\oeel{e_2}}^\diamond
  \]
  to get a map defined over $\spd \oeel{e_2}$. Forgetting about the
  $l$-structure finally yields a map 
  \[
    z \colon \spd (C^{\sharp_2}, \mathcal{O}_{C^{\sharp_2}}) \to \scrsdinf
  \]
  lying over $\spd \mathcal{O}_E$. This corresponds to a map $\spa
  (C^{\sharp_2}, \mathcal{O}_{C^{\sharp_2}}) \to
  \hatscrsginf^\mathrm{ad}$, which uniquely extends to a morphism $z
  \colon \spf \mathcal{O}_{C^{\sharp_2}} \to \hatscrsginf$.

  We now verify that the image of $z$ under $\scrsdpreinf \to F^\mathrm{pre}$
  is isomorphic to $T$. Lemma~\ref{Lem:UnderlyingUniformization} implies that applying
  $\pi_\mathrm{crys}$ to $z$ returns back the underlying $\mathcal{G}$-shtuka,
  which is $(\mathscr{Q}, \phi_\mathscr{Q})$ by construction. Next, to check
  that $z$ induces the $(C, \mathcal{O}_ C)$-point of the Igusa stack
  corresponding to $T$, we need to construct a
  formal quasi-isogeny between the formal abelian schemes $\mathcal{A}_y$ and
  $\mathcal{A}_z$ preserving the $G$-structure on the associated vector bundles
  on $X_{\spa(C, \mathcal{O}_C)}$, as well as the away-from-$p$ level structure.

  Choose $\varpi$ large enough so that there are surjective ring homomorphisms
  $\mathcal{O}_{C^{\sharp_1}} \twoheadrightarrow \mathcal{O}_C/\varpi$ and
  $\mathcal{O}_{C^{\sharp_2}} \twoheadrightarrow \mathcal{O}_C/\varpi$ induced
  from the isomorphisms $(C^{\sharp_1})^\flat \simeq C$ and $(C^{\sharp_2})^\flat
  \simeq C$ as in \eqref{Eq:NaturalMapPseudoUniformizer}. Since $\scrs_{K_p}\gx$
  is a limit of Noetherian schemes along \'{e}tale transition maps, we may
  further increase $\varpi$ so that $y \vert_{\spec \mathcal{O}_C/\varpi}$
  factors through $x \in \scrs_{K_p}\gx(l)$. Then compatibility with the Siegel
  case implies that the framing on the $\mathcal{G}_V$-shtuka $\mathscr{P}_y
  \times^{\mathcal{G}} \mathcal{G}_V$ is obtained from taking the $p$-divisible
  group $\tilde{\iota}[p^\infty]$ of the canonical isomorphism
  \[
    \tilde{\iota} \colon \mathcal{A}_y \times_{\spf \mathcal{O}_{C^{\sharp_1}}}
    \spec \mathcal{O}_C/\varpi \simeq \mathcal{A}_x \times_{\spec l} \spec
    \mathcal{O}_C/\varpi.
  \]

  On the other hand, by \cite[Proposition~2.2.7]{PappasRapoportShtukas}, the
  $G_V$-shtuka $\mathscr{Q} \times^\mathcal{G} \mathcal{G}_V$ corresponds to a
  principally polarized $p$-divisible group $\mathbb{X}_\mathscr{Q}$ over $\spf
  \mathcal{O}_{C^{\sharp_2}}$, and by
  \cite[Proposition~2.1.4]{PappasRapoportShtukas} and
  \cite[Theorem~A]{ScholzeWeinsteinModuli}, the isomorphism $\alpha$ corresponds
  to a formal quasi-isogeny
  \[
    \tilde{\alpha} \colon \mathcal{A}_y[p^\infty] \dashrightarrow
    \mathbb{X}_\mathscr{Q}
  \]
  preserving the polarization up to a scalar. It follows that the composition
  $\spa C^{\sharp_2} \xrightarrow{z_0} \mintgbxmuone{e_2} \to \mintgvbxmu$ is induced
  by a map
  \[
    \tilde{z}_0 \colon \spf \mathcal{O}_{C^{\sharp_2}} \to
    \mathrm{RZ}_{\mathcal{G}_V,b_x,\mu_V}
  \]
  corresponding to the quasi-isogeny
  \begin{align}
    \mathbb{X}_\mathscr{Q} \times_{\spf \mathcal{O}_{C^{\sharp_2}}} \spec
    \mathcal{O}_C/\varpi
    &\xdashrightarrow{\tilde{\alpha}^{-1}} \mathcal{A}_y[p^\infty] \times_{\spf
    \mathcal{O}_{C^{\sharp_1}}} \spec
    \mathcal{O}_C/\varpi \\
    &\xdashrightarrow{\tilde{\iota}[p^\infty]} \mathcal{A}_x[p^\infty] \times_{\spec
    l} \spec \mathcal{O}_C/\varpi.
  \end{align}

  To access $z$, we first need to understand what the isomorphism
  $(\phi_l^m)^\ast (G_l, b_x) \simeq (G_l, b_x)$ is doing. If we push out to an
  isomorphism of $\mathcal{G}_V$-torsors, Lemma~\ref{Lem:RelativeFrobeniusIsocrystal} below
  shows that it is the inverse of the iterated relative Frobenius
  \[
    F_l^m[p^\infty] \colon \mathcal{A}_x[p^\infty] \dashrightarrow
    \mathcal{A}_x[p^\infty] \times_{\spec l, \phi_l^m} \spec l.
  \]
  We can finally describe the image of $z$ under $\scrs_{K_p}\gx \to
  \scrs_{M_p}\gvx$ as the modification of the abelian scheme
  \[
    \mathcal{A}_x \times_{\spec l, \phi_l^m} \spec \mathcal{O}_C/\varpi
  \]
  along the quasi-isogeny 
  \begin{align}
    \mathbb{X}_\mathscr{Q} \times_{\spf \mathcal{O}_{C^{\sharp_2}}} \spec
    \mathcal{O}_C/\varpi
    &\xdashrightarrow{\tilde{\alpha}^{-1}} \mathcal{A}_y[p^\infty] \times_{\spf
    \mathcal{O}_{C^{\sharp_1}}} \spec
    \mathcal{O}_C/\varpi \\
    &\xdashrightarrow{\tilde{\iota}[p^\infty]} \mathcal{A}_x[p^\infty] \times_{\spec
    l} \spec \mathcal{O}_C/\varpi \\
    &\xdashrightarrow{F_l^m[p^\infty]} \mathcal{A}_x[p^\infty] \times_{\spec l,
    \phi_l^m} \spec \mathcal{O}_C/\varpi,
  \end{align}
  lifted according to Serre--Tate theory so that its $p$-divisible group is
  $\mathbb{X}_\mathscr{Q}$. Because both quasi-isogenies
  $\tilde{\iota}[p^\infty]$ and $F_l^m[p^\infty]$ actually come from
  quasi-isogenies $\tilde{\iota}$ and $F_l^m$ of abelian schemes, we may also
  describe it as the modification of the abelian scheme
  \[
    \mathcal{A}_y \times_{\spf \mathcal{O}_{C^{\sharp_1}}} \spec
    \mathcal{O}_C/\varpi
  \]
  along the quasi-isogeny 
  \[
    \mathbb{X}_\mathscr{Q} \times_{\spf \mathcal{O}_{C^{\sharp_2}}} \spec
    \mathcal{O}_C/\varpi
    \xdashrightarrow{\tilde{\alpha}^{-1}} \mathcal{A}_y[p^\infty] \times_{\spf
    \mathcal{O}_{C^{\sharp_1}}} \spec
    \mathcal{O}_C/\varpi
  \]
  lifted according to Serre--Tate theory so that its $p$-divisible group is
  $\mathbb{X}_\mathscr{Q}$. That is, we have 
  \[
    \mathcal{A}_z \times_{\spf \mathcal{O}_{C^{\sharp_2}}} \spec
    \mathcal{O}_C/\varpi \simeq (\mathcal{A}_y
    \times_{\spf \mathcal{O}_{C^{\sharp_1}}} \spec
    \mathcal{O}_C/\varpi) / \ker \tilde{\alpha}.
  \]    
  Note that if $\tilde{\alpha}$ is not an honest isogeny then we cannot literally quotient out by $\ker \tilde{\alpha}$. To remedy this, we argue as in \cite[Remark~8.12]{ZhangThesis}.
  This gives the desired formal quasi-isogeny between $\mathcal{A}_y$ and
  $\mathcal{A}_z$.

  From the construction, it is clear that the induced isomorphism
  $\mathcal{E}(\mathcal{A}_y) \simeq \mathcal{E}(\mathcal{A}_z)$ of vector
  bundles on $X_{(C, \mathcal{O}_C)}$ is the one associated to $\alpha$. Hence the
  $G$-structures on $\mathbb{L}_\mathrm{crys}$ are preserved under the formal
  quasi-isogeny. Moreover, as we are only modifying the abelian variety along a
  quasi-isogeny of the associated $p$-divisible group, the prime-to-$p$
  level structure is preserved. Therefore
  $\alpha$ defines a morphism between the two objects $y, z$ of the groupoid
  $\igspreinf(C, \mathcal{O}_C)$. This finishes the proof of surjectivity.
\end{proof}

\begin{Rem}
  Even though the construction of the inverse $F^\mathrm{pre}(C, \mathcal{O}_C)
  \to \scrsdinf(C, \mathcal{O}_C)$ involves choosing a section $l \to
  \mathcal{O}_C$, we see a posteriori from
  Proposition~\ref{Prop:RankOneGeometricInjective} that the result is independent of the choice.
\end{Rem}

\begin{Lem} \label{Lem:RelativeFrobeniusIsocrystal}
  Let $\mathbb{X}$ be a $p$-divisible group over a perfect ring $R$ in
  characteristic $p$. Consider $(\mathbb{D}(\mathbb{X})^{\natural}, \phi_{\mathbb{X}}^\natural)$ as in Section \ref{Sec:Dieudonne} and write $(\mathbb{D}(\mathbb{X})^{\natural}, \phi_{\mathbb{X}}^\natural)=(M, \beta)$. The map of
  Dieudonn\'{e} modules $(M, \beta) \to ((\phi^n)^\ast M, (\phi^n)^\ast \beta)$
  associated to the iterated relative Frobenius
  \[
    \mathbb{X} \to \phi^\ast \mathbb{X} \to \dotsb \to (\phi^n)^\ast \mathbb{X}
  \]
  is given by $(\phi^{n-1})^\ast \beta^{-1} \circ \dotsb \circ \phi^\ast
  \beta^{-1} \circ \beta^{-1} \colon M \to (\phi^n)^\ast M$.
\end{Lem}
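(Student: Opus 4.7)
The plan is to argue by induction on $n$, where the induction step is an elementary functoriality computation and the base case $n = 1$ is an unravelling of the conventions of Section~\ref{Sec:Dieudonne}. Using functoriality of the covariant Dieudonné functor $\mathbb{D}^\natural$, the key observations are: (i) the iterated relative Frobenius decomposes as $F_{(\phi^{n-1})^\ast \mathbb{X}/R} \circ \cdots \circ F_{\phi^\ast \mathbb{X}/R} \circ F_{\mathbb{X}/R}$; (ii) relative Frobenius is compatible with base change, so $F_{(\phi^k)^\ast \mathbb{X}/R} = (\phi^k)^\ast F_{\mathbb{X}/R}$; and (iii) $\mathbb{D}^\natural$ commutes with base change, so that $\mathbb{D}^\natural((\phi^k)^\ast \mathbb{X}) = (\phi^k)^\ast M$ with Frobenius $(\phi^k)^\ast \beta$. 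Together these facts reduce the lemma to the $n = 1$ assertion $\mathbb{D}^\natural(F_{\mathbb{X}/R}) = \beta^{-1}$.

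For the base case, the defining relation recalled in Section~\ref{Sec:Dieudonne} is that the contravariant Frobenius $\phi_\mathbb{X} \colon \phi^\ast \mathbb{D}(\mathbb{X}) \to \mathbb{D}(\mathbb{X})$ is precisely the image of $F_{\mathbb{X}/R}$ under the contravariant functor $\mathbb{D}$. Taking $W(R)$-linear duals therefore yields
\[
\mathbb{D}^\ast(F_{\mathbb{X}/R}) = \phi_\mathbb{X}^\vee \colon \mathbb{D}^\ast(\mathbb{X}) \to \phi^\ast \mathbb{D}^\ast(\mathbb{X}).
\]
The paper's Frobenius $\phi_\mathbb{X}$ on $\mathbb{D}^\ast(\mathbb{X})[1/p]$ is, by definition, the inverse of $\phi_\mathbb{X}^\vee$. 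Pulling back by $(\phi^{-1})^\ast$ and using $M = (\phi^{-1})^\ast \mathbb{D}^\ast(\mathbb{X})$ together with $\beta = (\phi^{-1})^\ast \phi_\mathbb{X}$, we find
\[
\mathbb{D}^\natural(F_{\mathbb{X}/R}) = (\phi^{-1})^\ast \mathbb{D}^\ast(F_{\mathbb{X}/R}) = \bigl((\phi^{-1})^\ast \phi_\mathbb{X}\bigr)^{-1} = \beta^{-1}
\]
on $M[1/p]$. Since $F_{\mathbb{X}/R}$ is an honest isogeny, the map $\mathbb{D}^\natural(F_{\mathbb{X}/R})$ is defined integrally as a map $M \to \phi^\ast M$, and the formula $\beta^{-1}$ in the statement refers implicitly to this integral lift.

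The only real obstacle is purely notational: keeping track of the $(\phi^{-1})^\ast$-twist in the definition of $\mathbb{D}^\natural$, the $W(R)$-linear duality relating $\mathbb{D}$ and $\mathbb{D}^\ast$, and the fact that the paper overloads the symbol $\phi_\mathbb{X}$ to denote both the contravariant Frobenius and the inverse of its dual on $\mathbb{D}^\ast[1/p]$. There is no nontrivial geometric content beyond the definitional relationship between $\phi_\mathbb{X}$ and the relative Frobenius $F_{\mathbb{X}/R}$ together with the standard compatibility of Dieudonné theory with Frobenius twists.
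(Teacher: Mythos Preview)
Your proposal is correct and follows essentially the same approach as the paper: verify the $n=1$ case and then compose Frobenius pullbacks. The paper's proof is terser---it simply displays the commutative square exhibiting $\beta^{-1}$ as a morphism of Dieudonn\'e modules $(M,\beta)\to(\phi^\ast M,\phi^\ast\beta)$ and says to compose---whereas you actually unwind the definitions of Section~\ref{Sec:Dieudonne} to justify why the induced map equals $\beta^{-1}$, which is arguably more informative.
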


\begin{proof}
  This follows from the fact that for a single relative Frobenius, the induced
  map is
  \[ \begin{tikzcd}
    \phi^\ast M \arrow{d}{\beta} \arrow{r}{\phi^\ast \beta^{-1}} & (\phi^2)^\ast
    M \arrow{d}{\phi^\ast \beta} \\ M \arrow{r}{\beta^{-1}} & \phi^\ast M.
  \end{tikzcd} \]
  Composing appropriate Frobenius pullbacks of the relative Frobenius then gives
  the result.
\end{proof}

\begin{Prop} \label{Prop:RankOneGeometricInjective}
  For $C$ an algebraically closed perfectoid field in characteristic $p$, the
  map
  \[
    \scrsdpreinf(C, \mathcal{O}_C) \to F^\mathrm{pre}(C, \mathcal{O}_C)
  \]
  is fully faithful.
\end{Prop}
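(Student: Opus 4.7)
Since $\scrsdpreinf$ is $0$-truncated (being a presheaf of sets), my plan is to split the full faithfulness into two assertions: first, that every automorphism of the image of any $x \in \scrsdpreinf(C,\mathcal{O}_C)$ in $F^\mathrm{pre}(C,\mathcal{O}_C)$ is trivial; second, that any morphism between the images of $x,y \in \scrsdpreinf(C,\mathcal{O}_C)$ forces $x=y$. For the automorphism statement, I will observe that a self-morphism of the image of $x$ consists of compatible self-morphisms of $x$ in $\igspreinf$ and of $\pi_\mathrm{crys}(x)$ in $\shtgmuone$: by Lemma~\ref{Lem:Discrete} the first must be the identity, the induced self-map on $\bungmu$ is then trivial, and by Lemma~\ref{Lem:ZeroTruncated} the second must also be the identity.

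For the essential injectivity, suppose we are given a morphism $(\phi,\psi) \colon \bar{x} \to \bar{y}$ in $F^\mathrm{pre}(C,\mathcal{O}_C)$, where $\phi$ is a formal quasi-isogeny $(A_x,\lambda_x) \dashrightarrow (A_y,\lambda_y)$ and $\psi$ is an isomorphism between the associated $\mathcal{G}$-shtukas $\pi_\mathrm{crys}(x) \simeq \pi_\mathrm{crys}(y)$. The first key observation is that the leg of a $\mathcal{G}$-shtuka bounded by $\mu$ is part of its data, so the mere existence of $\psi$ already forces the untilts $C^{\sharp_1}$ and $C^{\sharp_2}$ of $x$ and $y$ to coincide; write $C^\sharp$ for this common value.

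The next step of the plan is to reduce to the Siegel case by pushing $(\phi,\psi)$ forward along the Hodge embedding $\iota \colon \gx \to \gvx$, producing a morphism between the images of $\iota(x), \iota(y) \in \scrsdvpreinf(C,\mathcal{O}_C)$ in the Siegel fiber product $F^\mathrm{pre}_{G_V}(C,\mathcal{O}_C)$. By Lemma~\ref{Lem:ComparisonZhang} combined with \cite[Theorem~1.3]{ZhangThesis}, the Siegel analogue of the diagram in Theorem~\ref{Thm:HodgeMain} is already known to be $2$-Cartesian; since every v-cover of $\spd(C,\mathcal{O}_C)$ splits, presheaf and sheaf values agree on this geometric point, and so the Siegel case of the present proposition is available. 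In particular, we conclude $\iota(x) = \iota(y)$ in $\scrsdvpreinf(C,\mathcal{O}_C)$.

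The final step is to descend the equality $\iota(x) = \iota(y)$ on the Siegel side to $x = y$ on the Hodge side. For each neat compact open $K^p \subset \gafp$, I will choose $M^p \supset K^p$ such that $\mathbf{Sh}_{K_pK^p}\gx \to \mathbf{Sh}_M\gvx_E$ is a closed immersion; then by construction (see Section~\ref{subsub:Zarhin}) the induced integral morphism $\scrs_{K_pK^p}\gx \to \scrs_M\gvx \otimes_{\zp} \mathcal{O}_E$ is finite. Combining the closed immersion on the generic fiber with separatedness of $\scrs_{K_pK^p}\gx$ over $\mathcal{O}_E$, and the fact that any closed subscheme of $\spec \mathcal{O}_{C^\sharp}$ meeting the generic point is the whole spectrum, I will conclude injectivity of $\scrs_{K_pK^p}\gx(\mathcal{O}_{C^\sharp}) \to \scrs_M\gvx(\mathcal{O}_{C^\sharp})$ at each finite level, and passing to the inverse limit over $K^p$ then yields $x = y$. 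I expect the main subtlety of the argument to lie in this last descent step, where one must balance the formal-scheme structure at infinite level against the level-by-level injectivity of the underlying finite morphisms, and where careful compatibility between the varying auxiliary Siegel levels $M^p(K^p)$ is required.
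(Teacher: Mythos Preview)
Your reduction to the Siegel case is sound and parallels the paper: both arguments first force the two untilts to coincide via the leg of the shtuka isomorphism, then push the datum along the Hodge embedding to conclude $\iota(x)=\iota(y)$ on the Siegel side. (A minor point: rather than appealing to ``v-covers of $\spd(C,\mathcal{O}_C)$ split'' to pass from sheaf to presheaf, you can cite the presheaf-level Siegel result directly---this is Theorem~\ref{Thm:SiegelCartesian} in the paper, i.e.\ \cite[Proposition~8.14]{ZhangThesis}, which already works on products of geometric points.)

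The genuine gap is in your descent step. Your argument uses that the generic point of $\spec \mathcal{O}_{C^\sharp}$ lands in the generic fiber $\mathbf{Sh}_{K_pK^p}\gx \hookrightarrow \mathbf{Sh}_M\gvx_E$, where the map is a closed immersion, and then invokes separatedness. This is correct and rather elegant when the common untilt $C^\sharp$ has characteristic~$0$. But an untilt over $\mathcal{O}_E$ can equally well have characteristic~$p$ (i.e.\ $C^\sharp = C$), and this case is genuinely needed for the later product-of-points argument in Proposition~\ref{Prop:ProdOfGeomPointsBijective}. When $C^\sharp = C$, the map $\spf \mathcal{O}_C \to \hatscrsg$ factors through the special fiber $\scrsg_{k_E}$, and there the map to $\scrs_M\gvx_{k_E}$ is only the normalization of a closed immersion---a finite morphism that need not be injective on geometric points (distinct branches over a singular point of the image can map to the same point). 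So your separatedness argument does not close.

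The paper's proof handles both characteristics uniformly by invoking Pappas--Rapoport uniformization. After establishing $\iota(y)=\iota(z)$, it passes to the residue field $l$ of $\mathcal{O}_C$, and uses the isomorphism $\Theta_{\mathcal{G},x}$ between the formal neighborhood $(\widehat{\scrs_{K_p}\gx_{\oeel{e}}}_{/x})^\lozenge$ and that of the integral local Shimura variety. Compatibility with the Siegel uniformization forces the two formal neighborhoods $\widehat{\scrs_{K_p}\gx}_{/x}$ and $\widehat{\scrs_{K_p}\gx}_{/x_z}$ to have the same image in the Siegel side, hence to coincide as irreducible components of a common normalization; this gives $x=x_z$ in $\scrs_{K_p}\gx(l)$, and then $y=z$ follows from the uniformization isomorphism. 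Your simpler descent could replace this in characteristic~$0$, but some argument of this flavor (or an independent injectivity statement for the special-fiber map on $\mathcal{O}_C$-points) is needed in characteristic~$p$.
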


\begin{proof}
  Let $y, z \colon \spf \mathcal{O}_{C^\sharp} \to \scrs_{K_p}\gx$ two maps over
  $\mathcal{O}_E$. We wish to show that if there exists a quasi-isogeny
  \[
    f \colon \mathcal{A}_y \times_{\spf \mathcal{O}_{C^\sharp}} \spec
    \mathcal{O}_C/\varpi \dashrightarrow \mathcal{A}_z
    \times_{\spf \mathcal{O}_{C^\sharp}} \spec
    \mathcal{O}_C/\varpi
  \]
  preserving the \'{e}tale trivialization and an isomorphism
  \[
    \alpha \colon (\mathscr{P}_y, \phi_{\mathscr{P}_y}) \simeq (\mathscr{P}_z,
    \phi_{\mathscr{P}_z})
  \]
  inducing the same isomorphism of vector bundles on $X_{(C, \mathcal{O}_C)}$, then
  $y = z$.

  We first note that $\alpha \times^\mathcal{G} \mathcal{G}_V$ corresponds to an isomorphism
  $\alpha \colon \mathcal{A}_y[p^\infty] \simeq \mathcal{A}_z[p^\infty]$ of
  $p$-divisible groups, by \cite[Proposition~2.2.7]{PappasRapoportShtukas} and \cite[Theorem 14.4.1]{ScholzeWeinsteinBerkeley}.
  Moreover, compatibility with $f$ says that $\alpha$ restricted to $\spec
  \mathcal{O}_C/\varpi$ agrees with $f[p^\infty]$.  This implies
  that $f$ in fact is an isomorphism of abelian schemes over $\spec
  \mathcal{O}_C/\varpi$, and moreover lifts to an isomorphism
  $\mathcal{A}_y \simeq \mathcal{A}_z$ over $\spf \mathcal{O}_{C^\sharp}$. That
  is, the images of $y$ and $z$ under
  \[
    i \colon \scrs_{K_p}\gx \to \scrs_{M_p}\gvx
  \]
  agree.

  We now argue as in \cite[Proposition~4.10.3]{PappasRapoportShtukas}. Denote by
  $l$ the residue field of $C$, and choose a section $l \hookrightarrow
  \mathcal{O}_C$ using Lemma~\ref{Lem:ResidueFieldSection}. We first show that
  the restrictions $x, x_z \in \scrs_{K_p}\gx(l)$ of $y, z$ agree. As
  the $\spf \mathcal{O}_C^{\sharp}$-point $y$
  factors through the formal completion of $x$, via
  $\Theta_{\mathcal{G},x}^{-1}$ we obtain a framing $(\mathscr{P}_y,
  \phi_{\mathscr{P}_y}) \simeq (G, b_x)$. Through the isomorphism $\alpha$, we
  also obtain a framing of $(\mathscr{P}_z, \phi_{\mathscr{P}_z})$. As $\alpha$
  is an isomorphism of $\mathcal{G}$-shtukas, they define the same point $s
  \colon \spd \mathcal{O}_{C^\sharp} \to \mintgbxmuone{e}$.

Let $\mathscr{S}_{K_p}^-$ the image of $\scrs_{K_p}\gx$ in $\scrs_{M_p}\gvx$. By compatibility of
  $\Theta_{\mathcal{G}}$ with $\Theta_{\mathcal{G}_V}$ \eqref{Eq:RZCompatible}, we deduce that the
  images of
  \[
    \widehat{\scrs_{K_p}\gx_{W(l)}^\diamond}_{/x},
    \widehat{\scrs_{K_p}\gx_{W(l)}^\diamond}_{/x_z} \rightarrow
    \widehat{\scrs_{M_p}\gvx_{W(l) \otimes_{W(k_E)}
    \mathcal{O}_E}^\diamond}_{/i(x)=i(x_z)}
  \]
  agree. Since both are irreducible components of the normalization of the image $\widehat{\mathscr{S}_{K_p}^-}_{/i(x)}$, it follows that $x = x_z$. We now observe that $y, z$ correspond
  to $s$ under the isomorphism
  \[
    \widehat{\scrs_{K_p}\gx_{\oeel{e}}^\diamond}_{/x} \simeq
    \widehat{ \mintgbxmuone{e}}_{/s_0},
  \]
  where $s_0$ is the restriction of $s$ to $\spd l$. It follows that $y = z$ as
  desired.
\end{proof}

Combining Proposition~\ref{Prop:RankOneGeometricSurjective} and
Proposition~\ref{Prop:RankOneGeometricInjective}, we obtain the following.

\begin{Cor} \label{Cor:RankOneGeometricBijective}
  For $C$ an algebraically closed perfectoid field in characteristic $p$, the
  map
  \[
    \scrsdpreinf(C, \mathcal{O}_C) \to F^\mathrm{pre}(C, \mathcal{O}_C)
  \]
  is an equivalence of categories. 
\end{Cor}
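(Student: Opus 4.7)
The plan is to deduce this corollary directly from the two preceding propositions. Recall that a functor between groupoids is an equivalence of categories precisely when it is essentially surjective and fully faithful. Proposition \ref{Prop:RankOneGeometricSurjective} establishes essential surjectivity of the functor $\scrsdpreinf(C, \mathcal{O}_C) \to F^\mathrm{pre}(C, \mathcal{O}_C)$ by constructing, for each object of $F^\mathrm{pre}(C, \mathcal{O}_C)$, a preimage in $\scrsdpreinf(C, \mathcal{O}_C)$ via Pappas--Rapoport uniformization $\Theta_{\mathcal{G},x}$ (together with a bookkeeping argument to deal with the possibly different embeddings $e_1, e_2 \colon k_E \hookrightarrow l$). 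Proposition \ref{Prop:RankOneGeometricInjective} in turn establishes fully faithfulness: given two lifts $y, z$ of a common object of $F^\mathrm{pre}(C, \mathcal{O}_C)$, one uses compatibility with Siegel uniformization and the normalization construction of $\scrs_{K_p}\gx$ in the Siegel Shimura variety to conclude $y = z$. Putting these together yields the desired equivalence.

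There is nothing further to verify: the two propositions are stated at precisely the level of generality required, and neither assumes the conclusion of the other, so their conjunction is exactly the assertion of the corollary. The only thing one should keep in mind for the broader strategy is that this corollary is the rank-one geometric input to the proof of Theorem \ref{Thm:HodgeMain}; the remaining work (carried out in the proposition about products of geometric points mentioned in the introduction) will promote this pointwise equivalence first to the case of ``products of geometric points'' via a subtle limit argument, and then to v-local isomorphism via descent. Consequently, no additional hard step is needed at this stage of the argument.
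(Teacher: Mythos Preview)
Your proof is correct and matches the paper's approach exactly: the paper simply states that the corollary follows by combining Proposition~\ref{Prop:RankOneGeometricSurjective} (essential surjectivity) and Proposition~\ref{Prop:RankOneGeometricInjective} (full faithfulness). Your additional remarks on the content of those propositions and the role of the corollary in the proof of Theorem~\ref{Thm:HodgeMain} are accurate contextual commentary but not needed for the deduction itself.
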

}

\subsection{The case of a product of geometric points}
\label{Sec:ProductOfPointsProof}

{
\def\RkOne{\coprod_i s_i}
\def\ProdPts{S}

The v-topology on $\perf$ has a particularly simple basis, given by the so-called
``products of (geometric) points,'' see \cite[Definition~1.2,
Remark~1.3]{GleasonSpecialization}. Thus we only need to prove Theorem
\ref{Thm:HodgeMain} for these test objects. The basic strategy is to reduce to
the case of a single rank one geometric point, which is achieved by comparing
with Theorem \ref{Thm:HodgeMain} for Siegel Shimura varieties, established in
\cite{ZhangThesis}.

\subsubsection{} We recall the following definition.

\begin{Def} \label{Def:ProductOfPoints}
  A \textit{product of geometric points} is a perfectoid Huber pair of the form $((\prod_i
  C_i^+)[\varpi^{-1}], \prod_i C_i^+)$. Here $I$ is a set and for each $i \in I$ we have an algebraically closed perfectoid field $C_i$ of characteristic $p$ together with an open and bounded valuation
  subring $C_i^+$ and a pseudouniformizer $\varpi_i$; we set $\varpi = (\varpi_i)$ and give $\prod_i
  C_i^+$ the $\varpi$-adic topology. 
\end{Def}

We introduce the following definition, see also \cite[Definition 2.1.8]{Companion}.

\begin{Def} \label{Def:LiftsProductOfPoints}
  Let $f \colon \mathcal{F} \to \mathcal{G}$ be a map of presheaves of groupoids on $\perf$. Given a
  2-commutative diagram of solid arrows
  \begin{equation} \begin{tikzcd}
    \RkOne = \coprod_i \spa(C_i, \mathcal{O}_{C_i}) \arrow{d} \arrow{r} & \mathcal{F}
    \arrow{d} \\ \ProdPts = \spa((\prod_i C_i^+)[\varpi^{-1}], \prod_i C_i^+) \arrow{r}
    \arrow[dashed]{ru} & \mathcal{G}, \label{Eq:LiftingDiagram}
  \end{tikzcd} \end{equation}
  where $((\prod_i C_i^+)[\varpi^{-1}], \prod_i C_i^+)$ is a product of geometric
  points in characteristic $p$, there is an induced map
  \[
    \lambda_f \colon \mathcal{F}(\ProdPts) \to \mathcal{F}({\textstyle\RkOne})
    \times_{\mathcal{G}(\RkOne)} \mathcal{G}(\ProdPts).
  \]
  We say that
  \begin{itemize}
    \item $f$ is proper* when the map $\lambda_f$ is an equivalence of groupoids for every
      diagram \eqref{Eq:LiftingDiagram},
    \item $f$ is separated* when the map $\lambda_f$ is a fully faithful morphism of groupoids for every
      diagram \eqref{Eq:LiftingDiagram}.
  \end{itemize}
\end{Def}

We record some formal properties.

\begin{Lem} \label{Lem:ProperStarIsomorphismRankOne}
    If $f \colon \mathcal{F} \to \mathcal{G}$ is a map of v-stacks that is proper*, then $f$ is an isomorphism if and only if $f$ induces equivalences when evaluated at rank one points. 
\end{Lem}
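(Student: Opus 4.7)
The ``only if'' direction is immediate, since every rank one point $\spa(C, \mathcal{O}_C)$ lies in $\perf$ and hence any isomorphism of v-stacks restricts to an equivalence there. My approach for the converse is to first upgrade the hypothesis to equivalences on all products of geometric points using the proper* property, and then invoke the fact that such products form a basis for the v-topology on $\perf$.

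For the first step, let $S = \spa((\prod_i C_i^+)[\varpi^{-1}], \prod_i C_i^+)$ be a product of geometric points with associated rank one locus $\RkOne = \coprod_i \spa(C_i, \mathcal{O}_{C_i})$. The canonical map $\RkOne \to S$ together with the morphism $f$ assembles into a commutative square of the form \eqref{Eq:LiftingDiagram}. The proper* hypothesis then asserts that
\[
  \lambda_f \colon \mathcal{F}(S) \to \mathcal{F}(\RkOne) \times_{\mathcal{G}(\RkOne)} \mathcal{G}(S)
\]
is an equivalence of groupoids. Since $\mathcal{F}$ and $\mathcal{G}$ are v-stacks, they send the disjoint union $\RkOne$ to the product $\prod_i \mathcal{F}(\spa(C_i, \mathcal{O}_{C_i}))$ and likewise for $\mathcal{G}$. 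Our hypothesis that $f$ induces equivalences on rank one points therefore implies that $\mathcal{F}(\RkOne) \to \mathcal{G}(\RkOne)$ is an equivalence. Consequently the projection $\mathcal{F}(\RkOne) \times_{\mathcal{G}(\RkOne)} \mathcal{G}(S) \to \mathcal{G}(S)$ is a base change of an equivalence of groupoids, hence itself an equivalence. Composing with $\lambda_f$ yields that $f(S) \colon \mathcal{F}(S) \to \mathcal{G}(S)$ is an equivalence.

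To conclude, I invoke the fact that every object of $\perf$ admits a v-cover by a product of geometric points (see, e.g., \cite[Lemma 2.1.2]{Companion} or the discussion surrounding \cite[Remark~1.3]{GleasonSpecialization}). Iterating, one constructs a hypercover of any $T \in \perf$ whose terms are all products of geometric points. By v-descent applied to $\mathcal{F}$ and $\mathcal{G}$, the map $\mathcal{F}(T) \to \mathcal{G}(T)$ is then a limit of the equivalences $\mathcal{F}(S_n) \to \mathcal{G}(S_n)$ produced by the first step, and therefore an equivalence. This shows $f$ is an isomorphism of v-stacks.

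The argument is essentially formal once proper* is in hand; the main subtlety is the reduction from arbitrary perfectoid spaces to products of geometric points, which requires invoking v-descent along a hypercover rather than a single cover, since the self-fiber-products of a product of geometric points need not themselves be products of geometric points. All the real geometric content is packaged into the proper* hypothesis itself.
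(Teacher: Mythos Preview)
Your proof is correct and follows essentially the same approach as the paper's: reduce to products of geometric points via the proper* hypothesis, then invoke that such products form a basis for the v-topology. The paper's own proof is a one-liner pointing to exactly these two ingredients, so your write-up simply unpacks the details (including the observation about hypercovers versus single covers, which is a fair point even if for $1$-stacks a truncated \v{C}ech nerve already suffices).
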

\begin{proof}
    This is a direct consequence of Definition \ref{Def:LiftsProductOfPoints} and the fact that products of points are a basis of the v-topology on $\perf$, see \cite[Remark~1.3]{GleasonSpecialization}.
\end{proof}

\begin{Lem} \label{Lem:ProperRepresentableLifting}
  If a map $f \colon \mathcal{F} \to \mathcal{G}$ of v-stacks is proper and
  representable by diamonds, then $f$ is proper*.
\end{Lem}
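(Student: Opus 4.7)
The plan is to reduce the problem to a valuative-criterion statement for sections of a proper map of diamonds. First I would base-change $f$ along the map $g \colon \ProdPts \to \mathcal{G}$ arising from diagram \eqref{Eq:LiftingDiagram}, replacing $f$ by $X \coloneqq \mathcal{F} \times_{\mathcal{G},g} \ProdPts \to \ProdPts$. Since properness and representability by diamonds are both stable under base change, $X$ is a diamond and $X \to \ProdPts$ is a proper morphism. Under this reduction, an object of $\mathcal{F}(\ProdPts) \times_{\mathcal{G}(\RkOne)} \mathcal{G}(\ProdPts)$ lying over $g$ is precisely a section $\sigma \colon \RkOne \to X$, so establishing that $\lambda_f$ is an equivalence reduces to showing that restriction induces a bijection $X(\ProdPts) \xrightarrow{\sim} X(\RkOne)$.

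For essential surjectivity, I would start with a section $\sigma \colon \RkOne \to X$. For each $i$, the component $\sigma_i \colon \spa(C_i, \mathcal{O}_{C_i}) \to X$ is a lift of the rank-one generization of $\spa(C_i, C_i^+) \to \ProdPts$, and by the valuative criterion of properness for diamonds (e.g., \cite[Proposition~18.3]{EtCohDiam}) it extends uniquely to $\tilde\sigma_i \colon \spa(C_i, C_i^+) \to X$ over $\ProdPts$. I would then assemble the $\tilde\sigma_i$ into a global section $\ProdPts \to X$ by invoking the valuative criterion once more, but now applied directly to $\ProdPts$ together with its specialization structure in the sense of \cite{GleasonSpecialization}: properness of $X \to \ProdPts$ ensures that a family of compatible rank-one sections propagates uniquely to a section over all of $\ProdPts$.

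For full faithfulness, given two sections $s_1, s_2 \colon \ProdPts \to X$ agreeing on $\RkOne$, I would consider the locus $Z \subseteq \ProdPts$ where $s_1 = s_2$; this is the pullback of the diagonal $\Delta_{X/\ProdPts}$, which is a closed immersion because $X \to \ProdPts$ is separated. Hence $Z \subseteq \ProdPts$ is a closed sub-v-sheaf containing $\RkOne$, and the uniqueness half of the valuative criterion applied on each $\spa(C_i, C_i^+)$ forces $Z = \ProdPts$, giving $s_1 = s_2$.

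The main obstacle is the assembly step in the existence part: the pointwise extensions $\tilde\sigma_i$ over each $\spa(C_i, C_i^+)$ need to be coherently packaged into a section out of $\ProdPts$, even though $\coprod_i \spa(C_i, C_i^+)$ is not literally a v-cover of $\ProdPts$. The cleanest way to handle this is to combine the pointwise valuative criterion with the fact that proper morphisms of small v-sheaves preserve specializations in the sense of \cite[Section~3]{GleasonSpecialization}, so that the rank-one section $\sigma$ over $\RkOne$ determines the extension $\ProdPts \to X$ globally and uniformly across the factors indexed by $I$.
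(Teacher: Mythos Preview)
Your reduction by base change and the decomposition into two steps (extend each $\sigma_i$ to $\tilde\sigma_i \colon \spa(C_i,C_i^+)\to X$ via the valuative criterion, then assemble) match exactly what the paper does; the paper simply quotes \cite[Proposition~2.18]{ZhangThesis} for the assembly step and observes that properness supplies the individual extensions along $\spa(C_i,\mathcal O_{C_i})\to\spa(C_i,C_i^+)$.

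There is, however, a genuine gap in your assembly argument. The map $\coprod_i\spa(C_i,C_i^+)\to \ProdPts$ is not a cover in any topology (its image misses all the ``nonprincipal'' points of $\lvert \ProdPts\rvert$), so neither the valuative criterion nor the fact that proper maps respect specializations can on their own glue the $\tilde\sigma_i$ into a section over $\ProdPts$. What you need is quasi-compactness of $X\to \ProdPts$: choose a finite cover of $X$ by affinoid perfectoid opens $U_1,\dots,U_n$, and for each $i\in I$ use that $\spa(C_i,C_i^+)$ is local to see that $\tilde\sigma_i$ factors through some $U_{j(i)}$. Partitioning $I$ according to $j(i)$ and using that each $U_j=\spa(A_j,A_j^+)$ is affinoid, you can now literally assemble the ring maps $A_j\to C_i^+$ into a map $A_j\to\prod_{i\in I_j}C_i^+$ via the universal property of the product. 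This produces a morphism $\ProdPts\to X$ over $\ProdPts$; uniqueness follows from separatedness exactly as you argue. This is the content hidden in \cite[Proposition~2.18]{ZhangThesis}, and the same idea appears elsewhere in the paper in the scheme-theoretic analogue Lemma~\ref{Lem:ExistenceSection}. Your references to ``specialization structure'' and a second application of the valuative criterion do not substitute for this argument.
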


\begin{proof}
  This is an immediate consequence of \cite[Proposition~2.13]{ZhangThesis}. Note
  that properness is used to produce uniquely existing lifts along $\coprod_i
  \spa(C_i, \mathcal{O}_{C_i}) \to \coprod_i \spa(C_i, C_i^+)$ in the sense of \cite[Definition 2.1.8.(1)]{Companion}.
\end{proof}

\begin{Lem} \label{Lem:LiftsDiagonal}
  Let $f \colon \mathcal{F} \to \mathcal{G}$ be a map of presheaves of groupoids. Then $f$
  is separated* if and only if the diagonal
  $\Delta_f \colon \mathcal{F} \to \mathcal{F} \times_\mathcal{G} \mathcal{F}$
  is proper*.
\end{Lem}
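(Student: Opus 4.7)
The statement is the formal 2-categorical analog of the classical fact that a morphism is a monomorphism if and only if its diagonal is an isomorphism. My plan is to prove both implications simultaneously by translating each condition into the same statement about $\operatorname{Hom}$-sets in $\mathcal{F}(S)$.

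First I would unwind separatedness*. Using the usual description of morphisms in a 2-fiber product of groupoids, $\lambda_f$ is fully faithful (for a given $S$) if and only if, for every pair of objects $x, y \in \mathcal{F}(S)$, the natural map
\[
\operatorname{Hom}_{\mathcal{F}(S)}(x,y) \to \operatorname{Hom}_{\mathcal{F}(\coprod_i s_i)}(x|_{\coprod_i s_i}, y|_{\coprod_i s_i}) \times_{\operatorname{Hom}_{\mathcal{G}(\coprod_i s_i)}(fx|_{\coprod_i s_i}, fy|_{\coprod_i s_i})} \operatorname{Hom}_{\mathcal{G}(S)}(fx,fy)
\]
is a bijection.

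Next I would unwind proper*-ness of $\Delta_f$. A lifting diagram for $\Delta_f$ is specified, up to canonical equivalence, by a tuple $(x, y, \theta, \beta)$ where $x, y \in \mathcal{F}(S)$, where $\theta \colon fx \xrightarrow{\sim} fy$ is an isomorphism in $\mathcal{G}(S)$, and where $\beta \colon x|_{\coprod_i s_i} \xrightarrow{\sim} y|_{\coprod_i s_i}$ is an isomorphism in $\mathcal{F}(\coprod_i s_i)$ satisfying $f\beta = \theta|_{\coprod_i s_i}$. A direct computation shows that the groupoid of lifts of such a diagram is equivalent to the set of isomorphisms $\psi \colon x \to y$ in $\mathcal{F}(S)$ with $f\psi = \theta$ and $\psi|_{\coprod_i s_i} = \beta$. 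Consequently essential surjectivity of $\lambda_{\Delta_f}$ is equivalent to the existence of such a $\psi$ for every $(x,y,\theta,\beta)$, and full faithfulness of $\lambda_{\Delta_f}$ is equivalent to the uniqueness of such $\psi$ (equivalently, to the faithfulness of $\lambda_f$).

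Combining the two previous steps, $\lambda_{\Delta_f}$ is an equivalence of groupoids for every $S$ precisely when the displayed map of $\operatorname{Hom}$-sets is bijective for every $x, y \in \mathcal{F}(S)$, i.e., precisely when $\lambda_f$ is fully faithful for every $S$. This yields the biconditional. The argument is purely diagrammatic; the only care needed is in manipulating data in and out of 2-fiber products of groupoids, so the main (mild) obstacle is bookkeeping rather than substantive geometry.
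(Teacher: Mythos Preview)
Your proposal is correct. Both your argument and the paper's are purely formal groupoid manipulations, but the packaging differs. The paper observes a single identification of groupoids
\[
\mathcal{F}(S)\times_{\bigl(\mathcal{F}(\coprod_i s_i)\times_{\mathcal{G}(\coprod_i s_i)}\mathcal{G}(S)\bigr)}\mathcal{F}(S)\;\simeq\;\mathcal{F}(\textstyle\coprod_i s_i)\times_{\bigl(\mathcal{F}(\coprod_i s_i)\times_{\mathcal{G}(\coprod_i s_i)}\mathcal{F}(\coprod_i s_i)\bigr)}\bigl(\mathcal{F}(S)\times_{\mathcal{G}(S)}\mathcal{F}(S)\bigr),
\]
under which $\lambda_{\Delta_f}$ becomes the diagonal $\Delta_{\lambda_f}$ of $\lambda_f$, and then invokes the general fact that a functor of groupoids is fully faithful if and only if its diagonal is an equivalence. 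Your route unwinds both sides independently to the same explicit $\operatorname{Hom}$-set bijectivity condition; this is exactly what one gets by unpacking the paper's identification and the abstract ``fully faithful $\Leftrightarrow$ diagonal equivalence'' statement. The paper's version is shorter and makes the structural reason (an exchange of fiber products) transparent, while yours has the virtue of making the concrete lifting problem for $\Delta_f$ visible, which matches how the lemma is actually used later.
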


\begin{proof}
  We note that there is a natural identification of groupoids
  \[
    \mathcal{F}(\ProdPts) \times_{(\mathcal{F}(\RkOne)
    \times_{\mathcal{G}(\RkOne)} \mathcal{G}(\ProdPts))} \mathcal{F}(\ProdPts)
    \simeq \mathcal{F}({\textstyle\RkOne}) \times_{(\mathcal{F}(\RkOne)
    \times_{\mathcal{G}(\RkOne)} \mathcal{F}(\RkOne))} (\mathcal{F}(\ProdPts)
    \times_{\mathcal{G}(\ProdPts)} \mathcal{F}(\ProdPts)),
  \]
  under which $\lambda_{\Delta_f}$ is identified with the diagonal
  $\Delta_{\lambda_f}$ of $\lambda_f$. The statement now follows from the fact
  that a morphism $\lambda$ of groupoids is fully faithful if and only if its
  diagonal $\Delta_\lambda$ is an equivalence.
\end{proof}
\begin{Lem} \label{Lem:LiftsTwoOutOfThree}
  Let $f \colon \mathcal{F} \to \mathcal{G}$ and $g \colon \mathcal{G} \to
  \mathcal{H}$ be maps between presheaves of groupoids on $\perf$. If $g$ is separated*
 and $g \circ f$ is proper*, then $f$ is proper*. The converse holds if $g$ is proper*.
\end{Lem}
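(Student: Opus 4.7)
The plan is to identify, for each product-of-points diagram as in \eqref{Eq:LiftingDiagram}, a natural factorization of $\lambda_{g \circ f}$ through $\lambda_f$, and then apply a formal two-out-of-three argument for groupoids.

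Fix a diagram as in \eqref{Eq:LiftingDiagram} for the composite $g \circ f$, and abbreviate $T = \coprod_i s_i$. Writing
\[
  A := \mathcal{F}(T) \times_{\mathcal{G}(T)} \mathcal{G}(\ProdPts), \qquad
  B := \mathcal{F}(T) \times_{\mathcal{H}(T)} \mathcal{H}(\ProdPts),
\]
the functor $g$ induces a map $\beta \colon A \to B$, namely the identity on the first factor and $\lambda_g$ on the second factor (after rewriting $B = \mathcal{F}(T) \times_{\mathcal{G}(T)} (\mathcal{G}(T) \times_{\mathcal{H}(T)} \mathcal{H}(\ProdPts))$). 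By construction one has the tautological identity $\lambda_{g \circ f} = \beta \circ \lambda_f \colon \mathcal{F}(\ProdPts) \to B$. Since base change of a fully faithful (resp.\ essentially surjective) functor of groupoids along any functor remains fully faithful (resp.\ essentially surjective), the formal properties of $\beta$ are inherited from those of $\lambda_g$: if $g$ is separated* then $\beta$ is fully faithful, and if $g$ is proper* then $\beta$ is an equivalence.

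For the first assertion, suppose $g$ is separated* and $g \circ f$ is proper*. Then $\beta$ is fully faithful and $\beta \circ \lambda_f$ is an equivalence. A standard cancellation argument yields that $\lambda_f$ is an equivalence: it is fully faithful because $\beta \circ \lambda_f$ is and $\beta$ is faithful, and it is essentially surjective because for any $a \in A$ one has $\beta(a) \simeq \beta(\lambda_f(x))$ for some $x \in \mathcal{F}(\ProdPts)$, and full faithfulness of $\beta$ lifts this isomorphism to $a \simeq \lambda_f(x)$ in $A$. Hence $f$ is proper*.

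For the converse, suppose $f$ is proper* and $g$ is proper*; then both $\lambda_f$ and $\beta$ are equivalences, so $\lambda_{g \circ f} = \beta \circ \lambda_f$ is an equivalence, and hence $g \circ f$ is proper*. The main substance is really just unwinding the definitions carefully enough to identify the factorization $\lambda_{g \circ f} = \beta \circ \lambda_f$; once this is in place, the proof reduces to elementary formal properties of functors between groupoids.
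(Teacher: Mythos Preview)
Your proof is correct and follows essentially the same approach as the paper: both factor $\lambda_{g \circ f}$ as the composition of $\lambda_f$ with the map $\mathrm{id} \times \lambda_g$ (your $\beta$), and then invoke the two-out-of-three property for equivalences/fully faithful functors of groupoids. You spell out the cancellation argument in slightly more detail than the paper does, but the substance is identical.
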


\begin{proof}
  We note that $\lambda_{g \circ f}$ can be identified with the composition
  \[
    \mathcal{F}(\ProdPts) \xrightarrow{\lambda_f}
    \mathcal{F}({\textstyle\RkOne}) \times_{\mathcal{G}(\RkOne)}
    \mathcal{G}(\ProdPts) \xrightarrow{\mathrm{id} \times \lambda_g}
    \mathcal{F}({\textstyle\RkOne}) \times_{\mathcal{H}(\RkOne)}
    \mathcal{H}(\ProdPts),
  \]
  and so $\lambda_g$ being fully faithful implies that $\lambda_f$ is an
  equivalence if $\lambda_{g \circ f}$ is an equivalence. If $\lambda_g$ is moreover an equivalence, then $\lambda_f$ is an
  equivalence if and only if $\lambda_{g \circ f}$ is an equivalence; the lemma follows. 
\end{proof}

\begin{Lem} \label{Lem:UniqueLiftsFiberProd}
  Let
  \[ \begin{tikzcd}
    \mathcal{A}_1 \arrow{d}{f} \arrow{r} & \mathcal{B}_1 \arrow{d}{g} &
    \mathcal{C}_1 \arrow{l} \arrow{d}{h} \\ \mathcal{A}_2 \arrow{r} &
    \mathcal{B}_2 & \mathcal{C}_2 \arrow{l}
  \end{tikzcd} \]
  be a 2-commutative diagram of presheaves of groupoids on $\perf$, where $f, g,
  h$ are all separated*. Then the induced map
  \[
    f \times_g h \colon \mathcal{A}_1 \times_{\mathcal{B}_1} \mathcal{C}_1 \to
    \mathcal{A}_2 \times_{\mathcal{B}_2} \mathcal{C}_2
  \]
  also is separated*.
\end{Lem}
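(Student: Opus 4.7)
The plan is to show directly that $\lambda_{f \times_g h}$ is a fully faithful morphism of groupoids for every diagram \eqref{Eq:LiftingDiagram}, which by definition establishes that $f \times_g h$ is separated*. Setting $\mathcal{D}_i = \mathcal{A}_i \times_{\mathcal{B}_i} \mathcal{C}_i$ for $i=1,2$, the idea is to identify $\lambda_{f \times_g h}$ with a 2-fiber product of $\lambda_f, \lambda_g, \lambda_h$, each of which is fully faithful by the hypothesis that $f, g, h$ are separated*, and then to appeal to the general fact that fully faithful functors of groupoids are stable under 2-fiber products.

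The key identification relies on commuting 2-fiber products of groupoids. Specifically, the target of $\lambda_{f \times_g h}$, namely $\mathcal{D}_1(\RkOne) \times_{\mathcal{D}_2(\RkOne)} \mathcal{D}_2(\ProdPts)$, admits a canonical identification with
\[
\bigl(\mathcal{A}_1(\RkOne) \times_{\mathcal{A}_2(\RkOne)} \mathcal{A}_2(\ProdPts)\bigr) \times_{\mathcal{B}_1(\RkOne) \times_{\mathcal{B}_2(\RkOne)} \mathcal{B}_2(\ProdPts)} \bigl(\mathcal{C}_1(\RkOne) \times_{\mathcal{C}_2(\RkOne)} \mathcal{C}_2(\ProdPts)\bigr),
\]
obtained by unravelling both sides to the same data of three compatible pieces together with a distinguished isomorphism in the middle piece. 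Under this identification, $\lambda_{f \times_g h}$ corresponds to the map induced on 2-fiber products by the triple $(\lambda_f, \lambda_g, \lambda_h)$.

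The lemma thus reduces to the following abstract sublemma: given a commutative diagram of groupoids $\mathcal{G}_1 \to \mathcal{H}_1 \leftarrow \mathcal{K}_1$ and $\mathcal{G}_2 \to \mathcal{H}_2 \leftarrow \mathcal{K}_2$ with fully faithful vertical maps $\mathcal{G}_1 \to \mathcal{G}_2$, $\mathcal{H}_1 \to \mathcal{H}_2$, $\mathcal{K}_1 \to \mathcal{K}_2$, the induced map $\mathcal{G}_1 \times_{\mathcal{H}_1} \mathcal{K}_1 \to \mathcal{G}_2 \times_{\mathcal{H}_2} \mathcal{K}_2$ is fully faithful. This is a direct verification on hom-groupoids: a morphism in a 2-fiber product $\mathcal{G}_i \times_{\mathcal{H}_i} \mathcal{K}_i$ between $(g, k, \beta)$ and $(g', k', \beta')$ consists of a pair $(\sigma \colon g \to g', \tau \colon k \to k')$ satisfying a compatibility condition in $\mathcal{H}_i$; full faithfulness of the $\mathcal{G}$- and $\mathcal{K}$-vertical functors puts such pairs in bijection at levels $i = 1, 2$, while faithfulness of the $\mathcal{H}$-vertical functor shows that the compatibility upstairs in $\mathcal{H}_1$ is equivalent to the compatibility downstairs in $\mathcal{H}_2$.

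No genuine obstacle arises in this plan; it is purely formal. The main care needed is in the 2-categorical bookkeeping of the identification in paragraph two, particularly in verifying that the distinguished structure isomorphisms match up correctly on both sides of the rearrangement.
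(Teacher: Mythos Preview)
Your proposal is correct and follows essentially the same approach as the paper: both identify $\lambda_{f \times_g h}$ with the 2-fiber product of $\lambda_f$, $\lambda_g$, $\lambda_h$ via the rearrangement of iterated fiber products, and then invoke that fully faithful morphisms of groupoids are stable under fiber products. You spell out the sublemma and the bookkeeping in slightly more detail than the paper does, but the argument is the same.
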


\begin{proof}
  This follows from the fact that the map $\lambda$ corresponding to $f \times_g
  h$ may be computed by taking the fiber product of both rows of the diagram
  \[ \begin{tikzcd}[column sep=small]
    \mathcal{A}_1(\ProdPts) \arrow{r} \arrow{d}{\lambda_f} &
    \mathcal{B}_1(\ProdPts) \arrow{d}{\lambda_g} & \mathcal{C}_1(\ProdPts)
    \arrow{l} \arrow{d}{\lambda_h} \\ \mathcal{A}_1(\RkOne)
    \times_{\mathcal{A}_2(\RkOne)} \mathcal{A}_2(\ProdPts) \arrow{r} &
    \mathcal{B}_1(\RkOne) \times_{\mathcal{B}_2(\RkOne)} \mathcal{B}_2(\ProdPts)
    & \mathcal{C}_1(\RkOne) \times_{\mathcal{C}_2(\RkOne)}
    \mathcal{C}_2(\ProdPts), \arrow{l}
  \end{tikzcd} \]
  together with the fact that fiber products of fully faithful morphisms of
  groupoids are fully faithful.
\end{proof}

\subsubsection{}
We now prove that certain comparison maps are separated*. We note that there is a commutative diagram
\begin{equation} \label{Eq:CommutativeDiagramShimuraIgusa}
  \begin{tikzcd}
    \scrsdpreinf \ar[r] \ar[d,"i^{\diamond, \mathrm{pre}}"] & F^{\mathrm{pre}}
    \ar[d]\\ \scrsdvpreinf \ar[r] & F_{V}^{\mathrm{pre}},
  \end{tikzcd}
\end{equation}
where we write $F_V^{\mathrm{pre}} = \igsvpreinf \times_{\bun_{G_V}} \shtgvmu$.
Here, the map $F^\mathrm{pre} \to F_V^\mathrm{pre}$ is induced from taking the
fiber product of the maps
\[
  \igspreinf \to \igsvpreinf, \quad \bun_G \to \bun_{G_V}, \quad \shtgmuone \to
  \shtgvmu
\]
appearing in \eqref{Eq:TheCubeInt}.

\begin{Prop} \label{Prop:UniqueLiftsShimura}
  The map $i^{\diamond, \mathrm{pre}} \colon \scrsdpreinf \to \scrsdvpreinf$ is proper*.
\end{Prop}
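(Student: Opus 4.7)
The plan is to reduce the statement to finite prime-to-$p$ level, where the morphism $\scrs_K\gx \to \scrs_M\gvx \otimes_{\zp} \mathcal{O}_E$ is finite, and then to glue the rank-one lifts by exploiting both this finiteness and the fact that the generic fiber of this morphism is a closed immersion. Since $\scrsginf$ is a cofiltered limit of $\scrsg$ along affine transition maps, and similarly on the Siegel side, a morphism $\spf R^{\sharp+} \to \hatscrsginf$ corresponds to a compatible system of maps into each finite level; it therefore suffices to verify proper* after fixing a neat $K = K_pK^p$ and choosing $M = M_pM^p$ so that $\mathbf{Sh}_K\gx \hookrightarrow \mathbf{Sh}_M\gvx \otimes_{\qp} E$ is a closed immersion, cf.\ Section~\ref{subsub:Zarhin}. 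At this level the map of integral models is finite by construction.

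Given a product of geometric points $\ProdPts = \spa(R,R^+)$ with rank-one fibers $s_i = \spa(C_i, \mathcal{O}_{C_i})$, an untilt $S^\sharp$ of $S$ over $\mathcal{O}_E$, a morphism $\alpha \colon \spf R^{\sharp+} \to \widehat{\scrs_M\gvx \otimes \mathcal{O}_E}$, and lifts $\beta_i \colon \spf \mathcal{O}_{C_i^\sharp} \to \widehat{\scrs_K\gx}$ of the restrictions of $\alpha$, the task is to construct a unique lift $\beta \colon \spf R^{\sharp+} \to \widehat{\scrs_K\gx}$ restricting to each $\beta_i$. Working Zariski-locally in $\scrs_M\gvx$, I would express $\alpha$ as a ring homomorphism $\phi \colon B \to R^{\sharp+}$; the preimage of $\spec B$ in $\scrs_K\gx$ is then an affine $\spec A$ with $B \to A$ finite. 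The $\beta_i$ correspond to ring homomorphisms $\psi_i \colon A \to C_i^{\sharp+}$ extending $\phi$, and by the universal property of the product they assemble into $\tilde\psi \colon A \to \prod_i C_i^{\sharp+}$ extending $B \to R^{\sharp+} \hookrightarrow \prod_i C_i^{\sharp+}$. Uniqueness of $\beta$ follows from injectivity of $R^{\sharp+} \hookrightarrow \prod_i C_i^{\sharp+}$, so the real content is to show that $\tilde\psi$ factors through $R^{\sharp+}$.

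The crux of the argument, and the main obstacle, is this factorization. I plan to handle it by combining two observations. First, the finiteness of $A$ over $B$ implies that each $\tilde\psi(a)$ satisfies a monic polynomial with coefficients in $R^{\sharp+}$, making it integral over $R^{\sharp+}$ inside $\prod_i C_i^{\sharp+}$. Second, the closed immersion $\mathbf{Sh}_K\gx \hookrightarrow \mathbf{Sh}_M\gvx \otimes_{\qp} E$ forces $\phi[p^{-1}] \colon B[p^{-1}] \to R^\sharp$ to extend to a map $\alpha_A \colon A[p^{-1}] \to R^\sharp$, because any element of the kernel of the surjection $B[p^{-1}] \twoheadrightarrow A[p^{-1}]$ is killed by each $\psi_i[p^{-1}]$ and hence by $\phi[p^{-1}]$ via the injection $R^\sharp \hookrightarrow \prod_i C_i^\sharp$. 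Since both $\alpha_A$ composed with $R^\sharp \to \prod_i C_i^\sharp$ and $\tilde\psi[p^{-1}]$ are extensions of $\phi[p^{-1}]$ from the quotient $A[p^{-1}]$, they agree, so $\tilde\psi(a) \in R^\sharp \cap \prod_i C_i^{\sharp+}$ for every $a$. Finally, since $R^{\sharp+}$ is integrally closed in $R^\sharp$ as the ring of power-bounded elements, any element of $R^\sharp$ integral over $R^{\sharp+}$ already lies in $R^{\sharp+}$, which completes the factorization and hence the proof.
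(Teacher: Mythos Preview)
Your overall strategy---reduce to a finite prime-to-$p$ level where $\scrs_K\gx \to \scrs_M\gvx$ is finite, and then glue the rank-one lifts via an integrality argument---is sound and can be made to work, but step~2 as written has a genuine gap, and in fact step~2 is not needed at all.

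The gap is that you assume from the start that the $S$-point $\alpha$ lives over $\mathcal{O}_E$. An $S$-point of $\scrsdvpreinf$ gives only an untilt over $\zp$ and a map $\spf R^{\sharp+} \to \hatscrsgv$; the $\mathcal{O}_E$-structure on $R^{\sharp+}$ is part of what you must construct. So you cannot take $B$ to be a chart of $\scrs_M\gvx \otimes_{\zp}\mathcal{O}_E$ and still have $\phi\colon B\to R^{\sharp+}$. If you instead take $B$ to be a chart of $\scrs_M\gvx$ over $\zp$, then $B[1/p]\to A[1/p]$ is no longer surjective (the map $\mathbf{Sh}_K\gx \to \mathbf{Sh}_M\gvx$ is not a closed immersion when $E\neq\qp$), and step~2 breaks either way. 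The fix, which also simplifies the argument, is to drop step~2: since $R^{\sharp+}=\prod_i C_i^{\sharp+}$ and each $C_i^{\sharp+}$ is a valuation ring, hence integrally closed in $\mathcal{O}_{C_i^\sharp}$, the ring $R^{\sharp+}$ is already integrally closed in $\prod_i \mathcal{O}_{C_i^\sharp}$. So step~1 alone (finiteness of $A$ over the $\zp$-chart $B$) forces $\tilde\psi(a)\in R^{\sharp+}$. You can also avoid Zariski localization by writing $\scrs_K\gx$ as the relative $\underline{\spec}$ of a finite $\mathcal{O}_{\scrs_M\gvx}$-algebra and pulling back along $\alpha$.

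For comparison, the paper argues differently: it first observes that on products of geometric points the presheaves $\scrsdpreinf,\scrsdvpreinf$ agree with their sheafifications (Lemma~\ref{Lem:DiamondOfFormalScheme} plus total disconnectedness), and then shows that the sheafified map $i^\diamond$ is proper and representable in diamonds, which implies proper* by Lemma~\ref{Lem:ProperRepresentableLifting}. Your (repaired) argument is more hands-on---it unpacks exactly what finiteness buys in this situation---while the paper's route is more conceptual and exhibits proper* as a shadow of ordinary properness.
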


\begin{proof}
  By Lemma~\ref{Lem:DiamondOfFormalScheme} together with
  \cite[Proposition~1.6]{GleasonSpecialization}, which says that a product
  of geometric points is totally disconnected in the sense of
  \cite[Definition~7.1]{EtCohDiam}, we see that $\scrsdpreinf$ and $\scrsdinf$
  take the same value on products of geometric points. Similarly, the presheaves of groupoids
  $\scrsdvpreinf$ and $\scrsdvinf$ take the same value on products of geometric
  points. Therefore it suffices to check that $i^\diamond \colon \scrsdinf \to
  \scrsdvinf$ is proper*.

  Using Lemma~\ref{Lem:ProperRepresentableLifting}, we reduce to verifying that
  $i^\diamond$ is proper and representable by diamonds. For any morphism of adic
  spaces $T = \spa(R^\sharp, R^{\sharp+}) \to
  \hatscrsgvinf^\mathrm{ad}$, the fiber product
  \[
    X = T \times_{\hatscrsgvinf^\mathrm{ad}}
    \hatscrsginf^\mathrm{ad}
  \]
  is a quasi-compact quasi-separated analytic adic space, as the map
  $i^\mathrm{ad}$ is an adic morphism. It follows that $X^\lozenge$ is a
  quasi-compact quasi-separated diamond, and therefore $i^\diamond$ is
  quasi-compact, quasi-separated, and representable by diamonds. To check
  properness, we use \cite[Proposition~18.3]{EtCohDiam} to reduce to a statement
  about lifting maps along $\spa(K, \mathcal{O}_K) \to \spa(K, K^+)$, where $K$
  is a perfectoid field. Again using Lemma~\ref{Lem:DiamondOfFormalScheme}, we
  reduce to lifting along $\spf \mathcal{O}_K \to \spf K^+$. This now follows
  from $i$ being a limit of proper maps.
\end{proof}

\begin{Prop} \label{Prop:UniqueLiftsBunG}
  The map $\bun_G \to \bun_{G_V}$ is separated*.
\end{Prop}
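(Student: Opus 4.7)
The plan is to apply Lemma~\ref{Lem:LiftsDiagonal} to reduce the claim to showing that the diagonal
\[
  \Delta \colon \bun_G \to \bun_G \times_{\bun_{G_V}} \bun_G
\]
is proper*. Unwinding the definition, this amounts to the following concrete statement: given a product of geometric points $S$ with rank-one points $\coprod_i s_i \to S$, a triple $(\mathcal{Q}, \mathcal{P}_1, \mathcal{P}_2)$ over $S$ with $\mathcal{Q} \in \bun_{G_V}(S)$ and two $G$-reductions $\mathcal{P}_1, \mathcal{P}_2$, together with an identification $\mathcal{P}_1\vert_{X_{\coprod_i s_i}} \simeq \mathcal{P}_2\vert_{X_{\coprod_i s_i}}$ respecting $\mathcal{Q}$, one must extend this canonically to an isomorphism $\mathcal{P}_1 \simeq \mathcal{P}_2$ on $X_S$. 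Uniqueness of such an extension is immediate because $G \hookrightarrow G_V$ is a monomorphism: two extensions would differ by an automorphism of $\mathcal{P}_1$ inducing the identity of $\mathcal{Q}$, which is forced to be trivial.

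For existence, the plan is to interpret $\mathcal{P}_1$ and $\mathcal{P}_2$ as two sections $\sigma_1, \sigma_2$ of the associated fiber bundle $\mathcal{Q}/G \to X_S$, whose fibers are modeled on the homogeneous space $G_V/G$. By Matsushima's theorem, $G_V/G$ is an affine variety over $\qp$ (since $G$ is reductive and $G \hookrightarrow G_V$ is a closed immersion), and in particular $\mathcal{Q}/G \to X_S$ is separated. Consequently, the equalizer locus $W := \{ \sigma_1 = \sigma_2 \}$ is a closed subdiamond of $X_S$ containing $X_{\coprod_i s_i}$ by hypothesis.

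The central step, and main obstacle, is to deduce that $W = X_S$. I would proceed by choosing a $G_V$-equivariant closed embedding $G_V/G \hookrightarrow V$ into a representation $V$ of $G_V$, which realizes $\mathcal{Q}/G$ as a closed subspace of the vector bundle $\mathcal{V} := \mathcal{Q} \times^{G_V} V$ on $X_S$. The difference $\sigma_1 - \sigma_2$ thus becomes a global section of $\mathcal{V}$ vanishing on $X_{\coprod_i s_i}$, so the claim reduces to the injectivity of the restriction map on global sections. Working on the affinoid cover of $X_S$ by the $\mathcal{Y}_{[a,b]}(S)$, this in turn reduces to injectivity of the corresponding maps of structure sheaves. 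The expected input is that the natural map $\prod_i C_i^+ \hookrightarrow \prod_i \mathcal{O}_{C_i}$ is injective (since each $C_i^+ \subseteq \mathcal{O}_{C_i}$ is an inclusion of valuation subrings of $C_i$), and that this injectivity is propagated through the Witt-vector, localization, and completion constructions that build the rings of functions on the $\mathcal{Y}_{[a,b]}$. Verifying this injectivity carefully in the relevant complete topological setup, particularly ensuring that neither $p$-adic completion nor the passage to $\varphi$-equivariant modules destroys injectivity, is the principal technical difficulty that the proof must confront.
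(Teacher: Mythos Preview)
Your approach is correct and structurally very close to the paper's, but you leave open precisely the step the paper dispatches cleanly. Both arguments start identically with Lemma~\ref{Lem:LiftsDiagonal} and both linearize the problem by realizing the two $G$-reductions as sections of a vector bundle on $X_S$ (you via Matsushima and an equivariant embedding $G_V/G \hookrightarrow V$; the paper via a finite set of tensors $\{t_\alpha\} \subset V^\otimes$ cutting out $G$ inside $\mathrm{GL}(V)$). The key divergence is in how the vanishing is concluded.

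Rather than proving your injectivity statement directly on the ring level, the paper observes that the locus where the two sections (equivalently the two families of tensors) agree is exactly the equalizer of two $S$-points of the Banach--Colmez space $\mathrm{BC}(\mathcal{E}^{\otimes n} \otimes (\mathcal{E}^\ast)^{\otimes m}) \to S$, and this Banach--Colmez space is separated over $S$ by \cite[Proposition~II.2.16]{FarguesScholze}. Hence the diagonal $\Delta$ is a closed immersion of v-stacks, and one finishes by invoking Lemma~\ref{Lem:ProperRepresentableLifting} (closed immersions are proper and representable, hence proper*). This bypasses entirely the hands-on verification of injectivity through Witt vectors, localizations, and completions that you flag as the principal difficulty; all of that is absorbed into the valuative-criterion argument hidden inside Lemma~\ref{Lem:ProperRepresentableLifting} (ultimately \cite[Proposition~2.18]{ZhangThesis}). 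Your route would also succeed once that injectivity is established, but the paper's packaging is both shorter and more robust, since it works uniformly over arbitrary $S$ rather than exploiting the special shape of a product of points.
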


\begin{proof}

  By Lemma~\ref{Lem:LiftsDiagonal}, we may instead show that the diagonal
  $\Delta \colon \bun_G \to \bun_G \times_{\bun_{G_V}} \bun_G$ is proper*.
  We claim that the diagonal is
  a closed immersion; the result then follows from
  Lemma~\ref{Lem:ProperRepresentableLifting}, as closed immersions are proper
  and representable. Let $S$ be an arbitrary perfectoid space in characteristic
  $p$ and let $S \to \bun_G \times_{\bun_{G_V}} \bun_G$ be a morphism. We need
  to check that the pullback $T$ along the diagonal $\Delta$ defines a closed
  immersion $T \hookrightarrow S$.

  As in Section \ref{Sec:EtaleTensors}, we choose a finite collection of tensors $\{t_\alpha\}_{\alpha \in \mathscr{A}}
  \subset V^\otimes$ with the property that the intersection of the stabilizers
  of $t_\alpha$ inside $\mathrm{GL}(V)$ is $G$. Then a $G$-bundle on the
  Fargues--Fontaine curve $X_S$ gives rise to a vector bundle $\mathcal{E}$
  together with global sections $\{t_{\alpha,\mathrm{FF}}\}_{\alpha \in
  \mathscr{A}} \subset H^0(X_S, \mathcal{E}^\otimes)$. The map $S \to \bun_G
  \times_{\bun_{G_V}} \bun_G$ corresponds to a pair of $G$-bundles on $X_S$
  together with an isomorphism between the
  underlying $G_V$-bundles. Writing $\mathcal{E}$ for the common underlying
  vector bundle on $X_S$, we obtain two collections of tensors
  $\{t_{1,\alpha,\mathrm{FF}}\}_{\alpha \in \mathscr{A}},\{t_{2,\alpha,\mathrm{FF}}\}_{\alpha \in
  \mathscr{A}} \in H^0(X_S, \mathcal{E}^\otimes)$. The pullback $T$ is
  identified with the locus in $S$ over which $t_{1,\alpha,\mathrm{FF}} = t_{2,\alpha,\mathrm{FF}}$ for
  all $\alpha \in \mathscr{A}$.

  Since finite intersections of closed immersions are closed immersions, it
  suffices to prove that the locus of $t_{1,\alpha,\mathrm{FF}} = t_{2,\alpha,\mathrm{FF}}$ is a closed
  immersion for a single $\alpha$. This comes down to showing that for integers
  $n,m \ge 0$ and two $S$-points $t_1,t_2$ of the relative Banach--Colmez space
  $\operatorname{BC}(\mathcal{E}^{\otimes n} \otimes
  (\mathcal{E}^{\ast})^{\otimes m}) \to S$, the locus of $t_1=t_2$ defines a
  closed immersion. This follows since $\operatorname{BC}(\mathcal{E}^{\otimes n}
  \otimes (\mathcal{E}^{\ast})^{\otimes m}) \to \ProdPts$ is separated, which
  is proven in \cite[Proposition II.2.16]{FarguesScholze}.
\end{proof}

\begin{Prop} \label{Prop:UniqueLiftsShtuka}
  The map $\shtgmuone \to \shtgvmu$ is proper*.
\end{Prop}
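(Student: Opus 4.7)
The plan is to invoke Lemma \ref{Lem:ProperRepresentableLifting} after establishing that $\shtgmu \to \shtgvmu$ is proper and representable by locally spatial diamonds; the desired conclusion for $\shtgmuone \hookrightarrow \shtgmu \to \shtgvmu$ then follows because $\shtgmuone$ is an open and closed substack of $\shtgmu$.

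By our Zarhin-style choice of data (Section \ref{subsub:Zarhin}), the morphism $\mathcal{G} \hookrightarrow \mathcal{G}_V$ is a closed immersion of smooth affine $\zp$-group schemes, arising as the scheme-theoretic stabilizer of a tuple of tensors $\{t_\alpha\}_\alpha \subset V_\zp^\otimes$. For any test space $T \in \perf$ and any $\mathcal{G}_V$-shtuka $(\mathcal{Q}, \phi_\mathcal{Q})$ on $T$ with leg at an untilt $T^\sharp$, a reduction of structure to a $\mathcal{G}$-shtuka corresponds, via the Tannakian formalism, to a tuple of $\phi$-invariant global sections of the associated bundle $\mathcal{Q}^\otimes$ on $T \bdtimes \zp$ that are \'etale-locally equal to the $t_\alpha$. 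Exactly as in the proof of Proposition \ref{Prop:UniqueLiftsBunG}, equality of such global sections inside the relative Banach--Colmez space of $\mathcal{Q}^\otimes$ is a closed condition, and $\phi$-invariance is a further closed (equalizer) condition. This realizes the fiber of $\shtg \to \shtgv$ over $T$ as a closed subdiamond of a relative Banach--Colmez space, hence as a locally spatial diamond. Factoring
\[
\shtgmu \longrightarrow \shtg \times_{\shtgv} \shtgvmu \longrightarrow \shtgvmu,
\]
the second arrow is a base change of the representable morphism $\shtg \to \shtgv$, so it too is representable in diamonds.

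For the first arrow, the $\mu$-boundedness is imposed by the closed $v$-sheaf local model $\mathbb{M}_{\mathcal{G}, \mu}^\mathrm{v} \hookrightarrow \mathrm{Gr}_{\mathcal{G}, \spd \mathcal{O}_E}$, which by \cite[Theorem 1.1]{AGLR} and \cite[Corollary 1.4]{GleasonLourencoLocalModel} is the $v$-sheaf associated to a proper flat $\mathcal{O}_E$-scheme. Combined with the $\mu_V$-boundedness already imposed in $\shtgvmu$ and the closed immersion of local models
\[
\mathbb{M}_{\mathcal{G}, \mu}^\mathrm{v} \hookrightarrow \mathbb{M}_{\mathcal{G}_V, \mu_V}^\mathrm{v} \times_{\spd \zp} \spd \mathcal{O}_E
\]
induced by $\mathcal{G} \hookrightarrow \mathcal{G}_V$, this identifies $\shtgmu$ inside $\shtg \times_{\shtgv} \shtgvmu$ as a closed subspace whose fibers are proper diamonds. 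Combining this properness with the representability of the second arrow yields that $\shtgmu \to \shtgvmu$ is representable by proper diamonds.

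The main obstacle is ensuring properness of the fibers rigorously: while the representability via tensor sections is a near-verbatim adaptation of the argument in Proposition \ref{Prop:UniqueLiftsBunG}, showing that the resulting closed subdiamond is in fact \emph{proper} over $T$ requires careful bookkeeping of how the two closed conditions — reduction-of-structure and $\mu$-boundedness — interact, and how they descend through the local model diagrams. With this in hand, Lemma \ref{Lem:ProperRepresentableLifting} immediately gives the proper* property.
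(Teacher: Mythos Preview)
Your approach has a genuine gap. The crux of your argument is to transport the Banach--Colmez reasoning from Proposition~\ref{Prop:UniqueLiftsBunG}, but that argument works because $X_S$ is proper over $S$: global sections of a vector bundle on $X_S$ form a Banach--Colmez space, which is a diamond with good separatedness properties. By contrast, a reduction of a $\mathcal{G}_V$-shtuka to $\mathcal{G}$ is data living on $\mathcal{Y}_{[0,\infty)}(T) = T \bdtimes \zp$, which is not proper over $T$. The functor of global sections of $\mathcal{Q}^\otimes$ over $\mathcal{Y}_{[0,\infty)}(T)$ is not a Banach--Colmez space, and there is no reason to expect it to be a proper (or even locally spatial) diamond over $T$. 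More concretely, reductions of $\mathcal{Q}$ to $\mathcal{G}$ correspond to sections of the $\mathcal{G}_V/\mathcal{G}$-bundle $\mathcal{Q}/\mathcal{G}$ over $\mathcal{Y}_{[0,\infty)}(T)$; since $\mathcal{G}_V/\mathcal{G}$ is a non-proper affine $\zp$-scheme, there is no mechanism forcing properness of the space of such sections. Your acknowledgment that ``showing that the resulting closed subdiamond is in fact proper over $T$ requires careful bookkeeping'' is exactly where the argument fails: no bookkeeping will supply properness here, and Lemma~\ref{Lem:ProperRepresentableLifting} does not apply.

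The paper's proof avoids this entirely by never claiming that $\shtgmuone \to \shtgvmu$ is proper and representable. Instead, it verifies directly that the \emph{structure maps} $\shtgmuone \to \spd \mathcal{O}_E$ and $\shtgvmu \to \spd \zp$ are each proper*, by factoring $\coprod_i s_i \to S$ through the intermediate product of rank-$1$ points $s = \spa((\prod_i C_i^+)[\varpi^{-1}], \prod_i \mathcal{O}_{C_i})$ and invoking the discussion after \cite[Lemma~2.4.4]{PappasRapoportShtukas} (for the step $s \to S$: shtukas depend only on $R^\circ$, not $R^+$) together with \cite[Remark~11.11]{ZhangThesis} (for the step $\coprod_i s_i \to s$), plus the closed immersion $\mathbb{M}_{\mathcal{G},\mu}^{\mathrm{v}} \hookrightarrow \mathrm{Gr}_{\mathcal{G}, \spd \mathcal{O}_E}$ to handle the boundedness condition. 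Since $\spd \mathcal{O}_E \to \spd \zp$ is proper* as well (being proper and representable, via Lemma~\ref{Lem:ProperRepresentableLifting}), the two-out-of-three Lemma~\ref{Lem:LiftsTwoOutOfThree} then gives the conclusion.
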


\begin{proof}
  Let $S$ be as in Definition~\ref{Def:LiftsProductOfPoints}. \def\ProdRkOne{s}
  Decompose $\RkOne \to \ProdPts$ into
  \[
    {\textstyle\RkOne} \to \ProdRkOne = \spa({\textstyle (\prod_i
    C_i^+)[\varpi^{-1}], \prod_i \mathcal{O}_{C_i}}) \to \ProdPts.
  \]
  It follows from the discussion after \cite[Lemma~2.4.4]{PappasRapoportShtukas} that the structure maps
  $\shtgmuone \to \spd \mathcal{O}_E$ and $\shtgvmu \to \spd\zp$ have uniquely
  existing lifts for $\ProdRkOne \to \ProdPts$. On the other hand,
  \cite[Remark~11.11]{ZhangThesis} tells us that $\shtgmuone \to \spd
  \mathcal{O}_E$ and $\shtgvmu \to \spd\zp$ have uniquely existing lifts (as in \cite[Definition 2.1.8.(1)]{Companion}) for
  $\RkOne \to \ProdRkOne$. Here to get the bounds on the legs, we are using the
  fact that $\mathbb{M}_{\mathcal{G},\mu}^{\mathrm{v}} \hookrightarrow
  \operatorname{Gr}_{\mathcal{G},\spd \mathcal{O}_E}$ is a closed immersion,
  hence has uniquely existing lifts for $\RkOne \to \ProdRkOne$ by
  Lemma~\ref{Lem:ProperRepresentableLifting}. This shows that both maps have
  uniquely existing lifts for $\RkOne \to \ProdPts$.

  Now consider the diagram
  \[ \begin{tikzcd}
    \shtgmuone \arrow{r} \arrow{d} & \shtgvmu \arrow{d} \\ \spd \mathcal{O}_E
    \arrow{r} & \spd \zp.
  \end{tikzcd} \]
  Both vertical maps are proper*, and so is $\spd \mathcal{O}_E \to \spd \zp$.
  Lemma~\ref{Lem:LiftsTwoOutOfThree} now implies that $\shtgmuone \to \shtgvmu$
  also is proper*.
\end{proof}

\begin{Prop} \label{Prop:UniqueLiftsIgusa}
  The map $\igspreinf \to \igsvpreinf$ is separated*.
\end{Prop}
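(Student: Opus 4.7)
The plan is to verify fully-faithfulness of $\lambda_i$ on Hom sets directly, reducing the essential content to the separatedness$^\ast$ of $\bun_G \to \bun_{G_V}$ established in Proposition~\ref{Prop:UniqueLiftsBunG}. Since a morphism in $\igspreinf$ over any test object is a formal quasi-isogeny equipped with extra compatibility data (an \'etale condition and a $G$-crystalline condition) and is uniquely determined by the underlying quasi-isogeny, the forgetful map from $\Hom_{\igspreinf(T)}$ to $\Hom_{\igsvpreinf(T)}$ is injective for every $T \in \perf$. Consequently $\lambda_i$ is faithful on Hom sets, and any preimage of a compatible pair $(\gamma, \delta)$ in the fiber product is automatically compatible with $\gamma$ on $\RkOne$.

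For fullness, I will show that, given $(\gamma, \delta)$ in the fiber product with $z_1, z_2 \in \igspreinf(\ProdPts)$, the formal quasi-isogeny underlying $\delta$ already satisfies the two extra conditions in $\igspreinf(\ProdPts)$. The \'etale condition is handled by observing that the canonical Hodge-type level structure $\eta_{z_i}$ on $\scrs_{K_p}\gx$ coincides, as an isomorphism of underlying $\afp$-local systems, with the pullback along the Hodge embedding $\mathsf{G} \hookrightarrow \mathsf{G}_V$ of the canonical Siegel level structure on $\scrs_{M_p}\gvx$. Consequently, the tensor-preserving \'etale equation required in $\igspreinf$ is literally the same as the symplectic \'etale equation required in $\igsvpreinf$, so it is automatic from $\delta \in \Hom_{\igsvpreinf(\ProdPts)}$. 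For the $G$-crystalline condition, $\delta$ induces an isomorphism of $G_V$-bundles $i \mathbb{L}_{\mathrm{crys}, z_1} \xrightarrow{\sim} i \mathbb{L}_{\mathrm{crys}, z_2}$ on $X_\ProdPts$; its restriction to $\RkOne$ lifts to a $G$-isomorphism thanks to $\gamma$, and Proposition~\ref{Prop:UniqueLiftsBunG} promotes this $G$-lift to a global $G$-isomorphism over $\ProdPts$.

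The main technical point is the compatibility of the two canonical \'etale trivializations used in the previous paragraph. This is not deep, but it needs to be carefully extracted from the construction in Section~\ref{Sec:EtaleTensors} together with the construction of the integral model $\scrs_{K_p}\gx$ at stabilizer Bruhat--Tits level as a subspace of $\scrs_{M_p}\gvx$: the Hodge trivialization is by definition the pullback of the Siegel one, together with the additional fact that it also preserves each tensor $t_\alpha$. Once this compatibility is in hand the remainder of the argument is formal and is driven entirely by Proposition~\ref{Prop:UniqueLiftsBunG}.
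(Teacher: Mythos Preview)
Your argument is correct and follows essentially the same route as the paper: both reduce the separatedness$^\ast$ to the $G$-crystalline lifting condition and invoke Proposition~\ref{Prop:UniqueLiftsBunG}. You are simply more explicit about why the \'etale condition in $\igspreinf$ is already implied by the Siegel-level \'etale condition in $\igsvpreinf$ (the paper absorbs this into the phrase ``per definition''), but the structure of the proof is the same.
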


\begin{proof}
  Let $\RkOne \to \ProdPts$ be as in Definition~\ref{Def:LiftsProductOfPoints}. Let $x_1, x_2 \in \igspreinf(\ProdPts)$ be two points and let $\psi \colon \iota(x_1) \to \iota(x_2)$ be a morphism in
  $\igsvpreinf(\ProdPts)$, such that the restriction of $\psi$ to
  $\igsvpreinf(\RkOne)$ is (uniquely) induced from a morphism
  $x_1 \vert_{\RkOne} \to x_2 \vert_{\RkOne}$ in $\igspreinf(\RkOne)$. We wish
  to verify that $\psi$ is (uniquely) induced from a morphism $x_1 \to x_2$ in
  $\igspreinf(\ProdPts)$. The condition for a morphism in $\igsvpreinf(\ProdPts)$
  to come from a morphism in $\igspreinf(\ProdPts)$ is, by definition, that the
  induced morphism in $\bun_{G_V}(\ProdPts)$ comes from a morphism in
  $\operatorname{Bun}_{G}(\ProdPts)$. The result now follows from
  Proposition~\ref{Prop:UniqueLiftsBunG}.
\end{proof}

\begin{Prop} \label{Prop:UniqueLiftsF}
  The map $F^{\mathrm{pre}} \to F_V^{\mathrm{pre}}$ is separated*.
\end{Prop}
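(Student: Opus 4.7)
The proof will be a direct application of the formal properties of separated*/proper* maps that have just been established, combined with the three preceding propositions. The plan is as follows.

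First I would observe that $F^{\mathrm{pre}} \to F_V^{\mathrm{pre}}$ arises as the map on fiber products induced by the 2-commutative diagram
\[ \begin{tikzcd}
  \igspreinf \arrow{d} \arrow{r}{\overline{\pi}_{\mathrm{HT}}} & \bungmu \arrow{d} & \shtgmuone \arrow{l}[swap]{\mathrm{BL}^\circ} \arrow{d} \\
  \igsvpreinf \arrow{r} & \bun_{G_V, \mu_V^{-1}} & \shtgvmu. \arrow{l}
\end{tikzcd} \]
To apply Lemma~\ref{Lem:UniqueLiftsFiberProd}, it suffices to check that each of the three vertical arrows is separated*. The leftmost arrow is separated* by Proposition~\ref{Prop:UniqueLiftsIgusa}, and the middle arrow $\bungmu \to \bun_{G_V, \mu_V^{-1}}$ is separated* since it is the restriction of the separated* map $\bun_G \to \bun_{G_V}$ of Proposition~\ref{Prop:UniqueLiftsBunG} (separatedness* is preserved under base change along an inclusion of open substacks).

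Second, I would note that the rightmost arrow $\shtgmuone \to \shtgvmu$ is proper* by Proposition~\ref{Prop:UniqueLiftsShtuka}, and every proper* map is separated*. Indeed, directly from Definition~\ref{Def:LiftsProductOfPoints}, if $\lambda_f$ is an equivalence of groupoids for every lifting diagram, then $\lambda_f$ is in particular fully faithful.

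Finally, applying Lemma~\ref{Lem:UniqueLiftsFiberProd} to the above diagram, we conclude that the induced map on the fiber products $F^{\mathrm{pre}} = \igspreinf \times_{\bungmu} \shtgmuone \to \igsvpreinf \times_{\bun_{G_V, \mu_V^{-1}}} \shtgvmu = F_V^{\mathrm{pre}}$ is separated*. There is no genuine obstacle: this step of the argument is essentially a bookkeeping exercise, with all the substantive input already contained in the preceding three propositions and in the formalism set up in Lemmas~\ref{Lem:LiftsDiagonal}--\ref{Lem:UniqueLiftsFiberProd}.
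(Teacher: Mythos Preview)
Your proposal is correct and follows the same approach as the paper, which simply cites Lemma~\ref{Lem:UniqueLiftsFiberProd} together with Propositions~\ref{Prop:UniqueLiftsBunG}, \ref{Prop:UniqueLiftsIgusa}, and \ref{Prop:UniqueLiftsShtuka}. Your write-up just makes explicit the two easy observations (proper* implies separated*, and separated* passes to the restriction $\bungmu \to \bun_{G_V,\mu_V^{-1}}$) that the paper leaves implicit.
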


\begin{proof}
  This follows from Lemma~\ref{Lem:UniqueLiftsFiberProd} in combination with
  Propositions \ref{Prop:UniqueLiftsBunG}, \ref{Prop:UniqueLiftsIgusa} and
  \ref{Prop:UniqueLiftsShtuka}.
\end{proof}

We also record the following crucial ingredient from \cite{ZhangThesis}.

\begin{Thm}[{\cite[Proposition~8.14]{ZhangThesis}}] \label{Thm:SiegelCartesian}
  For $\ProdPts$ a product of geometric points in characteristic $p$, the map
  $\scrsdvpreinf(\ProdPts) \to F_V^{\mathrm{pre}}(\ProdPts)$ is a bijection.
\end{Thm}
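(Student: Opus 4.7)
The plan is to exploit the moduli interpretation specific to the Siegel case: $\scrs_{M_p}\gvx$ parametrizes principally polarized abelian schemes up to prime-to-$p$ isogeny, equipped with $M^p$-level structure. Since a product of geometric points $\ProdPts = \spa((\prod_i C_i^+)[\varpi^{-1}], \prod_i C_i^+)$ is totally disconnected by \cite[Proposition~1.6]{GleasonSpecialization}, Lemma~\ref{Lem:DiamondOfFormalScheme} implies that $\scrsdvpreinf(\ProdPts)$ is the honest groupoid of pairs $(S^\sharp, y)$ with $y \colon \spf R^{\sharp+} \to \hatscrsgvinf$. Correspondingly, an object of $F_V^{\mathrm{pre}}(\ProdPts)$ consists of a triple $((S^{\sharp_1}, y), (S^{\sharp_2}, \mathscr{Q}, \phi_\mathscr{Q}), \alpha)$, where the last datum $\alpha$ is an identification of the associated $G_V$-bundles on $X_\ProdPts$. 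We prove essential surjectivity and full faithfulness separately.

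For essential surjectivity, given such a triple we convert the shtuka $(\mathscr{Q}, \phi_\mathscr{Q})$ into a principally polarized $p$-divisible group $\mathbb{X}$ over $\spf R^{\sharp_2+}$ by combining \cite[Proposition~2.2.7]{PappasRapoportShtukas} with \cite[Theorem~14.4.1]{ScholzeWeinsteinBerkeley}. Using Lemma~\ref{Lem:Compatibility} and the Dieudonn\'e-theoretic formalism of Section~\ref{Sub:FormalQuasiIsogenies}, the isomorphism $\alpha$ translates to a formal quasi-isogeny
\[
  \tilde{\alpha} \colon A_y[p^\infty] \dashrightarrow \mathbb{X}
\]
preserving polarizations up to a locally constant $\mathbb{Q}^\times$-valued function on $\spec R^+/\varpi$. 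The desired Shimura point arises by modification: over $\spec R^+/\varpi$, form $(A_y \otimes R^+/\varpi)/\ker(\tilde{\alpha})$, interpreted via a factorization of $\tilde\alpha$ as a composition of honest isogenies when it is not itself an isogeny, as in \cite[Remark~8.12]{ZhangThesis}. Serre--Tate theory then lifts this quotient uniquely to a formal abelian scheme $A_z$ over $\spf R^{\sharp_2+}$ with $p$-divisible group $\mathbb{X}$, inheriting the principal polarization from $\mathbb{X}$ and the prime-to-$p$ level structure transported through $\tilde\alpha$. The resulting moduli point $z \colon \spf R^{\sharp_2+} \to \hatscrsgvinf$ maps to an object of $F_V^{\mathrm{pre}}(\ProdPts)$ that is isomorphic to the given triple by construction.

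For full faithfulness, suppose $y_1, y_2 \in \scrsdvpreinf(\ProdPts)$ induce isomorphic objects of $F_V^{\mathrm{pre}}$. The Igusa-component isomorphism provides a formal quasi-isogeny $A_{y_1} \otimes R^+/\varpi \dashrightarrow A_{y_2} \otimes R^+/\varpi$ preserving all structures, while the shtuka-component isomorphism gives, via Scholze--Weinstein, a genuine isomorphism $A_{y_1}[p^\infty] \xrightarrow{\sim} A_{y_2}[p^\infty]$ over $\spf R^{\sharp+}$ (necessarily the two untilts coincide, by examining the leg of the shtuka). Agreement at the level of $p$-divisible groups on $\spec R^+/\varpi$ forces the formal quasi-isogeny of abelian schemes to actually be an isomorphism, which Serre--Tate lifts to an isomorphism over $\spf R^{\sharp+}$; hence the moduli data of $y_1$ and $y_2$ coincide.

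The main technical obstacle lies in ensuring that all three incarnations of the same quasi-isogeny datum---a formal quasi-isogeny on $\spec R^+/\varpi$, the shtuka-to-$p$-divisible-group dictionary on $\spf R^{\sharp+}$, and the Fargues--Fontaine bundle isomorphism on $X_\ProdPts$---fit together coherently. Lemma~\ref{Lem:Compatibility} supplies the essential functorial identification $\mathcal{E}(A) \simeq \mathrm{BL}^\circ(\mathscr{V}(A[p^\infty]))$ used to move back and forth, but upgrading this to compatibility of isomorphisms requires verifying that the Tannakian description of the bundle on $X_\ProdPts$ induced by a formal quasi-isogeny agrees with the one coming from an isomorphism of shtukas. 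A secondary subtlety is the well-definedness of the quotient $A_y/\ker(\tilde\alpha)$ for genuine quasi-isogenies, which is handled by the two-step zigzag trick of \cite[Remark~8.12]{ZhangThesis}.
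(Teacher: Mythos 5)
The paper does not actually prove Theorem~\ref{Thm:SiegelCartesian}: it is imported wholesale from \cite[Proposition~8.14]{ZhangThesis}, with Lemma~\ref{Lem:ComparisonZhang} matching the two definitions of the Siegel Igusa stack and the remark following the theorem handling the passage from finite to infinite prime-to-$p$ level. So you are reconstructing an external argument rather than paralleling one in the text. Your outline does have the right shape --- it is in essence the moduli-theoretic argument of loc.\ cit., and the moduli-theoretic route is the only viable one here, since the uniformization machinery the paper deploys for its own rank-one statements is not available over a product of points --- but as written it has a genuine gap at its central step.

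The gap: every dictionary you invoke is justified by your citations only over a single algebraically closed point. Converting the shtuka $(\mathscr{Q},\phi_{\mathscr{Q}})$ into a principally polarized $p$-divisible group over $\spf R^{\sharp_2+}$ via \cite[Theorem~14.4.1]{ScholzeWeinsteinBerkeley} together with \cite[Proposition~2.2.7]{PappasRapoportShtukas}, and converting the bundle isomorphism $\alpha$ on $X_S$ into a quasi-isogeny $\tilde{\alpha}$ of $p$-divisible groups over $\spec R^+/\varpi$ via crystalline Dieudonn\'e theory, are exactly the steps the paper performs in Propositions~\ref{Prop:RankOneGeometricSurjective} and~\ref{Prop:RankOneGeometricInjective} --- but there $R^+=\mathcal{O}_C$ for one algebraically closed perfectoid field $C$, which is the setting of the cited results. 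Over $R^+=\prod_i C_i^+$ neither the essential surjectivity of the functor from $p$-divisible groups to minuscule shtukas nor the full faithfulness up to isogeny of the Dieudonn\'e functor is covered by those references; establishing them over such rings (by Lau or Ansch\"utz--Le Bras style integral $p$-adic Hodge theory, or by glueing over the factors with uniform control of denominators, using quasi-compactness of $\spec \prod_i C_i^+$) is precisely the content of \cite[Proposition~8.14]{ZhangThesis} that makes it nontrivial. Your closing paragraph correctly identifies the coherence of the three incarnations of the quasi-isogeny as ``the main technical obstacle'' but then leaves it unresolved. The faithfulness half of your argument is sound modulo the same dictionary, so the proposal as it stands reduces the theorem to unproved inputs of comparable difficulty rather than to the literature.
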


\begin{Rem}
  Although \cite{ZhangThesis} works with Shimura varieties at a finite
  level $M_p M^p$, it can be readily checked that the isomorphisms are
  compatible with appropriate transition maps as $M^p$ shrinks along a
  neighborhood basis of $1 \in \mathsf{G}_V(\afp)$, and hence induce an
  isomorphism at infinite level away from $p$.\footnote{Here we use
  Lemma~\ref{Lem:ComparisonZhang} to identify $\operatorname{Igs}_{M}\gvx$
  with the Igusa stack studied in \cite{ZhangThesis}.}
\end{Rem}

We are finally ready to prove Theorem~\ref{Thm:HodgeMain}.

\begin{Prop} \label{Prop:ProdOfGeomPointsBijective}
  For $\ProdPts$ a product of geometric points in characteristic $p$, the map
  $\scrsdpreinf(\ProdPts) \to F^{\mathrm{pre}}(\ProdPts)$ is a bijection.
\end{Prop}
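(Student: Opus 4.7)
The plan is to combine the formal properties assembled in Lemmas \ref{Lem:ProperStarIsomorphismRankOne}--\ref{Lem:UniqueLiftsFiberProd} with the Siegel case (Theorem \ref{Thm:SiegelCartesian}) and the rank-one geometric case (Corollary \ref{Cor:RankOneGeometricBijective}) to deduce the proposition. The strategy is to show that the map $f \colon \scrsdpreinf \to F^{\mathrm{pre}}$ is proper*; once this is established, the proposition follows immediately, since $\lambda_f$ being an equivalence together with the fact that $\scrsdpreinf(\RkOne) \to F^{\mathrm{pre}}(\RkOne)$ is a bijection (Corollary \ref{Cor:RankOneGeometricBijective}) forces the projection
\[
  \scrsdpreinf(\RkOne) \times_{F^{\mathrm{pre}}(\RkOne)} F^{\mathrm{pre}}(\ProdPts) \to F^{\mathrm{pre}}(\ProdPts)
\]
to be a bijection, and hence so is $\scrsdpreinf(\ProdPts) \to F^{\mathrm{pre}}(\ProdPts)$.

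To verify that $f$ is proper*, I would first observe that the map $g \colon F^{\mathrm{pre}} \to F_V^{\mathrm{pre}}$ is separated* by Proposition \ref{Prop:UniqueLiftsF}, so by Lemma \ref{Lem:LiftsTwoOutOfThree} it suffices to show that the composition $g \circ f$ is proper*. Using the commutativity of diagram \eqref{Eq:CommutativeDiagramShimuraIgusa}, this composition is identified with
\[
  \scrsdpreinf \xrightarrow{i^{\diamond,\mathrm{pre}}} \scrsdvpreinf \to F_V^{\mathrm{pre}}.
\]
The first map is proper* by Proposition \ref{Prop:UniqueLiftsShimura}. The second map is a bijection on $\ProdPts$-points for every product of geometric points $\ProdPts$ (and hence in particular for $\RkOne$) by Theorem \ref{Thm:SiegelCartesian}, which immediately implies that it is proper* in the sense of Definition \ref{Def:LiftsProductOfPoints}. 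Composing, the fact that a composition of proper* maps is proper* (a consequence of Lemma \ref{Lem:LiftsTwoOutOfThree}, taking the second map to also be separated*, which it is as an equivalence on all $\ProdPts$-points) shows $g \circ f$ is proper*, and hence $f$ is proper* as desired.

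This reduction is essentially the only part of the argument; the main work is already packaged into the rank-one statement (Corollary \ref{Cor:RankOneGeometricBijective}), the Siegel case (Theorem \ref{Thm:SiegelCartesian}), and the uniqueness-of-lifts statements for the Shimura variety, the Igusa stack, the affine Grassmannian of shtukas, and $\bun_G$. Having said that, the most delicate conceptual step is that the separatedness* of $g$ in Proposition \ref{Prop:UniqueLiftsF} ultimately rests on the closed-immersion property of the diagonal $\bun_G \to \bun_G \times_{\bun_{G_V}} \bun_G$ (Proposition \ref{Prop:UniqueLiftsBunG}), which is what detects the $G$-structure crystalline-theoretically on products of points; this is the geometric statement that makes everything glue.
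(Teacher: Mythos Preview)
Your argument is correct and uses the same ingredients as the paper's proof (Corollary~\ref{Cor:RankOneGeometricBijective}, Theorem~\ref{Thm:SiegelCartesian}, Propositions~\ref{Prop:UniqueLiftsShimura} and~\ref{Prop:UniqueLiftsF}). The paper proves surjectivity and injectivity separately by explicit diagram chasing, whereas you package both into the single statement that $f$ is proper* via Lemma~\ref{Lem:LiftsTwoOutOfThree}; this is a cleaner formalization of the same idea. One small point worth making explicit: when you invoke the bijection on $\RkOne$-points (both for Corollary~\ref{Cor:RankOneGeometricBijective} and Theorem~\ref{Thm:SiegelCartesian}), you are using that presheaf evaluation on the coproduct $\RkOne = \coprod_i s_i$ is the product of evaluations on the $s_i$, and that each $s_i = \spa(C_i,\mathcal{O}_{C_i})$ is itself a product of (a single) geometric point.
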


\begin{proof}
  We first prove surjectivity. Let $\RkOne \to \ProdPts$ be as in
  Definition~\ref{Def:LiftsProductOfPoints}, and let $f \colon \ProdPts \to
  F^{\mathrm{pre}}$ be a morphism. By Theorem~\ref{Thm:SiegelCartesian}, the
  composition $\ProdPts \xrightarrow{f} F^\mathrm{pre} \to F_V^\mathrm{pre}$
  induces a morphism $g \colon \ProdPts \to \scrsdvpreinf$. On the other hand,
  the map $\scrsdpreinf(\RkOne) \to F^{\mathrm{pre}}(\RkOne)$ is a bijection by
  Corollary~\ref{Cor:RankOneGeometricBijective}, and so the restriction of $f$
  to $\RkOne$ lifts uniquely to a map $h \colon \RkOne \to \scrsdpreinf$. By the
  commutativity of \eqref{Eq:CommutativeDiagramShimuraIgusa} together with
  Theorem~\ref{Thm:SiegelCartesian} applied to each $s_i$, the restriction of
  $g$ to $\RkOne$ agrees with $i^{\diamond,\mathrm{pre}} \circ h$, as they agree
  after composing with $\scrsdvpreinf \to F_V^\mathrm{pre}$. By
  Proposition~\ref{Prop:UniqueLiftsShimura}, this means that there is a unique
  lift $\tilde{h} \colon \ProdPts \to \scrsdpreinf$. We summarize the above paragraph with the diagram
  \[ \begin{tikzcd}
    \RkOne \arrow{r}{h} \arrow{d} & \scrsdpreinf \arrow{d} \arrow{r}{\alpha} &
    F^\mathrm{pre} \arrow{d} \\ \ProdPts \arrow{r}[']{g} \arrow[ur, densely dotted, "\tilde{h}"]
    \arrow{rru}[',near end]{f} & \scrsdvpreinf \arrow{r} & F_V^\mathrm{pre}.
  \end{tikzcd} \]
We claim that the diagram is commutative, that is, that $f = \alpha \circ \tilde{h}$. To see this, we note that both $f$ and $\alpha \circ
  \tilde{h}$ are lifts of $F^\mathrm{pre} \to
  F_V^\mathrm{pre}$ along $\RkOne \to \ProdPts$, and hence Proposition~\ref{Prop:UniqueLiftsF} implies that
  $f = \alpha \circ \tilde{h}$.

  We now prove injectivity. Let $f_1, f_2$ be $\ProdPts$-points of
  $\scrsdpreinf$ that agree after composition with $\scrsdpreinf \to
  F^{\mathrm{pre}}$. By the commutativity of
  \eqref{Eq:CommutativeDiagramShimuraIgusa} and
  Theorem~\ref{Thm:SiegelCartesian}, it follows that $i^{\diamond, \mathrm{pre}}
  \circ f_1=i^{\diamond, \mathrm{pre}} \circ f_2$. From
  Corollary~\ref{Cor:RankOneGeometricBijective} it follows that $f_1$ and $f_2$
  agree after restriction to $\RkOne$. It now follows from
  Proposition~\ref{Prop:UniqueLiftsShimura} that $f_1=f_2$.
\end{proof}

\begin{proof}[Proof of Theorem~\ref{Thm:HodgeMain}]
  This follows from Proposition~\ref{Prop:ProdOfGeomPointsBijective} combined with Lemma \ref{Lem:ProperStarIsomorphismRankOne}.
\end{proof}
}

\subsection{Points of the Igusa stack} \label{Sec:PointsIgusa}
Recall that we have defined $\igsinf$ as the v-sheafification of $\igspreinf$.
This means that it is a priori difficult to explicitly describe the
points of $\igsinf$. However, we will prove that for $S$ a product of geometric
points in characteristic $p$, the set of $S$-points of $\igspreinf$ agrees with
the set of $S$-points of $\igsinf$. Recall from \cite[Proposition 3.1.10]{Companion} that the map $\operatorname{BL}^{\circ}:\shtgmuone \to \bung$ factors through $\bungmu$. Given a v-stack $Y$ over $\spd \mathcal{O}_E$ and a perfectoid space $S$ in characteristic $p$ with untilt $S^{\sharp}$ over $\mathcal{O}_E$, we will write $Y(S^{\sharp}) = Y(S) \times_{(\spd \mathcal{O}_E)(S)} \{S^{\sharp}\} \subset Y(S) $.

\begin{Prop} \label{Prop:EssentiallySurjectiveBL}
  Let $S=\spa(R,R^+)$ be a strictly totally disconnected perfectoid space in characteristic $p$, and let
  $S^\sharp$ be an untilt of $S$ over $E$. Then the map $\mathrm{BL} \colon
  \shtgmuonerat(S^{\sharp}) \to \bungmu(S)$ is essentially surjective.
\end{Prop}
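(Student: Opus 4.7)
The plan is to first reduce, via Lemma~\ref{Lem:BLCircVSBL} and the isomorphism \eqref{Eq:GenericFibreShtukas}, to essential surjectivity of the ordinary Beauville--Laszlo map
\[
  \mathrm{Gr}_{G,\mu^{-1}}(S^\sharp) \to \bungmu(S).
\]
Indeed, \eqref{Eq:GenericFibreShtukas} identifies $\shtgmuonerat$ with the quotient $[\mathrm{Gr}_{G,\mu^{-1}} / \underline{\mathcal{G}(\zp)}]$, and Lemma~\ref{Lem:BLCircVSBL} ensures that $\mathrm{BL}$ from the statement agrees with the map induced by the Beauville--Laszlo map. Since $\underline{\mathcal{G}(\zp)}$ is a pro-finite \'etale sheaf of groups and pro-\'etale covers of a strictly totally disconnected space split, every $\underline{\mathcal{G}(\zp)}$-torsor on $S$ is trivial; hence the forgetful map $\mathrm{Gr}_{G,\mu^{-1}}(S^\sharp) \to [\mathrm{Gr}_{G,\mu^{-1}}/\underline{\mathcal{G}(\zp)}](S)$ is already essentially surjective.

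For the reduced statement, given $\mathcal{E} \in \bungmu(S)$, I would produce a trivialization of $\mathcal{E}$ on $X_S \setminus \{S^\sharp\}$ whose induced modification at $S^\sharp$ is bounded by $\mu^{-1}$. The structure of the obstruction is controlled by the Fargues--Scholze description of Newton strata: by Theorem~\ref{Thm:GeometryOfBunG}, after base change to $\spd \fpbar$ each stratum $\bun_G^{[b]}$ is isomorphic to $[\spd \fpbar / \tilde{G}_b]$, and by Lemma~\ref{Lem:LocalUniformisation} the fiber of $\mathrm{BL} \colon \mathrm{Gr}_{G,\mu^{-1}} \to \bungmu$ over the basepoint of $\bun_G^{[b]}$ is the integral local Shimura variety $\mathcal{M}^{\mathrm{int}}_{G,b,\mu}$, which is non-empty for $[b] \in \bgmu$ by Theorem~A of \cite{He2} (used in the proof of Lemma~\ref{Lem:SurjectiveIII}). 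Consequently, on each Newton stratum the obstruction to lifting $\mathcal{E}$ to $\mathrm{Gr}_{G,\mu^{-1}}$ is controlled by a $\tilde{G}_b$-torsor on the corresponding portion of $S$.

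The hard part will be showing that these $\tilde{G}_b$-torsors are trivial on strictly totally disconnected bases, and then gluing local lifts into a single $S$-section. For the first point, $\tilde{G}_b$ is an extension of the locally pro-finite group sheaf $\underline{G_b(\qp)}$ by a positive Banach--Colmez space: torsors under the former are trivial on $S$ by the same pro-\'etale argument used in the first reduction, and torsors under the latter are trivial on any strictly totally disconnected base by the vanishing of the first v-cohomology of positive Banach--Colmez spaces, which is standard in the analysis of cohomology on $\bun_G$ carried out in \cite{FarguesScholze}. For the gluing step, since the Newton stratification of $\bungmu$ pulls back only to a locally closed stratification of $|S|$ rather than a clopen one, the likely route is to reduce to rank-one geometric points via the ``product of points'' formalism of Section~\ref{Sec:ProductOfPointsProof} and to use that any v-cover by rank-one geometric points of a strictly totally disconnected $S$ admits a section; alternatively, one can attempt to directly exploit the cohomological smoothness of $\mathrm{Gr}_{G,\mu^{-1}} \to \bungmu$ to produce the lift in one step.
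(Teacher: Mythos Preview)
Your initial reduction to essential surjectivity of $\mathrm{Gr}_{G,\mu^{-1}}(S^\sharp)\to\bungmu(S)$ is correct and matches the paper. After that, however, the paper takes a completely different and much more direct route than your Newton-strata analysis: it adapts \cite[Proposition~III.3.1, Lemma~III.3.2]{FarguesScholze}. Given $\mathcal{E}\in\bungmu(S)$, one first observes that each connected component of $S$ is $\spa(C,C^+)$ with $C$ algebraically closed, and over such a point \cite[Proposition~A.9]{RapoportAccessiblePeriodDomain} furnishes a lift in $\mathrm{Gr}_{G,\mu^{-1}}(C^\sharp,C^{\sharp+})$. One then trivializes $\mathcal{E}$ on the completion at $S^\sharp$ and reduces (as in \cite[Proposition~III.3.1]{FarguesScholze}) to proving surjectivity of $\mathrm{Gr}_{G,\mu^{-1}}(S^\sharp)\to\mathrm{Gr}_{G,\mu^{-1}}(C,C^+)$ for each connected component. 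By the Cartan decomposition restricted to the double coset $G(\mathbf{B}_\mathrm{dR}^+(C))\mu^{-1}(\xi)G(\mathbf{B}_\mathrm{dR}^+(C))$, this reduces to surjectivity of $G(\mathbf{B}_\mathrm{dR}^+(R))\to G(\mathbf{B}_\mathrm{dR}^+(C))$, which is handled exactly as in \cite[Lemma~III.3.2]{FarguesScholze}. No Newton stratification enters.

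Your proposed approach has genuine gaps. First, the fiber of $\mathrm{BL}$ over a point of $\bun_G^{[b]}$ is the infinite-level local Shimura variety $\mathcal{M}_{G,b,\mu,\infty}$, not the integral one $\mathcal{M}^{\mathrm{int}}_{\mathcal{G},b,\mu}$; Lemma~\ref{Lem:LocalUniformisation} concerns $\shtgmu$, not $\mathrm{Gr}_{G,\mu^{-1}}$. Second, even granting that $\tilde{G}_b$-torsors over $S$ trivialize, you still need a \emph{section} of $\mathcal{M}_{G,b,\mu,\infty}$ over the resulting map $S^{[b]}\to\spd k$, and non-emptiness of the fiber over a point (via He) does not produce this. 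Third, and most seriously, the Newton strata $S^{[b]}$ are only locally closed in $S$, so even if you had lifts on each stratum you cannot glue them: your suggested fixes do not work, since the product-of-points formalism of Section~\ref{Sec:ProductOfPointsProof} applies to a strictly smaller class of spaces, and ``cohomological smoothness'' of $\mathrm{BL}$ gives no mechanism for producing sections. The paper's $\mathbf{B}_\mathrm{dR}^+$-based argument sidesteps all of these issues by working directly with the modification data rather than the isomorphism class of the bundle.
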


\begin{proof}
By Lemma \ref{Lem:BLCircVSBL}, it suffices to show that $\mathrm{BL}:\mathrm{Gr}_{G,\mu^{-1}}(S^{\sharp}) \to \bungmu(S)$ is essentially surjective.  

We argue as in \cite[Proposition~III.3.1]{FarguesScholze}. Let $\mathcal{E}$
  be a $G$-bundle on $X_S$. For each connected component $\spa(C, C^+)
  \hookrightarrow S$, $C$ is an algebraically closed perfectoid
  field, see \cite[Proposition 7.16]{EtCohDiam}. Using
  \cite[Proposition~A.9]{RapoportAccessiblePeriodDomain}, we find an element of
 $\mathrm{Gr}_{G,\mu^{-1}}(C^\sharp, C^{\sharp+})$ that maps to
 $\mathcal{E} \vert_{\spa(C, C^+)}$ under $\mathrm{BL}$. By trivializing
 $\mathcal{E}$ at the completion at $S^\sharp$, we see as in the proof of
  \cite[Proposition~III.3.1]{FarguesScholze} that it suffices to prove that
 $\mathrm{Gr}_{G, \mu^{-1} }(S^\sharp) \to \mathrm{Gr}_{G,
  \mu^{-1}}(C, C^+)$ is surjective.

  At this point, we can use the argument of
  \cite[Lemma~III.3.2]{FarguesScholze}. Using the Cartan decomposition but
  restricting to the double coset $G(\mathbf{B}_\mathrm{dR}^+(C)) \mu^{-1}(\xi)
  G(\mathbf{B}_\mathrm{dR}^+(C))$, we similarly reduce to proving that
  $G(\mathbf{B}_\mathrm{dR}^+(R)) \to G(\mathbf{B}_\mathrm{dR}^+(C))$ is
  surjective. The rest of the proof of \cite[Proposition~III.3.1]{FarguesScholze} now follows through.
\end{proof}

\begin{Cor} \label{Cor:VSurjectiveBL}
  The morphism $\mathrm{BL} \colon \shtgmuonerat \to \bungmu$ is a surjective map of v-stacks; in fact, of pro-\'etale stacks.
\end{Cor}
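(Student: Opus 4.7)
The plan is to deduce this directly from Proposition~\ref{Prop:EssentiallySurjectiveBL} by performing standard reductions in the pro-\'etale topology on $\perf$. Given a morphism $T \to \bungmu$ from a perfectoid space $T \in \perf$, I need to produce a pro-\'etale cover $T' \to T$ together with a factorization $T' \to \shtgmuonerat$ of the composition through $\mathrm{BL}$.

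First, by standard results on the pro-\'etale topology (\cite[Proposition 7.16, Lemma 7.18]{EtCohDiam}), $T$ admits a pro-\'etale cover by a strictly totally disconnected perfectoid space, so I may replace $T$ with such a cover; concretely, $T$ becomes a disjoint union of $\spa(C_i, C_i^+)$ where each $C_i$ is an algebraically closed perfectoid field of characteristic $p$. Second, I need to equip $T$ with an untilt over $E$. Since each $C_i$ is algebraically closed it contains a copy of the residue field $k_E$; after (if necessary) a further pro-\'etale cover ensuring that $k_E$ lifts to $C_i^+$, Fontaine's $\mathbf{A}_\mathrm{inf}$-machinery applied to the $\mathcal{O}_E$-ramified Witt vectors $W_{\mathcal{O}_E}(C_i^+)$ together with a choice of a uniformizer-primitive element produces an untilt $(C_i^\sharp, C_i^{\sharp+})$ of $(C_i, C_i^+)$ over $E$ on each component. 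Assembling these yields an untilt $T^\sharp$ of $T$ over $E$.

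With these two reductions accomplished, the pair $(T, T^\sharp)$ satisfies precisely the hypotheses of Proposition~\ref{Prop:EssentiallySurjectiveBL}, which yields the desired lift $T \to \shtgmuonerat$ of the given map $T \to \bungmu$. Since both reductions proceeded via pro-\'etale covers, this establishes simultaneously v-surjectivity and the stronger statement of pro-\'etale surjectivity.

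The only potentially delicate step is the construction of untilts over $E$ on each component of the strictly totally disconnected cover, but this is essentially routine given the algebraic closedness of the $C_i$ and the fact that $E$ is a finite extension of $\qp$. No essential new ideas are needed beyond Proposition~\ref{Prop:EssentiallySurjectiveBL} itself; the corollary is a topological upgrade from "essentially surjective on strictly totally disconnected test objects equipped with an $E$-untilt" to "surjective as a map of v-stacks (respectively pro-\'etale stacks)."
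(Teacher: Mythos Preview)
Your approach matches the paper's: reduce to strictly totally disconnected test objects (which form a basis for the pro-\'etale topology) and apply Proposition~\ref{Prop:EssentiallySurjectiveBL}. The paper's proof is a single sentence saying exactly this.

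One inaccuracy worth flagging: a strictly totally disconnected perfectoid space is qcqs, hence \emph{not} in general a disjoint union of its connected components $\spa(C_i, C_i^+)$; rather, $\pi_0$ is a profinite set. So your ``assemble untilts component by component'' step is not rigorous as written. The clean fix is to construct the untilt globally: for $S = \spa(R, R^+)$ strictly totally disconnected, pick a pseudouniformizer $\varpi \in R^+$ and form the characteristic-zero untilt $W(R^+)/(p - [\varpi])$ to get a map $S \to \spd \qp$; since $\spd E \to \spd \qp$ is finite \'etale and $S$ has no nonsplit finite \'etale covers, this lifts to $S \to \spd E$ without any further pro-\'etale refinement. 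With this adjustment your argument is complete.
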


\begin{proof}
  It follows from Proposition~\ref{Prop:EssentiallySurjectiveBL} together with
  the fact that strictly totally disconnected spaces form a basis for the pro-\'etale topology.
\end{proof}

\begin{Cor} \label{Cor:ProdGeomPtsIgsQuotSurjective}
  For $S$ a strictly totally disconnected perfectoid space in characteristic $p$, the map $\IgsQuot
  \colon \scrsdinf(S) \to \igsinf(S)$ is essentially surjective.
\end{Cor}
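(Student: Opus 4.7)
The plan is to reduce the essential surjectivity of $\IgsQuot$ on $S$-points to the pro-\'etale surjectivity of $\mathrm{BL}\colon \shtgmuonerat \to \bungmu$ established in Corollary~\ref{Cor:VSurjectiveBL}, using the 2-Cartesian diagram of Theorem~\ref{Thm:HodgeMain}. Given $y \in \igsinf(S)$, I would set $\mathcal{E} := \overline{\pi}_\mathrm{HT}(y) \in \bungmu(S)$. Since the square in Theorem~\ref{Thm:HodgeMain} is 2-Cartesian, producing a lift $x \in \scrsdinf(S)$ with $\IgsQuot(x) \simeq y$ is equivalent to producing a point $z \in \shtgmuone(S)$ together with an isomorphism $\alpha \colon \mathrm{BL}^\circ(z) \simeq \mathcal{E}$; the $2$-morphism ambiguity is trivial because $\shtgmu \to \bung$ is $0$-truncated (Lemma~\ref{Lem:ZeroTruncated}).

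To construct such a pair $(z, \alpha)$, I would apply Corollary~\ref{Cor:VSurjectiveBL} to the morphism $\mathcal{E} \colon S \to \bungmu$. This furnishes a pro-\'etale cover $\widetilde S \to S$ together with a lift $\widetilde z \colon \widetilde S \to \shtgmuonerat$ of the composition $\widetilde S \to S \xrightarrow{\mathcal{E}} \bungmu$ and a witnessing isomorphism $\mathrm{BL}(\widetilde z) \simeq \mathcal{E}\vert_{\widetilde S}$. The hypothesis that $S$ is strictly totally disconnected is then used precisely to promote this pro-\'etale local lift to an honest $S$-point: every pro-\'etale cover of a strictly totally disconnected perfectoid space admits a section (cf.\ \cite[Section~7]{EtCohDiam}). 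Picking such a section $s \colon S \to \widetilde S$ and composing with $\widetilde z$ yields a morphism $S \to \shtgmuonerat$, which via the open immersion $\shtgmuonerat = \shtgmuone \times_{\spd \mathcal{O}_E} \spd E \hookrightarrow \shtgmuone$ defines the desired $z \in \shtgmuone(S)$, while the witnessing isomorphism restricts along $s$ to supply $\alpha$. Finally, the 2-Cartesian property of the square delivers the sought-after $x \in \scrsdinf(S)$.

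No serious obstacle is expected in this argument: the genuinely geometric input has already been absorbed into Theorem~\ref{Thm:HodgeMain} and Proposition~\ref{Prop:EssentiallySurjectiveBL}, while the role of the strict total disconnectedness hypothesis is purely topos-theoretic, converting the pro-\'etale local existence of a lift into a global one over $S$.
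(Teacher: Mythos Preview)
Your overall architecture is correct—reducing to essential surjectivity of $\mathrm{BL}^\circ$ on $S$-points via the 2-Cartesian diagram of Theorem~\ref{Thm:HodgeMain} is exactly what the paper does. However, there is a genuine gap in how you obtain that essential surjectivity. You invoke Corollary~\ref{Cor:VSurjectiveBL} to produce a pro-\'etale cover $\widetilde S \to S$ with a lift, and then assert that pro-\'etale covers of strictly totally disconnected spaces split. This last claim is false: strict total disconnectedness is \emph{defined} by splitting of \'etale covers, not pro-\'etale ones. Over a strictly totally disconnected $S$, an affinoid pro-\'etale cover $\widetilde S \to S$ corresponds to a surjection of profinite sets $\pi_0(\widetilde S) \to \pi_0(S)$, and such surjections admit continuous sections only when $\pi_0(S)$ is extremally disconnected, which is not part of the hypothesis.

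The fix is simply to avoid the detour through Corollary~\ref{Cor:VSurjectiveBL} and use Proposition~\ref{Prop:EssentiallySurjectiveBL} directly: for strictly totally disconnected $S$ with a choice of untilt $S^\sharp$ over $E$, that proposition already asserts essential surjectivity of $\shtgmuonerat(S^\sharp) \to \bungmu(S)$ on the nose, with no cover needed. This is precisely the paper's proof. Note that the logical flow between these two results goes in the direction opposite to what you attempt: Corollary~\ref{Cor:VSurjectiveBL} is \emph{deduced from} Proposition~\ref{Prop:EssentiallySurjectiveBL} using that strictly totally disconnected spaces form a basis for the pro-\'etale topology, not the other way around.
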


\begin{proof}
  It follows from Proposition~\ref{Prop:EssentiallySurjectiveBL}
  in combination with Lemma~\ref{Lem:BLCircVSBL} that the map
 $\mathrm{BL}^{\circ}: \shtgmu(S) \to \bungmu(S)$ of groupoids is essentially surjective. The result
  then follows from Theorem~\ref{Thm:HodgeMain}.
\end{proof}

\subsubsection{} Denote by
\[
  \mathcal{R} = \scrsdinf \times_{\igsinf} \scrsdinf \subseteq \scrsdinf \times
  \scrsdinf
\]
the equivalence relation. Note that this is the v-sheafification of
\[
  \mathcal{R}^\mathrm{pre} = \scrsdpreinf \times_{\igspreinf} \scrsdpreinf
  \subseteq \scrsdpreinf \times \scrsdpreinf,
\]
as sheafification commutes with fiber products.
Since $\scrsdinf(S) \to \igsinf(S)$ is essentially surjective, as shown in
Corollary~\ref{Cor:ProdGeomPtsIgsQuotSurjective}, we see that
\[
  \igsinf(S) = [\scrsdinf(S) / \mathcal{R}(S)].
\]
By the definition of $\igspreinf$ we also have
\[
  \igspreinf(S) = [\scrsdpreinf(S) / \mathcal{R}^\mathrm{pre}(S)].
\]
\begin{Thm} \label{Thm:ProductOfPointsIgs}
  Let $S$ be a product of geometric points in characteristic $p$. Then the natural map
 $\igspreinf(S) \to \igsinf(S)$ is an equivalence.
\end{Thm}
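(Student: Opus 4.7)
Since $\igspreinf$ is $0$-truncated by Lemma~\ref{Lem:Discrete} and sheafification preserves $0$-truncation, $\igsinf$ is also $0$-truncated, so the functor $\igspreinf(S) \to \igsinf(S)$ is automatically faithful; I only need to verify essential surjectivity and fullness. Essential surjectivity is immediate from what has been proved: products of geometric points are strictly totally disconnected, so Corollary~\ref{Cor:ProdGeomPtsIgsQuotSurjective} gives that $\IgsQuot \colon \scrsdinf(S) \to \igsinf(S)$ is essentially surjective, while Lemma~\ref{Lem:DiamondOfFormalScheme} together with total disconnectedness identifies $\scrsdpreinf(S)$ with $\scrsdinf(S)$, and this quotient factors through $\igspreinf(S)$.

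For fullness, my plan is to exploit the equivalence of the two ``F-fiber-products'' at the level of $S$-points. Combining Proposition~\ref{Prop:ProdOfGeomPointsBijective} (which gives $\scrsdpreinf(S) \xrightarrow{\sim} F^{\mathrm{pre}}(S)$), Theorem~\ref{Thm:HodgeMain} (which gives $\scrsdinf \xrightarrow{\sim} F$ as v-stacks, hence on $S$-points), and the equality $\scrsdpreinf(S) = \scrsdinf(S)$, one concludes that the natural map $F^{\mathrm{pre}}(S) \to F(S)$ is an equivalence of groupoids. Now suppose $\psi \colon y_1 \to y_2$ is an iso in $\igsinf(S)$ between objects of $\igspreinf(S)$. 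Choose any untilt $S^\sharp$ of $S$ over $E$ (which exists whenever $\igspreinf(S)$ is nonempty, since each connected component $\spa(C,C^+)$ of $S$ admits such an untilt because $k_E \hookrightarrow C$). By Proposition~\ref{Prop:EssentiallySurjectiveBL}, I may pick a single $s \in \shtgmuone(S^\sharp) \subset \shtgmuone(S)$ together with an iso $\alpha_1 \colon \overline{\pi}_{\mathrm{HT}}(y_1) \xrightarrow{\sim} \mathrm{BL}^\circ(s)$ in $\bungmu(S)$, and set $\alpha_2 := \alpha_1 \circ \overline{\pi}_{\mathrm{HT}}(\psi)^{-1}$. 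The triples $(y_1, s, \alpha_1)$ and $(y_2, s, \alpha_2)$ then both lie in $F(S)$, and the pair $(\psi, \mathrm{id}_s)$ tautologically defines an isomorphism between them in $F(S)$. Pulling this isomorphism back through the equivalence $F^{\mathrm{pre}}(S) \simeq F(S)$ yields an iso $(\tilde\psi, \mathrm{id}_s)$ in $F^{\mathrm{pre}}(S)$ whose first component $\tilde\psi \colon y_1 \to y_2$ is the desired lift of $\psi$ to $\igspreinf(S)$.

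I do not anticipate any serious obstacle: the entire argument is a formal assembly of Theorem~\ref{Thm:HodgeMain}, Proposition~\ref{Prop:ProdOfGeomPointsBijective}, Corollary~\ref{Cor:ProdGeomPtsIgsQuotSurjective}, and Proposition~\ref{Prop:EssentiallySurjectiveBL}. The only mildly subtle bookkeeping is to keep track of when $S$ admits an untilt over $\mathcal{O}_E$ (respectively over $E$): if $\igspreinf(S) = \emptyset$ then Corollary~\ref{Cor:ProdGeomPtsIgsQuotSurjective} forces $\igsinf(S) = \emptyset$ as well and the statement is trivial, while otherwise the data of any object of $\igspreinf(S)$ provides the untilt needed to apply Proposition~\ref{Prop:EssentiallySurjectiveBL}.
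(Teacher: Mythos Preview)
Your proof is correct and rests on exactly the same ingredients as the paper's: the identification $\scrsdpreinf(S)=\scrsdinf(S)$ for totally disconnected $S$, Proposition~\ref{Prop:ProdOfGeomPointsBijective}, Theorem~\ref{Thm:HodgeMain}, and Corollary~\ref{Cor:ProdGeomPtsIgsQuotSurjective}. The paper organizes these slightly differently: it deduces from them that the equivalence relations $\mathcal{R}^{\mathrm{pre}}(S)$ and $\mathcal{R}(S)$ on $\scrsdpreinf(S)=\scrsdinf(S)$ coincide (via the identifications $\mathcal{R}\simeq\scrsdinf\times_{\bun_G}\shtgmu$ and $\mathcal{R}^{\mathrm{pre}}(S)\simeq\scrsdpreinf(S)\times_{\bun_G(S)}\shtgmu(S)$), and then observes that both $\igspreinf(S)$ and $\igsinf(S)$ are the quotient of the same set by the same relation. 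Your decomposition into faithfulness/essential surjectivity/fullness, with the key step being the equivalence $F^{\mathrm{pre}}(S)\simeq F(S)$, is an equivalent repackaging.

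One simplification: your detour through Proposition~\ref{Prop:EssentiallySurjectiveBL} and the construction of an untilt of $S$ over $E$ is unnecessary (and your justification ``because $k_E\hookrightarrow C$'' is a bit thin---you really need that each $C_i$ admits a characteristic-zero untilt, and then assemble these into an untilt of the product). Since $y_1$ is already an object of $\igspreinf(S)$, it is literally a pair $(S^{\sharp_1},x_1)$ with $x_1\in\scrsdpreinf(S)$; taking $s=\pi_{\mathrm{crys}}(x_1)\in\shtgmuone(S)$ and $\alpha_1=\mathrm{id}$ (using the $2$-commutativity built into the construction) gives the required lift immediately, without invoking Proposition~\ref{Prop:EssentiallySurjectiveBL} at all.
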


\begin{proof}
  As noted in the proof of Proposition~\ref{Prop:UniqueLiftsShimura}, the
  natural map $\scrsdpreinf(S) \to \scrsdinf(S)$ is a bijection by
  Lemma~\ref{Lem:DiamondOfFormalScheme} and
  \cite[Proposition~1.6]{GleasonSpecialization}. Hence it suffices to prove that
  the map $\mathcal{R}^\mathrm{pre}(S) \to \mathcal{R}(S)$ is a bijection.

  We note that Theorem~\ref{Thm:HodgeMain} implies that the natural map
  \[
    \mathcal{R} \to \scrsdinf \times_{\bun_G} \shtgmu
  \]
  is an isomorphism. On the other hand, the map $\scrsdpreinf \to
  \igspreinf \times_{\bun_G} \shtgmu = F^\mathrm{pre}$ induces a map
  \[
    \mathcal{R}^\mathrm{pre} \to \scrsdpreinf \times_{\igspreinf} F^\mathrm{pre}
    = \scrsdpreinf \times_{\bun_G} \shtgmu.
  \]
  By Proposition~\ref{Prop:ProdOfGeomPointsBijective}, this map induces an
  isomorphism at the level of $S$-points. That is, we have an isomorphism
  \begin{align}
    \mathcal{R}^\mathrm{pre}(S) &\simeq \scrsdpreinf(S) \times_{\bun_G(S)}
    \shtgmu(S) \\ & \simeq \scrsdinf(S) \times_{\bun_G(S)} \shtgmu(S) \simeq
    \mathcal{R}(S).
  \end{align}
  This proves that $\igspreinf(S) \to \igsinf(S)$ is an equivalence.
\end{proof}

\begin{Rem} \label{Rem:ChangeOfCutoffCardinal}
  A consequence of Theorem~\ref{Thm:ProductOfPointsIgs} is that the
  v-sheafification $\igsinf$ of $\igspreinf$ does not depend on the choice of
  the cutoff cardinal $\kappa$ in the definition of the v-site $\perf$. Given
  an affinoid perfectoid space $S$ in characteristic $p$, we may choose
  v-covers $S_0 \to S$ and $S_1 \to S_0 \times_S S_0$, where both $S_0, S_1$ are
  products of geometric points. Then
  \[
    \igsinf(S) \to \igspreinf(S_0) \rightrightarrows \igspreinf(S_1)
  \]
  is an equalizer diagram, and this gives a description of $\igsinf(S)$ that
  does not depend on the choice of $\kappa$.
\end{Rem}

\subsection{The reduction of the Igusa stack} \label{Sec:ReductionIgusaStack}
The goal of this section is to identify the reduction $\igsinf^{\mathrm{red}}$,
see Section~\ref{Sec:Reduction}. As in Section~\ref{Sec:ReductionDiagram}, we
use $\shginf$ to denote the perfect special fiber of $\scrsginf$, which is
naturally identified with $(\scrsdinf_{k_E})^{\mathrm{red}}=(\scrsdinf)^{\mathrm{red}}$ (see Lemma \ref{Lem:ReductionOfScheme}). The reduction of the morphism $\pi_{\mathrm{crys}}:\scrsdinf \to \shtgmu$ gives rise to a morphism
$\pi_{\mathrm{crys}}^{\mathrm{red}}:\shginf \to \shtglocmu$, see Lemma~\ref{Lem:ReductionShtukasMu}. Its
composition with the reduction of $\mathrm{BL}^{\circ}:\shtgmu \to \bung$ gives rise to a
morphism $\shginf \to \gisoc$, see Section~\ref{Sec:Isocrystals}. We let $A$ denote the pullback along $i:\shginf \to
\shgvinf$ of the universal abelian scheme up to prime-to-$p$ isogeny $A$. For $x:\spec R
\to \shginf$ we let $A_x$ be the pullback of $A$ along $x$. We let $\mathcal{E}(A_{x})$ be the
$\mathrm{GL}_V$-isocrystal corresponding to Dieudonn\'e-module
$\mathbb{D}^{\natural}(A_x[p^{\infty}])$ of the pullback along $x \circ i$. We will denote by $\mathbb{L}_{\mathrm{crys},x}$ the $G$-isocrystal over $\spec R$ induced by the morphism $\shginf \to \gisoc$. It
follows from the discussion in Section~\ref{Sec:CrystallineTensors} that there
is a natural isomorphism
\begin{align}
    \mathbb{L}_{\mathrm{crys},x} \times^{G} \mathrm{GL}_V \simeq \mathcal{E}(A_{x}).
\end{align}
The main result of this section is the following. 

\begin{Thm} \label{Thm:ReductionIgusa}
  The map of v-sheaves on $\affperf$
  \[
    \IgsQuot^\mathrm{red} \colon \shginf \to \igsinf^\mathrm{red}
  \]
  identifies $\igsinf^\mathrm{red}$ with the quotient v-sheaf of $\shginf$
  under the following equivalence relation: Two morphisms $x, y \colon \spec R
  \to \shginf$ are equivalent when there exists a quasi-isogeny $f:A_x
  \dashrightarrow A_y$ such that
  \begin{equation} \begin{tikzcd} \label{Eq:CommutativeDiagramEtaleIII}
    \mathcal{V}^p_{x} \arrow{r}{f} \arrow{d}{\eta_x} &  \mathcal{V}^p_{y}
    \arrow{d}{\eta_y} \\ V \otimes \underline{\mathbb{A}}_f^p \arrow[r, equals]
    & V \otimes \underline{\mathbb{A}}_f^p
  \end{tikzcd} \end{equation}
  commutes and the induced isomorphism of $\mathrm{GL}_V$-isocrystals
 $\mathcal{E}(A_{x}) \to \mathcal{E}(A_{y})$ is induced by a
  $($necessarily unique$)$ isomorphism of $G$-isocrystals
 $\mathbb{L}_{\mathrm{crys},x} \to \mathbb{L}_{\mathrm{crys},y}$.
\end{Thm}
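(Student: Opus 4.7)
The plan is to apply the reduction functor to the Cartesian square of Theorem~\ref{Thm:HodgeMain} and then identify the induced equivalence relation on $\shginf$. Since reduction commutes with limits (being a right adjoint), this yields a Cartesian diagram
\[
\begin{tikzcd}
\shginf \arrow{r}{\pi_{\mathrm{crys}}^{\mathrm{red}}} \arrow{d}{\IgsQuot^{\mathrm{red}}} & \shtglocmuone \arrow{d}{\mathrm{BL}^{\circ,\mathrm{red}}} \\
\igsinf^{\mathrm{red}} \arrow{r}{\overline{\pi}_{\mathrm{HT}}^{\mathrm{red}}} & \gisocmu
\end{tikzcd}
\]
of v-sheaves on $\affperf$. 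Proposition~\ref{Prop:VSurjective} asserts that the right vertical map is v-surjective on spectra of products of absolutely integrally closed valuation rings, which form a basis of the v-topology on $\affperf$ by Lemma~\ref{Lem:SchemeTheoreticProductOfPoints}. Consequently $\IgsQuot^{\mathrm{red}}$ is v-surjective, realizing $\igsinf^{\mathrm{red}}$ as the v-quotient of $\shginf$ by the equivalence relation $\mathcal{R}_0 := \shginf \times_{\igsinf^{\mathrm{red}}} \shginf$. It remains to match $\mathcal{R}_0$ with the v-sheafification $\mathcal{R}$ of the explicit relation described in the theorem.

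For the inclusion $\mathcal{R} \subseteq \mathcal{R}_0$, suppose $x, y \in \shginf(R)$ are equipped with a quasi-isogeny $f \colon A_x \dashrightarrow A_y$ as in the theorem. I would choose a v-cover $S = \spa(R', R'^+) \to \spd R$ with $R'^+$ perfect, and take the trivial characteristic $p$ untilt $S^\sharp = S$. The pullbacks $x_{R'^+}, y_{R'^+} \in \shginf(R'^+)$ then canonically lift to objects of $\igspreinf(S)$, and the base change of $f$ to $R'^+/\varpi$ (for any pseudouniformizer $\varpi$) supplies an isomorphism in $\igspreinf(S)$ between them. The conditions imposed on $f$ by the theorem translate directly to those defining morphisms in $\igspreinf$, using the compatibility $\mathbb{L}_{\mathrm{crys},x} \times^G \mathrm{GL}_V \simeq \mathcal{E}(A_x)$ recorded in Section~\ref{Sec:CrystallineTensors} together with the identification $\gisoc \simeq \bung^{\mathrm{red}}$ from \cite[Theorem~10.4]{GleasonIvanov}.

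For the reverse inclusion $\mathcal{R}_0 \subseteq \mathcal{R}$, given $(x, y) \in \mathcal{R}_0(R)$, I would first reduce via Lemma~\ref{Lem:SchemeTheoreticProductOfPoints} to the case where $R$ is a product of absolutely integrally closed valuation rings, and then pass to a v-cover $S = \spa(R', R'^+) \to \spd R$ by a product of geometric points. By Theorem~\ref{Thm:ProductOfPointsIgs}, $\igsinf(S) \simeq \igspreinf(S)$, so the equality of the images of $x$ and $y$ in $\igsinf(S)$ is realized by an isomorphism in $\igspreinf(S)$. By definition this is a quasi-isogeny $A_x|_{R'^+/\varpi} \dashrightarrow A_y|_{R'^+/\varpi}$ satisfying all the required compatibility conditions, which lifts uniquely to a quasi-isogeny over $\spec R'^+$ by Serre--Tate rigidity, and then descends to a quasi-isogeny over an $\affperf$-v-cover of $R$.

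The main obstacle will be this last descent from $\spec R'^+$ to a v-cover of $R$ in $\affperf$: although $\spd R'^+ \to \spd R$ is a v-cover in $\perf$, it is not immediate that the quasi-isogeny descends along an $\affperf$-v-cover of $R$ to witness $(x,y) \in \mathcal{R}(R)$. Carrying this out requires a careful interplay between v-topologies on $\perf$ and $\affperf$ and uses that both $R$ and $R'^+$ are perfect $\fp$-algebras.
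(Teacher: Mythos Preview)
Your setup through the v-surjectivity of $\IgsQuot^{\mathrm{red}}$ and the easy inclusion is correct, but the obstacle you flag in the reverse inclusion is genuine and I do not see how to close it along the lines you sketch. The quasi-isogeny you extract via Theorem~\ref{Thm:ProductOfPointsIgs} lives over $\spec R'^+/\varpi$ (or $\spec R'^+$ after Serre--Tate), and there is no general mechanism ensuring that $R \to R'^+$ is itself a v-cover in $\affperf$, nor that the quasi-isogeny descends to one. The difficulty is structural: Theorem~\ref{Thm:ProductOfPointsIgs} is a statement about $\perf$, and the passage from a perfectoid $(R',R'^+)$ back to an $\affperf$-v-cover of $R$ is not supplied by that result. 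Even granting uniqueness of the quasi-isogeny (Lemma~\ref{Lem:Discrete}), you would need some form of fpqc descent or a representability argument that is not available here.

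The paper avoids this entirely by computing $\mathcal{R}^{\mathrm{red}} = \shginf \times_{\igsinf^{\mathrm{red}}} \shginf$ directly, without going through $\igspreinf$. It first expresses $\mathcal{R}^{\mathrm{red}}$ as the fiber product of the pulled-back Siegel relation $(i^\ast \mathcal{R}_V)^{\mathrm{red}}$ with the diagonal $\gisoc \to \gisoc \times_{\gvisoc} \gisoc$ (Lemma~\ref{Lem:RelationGFromGV}, Corollary~\ref{Cor:RelationGFromGVRed}). It then identifies $\mathcal{R}_V^{\mathrm{red}}$ with the explicit Siegel relation on \emph{every} perfect $B$ (Proposition~\ref{Prop:ReductionIgusaSiegel}), using the Cartesian square $\mathcal{R}_V^{\mathrm{red}} \simeq \shgvinf \times_{\gvisoc} \shtgvlocmu$ together with Dieudonn\'e theory (Lemma~\ref{Lem:DieudonneTheory}): a $B$-point of the right-hand side is a point $x \in \shgvinf(B)$ plus a quasi-polarized $p$-divisible group $Y$ with an isocrystal quasi-isogeny $A_x[p^\infty] \dashrightarrow Y$, and modifying $A_x$ along this quasi-isogeny produces $A_y$ directly over $\spec B$. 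This establishes the stronger statement that the explicit relation equals $\mathcal{R}^{\mathrm{red}}$ as presheaves, so no sheafification or descent step is needed.
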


\begin{Rem}
  When such a quasi-isogeny $A_x \dashrightarrow A_y$ exists, it is unique by
  Lemma~\ref{Lem:Discrete}. We will refer to quasi-isogenies $f:A_x \dashrightarrow A_y$ for which the diagram \eqref{Eq:CommutativeDiagramEtaleIII} commutes as quasi-isogenies \emph{preserving the \'etale trivializations}. 
\end{Rem}

\subsubsection{} By taking the reduction of the diagram in Theorem~\ref{Thm:HodgeMain} and using the discussion before Theorem \ref{Thm:ReductionIgusa}, we
obtain a Cartesian diagram \eqref{Eq:ReductionDiagram}
\begin{equation}
\begin{tikzcd} \label{Eq:ReductionDiagram2}
  \shginf \arrow{r}{\pi_{\mathrm{crys}}^{\mathrm{red}}} \arrow{d}{ \IgsQuot^\mathrm{red}} & \shtglocmu \arrow{d}{\mathrm{BL}^{\circ, \mathrm{red}}} \\ \igsinf^\mathrm{red}
  \arrow{r}{\overline{\pi}_{\mathrm{HT}}^{\mathrm{red}}} & \gisocmu.
\end{tikzcd}
\end{equation}

\begin{Prop} \label{Prop:ReductionOfIgusaVSurjective}
  The map $\shginf \to \igsinf^\mathrm{red}$ is v-surjective.
\end{Prop}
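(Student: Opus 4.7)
The plan is to deduce the v-surjectivity of $\IgsQuot^{\mathrm{red}}$ from Proposition~\ref{Prop:VSurjective} via base change. First I would apply the reduction functor $(-)^{\mathrm{red}}$ to the 2-Cartesian square of Theorem~\ref{Thm:HodgeMain}. Since reduction is a right adjoint and hence commutes with limits (as recorded in Section~\ref{Sec:Reduction}), this yields a 2-Cartesian square
\[
\begin{tikzcd}
  \shginf \arrow{r} \arrow{d}{\IgsQuot^{\mathrm{red}}} & \shtglocmuone \arrow{d}{\mathrm{BL}^{\circ, \mathrm{red}}} \\
  \igsinf^{\mathrm{red}} \arrow{r} & \gisocmu,
\end{tikzcd}
\]
where I have used that $(\shtgmuone)^{\mathrm{red}} = \shtglocmuone$ (by definition of $\shtglocmuone$ as the reduction of $\shtgmuone$, together with Lemma~\ref{Lem:ShtukasLocvsShtukas}) and that $(\bungmu)^{\mathrm{red}} = \gisocmu$ (as explained in Section~\ref{Sec:Isocrystals}).

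Now Proposition~\ref{Prop:VSurjective} says precisely that the right vertical map $\mathrm{BL}^{\circ, \mathrm{red}}\colon \shtglocmuone \to \gisocmu$ is v-surjective. Since v-surjective maps of v-stacks are stable under base change, the left vertical map $\IgsQuot^{\mathrm{red}}\colon \shginf \to \igsinf^{\mathrm{red}}$ is v-surjective as well, completing the argument.

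There is no real obstacle left at this stage: the substantive content has been packaged into Proposition~\ref{Prop:VSurjective}, whose proof required the v-local triviality of $G$-torsors on $\spec W(R)[1/p]$ (via \cite[Proposition~11.5]{AnschuetzExtension}), the nonemptiness of affine Deligne--Lusztig varieties $X_{\mathcal{G}}(b',\mu^{-1})$ for $[b'] \in \bgmu$ (He's theorem), and the section-finding Lemma~\ref{Lem:ExistenceSection} for perfections of proper surjections over products of absolutely integrally closed valuation rings. Given those ingredients together with the Cartesian diagram of Theorem~\ref{Thm:HodgeMain}, the present proposition is a purely formal one-line consequence.
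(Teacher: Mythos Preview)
Your proposal is correct and essentially identical to the paper's proof, which simply says ``This follows from the Cartesian diagram and Proposition~\ref{Prop:VSurjective}.'' The only cosmetic difference is that the paper writes $\shtglocmu$ in the upper-right corner of the reduced diagram while you (more faithfully to Theorem~\ref{Thm:HodgeMain}) write $\shtglocmuone$; either works since Proposition~\ref{Prop:VSurjective} is stated for $\shtglocmuone$.
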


\begin{proof}
  This follows from the Cartesian diagram and Proposition~\ref{Prop:VSurjective}.
\end{proof}

As in Section~\ref{Sec:PointsIgusa}, we consider the sub-v-sheaf of relations
\[
  \mathcal{R} = \scrsdinf \times_{\igsinf} \scrsdinf \subseteq
  \scrsdinf \times \scrsdinf.
\]
Since reduction commutes with fiber products, we identify its reduction as
\[
  \mathcal{R}^\mathrm{red} = \shginf \times_{\igsinf^\mathrm{red}} \shginf
  \subseteq \shginf \times \shginf,
\]
using Lemma~\ref{Lem:ReductionOfScheme}. On the other hand, it follows from
Proposition~\ref{Prop:ReductionOfIgusaVSurjective} that 
\[
  \igsinf^\mathrm{red} = [\shginf / \mathcal{R}^\mathrm{red}].
\]
Our goal is now to compute $\mathcal{R}^\mathrm{red}$.

As sheafification commutes with fiber products, the v-sheaf
$\mathcal{R}$ is the sheafification of
\[
  \mathcal{R}^\mathrm{pre} = \scrsdpreinf \times_{\igspreinf} \scrsdpreinf \subseteq
  \scrsdpreinf \times \scrsdpreinf.
\]
This presheaf associates to $(R, R^+)$ the set of pairs of points $x \in
\scrsginf(\spf R^{\sharp_1+})$ and $y \in \scrsginf(\spf R^{\sharp_2+})$
for which there exists a formal quasi-isogeny $f \colon
A_x \dashrightarrow A_y$ preserving the \'{e}tale trivialization, and such that $\mathcal{E}(f):\mathcal{E}(A_x) \to \mathcal{E}(A_y)$ is
induced by an isomorphism $\mathbb{L}_{\mathrm{crys},x} \to
\mathbb{L}_{\mathrm{crys},y}$ of $G$-torsors on $X_{(R, R^+)}$. Note that we have a
commutative diagram
\[ \begin{tikzcd}
  \mathcal{R}^\mathrm{pre} \arrow[hook]{r} \arrow{d} & \scrsdpreinf \times \scrsdpreinf
  \arrow{d} \\ \mathcal{R}_V^\mathrm{pre} \arrow[hook]{r} & \scrsdvpreinf \times
  \scrsdvpreinf,
\end{tikzcd} \]
where $\mathcal{R}_V^\mathrm{pre}$ is the analogue of $
\mathcal{R}^\mathrm{pre}$ for $\gvx$.

\begin{Lem} \label{Lem:RelationGFromGV}
  Writing
  \[
    i^\ast \mathcal{R}_V^\mathrm{pre} = \mathcal{R}_V^\mathrm{pre}
    \times_{\scrsdvpreinf \times \scrsdvpreinf} (\scrsdpreinf \times \scrsdpreinf),
  \]
  the diagram
  \[ \begin{tikzcd}
    \mathcal{R}^\mathrm{pre} \arrow{r} \arrow{d} & i^\ast
    \mathcal{R}_V^\mathrm{pre} \arrow{d} \\ \bun_G \arrow{r}{\Delta} & \bun_G
    \times_{\bun_{G_V}} \bun_G
  \end{tikzcd} \]
  is Cartesian.
\end{Lem}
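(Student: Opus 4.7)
The plan is to unwind both sides of the diagram at the level of $S$-points, for $S = \spa(R,R^+) \in \perf$, and recognize the Cartesian condition as a tautology once the maps are properly identified. First I would clarify the maps. The right vertical sends a triple $(x,y,f)$ in $i^\ast\mathcal{R}_V^\mathrm{pre}(S)$ to the pair of $G$-bundles $(\mathbb{L}_{\mathrm{crys},x}, \mathbb{L}_{\mathrm{crys},y})$ on $X_S$, together with the $G_V$-bundle isomorphism of their pushouts $\mathbb{L}_{\mathrm{crys},x}\times^G G_V \xrightarrow{\sim} \mathbb{L}_{\mathrm{crys},y}\times^G G_V$ induced from $f$ via the construction of Section~\ref{Sec:FormalQuasiIsogenyBundle} (which is $\mathcal{E}(f)$ under the identification of Section~\ref{Sec:CrystallineTensors}). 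The left vertical map sends a quadruple $(x,y,f,\alpha) \in \mathcal{R}^\mathrm{pre}(S)$, where $\alpha \colon \mathbb{L}_{\mathrm{crys},x}\xrightarrow{\sim}\mathbb{L}_{\mathrm{crys},y}$ is the (by definition, necessarily unique) lift of $\mathcal{E}(f)$ to a $G$-bundle isomorphism, to the common $G$-bundle $\mathbb{L}_{\mathrm{crys},x}\simeq\mathbb{L}_{\mathrm{crys},y}$. The top map forgets $\alpha$, and the bottom is the diagonal $P\mapsto (P,P,\mathrm{id})$.

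Next I would compute the fiber product
\[
  T := \bun_G \times_{\bun_G \times_{\bun_{G_V}} \bun_G} i^\ast \mathcal{R}_V^\mathrm{pre}.
\]
An $S$-point of $T$ consists of a $G$-bundle $P$ on $X_S$, a triple $(x,y,f) \in i^\ast\mathcal{R}_V^\mathrm{pre}(S)$, and two isomorphisms of $G$-bundles $\beta_1 \colon P \xrightarrow{\sim} \mathbb{L}_{\mathrm{crys},x}$ and $\beta_2 \colon P \xrightarrow{\sim} \mathbb{L}_{\mathrm{crys},y}$ whose induced isomorphism $\beta_2\circ \beta_1^{-1}$, after pushout to $G_V$-bundles, agrees with $\mathcal{E}(f)$. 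Setting $\alpha := \beta_2 \circ \beta_1^{-1}$ and absorbing $P$ into $\mathbb{L}_{\mathrm{crys},x}$ via $\beta_1$, this is equivalent to the datum of $(x,y,f)$ together with an isomorphism $\alpha \colon \mathbb{L}_{\mathrm{crys},x}\xrightarrow{\sim}\mathbb{L}_{\mathrm{crys},y}$ of $G$-bundles whose pushout to $G_V$-bundles is $\mathcal{E}(f)$. But this is precisely the data parameterizing $\mathcal{R}^\mathrm{pre}(S)$, so the natural map $\mathcal{R}^\mathrm{pre}\to T$ is a bijection.

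The one technical point to be careful about is that the data in $\mathcal{R}^\mathrm{pre}$ records only the \emph{existence} (not the choice) of the lift $\alpha$, while the fiber product $T$ records $\alpha$ as part of the data. These match because the diagonal $\Delta \colon \bun_G \to \bun_G\times_{\bun_{G_V}}\bun_G$ is a monomorphism --- indeed a closed immersion, as established in the proof of Proposition~\ref{Prop:UniqueLiftsBunG} --- so the lift $\alpha$ is unique when it exists. This uniqueness is the main (and essentially only) obstacle; modulo it, the lemma is a formal consequence of unwinding the definition of $\mathcal{R}^\mathrm{pre}$, which is designed precisely so that the forgetful map to $i^\ast\mathcal{R}_V^\mathrm{pre}$ becomes the base change of $\Delta$.
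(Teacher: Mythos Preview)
Your proof is correct and takes essentially the same approach as the paper: both unwind the definition of $\mathcal{R}^\mathrm{pre}$ on $S$-points and observe that the extra condition imposed by pulling back along $\Delta$ is exactly the requirement that $\mathcal{E}(f)$ lift to an isomorphism of $G$-bundles. Your explicit handling of the uniqueness of the lift $\alpha$ via the fact that $\Delta$ is a monomorphism (from the proof of Proposition~\ref{Prop:UniqueLiftsBunG}) is a point the paper leaves implicit, relying instead on the parenthetical ``necessarily unique'' in Definition~\ref{Def:IgusaStack}.
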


\begin{proof}
  This is a reinterpretation of the definition of $\mathcal{R}^\mathrm{pre}$. The upper
  right corner parametrizes a quintuple $(R^{\sharp_1}, R^{\sharp_2}, x,y,f)$ where $x \in
\scrsginf(\spf R^{\sharp_1+})$ and $y \in \scrsginf(\spf R^{\sharp_2+})$ and $f:A_x \dashrightarrow A_y$ is a formal quasi-isogeny of the underlying formal abelian schemes preserving the \'{e}tale trivializations. The lower right corner parametrizes
  two $G$-torsors with an isomorphism between the underlying $G_V$-torsors. The
  diagonal imposes precisely the condition that this isomorphism of
 $G_V$-torsors comes from an isomorphism of $G$-torsors. Thus the fiber product parametrizes those quintuples $(R^{\sharp_1}, R^{\sharp_2}, x,y,f)$ where $\mathcal{E}(f):\mathcal{E}(A_x) \to \mathcal{E}(A_y)$ is
induced by an isomorphism $\mathbb{L}_{\mathrm{crys},x} \to
\mathbb{L}_{\mathrm{crys},y}$ of $G$-torsors on $X_{(R, R^+)}$.
\end{proof}
To simplify notation, we set 
\[(i^{\ast} \mathcal{R}_V)^{\mathrm{red}}=\mathcal{R}_V^\mathrm{red}
    \times_{\shgvinf \times \shgvinf} (\shginf \times \shginf).\] 
\begin{Cor} \label{Cor:RelationGFromGVRed}
  The diagram
  \[ \begin{tikzcd}
    \mathcal{R}^\mathrm{red} \arrow{r} \arrow{d} & (i^{\ast} \mathcal{R}_V)^{\mathrm{red}} \arrow{d}
    \\ \gisoc \arrow{r}{\Delta} & \gisoc \times_{\gvisoc} \gisoc
  \end{tikzcd} \]
  is Cartesian.
\end{Cor}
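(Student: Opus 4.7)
The plan is to deduce Corollary \ref{Cor:RelationGFromGVRed} from Lemma \ref{Lem:RelationGFromGV} by first v-sheafifying the Cartesian square in that lemma and then applying the reduction functor. Both operations preserve finite limits, so the Cartesian property is transported along the way.

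First I would v-sheafify the Cartesian diagram of Lemma \ref{Lem:RelationGFromGV}. The corners $\bun_G$, $\bun_G \times_{\bun_{G_V}} \bun_G$, $\scrsdinf$, $\scrsdvinf$, and $\mathcal{R}_V$ are already v-sheaves; moreover, $\mathcal{R}$ is by construction the v-sheafification of $\mathcal{R}^{\mathrm{pre}}$, as noted in the paragraph preceding the statement. Since v-sheafification is a left adjoint that preserves finite limits, and it commutes with the formation of $i^\ast(-)$, I obtain a Cartesian diagram of v-sheaves
\[
\begin{tikzcd}
\mathcal{R} \arrow{r} \arrow{d} & i^\ast \mathcal{R}_V \arrow{d} \\
\bun_G \arrow{r}{\Delta} & \bun_G \times_{\bun_{G_V}} \bun_G,
\end{tikzcd}
\]
where $i^\ast \mathcal{R}_V = \mathcal{R}_V \times_{\scrsdvinf \times \scrsdvinf} (\scrsdinf \times \scrsdinf)$.

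Next I would apply the reduction functor $(-)^{\mathrm{red}}$. As remarked in Section~\ref{Sec:Reduction}, this functor is a right adjoint and in particular commutes with arbitrary limits; hence it preserves Cartesian squares. Applying it to the diagram above and using the identifications $\bun_G^{\mathrm{red}} \simeq \gisoc$ and $\bun_{G_V}^{\mathrm{red}} \simeq \gvisoc$ from Section~\ref{Sec:Isocrystals}, together with Lemma~\ref{Lem:ReductionOfScheme} applied to $\scrs_{K_p}\gx$ and $\scrs_{M_p}\gvx$ to recover $\scrsdinf^{\mathrm{red}} \simeq \shginf$ and $\scrsdvinf^{\mathrm{red}} \simeq \shgvinf$, one reads off that $(i^\ast \mathcal{R}_V)^{\mathrm{red}}$ matches the expression $\mathcal{R}_V^{\mathrm{red}} \times_{\shgvinf \times \shgvinf} (\shginf \times \shginf)$ appearing in the statement, and the bottom row becomes $\gisoc \to \gisoc \times_{\gvisoc} \gisoc$. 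This is exactly the Cartesian square claimed in Corollary~\ref{Cor:RelationGFromGVRed}.

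There is no substantial obstacle: the entire argument is formal once Lemma~\ref{Lem:RelationGFromGV} is in hand, and rests only on the left-exactness of v-sheafification and the right-adjoint property of $(-)^{\mathrm{red}}$. The one thing worth double-checking carefully is that the v-sheafification of $i^\ast \mathcal{R}_V^{\mathrm{pre}}$ agrees with $i^\ast \mathcal{R}_V$, but this follows immediately from v-sheafification commuting with fiber products together with the fact that $\scrsdinf$ and $\scrsdvinf$ are already v-sheaves.
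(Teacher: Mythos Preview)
Your proposal is correct and follows exactly the same approach as the paper, which simply says ``We apply sheafification and then reduction to the Cartesian diagram of Lemma~\ref{Lem:RelationGFromGV}.'' You have spelled out the details that the paper leaves implicit: that v-sheafification is left exact and that reduction, being a right adjoint, preserves limits. One minor quibble: the corners of Lemma~\ref{Lem:RelationGFromGV} involve the presheaf versions $\scrsdpreinf$, $\scrsdvpreinf$, not the sheaf versions, so rather than saying these are ``already v-sheaves'' it would be more precise to invoke Lemma~\ref{Lem:DiamondOfFormalScheme} to identify their sheafifications with $\scrsdinf$, $\scrsdvinf$; but this does not affect the argument.
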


\begin{proof}
  We apply sheafification and then reduction to the Cartesian diagram of
  Lemma~\ref{Lem:RelationGFromGV}.
\end{proof}

Hence to understand $\mathcal{R}^\mathrm{red}$, it is enough to understand
$\mathcal{R}_V^\mathrm{red}$.

\begin{Prop} \label{Prop:ReductionIgusaSiegel}
  Let $B$ be a perfect $\fp$-algebra. A point $(x, y) \in (\shgvinf \times
  \shgvinf)(B)$ lies inside
  \[
    \mathcal{R}_V^\mathrm{red} \subseteq \shgvinf \times \shgvinf
  \]
  if and only if there exists a quasi-isogeny between the corresponding abelian
  varieties $A_x, A_y$ preserving the
  \'{e}tale trivializations.
\end{Prop}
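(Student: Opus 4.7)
The plan is to reduce the statement to the case of a product of geometric points via v-descent, so that Theorem~\ref{Thm:ProductOfPointsIgs} lets us replace $\mathcal{R}_V$ with the explicit presheaf $\mathcal{R}_V^\mathrm{pre}$ on such test objects.

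First, I would observe that both conditions in the proposition are v-local on $\spec B$. Membership in $\mathcal{R}_V^\mathrm{red}(B)$ is v-local by construction, as $\mathcal{R}_V^\mathrm{red}$ is a v-sheaf. For the existence of a quasi-isogeny $(A_x, \lambda_x) \dashrightarrow (A_y, \lambda_y)$ preserving étale trivializations, I would invoke representability: the functor on perfect $\mathbb{F}_p$-algebras $B'$ sending $B'$ to the set of such quasi-isogenies is representable by a locally closed subscheme of $\Hom(A_x, A_y) \otimes \mathbb{Z}_{(p)}$ (itself representable by an étale group scheme over $\spec B$), so non-emptiness of its $B$-points is v-local. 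By Lemma~\ref{Lem:SchemeTheoreticProductOfPoints}, I can then v-cover $\spec B$ by $\spec R^+$ where $R^+ = \prod_i C_i^+$ is a product of geometric points as in Definition~\ref{Def:ProductOfPoints}. Theorem~\ref{Thm:ProductOfPointsIgs} identifies $\mathcal{R}_V(\spa(R, R^+))$ with $\mathcal{R}_V^\mathrm{pre}(R, R^+)$, so the task reduces to an equivalence between the following two data on $\spec R^+$: (a) a quasi-isogeny over $\spec R^+$ preserving étale trivializations, and (b) an element of $\mathcal{R}_V^\mathrm{pre}(R, R^+)$ lifting the pulled-back pair $(x|_{R^+}, y|_{R^+})$.

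For the direction (a) $\Rightarrow$ (b), I would use the trivial untilts $R^{\sharp_1+} = R^{\sharp_2+} = R^+$ (available because $R$ is in characteristic $p$) and define the lifts $\spf R^+ \to \hatscrsgvinf$ as the evident compositions arising from $\spec R^+ \to \spec B \to \shgvinf$. The pulled-back quasi-isogeny, restricted to $\spec R^+/\varpi$, then provides the required formal quasi-isogeny, with commutativity conditions on étale trivializations and polarizations inherited from the original. For the reverse direction (b) $\Rightarrow$ (a), an element of $\mathcal{R}_V^\mathrm{pre}(R, R^+)$ produces a formal quasi-isogeny $f \colon A_x|_{R^+/\varpi} \dashrightarrow A_y|_{R^+/\varpi}$ preserving étale trivializations and polarizations up to scalar. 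I would apply Serre--Tate rigidity (see \cite[Lemma~1.1.3]{KatzSerreTate}) to extend $f$ uniquely across each nilpotent thickening $\spec R^+/\varpi^n$, then pass to the $\varpi$-adic limit and algebraize using that $R^+$ is $\varpi$-adically complete and that morphisms between abelian schemes over such a ring are determined by their $\varpi$-adic formal completions. This yields a quasi-isogeny over $\spec R^+$ preserving étale trivializations, and v-descent then gives one over $\spec B$.

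The main technical obstacle I anticipate is the careful verification that the quasi-isogeny condition is v-local on $\spec B$, which requires controlling $\Hom(A_x, A_y) \otimes \mathbb{Z}_{(p)}$ as an étale sheaf over a possibly non-Noetherian perfect $\mathbb{F}_p$-algebra, together with the algebraization step passing from a formal quasi-isogeny on $\spf R^+$ to an algebraic quasi-isogeny on $\spec R^+$. Both are standard but warrant a careful citation.
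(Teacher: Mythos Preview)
Your approach has a genuine gap: you conflate two different notions of ``product of points''. Lemma~\ref{Lem:SchemeTheoreticProductOfPoints} produces a v-cover $\spec R \to \spec B$ with $R = \prod_i V_i$ a product of absolutely integrally closed valuation rings of characteristic $p$; these $V_i$ may have rank~$0$ (i.e.\ be fields) or higher rank, and $R$ need not carry a pseudouniformizer. This is \emph{not} the same as a perfectoid product of geometric points in the sense of Definition~\ref{Def:ProductOfPoints}, which is a Huber pair $((\prod_i C_i^+)[\varpi^{-1}], \prod_i C_i^+)$ with each $C_i$ an algebraically closed perfectoid field. Theorem~\ref{Thm:ProductOfPointsIgs} applies only to the latter. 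Moreover, the quantity you must compute is $\mathcal{R}_V^{\mathrm{red}}(B) = \Hom(\spd B, \mathcal{R}_V)$, and even after a scheme-theoretic v-cover by $\spec R$, the v-sheaf $\spd R = \spd(R,R)$ is not itself an affinoid perfectoid space, so you cannot evaluate $\mathcal{R}_V^{\mathrm{pre}}$ on it. Bridging from $\mathcal{R}_V^{\mathrm{red}}(R)$ to $\mathcal{R}_V^{\mathrm{pre}}$ on some perfectoid cover of $\spd R$ is precisely the hard sheafification step you are trying to circumvent.

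The paper takes a completely different and much more direct route that avoids v-descent altogether. It first observes that the Cartesian diagram~\eqref{Eq:ReductionDiagram2} in the Siegel case (which is the reduction of the main theorem, already known from \cite{ZhangThesis}) gives an isomorphism $\mathcal{R}_V^{\mathrm{red}} \simeq \shgvinf \times_{\gvisoc} \shtgvlocmu$. One then shows directly that the presheaf $\mathcal{S}$ of pairs admitting a quasi-isogeny maps isomorphically onto this fiber product: the inverse is constructed using Dieudonn\'e theory over perfect rings (Lemma~\ref{Lem:DieudonneTheory}), which turns a Witt vector $\mathcal{G}_V$-shtuka together with a rational identification with $\mathbb{D}^\natural(A_x)$ into a quasi-polarized $p$-divisible group $Y$ with a quasi-isogeny $A_x[p^\infty] \dashrightarrow Y$, and hence into an abelian scheme $A_y$ together with the desired quasi-isogeny $A_x \dashrightarrow A_y$. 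This argument works uniformly over any perfect $\fp$-algebra $B$, with no reduction to special test objects and no Serre--Tate lifting step.
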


\begin{proof}
  Denote by $\mathcal{S}$ the sub-presheaf 
  \[
    \mathcal{S} \subseteq \shgvinf \times \shgvinf
  \]
  with $B$-points given by those pairs $(x, y)$ for which there exists a quasi-isogeny
 $A_x \dashrightarrow A_y$ preserving the \'{e}tale
  trivializations. (Note that a quasi-isogeny preserving the \'{e}tale
  trivializations automatically preserves the polarizations up to a scalar that is locally constant on $B$.) We first
  note that there is a containment
  \[
    \mathcal{S} \subseteq \mathcal{R}_V^\mathrm{red},
  \]
  because if there exists a quasi-isogeny $f \colon A_x \dashrightarrow A_y$,
  then for any perfectoid $B$-algebra $(R, R^+)$, the base change of $f$ is a
  quasi-isogeny over $R^+/\varpi$, again preserving the extra structures.

  On the other hand, we have an isomorphism
  \[
    \mathcal{R}_V^\mathrm{red} \simeq \shgvinf \times_{\gvisoc} \shtgvlocmu
  \]
  coming from the fiber product diagram \eqref{Eq:ReductionDiagram2} for $\gvx$. The composition
  \[
    q \colon \mathcal{S} \hookrightarrow \mathcal{R}_V^\mathrm{red} \xrightarrow{\sim}\shgvinf \times_{\gvisoc} \shtgvlocmu
  \]
  sends a $B$-point $(x, y)$ coming from a quasi-isogeny $f \colon A_x
  \dashrightarrow A_y$ to the pair $(x, \mathbb{D}^{\natural}(A_y))$ together with the
  isomorphism 
  \begin{equation}
    \mathcal{E}(f) \colon \mathcal{E}(A_{x})= \mathbb{D}^{\natural}(A_x)[p^{-1}] \xrightarrow{\sim}
    \mathbb{D}^{\natural}(A_y)[p^{-1}] = \mathcal{E}(A_{y})
  \end{equation}
  Here we use Lemma \ref{Lem:DieudonneTheory} to identify $\shtgvlocmu$ with the
  groupoid of quasi-polarized $p$-divisible groups of height $\dim V$. 

  For the converse, suppose that we are given a $B$-point of the target of $q$. That is, we are given a $B$-point $x$ of $\shgvinf$, a Witt vector $\mathcal{G}_V$ shtuka $\mathcal{P}_V$ with corresponding $W(B)$-module with Frobenius $(M, \phi_M)$ and a quasi-isogeny
 $f:\mathbb{D}^{\natural}(A_x)[p^{-1}] \xrightarrow{\sim} (M[1/p], \phi_M)$ preserving the polarization up to a scalar. By Lemma \ref{Lem:DieudonneTheory}, there is a quasi-polarized $p$-divisible group $(Y,\lambda_Y)$ over $B$ with Dieudonn\'e module corresponding to $(M, \phi_M)$. The quasi-isogeny induces a polarization preserving quasi-isogeny $f:A_x[p^{\infty}] \dashrightarrow Y$, which uniquely determines a weakly polarized abelian scheme up to prime-to-$p$ isogeny $A_y$ over $B$ together with a quasi-isogeny $f: A_x \dashrightarrow A_y$ and an isomorphism $A_y[p^{\infty}]=Y$. There is moreover a unique prime-to-$p$ level structure on $A_y$ making the diagram \eqref{Eq:CommutativeDiagramEtaleIII} commute. This lifts $A_y$ to a point $y:\spec B \to \shgvinf$ and this describes an inverse to the map $q$.
  
  Hence the map $q$ is an isomorphism, and therefore $\mathcal{S} =\mathcal{R}_V^\mathrm{red}$ as subsheaves of $\shgvinf \times \shgvinf$.
\end{proof}

We can now prove Theorem~\ref{Thm:ReductionIgusa}.

\begin{proof}[Proof of Theorem~\ref{Thm:ReductionIgusa}]
  In view of the presentation 
  \[
    \igsinf^\mathrm{red} = [\shginf / \mathcal{R}^\mathrm{red}],
  \]
  it suffices to prove that the subsheaf $\mathcal{R}^\mathrm{red} \subseteq
  \shginf \times \shginf$ consists of those pairs $(x, y) \in (i^{\ast} \mathcal{R}_V)^{\mathrm{red}}$ such that there
  exists a quasi-isogeny $f \colon A_x \dashrightarrow A_y$ for which the
  induced isomorphism of $\mathrm{GL}_V$-torsors $\mathcal{E}(A_{x}) \simeq
  \mathcal{E}(A_{y})$ comes (necessarily unique) from an isomorphism of $G$-torsors
 $\mathbb{L}_{\mathrm{crys},x} \simeq
\mathbb{L}_{\mathrm{crys},y}$. This follows
  immediately from Corollary~\ref{Cor:RelationGFromGVRed} and
  Proposition~\ref{Prop:ReductionIgusaSiegel}.
\end{proof}

\subsubsection{} Another consequence of the previous analysis is an explicit description of points of $\igsinf^\mathrm{red}$ on a basis of the v-topology on $\affperf$ (see Lemma \ref{Lem:SchemeTheoreticProductOfPoints}).

\begin{Cor} \label{Cor:ReductionIgusaGeometricPoint}
  Let $R$ be a product of absolutely integrally closed valuation rings over $\fp$. Then the
  natural map $\IgsQuot^{\mathrm{red}} \colon \shginf(R) \to
  \igsinf^\mathrm{red}(R)$ identifies $\igsinf^\mathrm{red}(R)$ with the
  quotient of $\shginf(R)$ with respect to the following equivalence relation:
  Two elements $x, y \in \shginf(R)$ are equivalent when there exists a
  quasi-isogeny $f \colon A_x \dashrightarrow A_y$ such that $f$ respects the
  \'{e}tale trivializations $\eta_x, \eta_y$, and the induced isomorphism of
  isocrystals $\mathcal{E}(A_x) \simeq \mathcal{E}(A_y)$ is induced by a
  $($necessarily unique$)$ isomorphism of $G$-isocrystals
 $\mathbb{L}_{\mathrm{crys},x} \simeq \mathbb{L}_{\mathrm{crys},y}$.
\end{Cor}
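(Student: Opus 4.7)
The corollary decomposes into two assertions: surjectivity of $\IgsQuot^{\mathrm{red}}(R) \colon \shginf(R) \to \igsinf^{\mathrm{red}}(R)$, and the identification of its fibers with the described equivalence relation. The plan is to handle each separately, and to observe that both are formal consequences of two already-established results.

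For the fiber description, I will exploit the fact that $\mathcal{R}^{\mathrm{red}} := \shginf \times_{\igsinf^{\mathrm{red}}} \shginf$ is a v-sheaf whose $R$-points compute $\shginf(R) \times_{\igsinf^{\mathrm{red}}(R)} \shginf(R)$, together with the fact that Theorem \ref{Thm:ReductionIgusa} identifies $\mathcal{R}^{\mathrm{red}}$, as a sub-v-sheaf of $\shginf \times \shginf$, with the subsheaf consisting of pairs admitting a quasi-isogeny $f \colon A_x \dashrightarrow A_y$ respecting the \'etale trivializations and whose induced $\mathrm{GL}_V$-isocrystal isomorphism lifts to an isomorphism of $G$-isocrystals. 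Combining these identifications at the level of $R$-points yields the stated equivalence relation on $\shginf(R)$.

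For the surjectivity, I will evaluate the Cartesian diagram \eqref{Eq:ReductionDiagram2} at $R$, using that $\mathcal{F} \mapsto \mathcal{F}(R)$ preserves limits, to obtain
\[
  \shginf(R) \xrightarrow{\sim} \igsinf^{\mathrm{red}}(R) \times_{\gisocmu(R)} \shtglocmuone(R).
\]
Given any $p \in \igsinf^{\mathrm{red}}(R)$ with image $b \in \gisocmu(R)$, Proposition \ref{Prop:VSurjective}---whose hypothesis that $\spec R$ be a product of absolutely integrally closed valuation rings is precisely the assumption of the corollary---produces a lift $\tilde b \in \shtglocmuone(R)$ of $b$. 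The pair $(p, \tilde b)$ then determines an element of $\shginf(R)$ above $p$. The only substantive geometric input is Proposition \ref{Prop:VSurjective}, which in turn rests on nonemptiness of affine Deligne--Lusztig varieties and on Lemma \ref{Lem:ExistenceSection}; given these inputs the corollary is purely formal.
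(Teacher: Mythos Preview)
Your proof is correct and follows essentially the same route as the paper: surjectivity via the Cartesian diagram \eqref{Eq:ReductionDiagram2} combined with Proposition~\ref{Prop:VSurjective}, and identification of the equivalence relation via the description of $\mathcal{R}^{\mathrm{red}}$ as a subsheaf of $\shginf \times \shginf$. The only cosmetic difference is that for the fiber description you invoke Theorem~\ref{Thm:ReductionIgusa} directly, whereas the paper cites its underlying ingredients (Corollary~\ref{Cor:RelationGFromGVRed} and Proposition~\ref{Prop:ReductionIgusaSiegel}); these are interchangeable since the theorem is proved from exactly those two inputs.
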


\begin{proof}
  We proceed as in Section~\ref{Sec:PointsIgusa}. We first note that the map
 $\shtglocmu(R) \to \gisocmu(R)$ is essentially surjective by
  Proposition~\ref{Prop:VSurjective}. From the Cartesian diagram \eqref{Eq:ReductionDiagram2} it
  follows that $\shginf(R) \to \igsinf^\mathrm{red}(R)$ is surjective. Therefore
  we have
  \[
    \igsinf^\mathrm{red}(R) = [\shginf(R) / \mathcal{R}^\mathrm{red}(R)].
  \]
  We now observe from Corollary~\ref{Cor:RelationGFromGVRed} and
  Proposition~\ref{Prop:ReductionIgusaSiegel} that $\mathcal{R}^\mathrm{red}(R)$
  defines the equivalence relation given in the statement.
\end{proof}

\begin{Rem} \label{Rem:IsogenyClasses} For $k=\fpbar$, we see that the quotient $\igsinf(\fpbar)/\gafp$ can be identified with the quotient of $\shginf(\fpbar)$ by the following equivalence relation: Two points $x, y \in \shginf(\fpbar)$ are equivalent when there exists a
  quasi-isogeny $f \colon A_x \dashrightarrow A_y$ such that $f:\mathcal{V}^p_x \to \mathcal{V}^p_y$ respects the \'etale tensors, and such that the induced isomorphism of
  isocrystals $\mathcal{E}(A_x) \simeq \mathcal{E}(A_y)$ is induced by an isomorphism of $G$-isocrystals
 $\mathbb{L}_{\mathrm{crys},x} \simeq \mathbb{L}_{\mathrm{crys},y}$. So we can reasonably think of $\igsinf(\fpbar)/\gafp$ as the set of mod $p$ isogeny classes in $\shginf(\fpbar)$. \footnote{We expect that our definition agrees with the one of \cite[Section 2.3]{KMPS}.} Using Proposition \ref{Prop:IndependenceOfParahoric}, we see that this set does not depend on $K_p$. This is in agreement with the Langlands--Rapoport conjecture, see  \cite{LanglandsRapoport, RapoportReduction, KisinPoints}, which gives a prediction of the set of mod $p$ isogeny classes which does not depend on $K_p$.
\end{Rem}

\begin{Rem} \label{Rem:XiaoZhuShtukasCorrespondence}
    Following \cite[Definition~5.2.8]{XiaoZhu}, we define\footnote{Our definition can readily be seen to agree with the one in loc.\ cit., keeping in mind Remark~\ref{Rem:ShtukasInversion}.}
    \begin{align}
        \shtglocmumu:=\shtglocmu \times_{\gisoc} \shtglocmu.
    \end{align}
    It follows from Theorem \ref{Thm:ReductionIgusa} that there is a natural map
    \begin{align}
        \mathcal{R}^{\mathrm{red}}:=\shginf \times_{\igsinf^{\mathrm{red}}} \shginf \to \shtglocmumu
    \end{align}
    which fits in a $2$-commutative diagram
    \begin{equation}
        \begin{tikzcd}
            \shginf \arrow{d}{\pi_{\mathrm{crys}}^{\mathrm{red}}} & \mathcal{R}^{\mathrm{red}} \arrow{d} \arrow{l} \arrow{r} & \shginf \arrow{d}{\pi_{\mathrm{crys}}^{\mathrm{red}}} \\
            \shtglocmu & \shtglocmumu \arrow{l} \arrow{r} & \shtglocmu
        \end{tikzcd}
    \end{equation}
    where both squares are $2$-Cartesian. Thus $\mathcal{R}^{\mathrm{red}}$ corresponds to the sheaf $\operatorname{Sh}_{\mu | \mu}$ of \cite[Remark 7.3.3]{XiaoZhu}. One can also interpret their conjecture on the existence of exotic Hecke correspondences, see \cite[Hypothesis 7.3.2]{XiaoZhu}, in terms of (the reductions of) Igusa stacks. This will be explained in forthcoming work of Sempliner and one of us (PvH).
\end{Rem}

\section{Functoriality} \label{Sec:Functoriality}
  {
\def\gvp{\mathsf{G}_{V'}}
\def\gvpp{\mathsf{G}_{V''}}
\def\gvxp{(\mathsf{G}_{V'},\mathsf{H}_{V'})}
\def\gvxpp{(\mathsf{G}_{V''},\mathsf{H}_{V''})}

\def\igsinfp{\mathrm{Igs}_{\Xi^\prime}\gxp}
\def\igsinfpp{\mathrm{Igs}_{\Xi^{\prime\prime}}\gx}
\def\igspreinfp{\mathrm{Igs}^{\mathrm{pre}}_{\Xi^\prime}\gxp}
\def\igspreinfpp{\mathrm{Igs}^{\mathrm{pre}}_{\Xi^{\prime\prime}}\gx}

\def\igsinftwo{\mathrm{Igs}_{\Xi^\prime}\gx}
\def\gtheta{\mathsf{G}_{\Theta}}
\def\gxtheta{(\mathsf{G}_{\Theta}, \mathsf{X}_{\Theta})}

\def\shtgonemuonerat{\mathrm{Sht}_{\mathcal{G}_{1},\mu,\delta=1,E}}
\def\shtgtwomuonerat{\mathrm{Sht}_{\mathcal{G}_{2},\mu,\delta=1,E}}
\def\shtgmupoe{\mathrm{Sht}_{\mathcal{G}^\prime,\mu', \mathcal{O}_{E}}}

In our construction of the Igusa stack $\igsinf$ we made a choice of Hodge embedding $\iota:\gx \to \gvx$, stabilizer Bruhat--Tits group scheme $\mathcal{G}$ and self-dual lattice $V_{\zp}$, signified by the notation $\Xi=(\mathcal{G}, \iota, V_{\zp})$. In this section we show that morphisms $f:\gx \to \gxp$ induce (unique) morphisms $\igsinf \to \igsinfp$, in particular showing that our construction does not depend on $\Xi$. To be precise, here we take the Igusa stack for $\gx$ with respect to a place $v$ of $\mathsf{E}$ above $p$, and the Igusa stack for $\gxp$ with respect to the induced place $v'$ of $\mathsf{E}' \subset \mathsf{E}$.

\subsection{Functoriality in parahoric group schemes} \label{Sec:IntegralFunctoriality}

We begin by proving a lemma. 
\begin{Lem} \label{Lem:Multiplier}
  Let $\gx$ be a Hodge type Shimura datum, and let $\iota_1 \colon \gx \to \gvx$
  and $\iota_2 \colon \gx \to \gvxp$ be two morphisms of Shimura data. If $c_1
  \colon \gv \to \mathbb{G}_m$ and $c_2 \colon \gvp \to \mathbb{G}_m$ are the
  similitude characters, then $c_1 \circ \iota_1 = c_2 \circ \iota_2$.
\end{Lem}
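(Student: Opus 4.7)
My plan is to reduce the equality of $\mathbb{Q}$-rational characters to a Hodge-theoretic computation on $h(\mathbb{S})$, and then to conclude via a Mumford--Tate rigidity argument using the Hodge-type assumption and \eqref{Eq:SV5}.

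For the first step, I fix any $h \in \mathsf{X}$ and any Hodge embedding $\iota \colon \gx \to (\mathsf{G}_W, \mathsf{H}_W)$. Then $\iota \circ h$ lies in the Siegel space $\mathsf{H}_W$, so it defines a polarized $\mathbb{Q}$-Hodge structure on $W$. Since the symplectic form $\psi_W$ is a Hodge morphism to $\mathbb{Q}(1)$, a direct calculation gives
\[
c_W(\iota(h(z))) = z \bar z
\]
as characters $\mathbb{S} \to \mathbb{G}_m$ over $\mathbb{R}$, and the right-hand side does not depend on $\iota$. Consequently the $\mathbb{Q}$-rational character $\chi := (c_1 \circ \iota_1) \cdot (c_2 \circ \iota_2)^{-1} \colon \mathsf{G} \to \mathbb{G}_m$, which factors through $\mathsf{G}^{\mathrm{ab}}$, satisfies $\chi \circ h = 1$ for every $h \in \mathsf{X}$.

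To upgrade this to $\chi = 1$, I would pass to $\mathbb{C}$ and use the identification $\mathbb{S}_\mathbb{C} \simeq \mathbb{G}_m \times \mathbb{G}_m$ to extract the Hodge cocharacter $\mu_h \colon \mathbb{G}_{m,\mathbb{C}} \to \mathsf{G}_\mathbb{C}$ as the restriction of $h_\mathbb{C}$ to the first factor. The vanishing above gives $\chi \circ \mu_h = 1$, and projecting to $\mathsf{G}^{\mathrm{ab}}$ yields a canonical cocharacter $\mu^{\mathrm{ab}} \in X_*(\mathsf{G}^{\mathrm{ab}})$, defined over the reflex field $\mathsf{E}$, with $\chi \circ \mu^{\mathrm{ab}} = 1$. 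By Galois equivariance of $\chi$, I obtain $\chi \circ \sigma(\mu^{\mathrm{ab}}) = 1$ for every $\sigma \in \gal(\bar{\mathbb{Q}}/\mathbb{Q})$. It then suffices to show that the Galois orbit of $\mu^{\mathrm{ab}}$ spans $X_*(\mathsf{G}^{\mathrm{ab}}) \otimes \mathbb{Q}$, equivalently, that the smallest $\mathbb{Q}$-subtorus of $\mathsf{G}^{\mathrm{ab}}$ through which the canonical map $\mathbb{S} \to \mathsf{G}^{\mathrm{ab}}_\mathbb{R}$ factors is all of $\mathsf{G}^{\mathrm{ab}}$.

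I expect the main obstacle to be this final spanning property. The Hodge-theoretic step is an essentially formal unwinding of definitions, whereas the rigidity step requires combining the $\mathbb{Q}$-rationality of the weight cocharacter (a consequence of \eqref{Eq:SV5}) with the Hodge-type hypothesis on $\gx$, which ensures that $\mathsf{G}$ coincides with the generic Mumford--Tate group and hence that the full cocharacter lattice of $\mathsf{G}^{\mathrm{ab}}$ is generated by the Galois orbit of $\mu^{\mathrm{ab}}$.
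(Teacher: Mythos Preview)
Your first reduction is fine: for any Hodge embedding $\iota$ one has $c(\iota(h(z))) = z\bar z$, so the character $\chi = (c_1\circ\iota_1)(c_2\circ\iota_2)^{-1}$ kills $h(\mathbb{S})$ and hence $\mu_h$.

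The gap is in the second step. You reduce to showing that the Galois orbit of $\mu^{\mathrm{ab}}$ spans $X_*(\mathsf{G}^{\mathrm{ab}})\otimes\mathbb{Q}$, and you propose to deduce this from the claim that the Hodge-type hypothesis forces $\mathsf{G}$ to coincide with the generic Mumford--Tate group. Neither assertion is justified. Deligne's axioms do not require $\mathsf{G}$ to be the smallest $\mathbb{Q}$-group through which the $h\in\mathsf{X}$ factor, and being of Hodge type does not change this: one can enlarge the center of a Hodge-type datum by an $\mathbb{R}$-anisotropic $\mathbb{Q}$-torus $U$ (so that SV5 still holds) on which the projection of $h$ lands in a proper $\mathbb{Q}$-subtorus, and such enlargements again admit Hodge embeddings. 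In such cases $\mu^{\mathrm{ab}}$ and its Galois conjugates do not span $X_*(\mathsf{G}^{\mathrm{ab}})\otimes\mathbb{Q}$. (Incidentally, the $\mathbb{Q}$-rationality of the weight cocharacter comes directly from the existence of a Hodge embedding, not from \eqref{Eq:SV5}.)

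The point you are missing, and which the paper exploits, is that you do not need to span all of $X_*(\mathsf{G}^{\mathrm{ab}})\otimes\mathbb{Q}$: since $\chi$ is a $\mathbb{Q}$-rational character with target $\mathbb{G}_m$, it automatically factors through the maximal $\mathbb{Q}$-split quotient $T$ of $\mathsf{G}^{\mathrm{ab}}$. For Hodge-type data this $T$ has rank exactly $1$ (the $\mathbb{Q}$-split rank of $Z_{\mathsf{G}}^\circ$ is $1$). The paper then observes that the weight cocharacter $w$ surjects onto $T$, since $c_1\circ\iota_1\circ w$ is $z\mapsto z^2$; hence $\chi\circ w = 1$ forces $\chi = 1$ on $T$ and therefore on $\mathsf{G}$. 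Your computation $c_i\circ\iota_i\circ h = z\bar z$ already contains this information (restrict to $\mathbb{G}_m\subset\mathbb{S}$), so your argument can be repaired, but once repaired it is the paper's argument.
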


\begin{proof}
  Write $w \colon \mathbb{G}_m \to \g$ for the weight homomorphism, which is
  defined over $\mathbb{Q}$ as $\gx$ is of Hodge type. Then $w_1 = \iota_1 \circ
  w \colon \mathbb{G}_m \to \gv$ and $w_2 = \iota_2 \circ w \colon \mathbb{G}_m
  \to \gvp$ are the weight homomorphisms for $\gvx$ and $\gvxp$. We also observe
  that $c_1 \circ w_1$ and $c_2 \circ w_2$ are both given by $z \mapsto z^2$. 
	
  The morphisms $c_1 \circ \iota_1$ and $c_2 \circ \iota_2$ both factor through
  $\g \to \gab$ and moreover through the maximal $\mathbb{Q}$-split quotient $T$ of
  $\gab$. Thus it suffices to show that the induced morphisms $h_1, h_2 \colon T
  \to \mathbb{G}_m$ are equal. Since $\gx$ is of Hodge type, the
  $\mathbb{Q}$-split rank of the central torus $Z_{\mathsf{G}}^{\circ}$ is equal to $1$, and
  this implies that $T$ is of rank $1$. Consider the homomorphism
  \[
    g \colon \mathbb{G}_m \xrightarrow{w} \mathsf{G} \to T.
  \]
  Observe that $h_1 \circ g = h_2 \circ g$ as both $c_1 \circ w_1$ and
  $c_2 \circ w_2$ are given by $z \mapsto z^2$. This moreover shows that $g$ is nonzero, and hence a surjection since $T$ is abstractly isomorphic to $\mathbb{G}_m$. It follows that $h_1 = h_2$.
\end{proof}

\subsubsection{} Suppose $f:\gx \to \gxp$ is a morphism of Shimura data of Hodge type, and let $\mathsf{E}$ and $\mathsf{E}'$ be the reflex fields of $\gx$ and $\gxp$, respectively. Fix a prime $p$ and a place $v$ of $\mathsf{E}$ above $p$ as before, let $v'$ be the induced place of $\mathsf{E}'$, and let $E' \subset E$ be the induced inclusion of completions. \smallskip

Let $\mathcal{G}$ and $\mathcal{G}'$ be stabilizer Bruhat--Tits group schemes over $\zp$ with generic fibers $G = \mathsf{G}_{\qp}$ and $G' = \mathsf{G}'_{\qp}$. Let $K_p = \mathcal{G}(\zp)$ and $K_p' = \mathcal{G}'(\zp)$. If $f(K_p) \subset K_p'$ then there is a morphism of Shimura varieties
\begin{align} \label{Eq:GenericFibreMorphism}
    \mathbf{Sh}_{K_p}\gx \to \mathbf{Sh}_{K_p'}\gxp_{E}.
\end{align}
To get a morphism of integral models, we need to assume that $f(\mathcal{G}(\zpbr)) \subset \mathcal{G}'(\zpbr)$, or equivalently (see \cite[Corollary 2.10.10]{KalethaPrasad}) that $f$ extends (necessarily uniquely) to a morphism $\mathcal{G} \to \mathcal{G}'$. The following result is \cite[Proposition 4.1.10]{Companion}, but is essentially due to Pappas--Rapoport, see \cite[Theorem 4.3.1]{PappasRapoportShtukas}.
\begin{Lem} \label{Lem:Functoriality}
    If $f_{\qp}$ extends to a morphism $\mathcal{G} \to \mathcal{G}'$, then \eqref{Eq:GenericFibreMorphism} extends uniquely to a morphism
\begin{align}
    \scrs_{K_p}\gx \to \scrs_{K'_p}\gx_{\mathcal{O}_{E}},
\end{align}
which moreover sits in a $2$-commutative diagram
\begin{equation}
    \begin{tikzcd}
        \scrs_{K_p}\gx^{\diamond} \arrow{r} \arrow{d} & \scrs_{K'_p}\gx^{\diamond}_{\mathcal{O}_{E}} \times_{\spd \mathcal{O}_{E'}} \spd \mathcal{O}_{E} \arrow{d} \\
        \shtgmu \arrow{r} & \shtgmupoe.
    \end{tikzcd}
\end{equation}
\end{Lem}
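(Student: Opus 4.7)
The plan is to follow the strategy of \cite[Theorem 4.3.1]{PappasRapoportShtukas} and \cite[Corollary 4.1.10]{Companion}. First I would dispatch uniqueness: any two extensions to the integral models agree after restricting to the generic fiber, and since $\scrs_{K'_p}\gxp_{\mathcal{O}_E}$ is flat and separated over $\mathcal{O}_E$ (in particular, Zariski-separated) with dense generic fiber $\mathbf{Sh}_{K_p'}\gxp_E$, they must coincide on the normal source $\scrs_{K_p}\gx$. Thus only existence and compatibility with $\pi_{\mathrm{crys}}$ require work.

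For existence, I would first arrange a compatible pair of Hodge embeddings. Starting from any Hodge embedding $\iota \colon \gx \to \gvx$ and $\iota' \colon \gxp \to \gvxp$, one applies Lemma \ref{Lem:Multiplier} to match the similitude characters and Zarhin's trick to ensure that $\mathcal{G}$ (resp.\ $\mathcal{G}'$) is the stabilizer of a self-dual $\zp$-lattice $V_\zp$ (resp.\ $V'_\zp$). The target of the combined embedding can then be replaced by $\gv \times_{\mathbb{G}_m} \gvp$ via the fibered product along the similitude character, after which $\iota$ factors through the corresponding symplectic group for $V \oplus V'$ in a way compatible with $f$ and the group scheme morphism $\mathcal{G} \to \mathcal{G}'$. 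On the Siegel side the integral models represent an explicit moduli problem, so functoriality in $f$ is immediate. Taking normalizations and using the definition of $\scrs_{K_p}\gx$ and $\scrs_{K'_p}\gxp_{\mathcal{O}_E}$ from Section \ref{subsub:Zarhin} then produces the desired morphism of integral models.

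The $2$-commutativity of the displayed diagram is a consequence of the construction of $\pi_{\mathrm{crys}}$ in \cite[Corollary 3.3.9]{Companion}. Indeed, the $\mathcal{G}$-shtuka $\mathscr{P}_{K,E}$ on $\mathbf{Sh}_K\gx^{\lozenge}$ is defined Tannakianly from the pro-\'etale $\mathcal{G}(\zp)$-cover $\mathbb{P}_K$, and pushout along $\mathcal{G} \to \mathcal{G}'$ recovers $\mathscr{P}_{K',E}$ pulled back under the Shimura variety morphism. The extension of this compatibility to the integral models then follows from the uniqueness in the characterization of canonical integral models \cite[Conjecture 4.0.4]{Companion}: both the pulled-back shtuka and the one obtained by composing with $\pi_{\mathrm{crys}, \mathcal{G}'}$ satisfy the conditions characterizing $\pi_{\mathrm{crys}, \mathcal{G}'}$ pulled back to $\scrs_{K_p}\gx^{\diamond}$, so they agree.

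The main subtlety is arranging the compatible Hodge embeddings: one needs $f \colon \mathsf{G} \to \mathsf{G}'$ to lift to a morphism of ambient symplectic groups which is moreover compatible on the integral level with the chosen self-dual lattices and with the given map $\mathcal{G} \to \mathcal{G}'$. This is where Lemma \ref{Lem:Multiplier}, together with the fibered product construction and Zarhin's trick, is essential; the remainder of the argument, including the extension to the normalizations, is then formal.
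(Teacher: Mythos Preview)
The paper does not prove this lemma at all: it is stated with the prefatory sentence ``The following result is \cite[Corollary~4.1.10]{Companion}, but is essentially due to Pappas--Rapoport, see \cite[Theorem~4.3.1]{PappasRapoportShtukas}'' and then simply used. So the comparison is really with the argument in those references, not with anything in the paper.

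Your sketch is plausible, but it is \emph{not} the strategy of the references you cite. In Pappas--Rapoport and the Companion paper the functoriality of $\scrs_{K_p}\gx$ is deduced from the abstract characterization of canonical integral models: once one knows that $\scrs_{K_p}\gx^{\diamond}$ and $\scrs_{K'_p}\gxp^{\diamond}$ are uniquely determined (together with $\pi_{\mathrm{crys}}$) by the axioms of \cite[Conjecture~4.2.2]{PappasRapoportShtukas} / \cite[Conjecture~4.0.4]{Companion}, the morphism of shtuka stacks $\shtgmu \to \shtgmupoe$ induced by $\mathcal{G} \to \mathcal{G}'$ forces the existence of the map of integral models and the $2$-commutativity of the square. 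No compatible Hodge embeddings, PEL data, or Zarhin trick enter; those are exactly the tools the paper brings in \emph{later}, in Proposition~\ref{Prop:FunctorialitySpecialCase}, to prove functoriality of the Igusa stack, and there Lemma~\ref{Lem:Functoriality} is used as an \emph{input} (see diagram~\eqref{Eq:scrs_theta}).

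Your approach---arranging a $\gv \times_{\mathbb{G}_m} \gvp$ embedding, passing to the PEL model $\scrs_\Theta$, and using the moduli description to project to the $V'$-factor---can in principle be made to work as an alternative route to Lemma~\ref{Lem:Functoriality}. But it requires independence of the Hodge embedding for $\scrs_{K_p}\gx$ (which is itself proved via the characterization of canonical integral models, \cite[Corollary~4.1.9]{Companion}), and one must check that the normalization-of-closure description of $\scrs_{K_p}\gx$ inside $\scrs_\Theta$ genuinely maps to the normalization defining $\scrs_{K_p'}\gxp$. So your proof is longer and logically leans on the same uniqueness machinery; the references instead invoke that machinery directly.
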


\subsubsection{} 
Fix Hodge embeddings
\begin{align*}
	\iota: \gx \to \gvx \text{ and } \iota': \gxp \to \gvxp. 
\end{align*}
We assume as in Section \ref{subsub:Zarhin} that there are self-dual $\zp$-lattices $V_\zp \subset V_\qp$ and $V_\zp' \subset V_\qp'$ such that $\mathcal{G}(\zpbr)$ is the stabilizer of $V_{\zpbr}:=V_\zp \otimes_{\zp}\zpbr$ in $G(\qpbr)$, and such that $\mathcal{G}'(\zpbr)$ is the stabilizer of $V'_{\zpbr}=V'_\zp \otimes_\zp \zpbr$ in $G'(\qpbr)$. Then as explained in \cite[Section 4.3]{PappasRapoportShtukas}, this means that $\iota:G \to G_V$ extends to a dilated immersion $\mathcal{G} \to \operatorname{GSp}(V_{\zp})$ and that $\iota':G' \to G_{V'}$ extends to a dilated immersion $\mathcal{G}' \to \operatorname{GSp}(V_{\zp}')$. We let $V_{\zlocp} = V_\zp \cap V$,  $V'_{\zlocp} = V_\zp' \cap V'$ and set $M_p=\operatorname{GSp}(V_{\zp})(\zp)$, $M_p'=\operatorname{GSp}(V_{\zp}')(\zp)$.
\subsubsection{} Let $V''_{\zlocp} = V'_{\zlocp} \oplus V_{\zlocp}$, equipped with the direct sum of the symplectic forms on $V'_{\zlocp}$ and $V_{\zlocp}$, and let $V'' = V_{\zlocp}'' \otimes_{\zlocp} \mathbb{Q}$; set $M_p'' = \operatorname{GSp}(V_{\zlocp}'')(\zp)$.  Observe that there is an idempotent $\Theta \in \operatorname{End}_{\zlocp}(V''_{\zlocp})$ projecting to $V'_{\zlocp}$, and note that $\Theta'=1-\Theta$ is an idempotent projecting to $V_{\zlocp}$. 

Let $\mathcal{O}_B \subset \operatorname{End}_{\zlocp}(V''_{\zlocp})$ be the $\zlocp$-algebra generated by $\Theta$ and let $B = \mathcal{O}_B \otimes_{\zlocp} \mathbb{Q}$. Then $(B,\mathcal{O}_B, V''_{\zlocp},\ast,h)$ is an unramified integral Shimura PEL datum in the sense of \cite[Definition 5.2]{ShinIgusa}. Here $\ast$ is the involution on $V''_{\zlocp}$ coming from the symplectic pairing and $h$ is a point of $\mathsf{X}_V \times \mathsf{X}_{V'}$. The corresponding similitude group is given by 
\begin{align}
    \gtheta=\gv \times_{\mathbb{G}_m} \gvp \subset \gvpp
\end{align}
and we similarly define $\mathcal{G}_{\Theta}=\operatorname{GSp}(V_{\zp}) \times_{\mathbb{G}_m} \operatorname{GSp}(V'_{\zp})$.
This PEL datum induces a Shimura datum $\gxtheta$. By Lemma \ref{Lem:Multiplier}, the natural map $(\iota,\iota'\circ f): \g \to \gv \times \gvp$ factors through $\gtheta$, and this in fact induces a morphism of Shimura data $\gx \to \gxtheta$. The composition of this map with $\gxtheta \to \gvxpp$ defines a Hodge embedding 
\begin{align*}
	\iota'':\gx \to \gvxpp.
\end{align*}
The assumption that $f_{\qp}$ extends to a morphism $\mathcal{G} \to \mathcal{G}'$ tells us that $\mathcal{G}(\zpbr)$ is the stabilizer of $V_{\zpbr}''$ in $G(\qpbr)$. We can thus use the embedding $\iota''$ to define an Igusa stack $\igsinfpp$, where we write $\Xi =
(\mathcal{G}, \iota, V_{\mathbb{Z}_{p}})$, $\Xi' =
(\mathcal{G}', \iota', V_{\mathbb{Z}_{p}}')$, and $\Xi'' =
(\mathcal{G}, \iota'', V_{\mathbb{Z}_{p}}'')$.
\begin{Prop}\label{Prop:FunctorialitySpecialCase} Assume that $f_{\qp}$ extends to a morphism $\mathcal{G} \to \mathcal{G}'$. Then in the situation described above, the projections from $V_{\zlocp}''$ onto $V_{\zlocp}$ and $V'_{\zlocp}$ induce morphisms
	\begin{align}\label{Eq:SummandFunctoriality}
		\igsinf \leftarrow \igsinfpp \to \igsinfp
	\end{align}
	which are compatible with the 2-Cartesian diagrams in \eqref{Eq:TheDiagram} for $(G,X,\mathcal{G})$ and $(G',X',\mathcal{G}')$.
	Moreover, the left-hand morphism in \eqref{Eq:SummandFunctoriality} is an isomorphism. 
\end{Prop}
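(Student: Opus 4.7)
The plan is to construct the two morphisms in \eqref{Eq:SummandFunctoriality} at the level of the presheaves of groupoids $\igspreinf$, $\igspreinfp$, $\igspreinfpp$, pass to v-sheafifications, verify compatibility with the $2$-Cartesian diagrams \eqref{Eq:TheDiagram}, and then deduce that the left-hand morphism is an isomorphism as a formal consequence of Theorem~\ref{Thm:HodgeMain}.

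For the construction of the morphisms, I would exploit the idempotent decomposition $V''_{\zlocp} = V_{\zlocp} \oplus V'_{\zlocp}$ arising from the $\mathcal{O}_B$-action. For $(S^\sharp, x) \in \igspreinfpp(S)$ with $x \colon \spf R^{\sharp+} \to \hatscrsginf$, the weakly polarized formal abelian scheme $A''_x$ obtained via $\iota''$ canonically decomposes as $A_x \times A^{{\prime\prime},\prime}_x$ through the action of the idempotents $\Theta, \Theta'$, where $A_x$ is the pullback along $\iota$ and $A^{{\prime\prime},\prime}_x$ is canonically identified with $A_{f(x)}$ for $f(x) \colon \spf R^{\sharp+} \to \widehat{\scrs}_{K'_p}\gxp$ the morphism provided by Lemma~\ref{Lem:Functoriality}. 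I would send $(S^\sharp, x)$ to $(S^\sharp, x) \in \igspreinf(S)$ under the left map and to $(S^\sharp, f(x)) \in \igspreinfp(S)$ under the right map. A morphism $\phi''$ in $\igspreinfpp(S)$ decomposes as $(\phi, \phi')$ via the idempotents, and the two projections provide the morphism-level data. Well-definedness uses the $G$-bundle isomorphism $\alpha \colon \mathbb{L}_{\crys,x} \xrightarrow{\sim} \mathbb{L}_{\crys,y}$ underlying $\phi''$ per condition~(b) of Definition~\ref{Def:IgusaStack}: $\alpha$ itself provides the required $G$-bundle isomorphism for $\phi$, while its pushforward along $f \colon G \to G'$ provides the $G'$-bundle isomorphism for $\phi'$, using the identification $f_\ast \mathbb{L}_{\crys,x} \simeq \mathbb{L}_{\crys,f(x)}$ afforded by Lemma~\ref{Lem:Functoriality}. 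The splitting of the pro-\'etale local systems $\mathbb{L}^p(V'') \simeq \mathbb{L}^p(V) \oplus \mathbb{L}^p(V')$ ensures that $\phi''$ preserves the \'etale trivializations if and only if both $\phi$ and $\phi'$ do. Sheafification yields the desired morphisms of v-stacks, and compatibility with the $2$-Cartesian diagrams \eqref{Eq:TheDiagram} follows immediately from Lemma~\ref{Lem:Functoriality}.

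To prove the left-hand morphism $\igsinfpp \to \igsinf$ is an isomorphism, I apply Theorem~\ref{Thm:HodgeMain} to both $\igsinf$ and $\igsinfpp$, obtaining two $2$-Cartesian squares with the same top row $\pi_{\mathrm{crys}} \colon \scrsdinf \to \shtgmuone$ and the same right column $\mathrm{BL}^\circ \colon \shtgmuone \to \bungmu$. Since $\mathrm{BL}^\circ$ is v-surjective by Corollary~\ref{Cor:VSurjectiveBL}, both $\scrsdinf \to \igsinf$ and $\scrsdinf \to \igsinfpp$ are v-surjective, so it suffices to identify the equivalence relations the two quotients define on $\scrsdinf$. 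A direct computation from either $2$-Cartesian square yields
\begin{equation*}
  \scrsdinf \times_{\igsinf} \scrsdinf \simeq \scrsdinf \times_{\bungmu} \shtgmuone \simeq \scrsdinf \times_{\igsinfpp} \scrsdinf
\end{equation*}
as sub-v-sheaves of $\scrsdinf \times \scrsdinf$. The constructed map $\igsinfpp \to \igsinf$ is compatible with the two v-surjections from $\scrsdinf$ (both factor through the identity on $\scrsdinf$), and hence must coincide with the resulting canonical isomorphism.

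The point requiring most care is the identification $A^{{\prime\prime},\prime}_x \simeq A_{f(x)}$ together with the compatibility $f_\ast \mathbb{L}_{\crys,x} \simeq \mathbb{L}_{\crys,f(x)}$ of crystalline period maps; both follow from the construction of $\iota''$ as the composition $\gx \to \gxtheta \to \gvxpp$ combined with the compatibility of integral models and crystalline period maps recorded in Lemma~\ref{Lem:Functoriality}. The isomorphism claim itself requires no further analysis of quasi-isogenies, being a purely formal consequence of the two $2$-Cartesian diagrams sharing their top row and right column.
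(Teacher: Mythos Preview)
Your approach is essentially the same as the paper's: both construct the morphisms at the presheaf level via the idempotent decomposition of $A''_x$ (which the paper obtains by factoring $\scrs_{K_p}\gx \to \scrs_{M_p''}\gvxpp$ through the PEL-type integral model $\scrs_\Theta$), and both deduce the isomorphism formally from the two $2$-Cartesian diagrams of Theorem~\ref{Thm:HodgeMain} sharing their top row and right column.

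There is one step you gloss over. You assert that a morphism $\phi''$ in $\igspreinfpp(S)$ ``decomposes as $(\phi,\phi')$ via the idempotents'', but a formal quasi-isogeny $A''_x \dashrightarrow A''_y$ between products need not respect the product structure a priori; it could have cross terms. The paper fills this by proving $\Theta_y \circ \psi = \psi \circ \Theta_x$: by the faithfulness of the prime-to-$p$ Tate module functor (Lemma~\ref{Lem:Discrete}) this may be checked on $\mathcal{V}^p$, where it follows from the compatibility of $\Theta_x,\Theta_y$ with the \'etale trivializations $\eta_x,\eta_y$ together with the condition $\eta_y \circ \psi = \eta_x$. Your sentence about the splitting of $\mathbb{L}^p(V'')$ is in the right neighborhood, but as written it verifies the \'etale-trivialization condition \emph{after} assuming the decomposition rather than establishing the decomposition itself. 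A second point: the factorization through $\scrs_\Theta$ that you invoke at the end requires knowing that $\scrs_\Theta$ is the canonical integral model for $\gxtheta$ in the Pappas--Rapoport sense (so that Lemma~\ref{Lem:Functoriality} applies); the paper verifies this by identifying it with Kottwitz's smooth PEL model.
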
 
\begin{proof}
    Let us first show the existence of the morphism on the right. By \cite[Lemma 2.1.2]{KisinModels}, the map $\iota''$ induces an embedding
	\begin{align*}
		\mathbf{Sh}_{K_p}\gx \hookrightarrow \mathbf{Sh}_{M_p''}\gvxpp_E.
	\end{align*}
	By \cite[Corollary 4.1.13]{Companion}, see \cite[Theorem 4.5.2]{PappasRapoportShtukas}, the integral model $\scrs_{K_p}\gx$ is independent of the Hodge embedding used to define it. Thus $\scrs_{K_p}\gx$ can be equivalently defined as the normalization of the closure of $\mathbf{Sh}_{K_p}\gx_E$ in $\scrs_{M_p''}\gvxpp_{\mathcal{O}_E}$, and the embedding above extends to a morphism of integral models
	\begin{align*}
		\scrs_{K_p}\gx \to \scrs_{M_p''}\gvxpp_{\mathcal{O}_E}.
	\end{align*} 
     Recall the auxiliary Shimura datum of PEL type $\gxtheta$ and the parahoric model $\mathcal{G}_\Theta$, and let $K_{\Theta,p}= \mathcal{G}_\Theta(\zp)$. Let $\scrs_\Theta$ be the moduli space over $\mathcal{O}_E$ of tuples $(A, \lambda, \eta^p, \Theta_A)$, where $(A, \lambda, \eta^p) \in \scrs_{M_p''}\gvxpp$, and $\Theta_A \in \operatorname{End}(A)$ is an endomorphism compatible with $\Theta$ via the isomorphism $\varepsilon: \mathcal{V}^p \xrightarrow{\sim} V'' \otimes \underline{\afp}$. Then $\scrs_\Theta$ is an integral model for $\mathbf{Sh}_{K_{\Theta,p}}(\mathsf{G}_\Theta, \mathsf{X}_\Theta)_E$, and moreover it is (the base change to $\mathcal{O}_E$ of) \textit{the} canonical integral model in the sense of \cite{PappasRapoportShtukas}. Indeed, it is (the base change to $\mathcal{O}_E$ of) the smooth integral model constructed by Kottwitz corresponding to the unramified integral Shimura PEL datum  $(B,\mathcal{O}_B, V''_{\zlocp},\ast,h)$, see \cite[Section 5]{ShinIgusa}. By \cite[Lemma 3.2]{XuPEL}, it agrees with the one constructed by Kisin in \cite{KisinModels}, which in turn agrees with the model constructed by Pappas--Rapoport in \cite{PappasRapoportShtukas} because the constructions are the same. 

By functoriality of canonical integral models (see Lemma \ref{Lem:Functoriality}), we have a commutative diagram 
	\begin{equation}\label{Eq:scrs_theta}
		\begin{tikzcd}
			\scrs_{K_p}\gx 
				\arrow[r] \arrow[d, "f_\scrs"]
			& \scrs_\Theta 
				\arrow[d]
			\\ \scrs_{K_p'}\gxp_{\mathcal{O}_E} 
				\arrow[r, "\iota'"]
			& \scrs_{M_p'}\gvxp_{\mathcal{O}_E}.
		\end{tikzcd}
	\end{equation}
	uniquely extending the morphisms on the generic fibers. Note that the map $\scrs_\Theta \to
  \scrs_{M_p'}\gvxp_{\mathcal{O}_{E}}$ is given by sending $(A, \lambda, \eta^p, \Theta_A)$ to
  $\Theta_A(A,\lambda, \eta^p)$. \smallskip
	
	Now we construct a morphism of v-stacks $\igsinfpp \to \igsinfp$. It suffices to construct instead $\igspreinfpp \to \igspreinfp$, where the latter objects are  the presheaves of groupoids introduced in Section \ref{Subsec:IgusaDefinition}. Fix an affinoid perfectoid space $S = \spa(R,R^+)$ in characteristic $p$. To an object $x:\spf R^{\sharp+} \to \scrs_{K_p}\gx$ in $\igspreinfpp(S)$, we associate the object $x'$ given by composition
	\begin{align*}
 \spf R^{\sharp+} \to \scrs_{K_p}\gx \xrightarrow{f_\scrs} \scrs_{K_p'}\gxp_{\mathcal{O}_{E}}.
	\end{align*} 
	On morphisms the process is more complicated: Suppose we are given a pair of untilts $(R^{\sharp_1}, R^{\sharp_1+}),(R^{\sharp_2}, R^{\sharp_2+})$ over $\mathcal{O}_E$, morphisms $x:\spf R^{\sharp_1+} \to \scrs_{K_p}\gx, y: \spf R^{\sharp_2+} \to \scrs_{K_p}\gx$ over $\spf \mathcal{O}_E$, and a morphism $\psi: x \to y$. So $\psi: A_x \dashrightarrow A_y$ is a formal quasi-isogeny such that $\eta_y \circ \psi = \eta_x$ as morphisms $\mathcal{V}^p_{x} \to V\otimes \underline{\afp}$ and such that the induced map $\mathcal{E}(A_x) \to \mathcal{E}(A_y)$ of vector bundles on $X_S$ is induced by a (necessarily unique) isomorphism of $G$-bundles $\mathbb{L}_{\mathrm{crys},x} \to \mathbb{L}_{\mathrm{crys},y}$.
	
	Now, since $\iota'':\scrs_{K_p}\gx \to \scrs_{M_p''}\gvxpp_{\mathcal{O}_E}$ factors through $\scrs_\Theta$, we obtain objects $(A_x, \lambda_x, \eta_x, \Theta_x)$ and $(A_y, \lambda_y, \eta_y, \Theta_y)$ in $\widehat{\scrs}_\Theta(\spf R^{\sharp_i+})$, $i=1,2$, which map to $\iota''(x)$ and $\iota''(y)$, respectively. On the other hand, by the commutativity of the diagram \eqref{Eq:scrs_theta}, we see that $(\iota' \circ f_\scrs)(x) = \Theta_x(A_x, \lambda_x, \eta_x)$ and $(\iota' \circ f_\scrs)(y) = \Theta_y(A_y, \lambda_y, \eta_y)$. Let $x' = f_\scrs(x)$ and $y' = f_\scrs(y)$, so $\Theta_x(A_x, \lambda_x, \eta_x) = (A_{x'}, \lambda_{x'}, \eta_{x'})$ and $\Theta_y(A_y, \lambda_y, \eta_y) = (A_{y'}, \lambda_{y'}, \eta_{y'})$.
	
	We claim that $\psi$ induces a formal quasi-isogeny $\psi':A_{x'} \dashrightarrow A_{y'}$. For this it is enough to show that $\Theta_y \circ \psi = \psi \circ \Theta_x$. In turn, by  the proof of Lemma \ref{Lem:Discrete}, it is enough to show this commutativity for the induced morphisms $\mathcal{V}^p_{x} \to \mathcal{V}^p_{y}$. But this follows from the compatibilities of $\Theta_x$ with $\eta_x$ and $\Theta_y$ with $\eta_y$, along with the identity $\eta_y \circ \psi = \eta_x$. Note also that the compatibility of $\Theta_x$ with $\eta_x$ and $\Theta_y$ with $\eta_y$, along with the fact that $\Theta_x(\mathcal{V}^p_{x}) = \mathcal{V}^p_{x'}$, implies that $\eta_{y'} \circ \psi' = \eta_{x'}$.

  It remains only to check that there is a morphism of $G'$-bundles $\psi:\mathbb{L}_{\mathrm{crys},x'} \to \mathbb{L}_{\mathrm{crys},y'}$ on $X_S$ which induces $\mathcal{E}(\psi'): \mathcal{E}(A_{x'}) \to \mathcal{E}(A_{y'})$. By functoriality of $\pi_{\mathrm{crys}}$, we have a natural isomorphism $\mathbb{L}_{\mathrm{crys},x} \times^G G' \xrightarrow{\sim} \mathbb{L}_{\mathrm{crys},x'}$, and similarly for $y'$. Thus the morphism $\psi_{\mathrm{crys}}:\mathbb{L}_{\mathrm{crys},x} \to \mathbb{L}_{\mathrm{crys},y}$ coming from $\psi: x \to y$ induces a morphism of $G'$-bundles $\psi'_{\mathrm{crys}}:\mathbb{L}_{\mathrm{crys},x'} \to \mathbb{L}_{\mathrm{crys},y'}$. It now suffices to check that this induces the map $\mathcal{E}(\psi')$ of $\operatorname{GL}(V')$-bundles on $X_S$. But this follows from the 2-commutativity of the cube
\begin{equation}\label{Eq:scrs_theta2}
\begin{tikzcd}[column sep=tiny, row sep=tiny]
    & \bun_{G} \arrow[rr] \arrow[dd]& &\bun_{G_{\Theta}} \arrow[dd]
    \\ \scrs_{K_p}\gx^{\diamond}
        \arrow[rr, crossing over] \arrow[ur] \arrow[dd, "f_\scrs^\diamond"]
    & & \scrs_\Theta^{\diamond} \arrow[ur]
    \\ & \bun_{G'} \arrow[rr] & & \bun_{G_{V'}} 
    \\ \scrs_{K_p'}\gxp^{\diamond}_{\mathcal{O}_E} 
        \arrow[rr, "\iota'"] \arrow[ur]
    & & \scrs_{M_p'}\gvxp^{\diamond}_{\mathcal{O}_E} \arrow[ur] \arrow[from=uu, crossing over]
\end{tikzcd}
\end{equation}
 coming from the fact that all the integral models in equation \eqref{Eq:scrs_theta} satisfy the Pappas--Rapoport axioms, and applying Lemma \ref{Lem:Functoriality}. 
 
 This completes the construction of the map $\igspreinfpp \to \igspreinfp$. Let $\igsinfpp \to \igsinfp$ be the induced map. As a consequence of the construction we see that the following cube is $2$-commutative.
 \begin{equation}
 \begin{tikzcd}[column sep=tiny, row sep=tiny] \label{Eq:IgusaFunctorialityCube}
     & \igsinfpp \arrow{rr} \arrow{dd} && \igsinfp \arrow{dd} \\ 
     \scrs_{K_p}\gx^{\diamond} \arrow[rr, crossing over] \arrow{dd} \arrow{ur} & &\scrs_{K_p'}\gxp^{\diamond}_{\mathcal{O}_E} \arrow{ur}
     \\ 
     & \bungmu \arrow{rr} && \bun_{G',{\mu'}^{-1}}. \\
     \shtgmu \arrow{rr} \arrow{ur} && \shtgmupoe \arrow{ur} \arrow[from=uu, crossing over]
 \end{tikzcd}
 \end{equation}
To construct the left morphism in the proposition, we follow the same argument as above, replacing $\Theta$ by the endomorphism $\Theta'$ of $V''_{\zlocp}$ whose image is $V_{\zlocp}$. This gives us a morphism $\igsinfpp \to \igsinf$. To show this is an isomorphism it is enough to pass to the v-cover $\operatorname{BL}:\shtgmuonerat \to \bungmu$ of Corollary~\ref{Cor:VSurjectiveBL}. But by commutativity of the cube in \eqref{Eq:IgusaFunctorialityCube} and Theorem \ref{Thm:HodgeMain} for both $\igsinfpp$ and $\igsinf$, the morphism 
	\begin{align*}
		\igsinfpp \times_{\bun_G} \shtgmuonerat \to \igsinf \times_{\bun_G} \shtgmuonerat
	\end{align*}
	is identified with the identity map $\mathbf{Sh}_{K_p}\gx^{\circ, \lozenge} \to \mathbf{Sh}_{K_p}\gx^{\circ, \lozenge}$, so we are done.
\end{proof}

\subsubsection{Conclusion} Suppose $f:\gx \to \gxp$ is a morphism of Shimura data of Hodge type, and let $\mathsf{E}$ and $\mathsf{E}'$ be the reflex fields of $\gx$ and $\gxp$, respectively. Fix a prime $p$ and a place $v$ of $\mathsf{E}$ as before, let $v'$ be the induced place of $\mathsf{E}'$, and let $E' \subset E$ be the induced inclusion of completions. Make choices $\Xi =
(\mathcal{G}, \iota, V_{\mathbb{Z}_{p}})$ and $ \Xi' =
(\mathcal{G}', \iota', V_{\mathbb{Z}_{p}}')$ as in Section \ref{Subsec:IgusaDefinition}.
\begin{Cor} \label{Cor:IntegralFunctoriality}
If $f_{\qp}$ extends to a morphism $\mathcal{G} \to
  \mathcal{G}'$, then $f$ induces a unique morphism of v-stacks
	\begin{align*}
		\igsinf \to \igsinfp.
	\end{align*}
    fitting in a commutative diagram
    \begin{equation}
    \begin{tikzcd}
        \scrs_{K_p}\gx^{\diamond} \arrow{r} \arrow{d} & \scrs_{K_p^\prime}\gxp^{\diamond} \arrow{d} \\
        \igsinf \arrow{r} & \igsinfp.
    \end{tikzcd}
    \end{equation}
\end{Cor}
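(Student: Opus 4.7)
The plan is to reduce the general case to Proposition~\ref{Prop:FunctorialitySpecialCase} by running the auxiliary Hodge embedding construction of that proposition. First I would introduce the auxiliary data $\Xi^{\prime\prime} = (\mathcal{G}, \iota^{\prime\prime}, V^{\prime\prime}_{\mathbb{Z}_p})$, where $V^{\prime\prime} = V \oplus V^\prime$ is given the direct sum symplectic form and $\iota^{\prime\prime} \colon \gx \to \gvxpp$ is the Hodge embedding built from $\iota$ and $\iota^\prime \circ f$ as in Section~\ref{Sec:IntegralFunctoriality}. Since $f_{\qp}$ extends to $\mathcal{G} \to \mathcal{G}^\prime$, the group scheme $\mathcal{G}$ stabilizes $V^{\prime\prime}_{\mathbb{Z}_p}$, so this auxiliary datum satisfies the hypotheses of the construction.

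Next I would apply Proposition~\ref{Prop:FunctorialitySpecialCase} to obtain the zig-zag of morphisms
\[
  \igsinf \xleftarrow{\sim} \igsinfpp \longrightarrow \igsinfp,
\]
where the left arrow is an isomorphism. The desired morphism $\igsinf \to \igsinfp$ is defined as the composition of the inverse of the left arrow with the right arrow. The compatibility with the $2$-Cartesian diagrams from Theorem~\ref{Thm:HodgeMain} asserted in Proposition~\ref{Prop:FunctorialitySpecialCase}, together with Lemma~\ref{Lem:Functoriality} (which provides the morphism $f_{\mathscr{S}} \colon \mathscr{S}_{K_p}\gx \to \mathscr{S}_{K_p^\prime}\gxp_{\mathcal{O}_E}$ of integral models), ensures that the resulting morphism sits in the commutative diagram of the statement. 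Indeed, the top square and the outer cube commute by the construction carried out in the proof of Proposition~\ref{Prop:FunctorialitySpecialCase}; chasing arrows through $\igsinfpp$ gives the required commutativity for the map we have defined.

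For uniqueness, I would use that $\IgsQuot \colon \scrsdinf \to \igsinf$ is a v-cover. More precisely, by Corollary~\ref{Cor:ProdGeomPtsIgsQuotSurjective} the morphism $\IgsQuot$ is essentially surjective on strictly totally disconnected perfectoid spaces, which form a basis for the v-topology, so $\IgsQuot$ is v-surjective. Since $\igsinfp$ is a v-sheaf (and is $0$-truncated by Lemma~\ref{Lem:Discrete}), any two morphisms $\igsinf \rightrightarrows \igsinfp$ that agree after precomposition with $\IgsQuot$ must coincide. Any morphism fitting in the asserted commutative diagram becomes, after precomposition with $\IgsQuot$, equal to the composition $\scrsdinf \to \scrs_{K_p^\prime}\gxp^{\diamond} \to \igsinfp$, so uniqueness is automatic.

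The main subtle step is really the construction inside Proposition~\ref{Prop:FunctorialitySpecialCase}, which we are invoking as a black box; beyond that, the present corollary is essentially formal. The only point that requires minor care is checking that the diagram in the statement commutes on the nose (as opposed to up to v-locally unique $2$-isomorphism): this reduces to unwinding the construction in the proof of Proposition~\ref{Prop:FunctorialitySpecialCase} and observing that the $\IgsQuot$-map from $\scrs_{K_p}\gx^{\diamond}$ into $\igsinf$ is, tautologically, the same as the $\IgsQuot$-map into $\igsinfpp$ composed with the isomorphism $\igsinfpp \xrightarrow{\sim} \igsinf$ coming from the projection onto $V_{\mathbb{Z}_{(p)}}$.
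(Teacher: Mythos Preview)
Your proposal is correct and follows exactly the paper's approach: existence comes from inverting the isomorphism in the zig-zag of Proposition~\ref{Prop:FunctorialitySpecialCase}, and uniqueness from the v-surjectivity of $\IgsQuot$. The paper's proof is just a terser version of what you wrote.
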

\begin{proof}
The uniqueness follows from the v-surjectivity of the vertical arrows in the diagram, and existence is given by inverting the first arrow in \eqref{Eq:SummandFunctoriality} of Proposition \ref{Prop:FunctorialitySpecialCase}.
\end{proof}

\subsection{Change-of-parahoric morphisms} Let $(\mathsf{G},\mathsf{X})$ be a Shimura datum of Hodge type, with a prime $p$ and $v\mid p$ a place of the reflex field $\mathsf E$ as before. Make choices $\Xi =
(\mathcal{G}_1, \iota, V_{\mathbb{Z}_{p}})$ and $ \Xi' =
(\mathcal{G}_2, \iota', V_{\mathbb{Z}_{p}}')$ as in Section \ref{Subsec:IgusaDefinition}. Assume that the identity map of $G$ extends to a morphism $\mathcal{G}_1 \to \mathcal{G}_2$ of quasi-parahoric group schemes. This is equivalent, by \cite[Corollary 2.10.10]{KalethaPrasad}, to assuming that $\mathcal{G}_1(\zpbr) \subset \mathcal{G}_2(\zpbr)$. Then by Corollary~\ref{Cor:IntegralFunctoriality} above, we have a morphism of the corresponding Igusa stacks.
\begin{Prop} \label{Prop:Forgetful}
    The map $\igsinf \to \igsinftwo$ constructed in Corollary~\ref{Cor:IntegralFunctoriality} is an isomorphism.
\end{Prop}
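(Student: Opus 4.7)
The plan is to show that $f \colon \igsinf \to \igsinftwo$ is an isomorphism by verifying that it becomes one after pullback along the v-surjection $\operatorname{BL}^\circ \colon \mathrm{Sht}_{\mathcal{G}_1, \mu, \delta=1} \to \bungmu$ from Corollary~\ref{Cor:VSurjectiveBL}, and then concluding by v-descent. Setting $K_{i,p} = \mathcal{G}_i(\mathbb{Z}_p)$, Theorem~\ref{Thm:HodgeMain} applied to $\mathcal{G}_1$ identifies the pullback of $\igsinf$ along $\operatorname{BL}^\circ$ with $\scrs_{K_{1,p}}\gx^{\diamond}$, while applied to $\mathcal{G}_2$ it identifies the pullback of $\igsinftwo$ along $\operatorname{BL}^\circ$ with the fiber product $\scrs_{K_{2,p}}\gx^{\diamond} \times_{\mathrm{Sht}_{\mathcal{G}_2, \mu, \delta=1}} \mathrm{Sht}_{\mathcal{G}_1, \mu, \delta=1}$. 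Unwinding the construction of $f$ from Corollary~\ref{Cor:IntegralFunctoriality}, the induced map between these pullbacks agrees with the natural map obtained from the functoriality morphism of integral models $\scrs_{K_{1,p}}\gx^{\diamond} \to \scrs_{K_{2,p}}\gx^{\diamond}$ of Lemma~\ref{Lem:Functoriality} together with $\pi_{\mathrm{crys},\mathcal{G}_1}$.

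The main step is therefore to establish that the square
\[
\begin{tikzcd}
  \scrs_{K_{1,p}}\gx^{\diamond} \ar[r, "\pi_{\mathrm{crys},\mathcal{G}_1}"] \ar[d] & \mathrm{Sht}_{\mathcal{G}_1, \mu, \delta=1} \ar[d] \\
  \scrs_{K_{2,p}}\gx^{\diamond} \ar[r, "\pi_{\mathrm{crys},\mathcal{G}_2}"] & \mathrm{Sht}_{\mathcal{G}_2, \mu, \delta=1}
\end{tikzcd}
\]
is 2-Cartesian. This extends the 2-Cartesian diagram \eqref{Eq:DiagramQuasiParahoricStabilizer}, which settles the special case where $\mathcal{G}_1$ and $\mathcal{G}_2$ share a common identity-component parahoric. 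For the general case of two distinct stabilizer Bruhat--Tits models $\mathcal{G}_1 \to \mathcal{G}_2$, the approach is to invoke the Pappas--Rapoport axiomatic characterization of canonical integral models: one verifies that the v-sheaf fiber product arises as the diamond of a normal flat $\mathcal{O}_E$-integral model of $\mathbf{Sh}_{K_{1,p}K^p}\gx$ satisfying the axioms of \cite[Conjecture~4.0.4]{Companion}, whence by the uniqueness result \cite[Corollary~4.0.9]{Companion} it must coincide with $\scrs_{K_{1,p}}\gx^{\diamond}$.

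The hard part is expected to be precisely this verification. The 2-commutative square supplied by Lemma~\ref{Lem:Functoriality} is not by itself enough, and one must combine it with the compatibility of $\pi_{\mathrm{crys}}$ with the shtuka-theoretic local model diagram together with the formal smoothness properties of the integral local Shimura varieties (in the spirit of the argument used in \cite{Companion} to establish \eqref{Eq:DiagramQuasiParahoricStabilizer}) in order to promote 2-commutativity to genuine 2-Cartesianness. Once the square above is known to be 2-Cartesian, the proposition follows immediately: $f$ pulls back to an isomorphism along the v-surjection $\operatorname{BL}^\circ$, hence is itself an isomorphism by v-descent.
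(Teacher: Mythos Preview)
Your overall strategy---prove that $f$ becomes an isomorphism after pullback along a v-cover of $\bungmu$ and conclude by descent---is exactly the paper's strategy. The difference lies in the choice of v-cover, and this difference is decisive.

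You pull back along the \emph{integral} map $\operatorname{BL}^\circ \colon \mathrm{Sht}_{\mathcal{G}_1,\mu,\delta=1} \to \bungmu$, which forces you to prove that the integral square
\[
\begin{tikzcd}
  \scrs_{K_{1,p}}\gx^{\diamond} \ar[r] \ar[d] & \mathrm{Sht}_{\mathcal{G}_1,\mu,\delta=1} \ar[d] \\
  \scrs_{K_{2,p}}\gx^{\diamond} \ar[r] & \mathrm{Sht}_{\mathcal{G}_2,\mu,\delta=1}
\end{tikzcd}
\]
is $2$-Cartesian. This is precisely Corollary~\ref{Cor:CuriousCartesian}, which the paper deduces \emph{from} Proposition~\ref{Prop:Forgetful} rather than the other way around. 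Your proposed route to it---show that the v-sheaf fiber product is the diamond of a normal flat $\mathcal{O}_E$-scheme satisfying the Pappas--Rapoport axioms, then invoke uniqueness---is a genuine gap: you do not verify representability by a scheme, normality, or any of the axioms, and you yourself flag this as ``the hard part.'' Note also that \eqref{Eq:DiagramQuasiParahoricStabilizer} only treats the case where $\mathcal{H} \subset \mathcal{G}$ share the same relative identity component, which is not the situation here.

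The paper instead pulls back along the \emph{generic-fiber} map $\mathrm{BL} \colon \mathrm{Sht}_{\mathcal{G}_1,\mu,\delta=1,E} \to \bungmu$ (this is what Corollary~\ref{Cor:VSurjectiveBL} actually asserts is v-surjective). Via Lemma~\ref{Lem:PotentiallyCrystalline} and \eqref{Eq:GenericFibreShtukas}, the resulting Cartesian square becomes
\[
\begin{tikzcd}
  \mathbf{Sh}_{K_{p,1}}\gx^{\circ,\lozenge} \ar[r] \ar[d] & \bigl[\operatorname{Gr}_{G,\mu^{-1}}/\underline{K_{p,1}}\bigr] \ar[d] \\
  \mathbf{Sh}_{K_{p,2}}\gx^{\circ,\lozenge} \ar[r] & \bigl[\operatorname{Gr}_{G,\mu^{-1}}/\underline{K_{p,2}}\bigr],
\end{tikzcd}
\]
and this is $2$-Cartesian for an elementary reason: both vertical maps are surjective finite \'etale of degree $\#(K_{p,1}\backslash K_{p,2})$, using \cite[Corollary~5.29]{ImaiMieda} on the left. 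No appeal to integral-model uniqueness is needed. So the fix is simply to restrict to the generic fiber from the start; your ``hard part'' then evaporates.
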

\begin{proof}
It suffices to check the map is an isomorphism after base change via the v-cover $\shtgonemuonerat \to \bungmu$, see Corollary~\ref{Cor:VSurjectiveBL}. By Lemma \ref{Lem:PotentiallyCrystalline} and the fiber product formulas for both Igusa stacks, see Theorem \ref{Thm:HodgeMain}, this base changed map can be identified with the natural map
\begin{align} \label{Eq:FiberProductMap}
\mathbf{Sh}_{K_{p,1}}\gx^{\circ,\lozenge} \to \mathbf{Sh}_{K_{p,2}}\gx^{\circ,\lozenge} \times_{\shtgtwomuonerat} \shtgonemuonerat.
\end{align}
To show that this is an isomorphism, we need to show that the diagram
\begin{equation} \label{Eq:CartesianDiagram}
    \begin{tikzcd}
        \mathbf{Sh}_{K_{p,1}}\gx^{\circ,\lozenge} \arrow{r} \arrow{d} & \shtgonemuonerat \arrow{d} \\
        \mathbf{Sh}_{K_{p,2}}\gx^{\circ,\lozenge} \arrow{r} & \shtgtwomuonerat      
    \end{tikzcd}
\end{equation}
is $2$-Cartesian. Using the description of the generic fiber of the stacks of shtukas from equation \eqref{Eq:GenericFibreShtukas}, we can identify \eqref{Eq:CartesianDiagram} with the diagram
\begin{equation}
    \begin{tikzcd}
        \mathbf{Sh}_{K_{p,1}}\gx^{\circ,\lozenge} \arrow{r} \arrow{d} & \left[\operatorname{Gr}_{G,\mu^{-1}} /\underline{K_{p,1}}\right] \arrow{d} \\
        \mathbf{Sh}_{K_{p,2}}\gx^{\circ,\lozenge} \arrow{r} & \left[\operatorname{Gr}_{G,\mu^{-1}} /\underline{K_{p,2}}\right]. 
    \end{tikzcd}
\end{equation}
The latter diagram is visibly $2$-Cartesian, for instance because both vertical maps are surjective and finite \'etale of degree $\#(K_{p,1} \backslash K_{p,2})$. Indeed, the diagram
\begin{equation}
\begin{tikzcd}
    \mathbf{Sh}_{K_{p,1}}\gx^{\circ,\lozenge} \arrow{r} \arrow{d} & \mathbf{Sh}_{K_{p,1}}\gx^{\lozenge} \arrow{d} \\
    \mathbf{Sh}_{K_{p,2}}\gx^{\circ,\lozenge} \arrow{r} & \mathbf{Sh}_{K_{p,2}}\gx^{\lozenge}
\end{tikzcd}    
\end{equation}
is Cartesian by \cite[Corollary~5.29]{ImaiMieda}, and the right-hand side is finite \'etale of degree $\#(K_{p,1} \backslash K_{p,2})$. 
\end{proof}

\begin{Cor} \label{Cor:CuriousCartesian}
If the identity map of $G$ extends to a map $\mathcal{G}_1 \to \mathcal{G}_2$, then the diagram
    \begin{equation}
        \begin{tikzcd}
            \scrs_{K_{p,1}}\gx^{\diamond} \arrow{r} \arrow{d} & \operatorname{Sht}_{\mathcal{G}_{1}, \mu,\delta=1} \arrow{d} \\
             \scrs_{K_{p,2}}\gx^{\diamond} \arrow{r} & \operatorname{Sht}_{\mathcal{G}_{2}, \mu,\delta=1}
        \end{tikzcd}
    \end{equation}
is $2$-Cartesian.
\end{Cor}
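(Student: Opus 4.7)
The plan is to derive this corollary as a direct consequence of the two main results already established in this section: the 2-Cartesian diagram of Theorem~\ref{Thm:HodgeMain} and the identification of Igusa stacks in Proposition~\ref{Prop:Forgetful}. The idea is that once we know the Igusa stack is canonically identified for the two parahoric choices $\mathcal{G}_1 \subset \mathcal{G}_2$, the Cartesian-ness of the desired square follows by pasting.

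More concretely, I would first apply Theorem~\ref{Thm:HodgeMain} to each of $\mathcal{G}_1$ and $\mathcal{G}_2$ (with a common choice of $K^p$) to obtain $2$-Cartesian diagrams
\begin{equation*}
\begin{tikzcd}
    \scrs_{K_{p,i}}\gx^{\diamond} \arrow{r}{\pi_{\mathrm{crys},\mathcal{G}_i}} \arrow{d}{\IgsQuot_{\mathcal{G}_i}} & \mathrm{Sht}_{\mathcal{G}_i,\mu,\delta=1} \arrow{d}{\mathrm{BL}^\circ} \\
    \mathrm{Igs}_{\Xi_i}\gx \arrow{r} & \bungmu
\end{tikzcd}
\end{equation*}
for $i = 1, 2$, and then invoke Proposition~\ref{Prop:Forgetful} to identify $\mathrm{Igs}_{\Xi_1}\gx \xrightarrow{\sim} \mathrm{Igs}_{\Xi_2}\gx$ compatibly with the maps to $\bungmu$ (here $\Xi_i$ denotes the auxiliary data attached to $\mathcal{G}_i$ as in Section~\ref{Subsec:IgusaDefinition}). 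This reduces the problem to a formal pasting of Cartesian squares: one obtains chains of isomorphisms
\begin{align*}
\scrs_{K_{p,1}}\gx^{\diamond}
&\simeq \mathrm{Igs}_{K^p}\gx \times_{\bungmu} \mathrm{Sht}_{\mathcal{G}_1,\mu,\delta=1} \\
&\simeq \bigl(\mathrm{Igs}_{K^p}\gx \times_{\bungmu} \mathrm{Sht}_{\mathcal{G}_2,\mu,\delta=1}\bigr) \times_{\mathrm{Sht}_{\mathcal{G}_2,\mu,\delta=1}} \mathrm{Sht}_{\mathcal{G}_1,\mu,\delta=1} \\
&\simeq \scrs_{K_{p,2}}\gx^{\diamond} \times_{\mathrm{Sht}_{\mathcal{G}_2,\mu,\delta=1}} \mathrm{Sht}_{\mathcal{G}_1,\mu,\delta=1},
\end{align*}
provided we know the $2$-commutativity of
\begin{equation*}
\begin{tikzcd}
    \mathrm{Sht}_{\mathcal{G}_1,\mu,\delta=1} \arrow{r} \arrow{d}{\mathrm{BL}^\circ} & \mathrm{Sht}_{\mathcal{G}_2,\mu,\delta=1} \arrow{d}{\mathrm{BL}^\circ} \\
    \bungmu \arrow[r, equal] & \bungmu,
\end{tikzcd}
\end{equation*}
which is immediate from the construction of $\mathrm{BL}^\circ$ in Section~\ref{Sec:LocalShtukas}. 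Finally, one has to check that the identification produced this way agrees with the morphism in the statement, i.e., that it is compatible with $\pi_{\mathrm{crys},\mathcal{G}_i}$ and the change-of-parahoric map of integral models; this will follow from the construction of $\IgsQuot$ and the functoriality given by Corollary~\ref{Cor:IntegralFunctoriality}.

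I do not expect any serious obstacle here: the corollary is essentially a formal consequence of two previously established results, and the only content beyond pasting Cartesian squares is bookkeeping of the Igusa stack identifications. The hardest technical input has already been absorbed into Proposition~\ref{Prop:Forgetful} (where one must invoke that $\mathrm{BL}$ is a v-cover and use the fiber-product description of $\mathbf{Sh}_{K_p}\gx^{\circ,\lozenge}$ at generic fibers).
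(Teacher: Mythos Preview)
Your proposal is correct and matches the paper's approach exactly: the paper's proof reads ``This is a direct consequence of Theorem~\ref{Thm:HodgeMain} and Proposition~\ref{Prop:Forgetful},'' and your pasting argument is precisely the content of that deduction, spelled out in full.
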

\begin{proof}
    This is a direct consequence of Theorem \ref{Thm:HodgeMain} and Proposition \ref{Prop:Forgetful}.
\end{proof}
\begin{Rem}
If one takes the reduction of the $2$-Cartesian diagram in Corollary \ref{Cor:CuriousCartesian} to $\affperf$, then one recovers \cite[Theorem 4.4.1]{vH} as a special case. 
\end{Rem}

\subsection{Independence of choices} Let $\gx$ be a Shimura datum of Hodge type with reflex field $\mathsf{E}$. Let $p$ be a prime number and let $v$ be a prime of $\mathsf{E}$ above $p$ and let $E$ be the $v$-adic completion of $\mathsf{E}$. Let $\Xi =
(\mathcal{G}, \iota, V_{\mathbb{Z}_{p}})$ be a choice as in Section \ref{Subsec:IgusaDefinition}, and let $\mathbf{Sh}\gx^{\circ,\lozenge}$ be the potentially crystalline locus introduced in Section \ref{Sec:PotCrys}. Then by Lemma~\ref{Lem:PotentiallyCrystalline}, Corollary~\ref{Cor:VSurjectiveBL} and \eqref{Eq:GenericFibreShtukas}, the natural map
\begin{align}
    \mathbf{Sh}\gx^{\circ,\lozenge} \to \igsinf
\end{align}
is a surjection in the v-topology, establishing $\igsinf$ as a quotient of $\mathbf{Sh} \gx^{\circ, \lozenge}$. This gives us an equivalence relation $R(\Xi) \subset \mathbf{Sh}\gx^{\circ,\lozenge} \times\mathbf{Sh}\gx^{\circ,\lozenge}$ defined as $\mathbf{Sh}\gx^{\circ,\lozenge} \times_{\igsinf} \mathbf{Sh} \gx^{\circ,\lozenge}$. 
\begin{Prop} \label{Prop:IndependenceOfParahoric}
    If $\Xi = (\mathcal{G}, \iota, V_\zp)$ and $\Xi' = (\mathcal{G}', \iota', V_\zp')$ are two choices as in Section \ref{Subsec:IgusaDefinition}, then $R(\Xi)=R(\Xi')$. 
\end{Prop}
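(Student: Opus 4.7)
The plan is to construct, for any two choices $\Xi_1$ and $\Xi_2$ as in Section~\ref{Subsec:IgusaDefinition}, an isomorphism $\mathrm{Igs}_{\Xi_1}\gx \xrightarrow{\sim} \mathrm{Igs}_{\Xi_2}\gx$ of v-sheaves compatible with the canonical quotient maps from $\mathbf{Sh}\gx^{\circ,\lozenge}$; the equality $R(\Xi_1) = R(\Xi_2)$ will then be immediate from such an isomorphism.

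Proposition~\ref{Prop:Forgetful} already yields such a compatible isomorphism whenever the identity map of $G$ extends to a morphism $\mathcal{G}_1 \to \mathcal{G}_2$ of quasi-parahoric group schemes, equivalently whenever $\mathcal{G}_1(\zpbr) \subseteq \mathcal{G}_2(\zpbr)$. To treat arbitrary pairs $(\mathcal{G}_1, \mathcal{G}_2)$, my strategy will be to connect them via a zigzag of stabilizer Bruhat--Tits group schemes related by such inclusions (in alternating directions). Applying Proposition~\ref{Prop:Forgetful} at each arrow of the zigzag then produces a chain of isomorphisms whose composition gives the desired isomorphism.

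The construction of this zigzag will use Bruhat--Tits theory as recalled in Section~\ref{Sec:BruhatTits}. Using Lemma~\ref{Lem:StabilizerToPointwiseStabilizer}, I would fix facets $F_1, F_2$ of $\mathcal{B}(G,\qp)$ and points $x_i \in F_i$ with $\mathcal{G}_i(\zpbr) = G(\qpbr)^1_{x_i}$ equal to the pointwise stabilizer of $F_i$. By the building axioms there is a common apartment $\mathcal{A}$ containing $F_1$ and $F_2$, and I would then pick chambers $C_1, C_2$ of $\mathcal{A}$ with $F_i \subseteq \overline{C_i}$ together with a gallery $C_1 = D_0, D_1, \ldots, D_n = C_2$ in which consecutive pairs $(D_i, D_{i+1})$ share a codimension-one face $W_i$. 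Since the pointwise stabilizer of a subfacet contains the pointwise stabilizer of a larger facet, this data yields a zigzag
\[
  \mathcal{H}_{F_1} \leftarrow \mathcal{H}_{D_0} \to \mathcal{H}_{W_0} \leftarrow \mathcal{H}_{D_1} \to \cdots \leftarrow \mathcal{H}_{D_n} \to \mathcal{H}_{F_2}
\]
of stabilizer Bruhat--Tits group schemes, where $\mathcal{H}_E$ denotes the stabilizer attached to a facet $E$ via Lemma~\ref{Lem:StabilizerToPointwiseStabilizer}. Each $\mathcal{H}_E$ in the zigzag will be equipped with a compatible Hodge embedding and self-dual lattice via Zarhin's trick (Section~\ref{subsub:Zarhin}), so that Proposition~\ref{Prop:Forgetful} applies to each arrow.

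The main obstacle will be to verify that the composed isomorphism of Igusa stacks is compatible with the quotient maps from $\mathbf{Sh}\gx^{\circ, \lozenge}$. I expect this to follow from the fact that each isomorphism produced by Proposition~\ref{Prop:Forgetful} sits in a commutative diagram with the canonical map from $\scrs_{K_p}\gx^\diamond$ (by Corollary~\ref{Cor:IntegralFunctoriality}), combined with the observation that the maps $\mathbf{Sh}\gx^{\circ, \lozenge} \to \scrs_{K_p}\gx^\diamond \times_{\spd \mathcal{O}_E} \spd E$ obtained via Lemma~\ref{Lem:PotentiallyCrystalline} are compatible under the forgetful maps of Shimura varieties as the parahoric level at $p$ varies.
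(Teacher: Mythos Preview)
Your approach is essentially the same as the paper's: reduce to a zigzag of stabilizer Bruhat--Tits group schemes connected by inclusions and apply Proposition~\ref{Prop:Forgetful} at each step. Your use of a gallery in a common apartment is a perfectly good substitute for the paper's appeal to contractibility of $\mathcal{B}(G,\qp)$.

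There is one slip. You write that Lemma~\ref{Lem:StabilizerToPointwiseStabilizer} lets you take facets $F_i$ and points $x_i \in F_i$ with $\mathcal{G}_i(\zpbr) = G(\qpbr)^1_{x_i}$ equal to the pointwise stabilizer $G(\qpbr)^1_{F_i}$. But the $\mathcal{G}_i$ are \emph{given}: by definition of a stabilizer Bruhat--Tits scheme there is some $x_i$ with $\mathcal{G}_i(\zpbr) = G(\qpbr)^1_{x_i}$, and the facet $F_i$ containing $x_i$ is then determined. Lemma~\ref{Lem:StabilizerToPointwiseStabilizer} only says that \emph{some} point of $F_i$ has stabilizer equal to $G(\qpbr)^1_{F_i}$; for the given $x_i$ one only has the containment $G(\qpbr)^1_{F_i} \subseteq G(\qpbr)^1_{x_i} = \mathcal{G}_i(\zpbr)$. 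The paper handles exactly this point: it introduces new group schemes for the pointwise stabilizers $G(\qpbr)^1_{F_i}$ (which are again stabilizer Bruhat--Tits by Lemma~\ref{Lem:StabilizerToPointwiseStabilizer}) and applies Proposition~\ref{Prop:Forgetful} once more, via the inclusion $G(\qpbr)^1_{F_i} \subseteq \mathcal{G}_i(\zpbr)$, to reduce to these before running the zigzag. Adding that one extra arrow on each end of your chain fixes your argument.
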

\begin{proof}
Since $\mathcal{G}$ and $\mathcal{G}'$ are stabilizer Bruhat--Tits group schemes, there are points $x$ and $x'$ in $\mathcal{B}(G,\qp)$ such that 
\begin{align*}
    \mathcal{G}(\zpbr) = G(\qpbr)^1 \cap \mathrm{Stab}(x), \quad \text{ and } \quad \mathcal{G}'(\zpbr) = G(\qpbr)^1 \cap \mathrm{Stab}(x').
\end{align*}
  Let $\mathcal{F}$ and $\mathcal{F}'$ denote the facets of $\mathcal{B}(G,\qp)$ containing $x$ and $x'$, respectively, satisfying $\mathcal{G}(\zpbr) \supset G(\qpbr)^1_{\mathcal{F}}$ and $\mathcal{G}'(\zpbr) \supset G(\qpbr)^1_{\mathcal{F}'}$. From Lemma~\ref{Lem:StabilizerToPointwiseStabilizer}, we know that $G(\qpbr)^1_{\mathcal{F}}$ and $G(\qpbr)^1_{\mathcal{F}'}$ are also stabilizers of points in $\mathcal{B}(G,\qp)$; let us write $\mathcal{G}_1$ and $\mathcal{G}_2$ for the corresponding quasi-parahoric group schemes. Then $\mathcal{G}_1$ and $\mathcal{G}_{2}$ upgrade to choices $\Xi_1$ and $\Xi_2$ as in Section~\ref{Subsec:IgusaDefinition} because they are stabilizer Bruhat--Tits group schemes. By Proposition~\ref{Prop:Forgetful} we have $R(\Xi)=R(\Xi_1)$ and $R(\Xi')=R(\Xi_2)$. \smallskip 

Using Proposition \ref{Prop:Forgetful} and the above argument inductively, we see that the result follows if we can find a chain of facets $\mathcal{F}_3, \cdots, \mathcal{F}_n$ 
whose associated quasi-parahoric subgroups satisfy the following inclusion relations 
\begin{equation}
    \begin{tikzcd}[column sep=0.5]
        & G(\qpbr)_{\mathcal{F}_3}^1 \arrow[dl,hook] \arrow[dr,hook] & & \cdots \arrow[dl, hook] \arrow[dr, hook] & & G(\qpbr)_{\mathcal{F}_n}^1 \arrow[dr,hook] \arrow[dl, hook] \\
        G(\qpbr)_{\mathcal{F}_1}^1 & & G(\qpbr)_{\mathcal{F}_4}^1 & & G(\qpbr)_{\mathcal{F}_{n-1}}^1 & & G(\qpbr)_{\mathcal{F}_2}^1.
    \end{tikzcd}
\end{equation}
But such a chain exists by the contractibility of the Bruhat--Tits building $\mathcal{B}(G,\qp)$, see e.g., \cite[Axiom 4.1.1, Corollary 4.2.9]{KalethaPrasad}. 
\end{proof}

From now on we will write $\operatorname{Igs}\gx$ for $\igsinf$ and
$\operatorname{Igs}_{K^p}\gx$ for $\igs$. We now prove the functoriality stated
in Theorem~\ref{Thm:IntroIgusaGeneric}. Let $f \colon \gx \to \gxp$ be a
morphism of Shimura data and let $v \mid p$ be a prime of the reflex field
$\mathsf{E} \supset \mathsf{E}'$; let $v'$ be the induced prime of
$\mathsf{E}'$. Then there is a morphism of infinite level Shimura varieties
\begin{align}
    \mathbf{Sh}\gx^{\lozenge} \to \mathbf{Sh}\gxp_{E}^{\lozenge}.
\end{align}

\begin{Thm}\label{Thm:RationalFunctoriality}
  There is a unique morphism $\operatorname{Igs}\gx \to \operatorname{Igs}\gxp$
  such that the following diagram commutes.
\begin{equation}
    \begin{tikzcd}
        \mathbf{Sh}\gx^{\circ, \lozenge} \arrow{d} \arrow{r} & \mathbf{Sh}\gxp_{E}^{\circ, \lozenge} \arrow{d} \\
        \operatorname{Igs}\gx \arrow{r} & \operatorname{Igs}\gxp.
    \end{tikzcd}
\end{equation}
\end{Thm}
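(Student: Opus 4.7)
The plan is to reduce to Corollary~\ref{Cor:IntegralFunctoriality}, which already handles the case where $f_{\qp}$ extends to a morphism of chosen parahoric group schemes, and then to conclude using Proposition~\ref{Prop:IndependenceOfParahoric} (independence of the choice of $\Xi$) together with the v-surjectivity of the quotient map $\mathbf{Sh}\gx^{\circ, \lozenge} \to \mathrm{Igs}\gx$.

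First I would establish uniqueness. Combining Lemma~\ref{Lem:PotentiallyCrystalline}, the identification of the generic fiber of $\shtgmuone$ with $[\mathrm{Gr}_{G,\mu^{-1}}/\underline{K_p}]$, Corollary~\ref{Cor:VSurjectiveBL}, and Theorem~\ref{Thm:HodgeMain}, the natural map $\mathbf{Sh}\gx^{\circ, \lozenge} \to \mathrm{Igs}\gx$ is surjective in the v-topology. Consequently any morphism $\mathrm{Igs}\gx \to \mathrm{Igs}\gxp$ making the square commute is uniquely determined by its restriction to this v-cover, namely by the Shimura variety morphism along the top. This uniqueness will also be the key to proving that the construction of the morphism below does not depend on auxiliary choices.

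For existence, the main step is to apply Lemma~\ref{Lem:LandVogt} to $f_{\qp} \colon G \to G'$, yielding points $x \in \mathcal{B}(G, \qp)$ and $y \in \mathcal{B}(G', \qp)$ whose corresponding stabilizer Bruhat--Tits group schemes $\mathcal{G}$ and $\mathcal{G}'$ admit an extension $\mathcal{G} \to \mathcal{G}'$ of $f_{\qp}$. I would then (independently for each side) choose Hodge embeddings and self-dual lattices in order to upgrade $\mathcal{G}$ and $\mathcal{G}'$ to data $\Xi$ and $\Xi'$ as in Section~\ref{Subsec:IgusaDefinition}. Applying Corollary~\ref{Cor:IntegralFunctoriality} then produces a morphism $\mathrm{Igs}_\Xi\gx \to \mathrm{Igs}_{\Xi'}\gxp$ sitting in a commutative square together with the corresponding morphism of integral models $\scrs_{K_p}\gx^\diamond \to \scrs_{K'_p}\gxp^\diamond$. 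Restricting this square to the generic fiber using Lemma~\ref{Lem:PotentiallyCrystalline} yields the 2-commutative square asserted in the theorem.

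It remains to upgrade this to a canonical morphism $\mathrm{Igs}\gx \to \mathrm{Igs}\gxp$, which I would do by invoking Proposition~\ref{Prop:IndependenceOfParahoric} to identify $\mathrm{Igs}_\Xi\gx$ with $\mathrm{Igs}\gx$ (and similarly for $\gxp$) as quotients of the respective Shimura varieties by the same equivalence relations. The main potential obstacle, which I expect to dissolve painlessly, is verifying that the resulting morphism is independent of the auxiliary choices of $(\mathcal{G}, \iota, V_\zp)$ and $(\mathcal{G}', \iota', V'_\zp)$; this follows immediately from the uniqueness established in the second paragraph, since any two such choices produce morphisms which agree after composition with the common v-cover $\mathbf{Sh}\gx^{\circ, \lozenge} \to \mathrm{Igs}\gx$.
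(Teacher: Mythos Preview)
Your proposal is correct and follows essentially the same approach as the paper's proof: both reduce to Lemma~\ref{Lem:LandVogt} to find compatible stabilizer Bruhat--Tits models, then invoke Corollary~\ref{Cor:IntegralFunctoriality} for the morphism and Proposition~\ref{Prop:IndependenceOfParahoric} for canonicity. The paper's version is more terse, phrasing the argument as ``the equivalence relation $R(\Xi)$ is functorial for morphisms of Shimura data,'' but your expanded treatment of uniqueness via v-surjectivity and of independence of the auxiliary choices makes explicit exactly what the paper leaves implicit.
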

\begin{proof}
  It suffices to show that the equivalence relation from
  Proposition~\ref{Prop:IndependenceOfParahoric} is functorial for morphisms of
  Shimura data. By Corollary~\ref{Cor:IntegralFunctoriality} and
  Proposition~\ref{Prop:IndependenceOfParahoric} it suffices to show that we can
  find stabilizer Bruhat--Tits models $\mathcal{G}$ of $G$ and $\mathcal{G}'$ of $G'$ such that $f $ extends to a morphism $\mathcal{G} \to
  \mathcal{G}'$. This statement is Lemma~\ref{Lem:LandVogt}. 
\end{proof}
}

}

\section{Cohomological consequences} \label{Sec:CohomologicaConsequences}
\label{Sec:Cohomology}

In this section, we will apply the six functor formalism of \cite{EtCohDiam} and the machinery of \cite{FarguesScholze} to the cohomology of Shimura varieties of Hodge type. We will first show that the Igusa stack is cohomologically smooth and compute its dualizing sheaf, see Theorem \ref{Thm:IgusaDualizingComplex}; this completes the proof of Theorem \ref{Thm:IntroIgusaGeneric}. In Section \ref{sub:Hecke}, we will construct a sheaf $\mathcal{F}$ on $\bungmuk$, where $k$ is an algebraic closure of $\fp$, which controls the cohomology of the Shimura variety, see Theorem \ref{Thm: WeilCohoShiVar}. We will use this to prove Mantovan's product formula for the cohomology of the Shimura variety, see Theorem \ref{Thm:MantovanFormula}. We then prove Theorem \ref{Thm:IntroPerversity}, see Theorem \ref{Thm: Perversity}. We start in Section \ref{Sec:SheavesBunG} by briefly recalling from \cite{EtCohDiam} and \cite{FarguesScholze} some of the six functor formalism that we will use. In Section \ref{Sec:WeilGroupActions} we will prove a technical result about Frobenii and Weil group actions.

\subsection{\'Etale sheaves on \texorpdfstring{$\bun_{G}$}{BunG}} \label{Sec:SheavesBunG} Fix a prime $p$ and a prime $\ell\neq p$. Let $\Lambda$ be a Noetherian $\mathbb{Z}_\ell$-algebra in which $\ell$ is nilpotent, which will serve as our coefficient ring. For a small v-stack $X$ we write $D(X,\Lambda)$ for the triangulated category $D_\text{\'et}(X,\Lambda)$ in \cite[Definition 14.13]{EtCohDiam}. By \cite[Lemma 17.1]{EtCohDiam}, there is a (natural choice of) presentable stable $\infty$-category $\mathcal{D}_\text{\'et}(X,\Lambda)$ whose homotopy category is equivalent (as a triangulated category) to $D_\text{\'et}(X,\Lambda)$. We review below some preliminaries of \'etale sheaves on $\bun_{G}$ from \cite{FarguesScholze}. Throughout this section, we fix an algebraic closure $k$ of $\fp$. 

\subsubsection{}  Let $[b] \in B(G)$ and let $i_{b}:\bungk^{[b]} \to \bungk$ denote the inclusion. Choose $b \in [b]$ and let $G_b$ denote the $\sigma$-centralizer of $b$, see \cite[Section 1.11]{RapoportRichartz}, which is a connected reductive group over $\qp$. Then by \cite[Proposition V.2.2]{FarguesScholze}, there is an equivalence of categories
\begin{align}
    D(\bungk^{[b]}, \Lambda) \simeq D(G_b(\qp), \Lambda),
\end{align}
where $D(G_b(\qp), \Lambda)$ denotes the (unbounded) derived category of the category of smooth representations of $G_b(\qp)$ on $\Lambda$-modules. 

\subsubsection{} \label{Subsub:ComparisonHuber} If $X$ is a quasi-separated rigid space over a non-archimedean field $C$, then $X^{\lozenge}$ is a locally spatial diamond, see \cite[Lemma 15.6]{EtCohDiam}. We will consider the \'etale site $X^{\lozenge}_{\text{\'et}}$ of the locally spatial diamond $X^{\lozenge}$, see \cite[Definition 14.1]{EtCohDiam}. The derived category $D(X^{\lozenge}_{\text{\'et}}, \Lambda)$ of sheaves of $\Lambda$ modules on $X^{\lozenge}_{\text{\'et}}$ admits a fully faithful functor to $D(X^{\lozenge}, \Lambda)$, see \cite[Proposition 14.15]{EtCohDiam}. We will use \cite[Lemma 15.6]{EtCohDiam} to identify the \'etale site of $X$ with the \'etale site of $X^{\lozenge}$. This induces isomorphisms of derived categories of \'etale sheaves compatible with $\ast$-pullback, see \cite[Proposition 14.7]{EtCohDiam}, and thus $\ast$-pushforward because it is an adjoint. This comparison is moreover compatible with internal hom and tensor product, and also with $!$-pushforward and $!$-pullback since the constructions are the same (extension by zero to the canonical compactification, and then pushforward). Thus we may use results from Huber's book \cite{Huber96} for $X$, in the six-functor formalism of \cite{EtCohDiam} for $X^{\lozenge}$. We will do this in what follows without further comment.

\subsubsection{} \label{Sec:ULA} We will use the notion of universally locally acyclic (ULA) sheaves from \cite[Definition IV.2.31]{FarguesScholze}. By Theorem V.7.1 of loc.\ cit., a complex $A \in D(\bungk, \Lambda)$ is ULA with respect to $\bungk\to \spd k$, if and only if $M_{b}=i_{b}^{\ast} A \in D(G_b(\qp), \Lambda)$ is admissible in the sense that for any pro-$p$ compact open subgroup $K \subset G_b(\qp)$, the complex $M_{b}^K$ is (quasi-isomorphic to) a perfect complex of $\Lambda$-modules. 

\subsubsection{} \label{Sec:PerverseTStructure} We recall the perverse $t$-structure on $D(\bungk, \Lambda)$ from \cite[Definition 4.11]{Hamann-Lee}. For $[b] \in B(G)$ we let $\nu_{b}$ be the conjugacy class of fractional cocharacters of $G$ over $\qpbr$ given by the Newton cocharacters of $b \in [b]$. We denote by $d_b=\langle 2\rho, \nu_{b} \rangle$ the pairing of $\nu_{b}$ with $2 \rho$, where $2\rho$ is the sum of all positive roots for some choice of Borel pair $T \subset B \subset G \otimes \qpbr$ for which $\nu_{b}$ factors through $T$ and is $B$-dominant. 

\begin{Prop} \label{Prop:PerverseTStruct}
    For any open substack $U \subset \bungk$ there is a $t$-structure on $D(U, \Lambda)$ characterized by the conditions that $A\in{}^pD^{\leq 0}(U,\Lambda)$ if for all $[b] \in B(G)$ we have
\begin{align}
    i_{b}^{\ast}A \in  D^{\le d_b}(\bungk^{[b]}, \Lambda),
\end{align}
and $A\in {}^pD^{\geq 0}(\bungk,\Lambda)$ if for all $[b]\in B(G)$
\begin{align}
    i_{b}^{!}A \in  D^{\ge d_b}(\bungk^{[b]}, \Lambda).
\end{align}
\end{Prop}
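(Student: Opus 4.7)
The plan is to construct this $t$-structure by Beilinson--Bernstein--Deligne gluing along the Newton stratification of $\bun_{G,k}$, with perversity function $b \mapsto d_b$. First I would recall that by Theorem~\ref{Thm:GeometryOfBunG} each stratum $\bun_{G,k}^{[b]}$ is (base changed to $\spd k$) isomorphic to $[\spd k/\tilde{G}_b]$, and via the equivalence $D(\bun_{G,k}^{[b]},\Lambda)\simeq D(G_b(\qp),\Lambda)$ of \cite[Proposition V.2.2]{FarguesScholze} the standard $t$-structure on the smooth representation category transports to a canonical $t$-structure on $D(\bun_{G,k}^{[b]},\Lambda)$. The stratum-wise $t$-structure in our perversity convention is this canonical one shifted by $d_b=\langle 2\rho,\nu_b\rangle$.

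Next I would implement the gluing. The Newton stratification $U=\bigsqcup_{[b]\in B(G,U)}\bun_{G,k}^{[b]}$ is by locally closed substacks, and the closure relations are controlled by the partial order on $B(G)$ recalled in Section~\ref{Sec:BGMU}; in particular, for each $[b]\in B(G,U)$ the union $U^{\leq [b]}=\bigcup_{[b']\leq[b]}\bun_{G,k}^{[b']}\cap U$ is an open substack (and $\bun_{G,k}^{[b]}\subset U^{\leq [b]}$ is closed). For any finite down-closed subset $S\subset B(G,U)$, the recollement formalism, applied inductively stratum by stratum, glues the shifted standard $t$-structures into a perverse $t$-structure on $D(U^{S},\Lambda)$, where $U^{S}=\bigcup_{[b]\in S}\bun_{G,k}^{[b]}$. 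This is straightforward since at each step we are gluing along an open/closed decomposition of the BBD-type, and the left and right perverse truncations exist on each stratum by the classifying-stack description.

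To pass from finite $S$ to all of $U$, I would then take a limit. Concretely, writing $U$ as the filtered union of its down-closed quasi-compact-in-the-Newton-order subsets $U^S$, any object $A\in D(U,\Lambda)$ restricts to $A_S\in D(U^S,\Lambda)$, and the candidate subcategories ${}^pD^{\leq 0}(U,\Lambda)$ and ${}^pD^{\geq 0}(U,\Lambda)$ are characterized by the condition that all restrictions $A_S$ lie in ${}^pD^{\leq 0}(U^S,\Lambda)$, resp.\ ${}^pD^{\geq 0}(U^S,\Lambda)$. One then verifies the $t$-structure axioms by assembling the truncation triangles on each $U^S$ compatibly. The key compatibility — that restriction commutes with perverse truncation along open immersions $U^S\hookrightarrow U^{S'}$ for $S\subset S'$ — is immediate from the characterization via $i_b^\ast$ and $i_b^!$, since $i_b^\ast$ and $i_b^!$ are preserved by restriction to an open substack containing the stratum $\bun_{G,k}^{[b]}$.

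The main obstacle I expect is the bookkeeping associated with the infinitude of $B(G)$: one must ensure the existence of left/right perverse truncations in the limit, and not merely for each finite piece. I would handle this by noting that for any given $A$ the triangles $({}^p\tau^{\leq 0}A_S\to A_S\to {}^p\tau^{\geq 1}A_S)$ are compatible under restriction (by the paragraph above), hence assemble in the (homotopy) limit category $D(U,\Lambda)\simeq \varprojlim_S D(U^S,\Lambda)$ — the latter identification, and its compatibility with the six-functor formalism, following from \cite[Section~V.2]{FarguesScholze} — to a functorial truncation triangle for $A$. Granting this, the verification that ${}^p\tau^{\leq 0}A$ and ${}^p\tau^{\geq 1}A$ lie in the claimed subcategories, and that $\Hom({}^pD^{\leq 0},{}^pD^{\geq 1})=0$, is purely stratum-wise and reduces to the analogous statement on each $\bun_{G,k}^{[b]}$, which holds tautologically.
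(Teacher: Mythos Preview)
Your approach via iterated BBD recollement gluing is correct and is a genuine alternative to the paper's argument. In the quasicompact case the paper instead invokes Lurie's criterion \cite[Proposition~1.4.4.11(1)]{LurieHA}: it shows that ${}^p\mathcal{D}^{\leq 0}(U,\Lambda)$ is a presentable subcategory of $\mathcal{D}(U,\Lambda)$, generated under colimits and extensions by the compact objects $i_{b,!}\,c\text{-Ind}_K^{G_b(\qp)}\Lambda[n]$ for $n\le -d_b$, which produces the $t$-structure abstractly; it then runs a separate excision induction on $|U|$ to verify the costalk description of ${}^pD^{\geq 0}$. Your gluing delivers both halves of the $t$-structure at once, at the cost of checking that the Newton open/closed recollement package (in particular the existence and base-change behavior of $i^!$) holds in this unbounded $\mathcal{D}_{\text{\'et}}$ setting; the paper's route sidesteps that by only ever using $i_b^*$ and compact generation. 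For non-quasicompact $U$ both arguments restrict to quasicompact opens and pass to the limit, though the paper checks the Hom-vanishing axiom directly via $A\simeq\varinjlim_i j_{i,!}j_i^*A$ rather than assembling truncation triangles in the limit as you propose; your version also works, but the citation of \cite[Section~V.2]{FarguesScholze} for the identification $D(U,\Lambda)\simeq\varprojlim_S D(U^S,\Lambda)$ is not quite on target---this is Zariski descent for $\mathcal{D}_{\text{\'et}}$.
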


\begin{proof}
  This result is well-known to experts; we supply a proof because we could not
  locate one in the literature.

  We argue as in \cite[Proposition~VI.7.1]{FarguesScholze}. There is a
  presentable stable $\infty$-category $\mathcal{D}(U,\Lambda)$ whose homotopy
  category is $D(U,\Lambda)$, see \cite[Lemma~17.1]{EtCohDiam}. Consider the
  full subcategory ${}^p\mathcal{D}^{\leq 0}(U,\Lambda) \subseteq
  \mathcal{D}(U,\Lambda)$ whose objects are $A \in \mathcal{D}(U,\Lambda)$
  satisfying $i_b^\ast A \in \mathcal{D}^{\leq d_b}(\bungk^{[b]},\Lambda)$ for
  all $[b] \in \lvert U \rvert$. Once we show that ${}^p\mathcal{D}^{\leq
  0}(U,\Lambda) \subseteq \mathcal{D}(U,\Lambda)$ is stable under extensions and
  colimits, and generated by a small set of objects under extensions and
  colimits, it follows from \cite[Proposition~1.4.4.11]{LurieHA} that there
  exists a unique $t$-structure $({}^p\mathcal{D}^{\leq 0}(U,\Lambda),
  {}^p\mathcal{D}^{\geq 0}(U,\Lambda))$ on $\mathcal{D}(U,\Lambda)$. Stability
  under extensions and colimits is clear, and for generation by a small
  collection of objects, we use the objects
  $\{i_{b,!}c\text{-Ind}_K^{G_b(\qp)}\Lambda[n]\}$ as $[b]$ runs over points in
  $\lvert U \rvert$, $K$ runs over open subgroups of $G_b(\qp)$, and $n$ runs
  over integers that are less or equal to $-d_b$. It is clear that such objects
  generate $i_{b,!}\mathcal{D}^{\leq d_b}(\bungk^{[b]}, \Lambda)$. We may now
  use excision together with the fact that if we write $U = \varinjlim_\alpha
  U_\alpha$ for $j_\alpha \colon U_\alpha \hookrightarrow U$ quasi-compact
  opens, then $A = \varinjlim_\alpha j_{\alpha,!} j_\alpha^\ast A$. This proves
  the existence of a $t$-structure.

  We now identify the full subcategory ${}^p\mathcal{D}^{\geq 0}(U,\Lambda)$
  with those $A \in \mathcal{D}(U,\Lambda)$ with the property that $i_b^! A \in
  D^{\geq d_b}(\bungk^{[b]},\Lambda)$, for all $[b] \in \lvert U \rvert$. By
  definition, see \cite[Remark~1.2.1.3]{LurieHA}, we have $A \in
  {}^p\mathcal{D}^{\ge 0}(U,\Lambda)$ if and only if
  $\Hom_{\mathcal{D}(U,\Lambda)}(B, A[-1]) = 0$ for all $B \in
  {}^p\mathcal{D}^{\le 0}(U,\Lambda)$.

  We first check that if $A \in {}^p\mathcal{D}^{\ge 0}(U,\Lambda)$ then $i_b^!
  A \in \mathcal{D}^{\ge d_b}(\bungk^{[b]},\Lambda)$. For this, we simply note
  that for all $[b] \in \lvert U \rvert$ and $B \in
  \mathcal{D}^{\le d_b}(\bungk^{[b]},\Lambda)$ we have $i_{b,!} B \in
  {}^p\mathcal{D}^{\le 0}(U,\Lambda)$ and hence
  \[
    \Hom(B, i_b^! A[-1]) = \Hom(i_{b,!} B, A[-1]) = 0.
  \]
  This implies $i_b^! A \in \mathcal{D}^{\ge d_b}(\bungk^{[b]},\Lambda)$ as
  desired.

  In the reverse direction, assume that $i_b^! A \in \mathcal{D}^{\ge
  d_b}(\bungk^{[b]},\Lambda)$ for all $[b] \in \lvert U \rvert$. It suffices to
  verify that $R\Hom(B, A) \in \mathcal{D}^{\ge 0}(\Lambda)$ for all $B \in
  {}^p\mathcal{D}^{\le 0}(U,\Lambda)$. For each $[b] \in \lvert U \rvert$ we
  have
  \[
    R\Hom(i_{b,!} i_b^\ast B, A) = R\Hom(i_b^\ast B, i_b^! A) \in
    \mathcal{D}^{\ge 0}(\Lambda)
  \]
  since $i_b^\ast B \in \mathcal{D}^{\le d_b}(\bungk^{[b]},\Lambda)$ and $i_b^! A
  \in \mathcal{D}^{\ge d_b}(\bungk^{[b]},\Lambda)$ by assumption.
  Next, on any quasi-compact open $j_\alpha \colon U_\alpha
  \hookrightarrow U$ we may write $j_{\alpha,!} j_\alpha^\ast B$ as an extension
  of $i_{b,!} i_b^\ast B$ and hence $R\Hom(j_{\alpha,!} j_\alpha^\ast B, A) \in
  \mathcal{D}^{\ge 0}(\Lambda)$. Finally once we write $U = \varinjlim_\alpha
  U_\alpha$ we have
  \[
    R\Hom(B, A) = R\Hom\Big( \varinjlim_\alpha j_{\alpha,!} j_\alpha^\ast B, A
    \Bigr) = \varprojlim_\alpha R\Hom(j_{\alpha,!} j_\alpha^\ast B, A) \in
    \mathcal{D}^{\ge 0}(\Lambda)
  \]
  as the full subcategory $\mathcal{D}^{\ge 0}(\Lambda) \subseteq
  \mathcal{D}(\Lambda)$ is stable under limits, see
  \cite[Corollary~1.2.1.6]{LurieHA}.
\end{proof}

\subsection{Weil group actions} \label{Sec:WeilGroupActions}

When proving Theorem~\ref{Thm:IntroWeilCohShimVar} in Section~\ref{sub:Hecke}, we will compute the cohomology of a Shimura variety with local reflex field $E$ using its structure morphism to $\Div = [\spd E / \phi^\mathbb{Z}]$. This endows the Shimura variety with an action of the Weil group $W_E$ which is a priori different from the natural one given by restricting the action of $\operatorname{Gal}(\bar{E}/E)$ on the cohomology. In this section we show that these two Weil group actions agree. The results of this section are well-known to experts, and we encourage the reader who is comfortable with this comparison to skip this section. 

\subsubsection{} Recall that every v-stack $X$ is equipped with an automorphism $\phi_X \colon X \to X$, called the absolute Frobenius of $X$ (see Section \ref{Sec:AbsoluteFrobenii}). Recall as well that any morphism $f \colon X \to Y$ of v-stacks is $\phi$-equivariant, in the sense that the diagram 
\begin{equation}\label{Eq:phiEquivariance}
        \begin{tikzcd}
            X \arrow{r}{\phi_{X}} \arrow{d}{f} & X \arrow{d}{f} \\
            Y \arrow{r}{\phi_{Y}} & Y
        \end{tikzcd}
    \end{equation}
is 2-commutative. We begin by sharpening our understanding of this $\phi$-equivariance.

\begin{Lem} \label{Lem:RelativeFrobenius}
For any morphism $f \colon X \to Y$ of v-stacks, the $2$-commutative diagram \eqref{Eq:phiEquivariance} is $2$-Cartesian. 
\end{Lem}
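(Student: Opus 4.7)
The plan is to verify the Cartesian property by evaluating on $S$-points and explicitly constructing the inverse to the comparison map, using the crucial fact that the absolute Frobenius on any perfectoid space in characteristic $p$ is invertible.

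More precisely, since the formation of fiber products in v-stacks is computed objectwise on $\perf$, it suffices to fix $S \in \perf$ and check that the induced diagram of groupoids
\[ \begin{tikzcd}
  X(S) \arrow{r}{\phi_X} \arrow{d}{f} & X(S) \arrow{d}{f} \\
  Y(S) \arrow{r}{\phi_Y} & Y(S)
\end{tikzcd} \]
is $2$-Cartesian. Unravelling definitions, the absolute Frobenius on $X(S)$ sends $a \colon S \to X$ to $a \circ \phi_S$, where $\phi_S \colon S \to S$ is the absolute Frobenius on $S$. Denoting the fiber product by $P$, an object of $P(S)$ consists of a triple $(y, x, \alpha)$ with $y \in Y(S)$, $x \in X(S)$ and a $2$-isomorphism $\alpha \colon y \circ \phi_S \simeq f \circ x$. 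The natural comparison map $X(S) \to P(S)$ sends $a$ to $(f \circ a, a \circ \phi_S, \mathrm{can})$, where $\mathrm{can}$ is the evident identification.

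The key observation is that because $S$ is a perfectoid space of characteristic $p$, the absolute Frobenius $\phi_S$ is an isomorphism. This allows us to define a candidate inverse functor $P(S) \to X(S)$ by sending $(y, x, \alpha)$ to $a := x \circ \phi_S^{-1}$. One then checks directly that $\phi_X(a) = a \circ \phi_S = x$ on the nose, while $f \circ a = f \circ x \circ \phi_S^{-1} \simeq y$ via $\alpha \circ \phi_S^{-1}$. A parallel computation on morphisms verifies that this construction is functorial, and the resulting pair of maps between $X(S)$ and $P(S)$ are mutually inverse equivalences of groupoids.

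I do not expect any real obstacle here: the entire content of the lemma is that the Frobenius on a characteristic $p$ perfectoid space is invertible, and everything else is a formal diagram chase. The only small care required is in tracking the canonical $2$-morphisms coming from the $\phi$-equivariance discussed in Subsubsection~\ref{Sec:AbsoluteFrobenii}, so as to ensure that the inverse functor is compatible with the $2$-cells and not just with underlying $1$-morphisms.
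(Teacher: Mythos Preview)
Your proof is correct and follows essentially the same idea as the paper: both construct the inverse by precomposing with $\phi^{-1}$, using that absolute Frobenius is invertible. The paper phrases this slightly more abstractly by working with an arbitrary test v-stack $Z$ and producing the lift as $\alpha \circ \phi_Z^{-1}$, whereas you restrict to representable $S \in \perf$ and check the equivalence on $S$-points; since fiber products of v-stacks are computed objectwise, this amounts to the same verification.
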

\begin{proof}
Let $Z$ be another v-stack, and suppose we are given 1-morphisms $\alpha \colon Z \to X$ and $\beta \colon Z \to Y$ and a 2-isomorphism $f \circ \alpha \simeq \phi_Y \circ \beta$. Then $\alpha \circ \phi_Z^{-1} \colon Z \to X$ is the unique (up to isomorphism) 1-morphism making the respective triangles 2-commute.
\end{proof}

\begin{Lem} \label{Lem:CanonicalPhiDescent}
  For a small v-stack $X$ and an object $\mathcal{F} \in \mathcal{D}(X, \Lambda)$, there is a canonical descent datum $\phi_{\mathrm{can},\mathcal{F}} \colon \phi_X^\ast \mathcal{F} \xrightarrow{\sim} \mathcal{F}$. 
\end{Lem}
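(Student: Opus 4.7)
The plan is to construct the desired isomorphism first for an affinoid perfectoid test object $X = S \in \perf$, where one can argue directly on the étale site, and then to globalize using the naturality of the construction with respect to morphisms in $\perf$. For $S = \spa(R, R^+) \in \perf$, both $R$ and $R^+$ are perfect $\fp$-algebras, so the absolute Frobenius $\phi_S \colon S \to S$, induced by the $p$-th power map on $R^+$, is an automorphism that acts trivially on the underlying topological space $\lvert S \rvert$.

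The key observation is that $\phi_S$ acts trivially on the étale site $S_{\mathrm{\acute{e}t}}$, up to a canonical natural isomorphism. Given any étale morphism $f \colon U \to S$ of perfectoid spaces, consider the Cartesian square
\[ \begin{tikzcd}
    \phi_S^\ast U \arrow{r}{\pi} \arrow{d} & U \arrow{d}{f} \\
    S \arrow{r}{\phi_S} & S,
\end{tikzcd} \]
and the relative Frobenius $F_{U/S} \colon U \to \phi_S^\ast U$ determined by $\pi \circ F_{U/S} = \phi_U$ and the structural identity on $S$. Since $\pi$ is the base change of the isomorphism $\phi_S$, it is itself an isomorphism; combined with the fact that $\phi_U$ is an isomorphism because $U$ is perfectoid of characteristic $p$, this forces $F_{U/S}$ to be an isomorphism of objects in $S_{\mathrm{\acute{e}t}}$. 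The assignment $U \mapsto F_{U/S}$ is natural in $U$ by the standard functoriality of Frobenius with respect to morphisms in $S_{\mathrm{\acute{e}t}}$, and hence defines a natural isomorphism $\mathrm{id} \xrightarrow{\sim} \phi_S^\ast$ of endofunctors on $S_{\mathrm{\acute{e}t}}$. Passing to sheaves of $\Lambda$-modules and then to derived categories yields the desired canonical isomorphism $\phi_{\mathrm{can},\mathcal{G}} \colon \phi_S^\ast \mathcal{G} \xrightarrow{\sim} \mathcal{G}$ for each $\mathcal{G} \in \mathcal{D}(S, \Lambda)$, functorial in $\mathcal{G}$.

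To pass to an arbitrary small v-stack $X$, recall from Subsubsection~\ref{Sec:AbsoluteFrobenii} that every test map $s \colon S \to X$ carries a canonical $\phi$-equivariance datum $\phi_X \circ s \simeq s \circ \phi_S$, supplying a canonical isomorphism $s^\ast \phi_X^\ast \mathcal{F} \simeq \phi_S^\ast s^\ast \mathcal{F}$. Composing with $\phi_{\mathrm{can}, s^\ast \mathcal{F}}$ produces the desired $(\phi_X^\ast \mathcal{F})|_S \xrightarrow{\sim} \mathcal{F}|_S$, and the remaining task is to verify that these isomorphisms are compatible as $s$ varies, so that they glue to define a single morphism $\phi_{\mathrm{can}, \mathcal{F}}$ in $\mathcal{D}(X,\Lambda)$. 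This compatibility reduces to the fact that the relative Frobenius is preserved under base change, so that $F_{U \times_S T / T}$ is obtained from $F_{U/S}$ by pulling back along any $t \colon T \to S$ in $\perf$, together with the fact that the canonical $\phi$-equivariance data are compatible under composition. The main technical obstacle will be executing this construction coherently at the level of the presentable stable $\infty$-category underlying $\mathcal{D}(X, \Lambda)$ from \cite[Lemma~17.1]{EtCohDiam}, rather than only in the homotopy category; this is tractable because the natural transformation $\mathrm{id} \Rightarrow \phi_S^\ast$ already lives at the level of étale sites, and the subsequent passages to sheaves and to derived $\infty$-categories are functorial, so the whole argument can be carried out at the $\infty$-level without further choices.
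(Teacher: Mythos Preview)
Your approach is correct but takes a longer route than the paper. The paper works directly on the v-site $X_v$ of the v-stack $X$, whose objects are maps $T \to X$ with $T \in \perf$. For any such object $(T, t)$, the Cartesianness of the $\phi$-equivariance square (Lemma~\ref{Lem:RelativeFrobenius}) gives $\phi_X \circ t = t \circ \phi_T$, so $(\phi_X^\ast \mathcal{F})(T, t) = \mathcal{F}(T, t \circ \phi_T)$, and the restriction map $\phi_T^\ast \colon \mathcal{F}(T, t) \to \mathcal{F}(T, t \circ \phi_T)$ is an isomorphism because $\phi_T$ is an isomorphism in $\perf$. Since this $\phi_T^\ast$ is already a structure map of the sheaf $\mathcal{F}$ on $X_v$ rather than something constructed ad hoc, naturality in $(T, t)$ is automatic, and the $\infty$-categorical coherence you flag as the ``main technical obstacle'' dissolves entirely: no gluing is needed. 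Your detour through the \'etale site of each affinoid perfectoid followed by gluing over all test maps is in essence the same idea---both arguments bottom out in ``absolute Frobenius is an isomorphism on perfectoids in characteristic $p$'', and the paper itself points to \cite[Lemma~03SR]{stacks-project} for the scheme-theoretic analogue you are mimicking---but the paper's formulation on the big site is shorter and avoids the bookkeeping.
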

\begin{proof}
  This follows from Lemma~\ref{Lem:RelativeFrobenius}; see the arguments in \cite[Lemma 03SR]{stacks-project}. More concretely, for every object $T \in X$ of $X_v$, we may describe $(\phi^\ast \mathcal{F})(T) = \mathcal{F}(T \xrightarrow{\phi_T} T \to X)$, and this is identified with $\mathcal{F}(T \to X)$ via $\phi_T^\ast$.
\end{proof}

\begin{Lem} \label{Lem:CanonicalPhiDescentPullback}
  Let $f \colon X \to Y$ be a morphism of small v-stacks, inducing a morphism of v-stacks $f_\phi \colon [X/\phi_X^\mathbb{Z}] \to [Y/\phi_Y^\mathbb{Z}]$. Let $\mathcal{F} \in \mathcal{D}(Y, \Lambda)$ and let $\mathcal{F}_\phi \in \mathcal{D}([Y/\phi_Y^\mathbb{Z}], \Lambda)$ be the descent of $\mathcal{F}$ for the canonical descent datum. Then $f_\phi^\ast \mathcal{F}_\phi \in \mathcal{D}([X/\phi_X^\mathbb{Z}], \Lambda)$ is canonically identified with the descent of $f^\ast \mathcal{F}$ for the canonical descent datum on $X$.
\end{Lem}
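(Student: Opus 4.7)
The plan is to reduce the statement to the naturality of the canonical descent datum of Lemma~\ref{Lem:CanonicalPhiDescent} under pullback. Let $p_X \colon X \to [X/\phi_X^\mathbb{Z}]$ and $p_Y \colon Y \to [Y/\phi_Y^\mathbb{Z}]$ denote the quotient maps. By the construction of $f_\phi$, there is a canonical $2$-isomorphism $p_Y \circ f \simeq f_\phi \circ p_X$, and pulling $\mathcal{F}_\phi$ around the resulting square yields a natural identification
\[
  p_X^* f_\phi^* \mathcal{F}_\phi \;\simeq\; f^* p_Y^* \mathcal{F}_\phi \;\simeq\; f^* \mathcal{F},
\]
where the second isomorphism is the defining property of $\mathcal{F}_\phi$ as a descent of $\mathcal{F}$. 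Since an object of $\mathcal{D}([X/\phi_X^\mathbb{Z}], \Lambda)$ is determined by its pullback along the cover $p_X$ together with a descent datum relative to $\phi_X$, what remains is to check that, under this identification, the descent datum on $p_X^* f_\phi^* \mathcal{F}_\phi$ (which exists because $p_X^* f_\phi^* \mathcal{F}_\phi$ is pulled back from $[X/\phi_X^\mathbb{Z}]$) coincides with the canonical descent datum $\phi_{\mathrm{can}, f^*\mathcal{F}}$ on $f^*\mathcal{F}$.

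The key point is thus a naturality statement: for any morphism $g \colon Z \to W$ of small v-stacks and any $\mathcal{G} \in \mathcal{D}(W, \Lambda)$, the base-change isomorphism $g^* \phi_W^* \mathcal{G} \simeq \phi_Z^* g^* \mathcal{G}$ induced by the $\phi$-equivariance of $g$ should identify the pullback $g^* \phi_{\mathrm{can}, \mathcal{G}}$ with $\phi_{\mathrm{can}, g^*\mathcal{G}}$. This is essentially formal and can be checked using the sheaf-level description given in the proof of Lemma~\ref{Lem:CanonicalPhiDescent}: for any $T$ in the v-site of $Z$, both descent data evaluate a section along the absolute Frobenius $\phi_T$ of $T$, and these evaluations agree by naturality of pullback along $\phi_T$ in the v-site.

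The main technical obstacle will be to phrase this naturality carefully at the level of the presentable stable $\infty$-category $\mathcal{D}(-, \Lambda)$ from \cite[Lemma~17.1]{EtCohDiam}, rather than merely at the level of its homotopy category, since the descent data carry higher coherence information. However, this follows formally once one observes that the canonical descent datum of Lemma~\ref{Lem:CanonicalPhiDescent} is itself constructed as the base-change isomorphism for the $2$-Cartesian square of Lemma~\ref{Lem:RelativeFrobenius}; the compatibility we need then reduces to pasting of base-change isomorphisms for the two $2$-Cartesian squares obtained by stacking the Cartesian squares for $f$ and for $\phi$, which is built into the six-functor formalism.
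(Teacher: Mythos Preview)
Your proof is correct and takes essentially the same approach as the paper: both reduce to checking that the canonical descent datum is natural under pullback, i.e., that the square identifying $\phi_X^* f^* \mathcal{F} \simeq f^* \phi_Y^* \mathcal{F}$ intertwines $\phi_{\mathrm{can},f^*\mathcal{F}}$ with $f^*\phi_{\mathrm{can},\mathcal{F}}$, and both observe that this follows formally from the Cartesian square of Lemma~\ref{Lem:RelativeFrobenius}. The paper's proof is simply terser, writing down the coherence square directly and invoking Lemma~\ref{Lem:RelativeFrobenius} without spelling out the descent-theoretic reduction you give in your first paragraph.
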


\begin{proof}
  After unraveling the constructions, this is equivalent to the existence of a canonical coherence datum
  \[ \begin{tikzcd}[column sep=large]
    \phi_X^\ast f^\ast \mathcal{F} \arrow{r}{\phi_{\mathrm{can},f^\ast\mathcal{F}}} \arrow{d}{\sim} & f^\ast \mathcal{F} \arrow[equals]{d} \\ f^\ast \phi_Y^\ast \mathcal{F} \arrow{r}{f^\ast \phi_{\mathrm{can},\mathcal{F}}} & f^\ast \mathcal{F}.
  \end{tikzcd} \]
  This again follows formally from Lemma~\ref{Lem:RelativeFrobenius}.
\end{proof}

\begin{Lem} \label{Lem:DescentAndPushforward}
  Let $\pi \colon X \to Y$ be a qcqs morphism of small v-stacks, and let
  $\pi_\phi \colon [X / \phi_X^\mathbb{Z}] \to [Y / \phi_Y^\mathbb{Z}]$ be the
  induced map. Let $A \in \mathcal{D}^+(X, \Lambda)$ and let $A_\phi \in
  \mathcal{D}^+([X/\phi_X^\mathbb{Z}], \Lambda)$ be the canonical descent of
  $A$. Then $R\pi_{\phi,\ast} A_\phi \in \mathcal{D}([Y / \phi_Y^\mathbb{Z}],
  \Lambda)$ is the canonical descent of $R\pi_\ast A \in \mathcal{D}(Y,
  \Lambda)$.
\end{Lem}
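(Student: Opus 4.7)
The plan is to leverage the $2$-Cartesian square
\[
\begin{tikzcd}
X \arrow[r, "q_X"] \arrow[d, "\pi"] & {[X/\phi_X^{\mathbb Z}]} \arrow[d, "\pi_\phi"] \\
Y \arrow[r, "q_Y"] & {[Y/\phi_Y^{\mathbb Z}]}
\end{tikzcd}
\]
to pull back $R\pi_{\phi,\ast} A_\phi$ along $q_Y$, identify the result with $R\pi_\ast A$ via base change, and then match the resulting descent datum with the canonical one on $R\pi_\ast A$.

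First I would verify that the above square is $2$-Cartesian (straightforward, since the vertical quotients are formed by a free $\mathbb{Z}$-action and $\pi$ is $\phi$-equivariant by Subsubsection~\ref{Sec:AbsoluteFrobenii}). Since $\pi_\phi$ is qcqs and $q_Y$ is the descent of the $\phi_Y^{\mathbb{Z}}$-torsor $Y \to [Y/\phi_Y^{\mathbb Z}]$, base change applied to this square yields a natural isomorphism
\[
q_Y^\ast R\pi_{\phi, \ast} A_\phi \xrightarrow{\sim} R\pi_\ast\, q_X^\ast A_\phi = R\pi_\ast A,
\]
where the last equality is tautological from $A_\phi$ being the canonical descent of $A$. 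Since descent along $q_Y$ is equivalent to specifying an isomorphism $\alpha \colon \phi_Y^\ast \mathcal F \xrightarrow{\sim} \mathcal F$ (with the cocycle condition automatic), and since the canonical descent $(\mathcal F)_\phi$ corresponds to $\alpha = \phi_{\mathrm{can}, \mathcal F}$ by Lemma~\ref{Lem:CanonicalPhiDescent}, it remains to identify the descent datum on $R\pi_\ast A$ induced by the above isomorphism with $\phi_{\mathrm{can}, R\pi_\ast A}$.

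Unwinding the constructions, the induced descent datum is the composite
\[
\phi_Y^\ast R\pi_\ast A \xrightarrow{\mathrm{bc}} R\pi_\ast \phi_X^\ast A \xrightarrow{R\pi_\ast(\phi_{\mathrm{can},A})} R\pi_\ast A,
\]
where the first arrow is the base change isomorphism for the Cartesian square of Lemma~\ref{Lem:RelativeFrobenius} applied to the Frobenius-equivariance of $\pi$, and the second uses the canonical descent datum on $A_\phi$. I would then check that this composite equals $\phi_{\mathrm{can}, R\pi_\ast A}$; this is a naturality statement for the canonical Frobenius with respect to pushforward, exactly analogous to the coherence datum established in the proof of Lemma~\ref{Lem:CanonicalPhiDescentPullback}, and can be verified directly from the section-level description of $\phi_{\mathrm{can}, (-)}$ given in the proof of Lemma~\ref{Lem:CanonicalPhiDescent}, namely that over a test object $T \to Y$ both sides are induced by pullback along $\phi_T$.

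The main obstacle I expect is the bookkeeping around the $2$-isomorphisms witnessing $\pi \circ \phi_X \simeq \phi_Y \circ \pi$ and ensuring that the base change isomorphism and the canonical Frobenius on $R\pi_\ast$ are both constructed from these same canonical $2$-morphisms; once this compatibility is pinned down, the naturality diagram collapses to the identity and the proof concludes. The boundedness hypothesis $A \in \mathcal{D}^+(X,\Lambda)$ enters only through the base change step, where it is needed to justify the isomorphism $q_Y^\ast R\pi_{\phi,\ast} \xrightarrow{\sim} R\pi_\ast q_X^\ast$ in the six-functor formalism of \cite{EtCohDiam}.
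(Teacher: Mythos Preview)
Your proposal is correct and follows essentially the same approach as the paper: apply qcqs base change along the Cartesian square relating $\pi$ and $\pi_\phi$ to identify $q_Y^\ast R\pi_{\phi,\ast} A_\phi$ with $R\pi_\ast A$, then verify that the induced descent datum is the canonical one by reducing to the site-level description of $\phi_{\mathrm{can}}$. The only difference is cosmetic: the paper explicitly passes through the v-site pushforward via \cite[Proposition~17.6]{EtCohDiam} and invokes \cite[Theorem~1.9.(i)]{EtCohDiam} for the base change step, whereas you leave these citations implicit.
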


\begin{proof}
  Because both $\pi$ and $\pi_\phi$ are qcqs, using
  \cite[Proposition~17.6]{EtCohDiam} we may identify $R\pi_\ast$ and
  $R\pi_{\phi,\ast}$ with the pushfoward on the v-sites $R\pi_{\mathrm{v}\ast}$ and
  $R\pi_{\phi,\mathrm{v}\ast}$. Using qcqs base change, see \cite[Theorem 1.9.(i)]{EtCohDiam}, along
  \[ \begin{tikzcd}
    X \arrow{r} \arrow{d}{\pi} & \lbrack X / \phi_X^\mathbb{Z} \rbrack
    \arrow{d}{\pi_\phi} \\ Y \arrow{r} & \lbrack Y / \phi_Y^\mathbb{Z}
    \rbrack
  \end{tikzcd} \]
  we see that the pullback of $R\pi_{\phi,\mathrm{v}\ast} A_\phi$ to $Y$ agrees with $R\pi_{\mathrm{v}\ast} A$. It now suffices to check that the descent data agree, i.e., the map
  \[
    \phi_Y^\ast R\pi_{\mathrm{v}\ast} A \xrightarrow{\sim} R\pi_{\mathrm{v}\ast} \phi_X^\ast A
    \xrightarrow{\phi_{\mathrm{can},A}} R\pi_{\mathrm{v}\ast} A
  \]
  agrees with $\phi_{\mathrm{can},R\pi_{\mathrm{v}\ast} A}$, see Lemma~\ref{Lem:CanonicalPhiDescent}. This formally follows from the fact that
  $\phi^\ast$ can be computed by the Frobenius action on the site.
\end{proof}

{
\def\rrarrows{\rightrightarrows}
\def\rrrarrows{\rightrightarrows\hspace{-1em}\to}
\def\rrrrarrows{\raisebox{0.07em}{$\rightrightarrows$}\hspace{-1em}\raisebox{-0.07em}{$\rightrightarrows$}}
\def\lbreve{\breve{L}}

\subsubsection{}
The derived $\infty$-category $\mathcal{D}(\Lambda)$ can be promoted to a condensed
$\infty$-category in the sense of \cite[Section~IX.1]{FarguesScholze} by considering
the assignment sending a profinite set $S$ to
the derived $\infty$-category $\mathcal{D}(S, \Lambda)$ of sheaves of
$\Lambda$-modules on the topological space $S$. For every algebraically closed
perfectoid field $C$, this agrees with the condensed $\infty$-category
structure on $\mathcal{D}(\spd C, \Lambda)$ described in
\cite[Section~IX.1]{FarguesScholze}, because for every profinite set $S$ we have
an equivalence
\begin{equation} \label{Eq:DetOnTotallyDisconnected}
  \mathcal{D}(\spd C \times \underline{S}, \Lambda) \simeq \mathcal{D}((\spd C
  \times \underline{S})_\text{\'{e}t}, \Lambda) \simeq \mathcal{D}(S, \Lambda)
\end{equation}
as $\spd C \times \underline{S}$ is strictly totally disconnected, see
\cite[Definition~7.15, Definition~14.13]{EtCohDiam}.

\begin{Lem} \label{Lem:vDescentDet}
  Let $Y$ and $Y^\prime$ be small v-stacks, and let $f \colon Y^\prime \to Y$ be
  a $0$-truncated v-cover. Then pullback along $f$ induces an equivalence of
  $\infty$-categories
  \[
    \mathcal{D}(Y, \Lambda) \xrightarrow{\sim} \varprojlim(\mathcal{D}(Y^\prime,
    \Lambda) \rrarrows \mathcal{D}(Y^\prime \times_Y Y^\prime, \Lambda)
    \rrrarrows \mathcal{D}(Y^\prime \times_Y Y^\prime \times_Y Y^\prime,
    \Lambda) \rrrrarrows \dotsb),
  \]
  where the limit (in the sense of \cite[Section 3.3.3]{HigherTopos}) is over the \v{C}ech nerve for $f$.
\end{Lem}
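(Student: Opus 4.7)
The plan is to reduce the statement to the known v-descent for $\mathcal{D}_{\text{\'et}}(-,\Lambda)$ on the site of small v-stacks, and then argue that under the $0$-truncatedness hypothesis the \v{C}ech nerve of $f$ is an honest simplicial object in small v-stacks that is cofinal among v-hypercovers of $Y$ refining $f$.

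First I would recall that by \cite[Proposition~17.3]{EtCohDiam} (and its $\infty$-categorical enhancement via \cite[Lemma~17.1]{EtCohDiam}), the assignment $X \mapsto \mathcal{D}_{\text{\'et}}(X,\Lambda)$ defines a sheaf of presentable stable $\infty$-categories on the v-site of small v-stacks. In particular, for every v-hypercover $Y_\bullet \to Y$ pullback induces an equivalence
\[
  \mathcal{D}(Y,\Lambda) \xrightarrow{\sim} \varprojlim_{[n]\in\Delta} \mathcal{D}(Y_n, \Lambda).
\]

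Next I would check that when $f \colon Y' \to Y$ is $0$-truncated in the sense of \cite[Section~IV]{EtCohDiam}, the iterated fiber products $Y'^{\times_Y n+1}$ assemble into an honest simplicial object $Y'^\bullet$ in small v-stacks, and that this simplicial object is a v-hypercover of $Y$. The point is that $0$-truncatedness kills all higher automorphism groupoids in the fiber products, so the \v{C}ech nerve does not acquire any higher coherences; this is the step where the hypothesis is used. Given this, one combines the previous paragraph with the \v{C}ech-nerve/hypercover identification to conclude that pullback along $f$ identifies $\mathcal{D}(Y,\Lambda)$ with the totalization of $\mathcal{D}(Y'^\bullet, \Lambda)$, which is exactly the desired limit.

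The main obstacle I anticipate is the bookkeeping around step two, namely producing the \v{C}ech nerve as a bona fide simplicial object in small v-stacks (rather than merely in $\infty$-stacks) and verifying it refines to a v-hypercover. Once this is in place, the descent statement itself is formal from Scholze's v-descent result; no further input from the six-functor formalism is required.
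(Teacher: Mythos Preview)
Your overall instinct---reduce to v-descent results from \cite{EtCohDiam}---is right, and this is what the paper does. But there is one genuine gap and one unnecessary detour.

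The gap: Proposition~17.3 of \cite{EtCohDiam} gives \v{C}ech descent for the v-site derived category $\mathcal{D}(Y_v,\Lambda)$, not for $\mathcal{D}_{\text{\'et}}(Y,\Lambda)$. The category $\mathcal{D}(Y,\Lambda)$ in the paper's notation is by definition $\mathcal{D}_{\text{\'et}}(Y,\Lambda)$, a full subcategory of $\mathcal{D}(Y_v,\Lambda)$ singled out by a local condition (Definition~14.13). So you cannot simply quote Proposition~17.3 for the \'etale category. The paper first applies Proposition~17.3 to obtain the equivalence for $\mathcal{D}(-_v,\Lambda)$, and then invokes \cite[Remark~14.14]{EtCohDiam} to check that membership in the full subcategory $\mathcal{D}_{\text{\'et}} \subset \mathcal{D}_v$ can be tested after pullback along a v-cover. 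That second step is what identifies the limit of the $\mathcal{D}_{\text{\'et}}$ categories with the $\mathcal{D}_{\text{\'et}}$ subcategory of the limit. Your proposal omits this, and without it the argument does not go through.

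The detour: your entire second paragraph (hypercovers, cofinality, \v{C}ech nerve versus hypercover) is unnecessary. The lemma asks for \v{C}ech descent along a single v-cover $f$, and Proposition~17.3 already gives exactly that statement for $\mathcal{D}(-_v,\Lambda)$; there is no hyperdescent issue to resolve. The $0$-truncatedness hypothesis is not used to make the \v{C}ech nerve ``land in v-stacks'' (iterated fiber products of v-stacks are v-stacks regardless), nor to compare with hypercovers; in the paper's argument it plays no essential role beyond ensuring the setup matches the cited statements. Once you drop the hypercover discussion and insert the missing \cite[Remark~14.14]{EtCohDiam} step, your argument becomes the paper's proof.
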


\begin{proof}
  This is implicit in \cite[Remark~17.4]{EtCohDiam}. From
  \cite[Proposition~17.3]{EtCohDiam}, we have an analogous statement for
  $\mathcal{D}(Y_v, \Lambda)$, and then it remains to check that $A \in
  \mathcal{D}(Y_v, \Lambda)$ lies in $\mathcal{D}(Y, \Lambda)$ if and only if
  $f^\ast A \in \mathcal{D}(Y^\prime, \Lambda)$. This follows from
  \cite[Remark~14.14]{EtCohDiam}.
\end{proof}

\subsubsection{}
For every condensed $\infty$-category $\mathcal{C}$ and a locally profinite
group $G$, we may consider the \v{C}ech nerve of $\ast \to BG$ and evaluate
on $\mathcal{C}$ to form the cosimplicial diagram of $\infty$-categories
\[
  \mathcal{C} \rrarrows \mathcal{C}(G) \rrrarrows \mathcal{C}(G^2) \rrrrarrows
  \dotsb,
\]
where to evaluate $\mathcal{C}(G^n)$ we first write $G^n$ as a disjoint union
$G^n = \coprod_i S_i$ of profinite sets and define $\mathcal{C}(G^n) =
\prod_i \mathcal{C}(S_i)$. We write $\mathcal{C}^{BG}$ for the limit of this
diagram of $\infty$-categories.\footnote{One may also construct $\mathcal{C}^{BG}$
by realizing $BG$ as a condensed $\infty$-category and considering the
$\infty$-category of condensed functors.}

\subsubsection{} \label{subsub:EquivariantObjectsLocallyProfinite}
For every locally profinite group $G$, together with an action of
$\underline{G}$ on $\spd C$, we construct an equivalence of $\infty$-categories
\begin{align}
  \mathcal{D}([\spd C / \underline{G}], \Lambda) &\simeq \varprojlim
  (\mathcal{D}(\spd C, \Lambda) \rrarrows \mathcal{D}(\spd C \times
  \underline{G}, \Lambda) \rrrarrows \mathcal{D}(\spd C \times \underline{G}^2)
  \rrrrarrows \dotsb) \\ &\simeq \varprojlim
  (\mathcal{D}(\Lambda) \rrarrows \mathcal{D}(G, \Lambda) \rrrarrows
  \mathcal{D}(G^2, \Lambda) \rrrrarrows \dotsb) = \mathcal{D}(\Lambda)^{BG}
\end{align}
using Lemma~\ref{Lem:vDescentDet} on the \v{C}ech nerve of $\spd C \to
[\spd C / \underline{G}]$ and \eqref{Eq:DetOnTotallyDisconnected}.
If $H$ is another locally profinite group with a
continuous group homomorphism $H \to G$, there is an induced morphism $f \colon
[\spd C / \underline{H}] \to [\spd C / \underline{G}]$. The associated pullback
functor
\[
  \mathcal{D}(\Lambda)^{BG} \simeq \mathcal{D}([\spd C / \underline{G}],
  \Lambda) \xrightarrow{f^\ast} \mathcal{D}([\spd C / \underline{H}], \Lambda)
  \simeq \mathcal{D}(\Lambda)^{BH}
\]
agrees with the functor restricting the $G$-action to an $H$-action.

\begin{Lem} \label{Lem:CanonicalDescentRep}
  Let $G$ be a locally profinite group with an action of $\underline{G}$ on
  $\spd C$. Then we have an isomorphism of stacks $[[\spd C / \underline{G}] /
  \phi_{[\spd C / \underline{G}]}^\mathbb{Z}] \simeq [\spd C / (\underline{G}
  \times \phi_C^\mathbb{Z})]$, and moreover the functor induced from the canonical
  descent datum (see Lemma~\ref{Lem:CanonicalPhiDescent})
  \[
    \mathcal{D}(\Lambda)^{BG} \simeq \mathcal{D}([\spd C / \underline{G}],
    \Lambda) \to \mathcal{D}([\spd C / (\underline{G} \times
    \phi_C^\mathbb{Z})], \Lambda) \simeq \mathcal{D}(\Lambda)^{B(G \times
    \mathbb{Z})}
  \]
  agrees with restriction along the projection map $G \times \mathbb{Z} \to G$.
\end{Lem}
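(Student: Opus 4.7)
The plan is to first establish the isomorphism of stacks and then check the functor compatibility by pulling back to the cover $\spd C$ and applying the Čech description of the derived categories.

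For the isomorphism of stacks, the key observation is that the absolute Frobenius on $\underline{G}$ is canonically the identity. Indeed, for any $S \in \perf$, elements of $\underline{G}(S)$ are continuous maps $|S| \to G$, and the absolute Frobenius $\phi_S \colon S \to S$ acts as the identity on the underlying topological space $|S|$; therefore $\phi_{\underline{G}}$ is canonically $\mathrm{id}_{\underline{G}}$. Applying Lemma~\ref{Lem:RelativeFrobenius} and the compatibility of absolute Frobenii with fiber products (Section~\ref{Sec:AbsoluteFrobenii}), the Frobenius $\phi_{[\spd C / \underline{G}]}$ fits into a $2$-commutative square with $\phi_{\spd C}$ along the quotient map $\spd C \to [\spd C / \underline{G}]$, and this square is moreover Cartesian. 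Since the $\underline{G}$-action and the $\phi_C^\mathbb{Z}$-action commute (as the $\underline{G}$-action is Frobenius-equivariant using $\phi_{\underline{G}} = \mathrm{id}$), combining the two actions yields a $\underline{G} \times \phi_C^\mathbb{Z}$-action on $\spd C$, whose quotient stack is canonically isomorphic to $[[\spd C / \underline{G}] / \phi_{[\spd C / \underline{G}]}^{\mathbb{Z}}]$.

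For the functor statement, we use Lemma~\ref{Lem:vDescentDet} applied to the Čech nerves of $\spd C \to [\spd C / (\underline{G} \times \phi_C^\mathbb{Z})]$ and $\spd C \to [\spd C / \underline{G}]$, which via the equivalences of Subsubsection~\ref{subsub:EquivariantObjectsLocallyProfinite} identify the two outer terms in the statement with the limits of the cosimplicial diagrams computing $\mathcal{D}(\Lambda)^{B(G \times \mathbb{Z})}$ and $\mathcal{D}(\Lambda)^{BG}$ respectively. Under these identifications, restriction along $G \times \mathbb{Z} \to G$ corresponds to pullback along the map of Čech nerves induced by the projection. On the other hand, by Lemma~\ref{Lem:CanonicalPhiDescentPullback}, pulling back the canonical descent datum along $\spd C \to [\spd C/\underline{G}]$ recovers the canonical descent datum on $\spd C$. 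Thus it suffices to verify the claim for the trivial group $G = 1$, namely that the canonical descent of $\mathcal{F} \in \mathcal{D}(\spd C, \Lambda)$ along $\spd C \to [\spd C/\phi_C^\mathbb{Z}]$ corresponds, under $\mathcal{D}(\spd C, \Lambda) \simeq \mathcal{D}(\Lambda)$ and $\mathcal{D}([\spd C/\phi_C^\mathbb{Z}],\Lambda) \simeq \mathcal{D}(\Lambda)^{B\mathbb{Z}}$, to the underlying object with trivial $\mathbb{Z}$-action.

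The main obstacle is this final reduction: we must show that, under the identification \eqref{Eq:DetOnTotallyDisconnected}, the pullback functor $\phi_C^{\ast} \colon \mathcal{D}(\spd C, \Lambda) \to \mathcal{D}(\spd C, \Lambda)$ is the identity functor and that $\phi_{\mathrm{can}, \mathcal{F}}$ is the identity $2$-morphism. This is a consequence of the fact that $\phi_C$ induces the identity on the underlying topological space, so its induced action on the \'etale site of $\spd C$ is the identity; since $\spd C$ is strictly totally disconnected, \'etale sheaves on $\spd C$ correspond to sheaves on its underlying profinite set of connected components, and $\phi_C$-pullback is the identity on this site. Concretely, evaluating the canonical descent datum from Lemma~\ref{Lem:CanonicalPhiDescent} on a test object $T \to \spd C$ reduces to the identification $\mathcal{F}(T \xrightarrow{\phi_T} T \to \spd C) = \mathcal{F}(T \to \spd C)$ given by $\phi_T^\ast$, which corresponds to the identity at the level of underlying $\Lambda$-complexes. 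This completes the identification of the functor with restriction along $G \times \mathbb{Z} \to G$.
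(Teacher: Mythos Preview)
Your treatment of the stack isomorphism is correct and essentially the same as the paper's.

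For the functor statement, however, the reduction to $G=1$ is not adequately justified. Lemma~\ref{Lem:CanonicalPhiDescentPullback} tells you that pulling back along $\spd C \to [\spd C/\underline{G}]$ takes the canonical descent datum on $[\spd C/\underline{G}]$ to that on $\spd C$; but this only pins down the behaviour at the $0$th level of the \v{C}ech nerve. To identify two functors between $\infty$-categories presented as limits of cosimplicial diagrams, you must compare the induced morphisms of cosimplicial diagrams at every simplicial level together with the coherence data, not merely check that the underlying (non-equivariant) complexes agree. Concretely, knowing that both functors land in objects whose underlying $\mathbb{Z}$-action is trivial after forgetting the $G$-action does not force the two functors $\mathcal{D}(\Lambda)^{BG} \to \mathcal{D}(\Lambda)^{B(G\times\mathbb{Z})}$ to coincide.

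The paper's argument works differently. Using Lemma~\ref{Lem:CanonicalPhiDescentPullback}, it realizes the canonical descent functor as the limit of the level-wise canonical descents
\[
  d_n \colon \mathcal{D}(\spd C \times \underline{G}^n, \Lambda) \to \mathcal{D}([(\spd C \times \underline{G}^n)/\phi^{\mathbb{Z}}], \Lambda).
\]
It then introduces a second \v{C}ech nerve (for $[\spd C/\phi^{\mathbb{Z}}] \to [\spd C/(\underline{G}\times\phi^{\mathbb{Z}})]$) and comparison maps $f_n$, and checks for every $n$ that the composite $f_n^{\ast} \circ d_n$ agrees with pullback along the projection $G^n \times \mathbb{Z}^n \to G^n$. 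The key step is the identity $q_n^{\ast} \circ d_n = \mathrm{id}$ (descent followed by pullback along the quotient map is the identity), which holds by construction. Your $G=1$ computation is essentially this identity for $n=0$; what is missing is the same verification at all higher levels, together with the bookkeeping that assembles these into a map of cosimplicial diagrams.
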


\begin{proof}
  The first part follows immediately from the functoriality of $\phi$ together with the
  fact that $\phi$ acts by identity on $\underline{G}$. For the second part, we
  note that because the canonical descent is functorial for pullbacks, see
  Lemma~\ref{Lem:CanonicalPhiDescentPullback}, the descent functor is
  induced by taking the limit of both rows in the diagram
  \[ \begin{tikzcd}
    \mathcal{D}(\spd C, \Lambda) \arrow[shift left]{r} \arrow[shift right]{r}
    \arrow{d}{d_0} & \mathcal{D}(\spd C \times \underline{G}, \Lambda)
    \arrow[shift left]{r} \arrow[shift right]{r} \arrow{r} \arrow{d}{d_1} &
    \cdots \\ \mathcal{D}([\spd C / \phi^\mathbb{Z}], \Lambda) \arrow[shift
    left]{r} \arrow[shift right]{r} & \mathcal{D}([(\spd C \times \underline{G})
    / \phi^\mathbb{Z}], \Lambda) \arrow[shift left]{r} \arrow[shift right]{r}
    \arrow{r} & \cdots,
  \end{tikzcd} \]
  where the horizontal functors are pullbacks and the vertical functors
  $d_\bullet$ are canonical descents, see Lemma~\ref{Lem:CanonicalPhiDescent}.
  On the other hand, the quotient map $\spd C \to [\spd C /
  \phi^\mathbb{Z}]$ induces a morphism of simplicial v-sheaves $f_\bullet$ from
  the \v{C}ech nerve of $\spd C \to [\spd C / (\underline{G} \times
  \phi^\mathbb{Z})]$ to the \v{C}ech nerve of $[\spd C / \phi^\mathbb{Z}] \to
  [\spd C / (\underline{G} \times \phi^\mathbb{Z})]$. Pulling back along these maps induces another diagram
  \[ \begin{tikzcd}
    \mathcal{D}([\spd C / \phi^\mathbb{Z}], \Lambda) \arrow[shift left]{r}
    \arrow[shift right]{r} \arrow{d}{f_0^\ast} & \mathcal{D}([(\spd C \times
    \underline{G}) / \phi^\mathbb{Z}], \Lambda) \arrow[shift left]{r}
    \arrow[shift right]{r} \arrow{r} \arrow{d}{f_1^\ast} & \cdots \\
    \mathcal{D}(\spd C, \Lambda) \arrow[shift left]{r} \arrow[shift right]{r} &
    \mathcal{D}(\spd C \times \underline{G} \times \underline{\mathbb{Z}},
    \Lambda) \arrow[shift left]{r} \arrow[shift right]{r} \arrow{r} & \cdots,
  \end{tikzcd} \]
  where taking the limit of both rows induces the identity functor on
  $\mathcal{D}([\spd C / (\underline{G} \times \phi^\mathbb{Z})], \Lambda)$.

  It now suffices to check that the composite functor
  \[
    \mathcal{D}(G^n, \Lambda) \simeq \mathcal{D}(\spd C \times \underline{G}^n,
    \Lambda) \xrightarrow{f_n^\ast \circ d_n} \mathcal{D}(\spd C \times
    \underline{G}^n \times \underline{\mathbb{Z}}^n, \Lambda) \simeq
    \mathcal{D}(G^n \times \mathbb{Z}^n, \Lambda)
  \]
  agrees with pulling back by the continuous projection map $G^n \times
  \mathbb{Z}^n \to G^n$. For this we note that $f_n$ can be written as the
  composition
  \[
    \spd C \times \underline{G}^n \times \underline{\mathbb{Z}}^n
    \xrightarrow{p_n} \spd C \times \underline{G}^n \xrightarrow{q_n} [(\spd C
    \times \underline{G}^n) / \phi^\mathbb{Z}],
  \]
  where $p_n$ is the projection. Now $q_n^\ast \circ d_n$ is the identity
  functor by construction, and hence $f_n^\ast \circ d_n = p_n^\ast$ agrees with
  pullback along $G^n \times \mathbb{Z}^n \to G^n$ as desired.
\end{proof}

\subsubsection{}
Let $L$ be a finite extension of $\qp$ with residue field $\mathbb{F}_q$ and write $q=p^r$.
Later we will take $L$ to be $E$, so this slight conflict of notation shall not
become confusing. We let $C$ be the completion of an algebraic closure of
$L$, let $\lbreve$ be the completion of the maximal unramified subextension
of $L$ in $C$, and let $k$ be the residue field of
$\lbreve$. Let $\sigma$ denote the Frobenius on $\lbreve =
W_{\CO_L}(k)[\tfrac{1}{p}]$ obtained as the Teichm\"uller lift of the
$q$-Frobenius on $k$.

\begin{Lem}\label{Lem:lbreve}
    There is a canonical isomorphism
    \[\spd \lbreve \xrightarrow{\sim} \spd L\times_{\spd \mathbb{F}_q} \spd k\]
    under which $\sigma$ is identified with $\mathrm{id}\times \phi^r$.
\end{Lem}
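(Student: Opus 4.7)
The plan is to test the claimed isomorphism on $S$-points for arbitrary $S = \spa(R, R^+) \in \perf$. An $S$-point of $\spd \lbreve$ is an untilt $(R^\sharp, R^{\sharp+})$ of $(R, R^+)$ together with a continuous ring map $\lbreve \to R^\sharp$. Using the identification $\CO_{\lbreve} = \CO_L \otimes_{W(\mathbb{F}_q)} W(k)$ (the completed tensor product is automatic since $\CO_L$ is finite over $W(\mathbb{F}_q)$), such data unwind as a pair of continuous maps $\CO_L \to R^{\sharp+}$ and $W(k) \to R^{\sharp+}$ agreeing on $W(\mathbb{F}_q)$. Meanwhile an $S$-point of $\spd k$ is simply a ring map $k \to R^+$ (the untilt is forced to be trivial since $k$ has characteristic $p$), and an $S$-point of $\spd L$ is an untilt equipped with a continuous $\CO_L \to R^{\sharp+}$.

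The comparison map $\spd \lbreve \to \spd L \times_{\spd \mathbb{F}_q} \spd k$ would be assembled from two pieces. The projection to $\spd L$ comes directly from the inclusion $L \hookrightarrow \lbreve$. For the projection to $\spd k$, I would compose $W(k) \to R^{\sharp+}$ with the natural reduction $R^{\sharp+} \to R^+/\varpi$ from Section~\ref{subsub:Rsharpmap}, factor through $W(k)/p = k$ (since $R^+/\varpi$ has characteristic $p$), and lift uniquely to $k \to R^+$ via the Teichm\"{u}ller lift, using that $R^+$ is perfect and $\varpi$-adically complete. Analogous reductions using a uniformizer of $\CO_L$ produce the common map to $\spd \mathbb{F}_q$. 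For the inverse direction: given a compatible pair $(L \to R^\sharp, k \to R^+)$, I would turn $k \to R^+$ into $W(k) \to W(R^+) \xrightarrow{\theta} R^{\sharp+}$ via Witt vector functoriality and the map $\theta$ from Section~\ref{subsub:Rsharpmap}, then combine with $\CO_L \to R^{\sharp+}$ over $W(\mathbb{F}_q)$ to obtain $\CO_{\lbreve} \to R^{\sharp+}$, hence $\lbreve \to R^\sharp$.

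The heart of the argument will be verifying that these two constructions are mutually inverse, which reduces to the bijection between continuous ring maps $W(k) \to R^{\sharp+}$ and ring maps $k \to R^+$ (compatibly with the $W(\mathbb{F}_q)$- and $\mathbb{F}_q$-structures); this is a standard tilting-style equivalence whose verification requires careful interplay between $\theta$, the reduction $R^{\sharp+} \to R^+/\varpi$, Witt vector functoriality for perfect $\mathbb{F}_p$-algebras, and perfectness of $R^+$. Finally, for the Frobenius identification, I would use that $\sigma$ acts trivially on $L \subset \lbreve$ and as the Witt vector $q$-Frobenius $\phi^r_{W(k)}$ on $W(k) \subset \lbreve$. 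On $S$-points, precomposition by $\sigma$ preserves the $\spd L$ component and sends $\beta \colon k \to R^+$ to $\beta \circ \phi^r_k = \phi^r_{R^+} \circ \beta$ by naturality of Frobenius for ring homomorphisms. By the description of the absolute Frobenius on v-sheaves in Section~\ref{Sec:AbsoluteFrobenii}, this is exactly the action of $\phi^r$ on $\spd k(S)$, so $\sigma$ corresponds to $\mathrm{id} \times \phi^r$ as claimed.
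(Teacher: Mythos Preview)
Your proposal is correct and follows essentially the same approach as the paper: test on $S$-points, use the identification $\mathcal{O}_{\lbreve} = \mathcal{O}_L \otimes_{W(\mathbb{F}_q)} W(k)$, build the inverse via Witt vector functoriality and $\theta$, and identify $\sigma$ with $\mathrm{id}\times\phi^r$ through naturality of Frobenius. The only cosmetic difference is that the paper packages the bijection between maps $W(k)\to R^{\sharp+}$ and maps $k\to R^+$ as the adjunction between $W_{\mathcal{O}_L}(-)$ and $(-)^\flat$, whereas you unpack it explicitly via reduction and Teichm\"{u}ller lifting.
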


\begin{proof}
    Let $S=\spa(R,R^+)\in \perf_{\mathbb{F}_q}$ be an affinoid perfectoid test object. A map $S\to \spd \lbreve$ is determined by an untilt of $S$ over $\spa \lbreve$, i.e., a pair $(S^\sharp\xrightarrow{\alpha} \spa \lbreve, \iota: S^{\sharp\flat}\simeq S)$. Given such a pair, the map $\alpha^\ast: \CO_{\lbreve}\to R^{\sharp+}$ defines a unique $k$-algebra structure on the perfect $\mathbb{F}_q$-algebra $R^+$ by adjunction between $W_{\CO_L}(-)$ and $(-)^\flat$, giving therefore a map $\spd \lbreve \to 
    \spd k$. This together with the natural projection $\spd \lbreve \to \spd L$ gives the desired map. To construct an inverse to it, assume we are given an $S$-point of the fiber product, i.e., a pair $(S^\sharp\xrightarrow{\beta} \spa L, \iota: S^{\sharp\flat}\simeq S)$, a map $k\xrightarrow{\gamma} R^+$ such that there is a commutative diagram
    \[\begin{tikzcd}
        W(k) \ar[r,"W(\gamma)"] & W(R^+)\ar[r, two heads] & R^{\sharp +}\\
        W(\mathbb{F}_q) \ar[u] \ar[r] & \CO_L \ar[ur, "\beta"].&
    \end{tikzcd}\]
    This commutative diagram leads to a map $W_{\CO_L}(k)\to R^{\sharp+}$, which gives rise to an $S$-point of $\spd \lbreve$. One verifies readily that the two constructions are inverse to each other. Since $W(\mathrm{Frob}_q)=\sigma$, the second claim is clear.
\end{proof}

\subsubsection{}
Let $\gal_L$ be the absolute Galois group of $L$. We consider the action of
$\underline{\gal_L} \times \underline{\mathbb{Z}}$ on $\spd C$, where $(\tau,
n)$ acts by $\tau \circ \phi_C^n$. We can identify
\[
  [\spd C / (\underline{\gal_L} \times \{0\})] \simeq \spd L, \quad [\spd C /
  (\underline{\gal_L} \times \underline{\mathbb{Z}})] \simeq [\spd L /
  \phi_L^\mathbb{Z}].
\]
Denote by $W_L \subset \gal_L$ the Weil group of $L$. There is a twisted embedding
\[
  \iota \colon W_L \hookrightarrow \gal_L \times \mathbb{Z}; \quad \tau \mapsto
  (\tau, -r \deg\tau),
\]
where $\deg$ is the projection $W_L \to (\phi^r)^\mathbb{Z} \simeq \mathbb{Z}$, cf.\ \cite[Section~IV.7]{FarguesScholze}. Then the natural map $\spd C \to \spd k$ is $\underline{\iota(W_L)}$-equivariant when we
give $\spd k$ the trivial action.

\subsubsection{}
Let $\pi \colon X \to \spd L$ be a qcqs morphism of v-stacks. Taking cohomology
defines a sheaf
\[
  R\pi_\ast \Lambda \in \mathcal{D}(\spd L, \Lambda) = \mathcal{D}([\spd C /
  (\underline{\gal_L} \times \{0\})], \Lambda) \simeq \mathcal{D}(\Lambda)^{B\gal_L}.
\]
On the other hand, we may quotient by the absolute Frobenius action on both
sides to define $\pi_\phi \colon [X/\phi_X^{r\mathbb{Z}}] \to [\spd L /
\phi_L^{r\mathbb{Z}}]$, and then base change along $\mathbb{F}_q \to k$ to
define
\[
  \pi_{\phi,k} \colon [X / \phi^{r\mathbb{Z}}] \times_{\spd \mathbb{F}_q} \spd k
  \to [\spd L / \phi^{r\mathbb{Z}}] \times_{\spd \mathbb{F}_q} \spd k.
\]
Using Lemma~\ref{Lem:lbreve}, we may identify the target with
\[
  [(\spd L \times_{\spd \mathbb{F}_q} \spd k) / (\phi^{r\mathbb{Z}} \times
  \mathrm{id})] = [\spd C / \underline{\iota(W_L)}].
\]
Therefore we obtain
\[
  R\pi_{\phi,k,\ast} \Lambda \in \mathcal{D}([\spd C / \underline{\iota(W_L)}], \Lambda)
  \simeq \mathcal{D}(\Lambda)^{BW_L}.
\]

\begin{Prop} \label{Prop:GaloisRestriction}
  Let $\pi \colon X \to \spd L$ be as above. Under the restriction functor
  \[
    \mathcal{D}([\spd C / (\underline{\gal_L} \times \{0\})], \Lambda) \simeq
    \mathcal{D}(\Lambda)^{B\gal_L} \to \mathcal{D}(\Lambda)^{BW_L} \simeq
    \mathcal{D}([\spd C / \underline{\iota(W_L)}], \Lambda)
  \]
  along the inclusion $W_L \hookrightarrow \gal_L$, the object $R\pi_\ast \Lambda$ is sent to $R\pi_{\phi,k,\ast} \Lambda$.
\end{Prop}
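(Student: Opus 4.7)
The plan is to combine the lemmas of this section with qcqs base change and a concrete identification of stacks via Lemma~\ref{Lem:lbreve}. First I would apply qcqs base change \cite[Theorem~1.9.(i)]{EtCohDiam} to the Cartesian square
\[
\begin{tikzcd}
\lbrack X/\phi_X^{r\mathbb{Z}}\rbrack \times_{\spd \mathbb{F}_q} \spd k \ar[r] \ar[d, "\pi_{\phi,k}"'] & \lbrack X/\phi_X^{r\mathbb{Z}}\rbrack \ar[d, "\pi_\phi"] \\
\lbrack \spd L/\phi_L^{r\mathbb{Z}}\rbrack \times_{\spd \mathbb{F}_q} \spd k \ar[r] & \lbrack \spd L/\phi_L^{r\mathbb{Z}}\rbrack
\end{tikzcd}
\]
to reduce the computation of $R\pi_{\phi,k,\ast}\Lambda$ to pulling back $R\pi_{\phi,\ast}\Lambda$ along the bottom map.

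Next, Lemma~\ref{Lem:DescentAndPushforward} identifies $R\pi_{\phi,\ast}\Lambda$ as the canonical $\phi$-descent of $R\pi_\ast\Lambda$. Using the identification $\spd L \simeq \lbrack \spd C/\underline{\gal_L}\rbrack$ from Subsubsection~\ref{subsub:EquivariantObjectsLocallyProfinite}, Lemma~\ref{Lem:CanonicalDescentRep} applied to $G = \gal_L$ shows that this canonical descent corresponds, on the representation side, to pulling back the $\gal_L$-representation $R\pi_\ast\Lambda$ along the projection $\gal_L \times \phi^{r\mathbb{Z}} \to \gal_L$; in particular, the $\phi^{r\mathbb{Z}}$-factor acts trivially.

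The technical heart of the proof is the identification of v-stacks
\[
\lbrack \spd L/\phi_L^{r\mathbb{Z}}\rbrack \times_{\spd \mathbb{F}_q} \spd k \simeq \lbrack \spd C/\underline{\iota(W_L)}\rbrack.
\]
Lemma~\ref{Lem:lbreve} gives $\spd L \times_{\spd \mathbb{F}_q} \spd k \simeq \spd \lbreve$ together with $\sigma = \mathrm{id} \times \phi^r$, so the left hand side becomes $\lbrack \spd \lbreve / (\phi_L^r \times \mathrm{id})^\mathbb{Z}\rbrack$. Using compatibility of the absolute Frobenius with products, we write $\phi_L^r \times \mathrm{id} = \phi_\lbreve^r \cdot \sigma^{-1}$. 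Combined with the identification $\spd \lbreve \simeq \lbrack \spd C/\underline{I_L}\rbrack$ (where $I_L \subseteq \gal_L$ is the inertia subgroup) and the fact that a lift of $\sigma$ to $\gal_L$ is a Frobenius element $\tau_0 \in W_L$ of degree $1$, we find that the quotient becomes $\lbrack \spd C/\langle I_L,\, \phi_C^r \circ \tau_0^{-1}\rangle\rbrack$. Since $\iota(\tau_0) = (\tau_0, -r)$ acts on $\spd C$ as $\tau_0 \circ \phi_C^{-r}$, we have $\phi_C^r \circ \tau_0^{-1} = \iota(\tau_0)^{-1}$, and hence $\langle I_L,\, \phi_C^r \circ \tau_0^{-1}\rangle = \iota(W_L)$.

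Under these identifications, the composition
\[
\lbrack \spd C/\underline{\iota(W_L)}\rbrack \to \lbrack \spd C/(\underline{\gal_L}\times \phi^{r\mathbb{Z}})\rbrack \to \lbrack \spd C/(\underline{\gal_L}\times\{0\})\rbrack = \spd L
\]
is induced by the group homomorphism $W_L \xrightarrow{\iota} \gal_L \times \mathbb{Z} \xrightarrow{\mathrm{proj}_1} \gal_L$, which equals the natural inclusion. Combined with the trivial $\phi^{r\mathbb{Z}}$-action from the second paragraph, the pullback of $R\pi_\ast\Lambda$ along this composition is precisely the restriction of $R\pi_\ast\Lambda$ along $W_L \hookrightarrow \gal_L$, as required. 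The main obstacle lies in the third paragraph: accurately tracking the interplay of the various absolute Frobenii $\phi_L$, $\phi_\lbreve$, $\sigma$, and $\phi_C$ through Lemma~\ref{Lem:lbreve}.
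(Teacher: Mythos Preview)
Your proof is correct and follows essentially the same strategy as the paper: apply Lemma~\ref{Lem:DescentAndPushforward} and Lemma~\ref{Lem:CanonicalDescentRep} to identify $R\pi_{\phi,\ast}\Lambda$ as the restriction of $R\pi_\ast\Lambda$ along the projection $\gal_L\times r\mathbb{Z}\to\gal_L$, then use qcqs base change along $g\colon[\spd C/\underline{\iota(W_L)}]\to[\spd C/(\underline{\gal_L}\times\underline{r\mathbb{Z}})]$ and observe that the composite $\mathrm{proj}_1\circ\iota$ is the natural inclusion $W_L\hookrightarrow\gal_L$. Your explicit unwinding of the identification $[\spd L/\phi^{r\mathbb{Z}}]\times_{\spd\mathbb{F}_q}\spd k\simeq[\spd C/\underline{\iota(W_L)}]$ via Lemma~\ref{Lem:lbreve} is a nice addition; the paper asserts this in one line.

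One presentational caveat: the displayed ``composition'' in your final paragraph,
\[
[\spd C/\underline{\iota(W_L)}]\to[\spd C/(\underline{\gal_L}\times\phi^{r\mathbb{Z}})]\to[\spd C/(\underline{\gal_L}\times\{0\})],
\]
does not make sense as a chain of maps of v-stacks: the second arrow would require $\gal_L\times\{0\}$ to contain $\gal_L\times r\mathbb{Z}$ (or an equivariant map for the projection, which fails since $\phi_C^r\neq\mathrm{id}$). What you actually use---and correctly so---is the composite of functors on derived categories, where the step from $\mathcal{D}(\spd L,\Lambda)$ to $\mathcal{D}([\spd L/\phi^{r\mathbb{Z}}],\Lambda)$ is the canonical descent functor, not a $*$-pullback along a map of stacks. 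The paper avoids this pitfall by phrasing the conclusion directly at the level of the group homomorphisms $W_L\xrightarrow{\iota}\gal_L\times r\mathbb{Z}\xrightarrow{\mathrm{id}\times 0}\gal_L$; you should do the same.
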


\begin{proof}
  By Lemma~\ref{Lem:DescentAndPushforward}, the object
  \[
    R\pi_{\phi,\ast} \Lambda \in \mathcal{D}([\spd L / \phi_L^{r\mathbb{Z}}],
    \Lambda) \simeq \mathcal{D}(\Lambda)^{B(\gal_L \times r\mathbb{Z})}
  \]
  is the canonical descent of $R\pi_\ast \Lambda \in
  \mathcal{D}(\Lambda)^{B\gal_L}$. This implies by
  Lemma~\ref{Lem:CanonicalDescentRep} that $R\pi_{\phi,\ast} \Lambda$ is
  obtained from $R\pi_\ast \Lambda$ by pulling back the action along the
  projection $\gal_L \times r\mathbb{Z} \to \gal_L$.

  On the other hand, qcqs base change for the diagram
  \[ \begin{tikzcd}[column sep = small]
    \lbrack X / \phi^{r\mathbb{Z}} \rbrack \times_{\spd \mathbb{F}_q} \spd k
    \arrow{r} \arrow{d}{\pi_{\phi,k}} & \lbrack X / \phi^{r\mathbb{Z}} \rbrack
    \arrow{d}{\pi_\phi} \\ \lbrack \spd L / \phi^{r\mathbb{Z}} \rbrack
    \times_{\spd \mathbb{F}_q} \spd k = \lbrack \spd C / \underline{\iota(W_L)} \rbrack
    \arrow{r}{g} & \lbrack \spd L / \phi^{r\mathbb{Z}} \rbrack = \lbrack \spd C
    / (\underline{\gal_L} \times \underline{r\mathbb{Z}}) \rbrack
  \end{tikzcd} \]
  implies that $g^\ast R\pi_{\phi,\ast} \Lambda \simeq R\pi_{\phi,k,\ast}
  \Lambda$, where $g^\ast \colon \mathcal{D}(\Lambda)^{B(\gal_L \times r\mathbb{Z})} \to
  \mathcal{D}(\Lambda)^{BW_L}$ is given by restriction along $\iota \colon W_L
  \hookrightarrow \gal_L \times \mathbb{Z}$. Putting the two together, we
  conclude that $R\pi_\ast \Lambda \in \mathcal{D}(\Lambda)^{B\gal_L}$ is sent
  to $R\pi_{\phi,k,\ast} \Lambda$ in $\mathcal{D}(\Lambda)^{BW_L}$ under
  restriction along the composition
  \[
    W_L \xrightarrow{\iota} \gal_L \times r\mathbb{Z} \xrightarrow{\mathrm{id}
    \times 0} \gal_L,
  \]
  which is the natural inclusion.
\end{proof}

}

\subsection{The dualizing sheaf of the Igusa stack} \label{Sub:DualizingSheaf}
Let $\gx$ be a Shimura datum of Hodge type with reflex field $\mathsf{E}$. Fix a prime number $p$ and a prime $v \mid p$ of $\mathsf{E}$, let $E$ be the completion of $\mathsf{E}$ at $v$, and let $K^p \subset \gafp$ be a neat compact open subgroup. We write $\mu$ for the Hodge cocharacter of the Shimura datum $\gx$, which we consider as a $G(\qpbar)$-conjugacy class of cocharacters of $G$ using the place $v$ of $\mathsf{E}$. Fix $k$ an algebraic closure of $k_E$. We let $\igsk=\igsk\gx$ be the Igusa stack constructed in Section \ref{Sec:HodgeType}, see Theorem \ref{Thm:HodgeMain}, base changed to $k$. In this section, we compute the dualizing complex in $D(\igsk,\Lambda)$. The main result is the following theorem.

\begin{Thm} \label{Thm:IgusaDualizingComplex}
    The stack $\igsk$ is an $\ell$-cohomologically smooth Artin v-stack of $\ell$-dimension 0. Moreover, the dualizing sheaf of $\igsk \to \spd k$ is isomorphic to $\Lambda[0]$.
\end{Thm}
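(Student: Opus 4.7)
The plan is to deduce the result from the Cartesian diagram of Theorem~\ref{Thm:HodgeMain}, the smoothness of $\mathbf{Sh}_K\gx^\circ$ over $E$, and the known cohomological smoothness of the Beauville--Laszlo morphism $\mathrm{BL}\colon \mathrm{Gr}_{G,\mu^{-1}} \to \bungmu$ in the minuscule case. Let $C$ denote the completion of an algebraic closure of $E$; since the residue field of $\mathcal{O}_C$ is $\fpbar = k$, the natural map $\spd C \to \spd k$ is an $\ell$-cohomologically smooth v-cover of $\ell$-dimension $0$. By v-descent for $\ell$-cohomological smoothness \cite[Proposition~23.13]{EtCohDiam}, it therefore suffices to verify the analogous statement for the base change $\igsk \times_{\spd k} \spd C$ over $\spd C$.

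Quotienting the diagram of Theorem~\ref{Thm:HodgeMain} by $\underline{K^p}$, passing to the generic fiber using Lemma~\ref{Lem:PotentiallyCrystalline} and the identification \eqref{Eq:GenericFibreShtukas}, and base changing along $\spd C \to \spd \fp$, we obtain a Cartesian diagram
\[
\begin{tikzcd}
\mathbf{Sh}_K\gx_C^{\circ,\lozenge} \ar[r,"{\pi_\mathrm{HT}}"] \ar[d,"{\IgsQuot}"] & {[\mathrm{Gr}_{G,\mu^{-1},C}/\underline{K_p}]} \ar[d,"{\mathrm{BL}}"] \\
\igsk \times_{\spd k} \spd C \arrow[r,"{\overline{\pi}_\mathrm{HT}}"] & \bungmu \times_{\spd \fp} \spd C.
\end{tikzcd}
\]
Since $\mathbf{Sh}_K\gx^\circ$ is smooth of dimension $d = \langle 2\rho, \mu\rangle$ over $E$, the top-left corner is $\ell$-cohomologically smooth over $\spd C$ of $\ell$-dimension $d$ with dualizing complex $\Lambda(d)[2d]$. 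The key cohomological input is that the right vertical map $\mathrm{BL}$ is $\ell$-cohomologically smooth of relative $\ell$-dimension $d$ with dualizing complex $\Lambda(d)[2d]$; for $\mu$ minuscule this is extracted from the analysis of the local Hecke correspondence in \cite[Section~IV.4]{FarguesScholze}, together with the observation that the profinite quotient $[-/\underline{K_p}]$ preserves both the relative dimension and the dualizing complex, and with the fact from \cite[Theorem~III.0.2, Theorem~IV.1.19]{FarguesScholze} that $\bungmu$ is an Artin v-stack which is $\ell$-cohomologically smooth of $\ell$-dimension $0$ with dualizing sheaf $\Lambda[0]$.

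Pulling back along $\overline{\pi}_\mathrm{HT}$, the left vertical map $\IgsQuot$ inherits these cohomological smoothness properties, and it is v-surjective by Corollary~\ref{Cor:ProdGeomPtsIgsQuotSurjective} combined with Corollary~\ref{Cor:VSurjectiveBL}. Applying v-descent once more to $\IgsQuot$, and using the composition formula for relative dualizing complexes, we deduce that $\igsk \times_{\spd k} \spd C \to \spd C$ is $\ell$-cohomologically smooth of $\ell$-dimension $0$ with dualizing complex $\Lambda[0]$; descending along the v-cover $\spd C \to \spd k$ then yields the assertions of the theorem. That $\igsk$ is an Artin v-stack follows from \cite[Proposition~IV.1.21]{FarguesScholze}, since it admits a cohomologically smooth v-surjection from the diamond $\mathbf{Sh}_K\gx_C^{\circ,\lozenge}$ and has a representable diagonal inherited from $\bungmu$ via the Cartesian diagram. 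The main technical obstacle is to confirm the precise form of the $\ell$-cohomological smoothness and dualizing complex of $\mathrm{BL}$ in the minuscule case as extracted from \cite{FarguesScholze}.
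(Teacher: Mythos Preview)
Your overall strategy is exactly the paper's: use the Cartesian diagram, the cohomological smoothness of $\mathrm{BL}$ of relative dimension $d$, and the smoothness of $\mathbf{Sh}_K^{\circ}$ over $E$ to deduce that $\IgsQuot$ is $\ell$-cohomologically smooth of relative dimension $d$, and hence that $\igsk$ is $\ell$-cohomologically smooth of dimension $0$. The reduction to working over $\spd C$ (the paper uses $\spd \ebreve$, which is immaterial) and the input that $\bungmu$ has trivial dualizing complex are also the same, though the latter is Theorem~\ref{Thm:DualizingBunG}, which the paper attributes to Hamann--Imai rather than to \cite{FarguesScholze}.

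There is, however, a real gap in your computation of the dualizing complex. From the composition formula and smooth base change you only obtain
\[
\IgsQuot^{\ast}\bigl(\text{dualizing sheaf of }\igsk\times_{\spd k}\spd C\bigr)\ \simeq\ \Lambda[0],
\]
and you must still descend this along the smooth v-surjection $\IgsQuot$. Conservativity of $\IgsQuot^{\ast}$ is not enough: one needs an actual morphism between the dualizing sheaf and $\Lambda[0]$ whose pullback is the given isomorphism. This is precisely the content of the last paragraph of the paper's proof. Working with the $\underline{G(\qp)}$-quotient diagram (so that the map $\tilde g$ from the Shimura side to the Igusa stack is separated, representable, and smooth), the paper uses the adjunction unit $Rf^{!}\Lambda \to R\tilde g_{\ast}\tilde g^{\ast}Rf^{!}\Lambda \simeq R\tilde g_{\ast}\Lambda$, applies $\tau_{\le 0}$, and identifies $\tau_{\le 0}R\tilde g_{\ast}\Lambda\simeq\Lambda[0]$ using the geometric connectedness of the fibers of $g$ (the admissible loci in $\mathrm{Gr}_{G,\mu^{-1}}$, by \cite{GleasonLourenco}). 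Only then does one check the resulting map is an isomorphism after pullback. Your phrase ``composition formula for relative dualizing complexes'' hides this step, and your stated ``main technical obstacle'' (the dualizing complex of $\mathrm{BL}$) is in fact the easier part.
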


By proving this theorem we will complete the proof of Theorem \ref{Thm:IntroIgusaGeneric}. For the theorem we need as input some understanding of the dualizing complex on $\bungk$, the Shimura variety and the flag variety. This is the content of the theorem and two lemmas that follow. 

\begin{Thm} \label{Thm:DualizingBunG}
The v-stack $\bungk$ is an $\ell$-cohomologically smooth Artin v-stack of $\ell$-dimension $0$. Moreover, the dualizing sheaf of $\bungk \to \spd k$ is isomorphic to $\Lambda[0]$.
\end{Thm}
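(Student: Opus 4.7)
The plan is to reduce the assertion to foundational results of Fargues--Scholze on $\bun_G$.

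First I would note that $\bun_G$ is an Artin v-stack by \cite[Theorem III.0.2]{FarguesScholze}, and this property is preserved under base change to $\spd k$. The deeper content---the $\ell$-cohomological smoothness of $\ell$-dimension $0$ together with the identification of the dualizing sheaf---is essentially \cite[Proposition V.2.1]{FarguesScholze}, so my proof would be to cite this result directly.

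For the sake of clarity, I would also outline the strategy employed in \cite{FarguesScholze}. The idea is to work with the Newton stratification. Over the open semistable stratum $\bun_{G,k}^{1} \simeq [\spd k / \underline{G(\qp)}]$, the map $\spd k \to [\spd k / \underline{G(\qp)}]$ is a pro-\'etale $\underline{G(\qp)}$-torsor; such maps are $\ell$-cohomologically smooth of $\ell$-dimension $0$ with dualizing sheaf $\Lambda[0]$, giving the desired conclusion on this stratum. On each non-semistable stratum $\bun_{G,k}^{[b]} \simeq [\spd k / \tilde{G}_b]$, one uses the description of $\tilde{G}_b$ as an extension of $\underline{G_b(\qp)}$ by a positive Banach--Colmez space of pure $\ell$-dimension $d_b$ (see \cite[Section III.5]{FarguesScholze}), together with cohomological smoothness of positive Banach--Colmez spaces, to show that the classifying stack has $\ell$-dimension $-d_b$. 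This is then exactly compensated by the codimension of $\bun_{G,k}^{[b]}$ in $\bun_{G,k}$, yielding pure $\ell$-dimension $0$ globally.

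The hard part is to verify that these stratum-by-stratum calculations assemble into a global statement on $\bun_{G,k}$: one must produce $\ell$-cohomologically smooth charts in neighborhoods of non-semistable points, which is done in \cite[Section V]{FarguesScholze} using explicit presentations by twisted flag varieties for Levi subgroups, together with the semi-continuity of the Harder--Narasimhan polygon. Once $\ell$-cohomological smoothness and $\ell$-dimension $0$ are established globally, the identification of the dualizing sheaf with $\Lambda[0]$ on all of $\bun_{G,k}$ follows from its identification on the open semistable stratum, combined with the fact that on a cohomologically smooth v-stack of pure $\ell$-dimension $0$ the dualizing complex is a locally constant shift of $\Lambda$ and hence is determined by its restriction to any dense open.
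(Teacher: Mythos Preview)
Your approach to the first assertion (Artin v-stack, $\ell$-cohomological smoothness of $\ell$-dimension $0$) matches the paper's, which simply cites \cite[Theorem~I.4.1.(vii)]{FarguesScholze}.

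For the dualizing sheaf, however, your sketch has a gap. You correctly observe that cohomological smoothness of $\ell$-dimension $0$ forces the dualizing complex to be an invertible object concentrated in degree $0$, i.e., a rank-one local system. But your claim that such a sheaf ``is determined by its restriction to any dense open'' is not valid in general: the complement of a dense open can contribute monodromy, so a rank-one local system need not be globally trivial merely because it trivializes on a dense open substack. There is also a secondary issue: $\bun_{G,k}^{1}$ is the open point of only one connected component of $\bun_{G,k}$, so it is not dense unless $\pi_1(G)_{\Gamma_p}$ is trivial; at minimum you would need to work over the full semistable locus (the union of all basic strata), and even then the density argument does not go through as stated.

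The paper does not attempt to fill this in directly but instead cites \cite[Proposition~3.18]{HamannImai} for the identification of the dualizing sheaf, indicating that this part is not already contained in \cite{FarguesScholze} and requires a separate argument.
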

\begin{proof}
The first part is \cite[Theorem I.4.1.(vii)]{FarguesScholze}, and the second part is \cite[Proposition 3.18]{HamannImai}. 
\end{proof}
For our fixed compact open subgroup $K^p \subset \gafp$ we consider
\begin{align}
\mathbf{Sh}_{K^p}^{\circ, \lozenge}=\mathbf{Sh}_{K^p}\gx^{\circ, \lozenge}&=\varprojlim_{U_p \subset G(\qp)} \mathbf{Sh}_{U_pK^p}\gx^{\circ, \lozenge}. 
\end{align}
Let $d$ be the dimension of the Shimura variety $\mathbf{Sh}_{U_pK_p}\gx$.  
\begin{Lem} \label{Lem: DualizingFlag}
     Consider the structure map
    \[q: [\operatorname{Gr}_{G, \mu^{-1}}/\underline{G(\qp)}]\rightarrow
    \spd E.\]
    Then there is an isomorphism $Rq^!\Lambda \simeq \Lambda[2d]( d )$.
\end{Lem}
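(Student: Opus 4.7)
The plan is to factor $q$ as
\[
  [\operatorname{Gr}_{G,\mu^{-1}}/\underline{G(\qp)}] \xrightarrow{\bar{q}} B\underline{G(\qp)}_{\spd E} \xrightarrow{\pi} \spd E
\]
and compute the dualizing sheaves of the two factors separately. The first step is to identify $\operatorname{Gr}_{G,\mu^{-1}}$: since $\mu$ is minuscule, the Bia\l{}ynicki-Birula map identifies it with the diamond $\mathscr{F}\ell_{\mu^{-1}}^\lozenge$ associated with the flag variety over $E$ attached to $\mu^{-1}$, which is a smooth projective $E$-scheme of dimension $d = \langle 2\rho, \mu\rangle$. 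Invoking the comparison between Huber's six-functor formalism and the diamond formalism (Subsubsection~\ref{Subsub:ComparisonHuber}) together with smoothness in the classical sense, the structure morphism $q_{\operatorname{Gr}} \colon \operatorname{Gr}_{G,\mu^{-1}} \to \spd E$ is $\ell$-cohomologically smooth of $\ell$-dimension $d$ with dualizing sheaf $Rq_{\operatorname{Gr}}^!\Lambda \simeq \Lambda[2d](d)$.

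Next I would handle $\pi \colon B\underline{G(\qp)}_{\spd E} \to \spd E$. Pulling back along the $\underline{G(\qp)}$-torsor $\spd E \to B\underline{G(\qp)}_{\spd E}$ reduces the question to the structure map $\underline{G(\qp)} \to \ast$, which is $\ell$-cohomologically smooth of dimension $0$ with trivial dualizing sheaf because $G(\qp)$ is locally pro-$p$ and $\ell \neq p$ (see \cite[Proposition~IV.1.16]{FarguesScholze} and the discussion of classifying stacks of locally profinite groups in \cite[Section~IV.1]{FarguesScholze}). Since $\ell$-cohomological smoothness and the formation of dualizing sheaves are v-local on the target, we conclude that $\pi$ is $\ell$-cohomologically smooth of dimension $0$ with $R\pi^!\Lambda \simeq \Lambda$.

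For $\bar{q}$, the Cartesian diagram
\[
  \begin{tikzcd}
    \operatorname{Gr}_{G,\mu^{-1}} \arrow{r} \arrow{d}[']{q_{\operatorname{Gr}}} & [\operatorname{Gr}_{G,\mu^{-1}}/\underline{G(\qp)}] \arrow{d}{\bar{q}} \\
    \spd E \arrow{r} & B\underline{G(\qp)}_{\spd E}
  \end{tikzcd}
\]
exhibits the base change of $\bar{q}$ along the v-cover $\spd E \to B\underline{G(\qp)}_{\spd E}$ as $q_{\operatorname{Gr}}$. Using that $\ell$-cohomological smoothness descends along v-covers and that $!$-pullback commutes with such base change (see \cite[Proposition~IV.1.15, Theorem~IV.1.16]{FarguesScholze}), we conclude that $\bar{q}$ is $\ell$-cohomologically smooth of $\ell$-dimension $d$ and $R\bar{q}^!\Lambda \simeq \Lambda[2d](d)$, as the trivial Tate-twisted constant sheaf carries a canonical $G(\qp)$-equivariant structure that matches the descent datum on $R\bar{q}^!\Lambda$.

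Composing the two steps, $Rq^!\Lambda \simeq R\bar{q}^! R\pi^!\Lambda \simeq R\bar{q}^!\Lambda \simeq \Lambda[2d](d)$, as desired. The main obstacle is ensuring that the descent along the $\underline{G(\qp)}$-torsor is legitimate: concretely, that the $\ell$-cohomological smoothness of $\pi$ and the identification $R\bar{q}^!\Lambda \simeq \Lambda[2d](d)$ are compatible with the pro-\'etale descent datum. This is ultimately a matter of invoking the v-local nature of cohomological smoothness and the base-change compatibility of $(-)^!$ from \cite[Section~IV.1]{FarguesScholze}, but care is needed to check that these results apply to the locally profinite group $\underline{G(\qp)}$ (which is not of finite $\ell$-cohomological dimension globally but is so for $\ell$-torsion coefficients thanks to $\ell \neq p$).
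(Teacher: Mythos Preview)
Your approach coincides with the paper's: both factor $q$ through $[\spd E/\underline{G(\qp)}]$ and treat the two pieces separately via the same Cartesian diagram. There is one point to correct: for $\pi \colon [\spd E/\underline{G(\qp)}] \to \spd E$, the relevant input is that $G(\qp)$ is \emph{unimodular}, not merely that it is locally pro-$p$. Being locally pro-$p$ (with $\ell \neq p$) ensures $\pi$ is $\ell$-cohomologically smooth of dimension $0$, so the pullback of $R\pi^!\Lambda$ to $\spd E$ is $\Lambda[0]$; but $R\pi^!\Lambda$ is a sheaf on the classifying stack, i.e., a smooth $G(\qp)$-representation, and this representation is the Haar measure line, trivial precisely when $G(\qp)$ is unimodular (see \cite[Examples~4.2.4--4.2.5]{HansenKalethaWeinstein}, which the paper invokes). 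Reductive $p$-adic groups are unimodular, so your conclusion stands, but the stated reason would fail for, say, a Borel. For $\bar{q}$, the paper makes your ``canonical equivariant structure matches'' assertion precise via the $G(\qp)$-equivariant smooth base change isomorphism $\tilde{g}^\ast Rf^!\Lambda \simeq R\tilde{f}^! g^\ast \Lambda$ of \cite[Proposition~23.16.(3)]{EtCohDiam}, whose right-hand side visibly carries the trivial $G(\qp)$-action; this is the rigorous substitute for your descent argument.
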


\begin{proof}
    It is clear that the underlying complex of $Rq^!\Lambda$ is isomorphic to $\Lambda[2d]( d)$ since $\operatorname{Gr}_{G, \mu^{-1}}$ is smooth of pure dimension $d$, see \cite[Proposition 7.5.3]{Huber96}. To show that the $G(\qp)$-equivariant structure is trivial, consider the Cartesian diagram
    \[\begin{tikzcd}
    {\operatorname{Gr}_{G, \mu^{-1}}} \ar[r,"\tilde{f}"] \ar[d,"\tilde{g}"] & \spd E \ar[d,"g"] \\
    {[\operatorname{Gr}_{G, \mu^{-1}}/\underline{G(\qp)}]}\ar[r,"f"] & {[\spd E/\underline{G(\qp)}]}.
    \end{tikzcd}\]

    Note that since $\tilde{f}$ is $\ell$-cohomologically smooth and $f$ has finite $\mathrm{dim.trg}$ (as all fibers have uniformly bounded $\mathrm{dim.trg}$, cf.\ the paragraph below \cite[Remark~21.8]{EtCohDiam}), the map $f$ is also $\ell$-cohomologically smooth by \cite[Proposition~23.15]{EtCohDiam}. Then we have 
    \[\tilde{g}^\ast Rf^!\Lambda \simeq R\tilde{f}^!g^\ast \Lambda,\]
    as complexes with $G(\qp)$-equivariant structure by \cite[Proposition~23.16.(3)]{EtCohDiam}. But the right hand side complex has trivial $G(\qp)$-action. This implies that $Rf^!\Lambda \simeq \Lambda[2d]( d)$ as complexes on $[\operatorname{Gr}_{G, \mu^{-1}}/\underline{G(\qp)}]$. Now since $G(\qp)$ is unimodular, the shriek pullback of $\Lambda$ along $[\spd E/\underline{G(\qp)}]\to \spd E$ is trivial, see \cite[Example~4.2.4, 4.2.5]{HansenKalethaWeinstein}. Hence $Rq^!\Lambda\simeq Rf^!\Lambda \simeq \Lambda[2d]( d)$.
\end{proof}

\begin{Lem} \label{Lem: DualizingShi}
    Consider the structure map
    \[q': [\mathbf{Sh}_{K^p}^{\circ, \lozenge}/\underline{G(\qp)}]\rightarrow \spd E. \]
    Then there is an isomorphism $R{q'}^!\Lambda \simeq \Lambda[2d]( d)$.
\end{Lem}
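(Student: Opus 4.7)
The plan is to follow the proof of Lemma \ref{Lem: DualizingFlag} essentially verbatim, replacing the flag variety $\operatorname{Gr}_{G,\mu^{-1}}$ by the good reduction locus $\mathbf{Sh}_{K^p}^{\circ,\lozenge}$. Concretely, I would set up the Cartesian square
\[\begin{tikzcd}
\mathbf{Sh}_{K^p}^{\circ, \lozenge} \ar[r,"\tilde{f}"] \ar[d,"\tilde{g}"] & \spd E \ar[d,"g"] \\
{[\mathbf{Sh}_{K^p}^{\circ, \lozenge}/\underline{G(\qp)}]} \ar[r,"f"] & {[\spd E/\underline{G(\qp)}]},
\end{tikzcd}\]
so that $q' = g \circ f$, and then follow the same two-step reduction: compute $Rf^{!}\Lambda$ by base change from the top row, and then compute $Rg^{!}\Lambda$ using unimodularity.

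The first step is to verify that $\tilde f$ is $\ell$-cohomologically smooth of pure $\ell$-dimension $d$. Here I would pick a pro-$p$ compact open subgroup $U_p \subset G(\qp)$ (possible since $\ell \neq p$) and factor $\tilde f$ as
\[\mathbf{Sh}_{K^p}^{\circ, \lozenge} \longrightarrow \mathbf{Sh}_{U_pK^p}\gx^{\circ, \lozenge} \longrightarrow \spd E.\]
The second arrow is $\ell$-cohomologically smooth of dimension $d$ because it is the diamond attached to a smooth rigid space of dimension $d$, and the first arrow is a pro-\'etale $\underline{U_p}$-torsor whose source and target have the same $\ell$-cohomological dimension since $\ell$ is invertible in the pro-order of $U_p$. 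Since all fibers of $f$ and $\tilde f$ coincide and are uniformly bounded in $\mathrm{dim.trg}$, \cite[Proposition~23.15]{EtCohDiam} upgrades the cohomological smoothness of $\tilde f$ to cohomological smoothness of $f$. Then cohomologically smooth base change for upper-shriek functors \cite[Proposition~23.16(3)]{EtCohDiam} gives
\[\tilde g^\ast Rf^{!}\Lambda \;\simeq\; R\tilde f^{!} g^\ast \Lambda \;\simeq\; \Lambda[2d](d),\]
and since $\tilde g$ is a v-cover this identification descends to $Rf^{!}\Lambda \simeq \Lambda[2d](d)$ on $[\mathbf{Sh}_{K^p}^{\circ,\lozenge}/\underline{G(\qp)}]$.

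To finish, I would invoke the unimodularity of $G(\qp)$: by \cite[Example~4.2.4, 4.2.5]{HansenKalethaWeinstein} this gives $Rg^{!}\Lambda \simeq \Lambda$ on $[\spd E/\underline{G(\qp)}]$, and hence $R{q'}^{!}\Lambda \simeq Rf^{!}Rg^{!}\Lambda \simeq \Lambda[2d](d)$. The only technical point that warrants care is the cohomological smoothness of the pro-\'etale torsor $\mathbf{Sh}_{K^p}^{\circ,\lozenge} \to \mathbf{Sh}_{U_pK^p}\gx^{\circ,\lozenge}$ for $U_p$ pro-$p$; this is where one must use $\ell \neq p$, but it is a formal consequence of the standard cohomological properties of classifying stacks of profinite groups of pro-order coprime to $\ell$.
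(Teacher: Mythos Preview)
The strategy of mimicking the flag-variety argument is sound, but your execution has a genuine gap at the critical step: the infinite-level space $\mathbf{Sh}_{K^p}^{\circ,\lozenge}$ is \emph{not} $\ell$-cohomologically smooth over $\spd E$, so the hypothesis of \cite[Proposition~23.15]{EtCohDiam} fails. The problem is that a pro-\'etale $\underline{U_p}$-torsor is not $\ell$-cohomologically smooth, even when $U_p$ is pro-$p$ and $\ell\neq p$. Concretely, the fiber over a geometric point is the profinite v-sheaf $\underline{U_p}$, and for the projection $\pi\colon\underline{U_p}\to\ast$ one computes $R\Gamma(\underline{U_p},R\pi^!\Lambda)\simeq\Hom_\Lambda(C(U_p,\Lambda),\Lambda)$, the space of $\Lambda$-valued measures on $U_p$; this is not isomorphic to $C(U_p,\Lambda)$ for infinite $U_p$, so $R\pi^!\Lambda$ is not the constant sheaf and indeed is not invertible. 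You are conflating the (true) statement that $[\ast/\underline{U_p}]\to\ast$ is $\ell$-cohomologically smooth of dimension $0$ with the (false) statement that $\ast\to[\ast/\underline{U_p}]$ is.

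The paper avoids this by never passing to infinite level. Instead it uses the cover
\[
\pi\colon S_K\coloneqq\mathbf{Sh}_K^{\circ,\lozenge}\longrightarrow [S_{K^p}/\underline{G(\qp)}],
\]
which is \emph{\'etale} because $G(\qp)/K_p$ is a discrete set. Since $S_K$ is a genuine smooth rigid space of dimension $d$, Huber's theory gives $Rq_K^!\Lambda\simeq\Lambda[2d](d)$ with a canonical trivialization coming from the trace map. As $\pi$ is \'etale, $\pi^\ast R{q'}^!\Lambda\simeq R\pi^! R{q'}^!\Lambda\simeq Rq_K^!\Lambda$, and the paper then verifies that the resulting descent datum on $\Lambda[2d](d)$ agrees with the tautological one, using functoriality of the trace map under \'etale pullback \cite[7.3.4]{Huber96}. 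The correct fix to your argument is therefore to replace the $\underline{G(\qp)}$-torsor chart $S_{K^p}$ by the \'etale chart $S_K$.
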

\begin{proof}
Fix a compact open subgroup $K_p \subset G(\qp)$ and let $K=K_pK^p$. We note that $\mathbf{Sh}_{K}^{\circ}$ is an open subspace of the smooth rigid space $\mathbf{Sh}_{K}^{\mathrm{an}}$ which is smooth of dimension $d$. Thus it follows from \cite[Proposition 7.5.3]{Huber96} (cf.\ \cite[Theorem 6.1.8]{Zavyalov}) that its dualizing complex (relative to $\spd E$) is isomorphic to $\Lambda[2d]( d)$. Here we fix the trivialization 
    \[\alpha_K: Rq_K^!\Lambda \xrightarrow{\sim} \Lambda[2d]( d)\]
    to be the adjunction map to the trace map as defined in \cite[(7.4.1)]{Huber96}, where $q_K$ denotes the structure map to $\spa E$ of $\mathbf{Sh}_{K}^{\circ}$. Note that by \cite[7.3.4]{Huber96} the trace map is functorial, which will be crucial to our argument below.
    
    To lighten the notation, we write below $S_K$ for $\mathbf{Sh}_{K}^{\circ,\lozenge}$ and $S_{K^p}$ for $\mathbf{Sh}_{K^p}^{\circ,\lozenge}$. Consider the presentation  
    \[S_K\times_{[S_{K^p}/\underline{G(\qp)}]} S_K \substack{\xrightarrow{p_1} \\[0em] \xrightarrow[p_2]{}} S_K\xrightarrow{\pi} [S_{K^p}/\underline{G(\qp)}].\]
    The last map is \'etale since $G(\qp)/K_p$ is a discrete topological space. Thus the dualizing complex $Rq_K^!\Lambda$ on $S_K$ is equipped with a natural descent datum 
    \begin{align*}
    p_1^\ast Rq_K^!\Lambda &\simeq Rp_1^! Rq_K^!\Lambda\simeq R(q_K\circ p_1)^!\Lambda\\
    & = R(q_K\circ p_2)^!\Lambda\simeq Rp_2^! Rq_K^!\Lambda\simeq p_2^\ast Rq_K^!\Lambda,    
    \end{align*}
    where the first and last isomorphisms come from the natural isomorphism $p_i^\ast \xrightarrow{\sim} Rp_i^!$, for $i=1,2$. 
    Via the trivialization
    $\alpha_K$, it identifies with the natural descent datum\footnote{Here we are using the naturality of the trivializations of dualizing complexes on $S_K$ and $S_K\times_{[S_{K^p}/\underline{G(\qp)}]} S_K$, coming from that of the trace maps \cite[7.3.4~(Var 3)]{Huber96}.} 
    \[p_1^\ast \Lambda[2d]( d) \simeq \Lambda[2d]( d) \simeq p_2^\ast \Lambda[2d]( d),\]
    on $\pi^\ast \Lambda[2d]( d)$ on $[S_{K^p}/\underline{G(\qp)}]$. Since by \cite[Definition IV.1.13]{FarguesScholze}, $R{q'}^!\Lambda$ is defined as the complex on $[S_{K^p}/\underline{G(\qp)}]$ given by the first descent datum, it follows that it is isomorphic to $\Lambda[2d]( d)$.    
\end{proof}
\begin{Rem}
    The argument here shows more generally that an \'etale quotient of a smooth rigid space of dimension $d$ has dualizing sheaf $\Lambda[2d](d)$.
\end{Rem}

\begin{proof}[Proof of Theorem \ref{Thm:IgusaDualizingComplex}] 
Note that using Corollary~\ref{Cor:VSurjectiveBL} and \cite[Proposition~13.4.(iv)]{EtCohDiam}, we can argue as in the proof of \cite[Corollary 8.19]{ZhangThesis} to deduce that $\operatorname{Igs}_{K^p}$ is an Artin v-stack. To prove that $\igsk \to \spd k$ is $\ell$-cohomologically smooth of $\ell$-dimension zero, it suffices to do so after base changing $\igsk$ via the cohomologically smooth cover $\spd \ebreve \to \spd k$.\footnote{Note that $\spd \mathcal{O}_E \to \spd k_E$ is cohomologically smooth by \cite[Corollary IV.2.34]{FarguesScholze} which implies after base changing via $\spd k \to \spd k_E$ that $\spd \mathcal{O}_{\ebreve} \to \spd k$ is $\ell$-cohomologically smooth. Since $\spd \ebreve \to \spd \mathcal{O}_{\ebreve}$ is open, it follows that $\spd \ebreve \to \spd k$ is $\ell$-cohomologically smooth.} Similarly, to conclude that the dualizing complex of $\igsk \to \spd k$ is constant it suffices to do so after base changing via $\spd \ebreve \to \spd k$. Indeed, since the formation of the dualizing sheaf commutes with base change, see \cite[Proposition 4.14]{GulottaHansenWeinstein}, if the result holds over $\spd \ebreve$, then it holds over $\spd C$ for any complete algebraically closed field containing $\ebreve$, and the base change map
\begin{align}
    D(\igsk, \Lambda) \to D(\operatorname{Igs}_{K^p,C}, \Lambda)
\end{align}
is fully faithful by \cite[Theorem 1.13.(ii)]{EtCohDiam}. \smallskip 

For the rest of this proof only, we will base change $\bun_{G}$ and $\igsk$ via $\spd \ebreve \to \spd k$ and we will base change $\mathbf{Sh}_{K}^{\circ,\lozenge}$ and $\operatorname{Gr}_{G, \mu^{-1}}$ via $\spd \ebreve \to \spd E$, \emph{without changing our notation}. The maps (over $\spd \ebreve$) induced by the Beauville--Laszlo map and the reduction map then still sit in a Cartesian diagram
    \begin{equation} \label{Eq:CartesianFiniteLevelOverEBreve}
    \begin{tikzcd}
      \mathbf{Sh}_{K}^{\circ,\lozenge} \ar[r,"\pi_{\mathrm{HT},K_p}"]\ar[d,"q_{\mathrm{Igs}}"] & 
     {[\operatorname{Gr}_{G, \mu^{-1}}/\underline{K}_p]} \ar[d,"\mathrm{BL}"]\\
    \operatorname{Igs}_{K^p} \ar[r,"\overline{\pi}_\mathrm{HT}"] & \bungmu.
    \end{tikzcd}
    \end{equation}
 Moreover the Beauville--Laszlo map $\mathrm{BL}$ is a separated $\ell$-cohomologically smooth map of $\ell$-dimension $d$, see \cite[the proof of Theorem IV.1.19]{FarguesScholze}.\footnote{Since $[\operatorname{Gr}_{G, \mu^{-1}}/\underline{K}_p]\to [\operatorname{Gr}_{G, \mu^{-1}}/\underline{G(\qp)}]$ is cohomologically smooth, the argument in the proof of \cite[Theorem IV.1.19]{FarguesScholze} still holds when replacing $G(\qp)$ by $K_p$. This is also spelled out in \cite[Proposition 2.10]{HansenMiddle}.} Hence by \cite[Proposition 23.15]{EtCohDiam}, the base change $q_{\mathrm{Igs}}$ is also separated and $\ell$-cohomologically smooth of $\ell$-dimension $d$, and it is moreover a surjection by Corollary \ref{Cor:VSurjectiveBL}. Since $\mathbf{Sh}_{K}^{\circ,\lozenge}$ is itself an $\ell$-cohomologically smooth spatial diamond, it follows by definition that $\operatorname{Igs}_{K^p} \to \spd \ebreve$ is $\ell$-cohomologically smooth, see \cite[Definition IV.1.11]{FarguesScholze}. It is of $\ell$-dimension 0 in the sense of \cite[Definition IV.1.17]{FarguesScholze}, since both $\mathbf{Sh}_{K}^{\circ,\lozenge}$ and the map $q_{\mathrm{Igs}}$ are of $\ell$-dimension $d$. \smallskip

To conclude that the dualizing complex is constant, consider the Cartesian diagram
\begin{equation}\label{Eq:CartesianDiagrammoduloGQP}\begin{tikzcd}
    {[\mathbf{Sh}_{K^p}^{\circ, \lozenge}/\underline{G(\qp)}]}\ar[r,"\tilde{f}"]\ar[d,"\tilde{g}"] & {[\operatorname{Gr}_{G, \mu^{-1}}/\underline{G(\qp)}]} \ar[d,"g"]\\
    \operatorname{Igs}_{K^p} \ar[r,"f"] & \bungmu,
\end{tikzcd}
\end{equation}
    where we have relabeled the maps for simplicity. Now since the dualizing complex on $\bun_{G}$ is trivial by Theorem \ref{Thm:DualizingBunG}, the dualizing complex on $\operatorname{Igs}_{K^p}$ can be computed as $Rf^!\Lambda$.\footnote{The existence of the exceptional pullback is in \cite[Theorem 1.4]{GulottaHansenWeinstein}. Note that by the Cartesian diagram \eqref{Eq:CartesianFiniteLevelOverEBreve}, the map $f$ is ``fine'' in the sense of loc.\ cit..} By cohomological smoothness of the map $\tilde{g}$, we have 
    \begin{align*}
        \tilde{g}^\ast Rf^!\Lambda & \simeq R\tilde{g}^! Rf^!\Lambda \otimes^{\mathbb{L}} (R\tilde{g}^!\Lambda)^{-1} \simeq \Lambda[2d]( d) \otimes^{\mathbb{L}} (R\tilde{g}^!f^\ast\Lambda)^{-1}\\
        & \simeq \Lambda[2d]( d) \otimes^{\mathbb{L}} (\tilde{f}^\ast Rg^!\Lambda)^{-1} \simeq \Lambda[2d]( d) \otimes^{\mathbb{L}} (\tilde{f}^\ast \Lambda[2d]( d))^{-1} \simeq \Lambda[0],
    \end{align*}
    where for the first isomorphism, we used the natural isomorphism 
    \[R\tilde{g}^! \simeq \tilde{g}^\ast \otimes^{\mathbb{L}}R\tilde{g}^!\Lambda,\]
    see \cite[Proposition~23.12.(i)]{EtCohDiam}. The second isomorphism follows from Lemma \ref{Lem: DualizingShi}
    and the observation that the dualizing sheaf of $[\mathbf{Sh}_{K^p}^{\circ, \lozenge}/\underline{G(\qp)}]$ can be computed as $R(f \circ \tilde{g})^{!} \Lambda$ because the dualizing sheaf of $\bun_{G}$ is trivial. The third isomorphism uses the base change result from \cite[Proposition 23.12.(iii)]{EtCohDiam} and the fact that $g$ is separated, representable in locally spatial diamonds and $\ell$-cohomologically smooth. The fourth isomorphism follows from Lemma \ref{Lem: DualizingFlag}. \smallskip 

    Now we get by adjunction a map
    \[Rf^!\Lambda \rightarrow R\tilde{g}_\ast \Lambda.\] Applying the truncation $\tau_{\leq 0}$ to both sides we obtain a map 
    \begin{equation} \label{Eq:TruncationEquation}
    Rf^!\Lambda = \tau_{\leq 0} Rf^!\Lambda \rightarrow \tau_{\leq 0} R\tilde{g}_\ast \Lambda \xrightarrow{\sim} \Lambda[0].
    \end{equation}
    
    Here the last isomorphism needs some explanation: the fiber of 
    \[[\operatorname{Gr}_{G, \mu^{-1}}/\underline{G(\qp)}]\rightarrow \bun_{G}\]
    over a point $b\in B(G)\simeq \lvert \bun_{G} \rvert$ is the moduli space of modifications 
    \[\mathcal{E}_b\dashrightarrow \mathcal{E}\] that are bounded by $\mu$, for which the modified bundle $\mathcal{E}$ is geometrically-pointwise trivial (although not trivialized). Hence it identifies with the $b$-admissible locus on $\operatorname{Gr}_{G, \mu^{-1}}$, which is geometrically connected by \cite[Theorem 1.1]{GleasonLourenco}.
    
    One may check that the map in \eqref{Eq:TruncationEquation} is an isomorphism by pulling back along $\tilde{g}$, where it becomes
    \[\tilde{g}^\ast Rf^!\Lambda \rightarrow \tilde{g}^\ast \tau_{\leq 0}R\tilde{g}_\ast \Lambda= \tau_{\leq 0}\tilde{g}^\ast R\tilde{g}_\ast \Lambda \xrightarrow{\sim} \tilde{g}^\ast \Lambda[0].\]
    But this composition is an isomorphism, and hence so is the first arrow.
\end{proof}

\subsection{The cohomology of Shimura varieties} \label{sub:Hecke} Let the notation be as in Section \ref{Sub:DualizingSheaf} above. In particular, $k$ is a fixed algebraic closure of $k_{E}$ and $\Lambda$ is a Noetherian $\zl$-algebra in which $\ell$ is nilpotent. We assume additionally that $\Lambda$ contains a square root of $p$ and fix a choice of $\sqrt{p} \in \Lambda$. The goal of this section is to prove Theorem~\ref{Thm:IntroWeilCohShimVar}, which gives a formula for the cohomology of Hodge type Shimura varieties in terms of the action of a Hecke operator on the relative cohomology of the Igusa stack over $\bungk$. Later in Section \ref{Sec:EichlerShimura}, we will reinterpret this result using the spectral action. This formula allows us to use techniques from \cite{FarguesScholze} to study the cohomology of Shimura varieties. It thus plays a central role in Sections \ref{Sec:Cohomology} through \ref{Sec:TorsionVanishing}.

Recall from \cite[Section IX.1]{FarguesScholze} that we have the derived $\infty$-category of smooth representations of $G(\qp)$ on $\Lambda$-modules $\mathcal{D}(G(\qp),\Lambda)$, which is equivalent to $\mathcal{D}_{\text{\'et}}(\bungk^{[1]}, \Lambda)$ by \cite[Theorem V.1.1]{FarguesScholze}. We use this equivalence to give $\mathcal{D}(G(\qp),\Lambda)$ the structure of a condensed $\infty$-category. Following Section~\ref{Sec:WeilGroupActions}, we can therefore consider the category of $W_E$-equivariant objects $\mathcal{D}(G(\qp),\Lambda)^{B W_E}$. 

We let $C$ be the completion of an algebraic closure of $E$ with ring of integers $\mathcal{O}_C$. We also fix an identification of $k$ with the residue field of $\mathcal{O}_C$. Let $\ebreve$ denote the completion of the maximal unramified extension of $E$ inside $C$, whose residue field is also identified with $k$, inducing a map $\spd \ebreve \to \spd k$.

\subsubsection{Hecke operators}
\label{subsub:HeckeOperators} 
Let $\mathrm{Div}^1$ be the v-sheaf $[\spd \qp/\phi^\mathbb{Z}]$, where $\phi$ is the absolute Frobenius as before. It follows from the proof of \cite[Proposition VI.1.2]{FarguesScholze} that for $S \in \perf$ there is a functorial injection from $\mathrm{Div}^1(S)$ into the set of closed Cartier divisors of $X_S$. Consider the local Hecke stack
\[\mathcal{H}ck_{G,\mathrm{Div}^1_{X_\qp}}\rightarrow \mathrm{Div}^1\]
as defined in \cite[Definition VI.1.6]{FarguesScholze}. It is the moduli space sending $S \to \mathrm{Div}^1$ to a pair of $G$-bundles $\mathcal{E}_1, \mathcal{E}_2$ over the completion of $X_S$ in the Cartier divisor $D_S$, together with an isomorphism $\mathcal{E}_1 \to \mathcal{E}_2$ away from $D_S$. We will omit the subscript $X_\qp$ when the context is clear. We pull back along $\Div:=[\spd E / \phi^{\mathbb{Z}}] \to \mathrm{Div}^1$ and consider the Schubert cell 
\[j_\mu: \mathcal{H}ck_{G,\Div, \mu}=\mathcal{H}ck_{G,\Div, \leq \mu}\hookrightarrow \mathcal{H}ck_{G,\Div},\] 
which sends an affinoid perfectoid $S\to \Div$ to the groupoid of pairs of $G$-bundles $\mathcal{E}_1$, $\mathcal{E}_2$ over the completion of $(X_S)_E$ at the given Cartier divisor, together with a meromorphic isomorphism $\mathcal{E}_1\dashrightarrow \mathcal{E}_2$, with a pole at the given divisor that is bounded by $\mu$.\footnote{Note that for us $G$ is not necessarily split and $\mu$ is a $G_{\qpbar}$-conjugacy class of homomorphisms $\mathbb{G}_m\to G_{\qpbar}$, while in \cite[Definition VI.2.2]{FarguesScholze} $G$ is assumed to be split and $\mu$ is a dominant cocharacter with respect to some Borel pair. To compare the definitions, we note that on the one hand, the condition of being bounded by $\mu$ can be checked geometric pointwise on test objects. On the other hand, for any choice of a Borel pair $T \subset B\subset G_{\qpbar}$, there is a bijection between the set of conjugacy classes of homomorphisms $\mathbb{G}_m\to G_{\qpbar}$ and the set of dominant cocharacters $X_\ast(T)^+$.}

Applying \cite[Definition VI.7.8]{FarguesScholze} (the version for $\mathrm{Div}^1$ instead of $\mathrm{Div}^1_\mathcal{Y}$) to $S=\Div$, we get the Satake category 
$\mathrm{Sat}(\mathcal{H}ck_{G,\Div},\Lambda)$. We follow the notation in \cite[Proposition VI.7.9]{FarguesScholze} and consider the flat perverse sheaf 
\[\mathcal{S}_\mu\coloneqq \mathrm{Im}(\phantom{}^p\mathcal{H}^0
(j_{\mu,!}\Lambda[d](\tfrac{d}{2})) \to \phantom{}^p\mathcal{H}^0
(Rj_{\mu,\ast}\Lambda[d](\tfrac{d}{2}))).
\] 
Here compared to the definition in loc.\ cit., we have included a Tate twist by $\frac{d}{2}$, which makes use of the fact that $\mathcal{H}ck_{G,\Div, \mu}$ lives over $\Div$ (the Tate twist is a rank one local system on $\Div$) and that we have chosen $\sqrt{p}\in \Lambda$. Adding this twist has the benefit of making $\mathcal{S}_\mu$ Verdier self-dual. 

\begin{Rem} \label{Rem:IntersectionCohomologyTrivial}
In our case, the cocharacter $\mu$ is minuscule and hence $j_\mu$ is a closed
embedding. It follows that $j_{\mu,!} = j_{\mu,\ast} = Rj_{\mu,\ast}$ by
\cite[Theorem~22.5]{EtCohDiam}. On the other hand, the sheaf $j_{\mu, \ast}
\Lambda[d](\tfrac{d}{2})$ is perverse, since it is supported on
$\mathcal{H}ck_{G,\Div,\leq \mu}$ and $j_{\mu}^\ast j_{\mu,\ast}
\Lambda[d](\tfrac{d}{2}) \simeq \Lambda[d](\tfrac{d}{2})$ is concentrated in
degree $-d$. Therefore
\[
  \mathcal{S}_\mu \simeq j_{\mu,!}\Lambda[d](\tfrac{d}{2}) \simeq
  j_{\mu,\ast}\Lambda[d](\tfrac{d}{2}).
\]
\end{Rem}

\subsubsection{} \label{subsub: HeckeCorr} It follows from the proof of \cite[Proposition VI.1.2]{FarguesScholze} that for $S \in \perf$ there is a functorial injection from $\Div(S)$ into the set of closed Cartier divisors of $X_S \times_{\qp} E$.\footnote{We are \emph{not} using the Fargues--Fontaine curve for the local field $E$ here, but rather the base change to $E$ of the Fargues--Fontaine curve for $\qp$.} We consider the global Hecke stack
\[
  \mathrm{Hck}_{G, \le\mu} \to \Div
\]
that parametrizes two $G$-bundles $\mathcal{E}_0, \mathcal{E}_1$ on $X_S$
together with a meromorphic map $\mathcal{E}_0 \vert_{(X_{S})_{E}} \dashrightarrow
\mathcal{E}_1 \vert_{(X_{S})_{E}}$ at the given divisor, bounded by $\mu$. This fits in a Hecke correspondence (where $h_1$ remembers $\mathcal{E}_0$, while $h_2$ remembers $\mathcal{E}_1$ and the Cartier divisor)
\begin{equation} \label{Eq:HeckeDiagramI}\bun_{G} \xleftarrow{h_1} \mathrm{Hck}_{G, \le \mu} \xrightarrow{h_2} \bun_{G} \times \Div.
\end{equation}
Recall from \cite[beginning of Section III.3]{FarguesScholze} that $[\operatorname{Gr}_{G, \mu^{-1}}/\phi^{\mathbb{Z}}] \to \Div$ represents the functor sending $S \to \Div$ to the set of isomorphism classes of pairs consisting of a $G$-bundle $\mathcal{E}_1$ on $X_S$ and a modification $\mathcal{E}^0 \vert_{(X_{S})_{E}} \dashrightarrow \mathcal{E}_1 \vert_{(X_{S})_{E}}$ bounded by $\mu^{-1}$, where $\mathcal{E}^0$ is the trivial bundle. This admits a map $\tilde{i}_1:[\operatorname{Gr}_{G, \mu^{-1}}/\phi^{\mathbb{Z}}] \to \mathrm{Hck}_{G,\leq \mu}$ sending the modification to its inverse (which makes it bounded by $\mu$).
\begin{Lem} \label{Lem:HeckeDiagram}
There is a $2$-Cartesian diagram
    \begin{equation} \label{Eq:HeckeDiagramII}
\begin{tikzcd} 
\left[\operatorname{Gr}_{G, \mu^{-1}}/ (\phi^{\mathbb{Z}} \times \underline{G(\qp))}\right] \ar[d,"\tilde{h}_2"]\ar[r, hook, "\tilde{i}_1"] & \mathrm{Hck}_{G,\leq \mu}\ar[d,"h_2"]\\ 
\bun^{[1]}_{G}\times \Div \ar[r, hook, "i_1"] & \bun_{G}\times \Div
\end{tikzcd}
\end{equation}
under which $h_1 \circ \tilde{i}_1$ is identified with the Beauville--Laszlo map.
\end{Lem}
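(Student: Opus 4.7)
The proof is a routine unraveling of definitions. The one non-formal input is the identification of the base change of the neutral Newton stratum $\bun_G^{[1]}$ to $\spd k$ with the classifying stack $[\spd k/\underline{G(\qp)}]$, which is a special case of Theorem~\ref{Thm:GeometryOfBunG} together with the fact that the automorphism sheaf of the trivial $G$-bundle $\mathcal{E}^0$ is $\underline{G(\qp)}$.

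First I would construct the map $\tilde{h}_2$. The left action of $\underline{G(\qp)}$ on $\operatorname{Gr}_{G,\mu^{-1}}$ is by change of the trivialization of $\mathcal{E}^0$, so an $S$-point of $[\operatorname{Gr}_{G,\mu^{-1}}/\underline{G(\qp)}]$ is, v-locally, a triple $(\mathcal{E}_1,\mathcal{E},\psi)$ consisting of $G$-bundles $\mathcal{E}_1,\mathcal{E}$ on $X_S$ with $\mathcal{E}$ geometrically trivial, together with a modification $\psi\colon\mathcal{E}_1\dashrightarrow\mathcal{E}$ of the appropriate type along the given divisor. The identification of $\bun_G^{[1]}$ turns $\mathcal{E}$ into a map $S\to\bun_G^{[1]}$, and combined with the $\Div$-point from the further $\phi^{\mathbb{Z}}$-quotient this furnishes $\tilde{h}_2$.

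For the $2$-commutativity, I would chase both compositions explicitly: by construction, $h_2\circ\tilde{i}_1$ sends the Grassmannian datum to the pair consisting of $\mathcal{E}$ (viewed as an object of $\bun_G$) together with the divisor $D$, and this agrees with $i_1\circ\tilde{h}_2$ after the identification above. For the final claim, $h_1\circ\tilde{i}_1$ records the source bundle of the Grassmannian modification, which by definition coincides with the Beauville--Laszlo map of \cite[Proposition~III.3.1]{FarguesScholze}. That $\tilde{i}_1$ is an embedding (denoted by $\hookrightarrow$) is immediate from the fact that being of the form $\mathcal{E}=\mathcal{E}^0$ cuts out a locally closed substack of the Hecke stack.

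The main obstacle is purely bookkeeping: one must keep careful track of the directionality of modifications and the boundedness condition ($\mu$ versus $\mu^{-1}$) when transporting data between $\operatorname{Gr}_{G,\mu^{-1}}$ and $\mathrm{Hck}_{G,\leq\mu}$. Once unambiguous conventions are fixed, all of the claims of the lemma follow formally from the definitions.
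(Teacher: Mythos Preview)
Your proposal is correct and matches the paper's approach: the paper's own proof is the single sentence ``This is a straightforward consequence of the moduli interpretations of the involved objects,'' and you have simply written out what that sentence means. One small remark: the lemma is stated before base change to $\spd k$, so the appeal to Theorem~\ref{Thm:GeometryOfBunG} is not strictly needed here---the map $\tilde{h}_2$ lands in $\bun_G^{[1]}\times\Div$ just because the bundle obtained after quotienting by $\underline{G(\qp)}$ is geometrically fiberwise trivial, which is exactly the defining condition for the locally closed substack $\bun_G^{[1]}$.
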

\begin{proof}
    This is a straightforward consequence of the moduli interpretations of the involved objects.
\end{proof}

\subsubsection{} There is a commutative diagram
\begin{equation}
    \begin{tikzcd}[column sep=small]
        \mathrm{Hck}_{G,\leq \mu} \arrow[rr, "q"] \arrow[dr] & & \mathcal{H}\mathrm{ck}_{G,\Div,\leq \mu} \arrow[dl]\\
         & \Div, & 
    \end{tikzcd}
\end{equation}
where $q$ is the map that sends $(\mathcal{E}_0, \mathcal{E}_1, \mathcal{E}_0
\vert_{(X_{S})_{E}} \dashrightarrow \mathcal{E}_1 \vert_{(X_{S})_{E}})$ to its
restriction to the formal completion around the given divisor. We denote still by $\mathcal{S}_{\mu}$ the pullback of  $\mathcal{S}_{\mu}$ along $q$. If we pull back along $\spd k \to \spd \fp$ then we find $\Divk \simeq[\spd \ebreve / \varphi_q^{\mathbb{Z}}]$ where $\varphi_q$ is now the $q$-Frobenius acting on $\spd \ebreve = \spd E \times_{\spd \mathbb{F}_q} \spd k$ via the first factor, or equivalently, this is the partial Frobenius $\phi^r \times 1$ from Lemma \ref{Lem:lbreve}. 
We obtain the Hecke correspondence over $\spd k$ (see \cite[Section IX.2]{FarguesScholze})
\begin{equation} \label{Eq:HeckeDiagram}\bungk \xleftarrow{h_{1,k}} \mathrm{Hck}_{G,\leq \mu,k}\xrightarrow{h_{2,k}} \bungk \times_k \Divk.
\end{equation}
Then the Hecke operator $T_\mu$ is defined by the following formula\footnote{In fact, it can already be defined before base change to $k$.}
\begin{align} \label{Eq:HeckeOperatorDef}
    T_{\mu}:D(\bungk, \Lambda) &\to D(\bungk \times_k \Divk, \Lambda) \\
    A &\mapsto Rh_{2,k,\ast}\left(h_{1,k}^{\ast} A \otimes^{\mathbb{L}}_{\Lambda} \mathcal{S}_{\mu} \right)\simeq Rh_{2,k,\ast}(h_{1,k}^{\ast} A[d](\tfrac{d}{2})).
\end{align}
Let $W_E$ be the Weil group of $E$. Recall from \cite[Proposition IV.7.1]{FarguesScholze} that $D(\bungk \times_k \Divk, \Lambda)$ contains $D(\bungk \times [\spd k / \underline{W_{E}}], \Lambda)$ as a full subcategory. According to \cite[Corollary IX.2.3]{FarguesScholze}, the essential image of $T_\mu$ lands in this subcategory.
Working $\infty$-categorically, one can identify
\[\mathcal{D}(\bungk \times [\spd k/\underline{W_E}],\Lambda) \simeq
\mathcal{D}( \bungk,\Lambda)^{BW_E}\]
as the $W_E$-equivariant objects of the condensed $\infty$-category $\mathcal{D}(\bungk,\Lambda)$, where as explained before, $W_E$ is considered as a condensed group in the last expression.

\subsubsection{} \label{subsub:HeckeFactorization} We consider the following diagram.
\[ \begin{tikzcd}
  & \lbrack \mathrm{Gr}_{G,\mu^{-1}} / (\phi^\mathbb{Z} \times
  \underline{G(\qp)}) \rbrack \arrow[hook]{d}{\tilde{i}_1}
  \arrow{r}{\tilde{h}_2} \arrow[dashed]{ldd}[']{g} & \bun_G^{[1]} \times \Div
  \arrow[hook]{d}{i_1} \\ & \mathrm{Hck}_{G,\le\mu} \arrow{d}{h_1}
  \arrow{r}{h_2} & \bun_G \times \Div \\ \bungmu \arrow[hook]{r}{j_\mu} & \bun_G
\end{tikzcd} \]
Since the composition $h_1 \circ \tilde{i}_1$ lands in $\bungmu$, we write $h_1
\circ \tilde{i}_1 = j_\mu \circ g$. Using smooth base change
\cite[Proposition~23.16.(i)]{EtCohDiam} along the open embedding $i_1$, we
compute for every $A \in \mathcal{D}(\bungk, \Lambda)$ that
\begin{align}
  i_{1,k}^\ast T_\mu A &\simeq i_{1,k}^\ast Rh_{2,k,\ast} (h_{1,k}^\ast
  A[d](\tfrac{d}{2})) \\ &\simeq R\tilde{h}_{2,k,\ast} \tilde{i}_{1,k}^\ast
  (h_{1,k}^\ast A[d](\tfrac{d}{2})) \simeq R\tilde{h}_{2,k,\ast} (g_k^\ast
  j_{\mu,k}^\ast A)[d](\tfrac{d}{2}).
\end{align}
This motivates the following definition.

\begin{Def} \label{Def:HeckeFactorization}
  We define the functor 
  \begin{align}
    T_\mu^{[1]} \colon \mathcal{D}(\bungmuk, \Lambda) &\to
    \mathcal{D}(\bun_{G,k}^{[1]} \times_k \Divk, \Lambda) \simeq
    \mathcal{D}(G(\qp), \Lambda)^{BW_E} \\ A &\mapsto R\tilde{h}_{2,k,\ast}
    (g_k^\ast A[d](\tfrac{d}{2}))
  \end{align}
  so that $T_\mu^{[1]} j_{\mu,k}^\ast = i_{1,k}^\ast T_\mu$.
\end{Def}

\subsubsection{} Recall that we have a morphism $\overline{\pi}_\mathrm{HT} \colon \igs \to \bungmu$. We define the sheaf
\[
  \mathcal{F} = R\overline{\pi}_{\mathrm{HT},k,\ast}\Lambda \in
  \mathcal{D}(\bungmuk, \Lambda),
\]
which carries a prime-to-$p$ Hecke action. The following gives a formula for the
cohomology of the Shimura variety in terms of $\mathcal{F}$. Note that we may
consider $R\Gamma(\mathbf{Sh}_{K^p,C}, \Lambda)$ as an element of
$\mathcal{D}(G(\qp),\Lambda)^{BW_E}$ using the natural action of $G(\qp)$ and of
$\operatorname{Gal}_E$, see Section \ref{subsub:EquivariantObjectsLocallyProfinite}.  

\begin{Thm}
\label{Thm: WeilCohoShiVar}
There is an isomorphism
  \[ R\Gamma(\mathbf{Sh}_{K^p,C}, \Lambda) \simeq T_\mu^{[1]}
  (\mathcal{F}[-d])(-\tfrac{d}{2}) \]
in $\mathcal{D}(G(\qp),\Lambda)^{BW_E}\simeq \mathcal{D}(\bungk^{[1]},\Lambda)^{BW_E}$. The isomorphism is moreover equivariant for the natural prime-to-$p$ Hecke actions on both sides. 
\end{Thm}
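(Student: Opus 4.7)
The plan is to derive the formula from the Cartesian diagram of Theorem~\ref{Thm:IntroIgusaGeneric} by iterated applications of proper base change, combined with the identification of Weil-group actions established in Section~\ref{Sec:WeilGroupActions} and the Lan--Stroh comparison for the good reduction locus. First I would observe that the definition of $T_\mu^{[1]}$ together with $\mathcal{S}_\mu \simeq j_{\mu, !}\Lambda[d](\tfrac{d}{2})$ (Remark~\ref{Rem:IntersectionCohomologyTrivial}) gives the tautological rewriting
\begin{equation*}
T_\mu^{[1]}(\mathcal{F}[-d])(-\tfrac{d}{2}) \simeq R\tilde{h}_{2,k,\ast}\,g_k^\ast \mathcal{F},
\end{equation*}
so that all shifts and twists cancel, and the problem reduces to computing $R\tilde{h}_{2,k,\ast} g_k^\ast \mathcal{F}$ as a $W_E$-equivariant object of $\mathcal{D}(G(\qp), \Lambda)$.

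Next I would combine the Cartesian square of Theorem~\ref{Thm:IntroIgusaGeneric} with the identification $h_1 \circ \tilde{i}_1 = j_\mu \circ g$ from Lemma~\ref{Lem:HeckeDiagram} and apply proper base change to get $\operatorname{BL}_k^\ast \mathcal{F} \simeq R\pi^\lozenge_{\mathrm{HT}, k, \ast}\Lambda$ on $\operatorname{Gr}_{G,\mu^{-1},k}$. Since the map $g$ is obtained from $\operatorname{BL}$ by dividing out by the $\phi^{\mathbb{Z}} \times \underline{G(\qp)}$-action, Lemma~\ref{Lem:CanonicalPhiDescentPullback} (applied to the $\phi$-quotient) combined with $\underline{G(\qp)}$-descent identifies $g_k^\ast \mathcal{F}$ with the canonical $(\phi^{\mathbb{Z}} \times \underline{G(\qp)})$-equivariant descent of $R\pi^\lozenge_{\mathrm{HT}, k, \ast} \Lambda$. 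Pushing forward along $\tilde{h}_{2,k}$, which after base change to $\spd C$ (via a canonical untilt $\spd C \to \Divk$) factors through the structure map to a point, one uses qcqs base change and Lemma~\ref{Lem:DescentAndPushforward} to conclude that $R\tilde{h}_{2,k,\ast}g_k^\ast \mathcal{F}$ pulled back to $\spd C$ is $R\Gamma(\mathbf{Sh}_{K^p, C}^{\circ, \lozenge}, \Lambda)$, equipped with its natural $G(\qp)$-action from the $\underline{G(\qp)}$-quotient and with the $W_E$-action arising from the $\Divk$-structure via canonical Frobenius descent. At this stage, I would invoke the Lan--Stroh comparison (Lemma~\ref{Prop:CohoGoodRed}) to rewrite this as $R\Gamma(\mathbf{Sh}_{K^p, C}, \Lambda)$.

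The hard part will be verifying that the $W_E$-action obtained in this way agrees with the natural restriction of the $\operatorname{Gal}_E$-action on $R\Gamma(\mathbf{Sh}_{K^p, C}, \Lambda)$; this is the main subtlety alluded to after the statement. The key input is Proposition~\ref{Prop:GaloisRestriction}, applied to $L = E$ and $\pi \colon \mathbf{Sh}_{K^p}^\lozenge \to \spd E$, which says precisely that the $W_E$-action obtained from passing to $[\spd E / \phi^{r\mathbb{Z}}] \times_{\spd \mathbb{F}_q} \spd k$ and descending agrees, under the twisted embedding $W_E \hookrightarrow \operatorname{Gal}_E \times \mathbb{Z}$, with the natural Galois action. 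The equivariance statement for $\gafp$ will then be essentially formal: at each step we are working with $\gafp$-equivariant complexes (since $\mathcal{F}$ is the pushforward of the constant sheaf from $\scrsdinf$, which carries an $\gafp$-action compatible with all maps in the diagram), and base change preserves this structure.

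The main bookkeeping obstacle is therefore to keep track of the interplay between the various Frobenii (the absolute Frobenius on the v-stacks, the Frobenius on $\ebreve$, and the Galois action on $C/E$) and to ensure that at every application of proper base change or canonical descent the equivariant structures remain coherent. This is precisely what Section~\ref{Sec:WeilGroupActions} was set up to achieve; with that machinery in place the proof is essentially a clean chain of base-change isomorphisms.
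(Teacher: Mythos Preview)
Your proposal is correct and takes essentially the same approach as the paper: both reduce to computing $R\tilde{h}_{2,k,\ast}g_k^\ast\mathcal{F}$, use qcqs base change along the Cartesian diagram, invoke Proposition~\ref{Prop:GaloisRestriction} to match the $W_E$-action with the natural Galois action, and finish with Proposition~\ref{Prop:CohoGoodRed}. The only organizational difference is that the paper works directly with the quotiented Cartesian diagram $[\mathcal{S}^\circ_{K^p}/(\underline{G(\qp)}\times\phi^{\mathbb{Z}})] \to [\operatorname{Gr}_{G,\mu^{-1}}/(\underline{G(\qp)}\times\phi^{\mathbb{Z}})]$ over $\igs \to \bungmu$ from the start (one base change, then observe $\tilde{h}_2\circ\pi$ is the structure map to $[\Div/\underline{G(\qp)}]$), whereas you first compute $\operatorname{BL}^\ast\mathcal{F}$ on the unquotiented Grassmannian and then descend; the paper's route is marginally more direct but equivalent.
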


\subsubsection{} We have only constructed the Igusa stack for the good reduction locus of the Shimura variety. Luckily this is sufficient for the purpose of computing cohomology of Hodge type Shimura varieties, due to the following proposition. Write $\mathcal{S}^\circ_{K^p}\subset \mathcal{S}_{K^p}$ for the inclusion 
\begin{align}
    \mathbf{Sh}_{K^p}^{\circ, \lozenge} \subset \mathbf{Sh}_{K^p}^{\lozenge}.
\end{align}
We consider the natural Hecke and $\mathrm{Gal}(\overline{E}/E)$ equivariant map $R \Gamma(\mathbf{Sh}_{K^p,C}, \Lambda) \to R\Gamma(\mathcal{S}_{K^p,C},\Lambda) \to R\Gamma(\mathcal{S}^\circ_{K^p,C},\Lambda)$. 
\begin{Prop}\label{Prop:CohoGoodRed}
The natural map $R \Gamma(\mathbf{Sh}_{K^p,C}, \Lambda) \to R\Gamma(\mathcal{S}^\circ_{K^p,C},\Lambda)$ is an isomorphism. 
\end{Prop}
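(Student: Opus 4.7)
The plan is to use excision together with Lan--Stroh's analysis of integral toroidal compactifications of Hodge-type Shimura varieties \cite{LanStrohII}, reducing the statement to a vanishing of cohomology with support in the ``boundary tube'' at infinite level at $p$. For a neat compact open $K = K_pK^p$, the good reduction locus $\mathbf{Sh}_K^{\circ,\mathrm{an}}$ is the adic tube above the open locus $\mathscr{S}_K\gx \subset \mathscr{S}_K^{\mathrm{tor}}\gx$ under the specialization map from the adic generic fiber of a smooth integral toroidal compactification, intersected with $\mathbf{Sh}_K^{\mathrm{an}}$. Writing $Z_K = \mathbf{Sh}_{K,C}^{\mathrm{an}} \setminus \mathbf{Sh}_{K,C}^{\circ,\mathrm{an}}$, Huber's comparison between \'etale cohomology of the scheme and of the analytification together with the excision triangle
\[
R\Gamma_{Z_K}(\mathbf{Sh}_{K,C}^{\mathrm{an}},\Lambda) \to R\Gamma(\mathbf{Sh}_{K,C}^{\mathrm{an}},\Lambda) \to R\Gamma(\mathbf{Sh}_{K,C}^{\circ,\mathrm{an}},\Lambda) \to
\]
reduces the proposition, after passing to the colimit over $K_p \subset G(\qp)$ (the transition maps at finite prime-to-$p$ level being finite \'etale), to showing that
\[
\varinjlim_{K_p} R\Gamma_{Z_K}(\mathbf{Sh}_{K,C}^{\mathrm{an}},\Lambda) = 0.
\]

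To obtain the vanishing, I would apply Lan--Stroh's stratification of the boundary $\partial \mathscr{S}_K^{\mathrm{tor}}\gx$ by cusp labels attached to rational parabolics $\mathsf{P} \subset \mathsf{G}$, under which each boundary stratum is a torus fibration over a Shimura variety for a Levi quotient of $\mathsf{P}$. Correspondingly, the tube above each stratum admits an explicit description involving the formal neighborhood controlled by $\mathsf{P}$ and its unipotent radical $\mathsf{U}_\mathsf{P}$. Passing to infinite level at $p$, the stabilizer of a cusp contains the pro-$p$ group $\mathsf{U}_\mathsf{P}(\zp)$ which acts freely on the relevant tube, so that for $\ell \neq p$ and $\Lambda$ an $\ell$-torsion coefficient ring, the contribution of each stratum kills itself in the colimit (exactness of pro-$p$ invariants on $\ell$-torsion modules). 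Assembling these vanishings via the spectral sequence of the stratification yields the desired colimit vanishing.

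The main technical obstacle is the precise identification, at each finite level and compatibly in $K_p$, of the tube $Z_K$ with the Lan--Stroh boundary charts for the integral model $\mathscr{S}_K^{\mathrm{tor}}\gx$, together with verifying that passing to the limit over $K_p$ really does produce a free action of $\mathsf{U}_\mathsf{P}(\zp)$ on each stratum of the tube; this is exactly where the structural results of \cite{LanStrohII} at parahoric level are used. Once this geometric description is in place, the vanishing becomes a formal consequence of the pro-$p$ invariants argument and can be propagated through the stratification.
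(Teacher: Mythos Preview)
Your approach via excision and boundary-tube vanishing is genuinely different from the paper's, but the key step as you have stated it does not work. You claim that at infinite level at $p$, the pro-$p$ group $\mathsf{U}_\mathsf{P}(\zp)$ acts freely on the tube over each boundary stratum, and that ``exactness of pro-$p$ invariants on $\ell$-torsion modules'' then makes the contribution vanish in the colimit. But a free action of a pro-$p$ group $U$ with $\ell$-torsion coefficients only tells you that $R\Gamma(X,\Lambda)\simeq R\Gamma(X/U,\Lambda)$; it does \emph{not} give $R\Gamma(X,\Lambda)=0$. Exactness of $(-)^U$ is a statement about preserving short exact sequences, not about killing cohomology. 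So the mechanism you invoke for vanishing is simply the wrong one, and without it the whole excision scheme has no engine. (Arguments of the flavor you are gesturing at do exist in the literature, but they rely on a free action of the \emph{non-compact} group $\mathsf{U}_\mathsf{P}(\qp)$ together with admissibility/compact-support considerations, and are used for a different purpose, e.g.\ fibers of the Hodge--Tate period map in \cite{CaraianiScholzeNoncompact}; transplanting them to $R\Gamma_{Z_K}$ of the open Shimura variety would require substantially more work and a different formulation.)

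The paper's proof is both simpler and stronger: it shows the isomorphism already at each finite level $K_p$, not only after passing to the colimit. One builds an ad hoc integral model $\mathscr{S}_K$ via a Hodge embedding into a hyperspecial-level Siegel moduli space, and then uses two nearby-cycles comparisons. Lan--Stroh \cite[Corollary~4.6]{LanStrohII}, \cite[Corollary~5.20]{LanStroh} (in the (Hdg) case) gives $R\Gamma(\mathbf{Sh}_{K,\overline{E}},\Lambda)\simeq R\Gamma(\mathscr{S}_{K,\overline{\mathbb{F}}_q},R\Psi\Lambda)$, while Huber \cite[Theorem~3.5.13]{Huber96} identifies $R\Gamma(\mathscr{S}_{K,\overline{\mathbb{F}}_q},R\Psi\Lambda)$ with $R\Gamma(\mathcal{S}^\circ_{K,C},\Lambda)$, since $\mathcal{S}^\circ_{K,E}$ is exactly the adic generic fiber of $\mathscr{S}_K$. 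In other words, the entire content of the boundary analysis is already packaged inside Lan--Stroh's theorem on compatibility of nearby cycles with compactification; one never has to stratify the tube or invoke unipotent actions.
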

\begin{proof}
The argument in \cite[Proposition 2.6.4]{CaraianiScholzeNoncompact} can be adapted to our situation, for the convenience of the readers we give the details. It suffices to show that for all compact open subgroups $K_p \subset G(\qp)$ the natural map 
\begin{align}
    R \Gamma(\mathbf{Sh}_{K,C}, \Lambda) \to R\Gamma(\mathcal{S}^\circ_{K,C},\Lambda)
\end{align}
is an isomorphism. Indeed, after taking the inverse limit, we then get an isomorphism
\begin{align}
    \varinjlim_{K_p} R \Gamma(\mathbf{Sh}_{K,C}, \Lambda) \to \varinjlim_{K_p} R\Gamma(\mathcal{S}^\circ_{K,C},\Lambda).
\end{align}
Applying \cite[Tag 09YQ]{stacks-project} to the left hand side and \cite[Proposition 14.9]{EtCohDiam} to the right hand side, we can identify this with
\begin{align}
    R \Gamma (\mathbf{Sh}_{K^p,C}, \Lambda) \to R\Gamma(\mathcal{S}^\circ_{K^p,C},\Lambda).
\end{align}
and the proposition follows. \smallskip

For $K_p \subset G(\qp)$ a compact open subgroup, we construct an ad hoc integral model $\mathscr{S}_{K}$ over $\CO_E$ for the Shimura variety $\mathbf{Sh}_{K}$, by choosing a Hodge embedding $\gx \hookrightarrow \gvx$ and taking a relative normalization of the integral model of the latter at a suitable level, see \cite[Introduction]{Madapusi}. Using Zarhin's trick, the level of the auxiliary Siegel Shimura variety can be chosen to be hyperspecial at $p$. Then we can apply \cite[Corollary 5.20]{LanStroh}, see \cite[Corollary 4.6]{LanStrohII} in the (Hdg) case\footnote{Note that in \cite{LanStrohII}, \cite{LanStroh}, the case (Hdg) has the more restrictive assumption that the level at $p$ is exactly the pullback of a hyperspecial level subgroup of $\mathrm{GSp}(\qp)$, but this is not necessary. This assumption was made due to reliance on the construction of toroidal compactifications of Hodge type Shimura varieties in \cite{Madapusi}, whose first version assumed this condition. However, in the published version of the cited reference, this assumption is removed. One can moreover check that the qualitative description of good compactifications in \cite[Proposition 2.2]{LanStroh} is satisfied by the compactifications constructed in \cite{Madapusi}, see \cite[Theorem 4.1.5, Theorem 5.2.11]{Madapusi}. We thank Kai-Wen Lan for explaining this to us.} and obtain 
    \[R\Gamma(\mathbf{Sh}_{K,\overline{E}},\Lambda)\simeq R\Gamma(\mathscr{S}_{K,k}, R\Psi\Lambda),\]
    where $R\Psi$ is the nearby cycle functor. On the other hand, since $\mathcal{S}^\circ_{K,E}$ is the adic generic fiber of $\mathscr{S}_{K}$, by Lemma \ref{Lem:PotentiallyCrystalline} and its proof, the natural map 
    \[R\Gamma(\mathcal{S}^\circ_{K,C},\Lambda)\rightarrow R\Gamma(\mathscr{S}_{K,k}, R\Psi\Lambda),\]
    is always an isomorphism by \cite[Theorem 3.5.13]{Huber96}. One concludes by invariance of \'etale cohomology under extension of algebraically closed fields.
\end{proof}
\begin{Rem}
    It is not clear to us that $R\Gamma(\mathcal{S}_{K^p,C},\Lambda)$ can be computed as the direct limit of $R\Gamma(\mathcal{S}_{K,C},\Lambda)$, because the adic spaces $\mathcal{S}_{K,C}$ are not quasi-compact. 
\end{Rem}

\begin{proof}[Proof of Theorem \ref{Thm: WeilCohoShiVar}]
 The last statement about prime-to-$p$ Hecke action is clear once we can prove
  the earlier statements. Using Lemma~\ref{Lem:HeckeDiagram} and the Cartesian
  diagram from \eqref{Eq:CartesianDiagrammoduloGQP}, where we quotient the top
  row out by $\phi^{\mathbb{Z}}$ noting that $\phi$ acts trivially on the bottom
  row by Proposition~\ref{Prop:FrobeniusTrivial} and
  Proposition~\ref{Prop:AbsoluteFrobeniusBunG}, we get the following
  $2$-commutative diagram (where we have relabeled the maps for simplicity).
  \[ \begin{tikzcd} \label{Eq:NotSoHugeDiagram}
    \lbrack \mathcal{S}_{K^p}^{\circ} / (\underline{G(\qp)} \times
    \phi^\mathbb{Z}) \rbrack \arrow{r}{\pi} \arrow{d}{\tilde{g}} & \lbrack
    \mathrm{Gr}_{G,\mu^{-1}}^\lozenge / (\underline{G(\qp)} \times
    \phi^\mathbb{Z}) \rbrack \arrow{d}{g} \arrow{r}{\tilde{h}_2} & \bun_G^{[1]}
    \times_{\fp} \Div \\ \igs \arrow{r}{\overline{\pi}} & \bungmu
  \end{tikzcd} \]

  Using the formula in Definition~\ref{Def:HeckeFactorization}, we compute
  \[
    T_\mu^{[1]} (\mathcal{F}[-d])(-\tfrac{d}{2}) \simeq R\tilde{h}_{2,k,\ast}
    g_k^\ast \mathcal{F} = R\tilde{h}_{2,k,\ast} g_k^\ast
    R\overline{\pi}_{k,\ast} \Lambda \simeq R\tilde{h}_{2,k,\ast} R\pi_{k,\ast}
    \Lambda,
  \]
  where in the last step we use qcqs base change
  \cite[Proposition~17.6]{EtCohDiam} along $\overline{\pi}_k$.\footnote{The map $\overline{\pi}_k$ is qcqs by \cite[Proposition 10.11.(o)]{EtCohDiam}, using Corollary \ref{Cor:VSurjectiveBL}.} We now observe
  that the composition $\tilde{h}_2 \circ \pi$ is exactly the structure map
  \[
    \lbrack \mathcal{S}_{K^p}^{\circ}/(\underline{G(\qp)} \times
    \phi^\mathbb{Z}) \rbrack \to \lbrack \Div/\underline{G(\qp)} \rbrack.
  \]
  This identifies 
  \[
    R (\tilde{h}_2 \circ \pi)_{k,\ast} \Lambda \simeq
    R\Gamma(\mathcal{S}^\circ_{K^p}, \Lambda)
  \]
  as objects of $\mathcal{D}(\bungk^{[1]},\Lambda)^{BW_E}$, where we use
  Proposition~\ref{Prop:GaloisRestriction} to identify the Weil group actions.
  The desired statement now follows from Proposition~\ref{Prop:CohoGoodRed}. 
\end{proof}

\subsubsection{} \label{subsub:Coefficients}
Let $K_{\ell}$ be the image of $K^p$ in $G(\ql)$ and let $V$ be a continuous representation of $K_{\ell}$ on a finite free $\Lambda$-module (e.g.\ the base change to $\Lambda$ of a $K_{\ell}$-stable $\zl$-lattice in an algebraic representation of $G(\ql)$ over $\ql$). Then $V$ defines an automorphic local system $\mathbb{V}$ on the Shimura variety of level $K^p$ which in fact descends to $\igsk$. Defining $\mathcal{F}_{\mathbb{V}}=R \overline{\pi}_{\mathrm{HT}, k, \ast} \mathbb{V}$, the proof of Theorem \ref{Thm: WeilCohoShiVar} shows that there is a $G(\qp) \times W_E$-equivariant isomorphism
\begin{align}
    T_\mu^{[1]} \mathcal{F}_{\mathbb{V}} \xrightarrow{\sim} R\Gamma(\mathbf{Sh}_{K^p,C}, \mathbb{V})[d](\tfrac{d}{2}).
\end{align}

\subsection{Mantovan's product formula} 
Our goal in this section is to prove a version of Mantovan's product formula for the cohomology of Hodge type Shimura varieties, which we deduce from Theorem \ref{Thm: WeilCohoShiVar}.

We use the same notation as before. In particular, $\Lambda$ is a Noetherian $\zl$-algebra in which $\ell$ is nilpotent; $\mathcal{S}^\circ_{K^p}$ is the good reduction locus of the Shimura variety with infinite level at $p$, and $\mathcal{F}$ is the complex $R\overline{\pi}_{\mathrm{HT},\ast}\Lambda$ on $\bungk$. For $b \in G(\qpbr)$, fix a left invariant Haar measure on $G_b(\qp)$ and write $\mathcal{H}(G_b)\coloneqq C_c(G_b(\qp))$ for the algebra of compactly supported $\Lambda$-valued functions on $G_b(\qp)$ (with multiplication given by convolution).
We define the local Shimura variety at infinite level attached to the local Shimura datum $(G,b,\mu)$ as the fiber product
\begin{equation}
    \begin{tikzcd}
        \mathcal{M}_{G,b,\mu,\infty} \arrow{r} \arrow{d} & \operatorname{Gr}_{G, \mu^{-1}}^{[b]} \arrow{d}{\operatorname{BL}} \\
        \spd k \arrow{r}{b} & \bung^{[b]},
    \end{tikzcd}
\end{equation}
where $\operatorname{Gr}_{G, \mu^{-1}}^{[b]}$ is the preimage of $\bung^{[b]}$ under $\mathrm{BL}$.

The map $\mathcal{M}_{G,b,\mu,\infty} \to \operatorname{Gr}_{G, \mu^{-1}}^{[b]}$ is a $\tilde{G}_b$-torsor, and the action of $\underline{G(\qp)}$ on $\operatorname{Gr}_{G, \mu^{-1}}^{[b]}$ lifts to an action on $\mathcal{M}_{G,b,\mu,\infty}$ which commutes with the $\tilde{G}_b$-action.

\subsubsection{}

For any $[b]\in \bgmu$, let $i_b \colon \bungk^{[b]} \hookrightarrow \bungmuk$
be the inclusion of the corresponding stratum and we write the restriction of
$\overline{\pi}_\mathrm{HT}$ to this stratum as $\overline{\pi}_b$. 

Let $C$ be as in Section \ref{sub:Hecke}. Assume we are given a $\spa C$-point of $\bungk$ corresponding to some $[b]$, and a lift of it to a $\spa C$-point $x$ of $\mathrm{Gr}_{G,\mu^{-1}}$. We choose a representative $b\in [b]$ and consider as in Section \ref{Sub:VSheafIgusa} the $v$-sheaf Igusa variety $\operatorname{Ig}^{b,\mathrm{v}}\gx \to \spd \fpbar$ equipped with commuting actions of $\tilde{G}_b$ and $\underline{\gafp}$; we write $\operatorname{Ig}^{b,\mathrm{v}}_{K^p}$ for its quotient by $\underline{K^p}$. We also consider the perfect Igusa variety $\mathrm{Ig}^b_{K^p}$ of level $K^p$ from Section \ref{Sec:PerfectIgusa} and recall that there is a natural open immersion 
\begin{align}
    \mathrm{Ig}^{b,\diamond}_{K^p} \to \operatorname{Ig}^{b,\mathrm{v}}_{K^p}
\end{align}
by Corollary \ref{Cor:CanonicalCompactification}. 
\begin{Lem} \label{ActionExists}
    The $\tilde{G}_b$ action on $\operatorname{Ig}^{b,\mathrm{v}}_{K^p}$ restricts to a $\tilde{G}_b$ action on $\mathrm{Ig}^{b,\diamond}_{K^p}$.
\end{Lem}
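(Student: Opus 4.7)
My plan is to exhibit $\mathrm{Ig}^{b,\diamond}_{K^p}$ as the preimage under a $\tilde{G}_b$-equivariant morphism of a $\tilde{G}_b$-stable sub-v-sheaf, from which the stability is automatic. The central input is the Mantovan product formula (Corollary \ref{Cor:ProductFormula}), which, after passing to the $K^p$-quotient (using that the $\gafp$- and $\tilde{G}_b$-actions commute), gives a $\tilde{G}_b$-torsor
\[
    q \colon \operatorname{Ig}^{b,\mathrm{v}}_{K^p} \times_{\spd \fpbar} \mintgmu \to \scrs_{K}\gx^{\diamond, [b]} \times_{\spd k} \spd \fpbar,
\]
where $K = K_p K^p$, with $\tilde{G}_b$ acting diagonally on the source and trivially on the target.

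Next, I record some $\tilde{G}_b$-stable subsheaves. Write $\mintgmu^{\mathrm{sp}} := \mintgmu \times_{\spd \mathcal{O}_E} \spd \fp$ for the special fiber of the integral local Shimura variety; this is $\tilde{G}_b$-stable because the structure morphism to $\spd \mathcal{O}_E$ is $\tilde{G}_b$-invariant. Let $\shginfd^{[b]} \subseteq \scrs_{K}\gx^{\diamond, [b]} \times_{\spd k} \spd \fpbar$ denote the good-reduction locus on the $[b]$-Newton stratum; it is $\tilde{G}_b$-stable since $\tilde{G}_b$ acts trivially on the entire target of $q$. Hence $q^{-1}(\shginfd^{[b]})$ is $\tilde{G}_b$-stable.

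The key identity to prove is
\[
    q^{-1}(\shginfd^{[b]}) = \mathrm{Ig}^{b,\diamond}_{K^p} \times_{\spd \fpbar} \mintgmu^{\mathrm{sp}}.
\]
Granting this, the right-hand side is $\tilde{G}_b$-stable; since $\mintgmu^{\mathrm{sp}}$ is itself $\tilde{G}_b$-stable and v-locally nonempty over $\spd \fpbar$ (it contains, for example, the base point corresponding to the trivial modification of $b$), the diagonal action argument forces $\mathrm{Ig}^{b,\diamond}_{K^p}$ to be $\tilde{G}_b$-stable, as desired. The inclusion $\supseteq$ of the identity is direct: by Section \ref{Sec:CentralLeaves}, the perfect Igusa variety $\mathrm{Ig}^b_{K^p}$ lives over the perfect special fiber $\shg$, and pairing a perfect Igusa point with a modification over $\spd \fp$ via the product formula returns an integral Shimura point lying in the $[b]$-Newton stratum.

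The main obstacle is the reverse inclusion: for $(x, m)$ with $q(x, m) \in \shginfd^{[b]}$, one must verify both $x \in \mathrm{Ig}^{b,\diamond}_{K^p}$ and $m \in \mintgmu^{\mathrm{sp}}$. The statement for $m$ follows immediately from matching untilts under $q$; the statement for $x$ is the substantive step. I plan to verify it on a basis of strictly totally disconnected perfectoid test objects $S = \spa(R, R^+)$, where by Corollary \ref{Cor:CanonicalCompactification} the condition ``$x \in \mathrm{Ig}^{b,\diamond}_{K^p}$'' unfolds to the concrete statement that a given $\spec R^\circ$-point of $\mathrm{Ig}^b_{K^p}$ extends to an $\spec R^+$-point. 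Such an extension is produced by reversing the construction of the product formula: the integral Shimura point supplies integral data over $\spec R^+$, and the quasi-isogeny encoded in $m \in \mintgmu^{\mathrm{sp}}$ is defined over $\spec R^+$ (precisely because $m$ is in the special fiber), so transferring the integral Shimura data back through $m^{-1}$ yields an integral Igusa point.
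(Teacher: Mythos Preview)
Your key identity
\[
    q^{-1}\bigl(\text{special fiber of } \scrs_K\gx^{\diamond,[b]}\bigr) = \mathrm{Ig}^{b,\diamond}_{K^p} \times_{\spd \fpbar} \mintgmu^{\mathrm{sp}}
\]
is false. The untilt of $q(x,m)$ is determined solely by the leg of the shtuka carried by $m$, so $q(x,m)$ lands in the special fiber if and only if $m \in \mintgmu^{\mathrm{sp}}$; there is no constraint on $x$. Concretely, take any $(R,R^+)$-point $x \in \operatorname{Ig}^{b,\mathrm{v}}_{K^p} \setminus \mathrm{Ig}^{b,\diamond}_{K^p}$ (such points exist whenever the canonical compactification of Corollary~\ref{Cor:CanonicalCompactification} is nontrivial) and let $m$ be the base point of $\mintgmu^{\mathrm{sp}}$. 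Then $q(x,m)$ is exactly the $R^+$-point of the special fiber underlying $x$, so it lies in the left-hand side, but $(x,m)$ is not in the right-hand side. In other words, your identity would force $\operatorname{Ig}^{b,\mathrm{v}}_{K^p} = \mathrm{Ig}^{b,\diamond}_{K^p}$, which is exactly what you are trying to avoid assuming.

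The slip occurs in the sentence ``the quasi-isogeny encoded in $m \in \mintgmu^{\mathrm{sp}}$ is defined over $\spec R^+$ (precisely because $m$ is in the special fiber)''. The argument of Lemma~\ref{Lem:CanonicalCompactification} (via \cite[Proposition~2.2.7]{PappasRapoportShtukas}) only lets you extend the framing from $\mathcal{Y}_{[r,\infty)}(R,R^+)$ to $\spec W(R^\circ)$, not to $\spec W(R^+)$. So you cannot use $m^{-1}$ to transport an $R^+$-integral Shimura point back to an $R^+$-point of the perfect Igusa variety.

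The paper's proof takes a different and more direct route: it first cites the Siegel case, where the $\tilde{G}_b$-action on the perfect Igusa variety is built by hand from automorphisms of $p$-divisible groups (\cite[Corollary~4.3.5]{CaraianiScholzeCompact}), and then reduces the Hodge-type case to it using the closed immersion $\operatorname{Ig}^b\gx \hookrightarrow \operatorname{Ig}^b\gvx \times_{\shgvinfke} \shginf$ of \cite[Lemma~5.1.1]{Hamacher-Kim}. The point is that $\mathrm{Ig}^{b,\diamond}_{K^p}\gx$ is the intersection of $\operatorname{Ig}^{b,\mathrm{v}}_{K^p}\gx$ with $\mathrm{Ig}^{b,\diamond}_{M^p}\gvx$ inside $\operatorname{Ig}^{b,\mathrm{v}}_{M^p}\gvx$, and both of these are $\tilde{G}_b$-stable.
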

\begin{proof}
When $\gvx=\gx$, the group  $\tilde{G}_{V,b}$ is canonically isomorphic to $\mathbf{Aut}_{\gv}(\tilde{\mathbb{X}}_b)^{\diamond}$ by \cite[Corollary 9.46]{ZhangThesis}. The group $\mathbf{Aut}_{\gv}(\tilde{\mathbb{X}}_b)$ acts on $\mathrm{Ig}_{K^p}^{b}\gvx$ by \cite[Corollary 4.3.5]{CaraianiScholzeCompact}. For general $\gx$ of Hodge type, after choosing a suitable Hodge embedding $\gx \to \gvx$ and a lift $b$ to a map $\spd \fpbar \to \shtgmu$ so that we may reinterpret the Igusa varieties as in \eqref{Eq:IgbDiagram}, we get a commutative diagram
\begin{equation}
    \begin{tikzcd}
         \operatorname{Ig}_{K^p}^b\gx \arrow{r} \arrow{d} &  \operatorname{Ig}_{M^p}^b\gvx \arrow{d} \\
          \scrs_{K}\gx^{\mathrm{perf}}_{\fpbar} \arrow{r} & \scrs_{M}\gvx^{\mathrm{perf}}_{\fpbar}.
    \end{tikzcd}
\end{equation}
It follows from the construction that the natural map
\begin{align}
     \operatorname{Ig}_{K^p}^b\gx \to \operatorname{Ig}_{M^p}^b\gvx \times_{\scrs_{M}\gvx^{\mathrm{perf}}_{\fpbar}} {\scrs_{K}\gx^{\mathrm{perf}}_{\fpbar}} 
\end{align}
is a closed immersion (it is a reduction of structure of torsors for profinite groups). We now consider the commutative diagram
\begin{equation}
    \begin{tikzcd}
         \operatorname{Ig}_{K^p}^b\gx^{\diamond} \arrow{d} \arrow{r} &  \operatorname{Ig}_{M^p}^b\gvx^{\diamond} \times_{\scrs_{M}\gvx^{\diamond}_{\fpbar}} {\scrs_{K}\gx^{\diamond}_{\fpbar}}   \arrow{r} \arrow{d} &\operatorname{Ig}_{M^p}^b\gvx^{\diamond} \arrow{d} \\
         \operatorname{Ig}^{b,\mathrm{v}}_{K^p}\gx \arrow{r} & \operatorname{Ig}_{M^p}^{b,\mathrm{v}}\gvx \times_{\scrs_{M}\gvx^{\diamond}_{\fpbar}} {\scrs_{K}\gx^{\diamond}_{\fpbar}}  \arrow{r}  & \operatorname{Ig}^{b,\mathrm{v}}_{M^p}\gvx.
    \end{tikzcd}
\end{equation}
From the discussion above, we deduce that the group $\tilde{G}_b$ acts compatibly on all objects in the outer square of the diagram, except possibly for $\operatorname{Ig}_{K^p}^b\gx^{\diamond}$. It thus suffices to show that the outer square is Cartesian. A small diagram chase shows that the right square is Cartesian. To show that the left square is Cartesian, we observe that the objects in the bottom row are the canonical compactifications toward $\scrs_{K}\gx^{\diamond}_{\fpbar}$ of the objects in the top row; this follows from Corollary \ref{Cor:CanonicalCompactification} and the fact that the formation of canonical compactifications commutes with base change \cite[Corollary 18.8 (iv)]{EtCohDiam}. Since the top horizontal arrow is a closed immersion and hence partially proper, it follows from the definition \cite[Proposition 18.6]{EtCohDiam} that the left square is Cartesian. This concludes the proof.
\end{proof}
We moreover consider the perfectoid Igusa variety $\mathrm{Ig}^b_{K^p,C}$ given by the adic generic fiber of the formal scheme $W(\mathrm{Ig}^{b}_{K^p})\times_{\spf W(k)}\spf \CO_C$, where $W(\mathrm{Ig}^{b}_{K^p})$ is the Witt vector lift of the perfect scheme $\mathrm{Ig}^b_{K^p}$. Note that there is a natural isomorphism
\begin{align}
    \mathrm{Ig}^{b,\diamond}_{K^p} \times_{\spd k} \spd C \xrightarrow{\sim} \left(\mathrm{Ig}^b_{K^p,C}\right)^{\lozenge},
\end{align}
and we will use $\mathrm{Ig}^{b,\lozenge}_{K^p,C}$ to denote the right hand
side, which is different from the base change of
$\mathrm{Ig}^{b,\lozenge}_{K^p}$ to $\spd C$.

\begin{Lem}\label{Lem:CohoIgsb}
There is an inclusion  
\[\left[\mathrm{Ig}^{b,\lozenge}_{K^p,C}/\widetilde{G}_b\right]\hookrightarrow \mathrm{Igs}_{K^p,C}^{[b]}\]
which induces an isomorphism between their canonical compactifications. In particular, we have isomorphisms in $D(G_b(\qp),\Lambda)$
\[i_{b}^\ast\mathcal{F}\simeq R\overline{\pi}_{b,\ast}\Lambda \simeq R\Gamma(\mathrm{Ig}^b_{K^p,C},\Lambda)\simeq R\Gamma(\mathrm{Ig}^b_{K^p},\Lambda).\]
\end{Lem}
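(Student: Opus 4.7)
The plan is to first establish the inclusion together with the asserted isomorphism on canonical compactifications, and then to derive the chain of cohomological isomorphisms from this geometric input and standard base change. The argument parallels the proof of Proposition~\ref{Prop: HTfiber}.

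\textbf{The inclusion.} First I would use Theorem~\ref{Thm:GeometryOfBunG} to identify the Newton stratum $\bun_{G,k}^{[b]}$, base changed to $\spd \fpbar$, with $[\spd \fpbar/\tilde{G}_b]$; pulling back along $\overline{\pi}_{\mathrm{HT}}$ and using the definition of the $v$-sheaf Igusa variety from Section~\ref{Sub:VSheafIgusa} (together with the identification of $k$ with the residue field of $\mathcal{O}_C$) yields $\mathrm{Igs}_{K^p,C}^{[b]} \simeq [\mathrm{Ig}^{b,\mathrm{v}}_{K^p,C}/\tilde{G}_b]$. By Corollary~\ref{Cor:CanonicalCompactification}, $\mathrm{Ig}^{b,\mathrm{v}}_{K^p}$ is the canonical compactification of $\mathrm{Ig}^{b,\diamond}_{K^p} \to \shginfd$, and stability of canonical compactifications under base change (\cite[Proposition~18.7]{EtCohDiam}) then implies that $\mathrm{Ig}^{b,\mathrm{v}}_{K^p,C}$ is the canonical compactification of the base change of $\mathrm{Ig}^{b,\diamond}_{K^p}$ to $\spd C$, which is identified with $\mathrm{Ig}^{b,\lozenge}_{K^p,C}$ as in the proof of Proposition~\ref{Prop: HTfiber}. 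The $\tilde{G}_b$-action extends to the open part by Lemma~\ref{ActionExists}, producing the desired inclusion $[\mathrm{Ig}^{b,\lozenge}_{K^p,C}/\tilde{G}_b] \hookrightarrow \mathrm{Igs}_{K^p,C}^{[b]}$ with isomorphism on canonical compactifications.

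\textbf{Cohomological isomorphisms.} The first isomorphism $i_b^\ast \mathcal{F} \simeq R\overline{\pi}_{b,\ast}\Lambda$ would follow from base change along $i_b$ applied to $\mathcal{F} = R\overline{\pi}_{\mathrm{HT},k,\ast}\Lambda$; one checks the hypothesis by pulling $\overline{\pi}_{\mathrm{HT}}$ back along the v-surjection $\mathrm{BL}^\circ\colon \shtgmu \to \bungmu$ of Corollary~\ref{Cor:VSurjectiveBL}, where by Theorem~\ref{Thm:HodgeMain} it becomes the structure map of $\mathbf{Sh}_{K^p}^{\circ,\lozenge}$, which is a pro-\'etale limit of qcqs morphisms. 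For the second isomorphism, pulling back along the $\tilde{G}_b$-torsor $\spd \fpbar \to \bun_{G,k}^{[b]}$ and using the equivalence $D(\bun_{G,k}^{[b]}, \Lambda) \simeq D(G_b(\qp), \Lambda)$ of \cite[Proposition~V.2.2]{FarguesScholze} identifies $R\overline{\pi}_{b,\ast}\Lambda$ with the cohomology of the geometric fiber $\mathrm{Ig}^{b,\mathrm{v}}_{K^p,C}$ together with its smooth $G_b(\qp)$-action. The canonical-compactification isomorphism from the previous step then reduces this to $R\Gamma(\mathrm{Ig}^{b,\lozenge}_{K^p,C}, \Lambda) = R\Gamma(\mathrm{Ig}^b_{K^p,C}, \Lambda)$. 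For the final identification with $R\Gamma(\mathrm{Ig}^b_{K^p}, \Lambda)$, I would invoke the equivalence of \'etale sites between a perfect scheme and its associated diamond \cite[Proposition~18.3.1]{ScholzeWeinsteinBerkeley}, invariance of \'etale cohomology under algebraically closed base field extensions, and the identification $\mathrm{Ig}^{b,\diamond}_{K^p} \times_{\spd \fpbar} \spd C \simeq \mathrm{Ig}^{b,\lozenge}_{K^p,C}$ from the proof of Proposition~\ref{Prop: HTfiber}.

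\textbf{Main obstacle.} The crucial technical input is showing that the open immersion into the canonical compactification induces an isomorphism on \'etale cohomology, i.e., that $R\Gamma(\mathrm{Ig}^{b,\mathrm{v}}_{K^p,C}, \Lambda) \simeq R\Gamma(\mathrm{Ig}^{b,\lozenge}_{K^p,C}, \Lambda)$. My strategy would exploit the affineness of the perfect Igusa variety $\mathrm{Ig}^b_{K^p}$ over the central leaf (from the forthcoming work of Mao cited in the introduction) to reduce to the statement that the $v$-sheaf completion adjoins only ``integral boundary'' data inert for the \'etale topology, following the analogous Siegel-type strategy developed in \cite{ZhangThesis}.
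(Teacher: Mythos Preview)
Your overall strategy for the inclusion and the first two cohomological isomorphisms matches the paper's approach closely, but there is a genuine gap in your treatment of what you call the ``main obstacle''.

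The invariance of \'etale cohomology under passage to the canonical compactification is a \emph{general} fact, recorded as \cite[Lemma~4.4.2]{CaraianiScholzeCompact}, and this is exactly what the paper cites. No special geometric input such as affineness is required: the canonical compactification only adds higher-rank specializations, which are invisible to the \'etale site. Your proposed route via affineness of the perfect Igusa variety is both unnecessary and problematic, since Mao's affineness result (cited in the paper for the proof of Proposition~\ref{Prop:semiperverse}) is only established under the assumption that the Shimura variety is proper, whereas Lemma~\ref{Lem:CohoIgsb} is stated and used without that hypothesis (for instance in Corollary~\ref{Cor:ULA}). Similarly, the final isomorphism $R\Gamma(\mathrm{Ig}^b_{K^p,C},\Lambda)\simeq R\Gamma(\mathrm{Ig}^b_{K^p},\Lambda)$ is handled in the paper by citing \cite[Lemma~4.4.3]{CaraianiScholzeCompact}, which packages the comparison you sketch.

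Two minor points: for the base change step $i_b^\ast\mathcal{F}\simeq R\overline{\pi}_{b,\ast}\Lambda$, what you need is that $\overline{\pi}_{\mathrm{HT}}$ is qcqs, and after pulling back along $\mathrm{BL}$ this becomes the Hodge--Tate period map $\mathcal{S}^\circ_{K^p}\to \mathrm{Gr}_{G,\mu^{-1}}^\lozenge$, not the structure map to $\spd E$. Also, you pull back along $\mathrm{BL}$ on the Grassmannian (the generic-fiber diagram), not along $\mathrm{BL}^\circ$ on $\shtgmu$.
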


\begin{proof}
    The first claim follows from the identification of the fibers of the
    Hodge--Tate period map, see the proof of Proposition~\ref{Prop: HTfiber}.
    The isomorphism $i_{b}^\ast \mathcal{F} \simeq
    R\overline{\pi}_{b,\ast}\Lambda$ follows from \cite[Proposition~17.6]{EtCohDiam}
    and the fact that $\overline{\pi}_\mathrm{HT}
    \colon \mathrm{Igs}_{K^p} \to \bungmu$ is quasi-compact; this can be tested
    v-locally, and $\mathcal{S}_{K^p}^\circ \to
    \mathrm{Gr}_{G,\mu^{-1}}^\lozenge$ is indeed quasi-compact. Identification of
    the latter with the cohomology of $\mathrm{Ig}^b_{K^p,C}$ uses the fact that
    canonical compactifications do not change the cohomology, see
    \cite[Lemma~4.4.2]{CaraianiScholzeCompact}. The last isomorphism follows
    from \cite[Lemma~4.4.3]{CaraianiScholzeCompact}.
\end{proof}

\begin{Cor} \label{Cor:ULA}
  The sheaf $\mathcal{F}$ is universally locally acyclic with respect to
  $\bungmuk \to \spd k$.
\end{Cor}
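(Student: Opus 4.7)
The plan is to apply the universal local acyclicity criterion recalled in Section \ref{Sec:ULA} (based on \cite[Theorem V.7.1]{FarguesScholze}) to reduce the statement to an admissibility check on stalks. Concretely, for each Newton stratum $[b] \in \bgmu$ and each pro-$p$ open compact subgroup $K' \subset G_b(\qp)$, one must exhibit $(i_b^\ast \mathcal{F})^{K'}$ as a perfect complex of $\Lambda$-modules. (Strata outside $\bgmu$ contribute nothing, since $\mathcal{F}$ is supported on $\bungmuk$.)

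The first step is to translate the stalk into geometry: by Lemma \ref{Lem:CohoIgsb},
\[
  i_b^\ast \mathcal{F} \simeq R\Gamma(\mathrm{Ig}^b_{K^p}, \Lambda)
\]
as smooth $G_b(\qp)$-representations. After possibly shrinking $K'$, I would arrange $K' \subseteq \Gamma_b = G_b(\qp)\cap \mathcal{G}(\zpbr)$, so that $K'$ acts through the pro-\'etale torsor structure $\mathrm{Ig}^b_{K^p} \to C^b_K$ from Section \ref{Sec:CentralLeaves}. Because $K'$ is pro-$p$ and $\ell$ is invertible in $\Lambda$, taking $K'$-invariants is exact on smooth $K'$-representations, and therefore
\[
  (i_b^\ast \mathcal{F})^{K'} \simeq R\Gamma(\mathrm{Ig}^b_{K^p}/K',\Lambda).
\]

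The crucial observation is then that the quotient $\Gamma_b/K'$ is a \emph{finite} group (as $\Gamma_b$ is profinite and $K'$ is open in it). Consequently the natural map $\mathrm{Ig}^b_{K^p}/K' \to \mathrm{Ig}^b_{K^p}/\Gamma_b = C^b_K$ is a finite \'etale cover, so $\mathrm{Ig}^b_{K^p}/K'$ is the perfection of a finite type $\fpbar$-scheme. To finish, I would invoke the standard perfectness of \'etale cohomology of a finite type scheme over an algebraically closed field with coefficients in a Noetherian ring in which $\ell$ is nilpotent: via $R\Gamma(-,\Lambda) \simeq R\Gamma(-,\zl/\ell^n)\otimes^{\mathbb{L}}_{\zl/\ell^n}\Lambda$ this reduces to $\Lambda = \zl/\ell^n$, where bounded finitely-generated cohomology combined with the finite global dimension of $\zl/\ell^n$ gives a perfect complex.

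I do not anticipate any serious obstacle, as all the essential ingredients have already been assembled: the ULA criterion, the identification of stalks of $\mathcal{F}$ with Igusa variety cohomology, and the pro-\'etale torsor structure of Igusa varieties over central leaves. The only delicate bookkeeping concerns the interplay between the $G_b(\qp)$-action by quasi-isogeny and the pro-\'etale $\Gamma_b$-torsor structure; this is handled implicitly by Lemma \ref{Lem:CohoIgsb} together with the discussion in Section \ref{Sec:CentralLeaves}.
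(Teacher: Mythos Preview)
Your proposal is correct and follows essentially the same approach as the paper: both reduce to the admissibility criterion of \cite[Theorem V.7.1]{FarguesScholze}, identify $i_b^\ast\mathcal{F}$ with $R\Gamma(\mathrm{Ig}^b_{K^p},\Lambda)$ via Lemma~\ref{Lem:CohoIgsb}, and then observe that the $K'$-invariants compute the cohomology of (the perfection of) a finite type $k$-scheme, hence give a perfect complex. The paper is slightly more explicit about one technical point you handle in passing: since the criterion in Section~\ref{Sec:ULA} is stated for sheaves on $\bungk$, the paper first extends $\mathcal{F}$ by zero along the open immersion $j_{\mu,k}$, checks ULA there, and then pulls back; your parenthetical ``strata outside $\bgmu$ contribute nothing'' amounts to the same reduction.
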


\begin{proof}
  We first explain that it suffices to check that $j_{\mu,k,!} \mathcal{F}$ is ULA
  with respect to $\bungk \to \spd k$. By choosing a $\ell$-cohomologically
  smooth chart $X \to \bungk$ and restricting to $\bungmuk$, it follows from
  \cite[Definition~IV.2.31]{FarguesScholze} and
  \cite[Proposition~IV.2.13.(ii)]{FarguesScholze} that ULA sheaves $A \in
  \mathcal{D}(\bungk, \Lambda)$ pull back to ULA sheaves $j_{\mu,k}^\ast A \in
  \mathcal{D}(\bungmuk, \Lambda)$. Hence if $j_{\mu,k,!} \mathcal{F}$ is ULA,
  then so is $\mathcal{F} = j_{\mu,k}^\ast j_{\mu,k,!} \mathcal{F}$.

    By Lemma \ref{Lem:CohoIgsb}, for each $[b]\in B(G)$, we can identify $i_{b}^\ast\mathcal{F}$ with $R\Gamma(\operatorname{Ig}^{b}_{K^p},\Lambda) \in D(G_b(\qp), \Lambda)$ under the equivalence of categories
\begin{align}
    D(\bun^{[b]}_{G,k},\Lambda) \simeq D(G_b(\qp), \Lambda).
\end{align}
According to \cite[Proposition VII.7.9]{FarguesScholze}, it suffices to show $R\Gamma(\operatorname{Ig}^{b}_{K^p},\Lambda)$ is admissible in the sense that for any pro-$p$ open subgroup $U_p\subset G_b(\qp)$, the complex $R\Gamma(\operatorname{Ig}^{b}_{K^p},\Lambda)^{U_p}$ is a perfect complex of $\Lambda$-modules. But the latter identifies with $R\Gamma(\operatorname{Ig}^{b}_{K^p}/U_p,\Lambda)$, which is the cohomology of (the perfection of) a finite type $k$-scheme, hence it is a perfect complex, see the proof of \cite[Proposition 8.21]{ZhangThesis}.
\end{proof}

Before stating the Mantovan product formula, we need the following computation from \cite{HamannImai}, where we have identified $D(\bungk^{[b]},\Lambda)$ with $ D(G_b(\qp),\Lambda)$ as in \cite[Theorem I.5.1.(ii)]{FarguesScholze}.
\begin{Prop}[{\cite[Proposition 3.15]{HamannImai}}]\label{Prop:DualizingComplexBunGb}
    The dualizing complex on $\bungk^{[b]}$ is isomorphic to $\delta_b^{-1}[-2d_b]$, where $\delta_b$ is the character as in Definition 3.14 of loc.\ cit.\ and $d_b\coloneqq \langle 2\rho, \nu_b\rangle$ as before.
\end{Prop}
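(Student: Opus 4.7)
The plan is to reduce to a computation on the classifying stack $[\spd \fpbar / \tilde{G}_b]$, using the identification $\bungk^{[b]} \simeq [\spd \fpbar / \tilde{G}_b]$ from Theorem \ref{Thm:GeometryOfBunG}. Since formation of the dualizing complex is compatible with $\ell$-cohomologically smooth base change and with passage to a v-cover, one is reduced to computing the dualizing complex of $[\spd \fpbar / \tilde{G}_b]$ relative to $\spd \fpbar$.

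First I would analyze the structure of $\tilde{G}_b = \Aut(\mathcal{E}_b)$ through the slope decomposition of the adjoint bundle $\operatorname{ad}(\mathcal{E}_b) = \mathcal{E}_b \times^G \mathfrak{g}$. This yields an extension of v-sheaf groups
\[
1 \to \tilde{G}_b^{+} \to \tilde{G}_b \to G_b(\qp) \to 1,
\]
where $G_b(\qp)$ is the locally profinite slope-zero piece and $\tilde{G}_b^{+}$ is a connected, positive-slope unipotent v-sheaf group whose Lie algebra is identified with the positive slope part $\operatorname{ad}(\mathcal{E}_b)^{>0}$; such a decomposition is constructed in \cite{FarguesScholze}, Sections III--IV. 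By the Cartan--Leray type spectral sequence for the classifying stack associated to this extension, one can compute the dualizing complex of $[\spd \fpbar / \tilde{G}_b]$ from the contributions of the two pieces, up to a twist by the character through which $G_b(\qp)$ acts on the determinant of the Lie algebra of $\tilde{G}_b^{+}$.

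For the locally profinite piece $[\spd \fpbar / \underline{G_b(\qp)}]$, the dualizing complex is the inverse of the modulus character of $G_b(\qp)$, as in \cite[Example~4.2.4]{HansenKalethaWeinstein}; this is trivial since $G_b(\qp)$ is unimodular. For the positive unipotent part, one filters $\tilde{G}_b^{+}$ by successive Banach--Colmez extensions corresponding to line subbundles $\mathcal{O}(\lambda) \subset \operatorname{ad}(\mathcal{E}_b)^{>0}$ with $\lambda > 0$. Each such Banach--Colmez space is $\ell$-cohomologically smooth of explicit $\ell$-dimension, and the sum of these dimensions equals $d_b = \langle 2\rho, \nu_b\rangle$ by the standard formula for the pairing of $2\rho$ with the Newton cocharacter. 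Passing from the space to its classifying stack flips the sign of the $\ell$-dimension, producing the shift $[-2d_b]$ in the dualizing complex.

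Finally, the character $\delta_b^{-1}$ appears by tracking the conjugation action of $\tilde{G}_b$ on the top exterior power of $\operatorname{Lie} \tilde{G}_b^{+}$: the identification of $[\spd \fpbar / \tilde{G}_b^{+}]$ as a relative classifying stack over $[\spd \fpbar / \tilde{G}_b]$ introduces precisely this modulus character, which by construction is $\delta_b$ in the sense of \cite[Definition~3.14]{HamannImai}. The main technical obstacle will be carrying out the filtration argument v-locally with sufficient functoriality to keep track of the $\tilde{G}_b$-equivariant structure throughout, so that the final twist can be canonically identified with $\delta_b^{-1}$ rather than with some twist of it; a secondary difficulty is verifying carefully that the Banach--Colmez spaces appearing along the filtration are indeed $\ell$-cohomologically smooth with trivial dualizing sheaves of the expected dimension, since this is genuinely false for the negative-slope analogues.
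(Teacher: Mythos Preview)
The paper does not prove this proposition: it is stated as a citation of \cite[Proposition~3.15]{HamannImai}, with no proof given in the paper itself. So there is no proof in the paper to compare your proposal against.

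That said, your outline is essentially the standard argument, and indeed the paper implicitly invokes the same decomposition in the proof of the subsequent Corollary~\ref{Cor:HomologyGb}: there the structure map $\bungk^{[b]} \to \spd k$ is factored as $[\spd k/\tilde{G}_b] \xrightarrow{\alpha} [\spd k/G_b(\qp)] \xrightarrow{\beta} \spd k$, and the identification $R\alpha^!\Lambda \simeq \delta_b^{-1}[-2d_b]$ is attributed to \cite[Lemma~7.4]{KoshikawaGeneric} and \cite[Corollary~1.8.(1)]{HamannImai}, while $R\beta^!$ is handled via unimodularity of $G_b(\qp)$ as in \cite[Example~4.2.4]{HansenKalethaWeinstein}. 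This matches your two-step plan exactly. Your filtration of $\tilde{G}_b^{>0}$ by Banach--Colmez spaces and the bookkeeping of the $G_b(\qp)$-action on $\det(\mathrm{Lie}\,\tilde{G}_b^{>0})$ to produce $\delta_b^{-1}$ is precisely how the cited references proceed.
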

To state the following result, we remark that the structure map $p: \bungk^{[b]}\to \spd k$ is a fine map of decent Artin $v$-stacks in the sense of \cite[Definition 1.1, Definition 1.3]{GulottaHansenWeinstein}, this can be deduced from \cite[Example IV.1.9]{FarguesScholze} and \cite[Proposition IV.1.22]{FarguesScholze}. Therefore there is a well defined exceptional pushforward $R p_{!}$, see \cite[Theorem 1.4]{GulottaHansenWeinstein}. 
\begin{Cor}\label{Cor:HomologyGb}
Under the identification $D(\bungk^{[b]},\Lambda)\simeq D(G_b(\qp),\Lambda)$,
there is a natural isomorphism of functors from $D(G_b(\qp),\Lambda)$ to $D(\Lambda)$
 \[ Rp_{!}(-)\simeq (-\otimes_\Lambda \delta_b[2d_b])\otimes^\mathbb{L}_{\mathcal{H}(G_b)}\Lambda,\]
 where $\Lambda$ is the trivial representation of $G_b(\qp)$, considered as a $\mathcal{H}(G_b)$-module via $f \cdot 1=\int_G f(y)dy$.
\end{Cor}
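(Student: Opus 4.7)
The plan is to combine the stratum-by-stratum geometric description of $\bungk$ with a standard computation of $Rp_!$ for a classifying stack. By Theorem~\ref{Thm:GeometryOfBunG}, we have $\bungk^{[b]} \times_{\spd k} \spd \fpbar \simeq [\spd \fpbar / \tilde{G}_b]$, and the identification $D(\bungk^{[b]},\Lambda) \simeq D(G_b(\qp),\Lambda)$ from \cite[Theorem I.5.1.(ii)]{FarguesScholze} is set up so that the pullback $p^{\ast} \colon D(\Lambda) \to D(G_b(\qp),\Lambda)$ is the inflation functor sending a complex $W$ of $\Lambda$-modules to $W$ with trivial $G_b(\qp)$-action.

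First, I would identify the three pairs of adjoint functors in play. The right adjoint of $p^{\ast}$ is $Rp_{\ast}$; under the identification, this is the derived smooth invariants functor $R\Hom_{\mathcal{H}(G_b)}(\Lambda,-)$. The left adjoint of $p^{\ast}$ is derived coinvariants $(-) \otimes^{\mathbb L}_{\mathcal{H}(G_b)} \Lambda$, where $\Lambda$ denotes the trivial representation. Finally, $Rp_{!}$ is the left adjoint of $Rp^{!}$, and by Proposition~\ref{Prop:DualizingComplexBunGb} combined with \cite[Proposition~23.12.(i)]{EtCohDiam} (which applies because the dualizing complex $\delta_b^{-1}[-2d_b]$ is a shift of an invertible rank one local system), we have $Rp^{!}(W) \simeq p^{\ast}(W) \otimes_{\Lambda} \delta_b^{-1}[-2d_b]$.

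Putting these together, for $V \in D(G_b(\qp),\Lambda)$ and $W \in D(\Lambda)$, I would compute
\begin{align*}
  \Hom(Rp_{!} V, W) &\simeq \Hom(V, Rp^{!} W) \simeq \Hom\bigl(V, p^{\ast}(W) \otimes \delta_b^{-1}[-2d_b]\bigr) \\
  &\simeq \Hom\bigl(V \otimes \delta_b[2d_b], p^{\ast} W\bigr) \simeq \Hom\bigl((V \otimes_{\Lambda} \delta_b[2d_b]) \otimes^{\mathbb L}_{\mathcal{H}(G_b)} \Lambda, W\bigr),
\end{align*}
where in the third step we tensor with the invertible object $\delta_b[2d_b]$ and in the last step we use that derived coinvariants is the left adjoint of $p^{\ast}$. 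The claimed natural isomorphism then follows from the Yoneda lemma.

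The main thing to check carefully is the projection formula identification $Rp^{!} \simeq p^{\ast}(-) \otimes \delta_b^{-1}[-2d_b]$. Since $p$ is a fine map of decent Artin v-stacks (as noted above Corollary~\ref{Cor:HomologyGb}) and $\omega_p = \delta_b^{-1}[-2d_b]$ is a shift of an \'{e}tale rank one local system, hence $\otimes$-invertible, the projection formula holds by the six-functor formalism of \cite[Theorem 1.4]{GulottaHansenWeinstein}. This is the one technical step that requires some care; the rest is formal adjunction.
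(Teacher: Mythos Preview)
Your approach is essentially the paper's: both arguments compute $Rp_!$ by Yoneda, using the adjunction $(Rp_!,Rp^!)$ together with a description of $Rp^!$. The paper writes out exactly your chain of Hom's, but applied to the factored map $\alpha\colon[\spd k/\tilde G_b]\to[\spd k/\underline{G_b(\qp)}]$ rather than to $p$ directly, and then identifies $R\beta_!$ for $\beta\colon[\spd k/\underline{G_b(\qp)}]\to\spd k$ with group homology twisted by Haar measures via \cite[Example~4.2.4]{HansenKalethaWeinstein}.

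There is, however, a genuine gap in your justification of $Rp^!\simeq p^\ast(-)\otimes\omega_p$. Invertibility of $\omega_p$ by itself does not imply this formula: \cite[Proposition~23.12.(i)]{EtCohDiam} is stated for $\ell$-cohomologically smooth morphisms, and \cite[Theorem~1.4]{GulottaHansenWeinstein} establishes existence of the six functors for fine maps but not this projection-type identity in general. What you need is that $p$ is $\ell$-cohomologically smooth. This is true --- the $\tilde G_b$-torsor $\spd k\to[\spd k/\tilde G_b]$ is cohomologically smooth since $\tilde G_b$ is, so \cite[Definition~IV.1.11]{FarguesScholze} applies --- but you should say so rather than invoke invertibility of $\omega_p$. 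The paper avoids this issue by working with $\alpha$, whose cohomological smoothness it invokes explicitly, and citing \cite[Lemma~7.4]{KoshikawaGeneric} and \cite[Corollary~1.8.(1)]{HamannImai} for $R\alpha^!\Lambda\simeq\delta_b^{-1}[-2d_b]$. This factorization also makes transparent the role of the fixed Haar measure (it enters through $R\beta_!$), which in your argument is hidden in the claim that $(-)\otimes^{\mathbb L}_{\mathcal H(G_b)}\Lambda$ is left adjoint to inflation.
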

\begin{proof}
    From Proposition~\ref{Prop:DualizingComplexBunGb} we see that if we write $Rp_{!}$ in two steps
    \[\bun^{[b]}_{G,k}\simeq [\spd k /\widetilde{G}_b]\xrightarrow{\alpha}[\spd k /G_b(\qp)]\xrightarrow{\beta} \spd k,\]
    then the equivalence between $D(\bun^{[b]}_{G,k},\Lambda)$ and $D(G_b(\qp),\Lambda)$ is induced by $\alpha^\ast$, $\alpha_\ast$. We have $R\alpha_!(-)\simeq (-)\otimes_\Lambda\delta_b[2d_b]$. Indeed, one can check this by computing using cohomological smoothness of $\alpha$ that
    \begin{align}
    \Hom_\Lambda(R\alpha_! A, B) &\simeq \Hom_\Lambda(A, R\alpha^! B) \\ &\simeq \Hom_\Lambda(A, B \otimes_\Lambda \delta^{-1}_b[-2d_b]) \simeq \Hom_\Lambda(A \otimes_\Lambda \delta_b[2d_b], B)
    \end{align}
    and applying Yoneda's lemma.
    See the computation in \cite[Lemma 7.4]{KoshikawaGeneric} and \cite[Corollary 1.9.(1)]{HamannImai} for the identification $R\alpha^!\Lambda \simeq \delta^{-1}_b[-2d_b]$. While $R\beta_!$ is taking group homology twisted by the module of Haar measures on $G_b(\qp)$, as explained in \cite[Example 4.2.4]{HansenKalethaWeinstein}, cf.\ \cite[Section 5.1.3]{Mantovan}. Hence the desired formula follows.
\end{proof}
The following result, which first appeared in \cite{Mantovan}, \cite{MantovanPEL} for PEL type Shimura varieties, is often referred to as Mantovan's product formula, cf.\ \cite[Theorem 7.1]{KoshikawaGeneric}, \cite[Corollary 3.17]{Hamann-Lee}. Note that $\mathcal{M}_{G,b,\mu,\infty}$ admits structure maps to both $\spd k$ and $\spd E$ and therefore to $\spd E \times_{\spd \mathbb{F}_q} \spd k$. This structure map admits a descent $\mathcal{M}_{G,b,\mu,\infty,\phi} \to \Divk$ defined as the fiber product
\begin{equation}
    \begin{tikzcd}
        \mathcal{M}_{G,b,\mu,\infty,\phi} \arrow{r} \arrow{d} & \lbrack\operatorname{Gr}_{G, \mu^{-1}}/\phi^{\mathbb{Z}}\rbrack \arrow{r} \ar[d]& \Div \\
        \spd k \arrow{r}{b} & \bungmu.
    \end{tikzcd}
\end{equation}
This equips the exceptional pushforward $R\Gamma_c(\mathcal{M}_{G,b,\mu,\infty},\delta_b)$ along the structure map $\mathcal{M}_{G,b,\mu,\infty} \to \spd C$ with an action of $G_b(\qp)\times G(\qp)\times W_E$. 

\begin{Thm} \label{Thm:MantovanFormula}
There exists a filtration on $R\Gamma(\mathbf{Sh}_{K^p,\overline{E}},\Lambda)$ by complexes of smooth representations of $G(\qp)\times W_E$, labeled by $B(G,\mu^{-1})$, with graded pieces given by 
\[i_1^\ast T_\mu (Ri_{b!}i_b^\ast \mathcal{F}[-d])(-\tfrac{d}{2}) \simeq R\Gamma(\mathrm{Ig}^b_{K^p},\Lambda)^\mathrm{op}\otimes_{\mathcal{H}(G_b)}^\mathbb{L} R\Gamma_c(\mathcal{M}_{G,b,\mu,\infty},\delta_b)[2d_b].\]
\end{Thm}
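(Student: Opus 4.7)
The approach is to combine Theorem \ref{Thm: WeilCohoShiVar} with the Newton stratification on $\bungmuk$ and a local computation of the Hecke operator $T_\mu$ on each Newton stratum. By Theorem \ref{Thm: WeilCohoShiVar}, there is a $G(\qp) \times W_E$-equivariant identification
\[R\Gamma(\mathbf{Sh}_{K^p,\overline{E}},\Lambda) \simeq i_1^\ast T_\mu(j_{\mu,!}\mathcal{F})[-d](-\tfrac{d}{2}).\]
The Newton stratification of $\bungmuk$ indexed by the finite poset $\bgmu$ (see Section \ref{Sec:BGMU}) induces, after refining to a total order compatible with specialization, an increasing filtration of $j_{\mu,!}\mathcal{F}$ in $D(\bungk,\Lambda)$ whose associated graded pieces are $Ri_{b,!}(i_b^\ast \mathcal{F})$ as $[b]$ ranges over $\bgmu$. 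Applying the triangulated functor $i_1^\ast T_\mu(-)[-d](-\tfrac{d}{2})$ then yields the sought filtration on $R\Gamma(\mathbf{Sh}_{K^p,\overline{E}},\Lambda)$.

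To identify the $[b]$-th graded piece with the right-hand side of the claimed formula, we will first invoke Lemma \ref{Lem:CohoIgsb}, which identifies $i_b^\ast \mathcal{F}$ with $R\Gamma(\mathrm{Ig}^b,\Lambda)$ under the equivalence $D(\bungk^{[b]},\Lambda) \simeq D(G_b(\qp),\Lambda)$. It then suffices to establish, for every $A \in D(G_b(\qp),\Lambda)$, a natural $G(\qp) \times W_E$-equivariant isomorphism
\[i_1^\ast T_\mu(Ri_{b,!} A)[-d](-\tfrac{d}{2}) \simeq A^{\mathrm{op}} \otimes^{\mathbb{L}}_{\mathcal{H}(G_b)} R\Gamma_c(\mathcal{M}_{G,b,\mu,\infty},\delta_b)[2d_b].\]

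To prove this local formula, we will unwind the definition of $T_\mu$ using the Hecke correspondence \eqref{Eq:HeckeDiagram} and restrict $h_1$ to the Newton stratum $\bungk^{[b]}$. Combining Lemma \ref{Lem:HeckeDiagram} (which identifies the fiber of the Hecke correspondence over $\bungk^{[1]}$ with modifications coming from the affine Grassmannian) with the local uniformization of Lemma \ref{Lem:LocalUniformisation} (which expresses $\mathcal{M}_{G,b,\mu,\infty}$ as the fiber of $\mathrm{BL}^\circ$ over $b$), one identifies $i_1^\ast T_\mu(Ri_{b,!}A)$ with $Rp_!$ applied to $A$ convolved against the cohomology of $\mathcal{M}_{G,b,\mu,\infty}$, where $p \colon [\spd k/\tilde{G}_b] \to \spd k$ is the structure map. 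Corollary \ref{Cor:HomologyGb} then rewrites this $Rp_!$ as the derived tensor product $(-\otimes \delta_b[2d_b]) \otimes^{\mathbb{L}}_{\mathcal{H}(G_b)} \Lambda$, accounting for the $\delta_b$ twist and $[2d_b]$ shift in the claimed formula; the shift and Tate twist $[d](\tfrac{d}{2})$ built into $\mathcal{S}_\mu$ precisely cancel with the outer $[-d](-\tfrac{d}{2})$.

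The principal obstacle will be the careful bookkeeping of equivariances and of the opposite ring action. One must verify that the $G_b(\qp)$-action on $R\Gamma(\mathrm{Ig}^b,\Lambda)$ coming from the $\tilde{G}_b$-action on the infinite-level Igusa variety and the action on $R\Gamma_c(\mathcal{M}_{G,b,\mu,\infty},\Lambda)$ induced by Corollary \ref{Cor:ProductFormula} are compatible in opposite senses, so as to produce a well-defined derived tensor product over $\mathcal{H}(G_b)$. Compatibility of the $W_E$-action on both sides requires the same considerations as in the proof of Theorem \ref{Thm: WeilCohoShiVar}, together with Proposition \ref{Prop:GaloisRestriction}. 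These bookkeeping matters are essentially the content of the analogous arguments in \cite{KoshikawaGeneric} and \cite{Hamann-Lee}, from which our situation should follow with only minor modifications.
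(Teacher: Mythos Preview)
Your proposal is correct and follows essentially the same approach as the paper: start from Theorem~\ref{Thm: WeilCohoShiVar}, filter $\mathcal{F}$ by excision along the Newton stratification to obtain the filtration on cohomology, identify $i_b^\ast\mathcal{F}$ via Lemma~\ref{Lem:CohoIgsb}, and then compute the graded pieces by restricting the Hecke correspondence to the $[b]$-stratum, factoring through $\mathcal{M}_{G,b,\mu,\infty}$, and applying the projection formula together with Corollary~\ref{Cor:HomologyGb}. Two small remarks: the relevant input identifying the local Shimura variety as a fiber product is its very definition (the Cartesian square over $\bun_G^{[b]}$ given just before the theorem), not Lemma~\ref{Lem:LocalUniformisation}, which concerns the integral object $\mathcal{M}^{\mathrm{int}}_{\mathcal{G},b,\mu}$; and for the final identification of the convolution with a derived tensor product over $\mathcal{H}(G_b)$, the paper invokes \cite[Proposition~5.12]{Mantovan} rather than the references you cite, though the content is the same.
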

\begin{proof}
    By Proposition~\ref{Prop:CohoGoodRed}, it suffices to consider the cohomology of $\mathcal{S}^\circ_{K^p,C}$. The category $D(\bungk,\Lambda)$ has a semi-orthogonal decomposition into $D(\bun^{[b]}_{G,k},\Lambda)$'s via excision triangles \cite[Theorem I.5.1]{FarguesScholze}; it follows that $\mathcal{F}$ has a filtration whose graded pieces are $Ri_{b!}i_b^\ast\mathcal{F}$. Now Theorem~\ref{Thm: WeilCohoShiVar} tells that $R\Gamma(\mathcal{S}^\circ_{K^p,C},\Lambda)$ has a filtration with graded pieces given by $i_1^\ast T_\mu (Ri_{b!}i_b^\ast \mathcal{F}[-d](-\tfrac{d}{2}))$. Therefore it suffices to identify this with the right hand side of the exhibited equation. 
    
    But this follows from restricting the Cartesian diagram in Theorem \ref{Thm:HodgeMain} to the Newton stratum labeled by $[b]$. More precisely, we have a diagram 
    \[ \begin{tikzcd}[column sep = small]
      \left[ \mathrm{Gr}_{G,\mu^{-1},\ebreve}^{[b]} / (\underline{G(\qp)} \times
      \varphi_q^{\mathbb{Z}}) \right] \arrow{d}{g_b} \arrow[bend left=15]{rr}{q}
      \arrow{r}{z} & \left[\mathrm{Gr}_{G,\mu^{-1},\ebreve} /
      (\underline{G(\qp)} \times \varphi_q^{\mathbb{Z}}) \right] \arrow{d}{g}
      \arrow{r} & \left[\spd k / \underline{G(\qp)} \right] \times_k
      \mathrm{Div}_{E,k}^1 \\ \bungk^{[b]} \arrow{r}{i_b} & \bungmuk \\
      \igsk^{[b]} \arrow{r} \arrow{u}{\overline{\pi}_b} & \igsk
      \arrow{u}{\overline{\pi}_{\mathrm{HT}}}
    \end{tikzcd} \]
    where both squares are Cartesian. It follows from proper base change, see \cite[Proposition~22.8]{EtCohDiam}, along the top square that 
    \begin{align}
        g^{\ast} (Ri_{b!}i_b^\ast \mathcal{F}) \simeq R z_{!} g_b^{\ast} i_b^{\ast} \mathcal{F}
    \end{align}
    and then from qcqs base change, see \cite[Proposition~17.6]{EtCohDiam}, along the bottom square that furthermore
    \begin{align}
        R z_{!} g_b^{\ast} i_b^{\ast} \mathcal{F} \simeq R z_{!} g_b^{\ast} R\overline{\pi}_{b\ast}\Lambda.
    \end{align}
    If we combine this with the proof of Theorem~\ref{Thm: WeilCohoShiVar}, we find that 
    \begin{align}
     i_1^\ast T_\mu (Ri_{b!}i_b^\ast \mathcal{F}[-d])(-\tfrac{d}{2})\simeq Rq_! g_b^\ast (R\overline{\pi}_{b\ast}\Lambda).
    \end{align}
    Notice that the map $q$ factors as a composition
    \begin{align}
        [\mathcal{M}_{G,b,\mu,\infty, \phi}/(\widetilde{G}_b\times \underline{G(\qp)})]
    &\simeq \left[\mathrm{Gr}_{G,\mu^{-1},\ebreve}^{[b]} /
      (\underline{G(\qp)} \times \varphi_q^{\mathbb{Z}}) \right] \\ &\xrightarrow{q'} [\spd k /(\widetilde{G}_b\times \underline{G(\qp)})] \times_{\spd k} \Divk \\
      &\xrightarrow{p_2} [\spd k/\underline{G(\qp)}] \times_{\spd k} \Divk.
    \end{align}
    Hence    
    \begin{align}
        Rq_! g_b^\ast (R\overline{\pi}_{b\ast}\Lambda)
        & \simeq  Rp_{2,!} Rq'_{!} g_b^\ast (R\overline{\pi}_{b\ast}\Lambda)\\
        & \simeq  Rp_{2,!} Rq'_{!} {q'}^\ast p_1^\ast (R\overline{\pi}_{b\ast}\Lambda)\\
        & \simeq  Rp_{2,!} (p_1^\ast R\overline{\pi}_{b\ast}\Lambda\otimes_{\Lambda}^\mathbb{L} Rq'_{!}\Lambda),
    \end{align}
    where $p_1: [\spd k /(\widetilde{G}_b\times \underline{G(\qp)})] \times_{\spd k} \Divk \to [\spd k /\widetilde{G}_b] \times_{\spd k} \Divk$ is the natural projection, and the last isomorphism uses the projection formula, see \cite[Proposition~22.23]{EtCohDiam}. Now we note that 
    \begin{align}
    & R\overline{\pi}_{b\ast}\Lambda\simeq R\Gamma(\mathrm{Ig}^b_{K^p,C},\Lambda),\\
    & R\Gamma_c(\mathcal{M}_{G,b,\mu,\infty},\Lambda)\coloneqq Rq'_{!}\Lambda,\\
    & Rp_{2,!}(-)\simeq (-\otimes_\Lambda \delta_b[2d_b])\otimes^\mathbb{L}_{\mathcal{H}(G_b)}\Lambda,
    \end{align}
    Here the first isomorphism uses Lemma~\ref{Lem:CohoIgsb} and the last isomorphism uses Corollary~\ref{Cor:HomologyGb}. Also, by the projection formula, see \cite[Proposition 22.11]{EtCohDiam}, we have  
    \[Rq'_{!}\Lambda \otimes_\Lambda \delta_b\simeq Rq'_{!}{q'}^\ast \delta_b=: R\Gamma_c(\mathcal{M}_{G,b,\mu,\infty}, \delta_b).\] 
    Now combine these with \cite[Proposition 5.12]{Mantovan} to conclude the desired formula.
\end{proof}

\subsubsection{} In this section we compare our definition of $R
\Gamma_c(\mathcal{M}_{G,b,\mu,\infty,C}, \Lambda)$ with a potentially different definition in the literature, see e.g. \cite[discussion before
Theorem~1.13]{Hamann-Lee} or \cite[page~90]{Hamann22}. We consider the structure
map $a \colon \mathcal{M}_{G,b,\mu,\infty,C} \to \spd C$. For a compact open
subgroup $K \subset G(\qp)$ we consider the quotient $\mathcal{M}_{G,b,\mu,K,C}
:= \mathcal{M}_{G,b,\mu,\infty,C}/\underline{K}$, which is the local Shimura
variety of level $K$ associated to $(G,b,\mu)$; this is a smooth rigid space
over $\spa C$, see \cite[Section 24.1]{ScholzeWeinsteinBerkeley}.

Let us use $R\Gamma_c$ to denote the shriek pushforward along the structure map
to $\spd C$. By definition, see \cite[Definition~22.13]{EtCohDiam}, we have
\[
  R\Gamma_c(\mathcal{M}_{G,b,\mu,K,C}, A) = \varinjlim_U R\Gamma_c(U, A
  \vert_U),
\]
where $U \subseteq \mathcal{M}_{G,b,\mu,K,C}$ ranges over quasi-compact opens
(see the discussion before \cite[Theorem~IX.3.1]{FarguesScholze}). Moreover, on
\cite[page~90]{Hamann22}, Hamann considers the complex
\[
  R\Gamma_c(G,b,\mu) := \varinjlim_{K} R\Gamma_c(\mathcal{M}_{G,b,\mu,K,C},
  \mathcal{S}_\mu),
\]
where $\mathcal{S}_\mu$ is the perverse sheaf in Section~\ref{Sec:Cohomology}. 

\begin{Prop} \label{Cor:CompactlySupportedDirectLimit}
  There is a natural isomorphism 
  \[
    R\Gamma_c(G,b,\mu) \xrightarrow{\sim}
    R\Gamma_c(\mathcal{M}_{G,b,\mu,\infty,C}, \mathcal{S}_\mu).
  \]
\end{Prop}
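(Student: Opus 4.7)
The plan is to construct the natural map and verify it is an isomorphism by reducing both sides to filtered colimits indexed by quasi-compact opens at finite level. For each compact open $K \subseteq G(\qp)$, the projection $\pi_K \colon \mathcal{M}_{G,b,\mu,\infty,C} \to \mathcal{M}_{G,b,\mu,K,C}$ is a pro-finite-\'etale $\underline{K}$-torsor, which is qcqs and proper, being a cofiltered limit of finite \'etale maps with proper transition morphisms. Because $\mathcal{S}_\mu$ is pulled back from the local Hecke stack, $\pi_K^* \mathcal{S}_\mu \simeq \mathcal{S}_\mu$, and the unit of adjunction $\mathcal{S}_\mu \to R\pi_{K,*}\pi_K^*\mathcal{S}_\mu \simeq R\pi_{K,!}\mathcal{S}_\mu$ induces, upon applying the structure pushforward to $\spd C$, a morphism $R\Gamma_c(\mathcal{M}_{G,b,\mu,K,C}, \mathcal{S}_\mu) \to R\Gamma_c(\mathcal{M}_{G,b,\mu,\infty,C}, \mathcal{S}_\mu)$. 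These maps are compatible under refinement of $K$ and produce the natural morphism in the proposition after passing to the colimit.

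To verify that this map is an isomorphism, I compare both sides as filtered colimits over quasi-compact opens. By definition of $R\Gamma_c$ on a locally spatial diamond,
\[
  R\Gamma_c(\mathcal{M}_{G,b,\mu,\infty,C}, \mathcal{S}_\mu) = \varinjlim_U R\Gamma_c(U, \mathcal{S}_\mu),
\]
where $U$ ranges over the quasi-compact open sub-diamonds of $\mathcal{M}_{G,b,\mu,\infty,C}$. Since $\mathcal{M}_{G,b,\mu,\infty,C} = \varprojlim_K \mathcal{M}_{G,b,\mu,K,C}$ with qcqs (in fact finite \'etale) transition maps, the general theory of spectral spaces ensures that the qc opens of the limit of the form $\pi_K^{-1}(U_K)$, for $U_K$ a qc open at some finite level $K$, are cofinal. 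Moreover, for a fixed such $U_K$, the preimage $\pi_K^{-1}(U_K) = \varprojlim_{K' \subseteq K} \pi_{K',K}^{-1}(U_K)$ is a cofiltered limit of qcqs diamonds with finite \'etale transition morphisms, and the commutation of $R\Gamma_c$ with such limits gives
\[
  R\Gamma_c(\pi_K^{-1}(U_K), \mathcal{S}_\mu) \simeq \varinjlim_{K' \subseteq K} R\Gamma_c(\pi_{K',K}^{-1}(U_K), \mathcal{S}_\mu).
\]
Re-indexing the resulting double colimit over all pairs $(K', U_{K'})$ with $U_{K'}$ a qc open of $\mathcal{M}_{G,b,\mu,K',C}$ recovers the colimit $\varinjlim_{K'} R\Gamma_c(\mathcal{M}_{G,b,\mu,K',C}, \mathcal{S}_\mu) = R\Gamma_c(G,b,\mu)$, and tracing through adjunctions identifies the resulting isomorphism with the natural map constructed above.

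The main obstacle is justifying the two compatibility statements: cofinality of qc opens pulled back from finite levels, and the commutation of $R\Gamma_c$ with qcqs cofiltered limits with finite \'etale transition maps. Both should be extracted from the general formalism of locally spatial diamonds in \cite{EtCohDiam}, using the description of the underlying spectral space of $\mathcal{M}_{G,b,\mu,\infty,C}$ as the inverse limit of the spectral spaces $\lvert \mathcal{M}_{G,b,\mu,K,C} \rvert$, together with the compatibility of \'etale cohomology in its $!$-variant with such limits (cf.\ the behavior at infinite level already used in the proofs of Proposition~\ref{Prop: HTfiber} and Theorem~\ref{Thm: WeilCohoShiVar}).
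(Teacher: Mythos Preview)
Your proof is correct and follows essentially the same approach as the paper's: both reduce to showing that for a fixed quasi-compact open $U$ at some finite level $K$, the natural map $\varinjlim_{K' \subseteq K} R\Gamma_c(\pi_{K',K}^{-1}(U), \mathcal{S}_\mu) \to R\Gamma_c(\pi_K^{-1}(U), \mathcal{S}_\mu)$ is an isomorphism, using that pro-finite-\'etale maps are proper so $R(-)_! = R(-)_*$ on the tower. The paper makes your ``commutation with limits'' step precise by invoking \cite[Proposition~14.9]{EtCohDiam} (via its relative version in the proof of \cite[Proposition~20.7]{EtCohDiam}) at the sheaf level over $U$, and then applying \cite[Proposition~22.20]{EtCohDiam} to commute $R\Gamma_c(U,-)$ with the resulting filtered colimit.
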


\begin{proof}
  For a compact open subgroup $K \subseteq G(\qp)$ and a quasi-compact open $U
  \subset \mathcal{M}_{G,b,\mu,K}$ we consider its inverse image $U_{\infty}
  \subset \mathcal{M}_{G,b,\mu,\infty}$ (which is again a quasi-compact open)
  with projection map $a_{U,\infty} \colon U_{\infty} \to U$. Choose a
  decreasing sequence $\{K_i\}_{i \in I}$ of compact open subgroups of $K$ with
  $\bigcap_i K_i=\{1\}$, and consider $a_{U,i} \colon U_i :=
  U_{\infty}/\underline{K_i} \to U$ and $b_{U,i} \colon U_\infty \to U_i$.

  Note that $a_{U,i}$, $b_{U,i}$, $a_{U,\infty}$ are all pro-finite \'{e}tale,
  hence proper and of zero $\mathrm{dim.trg.}$. Thus the natural maps
  $\mathcal{S}_\mu \to Rb_{U,i,\ast} \mathcal{S}_\mu = Rb_{U,i,!}
  \mathcal{S}_\mu$ induce a map
  \[
    \varinjlim_i Ra_{U,i,!} \mathcal{S}_\mu \to Ra_{U,\infty,!} \mathcal{S}_\mu.
  \]
  It follows from \cite[Proposition~14.9]{EtCohDiam} (or rather its relative
  version used in the proof of \cite[Proposition~20.7]{EtCohDiam}) that this is an
  isomorphism. Applying the (derived) shriek pushforward along $U \to \spd C$,
  and using that it commutes with all colimits,
  \cite[Proposition~22.20]{EtCohDiam}, we get an isomorphism
  \[
    \varinjlim_i R\Gamma_c(U_i, \mathcal{S}_\mu) \to R\Gamma_c(U_{\infty},
    \mathcal{S}_\mu).
  \]
  We now take the direct limit over $U$ to obtain the desired
  isomorphism.
\end{proof}

\subsection{Perversity} \label{Sec:PerversityMain} We continue using the notation in Section \ref{Sub:DualizingSheaf}. In this section we prove Theorem~\ref{Thm:IntroPerversity}, which is a perversity result for the relative cohomology of the Igusa stack over $\bungmuk$. This will be a key input to our study of the generic part of the torsion cohomology of compact Hodge type Shimura varieties in Section~\ref{Sec:TorsionVanishing}. We make the following assumption for the remainder of Section \ref{Sec:PerversityMain}.

\begin{Assump}
    The map $\mathbf{Sh}_{K} \to \spec E$ is proper.
\end{Assump}

Under this assumption, the good reduction locus agrees with the whole Shimura variety, see \cite[Example 5.20]{ImaiMieda}. Therefore the Cartesian diagram in Theorem \ref{Thm:HodgeMain} becomes (after base change to $k$) 
\begin{equation} \label{Eq:Cartesian}
\begin{tikzcd}
    \mathbf{Sh}_{K^p, \ebreve}^{\lozenge} \ar[r,"\pi_\mathrm{HT}"]\ar[d,"q_{\mathrm{Igs}}"] & 
    \operatorname{Gr}_{G, \mu^{-1}, \ebreve} \ar{d}{\operatorname{BL}}\\
    {\igsk} \ar[r,"\overline{\pi}_\mathrm{HT}"] & \bungmuk.
\end{tikzcd}
\end{equation}
Let us write $\mathcal{F}\coloneqq R\overline{\pi}_{\mathrm{HT},\ast}\Lambda =
R\overline{\pi}_{\mathrm{HT},!}\Lambda$ as before.

\begin{Prop} \label{Prop:semiperverse}
If $\mathbf{Sh}_{K}$ is proper over $\spec E$, then 
\[ i_{b}^\ast \mathcal{F} \in D^{\leq d_b}(\bun^{[b]}_{G,k}, \Lambda). \]
\end{Prop}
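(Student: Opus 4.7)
The plan is to identify $i_b^\ast \mathcal{F}$ with the cohomology of the perfect Igusa variety and then invoke Artin vanishing. More precisely, by Lemma~\ref{Lem:CohoIgsb} we have a $G_b(\qp)$-equivariant isomorphism
\[
  i_b^\ast \mathcal{F} \simeq R\Gamma(\mathrm{Ig}^b_{K^p},\Lambda),
\]
so it suffices to show that $R\Gamma(\mathrm{Ig}^b_{K^p},\Lambda) \in D^{\le d_b}(\Lambda)$ as a complex of $\Lambda$-modules with a smooth $G_b(\qp)$-action. By Corollary~\ref{Cor:InverseLimitOfFiniteType}, $\mathrm{Ig}^b_{K^p}$ is an inverse limit, with finite \'etale transition maps, of Igusa varieties $\mathrm{Ig}^b_{K}$ of finite level $K = K_p K^p$, each of which is a pro-\'etale torsor under a profinite group over the central leaf $C^b_K \subset \shg$. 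Since filtered colimits are $t$-exact for the standard $t$-structure on $D(\Lambda)$, and since \'etale cohomology of a perfect scheme agrees with that of its underlying scheme of finite type, it is enough to show that for each finite-type cover $X$ of $C^b_K$ (coming from pulling back a finite \'etale cover of $C^b_K$ along some level lowering) we have $R\Gamma(X,\Lambda) \in D^{\le d_b}(\Lambda)$.

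I would first reduce to the case where $K_p$ is an Iwahori subgroup: given the parahoric $\calg$, choose an Iwahori $\calg' \subset \calg$. The induced map $C^b_{K_p'K^p} \to C^b_{K_pK^p}$ is finite \'etale and surjective on central leaves that dominate our fixed leaf, so the inverse system of Igusa varieties at Iwahori level is cofinal, up to renaming $K^p$, with the one at parahoric level. This reduction is where we use Conjecture~\ref{Conj:SchemeTheoreticLocalModelDiagram}: under its assumption, Lemma~\ref{Lem:DimensionLeaves} gives that $C^b_K$ (at Iwahori level) is the perfection of a smooth and equidimensional scheme of dimension exactly $d_b$. Since the Igusa variety $\mathrm{Ig}^b_K$ is pro-\'etale over $C^b_K$, each finite-type slice $X$ appearing above is the perfection of a smooth scheme of dimension $d_b$ as well.

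The main remaining step, and the one I expect to be the substantive geometric input, is to invoke the affineness of the Igusa varieties proved in the forthcoming PhD thesis of Mao (see \cite{MaoCompact}): more precisely, that $\mathrm{Ig}^b_K$ is affine. This implies that each of the finite-type pieces $X$ above is affine, since they appear as finite \'etale quotients of finite-level Igusa varieties. With affineness in hand, Artin vanishing for affine schemes of dimension $d_b$ (applied to the underlying scheme of finite type before perfection, and noting that \'etale cohomology is insensitive to perfection) gives
\[
  R\Gamma(X, \Lambda) \in D^{\le d_b}(\Lambda).
\]
Passing to the filtered colimit over these covers (first in the tower of finite \'etale maps defining $\mathrm{Ig}^b_K$, then in $K^p$) and using $t$-exactness of filtered colimits, we conclude
\[
  i_b^\ast \mathcal{F} \simeq R\Gamma(\mathrm{Ig}^b_{K^p},\Lambda) \in D^{\le d_b}(\bun^{[b]}_{G,k},\Lambda),
\]
as desired. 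The properness of $\mathbf{Sh}_K$ entered only to ensure the Cartesian diagram \eqref{Eq:Cartesian}, so that $\mathcal{F}$ is literally defined as $R\overline{\pi}_{\mathrm{HT},\ast}\Lambda$ with no boundary corrections. The principal obstacle is the affineness of $\mathrm{Ig}^b$, which is the content of Mao's thesis and is not proven within this paper.
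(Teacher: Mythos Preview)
Your approach is essentially the same as the paper's: identify $i_b^\ast\mathcal{F}$ with $R\Gamma(\mathrm{Ig}^b_{K^p},\Lambda)$ via Lemma~\ref{Lem:CohoIgsb}, use Lemma~\ref{Lem:DimensionLeaves} (at Iwahori level) for the dimension of central leaves, invoke Mao's affineness, and conclude by Artin vanishing after writing the cohomology as a filtered colimit. Two remarks are in order.

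First, your reduction to Iwahori level is more elaborate than needed. By Proposition~\ref{Prop:IndependenceOfParahoric} the Igusa stack, and hence $\mathrm{Ig}^b_{K^p}$, does not depend on the choice of parahoric $K_p$ at all. So you may simply present $\mathrm{Ig}^b_{K^p}$ as a pro-\'etale torsor over the Iwahori central leaf directly, without comparing central leaves at different parahoric levels.

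Second, and more substantively, your final paragraph misidentifies where properness is used. The sheaf $\mathcal{F}$ is always $R\overline{\pi}_{\mathrm{HT},\ast}\Lambda$, and Lemma~\ref{Lem:CohoIgsb} holds without any properness hypothesis; the Cartesian diagram \eqref{Eq:Cartesian} simply records that the good reduction locus is the whole Shimura variety, which is irrelevant here. The actual role of properness is in Mao's result: the affineness of the central leaves (equivalently of the Igusa varieties) requires the integral model $\scrs_K$ to be proper, which follows from properness of $\mathbf{Sh}_K$ by Madapusi \cite[Corollary~4.1.7]{Madapusi}. Without properness the Igusa varieties need not be affine and Artin vanishing fails, so this is not a cosmetic point.
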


\begin{proof}
Using Lemma \ref{Lem:CohoIgsb}, it suffices to show that $R\Gamma(\operatorname{Ig}^{b}_{K^p}, \Lambda)$ is concentrated in degrees $\le d_b$. We deduce from Proposition \ref{Prop:CentLeafDim} below that central leaves in the Newton stratum associated to $b$ are equidimensional of dimension $d_b$. It follows from \cite[Proposition 5.14.(4)]{MaoCompact} that they are moreover affine (using the properness of $\scrs_{K}$, which follows from the properness of $\mathbf{Sh}_K$ by a theorem of Madapusi \cite[Corollary 4.1.7]{Madapusi}). 

Thus the Igusa variety is a pro-finite \'etale cover of the perfection of an affine $k$-scheme of dimension $d_b$, see Corollary \ref{Cor:InverseLimitOfFiniteType}. Since the \'etale site is invariant under perfection, the Igusa variety is also the perfection of an inverse limit of $d_b$-dimensional $k$-schemes, with finite \'etale transition maps. Using \cite[Tag 09YQ]{stacks-project}, cohomology of the Igusa variety is a colimit of that of those $k$-schemes in the inverse system. Hence the cohomological dimension bound follows from Artin vanishing, see \cite[Corollaire 3.5, Partieme XIV, Tome 3]{SGA4}.
\end{proof}

The main result of this section is the following. 

\begin{Thm}\label{Thm: Perversity}
  If $\mathbf{Sh}_K$ is proper over $\spec E$, then
  $\mathcal{F}$ is perverse for the perverse $t$-structure on $D(\bungmuk,
  \Lambda)$.
\end{Thm}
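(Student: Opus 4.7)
The plan is to combine the semi-perversity established in Proposition~\ref{Prop:semiperverse} with a Verdier self-duality argument. Given $\mathcal{F}\in {}^pD^{\leq 0}(\bungmuk,\Lambda)$, it then suffices to show $\mathcal{F}\in {}^pD^{\geq 0}(\bungmuk,\Lambda)$, for which self-duality combined with the (to be checked) self-duality of the perverse $t$-structure does the job.

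First I would verify that the perverse $t$-structure of Proposition~\ref{Prop:PerverseTStruct} is exchanged by Verdier duality, i.e., $\mathbb{D}$ sends ${}^pD^{\leq 0}$ to ${}^pD^{\geq 0}$. Using $i_b^!\circ \mathbb{D}\simeq \mathbb{D}\circ i_b^*$, this reduces to showing that Verdier duality on $\bungk^{[b]}$ exchanges $D^{\leq d_b}$ and $D^{\geq d_b}$. Since by Proposition~\ref{Prop:DualizingComplexBunGb} the dualizing complex of $\bungk^{[b]}$ is $\delta_b^{-1}[-2d_b]$, i.e., concentrated in cohomological degree $2d_b$, this is a direct computation.

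Next I would show that $\mathcal{F}$ is Verdier self-dual. By Theorem~\ref{Thm:IgusaDualizingComplex} and Theorem~\ref{Thm:DualizingBunG}, both $\operatorname{Igs}_{K^p,k}$ and $\bungk$ have dualizing sheaf $\Lambda[0]$ relative to $\spd k$, so Verdier duality on both sides is simply $R\mathcal{H}om(-,\Lambda)$. Functoriality of Verdier duality then yields
\[
\mathbb{D}(\mathcal{F})=\mathbb{D}(R\overline{\pi}_{\mathrm{HT},*}\Lambda) \simeq R\overline{\pi}_{\mathrm{HT},!}\mathbb{D}_{\operatorname{Igs}}(\Lambda)\simeq R\overline{\pi}_{\mathrm{HT},!}\Lambda.
\]
It remains to identify $R\overline{\pi}_{\mathrm{HT},!}\Lambda \simeq R\overline{\pi}_{\mathrm{HT},*}\Lambda=\mathcal{F}$. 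This can be tested v-locally on $\bungmuk$ by pulling back along the v-surjection $\operatorname{BL}$ of Corollary~\ref{Cor:VSurjectiveBL}. Using qcqs base change for $R(-)_*$ (via the fact that $\overline{\pi}_{\mathrm{HT}}$ is qcqs, as noted in Lemma~\ref{Lem:CohoIgsb}) together with base change for $R(-)_!$ applied to the Cartesian diagram \eqref{Eq:Cartesian}, both objects pull back to the corresponding pushforwards along $\pi_{\mathrm{HT}}$, and the desired identity reduces to the comparison $R\pi_{\mathrm{HT},*}\Lambda\simeq R\pi_{\mathrm{HT},!}\Lambda$ for the Hodge--Tate period map on the compact Shimura variety, which holds because $\mathbf{Sh}_K$ is proper so that at any finite level $K_p$ the map $\pi_{\mathrm{HT},K_p}$ is proper.

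The main obstacle is the v-descent step: one must carefully justify that pullback along $\operatorname{BL}$ commutes with both $R\overline{\pi}_{\mathrm{HT},*}$ and $R\overline{\pi}_{\mathrm{HT},!}$ in the setting of small v-stacks, and that the identity $R\pi_{\mathrm{HT},*}\simeq R\pi_{\mathrm{HT},!}$ known at each finite $K_p$ upgrades to the infinite-level statement needed after pullback along $\operatorname{BL}$. Once this is in place, combining Step 1 and Verdier self-duality gives $\mathcal{F}\in {}^pD^{\leq 0}\cap {}^pD^{\geq 0}$, i.e., $\mathcal{F}$ lies in the heart.
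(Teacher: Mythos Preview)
Your approach is essentially the same as the paper's: semi-perversity from Proposition~\ref{Prop:semiperverse} together with Verdier self-duality of $\mathcal{F}$, then deduce the co-stalk bound from the stalk bound via duality on strata. The paper organizes the last step slightly differently, computing directly $Ri_b^!\mathcal{F}\simeq Ri_b^!\mathbb{D}\mathcal{F}\simeq \mathbb{D}_{\bun_{G,k}^{[b]}} i_b^\ast\mathcal{F}\in D^{\ge d_b}$ using Proposition~\ref{Prop:DualizingComplexBunGb}, which amounts to your observation that duality swaps ${}^pD^{\le 0}$ and ${}^pD^{\ge 0}$.

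The one place where you take a detour is the identification $R\overline{\pi}_{\mathrm{HT},!}\Lambda\simeq R\overline{\pi}_{\mathrm{HT},\ast}\Lambda$. You try to check this after pullback along $\mathrm{BL}$, which forces you to worry about compatibility of both pushforwards with base change and about passing from finite to infinite level. The paper bypasses this entirely: properness is v-local on the target, and the pullback of $\overline{\pi}_{\mathrm{HT}}$ along the v-surjection $\mathrm{BL}$ is the Hodge--Tate period map from the compact Shimura variety, which is proper. Hence $\overline{\pi}_{\mathrm{HT}}$ is itself proper, and $R\overline{\pi}_{\mathrm{HT},!}=R\overline{\pi}_{\mathrm{HT},\ast}$ holds directly, with no descent argument needed. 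This dissolves the ``main obstacle'' you flag.
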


\begin{proof} 
  Since $\overline{\pi}_\mathrm{HT}$ is proper we have
  $R\overline{\pi}_{\mathrm{HT},!} \Lambda = R\overline{\pi}_{\mathrm{HT},\ast}
  \Lambda$. Let us write $\mathbb{D}_X$ for the Verdier duality functor with
  respect to $X \to \spd k$. Then it follows from relative Verdier duality, see
  \cite[Proposition~23.3.(i)]{EtCohDiam}, we have
  \[
    \mathbb{D}_{\bungmuk} \mathcal{F} \simeq \mathbb{D}_{\bungmuk}
    R\overline{\pi}_{\mathrm{HT},!} \Lambda \simeq
    R\overline{\pi}_{\mathrm{HT},\ast} \mathbb{D}_{\igsk} \Lambda \simeq
    \mathcal{F}.
  \]
  Here the last isomorphism follows from
  Theorem~\ref{Thm:IgusaDualizingComplex}. Recall from
  Proposition~\ref{Prop:semiperverse} that $i_b^\ast \mathcal{F} \in D^{\le
  d_b}(\bun^{[b]}_{G,k}, \Lambda)$ for $[b] \in \bgmu$. On the other hand, again
  by relative Verdier duality for $i_b$, see
  \cite[Proposition~22.3.(ii)]{EtCohDiam}, we have
  \[
    Ri_b^! \mathcal{F} \simeq Ri_b^! \mathbb{D}_{\bungmuk} \mathcal{F} \simeq
    \mathbb{D}_{\bungk^{[b]}} i_b^\ast \mathcal{F} \in D^{\ge
    d_b}(\bungk^{[b]}, \Lambda)
  \]
  as the dualizing sheaf on $\bungk^{[b]}$ is concentrated in degree $2d_b$,
  see Proposition~\ref{Prop:DualizingComplexBunGb}.
\end{proof}

\subsection{Dimensions of central leaves} We now compute the dimensions of the central leaves $C^b$ introduced in Section \ref{Sec:CentralLeaves}. Let $\Lambda$ be a $\zl$ algebra in which $\ell$ is nilpotent, let $K_p \subset G(\qp)$ be a parahoric subgroup and let $K=K_pK^p$. 

\begin{Prop} \label{Prop:CentLeafDim}
  For each $b \colon \spec \fpbar \to \shtglocmu$, the dualizing sheaf (with
  coefficients in $\Lambda$) of the central leaf
  $C_K^{b,\diamond} \to \spd \fpbar$ is invertible and concentrated in
  degree $-2d_b$. In particular, the central leaf $C_K^b$ has pure dimension
  $d_b = \langle 2\rho, \nu_b \rangle$.
\end{Prop}

Let $\widehat{\mathscr{S}}_{K_p}\gx_{\mathcal{O}_{\ebreve}}^{[b]}$
  be the formal completion of the locally closed subscheme
  $\mathscr{S}_{K_p}\gx_{\fpbar}^{[b]} \hookrightarrow
  \mathscr{S}_{K_p}\gx_{\mathcal{O}_{\ebreve}}$. Let $ \mathbf{Sh}_K\gx_{\ebreve}^{\circ,\lozenge,]b[} \subseteq
    \mathbf{Sh}_K\gx_{\ebreve}^{\circ,\lozenge}$ denote the induced inclusion after taking adic generic fibers and then applying the big diamond functor. 
    
\begin{Lem} \label{Lem:Specialization}
  There is an inclusion
  $(\widehat{\mathscr{S}}_{K_p}\gx_{\mathcal{O}_{\ebreve}}^{[b]})^\lozenge
  \subseteq \mathscr{S}_{K_p}\gx_{\mathcal{O}_{\ebreve}}^{\diamond,[b]}$.
\end{Lem} 

\begin{proof}
 It
  suffices to consider rank $1$ geometric points since for any perfectoid space
  $S$ and map $S \to \bung$, the $[b]$-stratum $S^{[b]} \subseteq S$ is stable
  under vertical specializations. Let $C$ be an algebraically closed complete
  non-archimedean field with residue field $l = \mathcal{O}_C/\mathfrak{m}_C$,
  and let $y \colon \spf \mathcal{O}_C \to \mathscr{S}_K\gx$ be a map whose
  restriction $x \in \mathscr{S}_K\gx(l)$ lies in the $[b]$-Newton stratum. Upon
  choosing a section $l \to \mathcal{O}_C^\flat$ using
  Lemma~\ref{Lem:ResidueFieldSection}, the map $y$ factors through
  \[
    \tilde{y} \colon \spf \mathcal{O}_C \to
    \widehat{\mathscr{S}_K\gx_{W_{\mathcal{O}_E}(l)}}_{/x}.
  \]
  By the Pappas--Rapoport axioms
  \cite[Conjecture~4.2.2.(c)]{PappasRapoportShtukas},
  \cite[Definition~4.1.2.(iv)]{Companion}, the right hand side is identified
  under $\Theta_x$ with a subsheaf of
  $\mathcal{M}_{\mathcal{G},b_x,\mu}^\mathrm{int}$. Then the induced map $\spd
  \mathcal{O}_C \to \bung$ lands in $\bung^{[b]}$, which is the stratum
  corresponding to $[b_x]$, cf.\ Lemma~\ref{Lem:UnderlyingUniformization}.    
\end{proof}

Consider the following diagram, where the bottom horizontal map is induced by Corollary \ref{Cor:ProductFormula} and the left vertical map by Corollary \ref{Cor:CanonicalCompactification}.
\begin{equation} \label{Eq:ProductFormulaLemmaDiagram}
    \begin{tikzcd}
    \mathrm{Ig}^b\gx^\diamond \times_{\spd \fpbar} \mintgmu \arrow[hook]{d} \arrow[dashed]{r} & \bigl(\widehat{\mathscr{S}}_{K_p}\gx_{\mathcal{O}_{\ebreve}}^{[b]} \bigr)^\lozenge \arrow[hook]{d} \\
         \igvinfbgx \times_{\spd \fpbar} \mintgmu \arrow{r} &
    \mathscr{S}_{K_p}\gx_{\mathcal{O}_{\ebreve}}^{\diamond,[b]}.
    \end{tikzcd}
\end{equation} 
We now prove that there exists a dashed arrow making the diagram commute. 

\begin{Lem} \label{Lem:CentLeafOpenSub}
The map $\mathrm{Ig}^b\gx^\diamond \times_{\spd \fpbar} \mintgmu \to \mathscr{S}_{K_p}\gx_{\mathcal{O}_{\ebreve}}^{\diamond,[b]}$ of \eqref{Eq:ProductFormulaLemmaDiagram} factors through 
$\bigl(\widehat{\mathscr{S}}_{K_p}\gx_{\mathcal{O}_{\ebreve}}^{[b]} \bigr)^\lozenge$.
\end{Lem}

\begin{proof}
Because $\bigl(\widehat{\mathscr{S}}_{K_p}\gx_{\mathcal{O}_{\ebreve}}^{[b]} \bigr)^\lozenge$ is an open subsheaf, see \cite[Proposition 4.22]{GleasonSpecialization}, it suffices to check the statement on geometric points. Let $C \supset \fpbar$ be an algebraically closed complete non-archimedean field and let $C^+ \subset C$ be a bounded open valuation subring. Given a $\spa(C, C^+)$-point of the left hand side, we first claim that it extends to a $\spd(C^+, C^+)$-point. Indeed, the extension to $\spd(C^+, C^+) \to \mathrm{Ig}^b\gx^\diamond$ is clear from the construction of the functor $(-)^\diamond$ from Section~\ref{Sub:AdicSpaces}. The extension on $\mintgmu$ follows from the fact that any framed $\mathcal{G}$-shtuka over an untilt $(C^\sharp, C^{\sharp+})$ extends to a $\mathcal{G}$-Breuil--Kisin--Fargues module by the extension result of Ansch\"utz \cite[Theorem~9.10]{AnschuetzExtension}, cf.\ \cite[Theorem~2.10]{GleasonShtukas}.

  Hence, given a $\spa(C, C^+)$-point of $\mathrm{Ig}^b\gx^\diamond
  \times_{\fpbar} \mintgmu$, it maps to a $\spa(C, C^+)$-point of the target
  $\mathscr{S}_{K_p}\gx_{\mathcal{O}_{\ebreve}}^{\diamond,[b]}$ that further
  extends to a $\spd(C^+, C^+)$-point. We claim that every map $f \colon
  \spd(C^+, C^+) \to
  \mathscr{S}_{K_p}\gx_{\mathcal{O}_{\ebreve}}^{\diamond,[b]}$ automatically
  factors through the open subsheaf
  $(\widehat{\mathscr{S}}_{K_p}\gx_{\mathcal{O}_{\ebreve}}^{[b]})^\lozenge$. We
  first algebraize $f$. The restriction $f \vert_{\spa(C, C^+)}$, corresponds to
  a map $f^\sharp \colon \spf C^{\sharp+} \to
  \widehat{\mathscr{S}}_{K_p}\gx_{\mathcal{O}_{\ebreve}}$ by construction of
  $(-)^\diamond$ and Lemma~\ref{Lem:DiamondOfFormalScheme}, and this induces a
  map $(f^\sharp)^\lozenge \colon \spd(C^{\sharp+}, C^{\sharp+}) \to
  \mathscr{S}_{K_p}\gx_{\mathcal{O}_{\ebreve}}^\diamond$. Both $f$ and
  $(f^\sharp)^\lozenge$ are extensions of the same $(C, C^+)$-point, and so
  \cite[Proposition~4.9, Proposition~4.17]{GleasonSpecialization} shows that $f
  = (f^\sharp)^\lozenge$. Then $f^\sharp \vert_{\spec (C^+/C^{\circ\circ})}
  \colon \spec C^+/C^{\circ\circ} \to \mathscr{S}_{K_p}\gx_{\fpbar}$ has image
  contained in $\mathscr{S}_{K_p}\gx_{\fpbar}^{\diamond,[b]}$ upon applying
  $(-)^\diamond$. This means that the corresponding $G$-isocrystal on
  $C^+/C^{\circ\circ}$ factors through $\gisoc^{[b]}$, and hence $f^\sharp$
  sends $\spec(C^+/C^{\circ\circ}) \to \mathscr{S}_{K_p}\gx_{\fpbar}^{[b]}$.
  Therefore $f^\sharp \colon \spf C^{\sharp+} \to
  \widehat{\mathscr{S}}_{K_p}\gx_{\mathcal{O}_{\ebreve}}^{[b]}$, and hence $f =
  (f^\sharp)^\lozenge$, has image lying in
  $(\widehat{\mathscr{S}}_{K_p}\gx_{\mathcal{O}_{\ebreve}}^{[b]})^\lozenge$.
\end{proof}
 
\begin{proof}[Proof of Proposition~\ref{Prop:CentLeafDim}]
  We first prove that the dualizing sheaf of $C_K^{b,\diamond} \to \spd \fpbar$
  is invertible and concentrated in degree $-2d_b$. Recall from the proof of
  Corollary~\ref{Cor:InverseLimitOfFiniteType} that we have profinite
  $\Gamma_b$-torsor $\mathrm{Ig}^b_{K^p}\gx \to C_K^b$ of perfect schemes, where
  $\Gamma_b \subseteq G_b(\qp)$ is a compact open subgroup, and this induces a
  $\underline{\Gamma_b}$-torsor $\mathrm{Ig}^b_{K^p}\gx^\diamond \to
  C_K^{b,\diamond}$. On the other hand, we have an open embedding
  $\mathrm{Ig}^b_{K^p}\gx^\diamond \hookrightarrow
  \mathrm{Ig}^{b,\mathrm{v}}_{K^p}\gx$ of v-sheaves by
  Corollary~\ref{Cor:CanonicalCompactification}.

  Using the section $\underline{G_b(\qp)} \to \tilde{G}_b$, we consider the
  v-sheaves
  \begin{equation} \label{eq:CentLeafDual}
    [\mathrm{Ig}^{b,\mathrm{v}}_{K^p}/\underline{\Gamma_b}] \xleftarrow{f}
    \mathrm{Ig}^{b,\mathrm{v}}_{K^p} \times_{\spd \fpbar}^{\underline{\Gamma_b}}
    \mathcal{M}_{G,b,\mu,K_p} \xrightarrow{g} \mathrm{Ig}^{b,\mathrm{v}}_{K^p}
    \times_{\spd \fpbar}^{\tilde{G}_b} \mathcal{M}_{G,b,\mu,K_p} =
    \mathbf{Sh}_K\gx_{\ebreve}^{\circ,\lozenge,[b]},
  \end{equation}
  where the last identification comes from
  Corollary~\ref{Cor:CanonicalCompactification}. The map $f$ is a base change of
  $[\underline{\Gamma_b} \backslash \spd \fpbar] \leftarrow [\underline{\Gamma_b}
  \backslash \mathcal{M}_{G,b,\mu,K_p}]$, which is v-surjective and
  $\ell$-cohomologically smooth of dimension $1 + \langle 2\rho, \mu \rangle$ by
  \cite[Proposition~23.15, Proposition~24.4, Proposition~24.5]{EtCohDiam}
  together with the smoothness of $\mathcal{M}_{G,b,\mu,K_p}$. By applying
  \cite[Proposition~23.15]{EtCohDiam} again, we deduce that $f$ is v-surjective
  and $\ell$-cohomologically smooth of dimension $1 + \langle 2\rho, \mu
  \rangle$ as well.

  On the other hand, the map $g$ is a base change of $[\spd
  \fpbar/\underline{\Gamma_b}] \to [\spd \fpbar/\tilde{G}_b]$, which is a
  composition of an \'etale map $[\spd \fpbar/\underline{\Gamma_b}] \to [\spd
  \fpbar/\underline{G_b(\qp)}]$ with the map $[\spd \fpbar/\underline{G_b(\qp)}]
  \to [\spd \fpbar/\tilde{G}_b]$ whose fiber is $\tilde{G}_b /
  \underline{G_b(\qp)} \cong \tilde{G}_b^{>0}$, see
  \cite[Section~III.5.1]{FarguesScholze}. As we see from
  \cite[Proposition~II.2.5, Proposition~III.5.1]{FarguesScholze}, the unipotent
  part $\tilde{G}_b^{>0}$ is an iterated extension of $\spd
  \fpbar[[t^{1/p^\infty}]]$, and hence $\ell$-cohomologically smooth of
  dimension $\langle 2\rho, \nu_b \rangle$ by \cite[Theorem~24.1]{EtCohDiam}.
  Again by applying \cite[Proposition~23.15]{EtCohDiam} twice, we deduce that
  $g$ is $\ell$-cohomologically smooth of dimension $\langle 2\rho, \nu_b
  \rangle$.

  By Lemma~\ref{Lem:CentLeafOpenSub} and Corollary~\ref{Cor:CanonicalCompactification}, we have open v-subsheaves and maps
  \[
    C_K^{b,\diamond} = [\mathrm{Ig}^{b,\diamond}_{K^p} / \underline{\Gamma_b}]
    \xleftarrow{f_0} \mathrm{Ig}^{b,\diamond}_{K^p}
    \times_{\spd \fpbar}^{\underline{\Gamma_b}} \mathcal{M}_{G,b,\mu,K_p}
    \xrightarrow{g_0} \mathbf{Sh}_K\gx_{\ebreve}^{\circ,\lozenge,]b[}
  \]
  obtained by restricting the diagram \eqref{eq:CentLeafDual}. Since we have
  restricted $\ell$-cohomologically smooth morphisms to open subsheaves, the
  maps $f_0$ and $g_0$ are still $\ell$-cohomologically smooth of dimensions $1
  + \langle 2\rho, \mu \rangle$ and $\langle 2\rho, \nu_b \rangle$,
  respectively. The v-sheaf $\mathbf{Sh}_K\gx_{\ebreve}^{\circ,\lozenge,]b[}$ is
  representable by an open rigid analytic subvariety of
  $\mathbf{Sh}_K\gx_{\ebreve}^\circ$, and hence is $\ell$-cohomologically smooth
  over $\spd \fpbar$ of dimension $1 + \langle 2\rho, \mu \rangle$. Therefore
  \[
    f_0^\ast (\text{dualizing sheaf of } C_K^{b,\diamond} \to \spd \fpbar)
  \]
  is invertible and concentrated in degree $-2 \langle 2\rho, \nu_b \rangle =
  -2d_b$. As $f_0$ is v-surjective (as $f$ is) we conclude that the dualizing
  sheaf of $C_K^{b,\diamond} \to \spd \fpbar$ is invertible and concentrated in
  degree $-2d_b$.

  We now transfer this information to the scheme side. Since $C_K^b \to \spec
  \fpbar$ is perfectly of finite type, there exists a dense open perfectly
  smooth locus $C_K^{b,\mathrm{sm}} \subseteq C_K^b$ by
  \cite[Lemma~056V]{stacks-project} for example. Choose an arbitrary connected
  component $C \subseteq C_K^{b,\mathrm{sm}}$ of dimension $d$, so that it
  suffices to prove $d = d_b$. Further choose a closed point $x \in C(\fpbar)$
  so that $C_x^\wedge \cong \spf \fpbar[[t_1^{1/p^\infty}, \dotsc,
  t_d^{1/p^\infty}]]$. Because $(C_x^\wedge)^\lozenge \subseteq
  C_K^{b,\diamond}$ is open, we see that $(C_x^\wedge)^\lozenge \to \spd \fpbar$
  also has dualizing sheaf invertible and concentrated in degree $-2d_b$. On the
  other hand, \cite[Theorem~24.1]{EtCohDiam} implies that its dualizing sheaf is
  invertible and concentrated in degree $-2d$. This proves $d = d_b$ as desired.
\end{proof}

\section{The Eichler--Shimura relation} \label{Sec:EichlerShimura}
{
\def\Hck{\mathrm{Hck}}
\def\zspec{\mathcal{Z}^{\mathrm{spec}}}

In this section, we study the complex $\mathcal{F}\coloneqq R\overline{\pi}_{\mathrm{HT},\ast}\Lambda$ using the spectral action of \cite{FarguesScholze}. This gives refined information about the cohomology of Shimura varieties as representations of the Weil group. In particular, we prove Theorem~\ref{Thm:IntroEichlerShimura} and Theorem \ref{Thm:IntroCompatibilityFarguesScholze}. 

\subsubsection{}\label{subsub:NotationSection9} As before, we fix a Hodge-type Shimura datum $(\mathsf{G},\mathsf{X})$, with Hodge cocharacter $\mu$, and reflex field $\mathsf{E}$; we let $E$ be the completion of $\mathsf{E}$ at a place $v \mid p$ and $k_E=\mathbb{F}_q$ be the residue field of $E$. For a fixed compact open subgroup $K^p\subset \mathsf{G}(\afp)$ and varying $K_p\subset G(\qp)$, we set $K$ to be $ K_pK^p$ and write $\mathbf{Sh}_{K}\coloneqq\mathbf{Sh}_K\gx$ for the base change of the corresponding Shimura variety to $E$. 

\subsubsection{} \label{subsub:RMu} Let $\dualgrp{G}$ be the dual group of $G$ over $\zl[\sqrt{p}]$ equipped with its action of the absolute Galois group of $\qp$ and therefore an action of the Weil group $W_\qp$. One can then define the $L$-group ${}^LG$ as the semi-direct product $\dualgrp{G}\rtimes W_\qp$, see \cite[Section~2.1]{BuzzardGee}. Under the geometric Satake equivalence of Fargues--Scholze \cite[Theorem VI.0.2]{FarguesScholze}, the sheaf $\mathcal{S}_{\mu}$ of Section~\ref{subsub:HeckeOperators} corresponds to a representation $r_\mu$ of ${}^LG_E\coloneqq \dualgrp{G}\rtimes W_E$ over $\zl[\sqrt{p}]$, where we use the fixed  $\sqrt{p} \in \zl[\sqrt{p}]$ to trivialize the cyclotomic twist. The following proposition uniquely characterizes $r_{\mu}$. It is well known to experts; we have included a proof for the sake of completeness.
\begin{Prop} \label{Prop:Computationrmu}
    The restriction of the representation $r_{\mu}$ to $\dualgrp{G} \subset {}^LG$ is given by the representation $V_{\mu}$ of highest weight $\mu$. Moreover, the Weil group $W_E$ acts trivially on the highest weight vector in $V_{\mu}$.
\end{Prop}
\begin{proof}
We first recall the equivalence $\mathrm{Sat}(\mathcal{H}ck_{G,\Div},\zl[\sqrt{p}]) \xrightarrow{\sim} \operatorname{Rep}_{{}^LG}(\zl[\sqrt{p}])$ of \cite[Theorem VI.0.2]{FarguesScholze}. There is an exact symmetric monoidal functor
\begin{align}
    F: \mathrm{Sat}(\mathcal{H}ck_{G,\Div},\zl[\sqrt{p}]) &\to \operatorname{Rep}_{W_E}(\zl[\sqrt{p}]), \\ A &\mapsto 
    \bigoplus_i \mathcal{H}^i(R \pi_{G, \Div} A),
\end{align}
defined in \cite[Definition/Proposition VI.7.10, Proposition VI.10.1]{FarguesScholze}.
Using $F$ and \cite[Proposition VI.10.2]{FarguesScholze}, Fargues--Scholze define a Tannaka group scheme $\widecheck{G}$ over $\zl[\sqrt{p}]$ equipped with an action of $W_E$. They then prove, see \cite[Theorem VI.11.1]{FarguesScholze}, that $\widecheck{G}$ is $W_E$-equivariantly isomorphic to $\dualgrp{G}$, where $\dualgrp{G}$ is equipped with a certain Tate-twisted version of the usual $W_E$ action. \smallskip

Using $\sqrt{p} \in \zl[\sqrt{p}]$, we can consider the exact symmetric monoidal functor 
\begin{align}
    F': \mathrm{Sat}(\mathcal{H}ck_{G,\Div},\zl[\sqrt{p}]) &\to \operatorname{Rep}_{W_E}(\zl[\sqrt{p}]), \\ A &\mapsto
    \bigoplus_i \mathcal{H}^i(R \pi_{G, \Div} A)(\tfrac{i}{2}).
\end{align} 
Using $F'$ and \cite[Proposition VI.10.2]{FarguesScholze}, we get a Tannaka group scheme $\widecheck{\widecheck{G}}$ over $\zl[\sqrt{p}]$ equipped with an action of $W_E$. It follows from the proof of \cite[Theorem VI.11.1]{FarguesScholze} that there is a natural $W_E$-equivariant isomorphism $\widecheck{\widecheck{G}} \xrightarrow{\sim} \dualgrp{G}$, where $\dualgrp{G}$ is equipped with the natural (untwisted) $W_E$ action. The only difference with the proof of \cite[Theorem VI.11.1]{FarguesScholze} is the computation that happens for $G=\operatorname{PGL}_{2}$. If $A \in \mathrm{Sat}(\mathcal{H}ck_{G,\Div},\zl[\sqrt{p}])$ is the intersection cohomology sheaf of the minuscule Schubert cell for $\mathbb{P}^1$ (denoted by $A_{\mu}$) in loc. cit, then $F(A)=\zl \oplus \zl(-1)$ and it is proved in loc. cit. that $\widecheck{G} \xrightarrow{\sim} \operatorname{SL}(\zl \oplus \zl(-1))$. Using $F'$, we find that $F'(A)=\zl(-\tfrac{1}{2}) \oplus \zl(-\tfrac{1}{2})$ so that $\widecheck{\widecheck{G}} \xrightarrow{\sim} \operatorname{SL}(\zl(-\tfrac{1}{2}) \oplus \zl(-\tfrac{1}{2})) \xrightarrow{\sim}\operatorname{SL}(\zl \oplus \zl)$. \smallskip 

By \cite[Proposition VI.10.2]{FarguesScholze}, the category $\mathrm{Sat}(\mathcal{H}ck_{G,\Div},\zl[\sqrt{p}])$ is isomorphic to the category of representations of $\widecheck{\widecheck{G}} \xrightarrow{\sim} \dualgrp{G}$ in $\operatorname{Rep}_{W_E}(\zl[\sqrt{p}])$. Unwinding the definitions, this is equivalent to the category of representations of ${}^LG_E$ in finite projective $\zl[\sqrt{p}]$ modules.\footnote{The Satake category is also isomorphic to the category of representations of $\widecheck{G} \rtimes W_E$ on finite projective $\Lambda$-modules. The induced isomorphism $\widecheck{G} \rtimes W_E \xrightarrow{\sim} \widecheck{\widecheck{G}} \rtimes W_E$ is given by $(g,w) \mapsto (g \cdot c(w)^{-1}, w)$, where $c(w) = 2\widecheck{\rho}(\chi_{\mathrm{cyc}}(w)^{\tfrac{1}{2}})$ with $\chi_{\mathrm{cyc}}$ the cyclotomic character and $2\widecheck{\rho}$ the sum of the positive coroots of $\widecheck{G}$.} We now identify the representation of ${}^LG_E$ corresponding to $\mathcal{S}_{\mu}$. Since $\mu$ is minuscule, it follows that the representation of $\dualgrp{G} \subset {}^LG_E$ is given by the representation with highest weight $\mu$ (see \cite[top of p. 237]{FarguesScholze}). It remains to compute the $W_E$-action on the highest weight vector. Since the Borel in $\widecheck{\widecheck{G}}$ is induced by the cohomological grading (see \cite[page 236]{FarguesScholze}), the highest weight vector is simply the line given by $\mathcal{H}^{d}(R \pi_{G, \Div} \mathcal{S}_{\mu})(\tfrac{d}{2}) = H^{2d}(\operatorname{Gr}_{G,\mu,\overline{E}}, \zl[\sqrt{p}])(d)$, where $\operatorname{Gr}_{G,\mu}$ is the Schubert variety corresponding to $\mu$. Now $W_E$ acts trivially on $H^{2d}(\operatorname{Gr}_{G,\mu,\overline{E}}, \zl[\sqrt{p}])(d)$ since $\operatorname{Gr}_{G,\mu}$ is (geometrically) connected. 
\end{proof}

\subsection{Spectral action} We denote by $\loc$ the stack of $L$-parameters over $\zl[\sqrt{p}]$ as in \cite{DHKMModuli}, \cite{ZhuCoherent}, and \cite{FarguesScholze}. This is the stack quotient of the moduli space $\cocycle_{\zl[\sqrt{p}]}$ of condensed 1-cocycles (in $\zl[\sqrt{p}]$-algebras) by the conjugation action of $\dualgrp{G}$. Let $\mathrm{Perf}(\loc)$ be the $\infty$-category of perfect complexes on $\loc$, see \cite[Section~VIII.5]{FarguesScholze}. We have the following main theorem of loc.\ cit., which is the combination of \cite[Theorem X.0.2]{FarguesScholze} and \cite[Theorem IX.0.1]{FarguesScholze}. We will base change $\loc$ to $\Lambda$ along $\zl[\sqrt{p}] \to \Lambda$ using our fixed $\sqrt{p} \in \Lambda$, without changing the notation. We will use the same mild abuse of notation for other objects defined over $\zl[\sqrt{p}]$.

\begin{Thm}[{\cite[Theorem~X.0.2, Theorem~V.4.1]{FarguesScholze}}] \label{Thm:FSSpectralAction}
    If $\ell$ is coprime to the order of $\pi_0(Z(G))$, then there exists a natural $\Lambda$-linear action of $\mathrm{Perf}(\loc)$ on $
    \mathcal{D}(\bungk,\Lambda)$, preserving the compact objects.
\end{Thm}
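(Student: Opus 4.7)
The plan is to cite the work of Fargues--Scholze directly, since this is precisely \cite[Theorem~X.0.2]{FarguesScholze} combined with \cite[Theorem~V.4.1, Theorem~IX.0.1]{FarguesScholze}. Nonetheless, let me sketch the shape of the argument one would need to reproduce, as it organizes the ingredients we will use later. The starting point is the construction of Hecke operators: for each $I$ and each $V\in \mathrm{Rep}_\Lambda(({}^LG)^I)$, one forms a correspondence
\[
  \bungk \xleftarrow{h_1} \mathrm{Hck}_{G,I} \xrightarrow{h_2} \bungk \times \mathrm{Div}^I
\]
and a ``Satake sheaf'' $\mathcal{S}_V$ on the local Hecke stack obtained via the geometric Satake equivalence of \cite[Theorem~VI.11.1]{FarguesScholze}, producing an operator $T_V=Rh_{2,*}(h_1^*(-)\otimes \mathcal{S}_V)$. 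The compactness assumption of \cite[Theorem~V.4.1]{FarguesScholze} ensures these preserve compact objects.

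The first step is to verify, using fusion over $\mathrm{Div}^I$ and the commutativity constraint from geometric Satake, that the assignment $V \mapsto T_V$ upgrades to a symmetric monoidal functor from $\mathrm{Rep}_\Lambda(({}^LG)^I)$ into the endofunctor category of $\mathcal{D}(\bungk,\Lambda)$, compatibly with varying $I$. This packages into an action of the $\infty$-category $\mathrm{Rep}({}^LG)^{BW_{\qp}}$-linear symmetric monoidal functors out of a suitable 2-category of finite sets.

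The second step is the spectral identification. One shows that the excursion algebra generated by the endomorphisms of the $T_V$'s (coming from $W_{\qp}$-actions on the legs) agrees with the ring of functions on $\cocycle_\Lambda$; this uses the description of condensed $1$-cocycles in \cite{DHKMModuli}. Combined with the first step, one upgrades the $\mathrm{Rep}({}^LG)$-action to an action of $\mathrm{QCoh}(\loc)$ at the level of quasi-coherent sheaves, and then restricts to $\mathrm{Perf}(\loc)$ using that $\loc$ is a nice (e.g.\ Noetherian locally complete intersection) stack. The hypothesis on $\ell$ being coprime to $\#\pi_0(Z(G))$ enters here to guarantee that the center of the category acts through an integral model of $\loc$ and that the resulting action is well-behaved on compact objects.

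The main obstacle is really the second step: controlling the excursion algebra and identifying it with global functions on $\loc$, which is the content of \cite[Section~VIII, IX]{FarguesScholze} and requires both the geometric Satake of \cite[Section~VI]{FarguesScholze} and a careful analysis of condensed cohomology of $W_{\qp}$. For our purposes, however, all of this is a black box, and we simply invoke the cited results.
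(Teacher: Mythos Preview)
Your proposal is correct and matches the paper's approach: the paper does not give its own proof of this theorem but simply cites it as a black box from \cite{FarguesScholze}, noting before the statement that it is the combination of \cite[Theorem~X.0.2]{FarguesScholze} and \cite[Theorem~IX.0.1]{FarguesScholze}. Your sketch of the Fargues--Scholze argument is more than the paper itself provides, but the essential point---that this is a cited result rather than something proved here---is handled identically.
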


In particular, for every object $W$ of $\mathrm{Perf}(\loc)$ there is a $\Lambda$-linear functor $T_W:\mathcal{D}(\bungk,\Lambda) \to \mathcal{D}(\bungk,\Lambda)$ preserving the compact objects, which we write as $A \mapsto W \ast A$. For every morphism $W \to W'$ in $\mathrm{Perf}(\loc)$ there is a natural transformation of functors $T_W \to T_W'$ and there are specified coherence data for compositions and commutative diagrams of morphisms. The following is our primary example.

\begin{Example}\label{Eg:SpectralAction}
    Let $r:\dualgrp{G}\to \mathrm{GL}(V)$ be an algebraic representation of $\dualgrp{G}$. The trivial bundle $\CO_{\cocycle}\otimes_\Lambda V$ on $\cocycle$, equipped with the descent datum given by $r$, descends to a vector bundle $\mathcal{V}$ on $\loc$. Suppose $r$ extends to a representation
    \[\tilde{r}: {}^LG_E\coloneqq\dualgrp{G}\rtimes W_E\to \GL(V)\]
    for some finite extension $E/\qp$. Then $\mathcal{V}$ acquires a $W_E$-action descending that on $\CO_{\cocycle}\otimes_\Lambda V$ through $\tilde{r} \circ \phi^\mathrm{univ}\mid_{W_E}$, where $\phi^\mathrm{univ}$ denotes the universal 1-cocycle. By construction, the spectral action of $\mathcal{V}$ on $\mathcal{D}(\bungk,\Lambda)$ is naturally (in $V$) identified with the Hecke operator $T_V$ defined in \cite[First page of Chapter IX, Theorem IX.0.1]{FarguesScholze}, i.e., $\mathcal{V} \ast (-) = T_V$. When $V=V_{\mu}$, then we recover the Hecke operator $T_{\mu}$ of \eqref{Eq:HeckeDiagramI}. 
\end{Example}

\begin{Rem}\label{Rem:SpectralWeilAction}
For any $A \in \mathcal{D}(\bungk,\Lambda)$, the object $T_V A$ inherits an action of $\mathrm{End}_{\loc}(\mathcal{V})$, and in particular an action of the Weil group $W_E$. We will refer to this $W_E$-action on $T_V A$ as the \textit{spectral $W_E$-action}. Recall that, by definition, the Hecke operators naturally carry Weil group actions, coming from the structure map of the Hecke stack to $\Div$ see \eqref{Eq:HeckeOperatorDef} or more generally \cite[Corollary~IX.2.3]{FarguesScholze}. These two Weil group actions are compatible, as can be seen by unraveling the definitions.
\end{Rem}

We apply the above example to the representation $r_\mu$ and write $\mathcal{V}_\mu$ for the corresponding vector bundle on $X_{\dualgrp{G}}$. This allows us to rephrase Theorem~\ref{Thm: WeilCohoShiVar} in terms of the spectral action. As before, we write $\mathcal{F}$ for $R\overline{\pi}_{\mathrm{HT},\ast}\Lambda$ and $d$ for the dimension of the Shimura variety. To state it, we need to introduce some notation. 

Following \cite[Definition IX.0.2]{FarguesScholze}, we let $\zspec(G,\Lambda)$ be the spectral Bernstein center, which is the ring $\Gamma(\loc, \mathcal{O})$ of global functions on $\loc$, and let $\mathcal{Z}(G(\qp), \Lambda)$ be the Bernstein center of the (abelian) category of smooth $G(\qp)$-representations on $\Lambda$-modules. If $\ell$ is coprime to the order of $\pi_0(Z(G))$, then by \cite[Corollary IX.0.3]{FarguesScholze}, there is a morphism
    $\Psi_G:\zspec(G,\Lambda)\to \mathcal{Z}(G(\qp), \Lambda)$.
If $\chi$ is a character of $\mathcal{Z}(G(\qp), \Lambda)$ valued in $\flbar$ or $\qlbar$, the Fargues--Scholze $L$-parameter $\phi_{\chi}$ is given by (the semisimple $L$-parameter corresponding to under \cite[Proposition VIII.3.2]{FarguesScholze} to) the morphism $\chi \circ \Psi_G$. Let us write $\pi_0\operatorname{End}_{\loc}(\mathcal{V}_{\mu})$ for the endomorphism ring of $\mathcal{V}_{\mu}$ considered as a vector bundle on $\loc$ (as opposed to a perfect complex).
\begin{Thm} \label{Thm:SpectralAction}
If $\ell$ is coprime to the order of $\pi_0(Z(G))$, then one has
\[R\Gamma(\mathbf{Sh}_{K^p,\overline{E}}, \Lambda) \simeq i_1^\ast \left( \mathcal{V}_\mu\ast (j_{\mu,k,!} \mathcal{F}[-d])(-\tfrac{d}{2})\right)\]
in $\mathcal{D}(G(\qp),\Lambda)^{BW_E}\simeq \mathcal{D}(\bungk^{[1]},\Lambda)^{BW_E}$. In particular, for all compact open subgroups $K \subset \gaf$, there are maps of $\Lambda$-algebras 
\[\pi_0\operatorname{End}_{\loc}(\mathcal{V}_{\mu})\to \pi_0\mathrm{End}_{D(G(\qp),\Lambda)}(R\Gamma(\mathbf{Sh}_{K^p,\overline{E}}, \Lambda))\to \pi_0\mathrm{End}_{D(\Lambda)}(R\Gamma(\mathbf{Sh}_{K,\overline{E}}, \Lambda)).\]
Moreover, the spectral and the usual $W_E$-action on $R\Gamma(\mathbf{Sh}_{K,\overline{E}}, \Lambda)(\tfrac{d}{2})$ agree. Furthermore, the action of $\zspec(G, \Lambda) \subset \pi_0\operatorname{End}_{\loc}(\mathcal{V}_{\mu})$ on $R\Gamma(\mathbf{Sh}_{K^p,\overline{E}}, \Lambda)$ factors through the natural action of $\mathcal{Z}(G(\qp), \Lambda)$ via $\Psi_G$.
\end{Thm}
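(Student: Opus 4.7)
The plan is to derive all four assertions from Theorem~\ref{Thm: WeilCohoShiVar} together with the formal properties of the spectral action. The first isomorphism is obtained as follows. Theorem~\ref{Thm: WeilCohoShiVar} yields
\[
  R\Gamma(\mathbf{Sh}_{K^p,\overline{E}},\Lambda) \simeq T_\mu^{[1]}(\mathcal{F}[-d])(-\tfrac{d}{2})
\]
in $\mathcal{D}(G(\qp),\Lambda)^{BW_E}$. By Definition~\ref{Def:HeckeFactorization} we have $T_\mu^{[1]} j_{\mu,k}^\ast = i_{1,k}^\ast T_\mu$, and since $j_\mu$ is an open immersion and $\mathcal{F}$ lives on $\bungmuk$ we have $j_{\mu,k}^\ast j_{\mu,k,!}\mathcal{F} = \mathcal{F}$. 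Invoking Example~\ref{Eg:SpectralAction} to identify the Hecke operator $T_\mu$ with $\mathcal{V}_\mu\ast(-)$, we obtain the stated formula.

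For the action of $\pi_0\operatorname{End}_{\loc}(\mathcal{V}_\mu)$, recall that by Theorem~\ref{Thm:FSSpectralAction} the spectral action is a $\Lambda$-linear monoidal functor from $\mathrm{Perf}(\loc)$ to endofunctors of $\mathcal{D}(\bungk,\Lambda)$. Hence morphisms $\mathcal{V}_\mu \to \mathcal{V}_\mu$ in $\mathrm{Perf}(\loc)$ induce natural transformations $\mathcal{V}_\mu\ast (-) \to \mathcal{V}_\mu\ast(-)$ in a manner compatible with composition. Specializing to $A = j_{\mu,k,!}\mathcal{F}[-d]$ and pulling back via $i_{1,k}^\ast$ yields, by part (1), an action of $\pi_0\operatorname{End}_{\loc}(\mathcal{V}_\mu)$ by endomorphisms of $R\Gamma(\mathbf{Sh}_{K^p,\overline{E}},\Lambda)$ in $\mathcal{D}(G(\qp),\Lambda)$. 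The second map is obtained by noting that $\mathbf{Sh}_{K^p} \to \mathbf{Sh}_K$ is a pro-\'etale $K_p$-cover, so that $R\Gamma(\mathbf{Sh}_{K,\overline{E}},\Lambda) \simeq R\Hom_{K_p}(\Lambda, R\Gamma(\mathbf{Sh}_{K^p,\overline{E}},\Lambda))$, and this functor takes endomorphisms of the $G(\qp)$-object to endomorphisms of the underlying $\Lambda$-complex.

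For the Weil-equivariance assertion, we combine Remark~\ref{Rem:SpectralWeilAction} with Theorem~\ref{Thm: WeilCohoShiVar}. The former tells us that the spectral $W_E$-action on $\mathcal{V}_\mu\ast(-)$ agrees with the $W_E$-action on the Hecke operator $T_\mu$ arising from the structure map $\mathrm{Hck}_{G,\le\mu,k}\to \Divk$ used to define $T_\mu$ in \eqref{Eq:HeckeOperatorDef}. On the other hand, the isomorphism of Theorem~\ref{Thm: WeilCohoShiVar} is $W_E$-equivariant, with the Weil-action on the right coming from this very Hecke structure and with the Weil-action on the left matched to the natural action of $W_E$ on Shimura variety cohomology via Proposition~\ref{Prop:GaloisRestriction}. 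Tracing through these identifications gives the agreement of the two $W_E$-actions on $R\Gamma(\mathbf{Sh}_{K,\overline{E}},\Lambda)$.

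Finally, for the factorization through the Bernstein center, we use that by definition $\zspec(G,\Lambda) = \Gamma(\loc,\mathcal{O}) = \operatorname{End}(\mathrm{id}_{\mathrm{Perf}(\loc)})$, so that its spectral action is a natural transformation of the identity functor on $\mathcal{D}(\bungk,\Lambda)$, in particular commuting with all Hecke operators and all $*$-pullbacks. Restriction along $i_1\colon \bungk^{[1]} \to \bungk$ and the equivalence $\mathcal{D}(\bungk^{[1]},\Lambda)\simeq \mathcal{D}(G(\qp),\Lambda)$ thus produce an action of $\zspec(G,\Lambda)$ by natural transformations of the identity functor on $\mathcal{D}(G(\qp),\Lambda)$; by the very construction of $\Psi_G$ in \cite[Corollary~IX.0.3]{FarguesScholze} this is precisely the action of $\mathcal{Z}(G(\qp),\Lambda)$ via $\Psi_G$. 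Applied to $R\Gamma(\mathbf{Sh}_{K^p,\overline{E}},\Lambda)$ and combined with the action of $\pi_0\operatorname{End}_{\loc}(\mathcal{V}_\mu) \supseteq \zspec(G,\Lambda)$ constructed above, this yields the desired factorization. The main subtlety to verify carefully is that the $\zspec(G,\Lambda)$-action obtained via the inclusion $\zspec(G,\Lambda) \hookrightarrow \pi_0\operatorname{End}_{\loc}(\mathcal{V}_\mu)$ really agrees with the one coming from the monoidality of the spectral action; this is a formal consequence of the fact that the inclusion is given by acting by scalars, i.e., by the tautological natural transformation of $\mathcal{V}_\mu\ast(-)$.
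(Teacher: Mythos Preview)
Your proposal is correct and follows essentially the same approach as the paper's proof: both derive the first isomorphism by combining Theorem~\ref{Thm: WeilCohoShiVar} with the identification $T_\mu = \mathcal{V}_\mu \ast (-)$ from Example~\ref{Eg:SpectralAction} and the factorization $T_\mu^{[1]} j_{\mu,k}^\ast = i_{1,k}^\ast T_\mu$, obtain the endomorphism maps from the spectral action (Remark~\ref{Rem:SpectralWeilAction}) and the functor $R\Gamma(K_p,-) = R\Hom(c\text{-}\mathrm{Ind}_{K_p}^{G(\qp)}\Lambda,-)$, and match the Weil actions via the same remark and theorem. Your treatment is in fact more explicit than the paper's, which leaves the factorization through $\Psi_G$ entirely implicit; your argument for that step (via the spectral action of $\zspec$ as natural transformations of the identity, restricted to the neutral stratum) is correct and is exactly what underlies the construction of $\Psi_G$ in \cite[Corollary~IX.0.3]{FarguesScholze}.
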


\begin{proof}
The first part and the compatibility of spectral and usual Weil group actions follow from Example \ref{Eg:SpectralAction} and Theorem \ref{Thm: WeilCohoShiVar}, taking into account Definition \ref{Def:HeckeFactorization} and Section \ref{subsub:HeckeFactorization}. The first map in the penultimate line exists by Remark~\ref{Rem:SpectralWeilAction} and the second map is induced by the functor
\[R\Gamma(K_p,-)=R\Hom(c\text{-Ind}_{K_p}^{G(\qp)}\Lambda,-): D(G(\qp),\Lambda)\to D(\Lambda). \qedhere \]
\end{proof}

By passing to the inverse limit, we get a similar statement for integral coefficients. More precisely, we have the following corollary:

 \begin{Cor} \label{Cor:EllAdicSpectralAction}
     Let $\ell$ be coprime to the order of $\pi_0(Z(G))$, and $\Lambda$ be the ring of integers in a finite extension of $\ql$, containing a fixed square root of $p$. Then $\pi_0\operatorname{End}_{\loc}(\mathcal{V}_{\mu})$ acts on $R\Gamma(\mathbf{Sh}_{K,\overline{E}}, \Lambda)$ for all compact open subgroups $K \subset \gaf$. Moreover, the spectral and the usual $W_E$-action on $R\Gamma(\mathbf{Sh}_{K,\overline{E}}, \Lambda)(\tfrac{d}{2})$ agree. Furthermore, the action of $\zspec(G, \Lambda) \subset \pi_0\operatorname{End}_{\loc}(\mathcal{V}_{\mu})$ on $R\Gamma(\mathbf{Sh}_{K,\overline{E}}, \Lambda)$ factors through the natural action of $\mathcal{Z}(G(\qp), \Lambda)$ via $\Psi_G$.
 \end{Cor}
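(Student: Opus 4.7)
The plan is to deduce this from Theorem~\ref{Thm:SpectralAction} by passing to the derived inverse limit over the torsion quotients $\Lambda_n := \Lambda/\ell^n$. Each $\Lambda_n$ is a torsion $\zl$-algebra containing the image of $\sqrt{p}$, so Theorem~\ref{Thm:SpectralAction} yields at each finite level a $\Lambda_n$-linear action of $\pi_0\operatorname{End}(\mathcal{V}_{\mu,n})$ on $R\Gamma(\mathbf{Sh}_{K,\overline{E}}, \Lambda_n)$, where $\mathcal{V}_{\mu,n}$ denotes the base change of $\mathcal{V}_\mu$ to the stack of $L$-parameters over $\Lambda_n$. The same theorem provides, at each level, the agreement of spectral and Galois $W_E$-actions and the factoring of the $\zspec(G,\Lambda_n)$-action through $\mathcal{Z}(G(\qp),\Lambda_n)$ via $\Psi_G$.

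Since $\mathbf{Sh}_{K,\overline{E}}$ is of finite type over $\overline{E}$, we have the identification
\[
R\Gamma(\mathbf{Sh}_{K,\overline{E}}, \Lambda) \simeq R\varprojlim_n R\Gamma(\mathbf{Sh}_{K,\overline{E}}, \Lambda_n)
\]
in $\mathcal{D}(\Lambda)$. Base change along $\Lambda \to \Lambda_n$ produces compatible ring homomorphisms $\pi_0\operatorname{End}_{\loc}(\mathcal{V}_\mu) \to \pi_0\operatorname{End}(\mathcal{V}_{\mu,n})$. By functoriality of the Fargues--Scholze spectral action in the coefficient ring, these induce actions of $\pi_0\operatorname{End}_{\loc}(\mathcal{V}_\mu)$ on each $R\Gamma(\mathbf{Sh}_{K,\overline{E}}, \Lambda_n)$ that are compatible with the reduction maps $R\Gamma(\mathbf{Sh}_{K,\overline{E}}, \Lambda_{n+1}) \to R\Gamma(\mathbf{Sh}_{K,\overline{E}}, \Lambda_n)$. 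Passing to the derived inverse limit yields the desired action on $R\Gamma(\mathbf{Sh}_{K,\overline{E}}, \Lambda)$.

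The agreement of the spectral $W_E$-action with the Galois $W_E$-action follows by the same limit argument, as the two actions agree modulo $\ell^n$ for every $n$. The factoring of the action of $\zspec(G,\Lambda) \subset \pi_0\operatorname{End}_{\loc}(\mathcal{V}_\mu)$ through $\mathcal{Z}(G(\qp), \Lambda)$ via $\Psi_G$ similarly passes to the limit, using that $\mathcal{Z}(G(\qp),\Lambda) \simeq \varprojlim_n \mathcal{Z}(G(\qp), \Lambda_n)$ as an immediate consequence of the compatibility of the Bernstein center with change of coefficients.

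The main technical point will be verifying the functoriality of the Fargues--Scholze spectral action under base change of the coefficient ring $\Lambda \to \Lambda_n$. This compatibility is implicit but not explicitly spelled out in \cite[Chapter~IX]{FarguesScholze}; however, it should follow directly from the construction of the spectral action, which proceeds via the Hecke operators $T_V$ and the categorification identifying sections of $\pi_0\operatorname{End}_{\loc}(\mathcal{V}_\mu)$ with endomorphisms of Hecke operators--both of which are manifestly compatible with base change of coefficients.
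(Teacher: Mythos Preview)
Your approach is essentially the same as the paper's: apply Theorem~\ref{Thm:SpectralAction} to each $\Lambda/\ell^n$ and pass to the inverse limit. The paper phrases the first step slightly differently, by noting that the inverse limit produces an action of the $\ell$-adic completion of $\pi_0\operatorname{End}_{\loc}(\mathcal{V}_{\mu})$ on the (automatically $\ell$-adically complete) complex $R\Gamma(\mathbf{Sh}_{K,\overline{E}},\Lambda)$, and then precomposing with the canonical map from the uncompleted ring; this sidesteps your appeal to base-change functoriality of the spectral action. One caution: your final claim that $\mathcal{Z}(G(\qp),\Lambda)\simeq\varprojlim_n\mathcal{Z}(G(\qp),\Lambda_n)$ is ``immediate'' from compatibility of the Bernstein center with change of coefficients is not obviously justified and is not something the paper invokes; the paper simply says the remaining assertions follow from Theorem~\ref{Thm:SpectralAction}, leaving this point implicit.
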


\begin{proof}
Applying Theorem \ref{Thm:SpectralAction} to $\Lambda/\ell^n \Lambda$ for all $n$ and taking the inverse limit gives an action of $\varprojlim_n \pi_0\operatorname{End}_{X_{\dualgrp{G}, \Lambda/\ell^n \Lambda}}(\mathcal{V}_{\mu})$ on $R\Gamma(\mathbf{Sh}_{K, \overline{E}}, \Lambda) = \varprojlim_n R\Gamma(\mathbf{Sh}_{K, \overline{E}}, \Lambda/\ell^n \Lambda)$. This receives a natural map from $\pi_0\operatorname{End}_{\loc}(\mathcal{V}_{\mu})$, giving us the desired action. The rest of the corollary now follows from Theorem \ref{Thm:SpectralAction}.
\end{proof}

\subsection{Proof of Theorem \ref{Thm:IntroCompatibilityFarguesScholze}} \label{Sub:SpectralActionII}
We use the same notation as in Section~\ref{subsub:NotationSection9}, except that in the rest of this section we take $\Lambda$ to be \textit{the ring of integers in a finite extension of $\ql$, containing a fixed square root of $p$}. We will now prove Theorem \ref{Thm:IntroCompatibilityFarguesScholze}, whose statement and proof are inspired by \cite[Theorem 1.3]{Koshikawa}. Let us first recall its content: For a compact open subgroup $K_p \subset G(\qp)$, we write $\mathcal{H}_{K_p}\coloneqq \Lambda[G(\qp)\sslash K_p]$ for the Hecke algebra of level $K_p$ with coefficients in $\Lambda$ and $\mathcal{Z}_{K_p}$ for its center. Recall moreover that $\mathcal{H}_{K_p}$ has a natural (right) action on $R\Gamma(\mathbf{Sh}_{K, \overline{E}}, \Lambda)$. Let $L \in \{\flbar, \qlbar\}$, let $\chi: \mathcal{Z}_{K_p} \to L$ be a character, and consider the $W_E$-representation $W^i(\chi):=H^i(\mathbf{Sh}_{K, \overline{E}}, \Lambda)(\tfrac{d}{2}) \otimes_{\mathcal{Z}_{K_p}, \chi} L$ for some $i$. 
\begin{Thm}[Theorem~\ref{Thm:IntroCompatibilityFarguesScholze}] \label{Thm:CompatibilityFarguesScholze}
If $\ell$ is coprime to the order of $\pi_0(Z(G))$, then each irreducible $L$-linear $W_E$ representation occurring as a subquotient of $W^i(\chi)$ also occurs as a subquotient of $r_{\mu} \circ {\phi_{\chi}}\mid_{W_E}$. 
\end{Thm}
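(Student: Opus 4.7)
The strategy follows \cite[Theorem~6.2]{Koshikawa}, but applied to the global cohomology complex rather than the cohomology of a local Shimura variety; the passage is enabled by Corollary~\ref{Cor:EllAdicSpectralAction}, which brings the spectral action of $\mathrm{Perf}(\loc)$ to bear on $R\Gamma(\mathbf{Sh}_{K^p,\overline{E}}, \Lambda)$.

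First, fix an integral coefficient ring $\Lambda$ inside $L$ (so $\Lambda = \zlbar$ if $L = \flbar$ and $\Lambda$ the ring of integers of a sufficiently large finite extension of $\ql$ if $L = \qlbar$) and apply Corollary~\ref{Cor:EllAdicSpectralAction} to obtain commuting actions of $\mathcal{H}_{K_p}$, $W_E$, and $\pi_0\mathrm{End}_{\loc}(\mathcal{V}_\mu)$ on $R\Gamma(\mathbf{Sh}_{K,\overline{E}}, \Lambda)$, with the following two crucial compatibilities: (a) the spectral $W_E$-action coincides with the honest Galois action, and (b) the subring $\zspec(G,\Lambda) \subseteq \pi_0\mathrm{End}_{\loc}(\mathcal{V}_\mu)$ acts through the Bernstein center of $G(\qp)$ via $\Psi_G$. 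Pass to $H^i$, then tensor over $\mathcal{Z}_{K_p}$ with $L$ via $\chi$. The central characters assemble into an algebra homomorphism $\chi \circ \Psi_G : \zspec(G,L) \to L$ whose kernel $m_\chi$ corresponds to the semisimple Fargues--Scholze parameter $\phi_\chi$. Consequently, the $\pi_0\mathrm{End}_{\loc}(\mathcal{V}_{\mu,L})$-action on $W^i(\chi)$ factors through the quotient algebra
\[
  E_\chi := \pi_0\mathrm{End}_{\loc}(\mathcal{V}_{\mu,L}) \otimes_{\zspec(G,L)} L,
\]
where we base-change along $\chi \circ \Psi_G$.

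Next, identify $E_\chi$ as an algebra of endomorphisms of the fiber of $\mathcal{V}_\mu$ at the closed point $[\phi_\chi] \in \loc$. Concretely, the pullback $\mathcal{V}_\mu|_{\phi_\chi}$ is the representation $V_\mu \otimes_\Lambda L$ of the centralizer $S_{\phi_\chi} := \dualgrp{G}^{\phi_\chi}$, and $E_\chi$ maps to the commutant $\mathrm{End}_{L[S_{\phi_\chi}]}(V_\mu \otimes_\Lambda L)$. Crucially, the $W_E$-equivariant structure on $\mathcal{V}_\mu$ restricts on the fiber to $r_\mu \circ \phi_\chi|_{W_E}$; by compatibility (a) above, this is precisely the action of $W_E$ on $W^i(\chi)$ induced from the spectral embedding $L[W_E] \to E_\chi$. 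Because the commutant $\mathrm{End}_{L[S_{\phi_\chi}]}(V_\mu \otimes_\Lambda L)$ visibly commutes with the image $r_\mu(\phi_\chi(W_E))$, every simple $W_E$-subquotient of any $E_\chi$-module is a subquotient of $r_\mu \circ \phi_\chi|_{W_E}$. Applied to $W = W^i(\chi)$, this yields the theorem.

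The main obstacle is Step~2 above: making rigorous the assertion that the fiber algebra $E_\chi$ maps into $\mathrm{End}_{L[S_{\phi_\chi}]}(V_\mu \otimes_\Lambda L)$, and that the spectral $W_E$-action on an $E_\chi$-module does lie inside the algebra generated by $r_\mu \circ \phi_\chi(W_E)$ and its commutant. For $L = \qlbar$ this is a statement about coherent sheaves over (a Noetherian truncation of) the reduced locus of $\loc$, and can be deduced from the construction of $\mathcal{V}_\mu$ in \cite[Chapter VIII--IX]{FarguesScholze} together with the fact that geometric points of $\loc$ correspond to $\dualgrp{G}(L)$-conjugacy classes of $L$-parameters. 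For $L = \flbar$, the assumption that $\ell$ is coprime to $|\pi_0(Z(G))|$ is precisely what allows the spectral action of Theorem~\ref{Thm:FSSpectralAction}; one then adapts the formal argument of \cite{Koshikawa} to mod-$\ell$ coefficients, working with the semisimplification. All of the subtler points are already carried out in loc.~cit.\ in the local Shimura variety setting, and the only change here is to replace the cohomology of the local Shimura variety by the cohomology of the Hodge-type Shimura variety, which has an analogous spectral action thanks to Corollary~\ref{Cor:EllAdicSpectralAction}.
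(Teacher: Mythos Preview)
Your overall framework is right: one reduces to showing that the $W_E$-action on $W^i(\chi)$ factors through an algebra $E_\chi$ built from endomorphisms of $\mathcal{V}_{\mu}$ over the fiber $X_{\dualgrp{G},\chi}$ of $\loc$ above the closed point of $\spec \zspec(G,L)$ corresponding to $\phi_\chi$. Corollary~\ref{Cor:EllAdicSpectralAction} is exactly what makes this available globally, as you say.

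The gap is in your Step~2. You propose to control $E_\chi$ by restricting to the \emph{single} $L$-point $\phi_\chi$ of the stack, obtaining a map $E_\chi \to \End_{L[S_{\phi_\chi}]}(V_\mu \otimes_\Lambda L)$, and then using that this commutant commutes with $r_\mu(\phi_\chi(W_E))$. But this restriction map is not injective: the scheme $Z^1(W_{\qp},\dualgrp{G})_\chi$ is in general non-reduced, and even its reduced locus has many $L$-points besides $\phi_\chi$ (namely all $L$-parameters whose semisimplification is $\phi_\chi$). The $E_\chi$-action on $W^i(\chi)$ has no reason to factor through the quotient by this kernel, so your inference ``every simple $W_E$-subquotient of any $E_\chi$-module is a subquotient of $r_\mu \circ \phi_\chi|_{W_E}$'' is not justified by the argument you give.

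The paper (and \cite{Koshikawa}) closes this gap by a nilpotence argument rather than a direct identification. One argues by contradiction: if some irreducible $\sigma_n$ appears in $W^i(\chi)$ but not in $r_\mu \circ \phi_\chi|_{W_E}$, Jacobson density produces $e \in L[W_E]$ acting as the identity on $\sigma_n$ and as zero on every irreducible subquotient of $r_\mu \circ \phi_\chi|_{W_E}$. One then shows that $e$, viewed as an endomorphism of $\mathcal{V}_{\mu,\chi}$ over the \emph{entire} fiber $X_{\dualgrp{G},\chi}$, is nilpotent. This requires two steps: first, on the reduced locus $Z^1(W_{\qp},\dualgrp{G})_\chi^{\mathrm{red}}$ one checks $e^N = 0$ at every $L$-point $\phi$ (each such $\phi$ has semisimplification $\phi_\chi$, so $r_\mu \circ \phi$ has the same irreducible subquotients as $r_\mu \circ \phi_\chi$, whence $e$ is nilpotent on the $N$-dimensional space $V_\mu$); second, the kernel of passing to the reduced locus is a nilpotent ideal, so $e^N$ and hence $e$ is nilpotent in $\pi_0\End(\mathcal{V}_{\mu,\chi})$. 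This forces $e$ to act nilpotently on $W^i(\chi)$, contradicting the choice of $e$. Your citation of \cite{Koshikawa} is apt, but what is actually done there is this nilpotence argument, not the single-point restriction you describe.
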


\begin{Lem} \label{Lem:KoshikawaJacobsonDensity}
    Let $L \in \{\flbar, \qlbar\}$ and let $\sigma_1, \cdots, \sigma_n$ finite dimensional irreducible representations of $W_E$ over $L$ which are pairwise non-isomorphic. Then the action map
    \[L[W_E] \to \prod_{i=1}^n \operatorname{End}_{L} \sigma_i\]
    is surjective.
\end{Lem}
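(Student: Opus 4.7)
The plan is to deduce this from the Jacobson density theorem for semisimple modules. Consider the finite-dimensional $L[W_E]$-module $M := \bigoplus_{i=1}^n \sigma_i$. Since each $\sigma_i$ is irreducible, $M$ is semisimple, and since the $\sigma_i$ are pairwise non-isomorphic there are no nonzero $W_E$-equivariant maps between distinct summands, so
\[
  \operatorname{End}_{L[W_E]}(M) = \prod_{i=1}^n \operatorname{End}_{L[W_E]}(\sigma_i).
\]
Because $L \in \{\flbar, \qlbar\}$ is algebraically closed and each $\sigma_i$ is finite-dimensional and irreducible, Schur's lemma identifies $\operatorname{End}_{L[W_E]}(\sigma_i)$ with $L$ (any equivariant endomorphism has an eigenvalue in $L$, and subtracting it gives a non-injective equivariant map, which must be zero by irreducibility). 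Hence the commutant of the $L[W_E]$-action on $M$ is $L^n$, acting on $M$ by scaling each $\sigma_i$ independently.

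Now the Jacobson density theorem, applied to the finitely generated semisimple $L[W_E]$-module $M$, asserts that the structure map $L[W_E] \to \operatorname{End}_{\operatorname{End}_{L[W_E]}(M)}(M)$ is surjective. The target is the set of $L$-linear endomorphisms of $M$ that commute with the $L^n$-action, which is exactly $\prod_{i=1}^n \operatorname{End}_L(\sigma_i)$, since an $L^n$-equivariant endomorphism must preserve each isotypic summand $\sigma_i$ but can act on it by an arbitrary $L$-linear map. This gives the desired surjectivity, and there is no real obstacle: the argument is purely algebraic and makes no use of the topology on $W_E$, so continuity of the $\sigma_i$ plays no role beyond ensuring finite-dimensionality.
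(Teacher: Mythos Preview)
Your proof is correct and takes essentially the same approach as the paper, which simply cites Jacobson's density theorem (referring to \cite[Theorem 3.2.2.(ii)]{RepresentationTheoryBook}) without further comment. You have just spelled out the standard details: computing the commutant as $L^n$ via Schur's lemma over an algebraically closed field, and noting that density becomes surjectivity because $M$ is finite-dimensional.
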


\begin{proof}
This follows from Jacobson's density theorem, \cite[Theorem 3.2.2.(ii)]{RepresentationTheoryBook}. 
\end{proof}

\begin{proof}[Proof of Theorem \ref{Thm:CompatibilityFarguesScholze}]
    We follow \cite[Section 5]{Koshikawa}. Let $\sigma_1, \cdots, \sigma_{n-1}$ be the pairwise non-isomorphic irreducible $L$-linear $W_E$-subquotients of $r_{\mu} \circ \restr{\phi_{\chi}}{W_E}$. Suppose that there is an irreducible $L$-linear $W_E$-representation $\sigma_n$ occurring in $W^i(\chi)$ which is not isomorphic to any of the representations $\sigma_1, \cdots, \sigma_{n-1}$. Then by Lemma \ref{Lem:KoshikawaJacobsonDensity} there is an element $e \in L[W_E]$ which acts by zero on $\sigma_1, \cdots, \sigma_{n-1}$ and as the identity on $\sigma_n$. By assumption, every power of the element $e$ acts nontrivially on $W^i(\chi)$, and from this we will derive a contradiction.

    By construction and Corollary \ref{Cor:EllAdicSpectralAction}, we have the following commutative diagram 
    \begin{equation}
        \begin{tikzcd}
            \zspec(G, \Lambda) \arrow{d}{\Psi_{G}} \arrow{r} &\pi_0\operatorname{End}_{\loc}(\mathcal{V}_{\mu}) \arrow{d} \\
            \mathcal{Z}(G(\qp), \Lambda) \arrow{r} \arrow{d} & \pi_0 \operatorname{End}_{D(\Lambda)}\left( R\Gamma(\mathbf{Sh}_{K^p,\overline{E}}, \Lambda)\right) \arrow{d} \\
            \mathcal{Z}_{K_p} \arrow{d}{\chi} \arrow{r} & \pi_0 \operatorname{End}_{D(\Lambda)}\left(R\Gamma(\mathbf{Sh}_{K,\overline{E}}, \Lambda) \right) \arrow{d} \\
            L \arrow{r} & \operatorname{End}_L(W^i(\chi)).
        \end{tikzcd}
    \end{equation}
By the assumption that the order of $\pi_0(Z(G))$ is coprime to $\ell$, the formation of $\zspec(G, -)$ commutes with base change, see \cite[Theorem VIII.0.2]{FarguesScholze}. Therefore there is an induced commutative diagram 
\begin{equation}
    \begin{tikzcd}
        \zspec(G, L) \arrow{r} \arrow{d} & \pi_0\operatorname{End}_{X_{\dualgrp{G}, L}}(\mathcal{V}_{\mu,L}) \arrow{d} \\
        L \arrow{r} & \operatorname{End}_L(W^i(\chi)).
    \end{tikzcd}
\end{equation}
Let us write $X_{\dualgrp{G}, \chi}$ for the fiber over the closed point corresponding to $\chi$ under $X_{\dualgrp{G},L} \to \spec \zspec(G,L)$, and $\mathcal{V}_{\mu, \chi}$ for the restriction of $\mathcal{V}_{\mu,L}$ to that closed substack. Then there is moreover a commutative diagram
\begin{equation}
    \begin{tikzcd}
        \zspec(G, L) \arrow{r} \arrow{d} & \pi_0\operatorname{End}_{X_{\dualgrp{G}, \chi}}(\mathcal{V}_{\mu,\chi}) \arrow{d} \\
        L \arrow{r} & \operatorname{End}_L(W^i(\chi)).
    \end{tikzcd}
\end{equation}
Note that $W_E$ acts on $\mathcal{V}_{\mu, \chi}$ and $W^i(\chi)$-compatibly, see Corollary \ref{Cor:EllAdicSpectralAction}. The element $e$ defines an endomorphism of $\mathcal{V}_{\mu,\chi}$ (just take the $L$-linear extension of the action of $W_E$). We want to show that this endomorphism is nilpotent, which would imply that the endomorphism $e$ of $W^i(\chi)$ is nilpotent, thus leading to a contradiction.

Let us write $Z^1(W_{\qp}, \dualgrp{G})_{\chi}$ for the inverse image of $X_{\dualgrp{G}, \chi}$ in $Z^1(W_{\qp}, \dualgrp{G})_L$ under the natural map. It suffices to show that the endomorphism of $e$ on the pullback of $\mathcal{V}_{\mu,\chi}$ to $Z^1(W_{\qp}, \dualgrp{G})_{\chi}$ is zero. Let $i:Z^1(W_{\qp}, \dualgrp{G})_{\chi}^{\mathrm{red}} \subset Z^1(W_{\qp}, \dualgrp{G})_{\chi}$ be the reduced closed subscheme and let $N$ be the rank of $\mathcal{V}_{\mu}$.
\begin{Claim}
    The endomorphism $e^N$ of $i^{\ast}\mathcal{V}_{\mu,\chi}$ is zero.
\end{Claim}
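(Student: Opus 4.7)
The strategy is to verify $e^N = 0$ fiber by fiber at closed geometric points of $Z^1(W_{\qp}, \dualgrp{G})_{\chi}^{\mathrm{red}}$, and then to deduce the global vanishing using reducedness. First I would invoke the identification of $\spec \zspec(G,L)$ as the GIT quotient of $Z^1(W_\qp, \dualgrp{G})_L$ by the conjugation action of $\dualgrp{G}$, together with the fact that its closed $L$-points are in bijection with $\dualgrp{G}(\overline{L})$-conjugacy classes of semisimple $L$-parameters $W_\qp \to \dualgrp{G}(\overline{L})$, see \cite[Section VIII.3]{FarguesScholze} and \cite{DHKMModuli}. Under this bijection $\chi$ corresponds to $\phi_\chi$, so every closed geometric point of $Z^1(W_{\qp}, \dualgrp{G})_{\chi}$ is represented by an $L$-parameter $\phi$ whose semisimplification is $\dualgrp{G}(\overline{L})$-conjugate to $\phi_\chi$.

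Next I would argue fiber by fiber. By the construction in Example~\ref{Eg:SpectralAction}, the fiber of $i^{\ast}\mathcal{V}_{\mu,\chi}$ at such a closed geometric point $\phi$ is the $W_E$-representation $r_\mu \circ \phi|_{W_E}$ equipped with the spectral Weil action from Remark~\ref{Rem:SpectralWeilAction}. Since $\phi$ and $\phi_\chi$ share a semisimplification as $\dualgrp{G}(\overline{L})$-valued parameters, their composites with $r_\mu$ have the same semisimplification as $W_\qp$-representations, and hence the same Jordan--H\"older factors after restriction to $W_E$. In particular, every irreducible $W_E$-subquotient of this fiber is isomorphic to some $\sigma_j$ with $1 \leq j \leq n-1$, and is therefore annihilated by $e$. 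It follows that $e$ acts nilpotently on the fiber, and since the fiber has $\overline{L}$-dimension $N$ and hence composition length at most $N$, the endomorphism $e^N$ vanishes on it.

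Finally, $e^N$ defines a global section of the sheaf $\End(i^{\ast}\mathcal{V}_{\mu,\chi})$ on the reduced, finite-type $L$-scheme $Z^1(W_{\qp}, \dualgrp{G})_{\chi}^{\mathrm{red}}$; since $i^{\ast}\mathcal{V}_{\mu,\chi}$ is locally free and closed points are dense, vanishing at all closed geometric points forces $e^N = 0$ globally. The main technical obstacle is the first step: identifying closed geometric points of $Z^1(W_{\qp}, \dualgrp{G})_{\chi}$ with $L$-parameters whose semisimplification is conjugate to $\phi_\chi$. This relies on the detailed structure of the moduli of $L$-parameters and its relation to the spectral Bernstein center developed in \cite{DHKMModuli, ZhuCoherent, FarguesScholze}.
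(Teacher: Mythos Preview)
Your proposal is correct and follows essentially the same approach as the paper: check vanishing at $L$-points, identify these with $L$-parameters having semisimplification conjugate to $\phi_\chi$, observe that $e$ then acts nilpotently on $r_\mu \circ \phi$ and hence $e^N$ acts by zero since the rank is $N$. You supply slightly more detail than the paper on why pointwise vanishing implies global vanishing (reducedness, finite type, local freeness), and you could simplify by noting that $L \in \{\flbar,\qlbar\}$ is already algebraically closed, so there is no need to pass to $\overline{L}$.
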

\begin{proof}
    It suffices to do this after pulling back to $L$-points of $Z^1(W_{\qp}, \dualgrp{G})_{\chi}^{\mathrm{red}}$. These $L$-points correspond to $L$-parameters $\phi:W_{\qp} \to {}^{L}G(L)$ with semisimplification (conjugate to) $\phi_{\chi}$, see \cite[Proposition VIII.3.2]{FarguesScholze}. For such $\phi$, the action of $e$ on $r_{\mu}\circ\phi$ is nilpotent (since it is zero on the semisimplification because the semisimplification is conjugate to $r_{\mu}\circ\phi_{\chi}$). We conclude that $e^N$ acts as zero on $r_{\mu}\circ\phi$, in other words, on the pullback of $\mathcal{V}_{\mu,\chi}$ via $\spec L \to Z^1(W_{\qp}, \dualgrp{G})_{\chi}^{\mathrm{red}}$. 
\end{proof}
We conclude that the endomorphism $e^N$ is in the kernel of
\begin{align}
    \operatorname{End}(Z^1(W_{\qp}, \dualgrp{G})_{\chi}, \mathcal{V}_{\mu,\chi}) \to \operatorname{End}(Z^1(W_{\qp}, \dualgrp{G})_{\chi}^{\mathrm{red}}, i^{\ast}\mathcal{V}_{\mu,\chi}).
\end{align}
But this kernel is generated by the kernel $J$ of
\begin{align}
    \Gamma(Z^1(W_{\qp}, \dualgrp{G})_{\chi}, \mathcal{O}) \to \Gamma(Z^1(W_{\qp}, \dualgrp{G})_{\chi}^{\mathrm{red}}, \mathcal{O}),
\end{align}
which is nilpotent. We conclude that the endomorphism $e^N$ is nilpotent (it corresponds to a matrix which is the zero matrix modulo $J$, and thus a power of it is zero). This implies that the endomorphism $e$ of $W^i(\chi)$ is nilpotent, which contradicts our assumption on the existence of $\sigma_n$, proving the theorem.
\end{proof}

\subsection{The Cayley--Hamilton theorem} \label{Sec:CayleyHamilton}
As above, we let $\Lambda$ be the ring of integers in a finite extension of $\ql$, containing a fixed square root of $p$, and we let $\loc$ be the stack of $L$-parameters over $\Lambda$. To lighten notation, we write $\zspec\coloneqq \zspec(G,\Lambda)$ for the spectral Bernstein center. We will use the following version of the Cayley--Hamilton theorem for vector bundles on $X_{\dualgrp{G}}$, cf.\ \cite[Proposition 6.3.1]{XiaoZhuTwisted}, where notice that when $G$ is unramified, their $[G\sigma/G]$ is precisely the substack of $X_{\widehat{G}}$ of unramified Langlands parameters.
\begin{Thm}[Cayley--Hamilton] \label{Thm:CayleyHamilton}
    Let $\mathcal{V}$ be a vector bundle on $\loc$. For any endomorphism $\gamma \in \pi_0\mathrm{End}_{\loc}(\mathcal{V})$, there is a $\zspec$-linear homomorphism 
    \[\zspec[X]/\det(X-\gamma)\to \pi_0\mathrm{End}_{\loc}(\mathcal{V}), \quad X\mapsto \gamma.\]
\end{Thm}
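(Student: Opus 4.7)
The strategy is to reduce to the classical Cayley--Hamilton theorem for endomorphisms of finitely generated projective modules over a commutative ring, via the presentation of $\loc$ as a global quotient by $\dualgrp{G}$.

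First, I would exploit the presentation $\loc = [Z/\dualgrp{G}]$ with $Z = Z^1(W_\qp, \dualgrp{G})$ an affine scheme, as recorded in \cite{DHKMModuli}. Let $q \colon Z \to \loc$ be the quotient map and set $R = \Gamma(Z, \mathcal{O}_Z)$. Pullback along $q$ identifies the vector bundle $\mathcal{V}$ with a $\dualgrp{G}$-equivariant vector bundle $q^\ast \mathcal{V}$ on $Z$, i.e., a finitely generated projective $R$-module $M$ equipped with a compatible algebraic action of $\dualgrp{G}$. Taking $\dualgrp{G}$-invariant endomorphisms gives an injective ring homomorphism
\[
  q^\ast \colon \pi_0 \mathrm{End}_{\loc}(\mathcal{V}) \hookrightarrow \mathrm{End}_R(M).
\]
Because $\Gamma(\loc, \mathcal{O}) = R^{\dualgrp{G}} = \zspec$, both sides are naturally $\zspec$-algebras and $q^\ast$ is $\zspec$-linear.

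Second, I would define the characteristic polynomial $p_\gamma(X) = \det(X \cdot \mathrm{id}_\mathcal{V} - \gamma) \in \zspec[X]$. Its coefficients are, up to sign, traces of the induced endomorphisms on the exterior powers $\Lambda^i \mathcal{V}$, which live in $\Gamma(\loc, \mathcal{O}) = \zspec$. Because the formation of determinants and traces commutes with pullback of vector bundles, $q^\ast p_\gamma(X)$ is the characteristic polynomial of $q^\ast \gamma$ as an endomorphism of the projective module $M$.

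Third, I would invoke the classical Cayley--Hamilton theorem for endomorphisms of finitely generated projective modules over a commutative ring to conclude that
\[
  q^\ast p_\gamma(\gamma) = p_{q^\ast\gamma}(q^\ast \gamma) = 0 \quad \text{in } \mathrm{End}_R(M).
\]
By injectivity of $q^\ast$, this forces $p_\gamma(\gamma) = 0$ in $\pi_0 \mathrm{End}_{\loc}(\mathcal{V})$. Therefore the evaluation map $\zspec[X] \to \pi_0 \mathrm{End}_{\loc}(\mathcal{V})$ sending $X \mapsto \gamma$ factors through $\zspec[X]/p_\gamma(X)$, giving the desired $\zspec$-linear ring homomorphism.

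The main technical nuisance (rather than a genuine obstacle) is that $\loc$ need not be connected and $\mathcal{V}$ may have non-constant rank, so $p_\gamma(X)$ should be understood as a polynomial of locally constant degree with coefficients in $\zspec$. This is routinely handled by decomposing $\loc$ into the open-and-closed loci on which the rank is constant and applying the argument above on each component; the relations assemble to the required single relation over $\zspec$.
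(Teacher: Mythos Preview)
The paper does not actually give a proof of this theorem; it only states it with a reference (``cf.\ \cite[Proposition~6.3.1]{XiaoZhuTwisted}'') and then proceeds to apply it. Your argument---pulling back along the affine cover $Z^1(W_{\qp},\dualgrp{G})\to\loc$, identifying the endomorphism with an equivariant endomorphism of a finite projective module, applying the classical Cayley--Hamilton theorem there, and using injectivity of pullback to descend the relation---is correct and is the standard way to establish such a statement, so it is almost certainly what the cited reference does as well.
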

We let $V$ be a representation of $\dualgrp{G}$ together with an extension $r_V:{}^LG_E\to \mathrm{GL}(V)$ as in Example \ref{Eg:SpectralAction}, and assume $V$ is of rank $n$ over $\Lambda$. Let $\gamma\in W_E$ and let $\phi_E^\mathrm{univ}$ be the universal $1$-cocycle on $\cocycle$ restricted to $W_E\subset W_\qp$. Then $r_V\circ \phi_E^\mathrm{univ}(\gamma)$ defines an element in $\pi_0(\End_{\loc}(\mathcal{V}))$. 

We will compute the coefficients of its characteristic polynomial in terms of excursion operators, see \cite[Definition VIII.4.2]{FarguesScholze} and the paragraph below. Apply the construction of loc.\ cit.\ to the excursion datum 
\[D_{V,\gamma}\coloneqq(\{0,1\}, V\otimes V^\ast, \alpha: 1\xrightarrow{coev}V\otimes V^\ast, \beta: V\otimes V^\ast \xrightarrow{ev} 1, \{\gamma,0\}\subset W_E);\]
it gives rise to an excursion operator (here we use the $\mathcal{D}_\mathrm{lis}$ formalism of \cite[Section VII.6]{FarguesScholze} so that we can construct a global function on $\loc$ instead of its $\ell$-adic completion)
\[S_{V,\gamma}:T_1\xrightarrow{T_\alpha}T_{V\otimes V^\ast}\xrightarrow{(\gamma,0)}T_{V\otimes V^\ast}\xrightarrow{T_\beta}T_1=\mathrm{id}_{\mathcal{D}_\mathrm{lis}(\bungk, \Lambda)}.\]
This lifts to a global function on $\loc$, whose value at an $L$-parameter $\phi$ computes the trace of $r_V\circ \phi(\gamma)$ on $V$. We view it as an element in $\zspec$ and still denote it by $S_{V,\gamma}$. Similarly, replacing $V$ by its wedge powers $\wedge^iV$, $i=1,\dots,n$, we get excursion operators $S_{\wedge^iV,\gamma}$ whose attached functions take value at $\phi$ the trace of $\wedge^i (r_V\circ \phi(\gamma))$ on $\wedge^iV$. We use this to define a \emph{spectral Hecke polynomial} associated to $\gamma$.
\begin{Def} \label{Def:SpectralHeckePolynomial} For $\gamma \in W_E$ we define $H^{\mathrm{spec}}_{G,V,\gamma}(X)$ to be the polynomial
\[H^{\mathrm{spec}}_{G,V,\gamma}(X)\coloneqq \mathrm{det}(X-r_V\circ \phi_E^\mathrm{univ}(\gamma))=\sum_{i=0}^{n}(-1)^iS_{\wedge^{n-i}V,\gamma}X^i\in \zspec[X].\]
\end{Def}
The following result (cf.\ \cite[Proposition 2.1]{Koshikawa}) is immediate from Definition \ref{Def:SpectralHeckePolynomial} and Theorem \ref{Thm:CayleyHamilton}.
\begin{Cor}\label{Lem:HamiltonCayley}
    There is a $\zspec$-algebra homomorphism 
    \[\zspec[X]/(H^{\mathrm{spec}}_{G,V,\gamma}(X))\rightarrow \pi_0(\End_{\loc}(\mathcal{V})), \quad X\mapsto r_V\circ \phi_E^\mathrm{univ}(\gamma).\]
\end{Cor}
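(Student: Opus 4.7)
The plan is to apply Theorem~\ref{Thm:CayleyHamilton} directly to the endomorphism $\gamma' \coloneqq r_V\circ \phi_E^{\mathrm{univ}}(\gamma)$, viewed as an element of $\pi_0(\End_{\loc}(\mathcal{V}))$. The cited theorem produces, at no cost, a $\zspec$-algebra homomorphism
\[
  \zspec[X]/\det(X-\gamma') \longrightarrow \pi_0(\End_{\loc}(\mathcal{V})), \quad X\mapsto \gamma',
\]
and it then remains only to identify the polynomial $\det(X-\gamma') \in \zspec[X]$ with the spectral Hecke polynomial $H^{\mathrm{spec}}_{G,V,\gamma}(X)$ from Definition~\ref{Def:SpectralHeckePolynomial}. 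But this identification is essentially tautological, since $H^{\mathrm{spec}}_{G,V,\gamma}(X)$ is \emph{defined} to be $\det(X - r_V\circ \phi_E^{\mathrm{univ}}(\gamma))$.

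The only minor bookkeeping I anticipate is verifying that the alternative expression $\sum_{i=0}^n(-1)^iS_{\wedge^{n-i}V,\gamma}X^i$ appearing in Definition~\ref{Def:SpectralHeckePolynomial} really does compute $\det(X-\gamma')$ as an element of $\zspec[X]$. This amounts to the classical identity that the coefficients of a characteristic polynomial are, up to sign, the traces on exterior powers, combined with the defining property of the excursion operator $S_{W,\gamma}$, namely that as a function on $\loc$ it sends an $L$-parameter $\phi$ to $\operatorname{tr}(r_W\circ\phi(\gamma) \mid W)$. Applied to $W = \wedge^{n-i}V$ and using $r_{\wedge^{n-i}V}\circ\phi(\gamma) = \wedge^{n-i}(r_V\circ\phi(\gamma))$, this matches the coefficient of $X^i$ in $\det(X - r_V\circ\phi(\gamma))$ pointwise on $\loc$, and hence globally in $\zspec[X]$. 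There is no substantive obstacle; the whole argument is a one-line combination of the two previously established results.
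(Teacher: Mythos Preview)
Your proposal is correct and matches the paper's approach exactly: the paper simply states that the corollary is immediate from Definition~\ref{Def:SpectralHeckePolynomial} and Theorem~\ref{Thm:CayleyHamilton}, which is precisely your argument of applying Cayley--Hamilton to $\gamma' = r_V\circ\phi_E^{\mathrm{univ}}(\gamma)$ and noting that $H^{\mathrm{spec}}_{G,V,\gamma}(X)$ is by definition $\det(X-\gamma')$. Your additional bookkeeping about the exterior-power expression for the coefficients is already asserted as part of Definition~\ref{Def:SpectralHeckePolynomial} (and justified in the paragraph preceding it), so it is not needed here.
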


\subsubsection{} Let $\mathcal{Z}(G(\qp), \Lambda)$ be the Bernstein center of smooth $G(\qp)$-representations on $\Lambda$-modules and $\Psi_G:\zspec(G, \Lambda)\to Z(G(\qp), \Lambda)$ be the map of Bernstein centers as before (assuming $\ell$ is coprime to the order of $\pi_0(Z(G))$). 
For $\gamma \in W_E$, we write $H_{G,V,\gamma}(X) \in \mathcal{Z}(G(\qp), \Lambda)[X]$ for the image of $H^{\mathrm{spec}}_{G,V,\gamma}(X)$ under $\Psi_G$. Now apply this to the representation $r_\mu$ and combine with Corollary~\ref{Cor:EllAdicSpectralAction} to obtain the following result. Recall that $\mathcal{Z}(G(\qp), \Lambda)$ acts on the (derived) $K_p$-fixed points of any $G(\qp)$-representation, through its image in the Hecke algebra a level $K_p$.
\begin{Cor} \label{Cor:InfiniteLevelEichlerShimura}
    If $\ell$ is coprime to the order of $\pi_0(Z(G))$, then for every compact open subgroup $K_p \subset G(\qp)$ and $\gamma \in W_E$ the endomorphism $H_{G,V_\mu,\gamma}(\gamma)$
    acts as zero on $R\Gamma(\mathbf{Sh}_{K,\overline{E}}, \Lambda)(\tfrac{d}{2})$. 
\end{Cor}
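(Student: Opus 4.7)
The plan is to combine the spectral action of Corollary~\ref{Cor:EllAdicSpectralAction} with the Cayley--Hamilton identity of Corollary~\ref{Lem:HamiltonCayley}. Applying the latter to $V = V_\mu$ and the chosen Weil element $\gamma \in W_E$ immediately yields the relation
\[
H^{\mathrm{spec}}_{G,V_\mu,\gamma}\!\bigl(r_\mu\circ \phi_E^{\mathrm{univ}}(\gamma)\bigr) = 0 \quad \text{in } \pi_0\End_{\loc}(\mathcal{V}_\mu),
\]
where the polynomial has coefficients in $\zspec$. The goal is to push this identity through to an identity of endomorphisms of $R\Gamma(\mathbf{Sh}_{K,\overline{E}},\Lambda)(\tfrac{d}{2})$.

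To do so I would invoke the three key outputs of Corollary~\ref{Cor:EllAdicSpectralAction}: first, that $\pi_0\End_{\loc}(\mathcal{V}_\mu)$ acts on the cohomology compatibly with the natural Hecke and Weil actions; second, that under this action the element $r_\mu\circ \phi_E^{\mathrm{univ}}(\gamma)$ coincides with the usual Weil action of $\gamma$ (the ``spectral $=$ usual'' statement); and third, that the subalgebra $\zspec \subset \pi_0\End_{\loc}(\mathcal{V}_\mu)$ acts through the Bernstein-center map $\Psi_G$. By construction (Definition~\ref{Def:SpectralHeckePolynomial} and the paragraph immediately preceding the corollary), $\Psi_G$ sends $H^{\mathrm{spec}}_{G,V_\mu,\gamma}(X)$ coefficient-by-coefficient to $H_{G,V_\mu,\gamma}(X)$. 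Combining these three compatibilities with the Cayley--Hamilton identity displayed above gives the vanishing of $H_{G,V_\mu,\gamma}(\gamma)$ on $R\Gamma(\mathbf{Sh}_{K,\overline{E}},\Lambda)(\tfrac{d}{2})$. No further input is needed; the assumption that $\ell$ is coprime to the order of $\pi_0(Z(G))$ is precisely what makes Corollary~\ref{Cor:EllAdicSpectralAction} available.

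The only real subtlety, and the point I would take most care to verify, is the bookkeeping of the Tate twist $(\tfrac{d}{2})$. Recall from Theorem~\ref{Thm:SpectralAction} that the cohomology is identified with $i_1^\ast(\mathcal{V}_\mu \ast j_{\mu,k,!}\mathcal{F}[-d])(-\tfrac{d}{2})$; the extra $(-\tfrac{d}{2})$ arises from our self-dual normalization $\mathcal{S}_\mu = j_{\mu,!}\Lambda[d](\tfrac{d}{2})$ of the Satake perverse sheaf. Applying $(\tfrac{d}{2})$ to the cohomology precisely absorbs this twist, so that the natural Weil action of $\gamma$ on the twisted cohomology matches the spectral action of $r_\mu\circ\phi_E^{\mathrm{univ}}(\gamma)$ without any auxiliary scalar. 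Once this normalization is checked, the argument becomes formal.
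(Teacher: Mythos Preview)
Your proposal is correct and follows exactly the paper's own approach: the proof in the paper is the single sentence ``This follows from Lemma~\ref{Lem:HamiltonCayley} in combination with Corollary~\ref{Cor:EllAdicSpectralAction},'' and you have simply unpacked what that means, including the Tate-twist bookkeeping that the paper leaves implicit.
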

\begin{proof}
    This follows from Corollary \ref{Lem:HamiltonCayley} in combination with Corollary \ref{Cor:EllAdicSpectralAction}. 
\end{proof}

\subsection{Hecke polynomials} \label{Sub:HeckePolynomials} This is a standalone section on Hecke polynomials, which we will use to prove Theorem~\ref{Thm:IntroEichlerShimura}. The goal of this section is to compute the spectral Hecke polynomial of a lift of Frobenius, after restriction to the stack of unramified $L$-parameters. We thank Jean-Francois Dat for his helpful suggestions regarding this section. 

Let $G$ be an unramified reductive group over $\qp$.\footnote{See Remark \ref{Rem:UnramifiedNecessary} for a discussion of this hypothesis.} Let $S \subset G$ be a maximal $\qp$-split torus with centralizer $T$, a maximal torus of $G$, and choose a Borel $B \supset T$. The torus $S$ determines an apartment in the Bruhat--Tits building of $G$. We choose a hyperspecial subgroup $K_p \subset G(\qp)$ corresponding to some vertex in this apartment. This determines an Iwahori subgroup $I$ containing $K_p$. Let $\Lambda$ be the ring of integers in a finite extension of $\ql$ and fix $\sqrt{p} \in \Lambda$ as before. Throughout Section~\ref{Sub:HeckePolynomials}, we make the following assumption. 

\begin{Assump}
    The prime number $\ell$ is coprime to the order of $\pi_0(Z(G))$.
\end{Assump}
Since the action of $W_{\qp}$ on $\dualgrp{G}$ factors through $W_{\qp}/I_{\qp}$, where $I_{\qp}$ is the inertia group, there is an inflation map (of $\Lambda$-schemes)
\begin{align}
    Z^1(W_{\qp}/I_{\qp}, \dualgrp{G}) \to Z^1(W_{\qp}, \dualgrp{G}).
\end{align}
This map is a closed immersion and we will denote its image by $Z^1_{\mathrm{ur}}(W_{\qp}, \dualgrp{G}) \subset Z^1(W_{\qp}, \dualgrp{G})$. We will write $\zspec_{\mathrm{ur}}(G, \Lambda)$ for the ring of global functions of $Z^1_{\mathrm{ur}}(W_{\qp}, \dualgrp{G}) \sslash \dualgrp{G}$, which receives a natural restriction map from $\zspec_{}(G, \Lambda)$. 
\begin{Lem} \label{Lem:UnramifiedVSUnipotent}
    The map $\zspec_{}(G, \Lambda) \to \zspec_{\mathrm{ur}}(G, \Lambda)$ is surjective. 
\end{Lem}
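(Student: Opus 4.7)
The plan is to verify surjectivity by realizing $\zspec_{\mathrm{ur}}(G, \Lambda)$ as generated, as a $\Lambda$-algebra, by restrictions of excursion operators coming from $\zspec(G, \Lambda)$.

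First, since $G$ is unramified, the action of $W_\qp$ on $\dualgrp{G}$ factors through $W_\qp/I_\qp = \widehat{\mathbb{Z}}\cdot \phi$, and upon choosing a lift of Frobenius one obtains a $\dualgrp{G}$-equivariant isomorphism
\[
  Z^1_{\mathrm{ur}}(W_\qp, \dualgrp{G}) \xrightarrow{\sim} \dualgrp{G}, \qquad \rho \mapsto \rho(\phi),
\]
where $\dualgrp{G}$ is equipped with the $\phi$-twisted conjugation action $h \cdot g = h g \phi(h)^{-1}$. Passing to GIT quotients, this identifies $\zspec_{\mathrm{ur}}(G, \Lambda)$ with the invariant ring $\mathcal{O}(\dualgrp{G})^{\dualgrp{G}, \phi\text{-conj}}$. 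By an integral version of Chevalley's restriction theorem for split reductive group schemes, this invariant ring is generated over $\Lambda$ by the twisted character functions $\chi_W \colon g \mapsto \mathrm{tr}(r_W(g \rtimes \phi))$ attached to algebraic representations $r_W \colon {}^LG \to \GL(W)$.

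Next, I would observe that each such twisted character $\chi_W$ is the restriction to the unramified locus of an excursion operator in $\zspec(G, \Lambda)$. Specifically, it is the excursion operator attached to the datum
\[
  \bigl(\,I = \{\ast\},\ V = W \otimes W^\vee,\ \alpha = \mathrm{coev},\ \beta = \mathrm{ev},\ \gamma = \phi\,\bigr),
\]
in the sense of Subsection~\ref{Sec:CayleyHamilton}. Indeed, unraveling the definition, this excursion function evaluated at the unramified parameter corresponding to $g \in \dualgrp{G}$ is $\mathrm{ev} \circ r_{W \otimes W^\vee}(g \rtimes \phi) \circ \mathrm{coev} = \mathrm{tr}_W(g \rtimes \phi) = \chi_W(g)$. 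Since the $\chi_W$ generate $\zspec_{\mathrm{ur}}(G, \Lambda)$ over $\Lambda$ and lift to $\zspec(G, \Lambda)$ via the excursion construction, the restriction map is surjective.

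The main obstacle is the integral Chevalley-type statement for the twisted conjugation invariants on $\dualgrp{G}$. This is handled by noting that since $G$ is unramified, $\phi$ acts on $\dualgrp{G}$ through a finite cyclic quotient, and twisted conjugation invariants on $\dualgrp{G}$ are classically generated by characters coming from representations of the $L$-group via Steinberg's theorem applied to the identity component of $\dualgrp{G} \rtimes \phi^{\mathbb{Z}}$; the statement over $\Lambda$ follows from the corresponding statement over $\mathbb{Z}$ by base change, as $\dualgrp{G}$ is split reductive over $\Lambda$.
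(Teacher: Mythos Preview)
Your argument has a genuine gap at exactly the step you flag as the ``main obstacle'': the claim that $\mathcal{O}(\dualgrp{G})^{\dualgrp{G},\phi\text{-conj}}$ is generated over $\Lambda$ by twisted characters of representations of ${}^LG$. Over an algebraically closed field of characteristic zero this is classical, but integrally it is delicate. Steinberg's theorem concerns simply connected semisimple groups over a field, and ``applying it to the identity component of $\dualgrp{G}\rtimes\phi^{\mathbb{Z}}$'' together with ``base change from $\mathbb{Z}$'' does not give what you need for a general $\dualgrp{G}$ over a general $\Lambda$; nor is it clear that formation of the invariant ring commutes with base change here. As written, this step is not justified, and it is not a routine matter to justify it.

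The paper's proof bypasses this difficulty entirely with a much simpler observation. Choosing a Frobenius lift $\sigma\in W_\qp$ gives a section $s\colon W_\qp/I_\qp\to W_\qp$ of the quotient map, and precomposition with $s$ defines a $\dualgrp{G}$-equivariant retraction
\[
Z^1(W_\qp,\dualgrp{G})\longrightarrow Z^1_{\mathrm{ur}}(W_\qp,\dualgrp{G})
\]
of the inflation map. Hence the restriction $\mathcal{O}(Z^1(W_\qp,\dualgrp{G}))\to\mathcal{O}(Z^1_{\mathrm{ur}}(W_\qp,\dualgrp{G}))$ admits a $\dualgrp{G}$-equivariant section, and taking $\dualgrp{G}$-invariants yields surjectivity of $\zspec(G,\Lambda)\to\zspec_{\mathrm{ur}}(G,\Lambda)$. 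No structural description of the target ring is required.
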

\begin{proof}
The inflation map $Z^1(W_{\qp}/I_{\qp}, \dualgrp{G}) \to Z^1(W_{\qp}, \dualgrp{G})$ has a section given by choosing a Frobenius element in $W_{\qp}$. This shows that the restriction map
    \begin{align}
        \mathcal{O}(Z^1_{}(W_{\qp}, \dualgrp{G})) \to \mathcal{O}(Z^1_{\mathrm{ur}}(W_{\qp}, \dualgrp{G}))
    \end{align}
has a section, and thus the induced map
\begin{align}
    \mathcal{O}(Z^1(W_{\qp}, \dualgrp{G}))^{\dualgrp{G}} \to \mathcal{O}(Z^1_{\mathrm{ur}}(W_{\qp}, \dualgrp{G}))^{\dualgrp{G}}
\end{align}
is surjective. 
\end{proof}

\subsubsection{} Let $I \subset K_p$ denote the Iwahori subgroup corresponding to $S$ and consider the representation
\begin{align}
    \pi=c\text{-}\mathrm{Ind}_{I}^{G(\qp)} \Lambda,
\end{align}
and let $\mathcal{H}_I\coloneqq\mathcal{H}_I(G, \Lambda)=\operatorname{End}_{G(\qp)}(\pi)$ be the Iwahori--Hecke algebra with coefficients in $\Lambda$, with center $\mathcal{Z}_I\coloneqq\mathcal{Z}_I(G(\qp), \Lambda)$. 
\begin{Lem} \label{Lem:Factorization}
    There exists a map $\zspec_{\mathrm{ur}}(G, \Lambda) \to \mathcal{H}_I$ fitting into a commutative diagram
    \begin{equation}
        \begin{tikzcd}
            \zspec(G, \Lambda) \arrow{r}{\Psi_G} \arrow{d} & \mathcal{Z}(G(\qp), \Lambda) \arrow{d} \\
            \zspec_{\mathrm{ur}}(G, \Lambda) \arrow[r] & \mathcal{H}_I.
        \end{tikzcd}
    \end{equation}
\end{Lem}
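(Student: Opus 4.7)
The plan is to construct the map via the natural action of the Bernstein center on $V = c\text{-}\mathrm{Ind}_I^{G(\qp)} \Lambda$ and then establish the factorization by reducing to a compatibility statement with the unramified local Langlands correspondence in characteristic zero. The Bernstein center acts on $V$ by $G(\qp)$-equivariant endomorphisms, and this action gives a homomorphism $\mathcal{Z}(G(\qp), \Lambda) \to \operatorname{End}_{G(\qp)}(V) = \mathcal{H}_I$. By centrality of the Bernstein center, the image lies in $\mathcal{Z}_I \subseteq \mathcal{H}_I$; this is the desired natural vertical map. Composing with $\Psi_G$ produces $\Phi \colon \zspec(G, \Lambda) \to \mathcal{Z}_I$. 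Since the map $\zspec(G, \Lambda) \to \zspec_{\mathrm{ur}}(G, \Lambda)$ is surjective by Lemma~\ref{Lem:UnramifiedVSUnipotent}, the factorization is equivalent to showing that the kernel $J$ of this surjection is annihilated by $\Phi$.

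By the Bernstein presentation of the Iwahori--Hecke algebra, $\mathcal{Z}_I$ is identified with a Weyl-invariant subalgebra of the group algebra of a cocharacter lattice (passing to the maximal $\qp$-split torus and the relative Weyl group when $G$ is quasi-split unramified). In particular $\mathcal{Z}_I$ is a finitely generated, $\Lambda$-flat, reduced $\Lambda$-algebra, and therefore embeds into the finite product of its residue fields at the generic points of $\operatorname{Spec} \mathcal{Z}_I$; each such residue field has characteristic zero. Consequently, the vanishing $\Phi(J) = 0$ can be verified after composing with an arbitrary homomorphism $\omega \colon \mathcal{Z}_I \to L$, where $L$ is algebraically closed of characteristic zero.

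By Bernstein's theorem such a homomorphism $\omega$ corresponds to a Weyl orbit of unramified characters $\chi \colon T(\qp) \to L^\times$, and by Borel's theorem on Iwahori-spherical representations it is realized as the $\mathcal{Z}_I$-central character of some irreducible subquotient $\pi$ of the unramified principal series $I(\chi)$. By the construction of $\Psi_G$ in \cite[Corollary~IX.0.3]{FarguesScholze}, the character $\omega \circ \Phi \colon \zspec(G, \Lambda) \to L$ is the pullback of functions along the Fargues--Scholze parameter $\phi_\pi$. The compatibility of the Fargues--Scholze construction with the unramified local Langlands correspondence in characteristic zero identifies $\phi_\pi$ with the unramified parameter attached to the Weyl orbit of $\chi$; in particular $\phi_\pi$ is trivial on the inertia subgroup $I_{\qp}$, so it lies in $Z^1_\mathrm{ur}(W_{\qp}, \dualgrp{G})$. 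Therefore $\omega \circ \Phi$ factors through $\zspec_{\mathrm{ur}}(G, \Lambda)$, as required.

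The principal technical input is the compatibility of the Fargues--Scholze correspondence with unramified local Langlands; the reduction to characteristic-zero geometric points via the $\Lambda$-flatness of $\mathcal{Z}_I$ means that no mod-$\ell$ compatibility is needed. A secondary point requiring care is that, since $G$ is assumed only unramified (hence quasi-split over $\qp$ and split over an unramified extension), the invocations of Bernstein's and Borel's theorems must be made in the appropriate generality for non-split reductive groups; this is classical.
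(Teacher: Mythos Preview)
Your proof is correct and follows essentially the same approach as the paper's: reduce to $\mathcal{Z}_I$, use $\Lambda$-flatness and reducedness of $\mathcal{Z}_I$ (via the Bernstein presentation) to check the factorization on characteristic-zero points, and then verify that Iwahori-spherical representations have unramified Fargues--Scholze parameters. The only notable difference is in how the last step is justified: you appeal to ``compatibility of the Fargues--Scholze construction with the unramified local Langlands correspondence,'' whereas the paper argues more directly that any Iwahori-spherical $\pi$ is a subquotient of a parabolic induction of an unramified character of $T(\qp)$ and then invokes compatibility of the Fargues--Scholze parameters with parabolic induction \cite[Corollary~IX.7.3]{FarguesScholze} together with the computation for tori \cite[Proposition~IX.6.5]{FarguesScholze}. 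The paper's phrasing is slightly sharper, since it uses only what is needed (unramifiedness of $\phi_\pi$) rather than the full identification with the classical parameter, and it avoids any appearance of circularity with the subsequent Proposition~\ref{Prop:UnramifiedLocalLanglandsFamilies}.
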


\begin{proof}
The right vertical map has image in $\mathcal{Z}_I$ so it suffices to prove the lemma with $\mathcal{H}_I$ replaced by $\mathcal{Z}_I$. If we translate this into a diagram of schemes, then we are trying to show that the map
\begin{align}
    \spec \mathcal{Z}_I \to \spec \mathcal{Z}(G(\qp), \Lambda) \to \spec \mathcal{O}\left(Z^1(W_{\qp}, \dualgrp{G}) \sslash \dualgrp{G}\right)
\end{align}
factors through the closed subscheme\footnote{Since $Z^1(W_{\qp}, \dualgrp{G}) \sslash \dualgrp{G}$ is an infinite disjoint union of affine schemes, it is not isomorphic to the spectrum of its ring of global functions.} 
\[Z^1_{\mathrm{ur}}(W_{\qp}, \dualgrp{G}) \sslash \dualgrp{G} \subset \spec \mathcal{O}\left(Z^1(W_{\qp}, \dualgrp{G}) \sslash \dualgrp{G}\right),\] 
see Lemma \ref{Lem:UnramifiedVSUnipotent}. It follows from \cite[Theorem 6.4.1]{Boumasmoud} that the scheme $\spec \mathcal{Z}_I$ is integral and $\Lambda$-flat. We may thus check the factorization after base change to $\Lambda[1/\ell]$ and we may moreover check it on $\qlbar$-points. 

On the level of $\qlbar$-points, we are trying to prove that Iwahori spherical $G(\qp)$-representations have unramified Fargues--Scholze parameters. This follows for example from the fact that they are quotients of parabolic inductions of unramified characters of $T(\qp)$, in combination with the fact that the construction of Fargues--Scholze parameters is compatible with parabolic induction, see \cite[Corollary IX.7.3]{FarguesScholze} and \cite[Proposition IX.6.5]{FarguesScholze}. Alternatively, this follows from the main results of \cite{MR4651101}. 
\end{proof} 

\subsubsection{} We write $W_0$ for the relative Weyl group of $S$, which acts on $\dualgrp{S}$ and $\dualgrp{T}$. We write $N$ for the normalizer of $\dualgrp{T}$ in $\dualgrp{G}$ and $N_0$ for the inverse image of $W_0$ in $N$. Then there are natural maps
\begin{align}
    \dualgrp{S} \sslash W_0 \leftarrow \dualgrp{T} \sslash \operatorname{Ad}_{\sigma} N_0 \to \dualgrp{G} \sslash \operatorname{Ad}_{\sigma} \dualgrp{G},
\end{align}
where $\sigma$ is the Frobenius on $\qpbr$ and we write $\operatorname{Ad}_\sigma$ for the $\sigma$-twisted conjugation action. We have the following lemma.

\begin{Lem}
  The natural map $\dualgrp{T} \sslash \operatorname{Ad}_{\sigma} N_0  \to \dualgrp{S} \sslash W_{0}$ is an isomorphism. 
\end{Lem}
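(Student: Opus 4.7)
The plan is to compute $\dualgrp{T} \sslash \operatorname{Ad}_\sigma N_0$ in two stages using the short exact sequence of group schemes $1 \to \dualgrp{T} \to N_0 \to W_0 \to 1$, which yields
\[
  \dualgrp{T} \sslash \operatorname{Ad}_\sigma N_0 \;=\; \bigl(\dualgrp{T} \sslash \operatorname{Ad}_\sigma \dualgrp{T}\bigr) \sslash W_0.
\]
For the inner quotient, the commutativity of $\dualgrp{T}$ reduces $\sigma$-conjugation by $t_0 \in \dualgrp{T}$ on $\dualgrp{T}$ to translation by $t_0 \sigma(t_0)^{-1}$. Dualizing, the action on $\Lambda[\dualgrp{T}] = \Lambda[X^\ast(\dualgrp{T})]$ scales each weight vector $e^\chi$ by the character $t_0 \mapsto ((\sigma^\ast - 1)\chi)(t_0)$, so $e^\chi$ is invariant iff $\sigma^\ast \chi = \chi$. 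Using $X^\ast(\dualgrp{T})^\sigma = X_\ast(T)^\sigma = X_\ast(S) = X^\ast(\dualgrp{S})$, this identifies $\dualgrp{T} \sslash \operatorname{Ad}_\sigma \dualgrp{T}$ with $\dualgrp{S}$.

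It then remains to identify the induced $W_0$-action on $\dualgrp{S}$ with the natural Weyl action. For $n \in N_0$ lifting $w \in W_0 \subseteq W(\dualgrp{G}, \dualgrp{T})^\sigma$, the element $z(n) := n^{-1}\sigma(n)$ lies in $\dualgrp{T}$, and a direct computation shows that $\sigma$-conjugation by $n$ on $\dualgrp{T}$ equals $t \mapsto w(t) \cdot w(z(n))^{-1}$. The induced action on $\dualgrp{S}$ coincides with the natural Weyl action precisely when the twist $w(z(n))^{-1} = n\sigma(n)^{-1}$ always lies in $L_\sigma(\dualgrp{T}) := \{s \sigma(s)^{-1} : s \in \dualgrp{T}\}$, equivalently when the Lang morphism $N_0 \to \dualgrp{T}$, $n \mapsto n^{-1}\sigma(n)$, factors through $L_\sigma$.

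This containment would reduce to producing, for each $w \in W_0$, a $\sigma$-fixed lift $m \in N_0^\sigma$: given such $m$, any $n \in N_0$ lifting $w$ can be written $n = t_0 m$ with $t_0 \in \dualgrp{T}$, and a short calculation gives $z(n) = w^{-1}(t_0^{-1}\sigma(t_0)) \in L_\sigma$, using that $w$ commutes with $\sigma$ and hence preserves $L_\sigma$. The existence of such $\sigma$-invariant lifts is the main input and follows from the classical theory of pinnings for quasi-split groups: since $G$ is unramified, $\dualgrp{G}$ admits a $\sigma$-stable pinning $(\dualgrp{T}, \dualgrp{B}, \{x_\alpha\})$, and the Tits lifts $n_\alpha = x_\alpha(1)x_{-\alpha}(-1)x_\alpha(1)$ give $\sigma$-invariant representatives for the simple reflections generating $W_0$; products then yield $\sigma$-invariant lifts for all of $W_0$. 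The principal subtlety is making the argument scheme-theoretic, as surjectivity of $N_0^\sigma \to W_0$ on $R$-points for general $\Lambda$-algebras $R$ can fail due to nontrivial $H^1(\langle\sigma\rangle, \dualgrp{T}(R))$; however, since all the schemes involved are affine of finite type over $\Lambda$, it suffices to verify the resulting map of invariant rings is an isomorphism after faithfully flat base change to an algebraically closed field, where the pinning argument applies verbatim.
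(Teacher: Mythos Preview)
Your approach is essentially the same as the paper's: both factor the quotient through $\dualgrp{T} \sslash \operatorname{Ad}_\sigma \dualgrp{T} \simeq \dualgrp{S}$ via the cocharacter identification $X_\ast(T)^\sigma = X_\ast(S)$, then pass to the $W_0$-quotient. The paper's proof is terse and simply asserts that the second stage gives $\dualgrp{S} \sslash W_0$; you supply the verification that the residual $N_0/\dualgrp{T}$-action on $\dualgrp{S}$ is the natural Weyl action, which the paper leaves implicit.

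One point to tighten: the individual Tits lifts $n_\alpha$ are \emph{not} $\sigma$-invariant unless $\sigma(\alpha)=\alpha$; rather $\sigma(n_\alpha)=n_{\sigma(\alpha)}$. What you actually need is that for $w\in W_0 = W^\sigma$ the Tits lift $n_w$ (the product of $n_{\alpha_i}$ along a reduced expression) is $\sigma$-fixed. This follows because $\sigma$ sends a reduced expression for $w$ to another one, and the Tits section satisfies the braid relations, so $n_w$ is independent of the reduced expression chosen. With this correction your argument goes through directly over $\Lambda$: the Tits lifts live in $N_0(\Lambda)$, so the last paragraph's worry about base-changing to an algebraically closed field is unnecessary. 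Once a $\sigma$-fixed lift $m$ of each $w\in W_0$ exists over $\Lambda$, your computation $(t_0 m)\sigma(t_0 m)^{-1}=t_0\sigma(t_0)^{-1}$ shows scheme-theoretically that $\chi(n\sigma(n)^{-1})=1$ for every $\sigma$-fixed character $\chi$ and every $n$ in the $w$-component of $N_0$, which is exactly what is required.
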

\begin{proof}
  It can be checked directly using cocharacter groups that the natural map
  $\dualgrp{T} \sslash \operatorname{Ad}_{\sigma} \dualgrp{T} \to
  \dualgrp{T}_\sigma$ is an isomorphism (where the latter is the $\qp$-split torus whose character group is the Galois coinvariants of $X^\ast(\widehat{T})$), and $\dualgrp{T}_\sigma \simeq
  \dualgrp{S}$ since $X_\ast(S) = X_\ast(T)^\sigma$. It follows that the map of
  the lemma is an isomorphism.
\end{proof}

\begin{Rem}
We expect that the natural map $\dualgrp{T} \sslash \operatorname{Ad}_{\sigma} N_0 \to \dualgrp{G} \sslash \operatorname{Ad}_{\sigma} \dualgrp{G}$ is also an isomorphism. This is true over $\ql$ by \cite[Lemma 6.4, 6.5]{BorelCorvallis} (cf.\ \cite[Conjecture 4.19, Theorem 4.21]{ZhuCoherent}).
\end{Rem}

We will write $\Theta_{\mathrm{Spec}}:\zspec_{\mathrm{ur}}(G, \Lambda) = \mathcal{O}(\dualgrp{G} \sslash \operatorname{Ad}_{\sigma} \dualgrp{G}) \to \mathcal{O}(\dualgrp{S} \sslash W_0)=\mathcal{O}(\dualgrp{S})^{W_0}$. By \cite[Theorem 6.4.1, Theorem 6.5.1]{Boumasmoud}, there is a canonical isomorphism 
\begin{align}
    \Theta_{\mathrm{Bern}}:\mathcal{O}(\dualgrp{S} \sslash W_0) \xrightarrow{\sim} \mathcal{Z}_I(G(\qp), \Lambda)
\end{align}
coming from the Bernstein presentation of $\mathcal{H}_I$. We now want to understand how the map $\Psi_I^G:\zspec_{\mathrm{ur}}(G, \Lambda) \to \mathcal{Z}_I(G(\qp), \Lambda)$ constructed in the proof of Lemma \ref{Lem:Factorization} interacts with these isomorphisms.
\begin{Prop} \label{Prop:UnramifiedLocalLanglandsFamilies}
    The following diagram commutes
    \begin{equation}
        \begin{tikzcd}
            & \mathcal{O}(\dualgrp{S})^{W_0} \arrow[dr,"\Theta_{\mathrm{Bern}}"] & \\
            \zspec_{\mathrm{ur}}(G, \Lambda) \arrow[rr, "\Psi_I^G"]  \arrow[ur, 
    "\Theta_{\mathrm{Spec}}"]
            & & \mathcal{Z}_I(G(\qp), \Lambda).
        \end{tikzcd}
    \end{equation}
\end{Prop}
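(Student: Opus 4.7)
The plan is a two-step reduction: first to checking the equality on $\overline{\mathbb{Q}}_\ell$-points of $\spec \mathcal{Z}_I$, and then to the compatibility of the Fargues--Scholze correspondence with the classical unramified local Langlands correspondence. By \cite[Theorem~6.4.1]{Boumasmoud}, the ring $\mathcal{Z}_I$ is $\Lambda$-flat and integral, finitely generated over $\Lambda$, so $\mathcal{Z}_I$ injects into $\mathcal{Z}_I \otimes_{\Lambda} \overline{\mathbb{Q}}_\ell$, which in turn injects into the product of its residue fields at closed $\overline{\mathbb{Q}}_\ell$-points. Consequently, to show that the two $\Lambda$-algebra homomorphisms $\Psi_I^G$ and $\Theta_{\mathrm{Bern}} \circ \Theta_{\mathrm{Spec}}$ agree, it suffices to check that for every character $\chi \colon \mathcal{Z}_I \to \overline{\mathbb{Q}}_\ell$ the compositions $\chi \circ \Psi_I^G$ and $\chi \circ \Theta_{\mathrm{Bern}} \circ \Theta_{\mathrm{Spec}}$ coincide as ring maps $\zspec_{\mathrm{ur}}(G,\Lambda) \to \overline{\mathbb{Q}}_\ell$.

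By the Bernstein presentation, such a $\chi$ corresponds to a $W_0$-orbit of unramified characters $\theta \colon T(\qp) \to \overline{\mathbb{Q}}_\ell^\times$, equivalently to a point $s_\theta \in (\dualgrp{S}/W_0)(\overline{\mathbb{Q}}_\ell)$; by Borel--Casselman every irreducible Iwahori-spherical representation is a subquotient of the normalized parabolic induction $\iota_B^G \theta$ for some such $\theta$, and $\chi$ is realized as the action of $\mathcal{Z}_I$ on any such subquotient $\pi$. Unraveling the construction of $\Theta_{\mathrm{Spec}}$ from the maps $\dualgrp{T}\sslash \operatorname{Ad}_\sigma N_0 \simeq \dualgrp{S}/W_0$ and $\dualgrp{T}\sslash \operatorname{Ad}_\sigma N_0 \to \dualgrp{G}\sslash \operatorname{Ad}_\sigma \dualgrp{G}$, one checks that $\chi\bigl(\Theta_{\mathrm{Bern}}(\Theta_{\mathrm{Spec}}(f))\bigr) = f(\phi_\theta^{\mathrm{ur}})$, where $\phi_\theta^{\mathrm{ur}} \colon W_\qp \to {}^LG(\overline{\mathbb{Q}}_\ell)$ is the unramified $L$-parameter of $G$ obtained from the unramified $L$-parameter of $T$ with Satake parameter $s_\theta$ via the $L$-embedding ${}^LT \hookrightarrow {}^LG$. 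On the other hand, by the defining property of $\Psi_G$ in terms of excursion operators, $\chi(\Psi_I^G(f)) = f(\phi_\pi^{\mathrm{FS}})$, where $\phi_\pi^{\mathrm{FS}}$ is the semisimple Fargues--Scholze parameter of $\pi$.

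The proposition thus reduces to the identity $\phi_\pi^{\mathrm{FS}} = \phi_\theta^{\mathrm{ur}}$ as semisimple $L$-parameters of $G$. To prove this I would invoke the compatibility of the Fargues--Scholze construction with parabolic induction, see \cite[Corollary~IX.7.3]{FarguesScholze} and \cite[Proposition~IX.6.5]{FarguesScholze}, which identifies $\phi_\pi^{\mathrm{FS}}$ with the composition of the Fargues--Scholze parameter $\phi_\theta^{\mathrm{FS}}$ of $\theta$, regarded as a smooth representation of the torus $T(\qp)$, with the $L$-embedding ${}^LT \hookrightarrow {}^LG$. The remaining step, which I expect to be the principal obstacle, is to verify that for the torus $T$ the Fargues--Scholze correspondence agrees with the classical local Langlands correspondence, so that $\phi_\theta^{\mathrm{FS}}$ is the unramified $T$-parameter attached to $\theta$ via local class field theory. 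This toral compatibility is built into the Fargues--Scholze construction, since for tori the spectral action on $\mathcal{D}(\bun_{T,k},\Lambda)$ is determined by class field theory; combined with the parabolic induction step this yields $\phi_\pi^{\mathrm{FS}} = \phi_\theta^{\mathrm{ur}}$ and hence the commutativity of the diagram.
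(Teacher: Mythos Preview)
Your proposal is correct and follows essentially the same route as the paper: reduce to $\overline{\mathbb{Q}}_\ell$-points using that $\mathcal{Z}_I$ is integral and $\Lambda$-flat, then use compatibility of both sides with parabolic induction (\cite[Corollary~IX.7.3]{FarguesScholze} on the Fargues--Scholze side) to reduce to the torus case, which is \cite[Proposition~IX.6.5]{FarguesScholze}. The only addition in the paper is an explicit citation for the compatibility of the Bernstein side with parabolic induction (the discussion before \cite[Lemma~6.3]{HainesSatake}), which you left implicit.
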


\begin{proof}
    To prove the proposition we may invert $\ell$ and work with $\qlbar$ coefficients, since $\zspec_{\mathrm{ur}}(G, \Lambda)\simeq \mathcal{O}(\dualgrp{S})^{W_0}$ is torsion-free. We may then moreover prove the result (considered as a statement about morphisms of schemes) on the level of $\qlbar$-points since our rings are reduced. We are then trying to show that the Fargues--Scholze $L$-parameters of Iwahori--spherical smooth irreducible representations $\pi$ of $G(\qp)$ over $\qlbar$ can be computed by thinking of $\pi$ as a character of $\mathcal{Z}_I(G(\qp), \Lambda)$, applying the Bernstein isomorphism and pulling back along $\Theta_{\mathrm{Spec}}$. Both these methods of building $L$-parameters are compatible with parabolic induction, see \cite[Corollary IX.7.3]{FarguesScholze} for the Fargues--Scholze $L$-parameters and the discussion before \cite[Lemma 6.3]{HainesSatake} for the $L$-parameters constructed in the second way. This reduces the proposition to the case of tori, which is \cite[Proposition IX.6.5]{FarguesScholze}. Alternatively, this follows from the main results of \cite{MR4651101}. 
\end{proof}

\begin{Rem} \label{Rem:UnramifiedNecessary}
Proposition \ref{Prop:UnramifiedLocalLanglandsFamilies} has been generalized to arbitrary quasi-split groups $G$ in recent work of van den Hove, see \cite[Proposition 4.4]{vdHoveEichlerShimura}. One ingredient in that work is a definition of the stack of spherical Langlands parameters for groups that are not necessarily unramified. Non-quasi-split groups are also covered by \cite[Proposition 4.4]{vdHoveEichlerShimura}; although the statement differs from Proposition \ref{Prop:UnramifiedLocalLanglandsFamilies} and involves the transfer homomorphisms of \cite[Section 12]{HainesSatake}. 
\end{Rem}

\subsubsection{The Eichler--Shimura relation} We now prove Theorem~\ref{Thm:IntroEichlerShimura}; we first recall its statement. Let $S \subset T \subset B$, $I \subset K_p$ and $\mathcal{Z}_I\coloneqq\mathcal{Z}_I (G(\qp), \Lambda)$ be as above. For $K^p \subset \gafp$ a neat compact open subgroup we set $K'=IK^p$. Also $q$ is the cardinality of the residue field of $E$ as before. Choose any Frobenius lift $\sigma_E\in W_E$. We denote by $H_{\mu}$ the re-normalized Hecke polynomial
\begin{align}
 \operatorname{det}\left(X- q^{\tfrac{d}{2}} \left(r_{\mu} \circ  \phi_E^{\operatorname{univ}}(\sigma_E)\right)\right) \in \mathcal{O}(\dualgrp{S})^{W_0}[X],
\end{align}
which we think of as a polynomial in $\mathcal{Z}_I[X]$ using the Bernstein isomorphism. Our definition of $H_{\mu}$ agrees with the one in \cite[Introduction]{Koshikawa}. Note that in \cite{LeeEichlerShimura}, the Hecke polynomial $H_{\mu^{-1}}$ is used, see \cite[Section 2.1.2]{LeeEichlerShimura}\footnote{Note that in \cite{LeeEichlerShimura} the symbol $d$ is used for the half integer $\tfrac{d}{2}$.}; we suspect the discrepancy is explained by \cite[Remark 1.2]{Koshikawa}. Let $\mathbf{Sh}_{K',\ebar}$ be the base change of the Shimura variety $\mathbf{Sh}_{K'}$ to $\ebar$, whose cohomology has an action of $\mathcal{Z}_I\times W_E$. 

\begin{Thm} \label{Cor:EichlerShimura}
If the order of $\pi_0(Z(G))$ is coprime to $\ell$, then the inertia group $I_E \subset W_E$ acts unipotently on $H^i(\mathbf{Sh}_{K',\overline{E}}, \Lambda)$ for all $i$. Moreover, the action of any Frobenius lift $\sigma_E \in W_E$ on $R \Gamma(\mathbf{Sh}_{K',\overline{E}}, \Lambda)$ satisfies $H_{\mu}(\sigma_E)=0$.
\end{Thm}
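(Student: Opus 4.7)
The plan is to deduce both halves of Theorem~\ref{Cor:EichlerShimura} from Corollary~\ref{Cor:InfiniteLevelEichlerShimura} specialized to $K_p = I$, combined with an explicit computation of the image of the spectral Hecke polynomial inside $\mathcal{Z}_I[X]$. Concretely, Corollary~\ref{Cor:InfiniteLevelEichlerShimura} applied to $K' = IK^p$ and any $\gamma \in W_E$ gives that the endomorphism $H_{G,V_\mu,\gamma}(\gamma)$ vanishes on $R\Gamma(\mathbf{Sh}_{K',\overline{E}},\Lambda)(\tfrac{d}{2})$, where $H_{G,V_\mu,\gamma}(X) = \Psi_G(H^{\mathrm{spec}}_{G,V_\mu,\gamma}(X))$ has coefficients in $\mathcal{Z}(G(\qp),\Lambda)$. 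Because $\mathcal{Z}(G(\qp),\Lambda)$ acts on cohomology of level $K'$ through the natural quotient $\mathcal{Z}(G(\qp),\Lambda) \to \mathcal{Z}_I$, Lemma~\ref{Lem:Factorization} forces the coefficients of $H_{G,V_\mu,\gamma}$ to lie in the image of $\zspec_{\mathrm{ur}}(G,\Lambda)$, and then Proposition~\ref{Prop:UnramifiedLocalLanglandsFamilies} identifies that image with $\Theta_\mathrm{Bern}(\mathcal{O}(\dualgrp{S})^{W_0})$. Thus the image of $H^{\mathrm{spec}}_{G,V_\mu,\gamma}(X) = \det(X - r_\mu \circ \phi_E^{\mathrm{univ}}(\gamma))$ in $\mathcal{Z}_I[X]$ is computed by first restricting the universal $1$-cocycle to the unramified locus and then applying $\Theta_\mathrm{Bern} \circ \Theta_\mathrm{Spec}$.

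For the unipotence of inertia, I would take $\gamma \in I_E$. Every unramified $L$-parameter is trivial on inertia by definition, so on $Z^1_\mathrm{ur}(W_\qp,\dualgrp{G})$ the operator $r_\mu \circ \phi_E^{\mathrm{univ}}(\gamma)$ is the identity on $V_\mu$. Hence the image of $H^{\mathrm{spec}}_{G,V_\mu,\gamma}(X)$ in $\mathcal{O}(\dualgrp{S})^{W_0}[X]$ is $(X-1)^n$ with $n = \dim V_\mu$, and consequently the image of $H_{G,V_\mu,\gamma}(X)$ in $\mathcal{Z}_I[X]$ is the scalar polynomial $(X-1)^n$. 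Corollary~\ref{Cor:InfiniteLevelEichlerShimura} then gives $(\gamma - \mathrm{id})^n = 0$ on $R\Gamma(\mathbf{Sh}_{K',\overline{E}},\Lambda)(\tfrac{d}{2})$, and since inertia acts trivially on the Tate twist $\Lambda(\tfrac{d}{2})$, the same vanishing holds on $R\Gamma(\mathbf{Sh}_{K',\overline{E}},\Lambda)$. Passing to $H^i$ yields the required unipotence of the $I_E$-action.

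For the Hecke polynomial relation, I would take $\gamma = \sigma_E$. On unramified parameters the evaluation $\phi_E^{\mathrm{univ}}(\sigma_E)$ is nothing other than the product of the universal Satake parameter $s \in \dualgrp{S}/\!\!/W_0$ with $\sigma_E \in W_E$, so the image of $\det(X - r_\mu \circ \phi_E^{\mathrm{univ}}(\sigma_E))$ under $\Theta_\mathrm{Bern} \circ \Theta_\mathrm{Spec}$ is exactly the unnormalized characteristic polynomial of $r_\mu(s \rtimes \sigma_E)$. Setting $n = \dim V_\mu$, the definition of $H_\mu$ as $\det\!\bigl(X - q^{d/2}\,r_\mu \circ \phi_E^{\mathrm{univ}}(\sigma_E)\bigr)$ yields the identity $H_\mu(X) = q^{nd/2}\,H_{G,V_\mu,\sigma_E}(q^{-d/2}X)$ in $\mathcal{Z}_I[X]$. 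Since $\sigma_E$ acts on the Tate twist $\Lambda(\tfrac{d}{2})$ by the scalar $q^{-d/2}$, its action on $R\Gamma(\tfrac{d}{2})$ is $q^{-d/2}$ times its action on $R\Gamma$. Feeding this into $H_{G,V_\mu,\sigma_E}(\sigma_E) = 0$ on $R\Gamma(\tfrac{d}{2})$ and multiplying by $q^{nd/2}$ translates the relation precisely into $H_\mu(\sigma_E) = 0$ on $R\Gamma(\mathbf{Sh}_{K',\overline{E}},\Lambda)$.

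The main obstacle will be the bookkeeping in the last step: reconciling the normalization $\sqrt{p} \in \Lambda$ and the Tate twist $(\tfrac{d}{2})$ introduced in Section~\ref{sub:Hecke} to make $\mathcal{S}_\mu$ Verdier self-dual with the $q^{d/2}$-renormalization in the definition of $H_\mu$, and confirming that the conventions for Frobenius in Proposition~\ref{Prop:GaloisRestriction} and those implicit in Proposition~\ref{Prop:UnramifiedLocalLanglandsFamilies} (whose proof rests on compatibility of Fargues--Scholze parameters with parabolic induction) agree. Every other ingredient—the factorization through unramified parameters, the Cayley--Hamilton identity, and the match between the spectral and Bernstein descriptions of $\mathcal{Z}_I$—is already in place in the preceding subsections.
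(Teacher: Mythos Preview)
Your proposal is correct and follows essentially the same approach as the paper: both arguments combine Corollary~\ref{Cor:InfiniteLevelEichlerShimura} at Iwahori level with Lemma~\ref{Lem:Factorization} and Proposition~\ref{Prop:UnramifiedLocalLanglandsFamilies} to compute the image of $H^{\mathrm{spec}}_{G,V_\mu,\gamma}(X)$ in $\mathcal{Z}_I[X]$, obtaining $(X-1)^n$ for $\gamma\in I_E$ and the unnormalized characteristic polynomial for $\gamma=\sigma_E$, then untwist by $\tfrac{d}{2}$ to recover $H_\mu$. Your explicit identity $H_\mu(X)=q^{nd/2}\,H_{G,V_\mu,\sigma_E}(q^{-d/2}X)$ is exactly the content of the paper's terse ``Twisting by $\tfrac{d}{2}$'' step, and the bookkeeping concerns you flag are precisely the ones the paper resolves via Proposition~\ref{Prop:UnramifiedLocalLanglandsFamilies}.
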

\begin{proof}
Lemma~\ref{Lem:Factorization} implies that for any algebraically closed field $L/\Lambda$ and any smooth $L$-representation of $G(\qp)$ with nonzero Iwahori fixed vectors, its attached Fargues--Scholze (semisimplified) $L$-parameter is unramified. This in particular shows that the characteristic polynomial $H^{\mathrm{spec}}_{G,V_{\mu},\gamma}(X)$ for any $\gamma\in I_E$ maps to $(X-\mathrm{id})^{\operatorname{rank}r_\mu}$ in $\mathcal{Z}_I[X]$. Hence $\gamma$ acts unipotently on $H^i(\mathbf{Sh}_{K',\overline{E}}, \Lambda)$ for all $i$. 

If we identify the center of the Iwahori--Hecke algebra $\mathcal{Z}_I\coloneqq\mathcal{Z}_I(G(\qp), \Lambda)$ with $\mathcal{O}(\dualgrp{S})^{W_0}$ as above, then it follows from Proposition \ref{Prop:UnramifiedLocalLanglandsFamilies} and Lemma \ref{Lem:Factorization} that we may identify the image of $H_{G,V_{\mu}, \sigma_E}^{\operatorname{spec}}(X)$ in $\mathcal{Z}_I[X]$ with the polynomial
    \begin{align}
        \operatorname{det}(X-r_{\mu} \circ \phi_E^{\operatorname{univ}}(\sigma_E)) \in \mathcal{O}(\dualgrp{S})^{W_0}[X].
    \end{align}
Here we are taking the representation $r_{\mu}:\dualgrp{G} \rtimes W_E \to \operatorname{GL}(V_{\mu})$ of Section \ref{subsub:RMu}, restricting it to $\dualgrp{S} \rtimes W_E$ and evaluating the universal $L$-parameter over $Z^1_{\mathrm{ur}}(W_\qp,\dualgrp{S})$ at $\sigma_E$. By Corollary \ref{Cor:InfiniteLevelEichlerShimura}, the value of this polynomial at $\sigma_E$ acts trivially on (via the natural right action of $\mathcal{Z}_I$) 
    \begin{align}
        R\Gamma(\mathbf{Sh}_{K',\overline{E}}, \Lambda)(\tfrac{d}{2}).
    \end{align}
Twisting by $\tfrac{d}{2}$ (using our fixed $\sqrt{p} \in \Lambda)$, we find that the re-normalized Hecke polynomial 
    \begin{align}
        H_{\mu}=\operatorname{det}\left(X- q^{\tfrac{d}{2}} \left(r_{\mu} \circ  \phi_E^{\operatorname{univ}}(\sigma_E)\right)\right) \in \mathcal{O}(\dualgrp{S})^{W_0}[X]
    \end{align}
evaluated at $\sigma_E$, acts as zero on 
$R\Gamma(\mathbf{Sh}_{K',\overline{E}}, \Lambda)$. 
    \end{proof}
}

\section{Generic part of the cohomology}\label{Sec:TorsionVanishing} 
In this section we use Theorem \ref{Thm: Perversity} to prove Theorem~\ref{Thm:IntroTorsionVanishing}, which is a vanishing result for the generic part of the cohomology of (compact) Shimura varieties with torsion coefficients. We proceed following the ideas of \cite{CaraianiScholzeCompact}, \cite{CaraianiScholzeNoncompact}, 
\cite{Koshikawa}, \cite{Santos}, and more closely \cite{Hamann-Lee}. Throughout this section, we fix a prime $\ell \neq p$ and take $\Lambda$ to be $\flbar$, a fixed algebraic closure of $\mathbb{F}_\ell$. We also fix a choice $\sqrt{p}\in \flbar$ and follow Section~\ref{Sec:Cohomology} for all unspecified notation. We further assume that $\ell$ is coprime to the order of $\pi_0(Z(G))$.

\subsection{\texorpdfstring{$t$}{t}-exactness of Hecke operators}
\subsubsection{} \label{Sec:LocalizedCategories} Let $W_{\qp}$ be the Weil group of $\qp$ and $\phantom{}^{L} G$ be the $L$-group of $G$ as before. Consider a semisimple $L$-parameter $\phi:W_{\qp} \to \phantom{}^{L} G(\flbar)$. Under our assumption that $\ell$ is coprime to the order of $\pi_0(Z(G))$ we have a full subcategory 
 \begin{equation} \label{Eq:InclusionCategory}
     D(\bungk,\flbar)_{\phi} \subset D(\bungk,\flbar),
 \end{equation}
see \cite[Definition 4.2]{Hamann-Lee}. Recall moreover that the inclusion \eqref{Eq:InclusionCategory} has a left adjoint, denoted by $\mathcal{L}_{\phi}: A \mapsto A_{\phi}$. Given $A \in D(G(\qp), \flbar) \simeq D(\bungk^{[1]}, \flbar) \subset D(\bungk, \flbar)$, we have that $A_{\phi} \in D(\bungk^{[1]}, \flbar)_\phi$, see \cite[Proposition A.2]{Hamann-Lee}.

If $G$ is quasi-split and splits over an unramified extension, then we have a notion of \emph{unramified} semisimple $L$-parameters, see \cite[Page 43]{ZhuCoherent}. For a fixed hyperspecial subgroup $K_p \subset G(\qp)$, the classical Satake isomorphism gives a bijection between conjugacy classes of unramified semisimple $L$-parameters $\phi$ and maximal ideals $\mathfrak{m}$ of the spherical Hecke algebra $\mathcal{H}_{K_p}$, see \cite[Section 4.3]{ZhuCoherent}. The following lemma is \cite[Lemma 4.3(3)]{Hamann-Lee}.
 \begin{Lem} \label{Lem:LocalizedCategories}
 If $\phi$ is unramified and $K_p$ is hyperspecial, then for $A \in D(\bungk^{[1]}, \flbar)$ there is a natural isomorphism
     \begin{align}
         R \Gamma ( K_p, A)_{\mathfrak{m}}= R \Gamma(K_p, A_{\phi}),
     \end{align}
     where we recall that $\mathcal{H}_{K_p}$ has a natural right action on $R \Gamma ( K_p, A)$.
 \end{Lem}

\subsubsection{} By \cite[Lemma 4.3.(2)]{Hamann-Lee}, the Hecke operator $T_\mu$ defined in Section \ref{subsub:HeckeOperators} restricts to a functor
\begin{align}
    T_{\mu}:D(\bungk,\flbar)_{\phi} \to D(\bungk,\flbar)^{BW_E}_{\phi}.
\end{align}
We consider the following assumption on the semisimple $L$-parameter $\phi$. 
\begin{Assump} \label{Assump:tExact}
The functor 
\begin{align}
  i_{1,k}^{\ast}T_{\mu} \colon D(\bungk,\flbar)^{\mathrm{ULA}}_{\phi} \to
  D(\bungk,\flbar)^{BW_E}_{\phi} \to D(\bungk^{[1]},\flbar)^{BW_E}_{\phi}
\end{align}
is $t$-exact with respect to the perverse $t$-structures from Proposition~\ref{Prop:PerverseTStruct}. 
\end{Assump}
Assumption \ref{Assump:tExact} is known in some cases by work of Hamann--Lee, see \cite[Theorem 4.23]{Hamann-Lee}. See \cite[Conjecture 6.4]{Hamann-Lee} for a more general conjecture. We will prove a special case of their conjecture below, see Proposition \ref{Prop:tExact} below.

\subsubsection{Torsion vanishing} Let the notation be as in Section \ref{sub:Hecke}. 
We will consider $R\Gamma(\mathbf{Sh}_{K^p,C}, \flbar)$ with its Hecke action as an object in $D(\bungk^{[1]},\flbar)$. Thus by Lemma \ref{Lem:LocalizedCategories} we can apply the functor $\mathcal{L}_\phi$ to it, for any semisimple $L$-parameter $\phi$.

Recall that we have the sheaf $\mathcal{F}\coloneqq R\overline{\pi}_{\mathrm{HT},\ast}\flbar \in  D(\bungmuk,\flbar)$. We can now prove the main result of this section, which directly implies Theorem \ref{Thm:IntroTorsionVanishing}. 

\begin{Thm}\label{Thm: TorsionVanishing}
Assume that $\mathbf{Sh}_K \to \spec E$ is proper. If Assumption \ref{Assump:tExact} holds for $\phi$, then there is an isomorphism  
    \[R\Gamma(\mathbf{Sh}_{K^p,C}, \flbar)_{\phi}\simeq H^d(\mathbf{Sh}_{K^p,C}, \flbar)_{\phi}[-d],\] in $D(G(\qp),\flbar)$. If $\phi$ is unramified, $K_p$ is a hyperspecial subgroup and $\ell$ is coprime to the pro-order of $K_p$, then $H^i(\mathbf{Sh}_{K,C},\flbar)_{\mathfrak{m}} = 0$ unless $i = d$. 
\end{Thm}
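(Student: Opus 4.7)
The plan is to combine Theorems \ref{Thm: Perversity} and \ref{Thm: WeilCohoShiVar} with Assumption \ref{Assump:tExact}, following the approach pioneered by Caraiani--Scholze and developed further in \cite{Koshikawa, Hamann-Lee}. Using Definition \ref{Def:HeckeFactorization}, Theorem \ref{Thm: WeilCohoShiVar} gives a $G(\qp)$-equivariant isomorphism
\begin{align*}
R\Gamma(\mathbf{Sh}_{K^p,C}, \flbar) \simeq i_{1,k}^* T_\mu\, j_{\mu,k,!}\mathcal{F}[-d](-\tfrac{d}{2}).
\end{align*}
By Theorem \ref{Thm: Perversity}, $\mathcal{F}$ is perverse on $\bungmuk$; the first step is to verify that $j_{\mu,k,!}\mathcal{F}$ is perverse on all of $\bungk$, which reduces to checking the costalk condition for Newton strata in the topological closure of $\bungmuk$ that lie outside $\bgmu$.

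Next, I would apply the spectral localization $\mathcal{L}_\phi$. Since $\mathcal{L}_\phi$ is realised as a projection onto a direct summand of the spectral decomposition of \cite[Section~IX]{FarguesScholze}, it is $t$-exact, commutes with the Hecke operator $T_\mu$, and is compatible with $i_{1,k}^*$ (cf.\ \cite[Appendix~A]{Hamann-Lee}). It follows that $(j_{\mu,k,!}\mathcal{F})_\phi \in \mathcal{D}(\bungk,\flbar)_\phi$ remains perverse, and
\begin{align*}
R\Gamma(\mathbf{Sh}_{K^p,C}, \flbar)_\phi \simeq i_{1,k}^* T_\mu (j_{\mu,k,!}\mathcal{F})_\phi[-d](-\tfrac{d}{2}).
\end{align*}
Assumption \ref{Assump:tExact} asserts that $i_{1,k}^*T_\mu$ is $t$-exact on $\mathcal{D}(\bungk,\flbar)_\phi$, so the right-hand side (before the shift and twist) is perverse in $\mathcal{D}(\bungk^{[1]},\flbar)_\phi$. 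Since $d_1 = \langle 2\rho, \nu_1\rangle = 0$, perversity on the neutral stratum amounts to concentration in cohomological degree $0$ under the equivalence $\mathcal{D}(\bungk^{[1]},\flbar) \simeq \mathcal{D}(G(\qp),\flbar)$. Hence $R\Gamma(\mathbf{Sh}_{K^p,C}, \flbar)_\phi$ is concentrated in degree $d$, which gives the first claimed isomorphism.

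For the last statement, I would use that when $\ell$ is coprime to the pro-order of $K_p$, the $K_p$-invariants functor $(-)^{K_p}$ is exact on smooth $G(\qp)$-representations on $\flbar$-vector spaces and agrees with $R\Gamma(K_p, -)$. By Lemma \ref{Lem:LocalizedCategories},
\begin{align*}
R\Gamma(\mathbf{Sh}_{K,\overline{\mathsf{E}}},\flbar)_\mathfrak{m} \simeq R\Gamma(K_p, R\Gamma(\mathbf{Sh}_{K^p,C}, \flbar)_\phi),
\end{align*}
and exactness of $K_p$-invariants, together with the first part of the theorem, forces this complex to be concentrated in degree $d$, yielding the desired vanishing.

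The main obstacle I anticipate is verifying the perversity of $j_{\mu,k,!}\mathcal{F}$ on $\bungk$. The stalk condition $i_b^* j_{\mu,k,!}\mathcal{F} \in D^{\leq d_b}$ follows easily from perversity of $\mathcal{F}$ on $\bungmuk$, using that $i_b^* j_{\mu,k,!} = 0$ on the complement. However, the costalk condition $i_b^! j_{\mu,k,!}\mathcal{F} \in D^{\geq d_b}$ for Newton strata in the closure of $\bungmuk$ lying outside $\bgmu$ is more delicate, and I expect to handle it via a Verdier duality argument using Proposition \ref{Prop:DualizingComplexBunGb} and the self-duality of $\mathcal{F}$ established in the proof of Theorem \ref{Thm: Perversity}.
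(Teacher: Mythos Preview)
Your overall strategy is right, but the step where you try to show that $j_{\mu,k,!}\mathcal{F}$ is perverse on all of $\bungk$ will not go through, and the Verdier duality argument you propose cannot rescue it. The point is that $\mathbb{D}_{\bungk}(j_{\mu,k,!}\mathcal{F}) \simeq j_{\mu,k,*}\mathbb{D}_{\bungmuk}\mathcal{F} \simeq j_{\mu,k,*}\mathcal{F}$, not $j_{\mu,k,!}\mathcal{F}$. So self-duality of $\mathcal{F}$ on $\bungmuk$ only tells you that $j_{\mu,k,*}\mathcal{F}$ satisfies the costalk condition, whereas for $[b]\notin\bgmu$ the costalk $i_b^! j_{\mu,k,!}\mathcal{F}$ sits in the fiber of $j_{\mu,k,!}\mathcal{F}\to j_{\mu,k,*}\mathcal{F}$, which has no reason to satisfy the bound. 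In general $j_{\mu,k,!}\mathcal{F}$ is only in ${}^pD^{\le 0}$.

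The paper sidesteps this with a short lemma (Lemma \ref{Lem:tExactIII}) proving directly that the factored operator $T_\mu^{[1]}\colon D(\bungmuk,\flbar)_\phi\to D(\bungk^{[1]},\flbar)_\phi^{BW_E}$ is $t$-exact. The trick is to use a \emph{different} extension of $\mathcal{F}$ for each half of the $t$-structure: given $B\in{}^pD^{\le 0}(\bungmuk,\flbar)_\phi$, take $A=j_{\mu,k,!}B$, for which the stalk condition on the complement is trivially satisfied since $i_b^*j_{\mu,k,!}=0$; given $B\in{}^pD^{\ge 0}$, take $A=j_{\mu,k,*}B$, for which the costalk condition is trivially satisfied since $i_b^!j_{\mu,k,*}=0$. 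In both cases $T_\mu^{[1]}B\simeq i_{1,k}^*T_\mu A$ by Definition~\ref{Def:HeckeFactorization}, and Assumption~\ref{Assump:tExact} applies. One then works with $\mathcal{F}_\phi$ on $\bungmuk$; the paper notes that perversity of $\mathcal{F}_\phi$ uses not only Theorem~\ref{Thm: Perversity} but also the ULA property from Corollary~\ref{Cor:ULA} (via \cite[Proposition~A.5]{Hamann-Lee}), a point your sketch glosses over. The rest of your argument, in particular the final step using exactness of $K_p$-invariants and Lemma~\ref{Lem:LocalizedCategories}, matches the paper.
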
 

\begin{proof} 
We are going to show that 
\[T_\mu^{[1]} \mathcal{F}_{\phi} \in D(G(\qp),\flbar)\]
is concentrated in degree $0$ using Assumption \ref{Assump:tExact}. We know that $j_{\mu,k,!} \mathcal{F}$ is ULA by the proof of Corollary \ref{Cor:ULA}, and it follows from the proof of Theorem \ref{Thm: Perversity} that $j_{\mu,k,!} \mathcal{F} \in {}^{p}D(\bungk,\flbar)^{\le 0}$. From \cite[Proposition A.4, Proposition A.5]{Hamann-Lee} it follows that $j_{\mu,k,!} \mathcal{F}_{\phi} =(j_{\mu,k,!} \mathcal{F})_{\phi}$ is also ULA and contained in ${}^{p}D(\bungk,\flbar)_{\phi}^{\le 0}$. It now follows from Assumption \ref{Assump:tExact} that $T_\mu^{[1]} \mathcal{F}_{\phi}=i_{1,k}^{\ast} T_{\mu} j_{\mu,k,!} \mathcal{F}_{\phi}$ is concentrated in degrees $\le 0$. 

Next, we consider $R j_{\mu,k,\ast} \mathcal{F}$. By relative Verdier duality, see \cite[Proposition 23.3.(i)]{EtCohDiam} and the self-duality of $\mathcal{F}$, see the proof of Theorem \ref{Thm: Perversity}, we find that $Rj_{\mu,k,\ast} \mathcal{F}$ is the Verdier dual of $j_{\mu,k,!} \mathcal{F}$ and is thus ULA. By \cite[Theorem 1.9.(iii)]{EtCohDiam}, we have $i_{b,k}^! Rj_{\mu,k,\ast} \mathcal{F} = i_{b,k}^! \mathcal{F}$ for $[b] \in \bgmu$ and $i_{b,k}^! Rj_{\mu,k,\ast} \mathcal{F} = 0$ for $[b] \in B(G) \setminus \bgmu$, and so $Rj_{\mu,k,\ast} \mathcal{F}$ lies in ${}^{p}D(\bungk,\flbar)^{\ge 0}$ since $\mathcal{F}$ is perverse. From \cite[Proposition A.4, Proposition A.5]{Hamann-Lee} it follows that $Rj_{\mu,k,\ast} \mathcal{F}_{\phi} =(Rj_{\mu,k,\ast} \mathcal{F})_{\phi}$ is also ULA and contained in ${}^{p}D(\bungk,\flbar)_{\phi}^{\ge 0}$. It now follows from Assumption \ref{Assump:tExact} that $T_\mu^{[1]} \mathcal{F}_{\phi}=i_{1,k}^{\ast} T_{\mu} Rj_{\mu,k,\ast} \mathcal{F}_{\phi}$ is concentrated in degrees $\ge 0$. But this is
\begin{align}
    R\Gamma(\mathbf{Sh}_{K^p,C}, \flbar)_{\phi}(\tfrac{d}{2})[d],
\end{align}
by Theorem \ref{Thm: WeilCohoShiVar}, and this implies the desired result. 

For the final assertion of the theorem, we note that since we assume $\ell$ is coprime to the pro-order of $K_p$, taking $K_p$-invariants is exact. The result now follows from the fact that 
\begin{align}
    R\Gamma(\mathbf{Sh}_{K^p,C}, \flbar)^{K_p} \simeq R\Gamma(\mathbf{Sh}_{K,C}, \flbar)
\end{align}
and so that
\begin{align}
    R\Gamma(\mathbf{Sh}_{K^p,C}, \flbar)^{K_p}_{\phi} \simeq R\Gamma(\mathbf{Sh}_{K,C}, \flbar)_{\mathfrak{m}_{\phi}}, 
\end{align}
see Lemma \ref{Lem:LocalizedCategories}. 
\end{proof}

\begin{Rem}
If Assumption \ref{Assump:tExact} holds for both $\phi$ and its Bernstein--Zelevinsky dual $L$-parameter $\phi^\vee$, then the condition that $\ell$ is coprime to the pro-order of $K_p$ in Theorem \ref{Thm:IntroTorsionVanishing} can be removed by applying Poincar\'e duality. Indeed, suppose $\phi$ is an $L$-parameter that corresponds to a maximal ideal $\mathfrak{m}$ of the spherical Hecke algebra $\mathcal{H}_{K_p}$ for some hyperspecial subgroup $K_p \subset G(\qp)$. Then, both 
\begin{align}
    R\Gamma(\mathbf{Sh}_{K,C}, \flbar)_{\mathfrak{m}^{\vee}} \quad \text{ and } \quad R\Gamma(\mathbf{Sh}_{K,C}, \flbar)_{\mathfrak{m}}
\end{align}
are concentrated in degrees $\ge d$ by Theorem \ref{Thm: TorsionVanishing}. Here $\mathfrak{m}^{\vee}$ is the dual maximal ideal, corresponding to the $L$-parameter $\phi^{\vee}$, see \cite[Corollary A.7]{Hamann-Lee}. But since these two complexes are Poincar\'e-dual to each other, it follows that they are concentrated in degree $d$.
\end{Rem}

\subsection{On the t-exactness conjecture of Hamann--Lee} \label{Sec:tExact} In this section we will discuss under which conditions Assumption \ref{Assump:tExact} is known to hold. Throughout this section we assume that $G$ is quasi-split, and we fix $T \subset B \subset G$, where $T$ is a maximal torus, and $B$ is a Borel. 

We let $X_{\ast}(T)$ denote the cocharacter lattice of $T$ over $\qpbar$, equipped with its action of $\Gamma_p=\gal(\qpbar/\qp)$. For $\alpha \in X_{\ast}(T)/\Gamma_p$ being the $\Gamma_p$-orbit of some coroot, we will consider the $\flbar$-valued representation $\alpha$ of $\phantom{}^{L} T$. Given a semisimple $L$-parameter $\phi_T:W_{\qp} \to \phantom{}^{L} T(\flbar)$ (a \emph{toral parameter}), we will consider the compositions $\alpha \circ \phi_{T}$. 

\subsubsection{} We recall from \cite[Definition 3.6]{Hamann22} that 
a toral $L$-parameter $\phi_T:W_{\qp} \to \phantom{}^{L} T(\flbar)$ is called of \emph{weakly Langlands--Shahidi type} if for all $\alpha \in X_{\ast}(T)/\Gamma_p$ corresponding to a $\Gamma_p$-orbit of coroots, the cohomology groups
\begin{align}
    H^2(W_{\qp}, \alpha\circ\phi_T), \quad H^2(W_{\qp}, \alpha\circ\phi_T^{\vee})
\end{align}
are trivial. We also recall from \cite[Definition 1.4]{Hamann22} that a toral $L$-parameter $\phi$ is called \emph{generic} if for all coroots $\alpha \in X_{\ast}(T)/\Gamma_p$, the complex
\begin{align}
    R \Gamma(W_{\qp}, \alpha\circ\phi_T)
\end{align}
is trivial. We call a semisimple $L$-parameter $W_{\qp} \to \phantom{}^{L} G(\flbar)$ \emph{generic} (resp.\ of weakly Langlands--Shahidi type) if it can be conjugated to factor through $\phantom{}^{L} T(\flbar) \subset \phantom{}^{L} G(\flbar)$ via a generic (resp.\ of weakly Langlands--Shahidi type) toral $L$-parameter $\phi_T$. 

For $G=\operatorname{GL}_n$, a toral parameter $\phi$ is a direct sum of characters $\phi_{i}$ for $i=1, \dots, n$. The parameter $\phi$ is of weakly Langlands--Shahidi type if $\phi_i \not \simeq \phi_j(1)$ for $i \not=j$, where $(1)$ denotes twisting by the mod $\ell$ cyclotomic character of $W_{\qp}$. The parameter $\phi$ is generic if $\phi_i \not \simeq \phi_j(1), \phi_j$ for $i \not=j$, see \cite[Remark 1.3]{Hamann22}.

\subsubsection{} Hamann and Lee conjecture that Assumption \ref{Assump:tExact} holds for toral $L$-parameters $\phi$ of weakly Langlands--Shahidi type, see \cite[Conjecture 6.4]{Hamann-Lee}. We will now prove a special case of this.

Since $G$ is quasi-split, there is a distinguished element $[b_{\mu}] \in \bgmu$, called the $\mu$-ordinary element, which is the largest element in $\bgmu$ for the partial order defined in \cite[Section 2.3]{RapoportRichartz}, see \cite[Section 1.3.15]{KMPS}. Let $\bgmu_{\mathrm{un}}$ be the subset of unramified elements in $\bgmu$, i.e., the intersection of $\bgmu$ with the image of $B(T)$ in $B(G)$ under the natural map $B(T) \to B(G)$. If we choose $\mu^{-1}$ to be a representative of its conjugacy class, then it defines an element in $B(T)\simeq \pi_1(T)_\Gamma$ by taking the image under $X_\ast(T)\simeq\pi_1(T)\to\pi_1(T)_\Gamma$, whose image under $B(T)\to B(G)$ is precisely $[b_\mu]$. Therefore, the set $B(G,\mu^{-1})_\mathrm{un}$ always contains $[b_\mu]$. The result below is inspired by the arguments in \cite{Koshikawa}.  We thank Linus Hamann for suggesting its proof; any error is due to us. 

\begin{Prop} \label{Prop:tExact}
Suppose that \cite[Assumption 6.5]{Hamann22} holds for $G$. If $\bgmu_{\mathrm{un}}$ consists of a single element, then Assumption \ref{Assump:tExact} holds for generic parameters $\phi$. 
\end{Prop}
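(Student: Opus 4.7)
The plan is to reduce the statement to Hamann's $t$-exactness theorem from \cite[Section 7]{Hamann22}, which under Assumption 7.5 establishes $t$-exactness of Hecke operators on categories localized at generic parameters (for split groups), and to use the strong hypothesis $|B(G,\mu^{-1})_{\mathrm{un}}| = 1$ to bridge the gap between Hamann's formulation and Assumption \ref{Assump:tExact}.

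The first step is to analyze how the localization functor $\mathcal{L}_\phi$ interacts with the Newton stratification of $\bun_{G,k}$ when $\phi$ is generic. The key claim here, which I expect to extract from Hamann's geometric Eisenstein machinery together with his Assumption 7.5, is that for a generic $\phi$ the functor $\mathcal{L}_\phi$ is ``supported on the unramified strata'': that is, for each $[b] \in B(G,\mu^{-1})$ the restriction $i_b^\ast \mathcal{L}_\phi$ factors through a toral localization, and the genericity hypothesis forces vanishing whenever $[b]$ is not unramified (via the standard coroot-by-coroot cohomological vanishing statement). This reflects the fact that, for generic parameters, the irreducible constituents produced by geometric Eisenstein series are concentrated on strata labelled by unramified elements of $B(G)$.

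The second step uses the hypothesis that $B(G,\mu^{-1})_{\mathrm{un}} = \{[b_\mu]\}$: combining it with the support statement above, the composition $i_b^\ast T_\mu \mathcal{L}_\phi$ vanishes for every $[b] \in B(G,\mu^{-1})$ distinct from $[b_\mu]$. In particular, $i_{1,k}^\ast T_\mu \mathcal{L}_\phi$ is either identically zero (when $[1] \neq [b_\mu]$, which automatically implies $t$-exactness), or it is computed by the contribution of the single stratum $\bun_{G,k}^{[b_\mu]}$ (when $[1] = [b_\mu]$, so that $\mu$ is central and $G$ is essentially unramified of trivial Newton type). In the latter case, $t$-exactness can be verified by an explicit computation on $\bun_{G,k}^{[b_\mu]}$, using the fact that the corresponding local Shimura variety $\mathcal{M}_{G,b_\mu,\mu,\infty}$ is étale over $\mathrm{Gr}_{G,\mu^{-1}}$ (the $\mu$-ordinary case) and the smoothness/dimension formula for central leaves from Lemma \ref{Lem:DimensionLeaves}.

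The hard part will be justifying the support statement in the first step with the precision needed to deduce $t$-exactness on the \emph{full} localized category $D(\bun_{G,k},\flbar)_\phi$ rather than merely on its ULA objects, since Hamann's arguments are most naturally phrased in the ULA setting. To bridge this, I expect to exploit that the localization functor $\mathcal{L}_\phi$ commutes with colimits and that every object of $D(\bun_{G,k},\flbar)_\phi$ can be approximated (stratum-by-stratum) by ULA objects, allowing the ULA $t$-exactness to propagate. The second potential obstacle is matching the Weil-group equivariance between the statement of Assumption \ref{Assump:tExact} and Hamann's formulation, but this should follow formally from the compatibility of the spectral action with the Hecke action, as recorded in Remark \ref{Rem:SpectralWeilAction}.
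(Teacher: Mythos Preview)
Your first step is correct and matches the paper: under \cite[Assumption~7.5]{Hamann22}, a generic $\phi$ forces every object of $D(\bungk,\flbar)_\phi$ to be supported on the unramified Newton strata (this is \cite[Proposition~4.5]{Hamann-Lee}).

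Your second step, however, is confused in a way that breaks the argument. You claim that $i_{1,k}^\ast T_\mu$ vanishes on the localized category whenever $[1]\neq[b_\mu]$. But $[1]=[b_\mu]$ only when $\mu$ is central, so in every interesting case your argument says $i_{1,k}^\ast T_\mu$ is identically zero on $D(\bungk,\flbar)_\phi$. This is false: by Theorem~\ref{Thm: WeilCohoShiVar} this functor computes the (nonzero) cohomology of the Shimura variety. The error is treating $T_\mu$ as if it preserved Newton strata, so that the support of $A$ would control which $i_b^\ast T_\mu A$ are nonzero. Hecke operators do not preserve strata; they move between them via the correspondence \eqref{Eq:HeckeDiagram}.

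The correct mechanism is the one recorded in Definition~\ref{Def:HeckeFactorization}: one has $i_{1,k}^\ast T_\mu = T_\mu^{[1]}\circ j_{\mu,k}^\ast$, so $i_{1,k}^\ast T_\mu$ only sees the restriction of $A$ to $\bungmuk$. Combining this with the support statement and the hypothesis $B(G,\mu^{-1})_{\mathrm{un}}=\{[b_\mu]\}$ shows that $j_{\mu,k}^\ast A$ is supported on the single stratum $[b_\mu]$. Thus one is reduced to proving $t$-exactness of
\[
i_{1}^\ast T_\mu\, i_{[b_\mu],!}\colon D(\bungk^{[b_\mu]},\flbar)\longrightarrow D(\bungk^{[1]},\flbar),
\]
and this is the actual content. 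The paper establishes it (Proposition~\ref{Lem: t-exact}) by invoking Hodge--Newton reducibility for the $\mu$-ordinary element: the local Shimura variety cohomology is parabolically induced from the Levi $M=G_{b_\mu}$, and for the Levi the local Shimura variety collapses to a point, so that $i_{1}^\ast T_\mu\, i_{[b_\mu],!}\simeq \mathrm{Ind}_{P(\qp)}^{G(\qp)}(-)[d](\tfrac{d}{2})$. Exactness then follows from exactness of parabolic induction. Your proposal has no analogue of this computation, and the reference to Lemma~\ref{Lem:DimensionLeaves} (a global statement about central leaves) is not relevant here.
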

The assumption \cite[Assumption 6.5]{Hamann22} is stated as \cite[Assumption 4.5]{Hamann-Lee}. See \cite[Theorem 4.13]{Hamann-Lee} for a list of the known cases of this assumption (see also \cite{Peng}).
\begin{proof}
Let $\phi$ be a generic semisimple $L$-parameter and let $B \in D(\bungk,\flbar)^{\mathrm{ULA}}_{\phi}$. Then it follows from \cite[Proposition 4.6]{Hamann-Lee} that $i_{b,k}^{\ast} B = 0$ unless $[b]$ is unramified. Since $\bgmu_{\mathrm{un}}$ always contains the $\mu$-ordinary element, while by our assumption $\bgmu_{\mathrm{un}}$ contains a single element, it must be the $\mu$-ordinary element $[b_{\mu}]$. This reduces the statement we want to the t-exactness of the functor
\[i_{1}^{\ast}T_{\mu}i_{[b_{\mu}],!}: D(\bungk^{[b_\mu]}, \flbar)_\phi\rightarrow D(\bungk^{[1]}, \flbar)_\phi\simeq D(G(\qp),\flbar)_\phi.\]
In fact, this holds even before localizing at $\phi$, which follows from Proposition \ref{Lem: t-exact} below and exactness of the unnormalized parabolic induction. The latter is a consequence of the second adjointness, due to \cite[Corollary 1.3]{DHKM} in our setting.
\end{proof}

For the $\mu$-ordinary element $[b_\mu]$, we have $\langle 2\rho, \nu_{b_\mu}\rangle=\langle 2 \rho, \mu \rangle=d$ and the group $G_{b_\mu}$ is isomorphic to a Levi subgroup $M$ of $G$, the centralizer of the slope homomorphism $\nu_{b_\mu}$ in $G$. Let $P$ be the parabolic with Levi factor $M$ containing the Borel $B^-$ opposite to $B$, and denote by $\mathrm{Ind}_{P(\qp)}^{G(\qp)}$, the unnormalized parabolic induction for $P$. In this case we have a simple formula for the Hecke operator $i_{1}^{\ast}T_{\mu}i_{b_\mu,!}(-)$. 
\begin{Prop}\label{Lem: t-exact} Let $[b]=[b_\mu]$ be the $\mu$-ordinary element. Under the isomorphism  
    \[D(G_{b}(\qp), \flbar)\simeq D(\bungk^{[b]}, \flbar),\]
    the functor
    \[i_{1}^{\ast}T_{\mu}i_{b,!}(-): D(\bungk^{[b]}, \flbar)\rightarrow D(\bungk^{[1]}, \flbar)\simeq D(G(\qp),\flbar).\]
    is identified with $\mathrm{Ind}_{P(\qp)}^{G(\qp)}(-)[d](\tfrac{d}{2})$. 
\end{Prop}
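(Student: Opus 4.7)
The plan is to unwind the Hecke correspondence \eqref{Eq:HeckeDiagram} over the specific Newton strata $\bun_{G,k}^{[b]}$ (source) and $\bun_{G,k}^{[1]}$ (target), and match the resulting pushforward with unnormalized parabolic induction. Write $h_1^{b,1}, h_2^{b,1}$ for the restrictions of $h_1, h_2$ to the preimage
\[
  \mathrm{Hck}_{G,\le\mu}^{b,1} := h_1^{-1}(\bun_{G,k}^{[b]}) \cap h_2^{-1}(\bun_{G,k}^{[1]}\times \Divk).
\]
By smooth base change and the moduli interpretation of the Hecke stack (Section~\ref{subsub: HeckeCorr}), an $S$-point of $\mathrm{Hck}_{G,\le\mu}^{b,1}$ consists of a Cartier divisor $D\subset (X_S)_E$, the Newton-ordinary bundle $\mathcal{E}_b$, a geometrically trivial bundle $\mathcal{E}_1$, and a meromorphic modification $\mathcal{E}_b\dashrightarrow \mathcal{E}_1$ bounded by $\mu$ at $D$. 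Thus $i_{1,k}^*T_\mu i_{b,!}$ is computed by $Rh_{2,\ast}^{b,1}(h_1^{b,1,\ast}(-))[d](\tfrac{d}{2})$, up to the equivalences $D(\bun_{G,k}^{[b]},\flbar)\simeq D(M(\qp),\flbar)$ and $D(\bun_{G,k}^{[1]},\flbar)\simeq D(G(\qp),\flbar)$.

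Next, I use the fact that $[b_\mu]$ is $\mu$-ordinary to obtain a clean geometric description of this correspondence. The Newton cocharacter $\nu_{b_\mu}$ equals the Galois-average of $\mu$, and $G_{b_\mu}=M$ is the centralizer Levi; moreover the parabolic $P$ (containing $B^-$) encodes the canonical Harder--Narasimhan reduction of $\mathcal{E}_{b_\mu}$. Using the Cartesian diagram of Lemma~\ref{Lem:LocalUniformisation} and Proposition~\ref{Prop:VSurjective}/Theorem~\ref{Thm:GeometryOfBunG} to base change along the $\tilde G_{b}$-torsor $b\colon\spd\fpbar\to \bun_{G,k}^{[b]}$ and the $\tilde G$-torsor $1\colon \spd\fpbar\to \bun_{G,k}^{[1]}$, the double base change of $\mathrm{Hck}_{G,\le\mu}^{b,1}$ is identified with the ``open Newton stratum'' of the infinite-level local Shimura variety $\mathcal{M}_{G,b_\mu,\mu,\infty}$ mapping to $\bun_{G,k}^{[1]}$. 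The entire correspondence at infinite level then carries commuting actions of $M(\qp)=\tilde G_{b_\mu}(\spd\fpbar)$ on the left and $G(\qp)=\tilde G_1(\spd\fpbar)$ on the right, compatible with the Weil descent to the Hecke correspondence.

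The key geometric input is then the identification of this open stratum with a $G(\qp)\times M(\qp)$-equivariant form of $G(\qp)/P(\qp)$. Since $\mu$ is minuscule, the Bia\l ynicki-Birula map identifies $\mathrm{Gr}_{G,\mu^{-1}}$ with the rigid flag variety $\mathscr{F}\ell_{\mu^{-1}}=G/P_{\mu^{-1}}$, and the semi-infinite orbit decomposition attached to the slope cocharacter $\nu_{b_\mu}$ (which is anti-dominant for $P$) picks out exactly the open $P$-orbit as the geometric locus where the modified bundle $\mathcal{E}_1$ is trivial: the non-open strata correspond to HN-reductions of $\mathcal{E}_1$ to proper parabolics, hence to non-basic Newton classes. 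This identifies the $\spd\fpbar$-points of the basic locus of the local Shimura variety compatibly with the natural $G(\qp)\times M(\qp)$-actions, and shows that the restriction of $\mathrm{Hck}_{G,\le\mu}^{b,1}$ to infinite level is representable in $\spd\fpbar$-pro-\'etale sheaves by $\underline{G(\qp)\times^{P(\qp)} M(\qp)}$.

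Granting this description, the pushforward $Rh_{2,*}^{b,1}(h_1^{b,1,*}(-))$ is computed as follows: viewing an object $V\in D(M(\qp),\flbar)$ as a sheaf on $[\spd \fpbar/\tilde M]$, its pullback by $h_1^{b,1}$ is the sheaf on $[\underline{G(\qp)/P(\qp)}/\underline{G(\qp)}]$ corresponding to the $P(\qp)$-representation obtained by inflating along $P(\qp)\to M(\qp)$; pushing forward to $[\spd\fpbar/\tilde G]$ then yields the compactly supported sections along the discrete profinite space $G(\qp)/P(\qp)$, which by definition is the unnormalized smooth induction $\mathrm{Ind}_{P(\qp)}^{G(\qp)}V$. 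Combining with the cohomological shift $[d](\tfrac{d}{2})$ built into $T_\mu$ gives the claimed formula. The main obstacle is the explicit identification in the third paragraph, namely matching the anti-dominant semi-infinite stratification of the $\mathbf{B}_{\mathrm{dR}}^+$-Grassmannian with the open Bruhat decomposition of $G(\qp)/P(\qp)$ in a manner that is $G(\qp)\times M(\qp)$-equivariant and compatible with the Weil group descent used to form $T_\mu$; this amounts to a careful analysis of \cite[Assumption~7.5]{Hamann22} and its equivariance, which in the $\mu$-ordinary setting is the content of (and indeed the main reason for) that assumption.
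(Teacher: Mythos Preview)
Your strategy is genuinely different from the paper's. The paper does not analyze the Hecke correspondence geometrically; it instead invokes two black boxes and then does bookkeeping: first the formula $i_1^\ast T_\mu i_{b,!}(-)\simeq R\Gamma_c(G,b,\mu)\otimes^{\mathbb L}_{\mathscr H(G_b)}\bigl((-)\otimes\delta_b\bigr)[2d]$ from \cite[Lemma~11.3]{Hamann22}, and second the Hodge--Newton decomposition \cite[Theorem~4.26]{ImaiGaisin}, which expresses $R\Gamma_c(\mathcal M_{G,b,\mu,\infty},\flbar)$ as parabolically induced from the Levi local Shimura variety $\mathcal M_{M,b_M,\mu_M,\infty}$. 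Since $\mu_M$ is central in $M$ the latter is just $\underline{M(\qp)}$, and a computation using \cite{HamannImai} shows the modulus characters $\delta_b$ and $\delta_{P,b_M}$ cancel, yielding the result.

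Your direct approach could in principle work---the identification $\mathcal M_{G,b_\mu,\mu,\infty}\simeq\underline{G(\qp)\times^{P(\qp)}M(\qp)}$ you want is exactly a consequence of Hodge--Newton decomposition---but your justification is the weak point. The relationship between the Bia\l{}ynicki--Birula or semi-infinite stratification of $\mathrm{Gr}_{G,\mu^{-1}}$ and its Newton stratification is not ``non-open cells give nontrivial $\mathcal E_1$''; one needs the theory of Harder--Narasimhan reductions of modifications, which is precisely the content of Hodge--Newton reducibility for the triple $([b_\mu],[1],\mu^{-1})$. Your attribution to \cite[Assumption~7.5]{Hamann22} is incorrect: that assumption concerns compatibility of the Fargues--Scholze construction with the classical local Langlands correspondence and has nothing to do with the geometry of the $\mu$-ordinary locus. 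The correct input is the unconditional Hodge--Newton theorem (e.g.\ \cite{ImaiGaisin}).

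You also gloss over the positive part $\tilde G_{b_\mu}^{>0}$. The equivalence $D(\bun_{G,k}^{[b_\mu]},\flbar)\simeq D(M(\qp),\flbar)$ goes through $\alpha\colon[\spd k/\tilde G_{b_\mu}]\to[\spd k/\underline{M(\qp)}]$, and the interaction of this with $i_{b,!}$ is what produces the paper's twist by $\delta_b[2d_b]$. In your picture the twists should disappear because $\tilde G_{b_\mu}^{>0}$ acts trivially on the zero-dimensional diamond $\underline{G(\qp)/U(\qp)}$, but this needs to be said; as written, your final paragraph computes a pushforward from $[\underline{G(\qp)/P(\qp)}/\underline{G(\qp)}]$ without explaining why the extra automorphisms of $\mathcal E_{b_\mu}$ coming from $\tilde G_{b_\mu}^{>0}$ play no role.
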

\begin{proof}
    Applying \cite[Proposition 10.3]{Hamann22} to $[b]$, we get the formula\footnote{See the discussion before Corollary \ref{Cor:CompactlySupportedDirectLimit} for the definition of $R\Gamma_c(G,b,\mu)$. Note that in our case $\mathcal{S}_\mu\simeq \flbar[d](\tfrac{d}{2})$ by Remark~\ref{Rem:IntersectionCohomologyTrivial}.}
    \begin{align}
    i_{1}^{\ast}T_{\mu} i_{b,!} \simeq R\Gamma_c(G,b,\mu)\otimes^{\mathbb{L}}_{\mathscr{H}(G_{b})} (-\otimes \delta_b)[2d],
    \end{align}
     where $\mathscr{H}(G_{b})=C^\infty_c(G_{b}(\qp),\flbar)$ is the usual smooth Hecke algebra, and $\delta_b$ is the character $G_{b}(\qp)\rightarrow \flbar^\ast$ as defined in \cite[Definition 3.14]{HamannImai}, such that the dualizing complex on $\bungk^{[b]}$ is $\delta_b^{-1}[-2d_b]$, see \cite[Proposition 4.1, Corollary 4.6.(1)]{HamannImai}. 

    Note that the triple $([b], [1], \mu^{-1})$ is Hodge-Newton reducible in the sense of \cite[Definition 4.5]{ImaiGaisin}, cf.\ \cite[Definition 4.24]{RapoportViehmann}. Pick a $B$-dominant representative of $\mu^{-1}$ that factors through $M$, and write $\mu_M^{-1}$ for its $M(\qpbar)$-conjugacy class. Denote by $[b_M]$ the $\mu_M$-ordinary element. Then $([b_\mu], [1])$ is the image of $([b_M], [1])$ under the natural map $B(M)\to B(G)$. Now, according to \cite[Theorem 4.26]{ImaiGaisin}, whose proof works verbatim replacing $\qlbar$ by $\flbar$, the cohomology of local Shimura variety $\mathcal{M}_{G,b,\mu,\infty}$ is parabolically induced from $\mathcal{M}_{M,b_M,\mu_M,\infty}$ in the sense that
    \begin{align}
R\Gamma_c(\mathcal{M}_{G,b,\mu,\infty}, \flbar)
\simeq & \mathrm{Ind}^{G(\qp)}_{P(\qp)}\left(R\Gamma_c(\mathcal{M}_{M,b_M,\mu_M,\infty}, \flbar[-2d])\otimes^\mathbb{L}\delta_{P,b_M} \right). 
    \end{align}
    Here $\delta_{P,b_M}$ denotes the character of $M$ (inflated to a character of $P$), defined in \cite[Section 3.1.(3)]{HamannImai}. The identification of $\delta_{P,b_M}$ with the character $\kappa$ in the original formula follows from \cite[Corollary 4.6.(2)]{HamannImai} for $\theta=b_M$. Plugging this into the expression for the Hecke operator we get
    \begin{align}
    i_{1}^{\ast}T_{\mu} i_{[b],!}(-) \simeq \mathrm{Ind}^{G(\qp)}_{P(\qp)}R\Gamma_c(\mathcal{M}_{M,b_M,\mu_M,\infty}, \flbar)\otimes^\mathbb{L}_{\mathscr{H}(G_{b})} (-\otimes \delta_b\otimes \delta_{P,b_M}) [d](\tfrac{d}{2}).
    \end{align}
    Since $\mu_M^{-1}$ is central in $M$, the space that parametrizes modifications  $\mathscr{E}_M^1\dashrightarrow \mathscr{E}$ of type $\mu_M^{-1}$ of the trivial $M$-bundle $\mathscr{E}_M^1$ is just a point. But $\mathcal{M}_{M,b_M,\mu_M,\infty}$ is the $M(\qp)$-torsor over this space which parametrizes trivializations of the bundle $\mathscr{E}$, so it is isomorphic to $M(\qp)=G_{b}(\qp)$. Hence   
    \[R\Gamma_c(\mathcal{M}_{M,b_M,\mu_M,\infty}, \flbar)\simeq {\mathscr{H}(G_{b})}[0].\]
    On the other hand, in our case, the diamond groups $\widetilde{G}_b^{>0}$ and $\widetilde{G}_{b,U}^{>0}$ in \cite[Proposition 4.5]{HamannImai} agree. Therefore, comparing Proposition 4.4 and 4.5 of loc.\ cit., we see that $\delta_b^{-1}\simeq \delta_{P,b_M}$. So the formula above further simplifies to 
     \begin{align}
    i_{1}^{\ast}T_{\mu} i_{[b],!}(-) \simeq \mathrm{Ind}^{G(\qp)}_{P(\qp)} (-) [d](\tfrac{d}{2}),
    \end{align}
    and this finishes the proof.
\end{proof}

\subsection{An example} \label{Sec:Example} In this section we apply Proposition \ref{Prop:tExact} and Theorem \ref{Thm: TorsionVanishing} in an example. Let $\mathsf{F}$ be a totally real field of degree $d>1$ and let $\tau_1, \cdots, \tau_d$ be the infinite places of $\mathsf{F}$. Let $H$ be an inner form of $\operatorname{GSp}_{4,\mathsf{F}}$ and suppose that 
\begin{align}
    H \otimes_{\mathsf{F},\tau_i} \mathbb{R}
\end{align}
is isomorphic to $\operatorname{GSp}_{4, \mathbb{R}}$ for $i=1, \dots, r$ and has $\mathbb{R}$-points compact modulo center for $i=r+1, \cdots, d$, with $1<r<d$. Let $\mathsf{G}_2=\operatorname{Res}_{\mathsf{F}/\mathbb{Q}} H$ and let $\mathsf{X}_2= \prod_{i=1}^d \mathsf{X}_{2,i}$ be the Shimura datum where $\mathsf{X}_{2,i}$ is the usual Siegel Shimura datum for $H \otimes_{\mathsf{F},\tau_i} \mathbb{R}$ if $i \le r$, and where $\mathsf{X}_{2,i}$ is trivial if $i >r$. The Shimura datum $\gxtwo$ is of abelian type and the corresponding Shimura varieties have dimension $3 r$. They are moreover proper: Because $r<d$,  $\mathsf{G}_2^{\mathrm{ad}}$ is $\mathbb{Q}$-anisotropic. 

\subsubsection{} Let $\ell$ be a prime number and let $S$ be a finite set of places containing $\ell$ and the finite places $\ell'$ such that $H_v$ is not isomorphic to $\operatorname{GSp}_{4,\mathsf{F}_v}$ for some place $v$ of $\mathsf{F}$ above $\ell'$. Let $K \subset \mathsf{G}_2(\af)$ be a neat compact open subgroup that is hyperspecial away from $S$ and let $\mathbb{T}^S$ be the spherical Hecke algebra away from $S$ with coefficients in $\flbar$. Note that $\mathbb{T}^S$ acts on $H^i(\mathbf{Sh}_{K}\gxtwo_{\overline{\mathsf{E}}}, \flbar)$ and let $\mathfrak{m}$ be a maximal ideal of $\mathbb{T}^S$. For a prime $p \not \in S$ this induces a maximal ideal $\mathfrak{m}_p$ of the spherical Hecke algebra of $G$ with respect to $K_p$. Let $\mathsf{E}$ be the reflex field of $\gxtwo$ and let $\overline{\mathsf{E}}$ be an algebraic closure. The following is a corollary of Theorem \ref{Thm: TorsionVanishing}.

\begin{Cor} \label{Cor:ExampleVanishing}
If there is a prime $p \not \in S$ that is totally split in $\mathsf{F}$ such that $\ell$ is coprime to the pro-order of $K_p$ and such that the maximal ideal $\mathfrak{m}_p$ corresponds (see Section \ref{Sec:LocalizedCategories}) to a toral $L$-parameter $\phi$ that is generic, then
    \begin{align}
        H^i(\mathbf{Sh}_{K}\gxtwo_{\overline{\mathsf{E}}}, \flbar)_{\mathfrak{m}_p}=0
    \end{align}
    unless $i=3r$. 
\end{Cor}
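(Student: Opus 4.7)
The strategy is to reduce Corollary~\ref{Cor:ExampleVanishing} to Theorem~\ref{Thm: TorsionVanishing} applied to a suitable Hodge-type companion, along the lines indicated in the introduction of Section~\ref{Sec:TorsionVanishing}. Since $H^{\mathrm{der}}$ is an inner form of $\operatorname{Sp}_{4,\mathsf{F}}$, the group $\gtwoder = \operatorname{Res}_{\mathsf{F}/\mathbb{Q}} H^{\mathrm{der}}$ is simply connected, so by Deligne's construction there exists a Shimura datum $\gx$ of Hodge type with the same connected Shimura datum as $\gxtwo$; concretely one may build $\mathsf{G}$ out of $\gtwoder$ and an auxiliary torus as in \cite[Section 5.2]{Hamann-Lee}. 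The abelian-type reduction procedure of loc.\ cit.\ then shows that torsion vanishing for $\gxtwo$ at $\mathfrak m_p$ can be deduced from torsion vanishing for $\gx$ at the maximal ideal induced by the same Fargues--Scholze parameter $\phi_{\mathfrak m_p}$, so it suffices to verify the hypotheses of Theorem~\ref{Thm: TorsionVanishing} for this Hodge-type companion.

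Among the hypotheses, properness of $\mathbf{Sh}_K\gx$ follows because $\mathsf{G}^{\mathrm{ad}} = \mathsf{G}_2^{\mathrm{ad}}$ is $\mathbb{Q}$-anisotropic (since $1<r<d$), and the dimension is $d = 3r$. Because $2 \in S$ we have $p \ne 2$, and since $\gder$ is simply connected this gives $p$ coprime to $2\cdot\pi_1(\gder) = 2$. Since $p \notin S$ is totally split in $\mathsf{F}$, the isomorphism
\[
    G = \mathsf{G}_\qp \simeq \prod_{v \mid p} H_v \simeq \prod_{i=1}^d \operatorname{GSp}_{4,\qp}
\]
shows that $G$ is split, hence a fortiori tamely ramified, and so Proposition~\ref{Prop:LocalModelDiagram} ensures Conjecture~\ref{Conj:SchemeTheoreticLocalModelDiagram} holds. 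The hypothesis that $\ell$ be coprime to the pro-order of $K_p$ is directly assumed, so the only remaining input is Assumption~\ref{Assump:tExact} for $\phi_{\mathfrak m_p}$.

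The hypotheses of Proposition~\ref{Prop:tExact} are in turn verified as follows. The group $G$ displayed above is split with semisimple factors of type $C_2$; since \cite[Assumption 7.5]{Hamann22} is known for split groups of type $C_2$ and is compatible with products, this hypothesis is satisfied. The Hodge cocharacter decomposes as $\mu = (\mu_i)_{i=1}^d$ where $\mu_i$ is the Siegel cocharacter for indices corresponding to places above the $r$ ``symplectic'' archimedean places and trivial otherwise. Using that on each $\operatorname{GSp}_{4,\qp}$-factor the set $B(\operatorname{GSp}_4,\mu_i^{-1})_{\mathrm{un}}$ consists of a single element---the $\mu_i$-ordinary one, being the only element of $B(\operatorname{GSp}_4,\mu_i^{-1})$ with integral Newton slopes, and the single element $[1]$ in the trivial case---one concludes that $\bgmu_{\mathrm{un}}$ is a singleton. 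Since $\phi_{\mathfrak m_p}$ is generic by hypothesis, Proposition~\ref{Prop:tExact} applies and yields Assumption~\ref{Assump:tExact}. Theorem~\ref{Thm: TorsionVanishing} then gives the desired vanishing on the Hodge-type side, and the abelian-type reduction completes the argument. The main subtlety will be the careful bookkeeping required in the first paragraph: one must produce a Hodge-type datum $\gx$ whose spherical Hecke algebra and local $L$-parameters at $p$ match those of $\gxtwo$ compatibly under the abelian-type transfer, so that the maximal ideal $\mathfrak m_p$ and the genericity hypothesis on $\phi_{\mathfrak m_p}$ transport across the reduction without loss; apart from this essentially formal verification, no new geometric input is needed.
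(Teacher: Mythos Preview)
Your overall strategy matches the paper's: pass to a Hodge-type companion $\gx$ with the same connected Shimura datum, invoke the abelian-type reduction of \cite[Section~5.2]{Hamann-Lee}, and verify the hypotheses of Theorem~\ref{Thm: TorsionVanishing} and Proposition~\ref{Prop:tExact}. However, there is a genuine gap in your execution: you conflate the Hodge-type group $\mathsf{G}$ with $\mathsf{G}_2$. The displayed isomorphism
\[
  G = \mathsf{G}_{\qp} \simeq \prod_{v\mid p} H_v \simeq \prod_{i=1}^d \operatorname{GSp}_{4,\qp}
\]
describes $\mathsf{G}_{2,\qp}$, not the Hodge-type companion. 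Any Hodge-type $\mathsf{G}$ with the same connected datum will differ from $\mathsf{G}_2$ by a central modification involving an auxiliary torus, so $\mathsf{G}_{\qp}$ is not literally $\operatorname{GSp}_4^d$; in particular you have not yet arranged that it is split at $p$, nor that \cite[Assumption~7.5]{Hamann22} holds for it.

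The paper fills this gap by making the companion explicit: one takes a quadratic CM extension $L/\mathsf{F}$ and builds $\mathsf{G}$ inside a quotient of $\mathsf{G}_2 \times \operatorname{Res}_{L/\mathbb{Q}}\mathbb{G}_m$, choosing $L$ so that every prime of $\mathsf{F}$ above $p$ splits in $L$; this forces $G=\mathsf{G}_{\qp}$ to be split. Then \cite[Assumption~7.5]{Hamann22} is verified for this $G$ by a chain of transfers: from $\operatorname{GSp}_4$ (known, using $p>2$) to $\operatorname{GSp}_4^d \times \mathbb{G}_m^{2d}$ via compatibility with products, then to the quotient $G_1$ via a variant of \cite[Lemma~4.9]{Hamann-Lee}, and finally to the subgroup $G \subset G_1$ via \cite[Proposition~4.8]{Hamann-Lee} (same derived and adjoint group). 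Your closing remark that the remaining subtlety is ``essentially formal bookkeeping'' about matching Hecke algebras understates this: the nontrivial content is precisely the construction of a suitable $\mathsf{G}$ and the transfer of Assumption~7.5 to it.
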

\begin{Rem}
    The assumption that $\phi$ is generic can be rephrased in terms of certain ratios of the Frobenius eigenvalues of $\phi$ not being equal to $p$ or $1$. To be precise, by our assumption on $p$ we have $G=\prod_{i=1}^d \operatorname{GSp}_4$ and if we identify $\dualgrp{G} = \operatorname{GSp}_4^d$, then our L-parameter is a product $\phi=\prod_{i=1}^d \phi_i$ of unramified representations
    \begin{align}
        \phi_i:W_{\qp} \to \operatorname{GSp}_4(\flbar).
    \end{align}
By abuse of notation, we will also write $\phi_i$ for the composition of $\phi_i$ with the inclusion $\operatorname{GSp}_4(\flbar) \to \operatorname{GL}_4(\flbar)$. The assumption that $\phi$ is generic then comes down to the following: For all $i$ the element $\phi_i(\operatorname{Frob})$
has eigenvalues $\alpha_1, \cdots, \alpha_4$ which satisfy
\begin{align}
    \alpha_j \cdot \alpha_{j'}^{-1} \not \in \{1,p\}
\end{align}
for $j \not=j'$.
\end{Rem}

\begin{proof}[Proof of Corollary \ref{Cor:ExampleVanishing}]
Let $p$ be a prime as in the statement of the theorem.  We will now explicitly describe an auxiliary Shimura datum $\gx$ of Hodge type for $\gxtwo$, following \cite[Section 12 of version 1]{KretShinSymplecticArxiv} (see also \cite[Section 6]{DeligneTravaux}). Let $L$ be a quadratic CM extension of $F$ and let $x \mapsto x^c$ denote the nontrivial $F$-automorphism of $L$. Let $\mathsf{T}=\operatorname{Res}_{F/\mathbb{Q}} \mathbb{G}_m$, let $\mathsf{T}_L=\operatorname{Res}_{L/\mathbb{Q}} \mathbb{G}_m$ and let $\mathsf{T}^1_L=\operatorname{Res}_{F/\mathbb{Q}} \operatorname{Res}^1_{L/F} \mathbb{G}_m$, where 
\begin{align}
    \operatorname{Res}^1_{L/F} \mathbb{G}_m \subset \operatorname{Res}_{L/F} \mathbb{G}_m
\end{align}
is the norm one subtorus. We define
\begin{align}
    \operatorname{sim}_L: \mathsf{G}_2 \times \mathsf{T}_L \to \mathsf{T} \times \mathsf{T}^1_L \\
    (g,t) \mapsto (\operatorname{sim}(g) t t^c, t/t^c),
\end{align}
where $\operatorname{sim}:\mathsf{G}_2 \to \mathsf{T}$ is the map identifying $\mathsf{T}$ as the maximal abelian quotient of $\mathsf{G}_2$. We note that the kernel of $\operatorname{sim}_L$ is given by the image of $j:\mathsf{T} \to \mathsf{G}_2 \times \mathsf{T}_L$ where $j$ is the product of the natural embedding of $\mathsf{T}$ as the center of $\mathsf{G}_2$ and the \emph{inverse} of the natural embedding $\mathsf{T} \to \mathsf{T}_L$. We define 
\begin{align}
    \mathsf{G}_1=j(\mathsf{T}) \backslash \left(\mathsf{G}_2 \times \mathsf{T}_L \right)
\end{align}
and note that it comes with a natural surjection $\operatorname{sim}_L: \mathsf{G}_1 \to \mathsf{T} \times \mathsf{T}^1_L$, which identifies the target with the maximal abelian quotient of $\mathsf{G}_1$. We then define $\mathsf{G}$ to be the inverse image in $\mathsf{G}_1$ of $\mathbb{G}_m \times \mathsf{T}^1_L \subset \mathsf{T} \times \mathsf{T}^1_L$. We identify the base change of $\mathsf{G} \times \mathsf{T}_L$ to $\mathbb{R}$ with
\begin{align}
    \prod_{i=1}^d  H \otimes_{F,\tau_i} \mathbb{R} \times \prod_{i=1}^d \mathbb{S},
\end{align}
where $\mathbb{S}=\operatorname{Res}_{\mathbb{C}/\mathbb{R}} \mathbb{G}_m$ is the Deligne torus. This allows us to define a Shimura datum for this group, which induces Shimura datum $\mathsf{X}$ for $\mathsf{G}$; see \cite[Section 12 of version 1]{KretShinSymplectic}, where it is also shown that $\gx$ is of Hodge type. \smallskip 

It is explained in \cite[Section~5.2, Proposition 5.4]{Hamann-Lee} that torsion vanishing results for the Shimura varieties for $\gx$ and for generic parameters imply torsion vanishing result for the Shimura varieties for $\gxtwo$ for generic parameters. 

Recall that we have assumed that $p$ is totally split in $F$ and that $p \not \in S$. If we choose $L$ such that every prime of $F$ above $p$ is (totally) split in $L$, then the group $G$ is a split reductive group. Note that this implies that $\bgmu_{\mathrm{un}}$ consists of a single element and so the desired torsion vanishing result for $\gx$ follows from Theorem \ref{Thm:IntroTorsionVanishing} and Proposition \ref{Prop:tExact} if we can show that \cite[Assumption 6.5]{Hamann22} holds for $G$.

By \cite[Theorem 4.13]{Hamann-Lee} we know that \cite[Assumption 6.5]{Hamann22} holds for $\operatorname{GSp}_{4}$, and it is straightforward to see that this implies that the assumption holds for $G_2 = \operatorname{GSp}_4^d$ and for 
\begin{align}
    G_2 \times T_L = \operatorname{GSp}_4^d \times \mathbb{G}_m^{2d},
\end{align}
using the compatibility of the Fargues--Scholze local Langlands correspondence with products, see \cite[Proposition IX.6.2]{FarguesScholze}. To deduce \cite[Assumption 6.5]{Hamann22} for $G_1$, we argue as in the proof of \cite[Theorem 4.13.(3)]{Hamann-Lee}, using a variant of \cite[Lemma 4.10]{Hamann-Lee} for $G_1$. Finally, \cite[Assumption 6.5]{Hamann22} for $G$ is now a direct consequence of \cite[Proposition 4.9]{Hamann-Lee} because $G \subset G_1$ has the same derived and adjoint group as $G_1$.
\end{proof}

\bibliographystyle{amsalpha}
\renewcommand{\VAN}[3]{#3}
\renewcommand{\VANDEN}[3]{#3}
\renewcommand\MR[1]{}
\bibliography{references}

\end{document}